\theoremstyle{plain}
\newtheorem{theorem}{Theorem}[section]
\newtheorem{thmx}{Theorem}
\newtheorem{proposition}[theorem]{Proposition}
\newtheorem{lemma}[theorem]{Lemma}
\newtheorem{corollary}[theorem]{Corollary}
\theoremstyle{definition}
\newtheorem{definition}[theorem]{Definition}
\newtheorem{example}[theorem]{Example}
\theoremstyle{remark}
\newtheorem{remark}[theorem]{Remark}
\renewcommand{\bar}{\overline}
\renewcommand{\hat}{\widehat}
\newcommand{\dge}{\rotatebox[origin=c]{45}{$\ge$}}
\newcommand{\uge}{\rotatebox[origin=c]{315}{$\ge$}}
\newcommand{\updots}{\hbox to1.65em{\rotatebox[origin=c]{45}{$\cdots$}}}
\newcommand{\dndots}{\hbox to1.65em{\rotatebox[origin=c]{315}{$\cdots$}}}
\newcommand{\lmd}[1]{\hbox to1.65em{$\hfill \lambda_{#1} \hfill$}}
\newcommand{\C}{\mathbb{C}}
\newcommand{\N}{\mathbb{N}}
\newcommand{\Q}{\mathbb{Q}}
\newcommand{\R}{\mathbb{R}}
\newcommand{\Z}{\mathbb{Z}}
\newcommand{\p}{\mathbb{P}}
\newcommand{\F}{\mathcal{F}}
\newcommand{\CP}{\mathbb{C}P}
\newcommand{\uu}{\mathfrak{u}}
\newcommand{\fa}{\mathfrak{a}}
\newcommand{\fb}{\mathfrak{b}}
\newcommand{\fc}{\mathfrak{c}}
\newcommand{\fd}{\mathfrak{d}}
\newcommand{\hookuparrow}{\mathrel{\rotatebox[origin=c]{90}{$\hookrightarrow$}}}
\def\pa{\partial}
\def\mcal{\mathcal}
\def\frak{\mathfrak}
\def\scr{\mathscr}
\def\ev{\textup{ev}}
\def\bev{\textbf{\textup{ev}}}
\numberwithin{equation}{section} 
\begin{document}

\title{Lagrangian fibers of Gelfand-Cetlin systems}

\author{Yunhyung Cho}
\address{Department of Mathematics Education, Sungkyunkwan University, Seoul, Republic of Korea}
\email{yunhyung@skku.edu}

\author{Yoosik Kim}
\address{Department of Mathematics and Statistics, Boston University, Boston, MA, USA}
\email{kimyoosik27@gmail.com, yoosik@bu.edu}

\author{Yong-Geun Oh}
\address{Center for Geometry and Physics, Institute for Basic Science (IBS),  Pohang, Republic of Korea, and Department of Mathematics, POSTECH, Pohang, Republic of Korea}
\email{yongoh1@postech.ac.kr}

\begin{abstract}
Motivated by the study of Nishinou-Nohara-Ueda on the Floer
thoery of Gelfand-Cetlin systems over complex partial flag manifolds,
we provide a complete description of the topology of Gelfand-Cetlin fibers.
We prove that all fibers are \emph{smooth} isotropic submanifolds and
give a complete description of the fiber to be Lagrangian in terms of combinatorics
of Gelfand-Cetlin polytope. Then we study (non-)displaceability of Lagrangian fibers.
After a few combinatorial and numercal tests for the displaceability, using the bulk-deformation
of Floer cohomology by Schubert cycles, we prove that
every full flag manifold $\mcal{F}(n)$ ($n \geq 3$) with a monotone Kirillov-Kostant-Souriau
symplectic form carries a continuum of non-displaceable Lagrangian tori which degenerates
to a non-torus fiber in the Hausdorff limit. In particular, the Lagrangian $S^3$-fiber in
$\mcal{F}(3)$ is non-displaceable the question of which was raised by Nohara-Ueda
who computed its Floer cohomology to be vanishing.
\end{abstract}
\maketitle
\setcounter{tocdepth}{1} 
\tableofcontents

\section{Introduction}
\label{secIntroduction}

A partial flag manifold $\mathrm{GL}(n, \C)/P$ can be defined as the orbit $\mcal{O}_\lambda$ 
of the diagonal matrix whose diagonal entries are given by a decreasing sequence $\lambda$ of integers 
under the conjugate $U(n)$-action. 
It comes with a $U(n)$-invariant K\"{a}hler form (unique up to scaling), so-called a {\em Kirillov-Kostant-Souriau symplectic form}. 
Guillemin-Sternberg \cite{GS} constructed a completely integrable system on $\mcal{O}_\lambda$ respecting the circle actions arising from the Cartan subgroups of connected closed subgroups that form a nested sequence of $U(n)$. 
They named it a \emph{Gelfand-Cetlin system} (\emph{GC system} for short) as the torus actions was used by Gelfand-Cetlin \cite{GC} to decompose the space of holomorphic sections of the $U(n)$-equivariant line bundle corresponding to $\lambda$, 
which is the irreducible representation of the highest weight $\lambda$ by the Borel-Weil theorem. 
GC systems are of main interest in this article. 

A projective variety admitting a toric degeneration can be studied through the toric variety located at the center.
Partial flag manifolds are known to have toric degenerations by works of Gonciulea-Lakshmibai \cite{GL}, Caldero \cite{Ca} and Kogan-Miller \cite{KM}. 
They have been versatile tools to study algebraic and symplectic geometry of partial flag manifolds for understanding Schubert varieties, studying mirror symmetry, constructing integrable systems, estimating Gromov width, etc by taking advantage of combinatorial obejcts associated with the toric varieties.
At the level of completely integrable systems, Nishinou-Nohara-Ueda \cite{NNU} constructed a toric degeneration of a GC system to a toric moment map. 
They computed (deformed) Floer cohomology of torus fibers. En route, they constructed a Landau-Ginzburg mirror by classifying holomorphic discs of Maslov index two bounded by a torus fiber on a partial flag manifold. It can be thought as Floer theoretical realization of Landau-Ginzburg models appearing in the context of closed mirror symmetry \cite{EHX, Gi, BCKV}.

Just like the toric cases, the image of any GC system is a polytope and the fiber over any point in its interior is a Lagrangian torus. However, contrary to the toric cases, the action of the big torus does not extend to a partial flag variety, which is reflected to an appearance of non-torus Lagrangian fibers. 
For the purpose of homological mirror symmetry, those Lagrangians cannot be ignored since a certain non-toric Lagrangian is indeed a non-zero object over the Novikov ring over the field of complex numbers according to the work of Nohara-Ueda \cite{NU2}. 
This feature contrasts with the toric cases where a certain collection of Lagrangian toric fibers together with deformation data can split-generate the Fukaya category of a toric manifold investigated by the third named author with Abouzaid, Fukaya, Ohta, and Ono \cite{AFOOO}. 
It is also responsible for the fact that the number of critical points of the superpotential is sometimes strictly less than the sum of betti numbers of the ambient variety. 
By the work of Nohara-Ueda \cite{NU1} and Harada-Kaveh \cite{HK}, one can produce completely integrable systems on a partial flag manifold coming from various toric degenerations, but one still misses critical points having some critical values if toric fibers are only considered. 
Therefore, non-toric fibers or their (quasi-)equivalent objects should be taken into consideration to do mirror symmetry, which motivates us to classify non-toric Lagrangian fibers in this paper. 

Besides the aspect of the Fukaya category of partial flag manifolds, the process revealing the topology of fibers is itself interesting as it involves combinatorics of ladder diagrams and tableaux. In representation theory and Schubert calculus, Young diagrams and tableaux are very effective combinatorial tools to study crystal graphs for representations of quantum groups and the product structure on Schubert cocycles for instance, see \cite{HoK, Ful}. 
In this paper, we device a combinatorial procedure on ladder diagrams and tableaux, which are analogues to Young diagrams and tableaux, in order to understand the topology of GC fibers and symplectic geometry of partial flag manifolds. 
The first named author with An and Kim \cite{ACK} found an order-preserving one-to-one correspondence between the faces of a GC polytope and certain subgraphs of the ladder diagram. 
By playing ``\textup{3D-TETRIS}\textregistered \," with certain shapes of blocks on the subgraph corresponding to a face of a GC polytope, one can tell the topology of fibers over the relative interior of the face and whether the fibers are Lagrangian or not. 
Those types of combinatorics on a ladder diagram will be further developed in \cite{CKO} to obtain topological and geometric data on partial flag manifolds. 

In the second part of the paper, based on the above detailed description of Lagrangian fibers, we test displaceability and non-displaceability of Lagrangian fibers. In Nishinou-Nohara-Ueda \cite{NNU}, they showed the Lagrangian torus fiber at the center of a GC polytope is non-displaceable. Other than toric fibers, there are not many things to be known about non-displaceability of non-toric fibers. As far as the authors know, the only known non-displaceable non-toric GC fibers are in some limited cases of Grassmannians such as the monotone $U(2)$-fiber in $\mathrm{Gr}(2,4)$ in Nohara-Ueda \cite{NU2} and the monotone $U(n)$-fiber in $\mathrm{Gr}(n, 2n)$ in Evans-Lekili \cite{EL2}. 
In this article, we deal with the case of complete flag manifolds $\mcal{F}(n)$ for $n \geq 3$ equipped with a monotone Kirillov-Kostant-Souriau symplectic form. We detect several non-displaceable non-toric Lagrangian GC fibers diffeomorphic to $U(m) \times T^s$. For this purpose, we consider line segments connecting the center of the polytope and the centers of certain faces having Lagrangian fibers. Using Lagrangian Floer theory deformed by ambient cycles developed by the third named author with Fukaya, Ohta and Ono \cite{FOOO}, every toric fiber over the line segments is shown to be non-displaceable. More specifically, for deformation we employ Schubert cycles, which degenerate into unions of toric subvarieties corresponding to facets of a moment polytope according to Kogan, Kogan-Miller \cite{K,KM}. Then, non-displaceability follows because non-toric fibers are realized as the Hausdorff limit of non-displaceable toric fibers. In the particular case for $\mcal{F}(3)$, we completely classify the non-displaceable GC fibers, answering questions of Nohara-Ueda \cite{NU2} and Pabiniak \cite{Pa}. 

We hope that our detailed description of the geometry of Lagrangian fibers
will have similar effect on the Floer theory and homological mirror symmetry
to that of Cho and the third named author's article \cite{CO}, especially as the testing
ground of homological mirror symmetry for the non-toric Fano manifolds.
For instance, it is expected to be used to recover Rietsch's Landau-Ginzburg mirror \cite{Ri} that is preferred in the aspect of closed mirror symmetry. It consists of a partial compactification of an algebraic torus together with a holomorphic function on it. Smoothing faces containing a face admitting Lagrangians leads to a Lagrangian torus fibration with singular fibers. By gluing \emph{formal} deformation spaces of immersed Lagrangians and Lagrangian tori in the scheme of Cho-Hong-Lau \cite{CHL}, one can recover the mirror and then the category of matrix factorizations should serve as the mirror of the Fukaya category of partial flag manifolds. It will be invesigated by the second named author with Hong and Lau. 

\subsection*{Acknowledgements}
The first named author is supported by the National Research Foundation of Korea grant funded by the Korea government (MSIP; Ministry of Science, ICT \& Future Planning) (NRF-2017R1C1B5018168). The third named author is supported by IBS-R003-D1. This work was initiated when the first named author and the second named author were affiliated to IBS-CGP and supported by IBS-R003-D1. 
The second named author would like to thank Cheol-Hyun Cho, Hansol Hong, Siu-Cheong Lau for explaining their upcoming work, Yuichi Nohara for explaining work with Ueda, and Byung Hee An, Morimichi Kawasaki and Fumihiko Sanda for helpful comments.

\part{The topology of Gelfand-Cetlin fibers}\label{part_thetopologyofgelfand}
\vspace{0.2cm}

\section{Introduction to Part~\ref{part_thetopologyofgelfand}.}

A partial flag manifold arises as the orbit $\mcal{O}_\lambda$ of an element $\lambda$ in the dual Lie algebra of $\frak{u}(n)$ under the co-adjoint action of the unitary group $U(n)$ 
equipped with a $U(n)$-invariant K\"{a}hler form $\omega_\lambda$ (unique up to scaling), 
so-called a {\em Kirillov-Kostant-Souriau symplectic form} (\emph{KKS form} for short). 
On a co-adjoint orbit, Guillemin and Sternberg \cite{GS} built a completely integrable system 
\[
	\Phi_\lambda \colon (\mathcal{O}_\lambda, \omega_\lambda) \rightarrow \R^{\dim_\C \mathcal{O}_\lambda},
\]
which is called a \emph{Gelfand-Cetlin system}, or a \emph{GC system}.  
The image of $\mcal{O}_\lambda$ under $\Phi_\lambda$ is a polytope, denoted by $\Delta_\lambda$. We call it a {\em Gelfand-Cetlin polytope}, abbreviated as a \emph{GC polytope}.

The first part of this article studies the topology of the fibers of a GC system. 
By the celebrated Arnold-Liouville theorem, the fiber over any interior point is a Lagrangian torus. 
Moreover, the fiber over each point not in the interior of any GC polytope turns out to be a smooth isotropic manifold,
see Theorem \ref{theoremA}.
It is notable because there is almost no control on the fiber over a non-regular value of a general completely integrable system. 
Furthermore, a GC fiber not in the interior can be Lagrangian, which shows 
one marked difference between GC systems and toric moment maps (on toric manifolds). 
Table~\ref{featuresofgefiber} summarizes similarities and differences between them. 

\vspace{0.1cm}
\begin{center}
\begin{table}[H]
  \begin{tabular}{| l || c | c | }  \hline
      & \,\, Gelfand-Cetlin fiber \,\, &   Toric moment fiber \\ \hline \hline
    over any point & \multicolumn{2}{|c|}{isotropic submanifold}  \\ \hline 
    over any interior point & \multicolumn{2}{|c|}{Lagrangian torus} \\ \hline
    \multirow{3}{*} {\shortstack[l]{over any point in the relative\\ interior of a $k$-dim face}}
     & \multicolumn{2}{|c|}{$\pi_1 \textup{(fiber)} = \Z^k$, \, $\pi_2 \textup{(fiber)} = 0$}  \\ \cline{2-3}
    			& not necessarily torus & $k$-dimensional torus   \\ \cline{2-2}\cline{2-3}
			& can be Lagrangian & can \emph{not} be Lagrangian \\ \hline
  \end{tabular}
\bigskip
\caption{Features of Gefand-Cetlin fibers and toric fibers}\label{featuresofgefiber}
\end{table}
\end{center}
\vspace{-0.5cm}

To show the above, we will describe each GC fiber 
as the total space of the iterated bundle constructed by playing a game with various ``\textup{LEGO}\textregistered \, blocks".
A game manual will be provided in Section~\ref{secLagrangianFibersOfGelfandCetlinSystems} and~\ref{secIteratedBundleStructuresOnGelfandCetlinFibers}. 
The first main result of the article, obtained from the game, is stated as follows. 

\begin{thmx}[Theorem~\ref{theorem_main}]\label{theoremA}
	Let $\Phi_\lambda$ be the Gelfand-Cetlin system on
	the co-adjoint orbit $(\mathcal{O}_\lambda, \omega_\lambda)$ for $\lambda \in \mathfrak{u}(n)^*$ and let $\Delta_\lambda$ be the corresponding Gelfand-Cetlin
	polytope. For any point $\textbf{\textup{u}} \in \Delta_\lambda$, the fiber $\Phi_\lambda^{-1}(\textbf{\textup{u}})$ is an isotropic submanifold of $(\mathcal{O}_\lambda, \omega_\lambda)$ and is the
	total space of an iterated bundle
	\[
		\Phi^{-1}_\lambda(\textbf{\textup{u}}) = E_{n-1} \stackrel{p_{n-1}} \longrightarrow E_{n-2} \stackrel{p_{n-2}} \longrightarrow \cdots \stackrel{p_2} \longrightarrow E_1
		\stackrel{p_1} \longrightarrow E_0= \mathrm{point}
	\]
	such that the fiber at each stage is either a point or a product of odd dimensional spheres.
	Two fibers $\Phi^{-1}_\lambda(\textbf{\textup{u}}_1)$ and $\Phi^{-1}_\lambda(\textbf{\textup{u}}_2)$ are diffeomorphic if two points $\textbf{\textup{u}}_1$ and $\textbf{\textup{u}}_2$ are contained in the relative interior of the same face.
\end{thmx}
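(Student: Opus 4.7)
The plan is to construct the iterated bundle by peeling off one GC level at a time, compute each fiber via the secular equation for Hermitian matrix extensions, and verify isotropy by an inductive Thimm-type moment-map argument.

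For $k = 0, 1, \ldots, n-1$, let $E_k$ denote the set of Hermitian matrices $A \in \mathrm{Herm}(k+1)$ whose top-left $j \times j$ block $A_{(j)}$ has the prescribed eigenvalues $\lambda^{(j)}(\mathbf{u})$ for every $j \leq k+1$. Then $E_0$ is a single point, $E_{n-1}$ is identified with $\Phi_\lambda^{-1}(\mathbf{u})$, and the top-block map $p_k \colon E_k \to E_{k-1}$, $A \mapsto A_{(k)}$, realizes the chain in the statement. To analyze its fibers, fix $B \in E_{k-1}$ and write a preimage as $\begin{pmatrix} B & v \\ v^* & c \end{pmatrix}$. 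After diagonalizing $B = W D W^*$ with $D = \mathrm{diag}(\mu_1 I_{m_1}, \ldots, \mu_r I_{m_r})$ and writing $\tilde v = W^* v = (\tilde v^{(1)}, \ldots, \tilde v^{(r)})$ with $\tilde v^{(i)} \in \C^{m_i}$, expansion of the characteristic polynomial shows that $c$ and each $\|\tilde v^{(i)}\|^2$ are uniquely determined by $\lambda^{(k+1)}$ and $\lambda^{(k)}$ via Cauchy interlacing. The only remaining freedom is the unit direction of $\tilde v^{(i)}$ in $\C^{m_i}$ when $\|\tilde v^{(i)}\| > 0$, contributing a factor of $S^{2m_i-1}$; hence each fiber of $p_k$ is a product of odd-dimensional spheres or a single point. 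The multiplicities $m_i$ and the indices with $\|\tilde v^{(i)}\| = 0$ (which correspond to the interlacing equalities $\lambda_i^{(k+1)} = \lambda_i^{(k)}$ or $\lambda_i^{(k)} = \lambda_{i+1}^{(k+1)}$) are constant on the relative interior of any face of $\Delta_\lambda$, so each $p_k$ is locally trivial. Induction on $k$ then yields smoothness of every $E_k$, and the diffeomorphism type of $\Phi_\lambda^{-1}(\mathbf{u})$ depends only on the face whose relative interior contains $\mathbf{u}$.

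Isotropy is the main obstacle, since the usual Arnold-Liouville argument only handles regular values whereas the GC fibers in question can be singular. The plan is to induct and show $E_k$ is isotropic in $(\mathcal{O}_{\lambda^{(k+1)}}, \omega_{\lambda^{(k+1)}})$. The map $\pi_k \colon \mathcal{O}_{\lambda^{(k+1)}} \to \mathfrak{u}(k)^*$, $A \mapsto A_{(k)}$, is a moment map for the $U(k)$-action embedded as the top-left block, and the sphere fibers of $p_k$ are exactly orbits of the stabilizer $U(m_1) \times \cdots \times U(m_r)$ of $B$ acting on $v$. Because the Lie algebra of this stabilizer commutes with $\pi_k(A) = B$, the moment-map identity $\iota_{X_\xi} \omega_{\lambda^{(k+1)}} = d\langle \pi_k, \xi\rangle$ forces $\omega_{\lambda^{(k+1)}}$ to vanish both on the sphere fibers themselves and between sphere fibers and any horizontal lift from $E_{k-1}$. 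Combined with the inductive isotropy of $E_{k-1}$ and the fact that the horizontal component of $\omega_{\lambda^{(k+1)}}$ descends along $\pi_k$ to $\omega_{\lambda^{(k)}}$, this completes the induction. The expected main technical difficulty is the careful analysis on the singular strata, where the dimensions of the sphere fibers can drop and the horizontal-vertical decomposition must be verified to remain well-behaved.
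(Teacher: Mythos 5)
Your proposal follows essentially the same route as the paper's proof. The definition of $E_k$, the characteristic-polynomial analysis isolating $c$ and $\|\tilde v^{(i)}\|^2$, and the identification of the fiber of $p_k$ as a product of odd-dimensional spheres (via the stabilizer $U(m_1)\times\cdots\times U(m_r)$ acting on the last column) is precisely the paper's Proposition~\ref{proposition_A_k_S_k_bundle} with its surrounding Lemmas~\ref{lemma_fiber_torus}--\ref{lemma_fiber_sphere_b_b}, and the isotropy argument via the iterated pullback of the KKS form is the content of Lemmas~\ref{lemma_transitive}--\ref{lemma_isotropic} and Proposition~\ref{proposition_second_part}.

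Two remarks on where your write-up and the paper diverge in detail. First, for isotropy you invoke the moment-map identity $\iota_{X_\xi}\omega = d\langle\pi_k,\xi\rangle$ and the commutation $[\xi,B]=0$, while the paper instead verifies the identity $(\omega_{\fa})_x(\xi,\eta)=(\omega_{\fb})_{x^{(k)}}\bigl((\rho_{k+1})_*\xi,(\rho_{k+1})_*\eta\bigr)$ by the explicit trace computation $\operatorname{tr}\bigl(ix[(i_k)_*X,(i_k)_*Y]\bigr)=\operatorname{tr}\bigl(ix^{(k)}[X,Y]\bigr)$; these are two packagings of the same linear-algebraic fact, so this is a cosmetic rather than substantive difference. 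Second, the step you call ``$c$ and each $\|\tilde v^{(i)}\|^2$ are uniquely determined via Cauchy interlacing'' is only straightforward when the $b_j$'s are pairwise distinct (where the coefficient matrix is an invertible Cauchy matrix); in the degenerate cases where $a_{j+1}\in\{b_1,\ldots,b_k\}$, one has to run through the four interlacing patterns as the paper does in Lemmas~\ref{lemma_fiber_zero_a_b}--\ref{lemma_fiber_sphere_b_b}, and the paper's uniqueness in the generic case (Lemma~\ref{lemma_fiber_torus}) is itself obtained via a non-obvious dimension count against $\dim_{\R}\mathcal{O}_{\fa}$ rather than directly from the linear system. You flagged exactly this as ``the expected main technical difficulty,'' and that is indeed where the bulk of the paper's argument lives, so your proposal is sound but would need that case analysis fleshed out to become a complete proof.
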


A GC system $\Phi_\lambda$ is known to admit a toric degeneration from $\Phi_\lambda$ to a toric integrable system $\Phi$ on a toric variety. 
On the toric degeneration of algebraic varieties in stages constructed by Kogan and Miller \cite{KM}, Nishinou, Nohara and Ueda constructed the degeneration (see Section \ref{secDegenerationsOfFibersToTori})
of the GC system $\Phi_\lambda$ into the moment map $\Phi$ of the (singular) projective toric variety 
$X_\lambda$ associated with $\Delta_\lambda$ using Ruan's technique \cite{R} of gradient-Hamiltonian flows, see Theorem 1.2 in \cite{NNU}. Then, we obtain the following commutative diagram
\begin{equation}\label{equation_toric_degeneration_diagram}
	\xymatrix{
		  (\mathcal{O}_\lambda, \omega_\lambda)  \ar[dr]_{\Phi_\lambda} \ar[rr]^{ \phi}
                              & & (X_\lambda, \omega)
      \ar[dl]^{\Phi} \\
  & \Delta_\lambda &}
\end{equation}
where $\phi$ is a continuous map from $\mathcal{O}_\lambda$ onto $X_\lambda$. Moreover, the map 
$\phi$ induces a symplectomorphism from $\Phi^{-1}_\lambda(\mathring{\Delta}_\lambda)$ to $\Phi^{-1}(\mathring{\Delta}_\lambda)$ where $\mathring{\Delta}_\lambda$ is the interior of ${\Delta}_\lambda$. As a consequence, one can see that each fiber located at $\mathring{\Delta}_\lambda$ is a Lagrangian torus.

According to the work of Batyrev, Ciocan-Fontanine, Kim, and Van Straten \cite{BCKV}, this degeneration process given by the map $\phi$ in~\eqref{equation_toric_degeneration_diagram} can be interpreted as a smoothing of conifold staratum, which is (a half of) a \emph{conifold} transition. Namely, through the map $\phi$, a partial flag manifold is deformed into a singular toric variety having conifold strata. The following theorem indeed visualizes how each GC fiber degenerates into a toric fiber. Specifically, through $\phi$, every odd-dimensional sphere of dimension $>1$ appeared in each stage of the iterated bundle $\{E_\bullet\}$ simultaneously contracts to a point and each $S^1$-factor persists. 

\begin{thmx}[Theorem ~\ref{theorem_contraction}]\label{theoremB}
	Let $\textbf{\textup{u}}$ be a point lying on the relative interior of an $r$-dimensional face. Then every $S^1$-factor appeared 
	in any stage of the iterated bundle given in Theorem \ref{theoremA} comes out as a trivial factor so that 
	\[
		\Phi_\lambda^{-1}(\textbf{\textup{u}}) \cong T^r \times B(\textbf{\textup{u}})
	\]
	where $B(\textbf{\textup{u}})$ is the iterated bundle obtained from the original bundle by removing all $S^1$-factors. 
	Moreover, the map $\phi \colon \Phi_\lambda^{-1}(\textbf{\textup{u}}) \rightarrow \Phi_{\vphantom{\lambda}}^{-1}(\textbf{\textup{u}})$ is nothing but the projection $T^r \times B(\textbf{\textup{u}}) \rightarrow T^r$ on the first factor.
\end{thmx}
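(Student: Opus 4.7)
The plan is to exhibit a Hamiltonian $T^r$-action on a neighborhood of $\Phi_\lambda^{-1}(\textbf{\textup{u}})$ that acts freely on the fiber and is intertwined by $\phi$ with the free $T^r$-action on the toric fiber $\Phi^{-1}(\textbf{\textup{u}}) \subset X_\lambda$. A standard torsor trivialization will then produce the splitting $\Phi_\lambda^{-1}(\textbf{\textup{u}}) \cong T^r \times B(\textbf{\textup{u}})$, and identifying $B(\textbf{\textup{u}})$ with the iterated bundle of Theorem~\ref{theoremA} stripped of its $S^1$-factors will complete the proof.

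To build $T^r$, let $F$ be the face of $\Delta_\lambda$ with $\textbf{\textup{u}} \in \mathrm{int}(F)$ and $\dim F = r$. By the ladder-diagram combinatorics of \cite{ACK}, $F$ distinguishes exactly $r$ Gelfand-Cetlin coordinates $u_{i_1,j_1},\ldots, u_{i_r,j_r}$ whose associated eigenvalues remain simple throughout $\Phi_\lambda^{-1}(\textbf{\textup{u}})$; these coordinate functions are smooth near the fiber, Poisson-commute, and generate the desired Hamiltonian $T^r$-action. Across all $n-1$ stages of the iterated bundle in Theorem~\ref{theoremA}, this $T^r$ acts as translation along each $S^1$-factor of the stage fiber and acts trivially on the higher odd-dimensional sphere factors. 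A count against $\pi_1(\Phi_\lambda^{-1}(\textbf{\textup{u}})) = \Z^r$ from Table~\ref{featuresofgefiber}, together with the simple connectedness of $S^{2k+1}$ for $k\geq 1$, confirms the total number of $S^1$-factors is exactly $r$.

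The map $\phi$ is $T^r$-equivariant since the Kogan-Miller toric degeneration is constructed equivariantly and Ruan's gradient-Hamiltonian flow (used by Nishinou-Nohara-Ueda to define $\phi$) preserves any Hamiltonian torus action commuting with the degeneration. The target $\Phi^{-1}(\textbf{\textup{u}})$ is a single free $T^r$-orbit in $X_\lambda$, so equivariance forces $T^r$ to act freely on $\Phi_\lambda^{-1}(\textbf{\textup{u}})$ as well, for any stabilizer element would have to fix a point of the torsor $\phi(\Phi_\lambda^{-1}(\textbf{\textup{u}}))$. The map
\begin{equation*}
	\Phi_\lambda^{-1}(\textbf{\textup{u}}) \;\longrightarrow\; T^r \times \phi^{-1}(\mathrm{pt}), \qquad x \;\longmapsto\; \bigl(\phi(x),\, \phi(x)^{-1}\cdot x\bigr),
\end{equation*}
is then a $T^r$-equivariant diffeomorphism with $B(\textbf{\textup{u}}) := \phi^{-1}(\mathrm{pt})$, under which $\phi$ is manifestly the projection onto $T^r$.

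The final step is to identify $B(\textbf{\textup{u}})$ with the iterated bundle of Theorem~\ref{theoremA} after removing all $S^1$-factors, which I would carry out by descending the $T^r$-action through the tower $E_{n-1} \to \cdots \to E_0$: at each stage $T^r$ preserves $E_k$ and acts freely on the $S^1$-factors but trivially on the higher-dimensional sphere factors, so $E_k/T^r$ is the same iterated bundle with all $S^1$'s deleted. The main obstacle is precisely this last matching, namely verifying that the stage-local $S^1$-factors produced by the LEGO-block construction of Theorem~\ref{theoremA} are each the orbit of a globally defined Hamiltonian $S^1 \subset T^r$, rather than circles defined only on that particular stage. I would handle this by a careful induction that reads off, from each thin rectangle in the block decomposition of the face $F$, the specific GC coordinate whose Hamiltonian flow produces the corresponding circle, thereby confirming that the $r$ stage-local $S^1$'s assemble into the global $T^r$ constructed in the first step.
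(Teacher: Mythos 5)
Your proposal follows the same structural plan as the paper's proof: construct the $T^r$-action from the Hamiltonians $\Phi_\lambda^{i,j}$ indexed by the minimal cycles of $\gamma$ (Lemma~\ref{lemma_smooth_component_free_torus_action}), use the $T^r$-equivariance of the gradient-Hamiltonian map $\phi$ (Corollary~\ref{corollary_torus_equivariant}), pull freeness back from the toric side to obtain the trivial $T^r$-bundle (Proposition~\ref{proposition_toric_degenerate_equivariant_map}), and then descend the iterated bundle stage by stage, which is exactly the content of claims (1), (2), (3) in the paper's proof of Theorem~\ref{theorem_contraction} and the step you correctly flag as the main remaining work. Two logical cautions are worth recording. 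First, you invoke $\pi_1(\Phi_\lambda^{-1}(\textbf{\textup{u}})) = \Z^r$ to count the $S^1$-factors, but in the paper that statement is Proposition~\ref{proposition_pi_2_zero_pi_1_k}, which is deduced \emph{from} Theorem~\ref{theorem_contraction}; the non-circular count is the combinatorial one of Proposition~\ref{proposition_M_1_block_simple_closed_region} (one $S^1$ for each $M_1$-block containing a bottom vertex, hence $r$ in total). Second, freeness of the chosen subtorus $T^r$ on the toric fiber $\Phi^{-1}(\textbf{\textup{u}})$ is a genuine lattice condition (Lemma~\ref{lemma_freely}); the paper verifies it by writing down the inward primitive normals of the facets containing $f_\gamma$ and checking that they generate $\Z^N$ together with $\{\xi_{i,j}\}_{(i,j)\in\mcal{I}_{\mcal{C}(\gamma)}}$, and you should supply this computation rather than assert that the toric fiber is a free orbit of your particular $T^r$.
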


Because of Theorem \ref{theoremA}, other fibers located in the interior of a face $f$ of $\Delta_\lambda$
are all Lagrangian as soon as the fiber of one single point at the interior of $f$ is Lagrangian. In this sense, it is reasonable to call such a face $f$ 
a \emph{Lagrangian face}. Since every fiber is isotropic again by Theorem \ref{theoremA}, in order to show that a face $f$ is Lagrangian, it suffices to check that the dimension of the fiber over 
a point in the interior of $f$ is exactly the half of the real 
dimension of $\mathcal{O}_\lambda$. We will classify all Lagrangian faces of a GC polytope by providing a refined combinatorial procedure testing whether a face is Lagrangian or not, see Corollary~\ref{corollary_L_fillable}. 

The remaing parts of Part I is organized as follows. 
In Section~\ref{secTheGelfandCetlinSystems}, we review a construction and properties of GC systems. 
Section~\ref{secLadderDiagramAndItsFaceStructure} discusses the face structure of a GC polytope in terms of certain graphs of the ladder diagram associated with the polytope. Section~\ref{secLagrangianFibersOfGelfandCetlinSystems} is devoted to introduce combinatorics that will be used to describe the GC fibers and classify all Lagrangian faces.
In Section~\ref{secIteratedBundleStructuresOnGelfandCetlinFibers}, we provide the proof of Theorem~\ref{theoremA} by relating the combinatorics on ladder diagrams to the topology of fibers. 
Finally, the proof of Theorem~\ref{theoremB} will be given in Section~\ref{secDegenerationsOfFibersToTori}.

\vspace{0.2cm}
\section{Gelfand-Cetlin systems}
\label{secTheGelfandCetlinSystems}

In this section, we briefly overview Gelfand-Cetlin systems on partial flag manifolds.

For a positive integer $r \in \mathbb{N}$, let $n_0, n_1, \cdots, n_r, n_{r+1}$
be a finite sequence of non-negative integers such that
	\begin{equation}\label{nidef}
		0 = n_0 < n_1 < n_2 < \cdots <n_r < n_{r+1} = n.
	\end{equation}
The {\em partial flag manifold} $\F(n_1, \cdots, n_r; n)$ is the space of nested sequences of complex vector subspaces whose dimensions are $n_1, \cdots, n_r$, respectively.
That is,
	\[
		\F(n_1, \cdots, n_r; n) = \{V_{\bullet} := 0 \subset V_1 \subset \cdots \subset V_r \subset \C^n ~|~ \dim_{\C} V_i = n_i \mbox{ for $i= 1, \cdots, r$}\}.
	\]
An element $V_\bullet$ of $\F(n_1, \cdots, n_r; n)$ is called a {\em flag}.

Note that the linear $U(n)$-action on $\C^n$ induces a transitive $U(n)$-action on $\F(n_1, \cdots, n_r; n)$ and each flag 
has an isotropy subgroup isomorphic to $U(k_1) \times \cdots \times U(k_{r+1})$ where
	\begin{equation}\label{kidef}
		k_i = n_i - n_{i-1}
	\end{equation}
for $i=1,\cdots,r+1$. Thus, $\F(n_1, \cdots, n_r; n)$ is a homogeneous space diffeomorphic to $U(n) / (U(k_1) \times \cdots \times U(k_{r+1}))$.
In particular, we have
	\begin{equation}\label{dimofflagma}
		\dim_{\R} \F(n_1, \cdots, n_r; n) = n^2 - \sum_{i=1}^{r+1} k_i^2.
	\end{equation}
For notational simplicity, we denote by $\mcal{F}(n)$ the \emph{complete flag manifold} $\mcal{F}(1, 2, \cdots, n-1; n)$.

\vspace{0.3cm}

\subsection{Description of $\F(n_1, \cdots, n_r; n)$ as a co-adjoint orbit of $U(n)$}~\label{ssecDescriptionOfFMathfrakNCoadjointOrbitOfUN}
\vspace{0.2cm}

Consider the conjugate action of $U(n)$ on itself. The action fixes the identity matrix $I_n \in U(n)$, which induces the $U(n)$-action,
called the {\em adjoint action} and denoted by $Ad$, on the Lie algebra $\mathfrak{u}(\mathfrak{n}) :=T_{I_n} U(n)$.
Note that $\uu(n)$ is the set of $(n \times n)$ skew-hermitian matrices
	\[
		\uu(n) = \{ A \in M_n(\C) ~|~ A^* = -A \}, \quad A^* = \overline{A}^t
	\]
and the adjoint action can be written as
	\[
		\begin{array}{ccccc}
			Ad \colon & U(n) \times \uu(n)& \rightarrow & \uu(n) \\[0.1em]
	      	   &  (M, A) & \mapsto  &  MAM^*.\\
		\end{array}
	\]
The {\em co-adjoint action} $Ad^*$ of $U(n)$ is the action on the dual Lie algebra $\uu(n)^*$ induced by $Ad$, explicitly given by
	\[
		\begin{array}{ccccc}
			Ad^* \colon  & U(n) \times \uu(n)^*& \rightarrow & \uu(n)^* \\[0.1em]
	         	   &  (M, X) & \mapsto  &  X_M\\
		\end{array}
	\]
where $X_M \in \uu(n)^*$ is defined by $X_M(A) = X(M^*AM)$ for every $A \in \uu(n)$.

\begin{proposition}[p.51 in \cite{Au}]\label{proposition_herm_coadjoint}
	Let $\mathcal{H}_n = i\uu(n) \subset M_n(\C)$ be the set of
	$(n \times n)$ hermitian matrices with the conjugate $U(n)$-action. Then there is a $U(n)$-equivariant $\R$-vector space isomorphism
	$\phi \colon \mathcal{H}_n \rightarrow \uu(n)^*$.
\end{proposition}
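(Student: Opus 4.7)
The plan is to write down $\phi$ explicitly via a trace pairing. Specifically, I will define
\[
\phi \colon \mathcal{H}_n \longrightarrow \mathfrak{u}(n)^*, \qquad \phi(X)(A) := -i\,\mathrm{tr}(XA)
\]
for $X \in \mathcal{H}_n$ and $A \in \mathfrak{u}(n)$ (equivalently, $\phi(X)(A) = \mathrm{tr}(iX \cdot A)$, using the standard real trace pairing on $\mathfrak{u}(n) \times \mathfrak{u}(n)$ and the fact that $iX \in \mathfrak{u}(n)$). The first verification is that $\phi(X)$ takes values in $\mathbb{R}$: since $X^* = X$ and $A^* = -A$, one has $(XA)^* = -AX$, so $\overline{\mathrm{tr}(XA)} = \mathrm{tr}((XA)^*) = -\mathrm{tr}(AX) = -\mathrm{tr}(XA)$, showing $\mathrm{tr}(XA) \in i\mathbb{R}$ and hence $\phi(X)(A) \in \mathbb{R}$. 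Real-linearity of $\phi(X)$ in $A$, and of $\phi$ in $X$, are immediate from linearity of the trace.

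Next I would verify that $\phi$ is a bijection. Since $\dim_{\mathbb{R}} \mathcal{H}_n = n^2 = \dim_{\mathbb{R}} \mathfrak{u}(n)^*$, it suffices to show injectivity. If $\phi(X) = 0$, then $\mathrm{tr}(XA) = 0$ for every $A \in \mathfrak{u}(n)$; using $\mathcal{H}_n = i\,\mathfrak{u}(n)$, this gives $\mathrm{tr}(XY) = 0$ for every $Y \in \mathcal{H}_n$, and taking $Y = X$ yields $\mathrm{tr}(X^2) = \sum \lambda_j^2 = 0$ where the $\lambda_j$ are the (real) eigenvalues of $X$, forcing $X = 0$.

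Finally I would check $U(n)$-equivariance. For $M \in U(n)$, the conjugation action on $\mathcal{H}_n$ sends $X \mapsto MXM^*$, while the co-adjoint action is defined in the excerpt by $(Ad^*_M \phi(X))(A) = \phi(X)(M^*AM)$. By cyclicity of the trace,
\[
\phi(MXM^*)(A) = -i\,\mathrm{tr}(MXM^*A) = -i\,\mathrm{tr}(X M^* A M) = \phi(X)(M^*AM),
\]
which is exactly $(Ad^*_M\phi(X))(A)$.

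There is no real obstacle here; the whole argument is a short exercise in linear algebra and properties of the trace. The only subtle point is the reality issue: since $\mathrm{tr}(XA)$ is purely imaginary for $X$ hermitian and $A$ skew-hermitian, one must insert the factor $-i$ (or equivalently view $\phi$ through the identification $\mathcal{H}_n \cong \mathfrak{u}(n)$ given by multiplication by $i$) to land in the real dual $\mathfrak{u}(n)^*$ rather than its complexification.
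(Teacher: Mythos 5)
Your argument is correct and is the standard one. The paper itself gives no proof of this proposition and simply cites Audin's book for it; the argument there is essentially the same trace pairing you use (Audin identifies $\mathfrak{u}(n)$ with $\mathfrak{u}(n)^*$ via the nondegenerate invariant bilinear form $(A,B)\mapsto -\mathrm{tr}(AB)$, and your map $\phi$ is this identification precomposed with $X\mapsto iX$). All three verifications — reality of $-i\,\mathrm{tr}(XA)$ from $(XA)^*=-AX$, injectivity from $\mathrm{tr}(X^2)=\sum\lambda_j^2$, and equivariance from cyclicity of trace together with the paper's convention $X_M(A)=X(M^*AM)$ — are carried out correctly.
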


Henceforth, we always think of the co-adjoint action of $U(n)$ on $\uu(n)^*$ as the conjugate $U(n)$-action on $\mathcal{H}_n$.
Let $\lambda = \{\lambda_1, \cdots, \lambda_n\}$ be a non-increasing sequence of real numbers such that
	\begin{equation}\label{lambdaidef}
		\lambda_1 = \cdots = \lambda_{n_1} > \lambda_{n_1 + 1} = \cdots = \lambda_{n_2} > \cdots > \lambda_{n_r +1} =
		\cdots = \lambda_{n_{r+1}} (= \lambda_n)
	\end{equation}
and let $I_\lambda = \text{diag}(\lambda_1, \cdots, \lambda_n) \in \mathcal{H}_n$ be the diagonal matrix whose $i$-th diagonal entry is $\lambda_i$ for $i=1,\cdots,n$.
Then the isotropy subgroup of $I_\lambda$ is given by 
	\[
		\begin{pmatrix}
			U(k_1) & 0 & \cdots & 0 \\
			0	& U(k_2) & \cdots & 0 \\
			\vdots & \vdots & \ddots & \vdots \\
			0 & 0 & \cdots &  U(k_{r+1}) 
		\end{pmatrix} \subset U(n).
	\]
If we denote by $\mathcal{O}_\lambda$ the $U(n)$-orbit of $I_\lambda$, then we have 
\[
	\mcal{O}_\lambda \cong U(n) / \left(U(k_1) \times \cdots \times U(k_{r+1}) \right),
\] which is diffeomorphic to $\F(n_1, \cdots, n_r; n)$. We call $\mcal{O}_\lambda$ the \emph{co-adjoint orbit associated with eigenvalue pattern} $\lambda$.

\begin{remark}\label{remark_property_hermitian_matrix}
	Any two similar matrices have the same eigenvalues with the same multiplicities and any hermitian matrix is unitarily diagonalizable.
	Thus the co-adjoint orbit $\mathcal{O}_{\lambda}$ is the set of all hermitian matrices having the eigenvalue pattern $\lambda$ respecting the multiplicities.
\end{remark}

\vspace{0.2cm}
\subsection{Symplectic structure on a co-adjoint orbit $\mathcal{O}_{\lambda}$}~\label{ssecSymplecticStructureOnMathcalOLambda}
\vspace{0.2cm}

For any compact Lie group $G$ with the Lie algebra $\mathfrak{g}$ and for any dual element $\lambda \in \mathfrak{g}^*$,
there is a {\em canonical} $G$-invariant symplectic form $\omega_{\lambda}$, called  the \emph{Kirillov-Kostant-Souriau symplectic form} (\emph{KKS form} shortly),
on the orbit $\mathcal{O}_{\lambda}$. 
Furthermore, $\mathcal{O}_\lambda$ admits a unique $G$-invariant K\"{a}hler metric compatible with $\omega_\lambda$, and therefore
$(\mathcal{O}_\lambda, \omega_\lambda)$ forms a K\"{a}hler manifold. We refer the reader to \cite[p.150]{Br} for more details.

The KKS form $\omega_\lambda$ can be described more explicitly 
in the case where $G = U(n)$ as below.
For each $h \in \mathcal{H}_n$, we define a real-valued skew-symmetric bilinear form $\widetilde{\omega}_h$ 
on $\uu(n) = i\mathcal{H}_n$ by
	\[
		\widetilde{\omega}_h(X,Y) := \text{tr}(ih[X,Y]) = \text{tr}(iY[X,h]), \quad X, Y \in \uu(n).
	\]
The kernel of $\widetilde{\omega}_h$ is then
	\[
		\text{ker} ~\widetilde{\omega}_h = \{ X \in \uu(n) ~|~ [X,h] = 0 \}.
	\]
Since $\mathcal{O}_\lambda$ is a homogeneous $U(n)$-space, we may express each tangent space $T_h \mathcal{O}_\lambda$ at a point $h \in \mcal{O}_\lambda$ as 
	\[
		T_h \mathcal{O}_\lambda = \{ [X,h] \in T_h \mathcal{H}_n = \mathcal{H}_n~|~ X \in \uu(n) \}.
	\]
Then we get a non-degenerate two form $\omega_\lambda$ on $\mathcal{O}_\lambda$ defined by
	\[
		\left(\omega_{\lambda}\right)_h([X,h], [Y,h]) := \widetilde{\omega}_h(X,Y), \quad h \in \mathcal{O}_\lambda, \quad X,Y \in \uu(n).
	\]
The closedness of $\omega_\lambda$ then follows from the Jacobi identity on $\uu(n)$, see \cite[p.52]{Au} for instance.

\begin{remark}
The diffeomorphism type of $\mathcal{O}_\lambda$ does not depend on
the choice of $\lambda$ but on $k_\bullet$'s. However, the symplectic form $\omega_\lambda$
depends on the choice of $\lambda$. For instance, two co-adjoint orbits $\mathcal{O}_{\lambda}$ and $\mathcal{O}_{\lambda'}$
have $k_1 = k_2 = 1$ when $\lambda = \{1,-1\}$ and $\lambda' = \{1,0\}$ and both orbits
are diffeomorphic to $U(2) / \left( U(1) \times U(1)\right) \cong \p^1$.
However, the symplectic area of $(\mathcal{O}_\lambda, \omega_\lambda)$ and $(\mathcal{O}_{\lambda'}, \omega_{\lambda'})$ are one
and two, respectively.
\end{remark}

Any partial flag manifold is known to be a Fano manifold and hence it admits a monotone K\"{a}hler form.
The following proposition gives a complete description of the monotonicity of $\omega_\lambda$.
\begin{proposition}[p.653-654 in \cite{NNU}]\label{proposition_monotone_lambda}
	The symplectic form $\omega_\lambda$ on $\mathcal{O}_\lambda$ satisfies
	\[
		c_1(T\mathcal{O}_\lambda) = [\omega_\lambda]
	\]
	if and only if
\[
  \lambda = (\underbrace{n-n_1, \cdots}_{k_1} ~,
  \underbrace{n-n_1-n_2, \cdots}_{k_2} ~, \cdots ~, \underbrace{n-n_{r-1}-n_r, \cdots}_{k_r} ~, \underbrace{-n_r, \cdots, -n_r}_{k_{r+1}} )
  +  (\underbrace{m, \cdots, m}_{n}),
  \label{canonical}
\]
for some $m \in \R$.
\end{proposition}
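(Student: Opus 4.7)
\emph{Plan.} The KKS form $\omega_\lambda$ depends on $\lambda$ only modulo the center $\R\cdot(1,\ldots,1)$: shifting $\lambda$ by $m(1,\ldots,1)$ translates the orbit inside $\mcal{H}_n$ and intertwines the two KKS forms, because $[X, m I_n] = 0$ kills the additional contribution to $\widetilde{\omega}_h(X,Y)$. Since $c_1(T\mcal{O}_\lambda)$ is also invariant under this shift (the complex structure is unchanged), both sides of the desired equality $[\omega_\lambda] = c_1(T\mcal{O}_\lambda)$ are invariant. It therefore suffices to produce a single distinguished weight $\lambda_0$ (the one in the displayed formula with $m = 0$) for which the equality holds, and then to show that the $\R$-linear map $\lambda \mapsto [\omega_\lambda]$ is injective modulo the center.

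\emph{Matching the two classes via root data.} Identify $\mcal{O}_\lambda$ with the homogeneous space $U(n)/P$, where $P$ has Levi factor $L = U(k_1)\times\cdots\times U(k_{r+1})$. Kirillov's orbit method (equivalently, Borel--Weil) sends $[\omega_\lambda]/2\pi$, for integral dominant $\lambda$, to the first Chern class of the homogeneous line bundle $L_\lambda$ associated to the character $\lambda$ of $L$; this assignment is $\R$-linear and extends to all $\lambda \in \mathfrak{t}^*$. The tangent bundle of $U(n)/P$ is induced from the isotropy representation of $P$, and its first Chern class is represented by the character $2(\rho - \rho_L)$, where $\rho$ and $\rho_L$ are the half-sums of positive roots of $U(n)$ and of $L$. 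In diagonal coordinates one has $2\rho = (n-1, n-3, \ldots, -(n-1))$, while inside the $s$-th block $2\rho_L$ reads $(k_s - 1, k_s - 3, \ldots, -(k_s-1))$. Hence the $(n_{s-1}+j)$-th entry of $2(\rho - \rho_L)$ equals
\[
\bigl(n - 2(n_{s-1}+j) + 1\bigr) - \bigl(k_s - 2j + 1\bigr) \;=\; n - 2n_{s-1} - k_s \;=\; n - n_{s-1} - n_s,
\]
which is independent of $j \in \{1, \ldots, k_s\}$ and specializes to $-n_r$ on the final block (using $n_{r+1} = n$). This is exactly the weight $\lambda_0$ written in the statement, so $c_1(T\mcal{O}_\lambda) = [\omega_{\lambda_0}]$.

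\emph{Injectivity and main obstacle.} To conclude the ``only if'' direction, I would check that $\lambda \mapsto [\omega_\lambda]$ is injective modulo the center. A convenient way is to pull back along the embedding $\mcal{O}_\lambda \hookrightarrow \prod_{s=1}^{r} \mathrm{Gr}(n_s, n)$ given by sending a flag to its family of subspaces: the $r$ tautological K\"ahler classes pull back to a linearly independent family in $H^2(\mcal{O}_\lambda;\R)$, with $[\omega_\lambda]$ appearing with coefficients equal to the successive eigenvalue jumps $\lambda_{n_s} - \lambda_{n_s+1}$, $s = 1, \ldots, r$. Because these $r$ jumps together with the overall center parametrize $\lambda$, the kernel of $\lambda \mapsto [\omega_\lambda]$ is exactly $\R\cdot(1,\ldots,1)$, so combining with the previous step forces $\lambda - \lambda_0 \in \R\cdot(1,\ldots,1)$, which is the claim. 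The main technical obstacle is bookkeeping: the identification of $\omega_\lambda$ with a Chern class uses the trace pairing on $\mcal{H}_n = i\uu(n)$, whereas $\rho$ and $\rho_L$ are naturally weights paired via the standard form on the maximal torus, and one must verify that these two identifications of $\mathfrak{t}^*$ with $\R^n$ agree up to the chosen normalizations. Once this is reconciled the block-wise matrix subtraction above produces the desired formula at once.
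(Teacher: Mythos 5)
The paper itself does not prove this proposition; it is cited directly from Nishinou--Nohara--Ueda, so there is no in-text argument to compare against. Judged on its own terms, your strategy is conceptually sound: you reduce to a single normalization of $\lambda$ via the translation-invariance of the KKS form under $\lambda \mapsto \lambda + m(1,\ldots,1)$, identify $c_1(T\mathcal{O}_\lambda)$ with the character $2(\rho - \rho_L)$ coming from the isotropy representation, and close the ``only if'' direction via injectivity of $\lambda \mapsto [\omega_\lambda]$ modulo the center. The blockwise subtraction $2\rho - 2\rho_L$ giving the value $n - n_{s-1} - n_s$ on the $s$-th block reproduces exactly the displayed $\lambda$ (with $n - n_1$ on the first block and $-n_r$ on the last), and the Grassmannian-embedding argument for injectivity is the standard way to see that the kernel of $\lambda \mapsto [\omega_\lambda]$ is exactly the one-dimensional center.

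Two items need tightening before this is a complete proof. First, you state that $[\omega_\lambda]/2\pi$ corresponds to $c_1(L_\lambda)$ and then immediately conclude $c_1(T\mathcal{O}_\lambda) = [\omega_{\lambda_0}]$; with your stated correspondence you would instead get $[\omega_{\lambda_0}] = 2\pi\, c_1(T\mathcal{O}_\lambda)$, off by a factor of $2\pi$. In the normalization the paper inherits from NNU, the KKS class $[\omega_\lambda]$ is already integral and equals $c_1(L_\lambda)$ directly, so either the $2\pi$ should be dropped or you must reconcile conventions explicitly; as written the two sentences of your middle paragraph contradict each other. Second, you label the reconciliation of the trace pairing on $\mathcal{H}_n \cong \mathfrak{u}(n)^*$ with the weight pairing on $\mathfrak{t}^*$ as the ``main technical obstacle'' and do not carry it out. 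For $\mathfrak{u}(n)$ both pairings reduce to the standard Euclidean inner product on the diagonal entries, so the check is short, but the identity $c_1(T\mathcal{O}_\lambda) = [\omega_{\lambda_0}]$ is precisely the content of the ``if'' direction and leaving it unverified is a genuine gap rather than mere bookkeeping.
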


\vspace{0.2cm}
\subsection{Completely integrable system on $\mathcal{O}_\lambda$}~
\label{ssecCompletelyIntegrableSystemOnMathcalOLambda}
\vspace{0.2cm}

We adorn a co-adjoint orbit $(\mcal{O}_\lambda, \omega_\lambda)$ with a completely integrable system, called the Gelfand-Cetlin system. We recall a standard definition of a completely integrable system.

\begin{definition}\label{definition_CIS_smooth}
A {\em completely integrable system} on a $2n$-dimensional
symplectic manifold $(M,\omega)$ is an $n$-tuple of smooth functions
	\[
		\Phi := (\Phi_1, \cdots, \Phi_n) \colon M \rightarrow \R^n
	\]
such that
\begin{enumerate}
	\item $\{\Phi_i, \Phi_j\} = 0$ for every $1 \leq i, j \leq n$ and
	\item $d\Phi_1, \cdots, d\Phi_n$ are linearly independent on an open dense subset of $M$.
\end{enumerate}
\end{definition}

If $\Phi$ is a proper map, the Arnold-Liouville theorem states that for any regular value $\textbf{\textup{u}} \in \R^n$ of $\Phi$ the preimage $\Phi^{-1}(\textbf{\textup{u}})$ is a Lagrangian torus. However, if $\textbf{\textup{u}}$ is a critical value, the fiber might not be a manifold in general.

Harada and Kaveh \cite{HK} proved that any smooth projective variety equips a completely integrable system whenever it admits a flat toric degeneration. 
(See Section \ref{secDegenerationsOfFibersToTori} for more details.)
However, the terminology ``completely integrable system'' used in \cite{HK} is a weakened version of Definition \ref{definition_CIS_smooth} in the following sense. 

\begin{definition}\label{definition_CIS_continuous}
	A {\em (continuous) completely integrable system} on a $2n$-dimensional symplectic manifold $(M,\omega)$ is a collection of $n$ \emph{continuous} functions
		\[
			\Phi := \{\Phi_1, \cdots, \Phi_n\} \colon M \rightarrow \R^n
		\]
	such that there exists an open dense subset $\mathcal{U}$ of $M$ on which
	\begin{enumerate}
		\item each $\Phi_i$ is smooth,
		\item $\{\Phi_i, \Phi_j\} = 0$ for every $1 \leq i, j \leq n$, and
		\item $ d\Phi_1, \cdots, d\Phi_n $ are linearly independent.
	\end{enumerate}
\end{definition}

For any co-adjoint orbit $(\mathcal{O}_\lambda, \omega_\lambda)$, Guillemin and Sternberg \cite{GS} constructed a completely integrable system (in the sense of Definition \ref{definition_CIS_continuous})
	\[
		\Phi_\lambda \colon \mathcal{O}_\lambda \rightarrow \R^{\dim_{\C} \mathcal{O}_\lambda},
	\]
called {\em the Gelfand-Cetlin system on $\mathcal{O}_\lambda$} (\emph{GC system} for short) with respect to the KKS symplectic form $\omega_\lambda$. 
The GC system on $\mathcal{O}_\lambda$ is in general continuous but not smooth. From now on, a completely integrable system will be meant to be a conitnuous completely integrable system in Definition~\ref{definition_CIS_continuous} unless mentioned.

We briefly recall a construction of the GC system on $\mathcal{O}_\lambda$ as follows. (See also \cite[p.7-9]{NNU}.)
Let $n_\bullet$'s and $k_\bullet$'s be given in~\eqref{nidef} and~\eqref{kidef}, respectively, and 
let $\lambda$ be a non-increasing sequence satisfying \eqref{lambdaidef}.
From~\eqref{dimofflagma}, it follows that
 	\[
 		\dim_{\R} \mathcal{O}_\lambda = 2\dim_{\C} \mathcal{O}_\lambda = n^2 - \sum_{i=1}^{r+1} k_i^2.
 	\]
For any $x \in \mathcal{O}_\lambda \subset \mathcal{H}_n$, let $x^{(k)}$ be the $(k \times k)$ leading principal minor of $x$ for each $k=1,\cdots,n-1$.
Since $x^{(k)}$ is also a hermitian matrix, the eigenvalues are all real. 
Let
\begin{equation}\label{equation_index_global}
	\mcal{I} = \{(i,j) ~|~ i,j \in \N, ~i+j \leq n + 1\}
\end{equation}
be an index set. We then define the real-valued function
	\[
		\Phi_\lambda^{i,j}\colon \mcal{O}_\lambda \to \R, \quad (i,j) \in \mcal{I}
	\]
where $\Phi_\lambda^{i,j}(x)$ is assigned to be the $i$-th largest eigenvalue of $x^{(i+j-1)}$. Note that the eigenvalues of $x^{(k)}$ are arranged by
\[
	\Phi_\lambda^{1,k}(x) \geq \Phi_\lambda^{2,k-1}(x) \geq \cdots \geq \Phi_\lambda^{k,1}(x)
\]
in the descending order. Collecting all $\Phi_\lambda^{i,j}$'s for $(i,j) \in \mcal{I}$, 
we obtain $\Phi_\lambda$.
	
\begin{definition}\label{definition_GC_system}
Let $\lambda$ be given in \eqref{lambdaidef}. 
The {\em Gelfand-Cetlin system $\Phi_\lambda$ associated with $\lambda$} is defined by the collection of real-valued functions
\begin{equation}\label{philambdaij}
	\Phi_\lambda := \left( \Phi_\lambda^{i,j} \right)_{(i,j) \in \mcal{I}} \colon \mathcal{O}_\lambda \rightarrow \R^{\frac{n(n+1)}{2}}.
\end{equation}
\end{definition}

\begin{remark}
We label each component of $\Phi_\lambda$ by multi-index $(i,j) \in \mcal{I}$ such that $\Phi_\lambda^{i,j}$ corresponds to the lattice point $(i,j) \in \Z^2$ 
in a ladder diagram in Definition \ref{definition_ladder_diagram}. (See also Figure \ref{figure_GC_to_ladder}.)
Notice that the labeling of components of $\Phi_\lambda$ used in \cite{NNU} is different from ours. 
\end{remark}

Now, we consider the coordinate system of $\R^{n(n+1)/2}$ 
\begin{equation}\label{equation_coordinate}
\left\{ \left( u_{i,j} \right) \in \R^{n(n+1)/2} ~|~ (i,j) \in \mcal{I} \right\}
\end{equation}
where each component $u_{i,j}$ records the values of $\Phi^{i,j}_\lambda$.
Fix $x \in \mcal{O}_\lambda$ and let $(u_{i,j})_{(i,j) \in \mcal{I}} = \Phi_\lambda(x)$, that is, $u_{i,j} = \Phi_\lambda^{i,j}(x)$ for each $(i,j) \in \mcal{I}$. 
By the min-max principle, the components $u_{i,j}$'s satisfy the following pattern: 

\vspace{0.2cm}
\begin{equation} 
\begin{alignedat}{17}
  \lambda_1 &&&& \lambda_2 &&&& \lambda_3 && \cdots && \lambda_{n-1} &&&& \lambda_n  \\
  & \uge && \dge && \uge && \dge &&&&&& \uge && \dge & \\
  && u_{1,n-1} &&&& u_{2, n-2} &&&&&&&& u_{n-1, 1} && \\
  &&& \uge && \dge &&&&&&&& \dge &&& \\
  &&&& u_{1, n-2} &&&&&&&& u_{n-2, 1} &&&& \\
  &&&&& \uge &&&&&& \dge &&&&& \\
  &&&&&& \dndots &&&& \updots &&&&&& \\
  &&&&&&& \uge && \dge &&&&&&& \\
  &&&&&&&& u_{1,1} &&&&&&&&&
\end{alignedat}
\label{equation_GC-pattern}
\vspace{0.2cm}
\end{equation}

Note that each $\Phi_\lambda^{j, n+1-j}$ assigns the $j$-th largest eigenvalue of $x^{(n)} = x$, and therefore $\Phi_\lambda^{j, n+1-j}(x) = \lambda_j$ 
for every $x \in \mcal{O}_\lambda$ and $j=1,\cdots, n$.
Furthermore, by our assumption~\eqref{lambdaidef}, we have $\lambda_{n_{i-1} + 1} = \cdots =  \lmd {n_i}$ for every $i=1,\cdots, r+1$.
Then it is an immediate consequence of (\ref{equation_GC-pattern}) that for all $j = n_{i-1}+1, \cdots, n_i -1$ and each $k = j, \cdots, n_i - 1$, we have 
$$
\Phi_\lambda^{j, n + 1 -k}(x) = \lambda_{n_i}
$$
for all $x \in \mcal{O}_\lambda$.
Thus, there are exactly $\frac{1}{2}(n^2 - \sum_{i=1}^{r+1} k_i^2)$ non-constant
functions among $\{\Phi^{j,k}_\lambda\}_{(j,k) \in \mcal{I}}$ on $\mathcal{O}_\lambda$.
Let
\begin{equation}\label{collectionofindicesset}
	\mcal{I}_\lambda := \{ (i,j) \in \mcal{I} ~|~ \Phi^{i,j}_\lambda \textup{ is \emph{not} a constant function on $\mcal{O}_\lambda$}\}. 
\end{equation}
Collecting all non-constant components of $\Phi_\lambda$, we rename 
\begin{equation}\label{Gelfand-Cetlinsystemdef}
\Phi_\lambda = \left( \Phi^{i,j}_\lambda \right)_{(i,j) \in \mcal{I}_\lambda} \colon \mathcal{O}_\lambda \to \R^{| \mcal{I}_\lambda |}, \quad 
| \mcal{I}_\lambda | = \dim_{\C} \mathcal{O}_\lambda = \frac{1}{2}(n^2 - \sum_{i=1}^{r+1} k_i^2)
\end{equation}
as the {\em Gelfand-Cetlin system}.
By abuse of notation, the collection is still denoted by $\Phi_\lambda$.
Guillemin and Sternberg \cite{GS} prove that $\Phi_\lambda$ satisfies all properties given in Definition \ref{definition_CIS_continuous},
and hence it is a completely integrable system on $\mathcal{O}_\lambda$ \footnote{In general, the Gelfand-Cetlin system is never smooth on the whole space $\mathcal{O}_\lambda$ unless $\mathcal{O}_\lambda$ is a projective space.}.
We will not distinguish two notations ~\eqref{philambdaij} and~\eqref{Gelfand-Cetlinsystemdef} unless any confusion arises.

\begin{definition}
	The {\em Gelfand-Cetlin polytope}\footnote{It is straightforward to see that $\Delta_\lambda$ is a convex polytope, since $\Delta_\lambda$ is the intersection
	of half-spaces
	defined by inequalities in (\ref{equation_GC-pattern}).} $\Delta_\lambda$ is the collection of points $(u_{i,j})$ satisfying (\ref{equation_GC-pattern}). A Gefland-Cetlin polytope will be abbreviated to a \emph{GC polytope} for simplicity.
\end{definition}

Indeed, we have another description of a GC polytope.

\begin{proposition}[\cite{GS}]
	The Gelfand-Cetlin polytope $\Delta_\lambda$ coincides with the image of $\mcal{O}_\lambda$ under $\Phi_\lambda$. 
\end{proposition}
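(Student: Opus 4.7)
The plan is to prove $\Phi_\lambda(\mcal{O}_\lambda) = \Delta_\lambda$ by establishing the two inclusions separately. The containment $\Phi_\lambda(\mcal{O}_\lambda) \subseteq \Delta_\lambda$ reduces to the Cauchy interlacing theorem for Hermitian matrices: if $B$ is a $k \times k$ Hermitian matrix with eigenvalues $\mu_1 \geq \cdots \geq \mu_k$ and $A$ is its top-left $(k-1) \times (k-1)$ principal submatrix with eigenvalues $\nu_1 \geq \cdots \geq \nu_{k-1}$, then $\mu_i \geq \nu_i \geq \mu_{i+1}$ for all $i$. Applying this to each consecutive pair $x^{(k-1)} \subset x^{(k)}$ in the chain of leading principal submatrices of an arbitrary $x \in \mcal{O}_\lambda$ produces exactly the array of inequalities appearing in (\ref{equation_GC-pattern}), hence $\Phi_\lambda(x) \in \Delta_\lambda$.

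For the reverse inclusion $\Delta_\lambda \subseteq \Phi_\lambda(\mcal{O}_\lambda)$, I would fix a pattern $\textbf{\textup{u}} = (u_{i,j}) \in \Delta_\lambda$ and construct $k \times k$ Hermitian matrices $x^{(1)}, \ldots, x^{(n)}$ by induction on $k$ so that $x^{(k-1)}$ is the leading principal submatrix of $x^{(k)}$ and the eigenvalues of $x^{(k)}$ are $u_{1,k}, u_{2,k-1}, \ldots, u_{k,1}$. Then $x := x^{(n)}$ belongs to $\mcal{O}_\lambda$ since $u_{j,n+1-j} = \lambda_j$ is forced by the pattern, and by construction $\Phi_\lambda(x) = \textbf{\textup{u}}$. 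The inductive step reduces to the following extension lemma: for any $(k-1) \times (k-1)$ Hermitian matrix $A$ with eigenvalues $\nu_1 \geq \cdots \geq \nu_{k-1}$ and any real numbers $\mu_1 \geq \nu_1 \geq \mu_2 \geq \cdots \geq \nu_{k-1} \geq \mu_k$, there exists a $k \times k$ Hermitian $B$ whose leading principal $(k-1) \times (k-1)$ block is $A$ and whose eigenvalues are $\mu_1, \ldots, \mu_k$. To prove this, after a unitary conjugation reducing $A$ to diagonal form, one searches for a last diagonal entry $b$ and a column $(b_{1k}, \ldots, b_{k-1,k})$ satisfying
\[
    \det(tI_k - B) = (t - b)\prod_i (t - \nu_i) - \sum_i |b_{ik}|^2 \prod_{j \neq i}(t - \nu_j) = \prod_i (t - \mu_i).
\]
Comparing the coefficient of $t^{k-1}$ fixes $b = \sum_i \mu_i - \sum_i \nu_i$, and evaluating at $t = \nu_i$ determines $|b_{ik}|^2$ by Lagrange interpolation; the interlacing hypothesis is precisely what forces these quantities to be non-negative.

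The main obstacle will be the behavior at boundary points of $\Delta_\lambda$, where some interlacing inequality becomes an equality so that certain $|b_{ik}|^2$ vanish and the corresponding off-diagonal entries must be chosen to be zero; the extension lemma still goes through but produces a block-decomposed matrix that requires a small case analysis. A cleaner alternative, closer in spirit to the original argument of Guillemin-Sternberg, is to carry out the construction only on the interior of $\Delta_\lambda$, where no such degeneracy occurs. Since $\Phi_\lambda(\mcal{O}_\lambda)$ is a closed subset of $\R^{|\mcal{I}|}$ (being the continuous image of the compact space $\mcal{O}_\lambda$) which then contains the dense open set $\mathring{\Delta}_\lambda$, taking the closure yields $\Phi_\lambda(\mcal{O}_\lambda) \supseteq \Delta_\lambda$ and completes the proof.
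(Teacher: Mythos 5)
Your argument is correct, and the paper itself gives no proof of this proposition (it is cited from \cite{GS}), so there is no paper proof to compare against; but both ingredients you use are consistent with the paper's machinery. The containment $\Phi_\lambda(\mcal{O}_\lambda)\subseteq\Delta_\lambda$ is exactly the min-max principle the paper invokes when deriving the pattern \eqref{equation_GC-pattern}, and your extension lemma is essentially Lemma 3.5 of \cite{NNU}, which the paper records as Lemma~\ref{lemma_NNU} and uses in the proof of Theorem~\ref{theorem_main}. Your Lagrange-interpolation computation of $|b_{ik}|^2 = -\prod_j(\nu_i-\mu_j)/\prod_{l\neq i}(\nu_i-\nu_l)$ is a clean self-contained proof of that lemma in the strict-interlacing case, and your compactness-plus-density fallback is the right way to dispense with the degenerate boundary configurations (where repeated $\nu_i$'s make direct Lagrange evaluation unavailable, not merely where the resulting $|b_{ik}|^2$ vanish) without redoing the block-wise case analysis that the paper later carries out in Lemmas~\ref{lemma_fiber_zero_a_b}--\ref{lemma_fiber_sphere_b_b} for the finer purpose of identifying fiber topology.
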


\vspace{0.2cm}
\subsection{Smoothness of $\Phi_\lambda$}~
\label{ssecSmoothnessOfPhiLambda}
\vspace{0.2cm}

Let $\lambda$ be given in \eqref{lambdaidef} and let $\Phi_\lambda$ be the GC system 
on $(\mcal{O}_\lambda, \omega_\lambda)$. In general, $\Phi_\lambda$ is \emph{not} smooth on the whole 
$\mcal{O}_\lambda$. However, the following proposition due to Guillemin-Sternberg 
states that $\Phi_\lambda$ is smooth on $\mcal{O}_\lambda$ {\em almost everywhere}.
\begin{proposition}[Proposition 5.3, p.113, and p.122 in \cite{GS}]\label{proposition_GS_smooth}
	For each $(i,j) \in \mcal{I}_\lambda$, the component $\Phi_\lambda^{i,j}$ is smooth at 
	$z \in \mcal{O}_\lambda$ if 
	\begin{equation}\label{equation_smooth_region}
		\Phi_\lambda^{i+1, j}(z) < \Phi_\lambda^{i, j}(z) < \Phi_\lambda^{i, j+1}(z). 
	\end{equation}
	In particular, $\Phi_\lambda$ is smooth on the open dense subset 
	$\Phi_\lambda^{-1}(\mathring{\Delta}_\lambda)$ of $\mathcal{O}_\lambda$ where $\mathring{\Delta}_\lambda$ is the interior of $\Delta_\lambda$.
	Furthermore, $d\Phi_\lambda^{i,j}(z) \neq 0$ for every point $z$ satisfying \eqref{equation_smooth_region}.  
\end{proposition}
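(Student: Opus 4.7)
The plan is to factor $\Phi_\lambda^{i,j}$ through the leading principal minor map. Setting $k := i+j-1$, one has $\Phi_\lambda^{i,j} = e_i \circ \pi_k$, where $\pi_k \colon \mcal{H}_n \to \mcal{H}_k$ is the smooth (indeed linear) projection $z \mapsto z^{(k)}$ and $e_i \colon \mcal{H}_k \to \R$ returns the $i$-th largest eigenvalue. The smoothness assertion thus reduces to smoothness of $e_i$ at $B_0 := z^{(k)}$, while the non-vanishing of $d\Phi_\lambda^{i,j}(z)$ amounts to finding a tangent vector in $T_z \mcal{O}_\lambda$ whose $\pi_k$-image is not annihilated by $de_i|_{B_0}$.

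First I would verify that the hypothesis \eqref{equation_smooth_region} forces $\mu_0 := \Phi_\lambda^{i,j}(z)$ to be a \emph{simple} eigenvalue of $B_0$. In the spectrum of $B_0$, the two eigenvalues adjacent to $\mu_0$ are $\Phi_\lambda^{i-1, j+1}(z)$ and $\Phi_\lambda^{i+1, j-1}(z)$; Cauchy interlacing between $z^{(k+1)}$ and $z^{(k)}$ gives $\Phi_\lambda^{i-1, j+1}(z) \geq \Phi_\lambda^{i, j+1}(z)$ and $\Phi_\lambda^{i+1, j}(z) \geq \Phi_\lambda^{i+1, j-1}(z)$, so combining with the strict hypothesis yields $\Phi_\lambda^{i-1, j+1}(z) > \mu_0 > \Phi_\lambda^{i+1, j-1}(z)$ and identifies $\mu_0$ as simple. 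The smoothness of $e_i$ now follows from the implicit function theorem applied to the characteristic polynomial $P(t, B) := \det(tI_k - B)$: simplicity gives $\partial_t P(\mu_0, B_0) \neq 0$, producing a smooth local solution $\widetilde{e}(B)$ that agrees with $e_i$ near $B_0$ by continuity of the ordered spectrum (e.g.\ Weyl's inequality). The second assertion is then immediate, since every point of $\Phi_\lambda^{-1}(\mathring{\Delta}_\lambda)$ satisfies all strict inequalities of \eqref{equation_GC-pattern}, so every non-constant component is smooth there.

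The main obstacle is the non-vanishing of $d\Phi_\lambda^{i,j}(z)$. Using the standard first-order perturbation formula $de_i|_{B_0}(H) = v^* H v$ for a unit $\mu_0$-eigenvector $v \in \C^k$ of $B_0$, and writing $\widetilde{v} \in \C^n$ for $v$ padded with zeros, the identities $z^* = z$ and $X^* = -X$ yield
\[
        d\Phi_\lambda^{i,j}(z) \bigl([X, z]\bigr) \,=\, \widetilde{v}^* [X, z]\, \widetilde{v} \,=\, 2\operatorname{Re}\bigl(\widetilde{v}^*\, X\, z\widetilde{v}\bigr), \qquad X \in \uu(n).
\]
Testing with $X := w\widetilde{v}^* - \widetilde{v} w^* \in \uu(n)$ for $w := z\widetilde{v}$ and using $\widetilde{v}^* z \widetilde{v} = \mu_0$, the right-hand side simplifies to $2(\mu_0^2 - |w|^2)$, which by Cauchy--Schwarz vanishes if and only if $z\widetilde{v} = \mu_0\widetilde{v}$. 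The decisive step --- and the second use of the strict interlacing --- is to rule out this equality: were $\widetilde{v}$ a $\mu_0$-eigenvector of $z$, restriction to the first $k+1$ coordinates would make $(v, 0)^\top \in \C^{k+1}$ a $\mu_0$-eigenvector of $z^{(k+1)}$, contradicting the hypothesis $\Phi_\lambda^{i+1, j}(z) < \mu_0 < \Phi_\lambda^{i, j+1}(z)$ which places $\mu_0$ strictly between two consecutive eigenvalues of $z^{(k+1)}$.
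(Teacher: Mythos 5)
Your proposal is correct. Note first that the paper does not actually prove this proposition: it cites Guillemin--Sternberg (Proposition 5.3, pp.\ 113 and 122 of \cite{GS}), so there is no in-paper argument to compare against. Evaluated on its own, your argument holds up.

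The smoothness half is the standard route: Cauchy interlacing between $z^{(k+1)}$ and $z^{(k)}$ (with $k=i+j-1$) converts the hypothesis $\Phi_\lambda^{i+1,j}(z) < \mu_0 < \Phi_\lambda^{i,j+1}(z)$ into strict separation of $\mu_0 = \Phi_\lambda^{i,j}(z)$ from its neighbors $\Phi_\lambda^{i-1,j+1}(z)$ and $\Phi_\lambda^{i+1,j-1}(z)$ in the spectrum of $z^{(k)}$, so $\mu_0$ is simple and the implicit function theorem applied to $\det(tI_k - B)$ gives local smoothness of $e_i$. Composition with the linear projection $\pi_k$ gives smoothness of $\Phi_\lambda^{i,j}$ near $z$. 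The computation for non-vanishing of the differential is where your argument is genuinely different in flavor from \cite{GS}. Guillemin--Sternberg deduce non-degeneracy of $d\Phi_\lambda^{i,j}$ as a by-product of showing that $\Phi_\lambda^{i,j}$ generates a nontrivial periodic Hamiltonian flow (conjugation by an explicit unitary), while you produce an explicit $X = w\widetilde{v}^* - \widetilde{v}w^* \in \uu(n)$ with $w := z\widetilde{v}$ and verify directly that $d\Phi_\lambda^{i,j}(z)([X,z]) = 2(\mu_0^2 - |w|^2) \neq 0$. The Cauchy--Schwarz equality case $z\widetilde{v} = \mu_0\widetilde{v}$ is ruled out exactly by the hypothesis, which places $\mu_0$ strictly between $\lambda_i(z^{(k+1)})$ and $\lambda_{i+1}(z^{(k+1)})$, hence outside the spectrum of $z^{(k+1)}$, whereas $z\widetilde{v} = \mu_0\widetilde{v}$ would force $(v,0)^\top$ to be a $\mu_0$-eigenvector of $z^{(k+1)}$. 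This is a clean, elementary argument; it trades the global structure (circle action) for a pointwise linear-algebra check and is well-suited to the stated claim, though it does not by itself recover the additional fact (used later in the paper) that the Hamiltonian flow of $\Phi_\lambda^{i,j}$ is periodic with integer period. One small phrasing note: what you call ``the second use of the strict interlacing'' is really a direct use of the strict hypothesis \eqref{equation_smooth_region} rather than an interlacing inequality.
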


One important remark is that a Hamiltonian trajectory of each $\Phi_\lambda^{i,j}$ passing through a point 
$z \in \mcal{O}_\lambda$ satisfying \eqref{equation_smooth_region} is periodic with integer period. 
Therefore, each $\Phi_\lambda^{i,j}$ generates a Hamiltonian circle action on the subset of $\mathcal{O}_\lambda$ 
on which $\Phi_\lambda^{i,j}$ is smooth.
See \cite[Theorem 3.4 and Section 5]{GS} for more details.

\vspace{0.2cm}
\section{Ladder diagram and its face structure}
\label{secLadderDiagramAndItsFaceStructure}

In order to visualize a GC polytope, it is convenient to employ an alternative description of its face structure 
in terms of certain graphs in the ladder diagram provided by the first named author with An and Kim in \cite{ACK}. 
The goal of this section is to review the description of the face structure.

We begin by the definition of a ladder diagram.
Let $\lambda = \{\lambda_1, \cdots, \lambda_n\}$ be given in \eqref{lambdaidef}. 
Then $\lambda$ uniquely determines $n_\bullet$'s and $k_\bullet$'s in~\eqref{nidef} and~\eqref{kidef}, respectively. 

\begin{definition}[\cite{BCKV}, \cite{NNU}]\label{definition_ladder_diagram}
Let $\Gamma_{\Z^2} \subset \R^2$ be the square grid graph satisfying
\begin{enumerate}
\item its vertex set is $\Z^2 \subset \R^2$ and
\item each vertex $(a,b) \in \Z^2$ connects to exactly four vertices $(a, b \pm 1)$ and $(a \pm 1, b)$.
\end{enumerate}
The {\em ladder diagram} $\Gamma (n_1, \cdots, n_r; n)$ is defined as the induced subgraph of $\Gamma_{\Z^2}$ 
that is formed from the set $V_{\Gamma (n_1, \cdots, n_r; n)}$ of vertices given by
\[
	V_{\Gamma (n_1, \cdots, n_r; n)} := \bigcup_{j=0}^r \,\, \left\{ (a,b) \in \Z^2 ~\big{|}~ \, (a,b) \in [n_j, n_{j+1}] \times [0,n-n_{j+1}] \right\}.
\]
As $\lambda$ determines $n_\bullet$'s, we simply denote $\Gamma(n_1, \cdots, n_r; n)$ by $\Gamma_\lambda$. We call $\Gamma_\lambda$ the \emph{ladder diagram associated with} $\lambda$.
\end{definition}

\begin{figure}[H]
	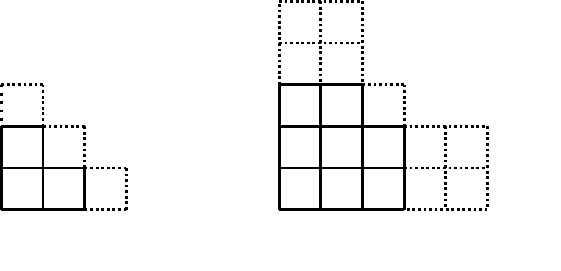
	\bigskip
	\caption{\label{figure_ld123} Ladder diagrams $\Gamma(1,2;3)$ and $\Gamma(2,3;5)$.}	
\end{figure}
\vspace{-0.2cm}

We call the vertex of $\Gamma_\lambda$ located at $(0,0)$ {\em the origin}.
Also, we call $v \in V_\Gamma$ a {\em top vertex} if $v$ is a farthest vertex from the origin 
with respect to the taxicab metric.
Equivalently, a vertex $v = (a,b) \in V_\Gamma$ is a top vertex if $a+b = n$. 

\vspace{0.2cm}
\begin{figure}[H]
	\scalebox{0.9}{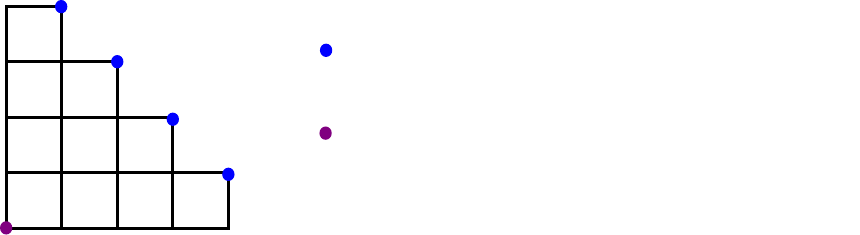}
	\vspace{0.2cm}
	\caption{\label{figure_top_bottom} Top vertices for $\Gamma(1,2,3,4;5)$ and $\Gamma(2,4;6)$.}	
\end{figure}
\vspace{-0.3cm}		

\begin{definition}[Definition 2.2 in \cite{BCKV}]
	A {\em positive path} on a ladder diagram $\Gamma_\lambda$ is a shortest path from the origin to some top vertex in $\Gamma_\lambda$.
\end{definition}

Now, we define the face structure of $\Gamma_\lambda$ as follows.

\begin{definition}[Definition 1.5 in \cite{ACK}]\label{definition_face}
	Let $\Gamma_\lambda$ be a ladder diagram.
	\begin{itemize}
		\item A subgraph $\gamma$ of $\Gamma_\lambda$ is called a {\em face} of $\Gamma_\lambda$ if
			\begin{enumerate}
				\item $\gamma$ contains all top vertices of $\Gamma_\lambda$, 
				\item $\gamma$ can be represented by a union of positive paths.
			\end{enumerate}
		\item For two faces $\gamma$ and $\gamma'$ of $\Gamma_\lambda$, $\gamma$ is said to be a {\em face} of $\gamma'$ if $\gamma \subset \gamma'$.
		\item The \emph{dimension} of a face $\gamma$ is defined by
		\[
			\dim \gamma := \text{rank}_\Z~H_1(\gamma; \Z),
		\]
		regarding $\gamma$ as a $1$-dimensional CW-complex. In other words, $\dim \gamma$ is the number of minimal cycles in $\gamma$.
	\end{itemize}
\end{definition}

It is straightforward from Definition~\ref{definition_face} that for any two faces $\gamma$ and
$\gamma'$ of $\Gamma_\lambda$, $\gamma \cup \gamma'$ is also a face of $\Gamma_\lambda$, which
is the smallest face containing $\gamma$ and $\gamma'$.

We now characterize the face structure of the GC polytope $\Delta_\lambda$
in terms of the face structure of the ladder diagram $\Gamma_\lambda$.
\begin{theorem}[Theorem 1.9 in \cite{ACK}]\label{theorem_equiv_CG_LD}
For a sequence $\lambda$ of real numbers given in~\eqref{lambdaidef},
let $\Gamma_\lambda$ be the ladder diagram and $\Delta_\lambda$ be the Gelfand-Cetlin polytope. 
Then there exists a bijective map
	\[
			\{~\text{faces of}~\Gamma_\lambda \} \stackrel{\Psi} \longrightarrow \{~\text{faces of} ~\Delta_\lambda\}
	\]
such that for faces $\gamma$ and $\gamma'$ of $\Gamma_\lambda$
\begin{itemize}
	\item (Order-preserving) $\gamma \subset \gamma'$ if and only if $\Psi(\gamma) \subset \Psi(\gamma')$,
	\item (Dimension) $\dim \Psi(\gamma) = \dim \gamma$.
\end{itemize}
\end{theorem}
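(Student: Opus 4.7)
The plan is to construct $\Psi$ explicitly via a dictionary between the combinatorics of subgraphs $\gamma$ of $\Gamma_\lambda$ and the face structure of $\Delta_\lambda$. Since $\Delta_\lambda$ is cut out by the Cauchy-interlacing inequalities in~\eqref{equation_GC-pattern}, each face $F$ of $\Delta_\lambda$ is determined by the subset of those inequalities that are required to be equalities on $F$. First I would set up a correspondence between edges of $\Gamma_\lambda$ and the defining $\geq$ relations in the pattern, arranged so that the lattice point $(i,j)$ corresponds to the coordinate $u_{i,j}$ as in the remark following Definition~\ref{definition_GC_system}, and so that each unit square of $\Gamma_\lambda$ corresponds to one of the free $u$-coordinates. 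Given this dictionary, define $\Psi(\gamma)$ to be the closed face of $\Delta_\lambda$ whose defining equalities are exactly those attached to the edges of $\Gamma_\lambda$ missing from $\gamma$.

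To show that $\Psi$ is a well-defined bijection, I would verify that (i) the system of equalities attached to any union $\gamma$ of positive paths containing all top vertices is automatically consistent and determines a nonempty face, and (ii) conversely, the subgraph built from a face $F$ by retaining only those edges whose associated inequality is strict on the relative interior of $F$ satisfies the positive-path and top-vertex hypothesis of Definition~\ref{definition_face}. The key input for (i) is that the positive-path condition forces, for each top vertex $v$, a chain of strict inequalities reaching $v$ from the origin, which rules out the degenerate scenario in which two distinct $\lambda_k$'s would be forced equal. The key input for (ii) is the min-max principle, which propagates strictness upward along the GC pattern once it holds at any one entry. The order-preserving property $\gamma \subset \gamma' \iff \Psi(\gamma) \subset \Psi(\gamma')$ is then tautological from the construction, since enlarging $\gamma$ relaxes the imposed equalities and therefore enlarges the corresponding face.

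The step I expect to be the main obstacle is the dimension identity $\dim \Psi(\gamma) = \mathrm{rank}_\Z H_1(\gamma;\Z)$. My plan is to compute $\dim \Psi(\gamma)$ as the number of independent $u$-coordinates surviving the identifications coming from the missing edges of $\gamma$, i.e.\ as the rank of an explicit quotient of the coordinate lattice by the sublattice generated by the imposed relations. On the other hand, since $\Gamma_\lambda$ is a planar graph embedded in $\R^2$ all of whose minimal cycles are unit squares, $\mathrm{rank}_\Z H_1(\gamma;\Z)$ equals the number of bounded regions of $\gamma$, computable from $\chi(\gamma) = |V(\gamma)| - |E(\gamma)|$ together with the number of connected components. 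The combinatorial core of the proof is to show that these two counts agree, which I would carry out by induction on $|E(\Gamma_\lambda) \setminus E(\gamma)|$: each additional deletion either destroys a single independent cycle (dropping both sides by one) or merges two equivalence classes of coordinates (again dropping both sides by one via an Euler-characteristic bookkeeping), and the positive-path/top-vertex hypothesis is precisely what rules out the degenerate subgraphs where neither effect occurs.
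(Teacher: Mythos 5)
The paper does not prove this theorem: it is stated as a citation from \cite{ACK} (Theorem~1.9 there), and the present article only records the description of the bijection $\Psi$ in the paragraph following the example, namely that $\Psi(\gamma)$ is the intersection of the facets obtained by equating pairs of adjacent $u_{i,j}$'s \emph{not} separated by an edge of $\gamma$. Your dictionary between missing edges and imposed equalities is exactly this description, so your construction agrees with the paper's.

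Since the paper supplies no argument, what remains is to assess your sketch on its own. Two points need strengthening. First, for $\Psi$ to be a bijection you need more than ``the system of equalities is consistent and cuts out a nonempty face'': you must show that on the relative interior of $\Psi(\gamma)$ \emph{exactly} the equalities coming from $\Gamma_\lambda\setminus\gamma$ hold, with strict inequalities across every remaining edge of $\gamma$. This is what makes $F\mapsto\gamma_F$ a two-sided inverse of $\Psi$ and is also what your ``tautological'' justification of order-preservation silently relies on; it amounts to producing a filling of the bounded regions of $\gamma$ by pairwise distinct values compatible with the pattern \eqref{rem:diagram-pattern}, and the positive-path hypothesis is precisely what guarantees such a filling exists (no cycle of equalities forces two distinct $\lambda_k$'s together). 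State and prove this separately rather than deferring it. Second, and more seriously, the induction on $|E(\Gamma_\lambda)\setminus E(\gamma)|$ does not go through as stated: deleting a single edge of $\Gamma_\lambda$ (or of a larger face) does not in general produce another face of $\Gamma_\lambda$ --- for instance, deleting a bridge of a union of positive paths disconnects it, violating Definition~\ref{definition_face} --- so the intermediate subgraphs in your edge-by-edge descent are typically \emph{not} faces, and the dichotomy ``destroys a cycle or merges two coordinate classes'' is not available at each step. Moreover, two faces adjacent in the face poset of $\Delta_\lambda$ can differ by more than one edge of $\Gamma_\lambda$, so even an induction along covering relations of the poset is not a one-edge-at-a-time induction. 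A direct count is cleaner: for a connected planar graph $\gamma$, $\operatorname{rank}_\Z H_1(\gamma;\Z)$ equals the number of bounded regions by Euler's formula, and each bounded region of $\gamma$ corresponds to one equivalence class of identified free $u$-coordinates (with boxes absorbed into the unbounded region being pinned to constants $\lambda_k$), so the identity $\dim\Psi(\gamma)=\operatorname{rank}H_1(\gamma)$ reduces to the nondegeneracy in the first point rather than to any induction on deleted edges.
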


\begin{example}\label{example_123}
	Let $\lambda = \{\lambda_1, \lambda_2, \lambda_3 \}$ with $\lambda_1 > \lambda_2 > \lambda_3$.
	The co-adjoint orbit $(\mcal{O}_\lambda, \omega_\lambda)$ is diffeomorphic to the complete flag manifold $\F(3)$.
	Let $\Gamma_\lambda$ be the ladder diagram associated with $\lambda$ as in Figure
	\ref{figure_ldF123}.
	Here, the blue dots are top vertices and the purple dot is the origin of $\Gamma_\lambda$.
	\begin{figure}[ht]
		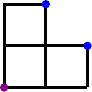
		\caption{\label{figure_ldF123} Ladder diagram $\Gamma_\lambda$.}	
	\end{figure}	
		
	The zero, one, two, and three-dimensional faces of $\Gamma_\lambda$ are respectively listed in Figure \ref{figure_zero_dim_face_F123}, \ref{figure_one_dim_face_F123}, \ref{figure_two_dim_face_F123}, and \ref{figure_three_dim_face_F123}.
	Here,
	$v_i$ denotes a vertex for $i \in \{1, \cdots, 7\}$,
	$e_{ij}$ is the edge containing $v_i$ and $v_j$, $f_I$ is  the facet
	containing all $v_i$'s for $i \in I$, and $I_{1234567}$ is the three dimensional face, i.e., the whole $\Gamma_\lambda$.

	\vspace{0.5cm}
	\begin{figure}[ht]
		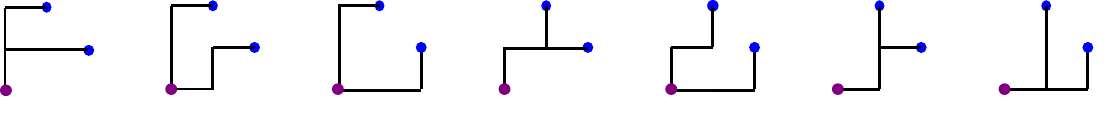
		\caption{\label{figure_zero_dim_face_F123} The zero-dimensional faces of $\Gamma_\lambda$.}	
	\end{figure}		
	
	\begin{figure}[ht]
		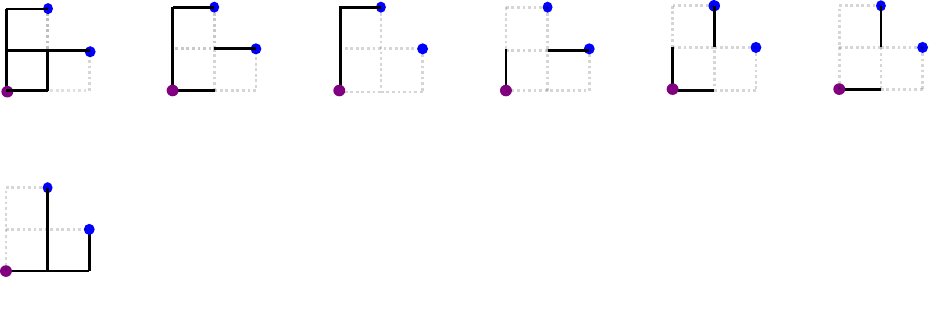
		\caption{\label{figure_one_dim_face_F123} The one-dimensional faces of $\Gamma_\lambda$.}	
	\end{figure}		
	
	\begin{figure}[H]
		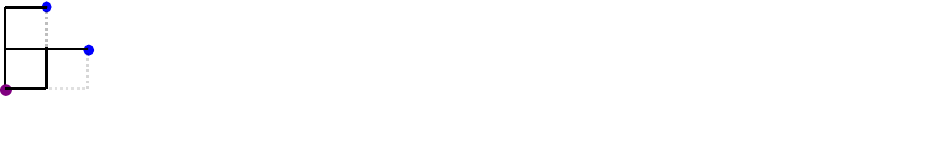
		\bigskip
		\caption{\label{figure_two_dim_face_F123} The two-dimensional faces of $\Gamma_\lambda$.}	
	\end{figure}		

	\vspace{-0.5cm}	
	\begin{figure}[H]
		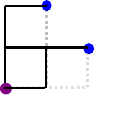
		\bigskip
		\caption{\label{figure_three_dim_face_F123} The three-dimensional face of $\Gamma_\lambda$.}	
	\end{figure}

	The GC polytope $\Delta_\lambda$ is given in Figure \ref{figure_GC_moment_123}.
	(See Figure 5 in \cite{K} or Figure 4 in \cite{NNU}.)
	We can easily see that the correspondence $\Psi(v_i) = w_i$ of vertices naturally
	extends to the set of faces of $\Gamma_\lambda$, satisfying
	(1) and (2) in Theorem \ref{theorem_equiv_CG_LD}.
		
	\begin{figure}[ht]
	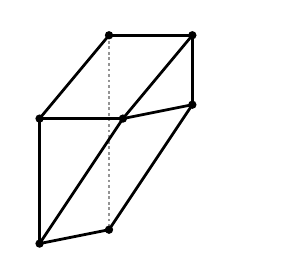
	\caption{\label{figure_GC_moment_123} The GC polytope $\Delta_\lambda$ for $\lambda = \{ \lambda_1 > \lambda_2 > \lambda_3 \}$.}	
	\end{figure}		
\end{example}

For our convenience, we describe each point in $\Delta_\lambda$ by using $\Gamma_\lambda$ \emph{with a filling}, putting each component ${u_{i,j}}$ of the coordinate system 
\eqref{equation_coordinate} of $\R^{|\mcal{I}|} = \R^{\frac{n(n+1)}{2}}$ into the unit box whose top-right vertex is $(i,j)$ of $\Gamma_\lambda$.
The GC pattern~\eqref{equation_GC-pattern} implies that $\{u_{i,j} \}_{(i,j) \in \mcal{I}}$ is
\begin{equation}\label{rem:diagram-pattern}
\begin{cases}
(1) \,\, \text{\emph{increasing} along the columns of $\Gamma_\lambda$ }, and \\
(2) \,\, \text{\emph{decreasing} along the rows of $\Gamma_\lambda$}
\end{cases}
\end{equation}
allowing repetitions (cf. a Young tableau in \cite{Ful}). 

\begin{example}
	Let $\mcal{O}_\lambda \simeq \mathcal{F}(3)$ be the co-adjoint orbit from Example~\ref{example_123}. 
	Recall that the pattern~\eqref{equation_GC-pattern} consists of the following inequalities:
	$$
		u_{1,2} \geq u_{1,1},\,\,\, u_{1,1} \geq u_{2,1},\,\,\, \lambda_1 \geq u_{1,2},\,\,\, u_{1,2} \geq \lambda_2,\,\,\, \lambda_2 \geq u_{2,1},\,\,\, u_{2,1} \geq \lambda_3.
	$$
	The ladder diagram $\Gamma_\lambda$ with a filling by variables $u_{i,j}$'s is as in Figure~\ref{figure_GC_to_ladder}.
	\begin{figure}[H]
	\scalebox{0.9}{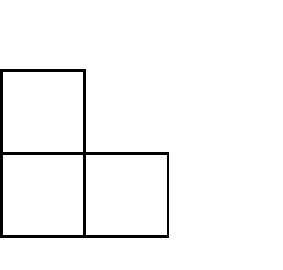}
	\caption{\label{figure_GC_to_ladder} The ladder diagram $\Gamma_\lambda$ with $u_{i,j}$'s variables.}	
\end{figure}	
\end{example}

Now, we explain how the map $\Psi$ in Theorem \ref{theorem_equiv_CG_LD}
is in general defined. For a given  face $\gamma$ of $\Gamma_\lambda$, consider $\gamma$ with a filling by the coordinate system $\{u_{i,j}\}$. 
The image of $\gamma$ under $\Psi$ is the intersection of facets supported by the hyperplanes that are given by equating two adjacent variables $u_{i,j}$'s \emph{not} divided by any positive paths. 

\begin{example}
Suppose that $\lambda = \{4,4,3,2,1\}$ and let $\gamma$ be a face given as in Figure \ref{figure_one_to_one_face}.
Then, the corresponding face $\Psi(\gamma)$ in $\Delta_\lambda$ is defined by
\[
	\Psi(\gamma) =  \Delta_\lambda \cap  \{ u_{2,1} = u_{1,1} \} \cap  \{ u_{1,1} = u_{1,2} \} \cap  \{ u_{2,2} = u_{1,2} \} \cap  \{ u_{2,1} = u_{2,2} \} \cap  \{ u_{2,2} = u_{2,3} \} \cap  \{ u_{3,1} = 4 \} .
\]
\begin{figure}[H]
	\scalebox{0.9}{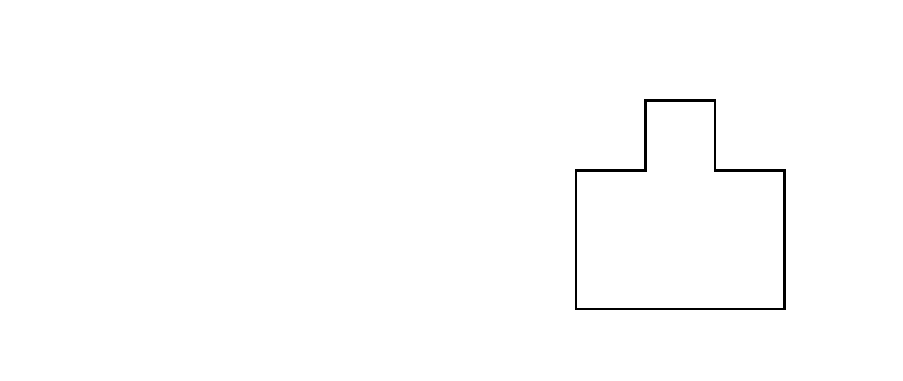}
	\caption{\label{figure_one_to_one_face} The bijection $\Psi$}
\end{figure}
\end{example}

\section{Classification of Lagrangian fibers}
\label{secLagrangianFibersOfGelfandCetlinSystems}

Our first main theorem \ref{theoremA}, which will be proven in Section \ref{secIteratedBundleStructuresOnGelfandCetlinFibers},
states that every fiber of the GC system $\Phi_\lambda$ on a co-adjoint orbit $(\mathcal{O}_\lambda, \omega_\lambda)$
is an isotropic submanifold and is the total space of certain iterated bundle
\begin{equation}\label{cha4iteratedbund}
		E_{n-1} \stackrel {p_{n-1}} \longrightarrow E_{n-2} \stackrel{p_{n-2}} \longrightarrow \cdots \stackrel{p_2} \longrightarrow E_1
		\stackrel{p_1} \longrightarrow E_0= \mathrm{point}
\end{equation}
such that the fiber at each stage is either a point or a product of odd dimensional spheres.
In this section, we provide a combinatorial way of ``reading off'' the topology of the fiber of each projection map $p_i$ from the ladder diagram (Theorem \ref{theorem_main}).
Furthermore, we classify all positions of Lagrangian fibers in the Gelfand-Cetlin polytope
(Corollary~\ref{corollary_L_fillable}).

We first consider a $2n$-dimensional compact symplectic toric manifold $(M,\omega)$ with a moment map $\Phi \colon M \to \R^n.$
It is a smooth completely integrable system on $M$ in the sense of Definition \ref{definition_CIS_smooth} and the 
Atiyah-Guillemin-Sternberg convexity theorem \cite{At, GS2} yields that the image $\Delta := \Phi(M)$ is an $n$-dimensional convex polytope.
It is well-known that for any $k$-dimensional face $f$ of $\Delta$ and a point $\textbf{\textup{u}} \in \mathring{f}$ in its relative interior $\mathring{f}$ of $f$.
the fiber $\Phi^{-1}(\textbf{\textup{u}})$ is a $k$-dimensional isotropic torus. 
In particular, a fiber $\Phi^{-1}(\textbf{\textup{u}})$ is Lagrangian if and only if $\textbf{\textup{u}} \in \mathring{\Delta}$. .

In contrast, in the GC system case the preimage of a point in the inteior of a $k$-th dimensional face of the GC polytope $\Delta_\lambda$ might 
have the dimension greater than $k$ as the torus action does not extend to the whole space.
In particular, $\Phi_\lambda$ might admit a Lagrangian fiber over a point not contained in the interior of $\Delta_\lambda$.

\begin{definition}\label{definition_lagrangian_face}
	We call a face $f$ of $\Delta_\lambda$ {\em Lagrangian} if it contains a point $\textbf{\textup{u}}$ in its relative interior $\mathring{f}$
	such that the fiber $\Phi_\lambda^{-1}(\textbf{\textup{u}})$ is Lagrangian. Also, we call a face $\gamma$ of $\Gamma_\lambda$ \emph{Lagrangian} if the corresponding face
	$f_\gamma := \Psi(\gamma)$ of $\Delta_\lambda$ is Lagrangian where $\Psi$ is given in Theorem~\ref{theorem_equiv_CG_LD}.
\end{definition}

\begin{remark}
We will see later that if $\textbf{\textup{u}}$ and $\textbf{\textup{u}}'$ are contained in the relative interior of a same face of $\Delta_\lambda$, then 
$\Phi_\lambda^{-1}(\textbf{\textup{u}})$ and $\Phi_\lambda^{-1}(\textbf{\textup{u}}')$ are diffeomorphic.
In particular if $f$ is a Lagrangian face of $\Delta_\lambda$, then every fiber over any point in $\mathring{f}$ is Lagrangian, see Corollary~\ref{corollary_L_fillable}.
\end{remark}

\begin{example}[\cite{K,NNU}]\label{example_NNU_S3}
	For a complete flag manifold $\F(3) \simeq \mcal{O}_\lambda$ of complex dimension three from Example~\ref{example_123}, the fiber $\Phi_\lambda^{-1}(w_3)$ is 
	a Lagrangian submanifold diffeomorphic to $S^3$ where the vertex $w_3$ is in Figure~\ref{figure_GC_moment_123}.
	See Example 3.8 in \cite{NNU}.
\end{example}

From now on,  we tacitly identify faces of the ladder diagram $\Gamma_\lambda$ with faces of the Gelfand-Cetlin polytope $\Delta_\lambda$ via the map $\Psi$ in Theorem \ref{theorem_equiv_CG_LD}.
For example, ``a point $r$ in a face $\gamma$'' of $\Gamma_\lambda$ means a point $r$ in the face $\Psi(\gamma) = f_\gamma$ of $\Delta_\lambda$.

\vspace{0.2cm}

\subsection{$W$-shaped blocks and $M$-shaped blocks}~
\label{ssecWShapedBlocks}
	
\vspace{0.2cm}

For each $(a,b) \in \Z^2$, let $\square^{(a,b)}$ be the simple closed region bounded by the unit square in $\R^2$
whose vertices are lattice points in $\Z^2$ and the top-right vertex is $(a,b)$. The region $\square^{(a,b)}$ is simply said to be the \emph{box} at $(a, b)$.
\begin{definition}\label{definition_W_shaped_block}
	For each integer $k \geq 1$, a {\em $k$-th $W$-shaped block} denoted by $W_k$, or simply {\em a $W_k$-block}, is defined by
	\[
		W_k := \bigcup_{(a,b)} \square^{(a,b)}
	\]
	where the union is taken over all $(a,b)$'s in $(\Z_{\geq 1})^2$ such that $k+1 \leq a+b \leq k+2$.
	A lattice point closest from the origin (with respect to the taxicab metric) in the $W_k$-block is called a \emph{bottom vertex}.
\end{definition}

	The following figures illustrate $W_1$, $W_2$, and $W_3$ where the red dots in each figure
	indicate the vertices over which the union is taken in Definition \ref{definition_W_shaped_block}. The purple dots are bottom vertices of each $W$-blocks.
	\begin{figure}[H]
	\scalebox{0.9}{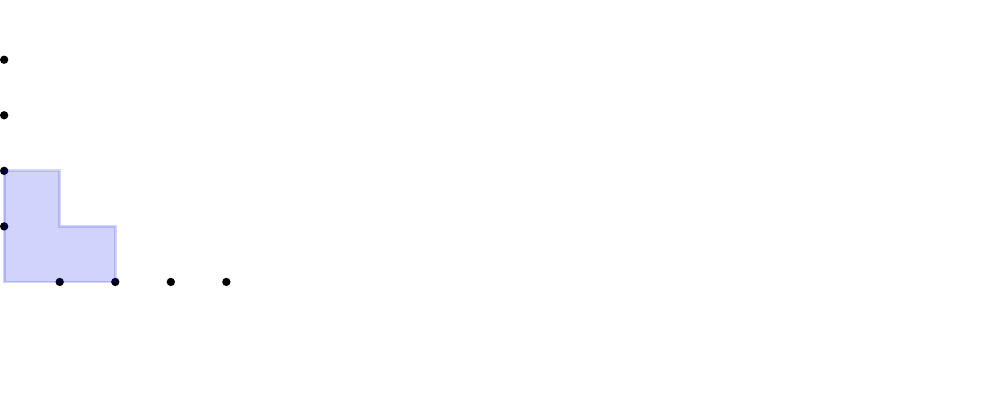}
	\caption{\label{figure_W_k_shaped_block} $W$-shaped blocks}
	\end{figure}	
\vspace{-0.5cm}

For a given diagram $\Gamma_\lambda$, the set of edges of each face $\gamma$ divides $W_k$
into several pieces of simple closed regions.
\begin{definition}\label{defn:W-gamma} For the $W_k$-block and a face $\gamma$ of $\Gamma_\lambda$, we denote by $W_k(\gamma)$ the $W_k$-block with `walls', 
where a wall is an edge of $\gamma$ lying on the interior of the $W_k$-block.
\end{definition}

\begin{example}\label{example_dividing_block_123}
	In Example \ref{example_123}, we consider the vertex $v_3$ of $\Gamma_\lambda$ 
	in Figure~\ref{figure_zero_dim_face_F123}. There are no edges of $v_3$ inside $W_1$ and hence
	$W_1(v_3) = W_1$ with no walls. The $W_2$-block is divided by $v_3$ into three pieces of simple
	closed regions so that $W_2(v_3)$ is $W_2$ with two walls indicated with red line segments in Figure~\ref{figure_W_k_shaped_block_v3}.
\vspace{0.2cm}
	\begin{figure}[H]
	\scalebox{0.9}{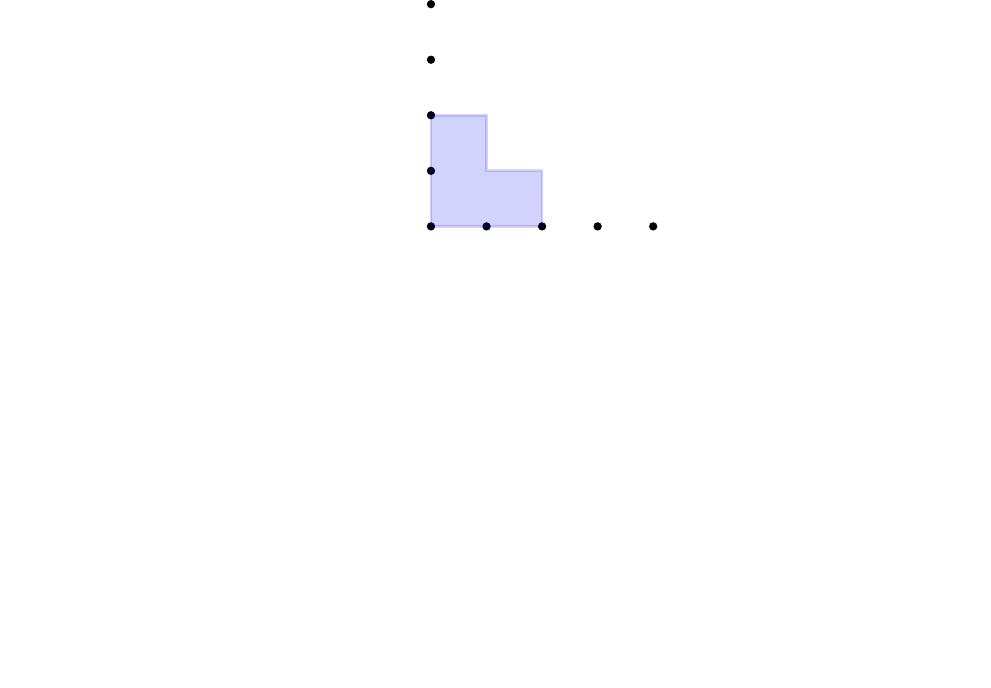}
	\caption{\label{figure_W_k_shaped_block_v3}  $W_i(v_3)$-blocks}
	\end{figure}
\vspace{-0.4cm}			
\end{example}

Next, we introduce the notion of \emph{$M$-shaped blocks}.

\begin{definition}\label{definition_M_shaped_block}
	For each positive integer $k \geq 1$, a {\em $k$-th $M$-shaped block} denoted by $M_k$, or simply an {\em $M_k$-block}, is defined, \emph{up to translation in} $\R^2$,
	by
	\[
		M_k := \bigcup_{(a,b)} \square^{(a,b)}
	\]
	where the union is taken over all $(a,b)$'s in $(\Z_{\geq 1})^2$ such that
	\begin{itemize}
		\item $k+1 \leq a+b \leq k+2$,
		\item $(a,b) \neq (k+1,1)$, and
		\item $(a,b) \neq (1,k+1)$.
	\end{itemize}
\begin{figure}[H]
	\scalebox{0.95}{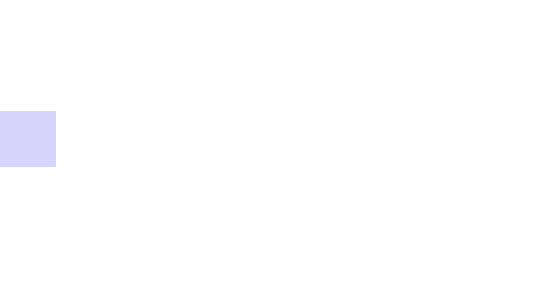}
		\vspace{-0.3cm}			
	\caption{\label{figure_block_sphere} $M$-shaped blocks.}
\end{figure}	
\vspace{-0.5cm}			
\end{definition}

\begin{remark}
	Note that $M_k$ can be obtained from $W_k$ by deleting two boxes $\square^{(k+1,1)}$ and $\square^{(1,k+1)}$.
	The reader should keep in mind that each $W$-shaped block $W_k$ is located at the specific position, but $M_k$ is not
	since it is defined up to translation in $\R^2$.
\end{remark}

For each divided simple closed region $\mcal{D}$ in $W_k(\gamma)$, we assign a topological space to $S_k (\mcal{D})$   by the following rule :
\begin{equation*}
S_k(\mathcal{D}) =
\begin{cases}
S^{2\ell-1} \quad \mbox{if $\mathcal{D}$ is the $M_\ell$-block and $\mcal{D}$ contains at least one bottom vertex of the $W_k$-block,} \\
 \mathrm{point} \quad \mbox{otherwise.}
\end{cases}
\end{equation*}
We then put
\begin{equation}\label{equation_block_sphere}
	S_k(\gamma) := \prod_{\mathcal{D} \subset W_k(\gamma)} S_k(\mathcal{D})
\end{equation}
where the product is taken over all simple closed regions in $W_k(\gamma)$ distinguished by walls coming from edges of $\gamma$.

\begin{example}\label{example_computation_S_2_v3}
	Again, consider the vertex $v_3$ of $\Gamma_\lambda$ in Example \ref{example_dividing_block_123}.
	Note that $S_1(v_3) = \mathrm{pt}$ since $W_1(v_3)$ consists of one simple closed region $W_1$ which is not an $M$-shaped block.
	In contrast, $W_2(v_3)$ consists of three simple closed regions $\mathcal{D}_1$, $\mathcal{D}_2$, and $\mathcal{D}_3$
	as in the figure below. (See also Figure \ref{figure_W_k_shaped_block_v3}.)
	Even if $\mcal{D}_1$ and $\mcal{D}_3$ are $M_1$-blocks, they do not contain any bottom vertices so that 
	$S_2(\mcal{D}_1) = \mathrm{point}$ and $S_2(\mcal{D}_3) = \mathrm{point}$.
	Observe that $\mathcal{D}_2$ is an $M_2$-block containing a bottom vertex of $W_2$. 	
	Therefore, we have
	\[
		S_2(v_3) = S_2(\mathcal{D}_1) \times S_2(\mathcal{D}_2) \times S_2(\mathcal{D}_3) \cong S^3.
	\]
	For $k>2$, $W_k(v_3)$ has no walls and hence $W_k(v_3)$ consists of one simple closed region $W_k$ which is never an
	$M$-shaped block. Thus $S_k(v_3) = \mathrm{point}$ for every positive integer $k>2$.

	\begin{figure}[H]
		\scalebox{0.7}{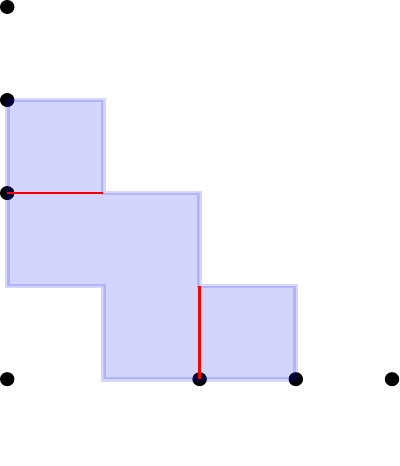}	
	\end{figure}
\end{example}

\begin{example}\label{example_gr24_W_block}
	Let $\lambda = \{t,t,0,0\}$ with $t > 0$. Then, the co-adjoint orbit $\mathcal{O}_\lambda$ is a complex Grassmannian $\mathrm{Gr}(2,4)$.
	Let $\gamma$ be the one-dimensional face of $\Gamma_\lambda$ given by
	\begin{figure}[H]
	\scalebox{0.9}{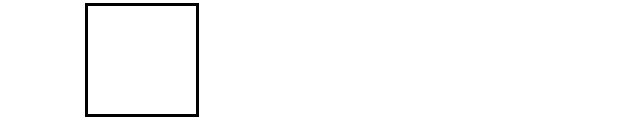}
	\end{figure}			
	As we see in the following figure, $W_1(\gamma)$ consists of one simple closed region that is not an
	$M$-shaped block. Thus we have $S_1(\gamma) = \mathrm{point}$.
	
	\begin{figure}[H]
		\scalebox{0.9}{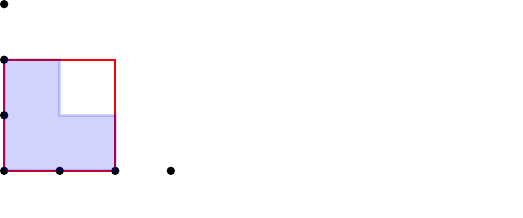}
	\end{figure}
	\vspace{-0.1cm}
	
	On the other hand, $W_2(\gamma)$ has two walls (red line segments in the figure below) and is exactly the same as $W_2(v_3)$ in Example 		
	\ref{example_computation_S_2_v3}. Thus we have $S_2(\gamma) = S^3$.
	
	\begin{figure}[H]
		\scalebox{0.9}{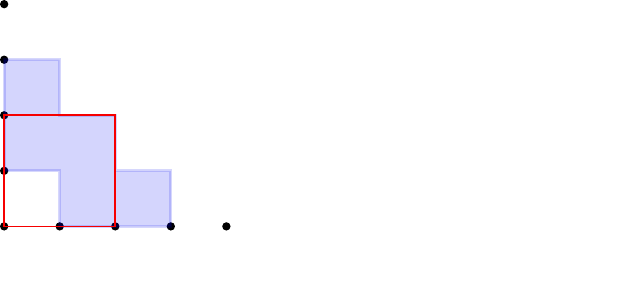}
	\end{figure}
	\vspace{-0.1cm}
	
	Finally, $W_3(\gamma)$ has two walls as we see below, and therefore there are three simple closed regions, 
	$\mathcal{D}_1$, $\mathcal{D}_2$, and $\mathcal{D}_3$, in $W_3(\gamma)$.
	
	\begin{figure}[H]
		\scalebox{0.9}{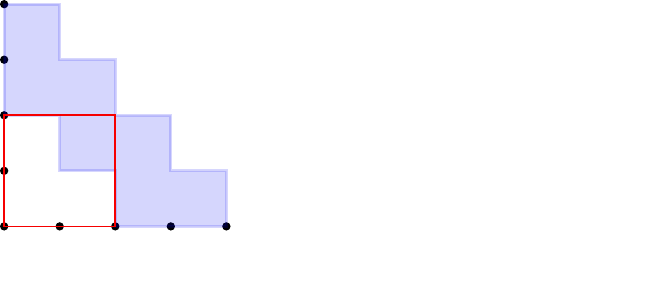}
	\end{figure}
	\vspace{-0.1cm}
	
	Since $\mathcal{D}_1$ and $\mathcal{D}_3$ are not $M$-shaped blocks, we have $S_3(\mathcal{D}_1) = S_3(\mathcal{D}_3) = \mathrm{point}$.
	Since $\mathcal{D}_2 = M_1$ and contains a botton vertex of $W_3$, we have $S_3(\mathcal{D}_2) = S^1$.
	Therefore, we have 
	\[
		S_3(\gamma) = S_3(\mathcal{D}_1) \times S_3(\mathcal{D}_2) \times S_3(\mathcal{D}_3) \cong S^1.
	\]
	For $k>3$, $W_k(\gamma)$ consists only one simple closed region, the $W_k$-block itself, and it is never an $M$-shaped block.
	Thus $S_k(\gamma) = \mathrm{point}$ for $k>3$.

\end{example}

\begin{proposition}\label{proposition_M_1_block_simple_closed_region}
	Let $\lambda$ be a sequence of real numbers satisfying~\eqref{lambdaidef} and $\gamma$ be a face of the ladder diagram $\Gamma_\lambda$.
	For each $i \geq 1$, let $\{\mathcal{D}_1, \cdots, \mathcal{D}_{m_i} \}$ be simple closed regions in $W_i(\gamma)$ such that
	$
		S_i(\mathcal{D}_j) = S^1
	$ for every $j=1,\cdots, m_i$.
	Then we have
	\[
		\dim \gamma = \sum_{i=1}^{n-1} m_i.
	\]
\end{proposition}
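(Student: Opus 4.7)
The plan is to match each generator of $H_1(\gamma;\Z)$ with one of the $M_1$-simple regions containing a bottom vertex across the blocks $W_i(\gamma)$, and then verify the count via Euler's formula applied to $\gamma$.

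First I would combinatorially identify those regions. The bottom vertices of $W_i$ lie on the antidiagonal $a+b = i-1$, so a single box $\square^{(a,b)}\subset W_i$ contains a bottom vertex (necessarily $(a-1,b-1)$) if and only if $a+b = i+1$, i.e.\ $\square^{(a,b)}$ sits on the inner antidiagonal of $W_i$. Within $W_i$ such a box has exactly two edge-adjacent neighbors, $\square^{(a,b+1)}$ and $\square^{(a+1,b)}$ (on the outer antidiagonal), while its other two potential neighbors $\square^{(a-1,b)}$ and $\square^{(a,b-1)}$ lie outside $W_i$. Hence $\square^{(a,b)}$ forms a simple closed region isolated in $W_i(\gamma)$ precisely when both the edge from $(a-1,b)$ to $(a,b)$ and the edge from $(a,b-1)$ to $(a,b)$ belong to $\gamma$. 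Calling a vertex $(a,b)$ with $a,b\geq 1$ at which both of these edges lie in $\gamma$ an \emph{NE-corner} of $\gamma$, this identifies $\sum_{i=1}^{n-1} m_i$ with the total number of NE-corners.

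Second, since every positive path passes through the origin, $\gamma$ is connected, so Euler's formula for a finite graph yields
\[
\dim \gamma = \text{rank}_\Z H_1(\gamma;\Z) = E_\gamma - V_\gamma + 1,
\]
where $V_\gamma$ and $E_\gamma$ denote the numbers of vertices and edges of $\gamma$. I would then compute this via an in-degree count: orient every edge of $\Gamma_\lambda$ from SW to NE, and for $v\in\gamma$ let $\mathrm{in}(v)$ be the number of $\gamma$-edges entering $v$. The origin has $\mathrm{in}=0$; every other vertex of $\gamma$ lies on some positive path of $\gamma$ and is entered from the SW, so $\mathrm{in}(v)\geq 1$; vertices with $a=0$ or $b=0$ can only achieve $\mathrm{in}(v)\leq 1$, while for interior vertices $\mathrm{in}(v)=2$ is exactly the NE-corner condition. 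Writing $N_k = \#\{v\in\gamma: v\neq(0,0),\ \mathrm{in}(v)=k\}$ for $k=1,2$, one has $V_\gamma = 1 + N_1 + N_2$ and $E_\gamma = \sum_v \mathrm{in}(v) = N_1 + 2N_2$, whence $E_\gamma - V_\gamma + 1 = N_2$. Combining with the first step gives $\dim \gamma = N_2 = \sum_{i=1}^{n-1} m_i$.

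The main point to verify in the first step is that the pair of edges at the NE corner really are the only obstruction to isolating $\square^{(a,b)}$ within $W_i$; this rests on the elementary observation that, for an inner-antidiagonal box, its SW edge-neighbors automatically fall outside of $W_i$, so no other $W_i$-connectivity needs to be ruled out.
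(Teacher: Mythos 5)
Your proof is correct, and it takes a somewhat different route from the paper's. The first step (identifying the $S^1$-regions in $W_i(\gamma)$ with boxes $\square^{(a,b)}$ on the inner antidiagonal whose top and right edges lie in $\gamma$, i.e.\ with what you call NE-corners) is essentially the same observation the paper makes. The difference is in how the two arguments reach $\dim\gamma$. The paper appeals directly to a bijection between the set of minimal cycles of $\gamma$ and the set of top-right vertices $v_\sigma$ of those cycles, noting that $\square^{v_\sigma}$ lies in the region bounded by $\sigma$; it then asserts that every relevant $M_1$-block arises as such a $\square^{v_\sigma}$. This is geometrically plausible but leaves the surjectivity of $\sigma\mapsto v_\sigma$ onto NE-corners unjustified (a priori one would need to produce, for each NE-corner, a cycle in $\gamma$ whose top-right vertex it is). Your argument sidesteps this entirely: you compute $\operatorname{rank}H_1(\gamma)=E_\gamma-V_\gamma+1$ from connectedness of $\gamma$, and then the SW-to-NE orientation plus the in-degree bookkeeping (every non-origin vertex has in-degree $1$ or $2$, and in-degree $2$ is precisely the NE-corner condition) gives $E_\gamma-V_\gamma+1=N_2$ by a two-line count. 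This is cleaner and in fact, read backwards, supplies the bijection the paper asserts. The paper's phrasing gains a more direct geometric picture of which cycle each $M_1$-block corresponds to; your version gains rigor and brevity.
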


\begin{proof}
	Note that $\dim \gamma$ is the number of minimal cycles in $\gamma$ by Definition \ref{definition_face}. Also, each minimal cycle $\sigma$ in $\gamma$
	can be represented by the union of two shortest paths connecting the bottom-left vertex and the top-right vertex of $\sigma$. We denote by
	$v_\sigma$ the top-right vertex of $\sigma$. 
	Note that $\square^{v_\sigma}$
	(blue-colored region in Figure \ref{figure_M_1_block_simple_closed_region}) is contained in the simple closed region bounded by $\sigma$.
	Therefore, if we denote by $\Sigma := \{\sigma_1, \cdots, \sigma_m\}$ the set of minimal cycles in $\gamma$, then
	there is a one-to-one correspondence
	between $\Sigma$ and $\{v_{\sigma_i} \}_{1 \leq i \leq m}$.
	
	On the other hand, observe that $\square^{v_\sigma}$ is appeared as an $M_1$-block in $W_i(\gamma)$ where
	\[
		i+1 = a+b, \quad v_\sigma = (a,b)
	\]	
	for each $\sigma \in \Sigma$.
	Also, every $M_1$-block appeared in $W_i(\gamma)$ for some $i$ should be one of such $\square^{v_\sigma}$'s.
	Consequently, there is a one-to-one correspondence between $\Sigma$ and $M_1$-blocks appeared in $W_i(\gamma)$ for $i \geq 1$.
	Since $|\Sigma| = \dim \gamma$ by definition, this completes the proof.
\end{proof}
	\vspace{-0.5cm}
	\begin{figure}[H]
		\scalebox{0.9}{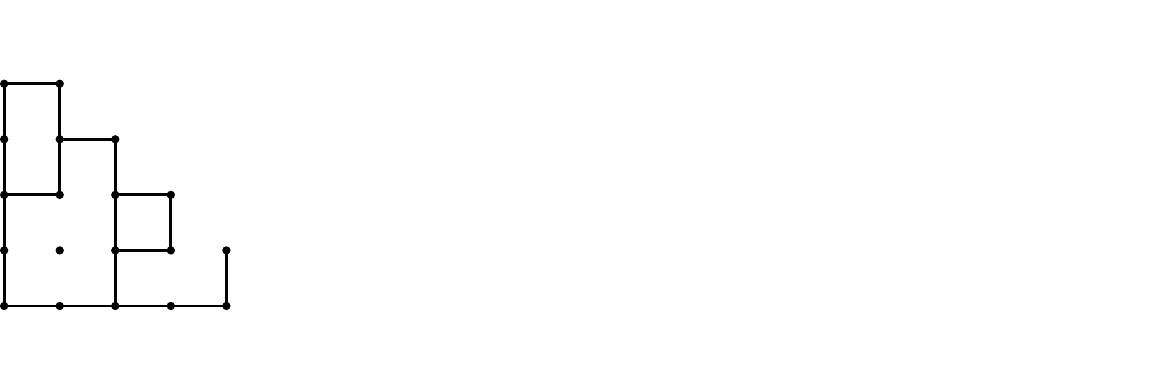}
		\caption{\label{figure_M_1_block_simple_closed_region} Correspondence between minimal cycles and $M_1$-blocks}
	\end{figure}
	\vspace{-0.3cm}

Now, we state one of our main theorem which characterizes the topology of each fiber of $\Phi_\lambda$, where the proof will be given in Section \ref{section4}.

\begin{theorem}\label{theorem_main}
	Let $\lambda = \{ \lambda_1, \cdots, \lambda_n \}$ be a non-increasing sequence of real numbers satisfying~\eqref{lambdaidef}.
	Let $\gamma$ be a face of $\Gamma_\lambda$ and $f_\gamma = \Psi(\gamma)$ be the corresponding face of $\Delta_\lambda$
	described in Theorem \ref{theorem_equiv_CG_LD}.
	For any point $\textbf{\textup{u}}$ in the relative interior of $f_\gamma$, the fiber $\Phi_\lambda^{-1}(\textbf{\textup{u}})$ is an isotropic submanifold
	of $(\mathcal{O}_\lambda, \omega_\lambda)$ and is the total space of an iterated bundle
	\begin{equation}\label{theoremmaindia}
		\Phi_\lambda^{-1}(\textbf{\textup{u}}) \cong \bar{S_{n-1}}(\gamma) \xrightarrow {p_{n-1}} \bar{S_{n-2}}(\gamma)
		 \rightarrow \cdots
		 \xrightarrow{p_2} \bar{S_1}(\gamma) \xrightarrow{p_1} \bar{S_0}(\gamma) := \mathrm{point}
	\end{equation}
	where
	$p_k \colon \bar{S_k}(\gamma) \rightarrow \bar{S_{k-1}}(\gamma)$ is an $S_k(\gamma)$-bundle over $\bar{S_{k-1}}(\gamma)$ for $k=1,\cdots, n-1$.
	In particular, the dimension of $\Phi_\lambda^{-1}(\textbf{\textup{u}})$ is given by 
	\[
		\dim \Phi_\lambda^{-1}(\textbf{\textup{u}}) = \sum_{k=1}^{n-1} \dim S_k(\gamma).
	\]
\end{theorem}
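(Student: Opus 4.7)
The plan is to construct the iterated bundle~\eqref{theoremmaindia} via successive truncation maps $\pi_k \colon \mathcal{O}_\lambda \to \mathcal{H}_k$ sending $x$ to its leading $k \times k$ principal minor $x^{(k)}$. Set $\bar{S_k}(\gamma) := \pi_{k+1}(\Phi_\lambda^{-1}(\mathbf{u}))$ and take $p_k \colon \bar{S_k}(\gamma) \to \bar{S_{k-1}}(\gamma)$ to be further truncation $\tilde y \mapsto \tilde y^{(k)}$. Since $\pi_1(x)=u_{1,1}$ is forced constant on $\Phi_\lambda^{-1}(\mathbf{u})$ and $\pi_n=\mathrm{id}$, one obtains $\bar{S_0}(\gamma)=\mathrm{point}$ and $\bar{S_{n-1}}(\gamma)=\Phi_\lambda^{-1}(\mathbf{u})$ automatically.

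Next I would compute $p_k^{-1}(y)$ for $y \in \bar{S_{k-1}}(\gamma)$. Any extension has the form $\tilde y = \begin{pmatrix} y & v \\ v^* & a\end{pmatrix}$ with $v \in \C^k$ and $a \in \R$. Diagonalizing $y = U\Lambda U^*$, where the distinct eigenvalues $\lambda_1 > \cdots > \lambda_s$ of $y$ have multiplicities $m_1, \ldots, m_s$ (read off from $\mathbf{u}$ on the anti-diagonal $\{(i,j) : i+j = k+1\}$), and splitting $U^*v = (v_1, \ldots, v_s)$ with $v_j \in \C^{m_j}$, the Schur complement identity yields
\[
\det(tI - \tilde y) = (t-a)\prod_{j}(t-\lambda_j)^{m_j} - \sum_{j} \|v_j\|^2\,(t-\lambda_j)^{m_j-1}\prod_{i\neq j}(t-\lambda_i)^{m_i}.
\]
Equating this to the prescribed characteristic polynomial $\prod_i (t-\nu_i)$ of $\tilde y$ (from the anti-diagonal $i+j=k+2$ of $\mathbf{u}$) and extracting the $(m_j{-}1)$-th Taylor coefficient at $t=\lambda_j$ pins down each $\|v_j\|^2$ as a non-negative constant; Cauchy interlacing ensures non-negativity, with strict positivity precisely when $\lambda_j$ has multiplicity $m_j-1$ in $\{\nu_i\}$. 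In that case $v_j$ traces out the sphere $S^{2m_j-1}$ of prescribed radius; otherwise $v_j=0$. The scalar $a$ is fixed by the trace. Thus $p_k^{-1}(y)$ is a product whose factors are points or odd-dimensional spheres.

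The remaining task is to match this decomposition with $W_k(\gamma)$. Boxes of $W_k$ at the two levels $a+b=k+1$ and $a+b=k+2$ parametrize the eigenvalues of $y=x^{(k)}$ and $\tilde y = x^{(k+1)}$ respectively, while walls of $\gamma$ inside $W_k$ encode precisely those adjacencies at which equality is \emph{not} forced on $\mathring{f_\gamma}$. Consequently each maximal run of boxes between walls corresponds to a multiplicity block; each simple closed region $\mathcal{D}\subset W_k(\gamma)$ corresponds to a distinct eigenvalue $\lambda_j$; its $M_\ell$-shape reflects $m_j=\ell$; and containment of a bottom vertex of $W_k$ is equivalent to the strict-interlacing condition that produces $\|v_j\|^2>0$. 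This identifies $p_k^{-1}(y) \cong S_k(\gamma)$ exactly as in~\eqref{equation_block_sphere}. Local triviality over $\mathring{f_\gamma}$ follows because the eigenspace splitting of $y$ varies smoothly throughout a fixed multiplicity stratum.

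Isotropy of $\Phi_\lambda^{-1}(\mathbf{u})$ is then obtained by combining the smooth Arnold--Liouville theorem on the open dense stratum $\Phi_\lambda^{-1}(\mathring{\Delta}_\lambda)$ (via Proposition~\ref{proposition_GS_smooth}) with the bundle structure: the boundary fibers appear as smooth limits of Lagrangian tori, and summing $\sum_k \dim S_k(\gamma)$ gives $\dim \Phi_\lambda^{-1}(\mathbf{u}) \le \tfrac{1}{2}\dim_{\R}\mathcal{O}_\lambda$, matching the isotropic bound. The dimension formula in the theorem is then immediate. The main obstacle I expect is the combinatorial dictionary of the third paragraph: a careful case analysis of the walls along the top and bottom rows of $W_k$ is needed to verify that the ``$M_\ell$-block containing a bottom vertex'' rule faithfully captures the joint multiplicity data $(m_j, r_j)$ for \emph{every} face $\gamma$ of $\Gamma_\lambda$, not just the generic interior case or the vertex cases treated in Examples~\ref{example_computation_S_2_v3}--\ref{example_gr24_W_block}.
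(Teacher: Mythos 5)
Your construction of the iterated bundle via truncation maps $\pi_k(x) = x^{(k)}$ and the Schur-complement characteristic polynomial identity is exactly the paper's route (Proposition~\ref{proposition_A_k_S_k_bundle}, Lemma~\ref{lemma_NNU}, Lemmas~\ref{lemma_fiber_zero_a_b}--\ref{lemma_fiber_sphere_b_b}, and the tower~\eqref{figure_iterated_bundle}). The combinatorial dictionary between walls of $\gamma$ in $W_k$ and multiplicity patterns of $(\mathfrak{a},\mathfrak{b})$ is also the paper's dictionary, and you are right that it is a bookkeeping exercise by case analysis — though you should be aware that you leave it explicitly unverified, while the paper does go through the cases~\eqref{eq:inequality1}--\eqref{eq:inequality2}.

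The genuine gap is your isotropy argument. You propose that isotropy follows because ``the boundary fibers appear as smooth limits of Lagrangian tori'' plus the dimension bound $\dim \Phi_\lambda^{-1}(\mathbf{u}) \le \tfrac{1}{2}\dim_{\R}\mathcal{O}_\lambda$. Neither piece suffices. The dimension bound is a \emph{necessary} condition for being isotropic, not a sufficient one, so it proves nothing about $\omega_\lambda|_{\Phi_\lambda^{-1}(\mathbf{u})}$. And being a pointwise (Hausdorff) limit of Lagrangian tori does not make a submanifold isotropic: what one would need is $C^1$-convergence of the submanifolds, but near a non-regular value the GC system is not smooth (Proposition~\ref{proposition_GS_smooth} is an open-dense statement) and the nearby torus fibers have \emph{different topology} from the singular fiber, so there is no canonical smooth family in which the tangent planes converge. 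The paper instead proves isotropy by a direct algebraic computation (Lemma~\ref{lemma_isotropic}): using the transitive $U(k)$-action of Lemma~\ref{lemma_transitive} it shows
\[
(\omega_{\mathfrak{a}})_x(\xi,\eta) \;=\; (\omega_{\mathfrak{b}})_{\rho_{k+1}(x)}\bigl((\rho_{k+1})_*\xi,(\rho_{k+1})_*\eta\bigr)
\]
for all tangent vectors $\xi,\eta$ at any $x \in \mathcal{A}_{k+1}(\mathfrak{a},\mathfrak{b})$, and then telescopes this down the tower $\rho_2\circ\cdots\circ\rho_n$ until the base is a point (Proposition~\ref{proposition_second_part}). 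This computation — not a limiting argument — is the content you are missing. A secondary, related gap: your remark that ``local triviality follows because the eigenspace splitting of $y$ varies smoothly'' is asserted, not proved; the paper establishes the bundle structure (and hence that the fiber is a smooth manifold at all) precisely via that same transitive $U(k)$-action, so the same lemma that fixes your isotropy gap also fixes this one.
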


We illustrate Theorem \ref{theorem_main} by the following examples.

\begin{example}\label{example_lag_fiber_123_vertex}
	Let $\lambda$ be given in Example \ref{example_123}. 
	Using Theorem \ref{theorem_main}, we will compute the fiber $\Phi_\lambda^{-1}(v_i)$ of each vertex $v_i$ given in Figure~\ref{figure_zero_dim_face_F123}
	for $i=1,\cdots,7$. 
	\begin{figure}[H]
		\scalebox{0.8}{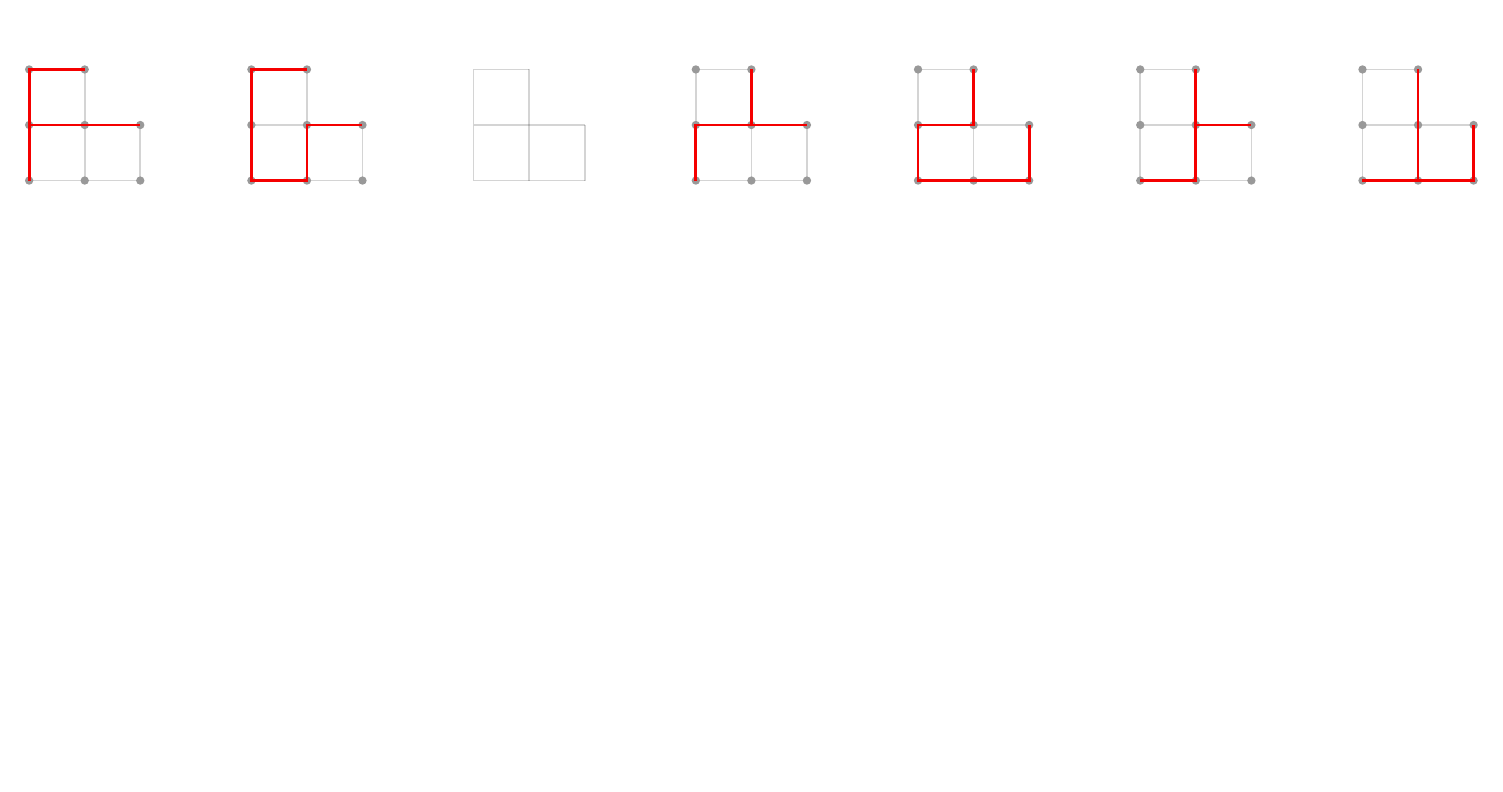}
		\caption{\label{figure_inverse_vertex_123} $W_1(v_i)$'s and $W_2(v_i)$'s for $\F (3)$}
	\end{figure}
		\vspace{-0.5cm}

	Figure \ref{figure_inverse_vertex_123} shows that $S_1(v_i) = \mathrm{pt}$ for every $i=1,\cdots,7$ since each $W_1(v_i)$ does not contain
	any $M$-shaped block containing a bottom vertex of $W_1$, that is, the origin $(0,0)$. Also, we can easily check that $S_2(v_i) = \mathrm{point}$ unless
	$i = 3$. When $i=3$, there is one $M$-shaped block $M_2$ inside $W_2(v_3)$ containing a bottom vertex of $W_2$. Thus we have
	\[
		S_2(v_3) = \mathrm{point} \times S^3 \times \mathrm{point} \cong S^3. 
	\] 
	Since $\Phi_\lambda^{-1}(v_i)$ is an $S_2(v_i)$-bundle over $S_1(v_i)$
	and $S_1(v_i)$ is a point for every $i=1,\cdots, 7$, by Theorem \ref{theorem_main}, we obtain
	\[
		\Phi_\lambda^{-1}(v_i) =
		\begin{cases}
			S^3 \quad &\mbox{ if } i = 3 \\
			\mathrm{point} \quad &\mbox{ otherwise.}
		\end{cases}
	\]
\end{example}

\begin{example}\label{example_lag_fiber_123_edge}	
	Again, we consider Example \ref{example_123} and compute the fibers over the points in the relative interior of some higher dimensional face of $\Delta_\lambda$ as follows.
	
	Consider the edge $e = e_{12}$ in Figure~\ref{figure_one_dim_face_F123} and let $\textbf{\textup{u}} \in \mathring{e}_{12}$. 
	By Theorem \ref{theorem_main}, $\Phi_\lambda^{-1}(\textbf{\textup{u}})$ is an $S_2(e_{12})$-bundle 
	over $S_1(e_{12})$ so that
	it is diffeomorphic to $S^1$. See the figure below. 
	For any other edge $e$ of $\Gamma_\lambda$,
	we can show that $\Phi_\lambda^{-1}(\textbf{\textup{u}}) \cong S^1$ for every point $\textbf{\textup{u}} \in \mathring{e}$ in a similar way. 
	\vspace{-0.2cm}
	\begin{figure}[H]
	\scalebox{0.9}{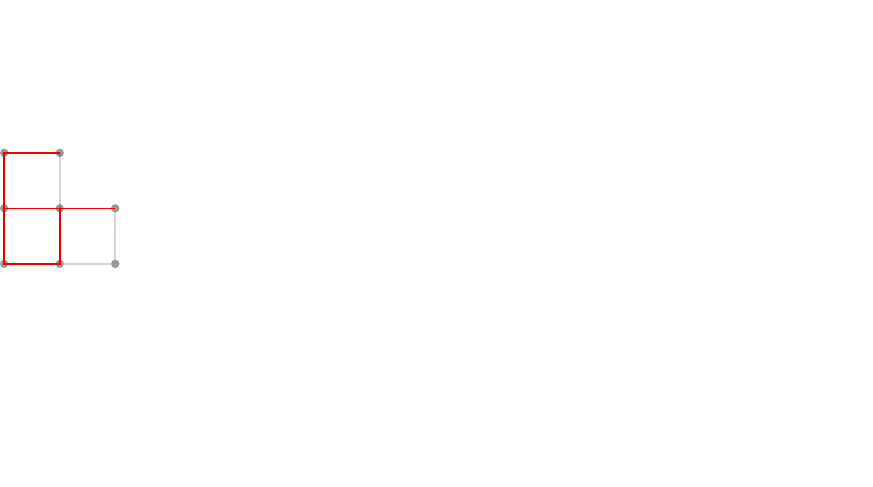}
	\end{figure}
	\vspace{-0.1cm}	
	
	For a two-dimensional face of $\Gamma_\lambda$, we first consider the face $f_{1345}$ of $\Delta_\lambda$ described in Figure
	\ref{figure_two_dim_face_F123}. 	
	Again by Theorem \ref{theorem_main}, 
	we have $\Phi_\lambda^{-1}(\textbf{\textup{u}}) \cong T^2$, an $S_2(f_{1345})$-bundle over $S_1(f_{1345})$, for every point $\textbf{\textup{u}} \in \mathring{f}_{1345}$. 
	Similarly, we obtain $\Phi_\lambda^{-1}(\textbf{\textup{u}}) \cong T^2$ for every interior point $\textbf{\textup{u}}$ of any two-dimensional face of $\Delta_\lambda$.

	\vspace{-0.2cm}
	\begin{figure}[H]
	\scalebox{0.9}{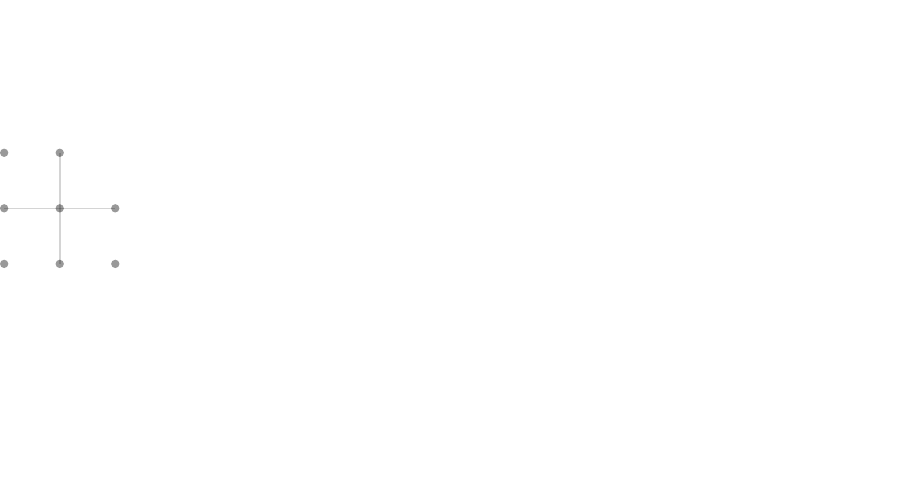}
	\end{figure}	
	\vspace{-0.1cm}	
	Finally, consider $I_{1234567}$, the improper face of $\Delta_\lambda$.
	Then Theorem \ref{theorem_main} tells us that 
	$\Phi_\lambda^{-1}(\textbf{\textup{u}})$ is an $S^1 \times S^1$-bundle over $S^1$ for every interior point $\textbf{\textup{u}}$ of $\Gamma_\lambda$.
	In fact, the Arnold-Liouville theorem implies that the bundle is trivial, i.e., $\Phi_\lambda^{-1}(\textbf{\textup{u}})$ is a torus $T^3$ for every 
	$\textbf{\textup{u}} \in \mathring{\Delta_\lambda}$, see also Theorem~\ref{theorem_contraction}.
	\vspace{-0.2cm}
	\begin{figure}[H]
	\scalebox{0.9}{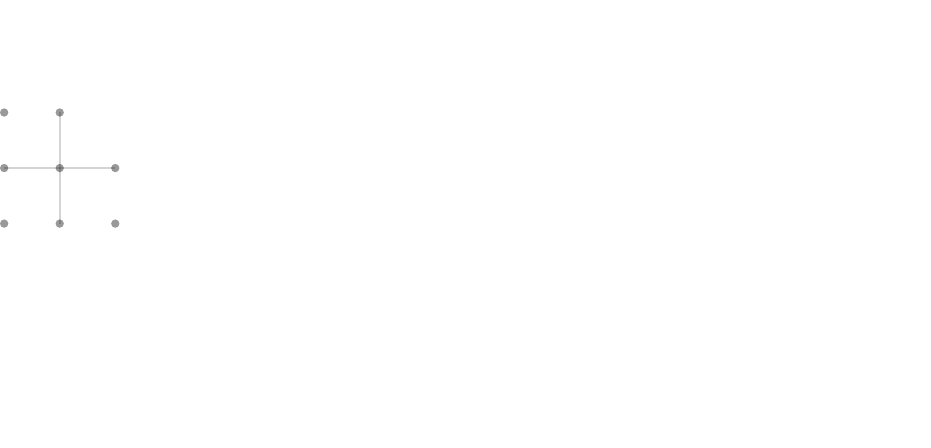}
	\end{figure}	
	\vspace{-0.2cm}
		
	Consequently, a Lagrangian fiber of the GC system on $(\mathcal{O}_\lambda, \omega_\lambda)$ is diffeomorphic to either $T^3$ (a fiber over an
	interior point of $\Delta_\lambda$) or $S^3$ (a fiber over $v_3$). Other fibers are isotropic but not Lagrangian submanifolds of
	$(\mathcal{O}_\lambda, \omega_\lambda)$ for dimensional reasons.
\end{example}

\begin{remark}\label{remark_su(3)}
In general, one should \emph{not} expect that every iterated bundle in~\eqref{theoremmaindia} is trivial. Namely, $\Phi^{-1}_\lambda(\textbf{\textup{u}})$ might not be the product 
space $\prod_{k=1}^{n-1} S_k (\gamma)$. For instance, consider the co-adjoint orbit $\mcal{O}_\lambda \simeq \mcal{F}(2,3;5)$ associated with 
$\lambda = (3,3,0,-3,-3)$ as in Figure~\ref{Fig_SU33}. By Theorem~\ref{theorem_main}, the GC fiber $\Phi_\lambda^{-1}(\textbf{0})$ over the origin is an $S^3$-bundle over $S^5$. 
Meanwhile, Proposition 2.7 in \cite{NU2} implies that $\Phi_\lambda^{-1}(\textbf{0})$ is $SU(3)$. It is however well-known that $SU(3)$ is not homotopy equivalent to $S^5 \times S^3$.
	\begin{figure}[H]
		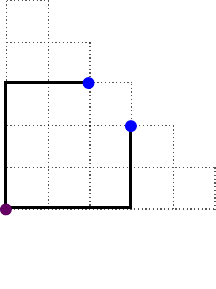
		\caption{\label{Fig_SU33} $SU(3)$-fiber}
	\end{figure}
\end{remark}

\subsection{Classification of Lagrangian faces}~
\label{ssecClassificationOfLagrangianFaces}

 \vspace{0.2cm}
Recall that Theorem \ref{theorem_main} implies that a face $\gamma$ is Lagrangian if and only if the GC fiber over an interior point of $\gamma$ is of dimension  $\dim_{\C} \mathcal{O}_\lambda$. 
Therefore to determine whether $\gamma$ is Lagrangian or not, it is sufficient to check that 
 \[
	\sum_{k=1}^{n-1} \dim S_k(\gamma) = \frac{1}{2} \dim_{\R} \mathcal{O}_\lambda.
 \]
In this section, we present an efficient way of checking whether a given face of $\Gamma_\lambda$ is Lagrangian or not by using so called ``L-shaped blocks''. 

\begin{definition}\label{definition_L_shaped_block}
	For each positive integer $k \in \Z_{\geq 1}$ and every lattice point $(p,q) \in \Z^2 \subset \R^2$, {\em a $k$-th L-shaped block at $(p,q)$}, 
	or simply {\em an $L_k$-block at $(p,q)$}, is the closed region defined by 
	\[
		L_k(p,q) := \bigcup_{(a,b)} \square^{(a,b)}
	\]
	where the union is taken over all $(a,b)$'s in $\Z^2$ such that
	\begin{itemize}
		\item $(a,b) = (p,q+i)$ \quad for $0 \leq i \leq k-1$, and
		\item $(a,b) = (p+i,q)$ \quad for $0 \leq i \leq k-1$.
	\end{itemize}
\end{definition}
	\vspace{-0.5cm}
	\begin{figure}[H]
	\scalebox{0.9}{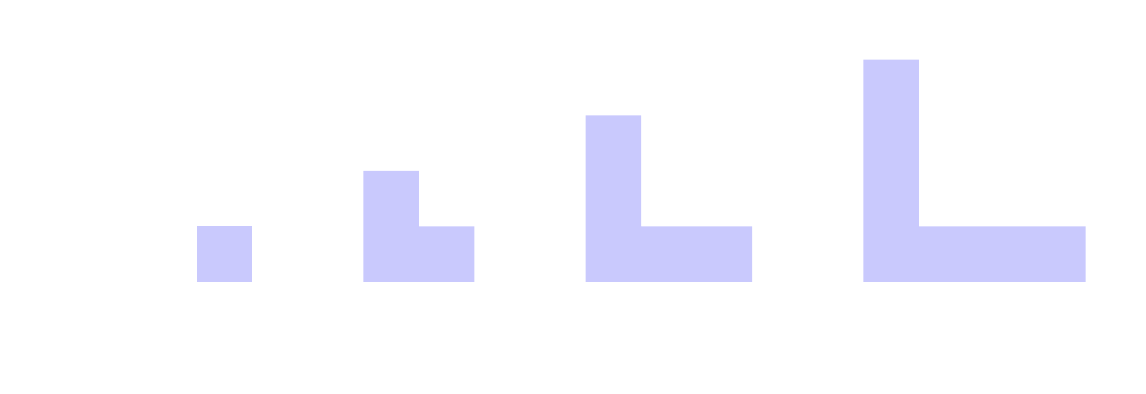}
	\caption{\label{figure_L_shape} $L$-shaped blocks}
	\end{figure}
	\vspace{-0.5cm}
	
\begin{remark}\label{rem:linearly-orderd}
    Note that GC patterns described in \eqref{equation_GC-pattern} are
    linearly ordered on any of $W$-shaped, $M$-shaped, and $L$-shaped
    blocks in the direction from the right or bottom most block to the left or top most
    block.
\end{remark}

\begin{definition}\label{definition_rigid}
	Let $\gamma$ be a face of $\Gamma_\lambda$.
	For a given positive integer $k \in \Z_{\geq 1}$ and a lattice point $(p,q) \in \Z^2$, we say that $L_k(p,q)$ is {\em rigid in} $\gamma$ if
	\begin{enumerate}
		\item the interior of $L_k(p,q)$ does not contain an edge of $\gamma$ and
		\item the rightmost edge and the top edge of $L_k(p,q)$ should be edges of $\gamma$.
	\end{enumerate}
\end{definition}

\begin{example}
	Let us consider $\Gamma = \Gamma(2,5;7)$ and let $\gamma$ be given as follows.
	\begin{figure}[H]
	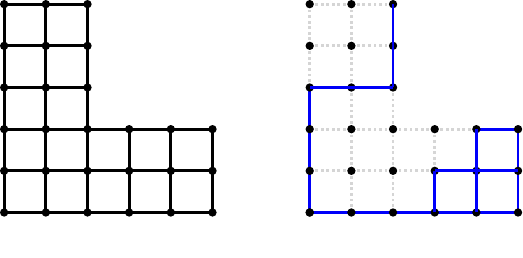
	\end{figure}
	In this example, there are exactly four rigid $L_k$-blocks : $L_3(1,1)$, $L_1(4,1)$, $L_1(5,1)$, and $L_1(5,2)$.
	\begin{figure}[H]
	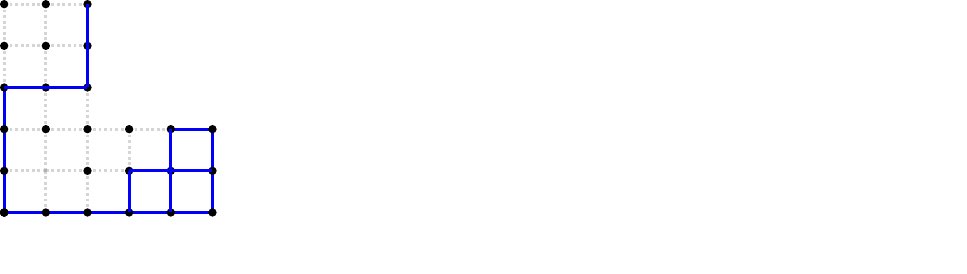
	\end{figure}
	We can check that any other $L$-blocks are \emph{not} rigid.
	For instance, $L_3(2,1)$ is not rigid since its interior contains an edge of $\gamma$. 
	\begin{figure}[H]
	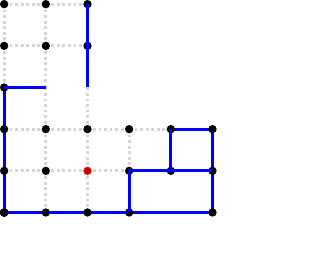
	\end{figure}
	Also, $L_2(2,2)$ is not rigid because its rightmost edge is \emph{not} an edge of $\gamma$ violating the condition (2) in Definition
	\ref{definition_rigid}.
	\begin{figure}[H]
		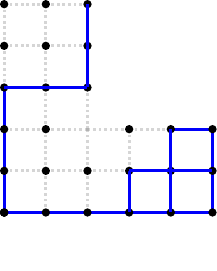
	\end{figure}
\end{example}

The following lemma follows from the min-max principle of GC pattern \eqref{equation_GC-pattern}, 
or more specifically from~\eqref{rem:diagram-pattern}.

\begin{lemma}\label{lemma_L_Q}
	Suppose that $L_k(a,b)$ is rigid in a face $\gamma$ of $\Gamma_\lambda$.
	Let $Q_k(a,b)$ be the closed region defined by
	\[
		Q_k(a,b) := \bigcup_{0 \leq i,j \leq k-1} \square^{(a+i,b+j)},
	\]
	i.e., $Q_k(a,b)$ is the square of size $(k \times k)$ that contains $L_k(a,b)$.
	Then there are no edges of $\gamma$ in the interior of $Q_k(a,b)$.
\end{lemma}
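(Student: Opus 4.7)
The plan is to combine the min-max principle of GC patterns (as the paper's hint suggests) with the description of $\gamma$ as a union of positive paths. Let $v_{i,j} := (a+i, b+j)$ for $0 \le i, j \le k-2$ denote the lattice vertices lying in the open interior of $Q_k(a,b)$; any edge of $\Gamma_\lambda$ whose open edge lies in $\mathrm{int}\, Q_k(a,b)$ has at least one endpoint among the $v_{i,j}$ (the case $k = 1$ being vacuous, as the interior then contains no edges). I therefore reduce the lemma to the claim that \emph{no positive path representing $\gamma$ passes through any interior vertex $v_{i,j}$}: granting this, any edge $e \in \gamma$ in $\mathrm{int}\, Q_k(a,b)$ would belong to some positive path of $\gamma$, which would then traverse an interior endpoint of $e$, contradicting the claim.

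I would prove the claim by induction on $s := i + j$. A positive path entering $v_{i,j}$ must do so via the Left edge $e_L = \bigl[(a+i-1, b+j), (a+i, b+j)\bigr]$ or the Down edge $e_D = \bigl[(a+i, b+j-1), (a+i, b+j)\bigr]$. When $i = 0$, the edge $e_L$ separates the two consecutive boxes $V_j = \square^{(a, b+j)}$ and $V_{j+1} = \square^{(a, b+j+1)}$ of the vertical strip of $L_k(a,b)$ (using $0 \le j \le k-2$), so $e_L \subset \mathrm{int}\, L_k(a,b)$ and rigidity condition (1) excludes it from $\gamma$; when $i \ge 1$, the tail of $e_L$ is the interior vertex $v_{i-1, j}$ of smaller index sum, and the inductive hypothesis forbids any positive path of $\gamma$ from passing through it, forcing $e_L \notin \gamma$. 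The symmetric analysis, using the horizontal strip $H_i, H_{i+1}$ of $L_k(a,b)$ in the $j = 0$ subcase, disposes of $e_D$. Thus no positive path of $\gamma$ reaches $v_{i,j}$; the base case $s = 0$ reduces to $v_{0,0} = (a, b)$, for which both $e_L$ and $e_D$ fall under the condition-(1) subcase.

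The main conceptual step is the identification in the $i = 0$ and $j = 0$ subcases of $e_L$ and $e_D$ as interior edges of $L_k(a,b)$ in the sense of Definition~\ref{definition_rigid}(1); once this is in place the induction proceeds uniformly, invoking only condition (1) of rigidity (condition (2) plays no explicit role in the argument). As an alternative along the paper's min-max hint, one may argue directly from fillings: condition (1) forces $u_{a+i, b} = u_{a, b+j} =: c$ on $\Psi(\gamma)$ for all $0 \le i, j \le k-1$, and then \eqref{rem:diagram-pattern} sandwiches each $u_{a+i, b+j}$ with $1 \le i, j \le k-1$ between $u_{a+i, b} = c$ (column, increasing up) and $u_{a, b+j} = c$ (row, decreasing right), so that all $u$-entries on $Q_k(a,b)$ coincide on $\Psi(\gamma)$; the bijectivity of $\Psi$ from Theorem~\ref{theorem_equiv_CG_LD} then prevents any edge of $\gamma$ from separating two adjacent boxes of $Q_k(a,b)$.
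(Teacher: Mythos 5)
Your main argument is correct, and it follows the same outline as the paper's — reduce to a positive path $\delta \subset \gamma$ through an interior lattice vertex $v_0$ of $Q_k(a,b)$ and derive a contradiction with rigidity condition~(1) — but where the paper asserts in one line that any shortest monotone path from the origin to $v_0$ must contain an edge in the interior of $L_k(a,b)$, you prove this claim by an induction on $s = i+j$. Your case analysis correctly identifies the incoming edges $e_L$ (for $i = 0$) and $e_D$ (for $j = 0$) as lying inside $L_k(a,b)$ in the sense of Definition~\ref{definition_rigid}(1), so rigidity excludes them, and the inductive hypothesis disposes of the remaining cases via the closer interior vertex $v_{i-1,j}$ or $v_{i,j-1}$; this buys a self-contained, elementary rendering of the paper's terse geometric assertion at the cost of length. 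The filling-based alternative you sketch at the end is a genuinely different route, but as stated it is incomplete: after showing that the $u$-entries on $Q_k(a,b)$ all coincide on $\Psi(\gamma)$, it does not follow from the bijectivity of $\Psi$ alone that $\gamma$ has no edge inside $Q_k(a,b)$; you would still need an additional step, for instance a dimension comparison with the face obtained by removing the offending separation, invoking the dimension clause of Theorem~\ref{theorem_equiv_CG_LD}.
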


\begin{proof}
	If $k=1$, then $L_1(a,b) = Q_1(a,b)$ and it has no edge of $\gamma$ in its interior so that there is nothing to prove. Now, assume that $k \geq 2$
	and suppose that there is an edge $e = [v_0v_1]$ of $\gamma$ contained in the interior of $Q_k(a,b)$.
	Then, without loss of generality, we may assume that
	$v_0 = (a_0, b_0)$ is in the interior of $Q_k(a,b)$ so that $a \leq a_0 < a+k-1$ and $b \leq b_0 < b+k-1$. 
	
	By Definition \ref{definition_face}, there exists a positive path $\delta$ contained in $\gamma$ passing through $v_0$. 
	Then $\delta$ should pass through the interior of $L_k(a,b)$ since $\delta$ contains a shortest path from the origin of $\Gamma_\lambda$ and $v_0$, 
	which intersects the interior of $L_k(a,b)$. This contradicts the rigidity (1) in Definition \ref{definition_rigid}.
\end{proof}

\begin{lemma}\label{lemma_L_disjoint}
	If two different $L$-blocks $L_i(a,b)$ and $L_j(c,d)$ are rigid in the same face $\gamma$, 
	then they cannot be overlapped, i.e.,
	\[
		\mathring{L_i}(a,b) \cap \mathring{L_j}(c,d) = \emptyset
	\]
	where $\mathring{L_i}(a,b)$ and $\mathring{L_j}(c,d)$ denote the interior of $\mathring{L_i}(a,b)$ and $\mathring{L_j}(c,d)$, respectively.
\end{lemma}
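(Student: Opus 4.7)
The plan is to argue by contradiction: assume $\mathring{L_i}(a,b) \cap \mathring{L_j}(c,d) \neq \emptyset$, which is equivalent to saying the two $L$-blocks share at least one unit box. The strategy is to exhibit one of the four ``rigid'' edges $T_i, R_i, T_j, R_j$ (the top and rightmost edges of the two blocks, forced to lie in $\gamma$ by rigidity condition~(2) of Definition~\ref{definition_rigid}) as an internal edge of the opposite block's $Q$-region. Lemma~\ref{lemma_L_Q} then forces that edge to not lie in $\gamma$, producing the contradiction.

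By the symmetry of swapping the two blocks I may assume $c \geq a$. The first case is $(a,b) = (c,d)$ with (say) $i < j$: the edge $T_i = [a-1,a] \times \{b+i-1\}$ separates the two boxes $\square^{(a,b+i-1)}$ and $\square^{(a,b+i)}$, both of which lie in the left column of $L_j(a,b)$ since $b+i \leq b+j-1$. Hence $T_i$ is an internal edge of $L_j \subset Q_j(a,b)$, contradicting Lemma~\ref{lemma_L_Q}.

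Now suppose $(a,b) \neq (c,d)$ with $c \geq a$. A direct enumeration of which boxes $\square^{(a+r,b)}$ or $\square^{(a,b+s)}$ of $L_i(a,b)$ can coincide with a box of $L_j(c,d)$ reduces the hypothesis to three exhaustive subcases: \textbf{(B1)} $c > a$ and $d = b$, forcing $c \leq a+i-1$; \textbf{(B2)} $c > a$ and $d < b$, forcing $c \leq a+i-1$ and $b-d \leq j-1$; \textbf{(B3)} $c = a$ and $d > b$, forcing $d-b \leq i-1$. In each subcase a short inequality check comparing the extents $a+i$ versus $c+j$ and $b+i$ versus $d+j$ locates one of $T_i, R_i, T_j, R_j$ as an internal edge of the opposite $Q$. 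For instance in~(B1): the edge $T_j$ lies in $\mathring{Q_i}(a,b)$ when $j < i$, and $R_i$ lies in $\mathring{Q_j}(c,d)$ when $j \geq i$. Subcases~(B2) and~(B3) proceed by the analogous dichotomy on whether $d+j < b+i$, $d+j = b+i$, or $d+j > b+i$.

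The main delicate point is the borderline configuration $d+j = b+i$ appearing in~(B3), where the top edges $T_i$ and $T_j$ coincide so that no top edge is strictly interior to the opposite $Q$-region. Here one instead invokes the rightmost edge $R_j = \{a+j-1\} \times [d-1,d]$: the constraints $c = a$, $d > b$, $d-b \leq i-1$ together with $j = b+i-d$ yield $1 \leq j \leq i-1$ and $d \leq b+i-1$, which places $R_j$ strictly inside $\mathring{Q_i}(a,b)$ and closes the argument.
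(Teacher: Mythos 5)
Your proof is correct and exploits the same mechanism as the paper's: a rigid edge (top or rightmost) of one block must meet the interior of the other block's $Q$-square, contradicting Lemma~\ref{lemma_L_Q}. The difference lies in the normalization. The paper chooses the WLOG $j \geq i$ and asserts without detail that the top or rightmost edge of the smaller $L_i(a,b)$ lands in $\mathring{Q_j}(c,d)$; you instead normalize by position, which is why the borderline $d+j=b+i$ in~(B3), where $T_i$ and $T_j$ coincide, surfaces and must be handled through $R_j$. Your explicit treatment of this coincidence is a genuine improvement over the paper's terse ``it is straightforward'' (indeed the paper's own normalization leaves an analogous unaddressed subtlety about which labelling to pick when $i=j$). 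One small imprecision to repair: the stated WLOG ``$c \geq a$'' does not exclude the overlapping configuration $c = a$, $d < b$, which your list (B1)--(B3) silently omits. The correct normalization, provided by the block-swap symmetry and implicitly used in your enumeration, is: $c > a$, or ($c = a$ and $d > b$).
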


\begin{proof}
	If $(a,b) = (c,d)$ and $i \neq j$, then both $L_i(a,b)$ and $L_i(c,d)$ cannot be rigid since at least one of these two blocks violates (1) in Definition~\ref{definition_rigid}. 
	Suppose that $(a,b) \neq (c,d)$ and $\mathring{L_i}(a,b) \cap \mathring{L_j}(c,d) \neq \emptyset$. Without loss of generality, we may assume that $j \geq i$.
	Then it is straightforward that either the top edge or the rightmost edge of $L_i(a,b)$ lies on the interior of
	$Q_j(c,d)$. It leads to a contradiction to Lemma \ref{lemma_L_Q} since
	the top edge and the rightmost edge of $L_i(a,b)$ are edges of $\gamma$ by Definition \ref{definition_L_shaped_block}.
	\begin{figure}[H]
		\scalebox{0.9}{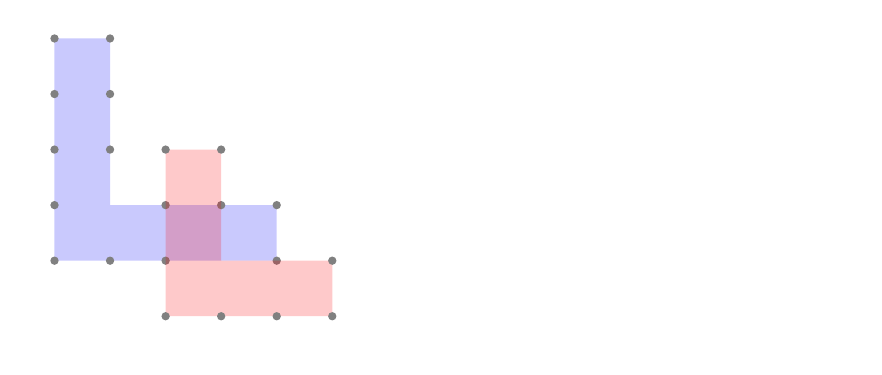}
	\end{figure}
	\vspace{-0.2cm}
\end{proof}

\begin{proposition}\label{proposition_classification_lagrangian_face}
	Let $\lambda = (\lambda_1, \cdots, \lambda_n)$ be given in \eqref{lambdaidef}. 
	For a given face $\gamma$ of $\Gamma_\lambda$, let $\mathcal{L}(\gamma)$ be the set of all rigid L-shaped blocks in $\gamma$.
	Then, for any point $\textbf{\textup{u}}$ in the relative interior of the face $f_\gamma = \Psi(\gamma)$ of $\Delta_\lambda$, we have 
	\[
		\dim \Phi_\lambda^{-1}(\textbf{\textup{u}}) = \sum_{L_k(a,b) \in \mathcal{L}(\gamma)} |L_k(a,b)| = \sum_{L_k(a,b) \in \mathcal{L}(\gamma)} (2k-1)
	\]	
	where $|L_k(a,b)| = 2k - 1$ is the are of $L_k(a,b)$.
\end{proposition}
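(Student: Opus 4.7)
The plan is to combine Theorem~\ref{theorem_main} with an explicit, size-preserving bijection $\Phi$ between $\mcal{L}(\gamma)$ and the collection of ``spherical'' divided regions across the $W_k(\gamma)$. By Theorem~\ref{theorem_main},
\[
\dim \Phi_\lambda^{-1}(\textbf{\textup{u}}) \;=\; \sum_{k=1}^{n-1} \dim S_k(\gamma),
\]
and the right-hand side counts, with multiplicity $2\ell-1$, each divided region $\mcal{D}$ of some $W_k(\gamma)$ that is a translate of $M_\ell$ and contains a bottom vertex of $W_k$. Since both $L_\ell$ and $M_\ell$ consist of exactly $2\ell-1$ unit boxes, once a bijection $\Phi$ respecting the subscript $\ell$ is produced the claimed identity follows immediately.

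For the forward map, I would send $L_k(p,q) \in \mcal{L}(\gamma)$ to the staircase region of $W_{p+q+k-2}(\gamma)$ consisting of the boxes $\{\square^{(p+i,\,q+k-1-i)}\}_{i=0}^{k-1}$ on the lower antidiagonal together with $\{\square^{(p+i,\,q+k-i)}\}_{i=1}^{k-1}$ on the upper antidiagonal, which is a translate of $M_k$ whose lower-antidiagonal boxes have bottom-left corners lying on the antidiagonal $a+b=p+q+k-3$ and hence are bottom vertices of $W_{p+q+k-2}$. To verify that this staircase is exactly one divided region of $W_{p+q+k-2}(\gamma)$, I would first check that the only adjacent boxes of $W_{p+q+k-2}$ outside it are $\square^{(p,\,q+k)}$ and $\square^{(p+k,\,q)}$, separated from the staircase by the top edge of $\square^{(p,\,q+k-1)}$ and the right edge of $\square^{(p+k-1,\,q)}$, which are the top and rightmost edges of $L_k(p,q)$ and hence lie in $\gamma$ by condition~(2) of rigidity. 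I would then rule out any $\gamma$-edge internal to the staircase: edges whose closed segments lie strictly inside $Q_k(p,q)$ are excluded by Lemma~\ref{lemma_L_Q}, while the two remaining candidates, the right edge of $\square^{(p,\,q+k-1)}$ and the top edge of $\square^{(p+k-1,\,q)}$, are excluded by a contradiction argument: if either were in $\gamma$, then $L_1(p,\,q+k-1)$ or $L_1(p+k-1,\,q)$ would itself be a rigid $L$-block whose interior sits inside that of $L_k(p,q)$, contradicting Lemma~\ref{lemma_L_disjoint}.

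For the inverse, given an $M_\ell$-shaped divided region $\mcal{D}$ of some $W_j(\gamma)$ containing a bottom vertex, its lower antidiagonal $\{\square^{(a+i,\,b-i)}\}_{i=0}^{\ell-1}$ (with $a+b=j+1$) determines $(p,q) = (a,\,b-\ell+1)$ and hence $j = p+q+\ell-2$. Condition~(2) of rigidity for $L_\ell(p,q)$ is immediate from the bounding walls of $\mcal{D}$, and condition~(1) follows by a similar contradiction using Lemma~\ref{lemma_L_disjoint}: any internal $\gamma$-edge of $L_\ell(p,q)$ would expose a smaller rigid $L$-block sitting strictly inside $L_\ell(p,q)$. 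Summing $(2k-1)$ over $\mcal{L}(\gamma)$ under this bijection then matches $\sum_k \dim S_k(\gamma)$, yielding the formula. The main obstacle I expect to navigate carefully is precisely this case-by-case verification of the absence of internal $\gamma$-edges in the staircase, especially for the two edges on $\partial Q_k(p,q)$ that Lemma~\ref{lemma_L_Q} does not directly handle; in every such case the key combinatorial input is the non-overlap of rigid $L$-blocks guaranteed by Lemma~\ref{lemma_L_disjoint}.
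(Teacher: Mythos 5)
Your bijection is exactly the one the paper uses (under the translation $(p,q) = (a+1, b-j+1)$, your staircase inside $W_{p+q+k-2}(\gamma)$ is precisely $M_j(a,b)$), and your forward direction is sound: given a rigid $L_k(p,q)$, you correctly combine Lemma~\ref{lemma_L_Q} (which applies because rigidity of $L_k(p,q)$ is assumed) with Lemma~\ref{lemma_L_disjoint} to rule out all internal $\gamma$-edges of the staircase and to identify its bounding walls with the top and rightmost edges of $L_k(p,q)$. That part is fine, if somewhat more elaborate than the paper's one-line ``Similarly\dots''.

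The gap is in the inverse direction. You need to show that an $M_\ell$-shaped divided region $\mcal{D}$ containing a bottom vertex forces $L_\ell(p,q)$ to satisfy rigidity condition~(1), and your proposed argument --- ``any internal $\gamma$-edge of $L_\ell(p,q)$ would expose a smaller rigid $L$-block sitting strictly inside $L_\ell(p,q)$, contradicting Lemma~\ref{lemma_L_disjoint}'' --- is circular and incomplete. Lemma~\ref{lemma_L_disjoint} only prohibits overlaps between two \emph{rigid} $L$-blocks, but whether $L_\ell(p,q)$ is rigid is exactly what you are trying to establish. Moreover, an internal $\gamma$-edge of $L_\ell(p,q)$ does not by itself ``expose'' a rigid $L_1$-block: rigidity would also require the top and rightmost edges of that unit block to be in $\gamma$, and you have no information about edges lying outside $L_\ell(p,q)$ in that range of the ladder diagram (most boxes of $L_\ell(p,q)$ live in $W_{k'}$ for $k'$ smaller than the $W_k$ your $\mcal{D}$ sits in). The paper instead uses the Gelfand--Cetlin monotonicity~\eqref{rem:diagram-pattern}: since all $u_{i,j}$ on the divided region $\mcal{D}$ are equal to a single value $c$, and the top-left and bottom-right boxes of $\mcal{D}$'s lower antidiagonal are precisely the end boxes of $L_\ell(p,q)$'s vertical and horizontal arms, the monotonicity (increasing up the columns, decreasing along the rows) squeezes $u_{p,q}$ between $c$ and $c$, hence $u_{p,q}=c$, and then every box of $L_\ell(p,q)$ (indeed of all of $Q_\ell(p,q)$) carries the value $c$. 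Consequently there can be no $\gamma$-edge in the interior of $L_\ell(p,q)$. That monotonicity argument is what your proof is missing, and without it the map from divided regions to rigid $L$-blocks is not known to be well-defined, so the claimed bijection is not established.
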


\begin{proof}
	By Theorem \ref{theorem_main}, it is enough to show that
	\[
		\sum_{k=1}^{n-1} \dim S_k(\gamma) =  \sum_{L_k(a,b) \in \mathcal{L}(\gamma)} |L_k(a,b)|
	\] where 
	\[
		S_k(\gamma) = \prod_{\mathcal{D} \subset W_k(\gamma)} S_k(\mathcal{D})
	\]
	as defined in (\ref{equation_block_sphere}).
	
	Let $\mathcal{D}$ be a simple closed region in $W_k(\gamma)$. Suppose that $\mathcal{D}$ contains a bottom vertex of $W_k$ and
	$\mathcal{D} = M_j(a,b)$ for some $j \geq 1$ where $M_j(a,b)$ denotes the $j$-th $M$-shaped block whose top-left vertex is
	$(a,b)$.
	Then there are two edges $e_1$ and $e_2$ of $\gamma$ on the boundary of $M_j$ as we see below.
	\begin{figure}[h]
	\scalebox{0.9}{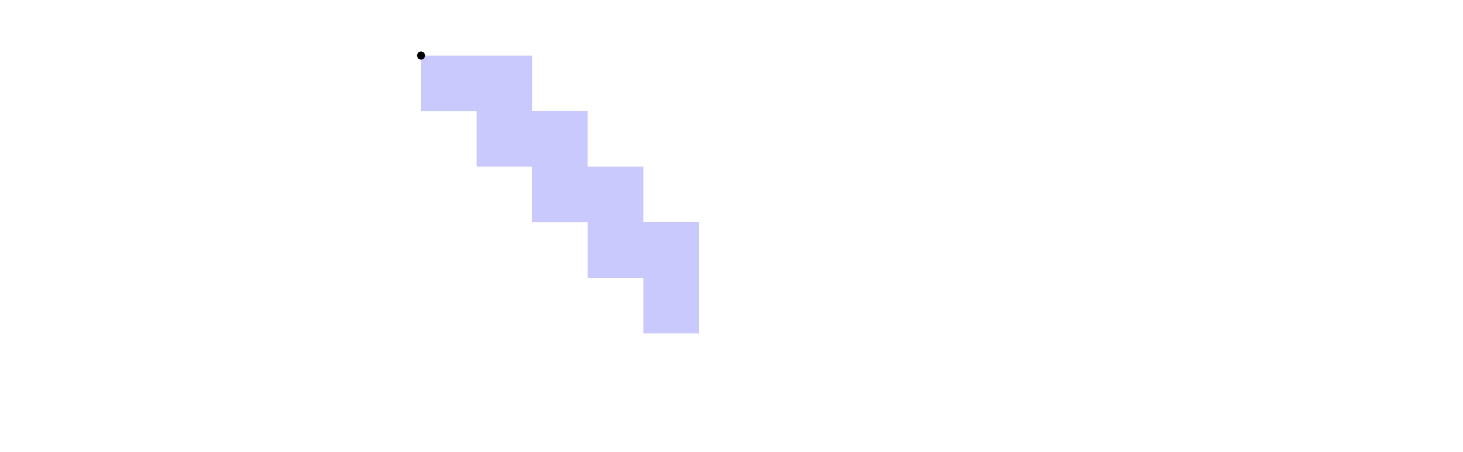}
	\end{figure}
	By~\eqref{rem:diagram-pattern},
	there is no edge of $\gamma$ in the interior of $Q_j(a+1,b-j+1)$. Thus $L_j(a+1,b-j+1)$
	is a rigid $L_j$-block in $\gamma$.
	
	Similarly, for any rigid $L$-shaped block $L_j(a+1,b-j+1)$ in $\gamma$ (for some $j \in \Z_{\geq 0}$ and $(a,b) \in (\Z_{\geq 1})^2$), 
	we can find a closed region $\mcal{D}$ in some $W_k(\gamma)$ such that 
	$\mcal{D}$ is $M_j(a,b)$ that contains a bottom vertex of $W_k$. 
	Therefore, 
	there is a one-to-one correspondence between the set of rigid $L$-shaped blocks in $\gamma$ and the set of simple closed regions $\mathcal{D}$ appeared
	in some $W_k(\gamma)$ such that
	such that $S_k(\mathcal{D}) = S^{2j-1}$. In other words,
	\[
		\begin{array}{rcl}
			\displaystyle \bigcup_{j=1} \,\, \{\textrm{rigid~$L_j$}$-$\mathrm{blocks~in~}\gamma \}  & \Leftrightarrow & \displaystyle \bigcup_{k=1}^{n-1} \bigcup_{j=1}
			\,\, \left\{ \mathcal{D} \subset W_k(\gamma) ~|~ S_k(\mathcal{D}) = S^{2j-1} \right\} \\ [0.5em]
			L_j(a+1,b-j+1) & \leftrightarrow & M_j(a,b).
		\end{array}
	\]
	Moreover, we have $|M_j| = |L_j| = \dim S_k(\mathcal{D}) = 2j-1$, and hence it completes the proof.
\end{proof}

The following corollary is derived from Theorem \ref{theorem_main},
Lemma \ref{lemma_L_disjoint} and Proposition \ref{proposition_classification_lagrangian_face}.

\begin{corollary}\label{corollary_L_fillable}
Let $\gamma$ and $f_\gamma$ be as in Proposition \ref{proposition_classification_lagrangian_face}.
Then the followings are equivalent.
	\begin{enumerate}
		\item For an interior point $\textbf{\textup{u}}$ of $f_\gamma$, the fiber $\Phi_\lambda^{-1}(\textbf{\textup{u}})$ is a Lagrangian submanifold of $(\mathcal{O}_\lambda, \omega)$.
		\item For any interior point $\textbf{\textup{u}}$ of $f_\gamma$, the fiber $\Phi_\lambda^{-1}(\textbf{\textup{u}})$ is a Lagrangian submanifold of $(\mathcal{O}_\lambda, \omega)$.
		\item The set of rigid L-shaped blocks in $\gamma$ covers the whole $\Gamma_\lambda$.
	\end{enumerate}
\end{corollary}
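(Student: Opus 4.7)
The plan is to reduce all three conditions to a single numerical criterion and then close them up using the structural results already established. Since Theorem~\ref{theorem_main} asserts that every GC fiber is isotropic and that the diffeomorphism type (in particular the dimension) of $\Phi_\lambda^{-1}(\textbf{\textup{u}})$ depends only on the face $f_\gamma$ containing $\textbf{\textup{u}}$ in its relative interior, the Lagrangian condition reduces to the dimensional equation $\dim \Phi_\lambda^{-1}(\textbf{\textup{u}}) = \dim_{\C} \mcal{O}_\lambda$. This property is manifestly independent of the chosen interior point $\textbf{\textup{u}}$, which yields the equivalence $(1) \Leftrightarrow (2)$ with no extra work.

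For $(2) \Leftrightarrow (3)$, the strategy is to compare two area-type quantities. On one side, Proposition~\ref{proposition_classification_lagrangian_face} computes
\[
\dim \Phi_\lambda^{-1}(\textbf{\textup{u}}) = \sum_{L_k(a,b) \in \mcal{L}(\gamma)} |L_k(a,b)| = \sum_{L_k(a,b) \in \mcal{L}(\gamma)} (2k-1),
\]
which is exactly the total area of the rigid L-shaped blocks in $\gamma$. Lemma~\ref{lemma_L_disjoint} guarantees that these blocks have pairwise disjoint interiors and they all sit inside $\Gamma_\lambda$; hence their total area is at most $\mathrm{Area}(\Gamma_\lambda)$, with equality precisely when the union of rigid L-blocks covers the whole ladder diagram.

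To finish the argument it remains to identify $\mathrm{Area}(\Gamma_\lambda)$ with $\dim_{\C} \mcal{O}_\lambda = \tfrac{1}{2}(n^2 - \sum_{i=1}^{r+1} k_i^2)$. Decomposing $\Gamma_\lambda$ into the rectangles $[n_j, n_{j+1}] \times [0, n - n_{j+1}]$ for $j=0,\ldots, r$ (which only share boundary edges) and using $k_{j+1} = n_{j+1}-n_j$, a brief telescoping calculation gives
\[
\mathrm{Area}(\Gamma_\lambda) \;=\; \sum_{j=0}^{r} k_{j+1}(n - n_{j+1}) \;=\; \tfrac{1}{2}\Bigl(n^2 - \sum_{i=1}^{r+1} k_i^2\Bigr).
\]
Combining this identity with the preceding paragraph, the fiber over $\textbf{\textup{u}} \in \mathring{f}_\gamma$ is Lagrangian if and only if the rigid L-blocks of $\gamma$ tile $\Gamma_\lambda$, which is $(3)$.

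I expect no serious obstacle here: Theorem~\ref{theorem_main}, Proposition~\ref{proposition_classification_lagrangian_face}, and Lemma~\ref{lemma_L_disjoint} already do the geometric and combinatorial heavy lifting, and the only remaining ingredient is the elementary area identity for $\Gamma_\lambda$. The mildest care point is making sure the rigid L-blocks, being disjoint with total area controlled by $\mathrm{Area}(\Gamma_\lambda)$, cover the diagram as soon as the dimensions match; this is immediate because they are finite unions of unit boxes all contained in $\Gamma_\lambda$.
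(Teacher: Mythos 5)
Your proposal is correct and follows precisely the route the paper has in mind (the paper itself only states that the corollary ``is derived from Theorem~\ref{theorem_main}, Lemma~\ref{lemma_L_disjoint} and Proposition~\ref{proposition_classification_lagrangian_face}''): reduce to the dimensional criterion via Theorem~\ref{theorem_main}, use Proposition~\ref{proposition_classification_lagrangian_face} to express that dimension as the total area of the rigid $L$-blocks, invoke Lemma~\ref{lemma_L_disjoint} for disjointness, and compare with $\mathrm{Area}(\Gamma_\lambda) = \dim_{\C}\mathcal{O}_\lambda$. The one step you pass over without comment is that rigid $L$-blocks are contained in $\Gamma_\lambda$; this does hold because the vertex set $V_{\Gamma_\lambda}$ is ``staircase-shaped'' (if $(a,b)\in V_{\Gamma_\lambda}$ and $0\le a'\le a$, $0\le b'\le b$, then $(a',b')\in V_{\Gamma_\lambda}$), so once the top and rightmost edges of $L_k(p,q)$ lie in $\gamma\subset\Gamma_\lambda$, every box of $L_k(p,q)$ does too — worth a line if you write this up.
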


Also, we have the following corollary which follows from Corollary \ref{corollary_L_fillable}.
\begin{corollary}\label{corollary_unless_projective}
	A Gelfand-Cetlin system $\Phi_\lambda \colon \mathcal{O}_\lambda \rightarrow \Delta_\lambda$ on a co-adjoint orbit $(\mathcal{O}_\lambda, \omega_\lambda)$
	always possesses a non-torus Lagrangian fiber unless $\mathcal{O}_\lambda$ is a projective space.
\end{corollary}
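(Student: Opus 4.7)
The plan is to produce, for every $\mcal{O}_\lambda$ that is not a projective space, an explicit Lagrangian face $\gamma$ of $\Gamma_\lambda$ whose covering by rigid $L$-blocks contains at least one $L_k$ with $k\geq 2$. By Theorem~\ref{theorem_main} and Theorem~\ref{theoremB}, such a face will yield a Lagrangian fiber of the form $T^{\dim f_\gamma}\times B(\textbf{\textup{u}})$ where $B(\textbf{\textup{u}})$ carries at least one sphere factor of dimension $\geq 3$, so the fiber cannot be a torus.

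The first step is to observe that $\mcal{O}_\lambda$ is a projective space precisely when $r=1$ and $\min(k_1,k_2)=1$; equivalently, $\Gamma_\lambda$ is a single horizontal or vertical strip of unit boxes. Assuming $\mcal{O}_\lambda$ is not a projective space, I would split into the cases $r\geq 2$ and ($r=1$ with $k_1,k_2\geq 2$), and check directly from Definition~\ref{definition_ladder_diagram} that in both cases the eight lattice points $(0,0),(0,1),(0,2),(1,0),(1,1),(1,2),(2,0),(2,1)$ all lie in $V_{\Gamma_\lambda}$. In the case $r=1$ with $k_1,k_2\geq 2$, they are all contained in $[0,n_1]\times[0,n-n_1]$; in the case $r\geq 2$, they are covered by the union of the first two nested rectangles $[0,n_1]\times[0,n-n_1]$ and $[n_1,n_2]\times[0,n-n_2]$. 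This shows that the three unit boxes $\square^{(1,1)},\square^{(1,2)},\square^{(2,1)}$ are all contained in $\Gamma_\lambda$, i.e., $L_2(1,1)\subset\Gamma_\lambda$.

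The second step is the construction of $\gamma$. I would consider the partition $\mcal{T}$ of $\Gamma_\lambda$ whose blocks are $L_2(1,1)$ together with $L_1(a,b)$ for every remaining unit box $\square^{(a,b)}\subset\Gamma_\lambda\setminus L_2(1,1)$. Take $\gamma$ to be the subgraph of $\Gamma_\lambda$ obtained as the union of the top and right edges of each block in $\mcal{T}$. One must verify that $\gamma$ is a genuine face in the sense of Definition~\ref{definition_face}: it contains every top vertex of $\Gamma_\lambda$ (since the outermost $L_1$-blocks in $\mcal{T}$ contribute precisely the staircase boundary of $\Gamma_\lambda$), and it is a union of positive paths (each positive path to a top vertex traces the tiling boundary). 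Once this is done, every block in $\mcal{T}$ is rigid in $\gamma$ by construction, so $\mcal{L}(\gamma)\supseteq\mcal{T}$ covers $\Gamma_\lambda$ and $\gamma$ is Lagrangian by Corollary~\ref{corollary_L_fillable}.

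Finally, the rigid block $L_2(1,1)$ corresponds, via the bijection in the proof of Proposition~\ref{proposition_classification_lagrangian_face}, to an $M_2$-block inside some $W_k(\gamma)$ containing a bottom vertex, contributing an $S^3$-factor to $S_k(\gamma)$. Invoking Theorem~\ref{theoremB}, the fiber $\Phi^{-1}_\lambda(\textbf{\textup{u}})$ decomposes as $T^{\dim f_\gamma}\times B(\textbf{\textup{u}})$, where $B(\textbf{\textup{u}})$ is an iterated bundle whose stage fibers are products of odd-dimensional spheres of dimension at least three; in particular $B(\textbf{\textup{u}})$ is simply connected of positive dimension. Hence $\pi_1(\Phi^{-1}_\lambda(\textbf{\textup{u}}))\cong\Z^{\dim f_\gamma}$ while $\dim\Phi^{-1}_\lambda(\textbf{\textup{u}})>\dim f_\gamma$, which forbids $\Phi^{-1}_\lambda(\textbf{\textup{u}})$ from being a torus. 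The main obstacle is the face verification in the third paragraph: although geometrically transparent, one must be careful to write out the positive paths explicitly and to confirm that the tiling edges assemble into a subgraph of $\Gamma_\lambda$ satisfying both axioms of Definition~\ref{definition_face}, in particular when the nested-rectangle shape of $\Gamma_\lambda$ for $r\geq 2$ forces a mild case analysis on the relative positions of the $L_1$-blocks adjacent to $L_2(1,1)$.
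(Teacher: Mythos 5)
Your overall plan is sound and matches the route the paper intends (it deduces the corollary from the $L$-fillability criterion, Corollary~\ref{corollary_L_fillable}), and your first two steps are correct: $\mcal{O}_\lambda$ is a projective space exactly when $r=1$ and $\min(k_1,k_2)=1$, and otherwise the eight lattice points you list all lie in $V_{\Gamma_\lambda}$, so $L_2(1,1)\subset\Gamma_\lambda$.

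The gap is in the explicit construction of $\gamma$. The subgraph consisting of the top and right edges of each block in $\mcal{T}$ is \emph{not} a face of $\Gamma_\lambda$: it never contains any edge along the left column $\{a=0\}$ or the bottom row $\{b=0\}$, so in particular the origin $(0,0)$ is not even a vertex of your $\gamma$, which is fatal since every positive path starts at $(0,0)$. (Concretely, for $\mcal{F}(3)$ your $\gamma$ has only the two edges $(0,2)\text{--}(1,2)$ and $(2,0)\text{--}(2,1)$, or four edges in the staircase reading; either way it is disconnected and fails Definition~\ref{definition_face}.) Moreover, if you try to patch this by keeping \emph{all} edges of $\Gamma_\lambda$ and merely deleting the two interior edges of $L_2(1,1)$, the result still fails: the remaining two edges at $(1,1)$, namely $(1,1)\text{--}(1,2)$ and $(1,1)\text{--}(2,1)$, become dangling and cannot lie on any positive path from $(0,0)$. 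The correct face is obtained by deleting \emph{all four} edges incident to $(1,1)$; then one verifies (using the downward-closedness of $V_{\Gamma_\lambda}$ to reroute any positive path around $(1,1)$) that the remaining graph is a union of positive paths containing every top vertex, that $L_2(1,1)$ and each $L_1(a,b)$ with $\square^{(a,b)}\not\subset L_2(1,1)$ is rigid, and that these tiles cover $\Gamma_\lambda$, so the face is Lagrangian with a rigid $L_2$-block. This matches the faces the paper actually exhibits, e.g.\ the vertex $v_3$ in Example~\ref{example_123}/\ref{example_lag_fiber_123_vertex} and the faces in Example~\ref{example_gr26}. The final step of your argument---invoking Theorem~\ref{theoremB} (or just Proposition~\ref{proposition_pi_2_zero_pi_1_k}) to rule out the torus---is fine once the face is fixed.
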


\begin{example}\label{example_gr26}
	Let $\lambda = \{t,t,0,0,0,0\}$ with $t >0$. The co-adjoint orbit $\mathcal{O}_\lambda$ is a complex Grassmannian $\mathrm{Gr}(2,6)$
	of two planes in $\C^6$ and the corresponding ladder diagram
	$\Gamma_\lambda$ is given as below.
	\begin{figure}[H]
	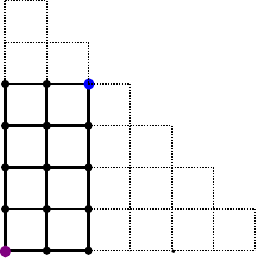
	\end{figure}
	Observe that any faces of $\Gamma_\lambda$ do not admit rigid $L_k$-blocks of $k > 2$. 
	Also, note that there are three Lagrangian faces $\gamma_1$, $\gamma_2$ and $\gamma_3$ of $\Gamma_\lambda$
	which have only one rigid $L_2$-block as follows.
	\begin{figure}[H]
	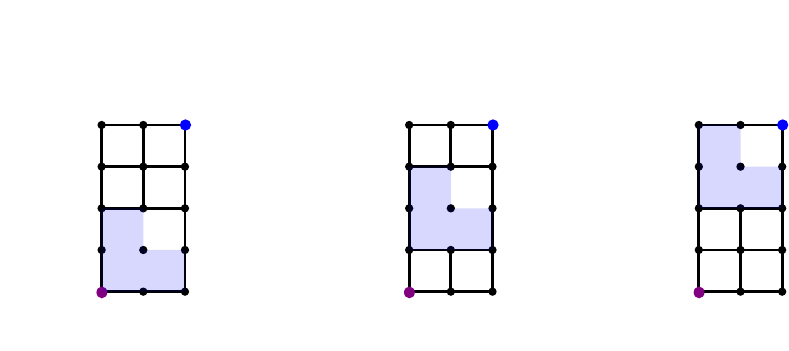
	\end{figure}
	Finally, there is exactly one Lagrangian face $\gamma_4$ which has two rigid $L_2$-blocks as below.
	\begin{figure}[H]
	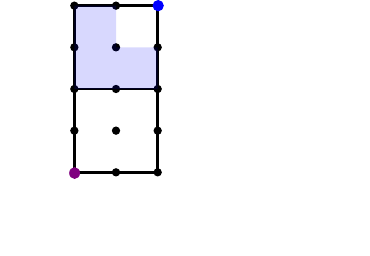
	\end{figure}
	Thus there are exactly four proper Lagrangian faces $\gamma_1, \gamma_2, \gamma_3$ and $\gamma_4$ of $\Gamma_\lambda$.
\end{example}

\begin{example}
	Let $\lambda = \{3,2,1,0\}$. Then the co-adjoint orbit $\mathcal{O}_\lambda$ is a complete flag manifold $\mcal{F}(4)$ and the corresponding ladder diagram $\Gamma_\lambda$ is as follows.
	\begin{figure}[H]
	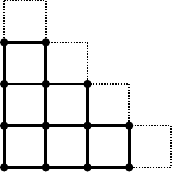
	\end{figure}
	We can easily see that any face of $\Gamma_\lambda$ does not have a rigid $L_k$-block for $k \geq 3$.
	There are exactly three Lagrangian faces of $\Gamma_\lambda$ containing one rigid $L_2$-block as below.
	 \begin{figure}[H]
	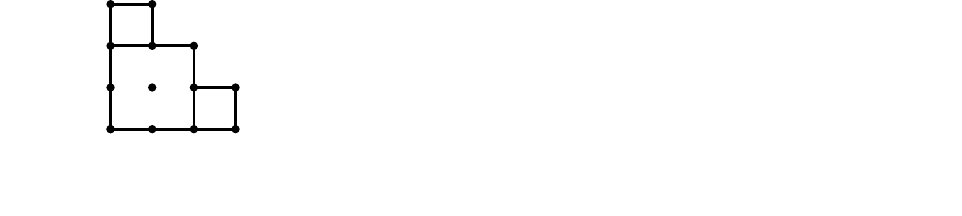
	\end{figure}
	Also, it is not hard to see that there is no Lagrangian face that contains more than one $L_2$-block.
	Thus $\gamma_1$, $\gamma_2$, and $\gamma_3$ are the only proper Lagrangian faces of $\Gamma_\lambda$.
\end{example}

\vspace{0.1cm}
\section{Iterated bundle structures on Gelfand-Cetlin fibers}
\label{secIteratedBundleStructuresOnGelfandCetlinFibers}
\label{section4}

In this section, for each point $\textbf{\textup{u}}$ in a GC polytope $\Delta_\lambda$, we will construct the iterated bundle $E_\bullet$ described in Section \ref{secLagrangianFibersOfGelfandCetlinSystems} whose total space is the fiber $\Phi^{-1}_\lambda(\textbf{\textup{u}})$. Using this construction, we complete the proof of Theorem \ref{theorem_main} by showing that
each $\Phi^{-1}_\lambda(\textbf{\textup{u}})$ is an isotropic submanifold of $\mathcal{O}_\lambda$.

	For a fixed integer $k \geq 1$, consider sequences  $\fa = (a_1, \cdots, a_{k+1})$ and $\fb = (b_1, \cdots, b_k)$ of real numbers satisfying
	\begin{equation}\label{patternaandb}
		a_1 \geq b_1 \geq a_2 \geq b_2 \geq \cdots \geq a_k \geq b_k \geq a_{k+1}.
	\end{equation}
	Denoting by $\mathcal{H}_\ell$ the set of $(\ell \times \ell)$ hermitian matrices for $\ell \geq 1$ and by $\mathrm{sp}(x)$ the spectrum of $x$, we let
	\[
			\mathcal{O}_\fa =  \left\{ x \in \mathcal{H}_{k+1} ~\big{|}~  \mathrm{sp}(x) = \{a_1, \cdots, a_{k+1}\} \right\}
	\]
	be the co-adjoint $U(k+1)$-orbit of the diagonal matrix $I_\fa := \mathrm{diag}(a_1,\cdots, a_{k+1})$ in $\mathcal{H}_{k+1}  \cong \uu(k+1)^*$. 
	
	Consider a subspace
	\begin{align*}
		\mathcal{A}_{k+1}(\fa,\fb) & = \left\{ x \in \mathcal{H}_{k+1} ~\big{|}~ \mathrm{sp}(x) = \{a_1, \cdots, a_{k+1}\}, ~\mathrm{sp}(x^{(k)}) = \{b_1, \cdots, b_k\} \right\}
	\end{align*}
	where $x^{(k)}$ denotes the $(k \times k)$ leading principal minor submatrix of $x$.
	It naturally comes with the projection map
	\[
		\begin{array}{ccccc}
		\rho_{k+1} \colon & \mathcal{A}_{k+1}(\fa,\fb) & \rightarrow & \mathcal{O}_\fb \\
		                     &                   x                     & \mapsto & x^{(k)}\\
		\end{array}
	\]
	from $\mathcal{A}_{k+1}(\fa,\fb)$ to the co-adjoint $U(k)$-orbit $\mathcal{O}_\fb$ of the diagonal matrix $I_\fb$ in $\mathcal{H}_k \cong \uu(k)^*$.

	Let $W_k(\fa,\fb)$ be the $k$-th $W$-shaped block $W_k$ together with walls defined by the equalities of $a_i$'s and $b_j$'s as in Figure~\ref{figure_block_a_b}.
	By comparing the divided regions by the walls on $W_k(\fa,\fb)$ with $M$-shaped blocks as in~\eqref{equation_block_sphere},
	we define a topological space $S_k(\fa,\fb)$, which is either a single point or a product space of odd dimensional spheres.
	
	\begin{figure}[H]
	\scalebox{0.9}{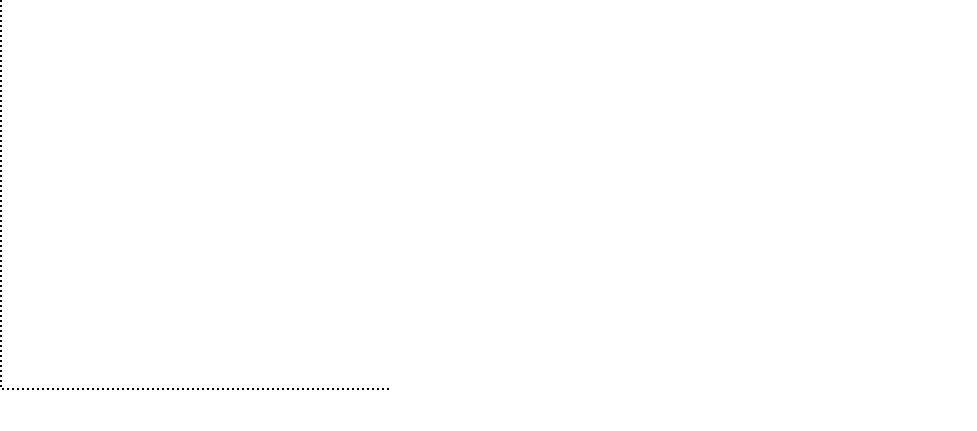}
	\caption{\label{figure_block_a_b} $W_k(\fa, \fb)$}
	\end{figure}	

	\begin{example}
	\begin{enumerate}
	\item
		For $\fa = (a_1,a_2,a_3) = (5,4,2)$ and $\fb = (b_1, b_2) = (4,2)$, $W_2(\fa,\fb)$ is divided by three simply closed regions $\mcal{D}_1, \mcal{D}_2$ and $\mcal{D}_3$. Since $\mcal{D}_1$ does not contain any bottom vertices and neither $\mcal{D}_2$ nor $\mcal{D}_3$ match with $M$-shaped blocks, $S_2(\fa,\fb) = S_2(\mathcal{D}_1) \times S_2(\mathcal{D}_2)  \times S_2(\mathcal{D}_3)  \cong \mathrm{point}$.
		 \begin{figure}[H]
				\scalebox{0.9}{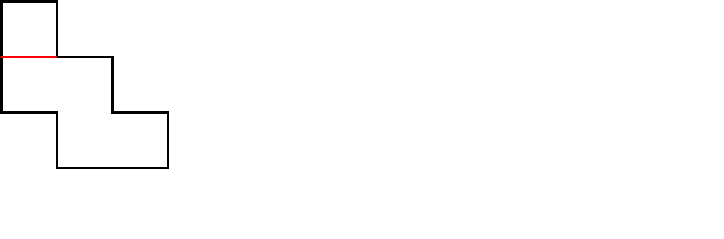}
		\end{figure}
		\vspace{-0.2cm}

		\item		For $\fa = (5,4,2)$ and $\fb = (4,4)$, $W_2(\fa,\fb)$ is divided by three simply closed regions $\mcal{D}_1, \mcal{D}_2$ and $\mcal{D}_3$. Observe that $\mcal{D}_1$ and $\mcal{D}_3$ do not contain any bottom vertices and $\mathcal{D}_2$ is an $M_2$-block containing bottom vertices of $W_2$. Therefore, we have $S_2(\fa,\fb) = \mathrm{point} \times S^3 \times \mathrm{point} \cong S^3.$

		\begin{figure}[H]
				\scalebox{0.9}{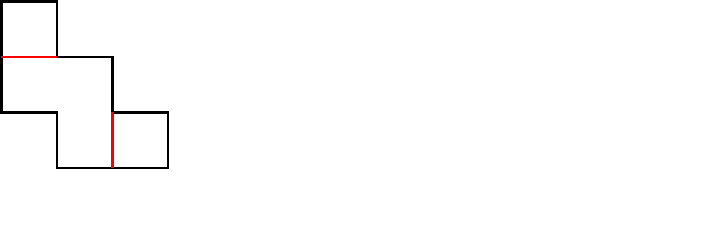}
		\end{figure}
		\vspace{-0.3cm}	
	\end{enumerate}
	\end{example}
	
	\begin{proposition}\label{proposition_A_k_S_k_bundle}\label{proposition_bundle}
		With the notations above, $\rho_{k+1} \colon \mathcal{A}_{k+1}(\fa,\fb) \rightarrow \mathcal{O}_\fb$ is an $S_k(\fa,\fb)$-bundle over $\mathcal{O}_\fb$.
	\end{proposition}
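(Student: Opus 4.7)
First I would exploit equivariance. The unitary group $U(k)$ acts on $\mcal{H}_{k+1}$ by conjugation through the block embedding $U \mapsto \mathrm{diag}(U, 1)$; since this action preserves both the full spectrum of $x$ and the spectrum of its $(k \times k)$ leading principal minor, it preserves $\mcal{A}_{k+1}(\fa, \fb)$, and $\rho_{k+1}$ is $U(k)$-equivariant for the transitive $U(k)$-action on $\mcal{O}_\fb$. It follows that $\mcal{A}_{k+1}(\fa, \fb) \cong U(k) \times_H F$, where $H$ is the $U(k)$-isotropy of $I_\fb$ and $F := \rho_{k+1}^{-1}(I_\fb)$, so $\rho_{k+1}$ is automatically a locally trivial fiber bundle with fiber $F$. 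It remains to identify $F$ with $S_k(\fa, \fb)$.

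Next I would parametrize and analyze $F$. The fiber consists of hermitian matrices
\[
x = \begin{pmatrix} I_\fb & v \\ v^* & c \end{pmatrix}, \qquad v \in \C^k, \,\, c \in \R,
\]
such that $\mathrm{sp}(x) = \{a_1, \ldots, a_{k+1}\}$. Let $\beta_1 > \cdots > \beta_s$ be the distinct values appearing in $\fb$, with multiplicities $m_1, \ldots, m_s$, and decompose $v = (v_1, \ldots, v_s)$ with $v_p \in \C^{m_p}$ according to the eigenspace decomposition of $I_\fb$. Setting $l_p := m_1 + \cdots + m_{p-1}$, the interlacing \eqref{patternaandb} forces $a_{l_p + 2} = \cdots = a_{l_p + m_p} = \beta_p$, leaving the ``free'' values $\tilde{a}_p := a_{l_p + 1}$ for $p = 1, \ldots, s+1$ subject to $\tilde{a}_p \geq \beta_p \geq \tilde{a}_{p+1}$. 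A Schur-complement computation yields
\[
\det(\lambda I - x) = \prod_p (\lambda - \beta_p)^{m_p - 1} \cdot \Bigl[ (\lambda - c)\prod_p (\lambda - \beta_p) - \sum_p w_p \prod_{q \neq p} (\lambda - \beta_q) \Bigr]
\]
with $w_p := |v_p|^2$. Equating this to $\prod_i (\lambda - a_i) = \prod_p (\lambda - \beta_p)^{m_p - 1} \cdot \prod_{p'} (\lambda - \tilde{a}_{p'})$, evaluating at $\lambda = \beta_p$, and comparing traces give the explicit formulas
\[
w_p = -\frac{\prod_{p' = 1}^{s+1} (\beta_p - \tilde{a}_{p'})}{\prod_{q \neq p} (\beta_p - \beta_q)}, \qquad c = \sum_p \tilde{a}_p - \sum_p \beta_p.
\]
A sign count using $\tilde{a}_{p'} \geq \beta_{p'} \geq \tilde{a}_{p'+1}$ shows $w_p \geq 0$, with $w_p > 0$ iff both strict inequalities $\tilde{a}_p > \beta_p > \tilde{a}_{p+1}$ hold.

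Finally I would match with the combinatorics. With $c$ and the $w_p$'s determined, $F$ factors as $\prod_{p=1}^s \{ v_p \in \C^{m_p} : |v_p|^2 = w_p \}$, which contributes $S^{2m_p - 1}$ when $w_p > 0$ and a single point when $w_p = 0$. To identify this product with $S_k(\fa, \fb)$, observe that the two walls flanking the $\beta_p$-block in $W_k(\fa, \fb)$, namely the wall between $a_{l_p + 1}$ and $b_{l_p + 1}$ and the wall between $b_{l_p + m_p}$ and $a_{l_p + m_p + 1}$, are present precisely when $\tilde{a}_p > \beta_p$ and $\beta_p > \tilde{a}_{p+1}$, respectively. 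Hence $w_p > 0$ is equivalent to the $\beta_p$-block being isolated as a translated $M_{m_p}$-block in $W_k(\fa, \fb)$; any such block includes lower-row boxes whose bottom-left lattice points lie on the diagonal $a + b = k - 1$ of bottom vertices, so it contains a bottom vertex of $W_k$ and contributes the factor $S^{2m_p - 1}$. The only other possible $M$-shaped regions are isolated single upper-row boxes (corresponding to $\tilde{a}_p$ lying strictly between $\beta_{p-1}$ and $\beta_p$); these lie entirely in the upper row and contain no bottom vertex, so they contribute only points. Assembling the factors yields $F \cong S_k(\fa, \fb)$.

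The main obstacle I anticipate is the sign analysis establishing $w_p \geq 0$ and pinning down the precise strict-inequality condition for $w_p > 0$, as this residue-type computation is what ties the algebraic fiber data to the wall combinatorics; once those facts are in hand, the translation between isolated $M_{m_p}$-blocks and sphere factors of $F$ is a direct unpacking of the definition of $S_k(\fa, \fb)$.
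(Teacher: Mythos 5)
Your proof is correct and takes a genuinely different route from the paper's. The equivariance observation that turns $\rho_{k+1}$ into a $U(k)$-bundle with fiber $F = \rho_{k+1}^{-1}(I_\fb) = \widetilde{\mcal{A}}_{k+1}(\fa,\fb)$ is common to both arguments, but the identification of $F$ differs. The paper first treats the generic case (all $a_i, b_j$ distinct) in Lemma~\ref{lemma_fiber_torus}, obtaining uniqueness of the radii \emph{indirectly} through a dimension count: a tower of such bundles is shown to realize a Gelfand-Cetlin fiber over an interior point of $\Delta_\fa$, and the known Lagrangian dimension of that fiber forces $\dim_\R \widetilde{\mcal{A}}_{k+1}(\fa,\fb) = k$; the degenerate patterns are then reduced to this case through Lemmas~\ref{lemma_fiber_zero_a_b}--\ref{lemma_fiber_sphere_b_b}. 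You instead obtain closed-form expressions for the radii $w_p$ by dividing the characteristic-polynomial identity by the common factor $\prod_p (\lambda - \beta_p)^{m_p-1}$ and evaluating at $\lambda = \beta_p$, and you control the sign of $w_p$ directly from the interlacing inequalities $\tilde{a}_p \geq \beta_p \geq \tilde{a}_{p+1}$; this handles all equality patterns uniformly in one computation and avoids both the case split and the detour through the GC fiber of $\mcal{O}_\fa$. The trade-off is that the paper's route produces the bundle tower as a by-product (which it reuses elsewhere), whereas yours is more self-contained and elementary. Your final dictionary---$w_p > 0$ precisely when the $\beta_p$-block is a translated $M_{m_p}$-block containing a bottom vertex, with all other simple closed regions (isolated $a$-boxes and merged blocks) contributing points---is also accurate and coincides with the combinatorics underlying the definition of $S_k(\fa,\fb)$.
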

	
	Before starting the proof of Proposition \ref{proposition_A_k_S_k_bundle}, as preliminaries, we introduce some notations and prove lemmas. We denote by $\widetilde{\mathcal{A}}_{k+1}(\fa,\fb)$ the set of matrices in $\mathcal{A}_{k+1}(\fa,\fb)$ whose $(k \times k)$ leading principal minor is the diagonal matrix $I_\fb$. So, a matrix in $\widetilde{\mathcal{A}}_{k+1}(\fa,\fb)$ is of the form
	\[
		Z_{(\fa,\fb)}(z) =   \begin{pmatrix} b_1 & & 0 & \bar{z}_1\\
                	  & \ddots & & \vdots\\
                  	0 & & b_k & \bar{z}_k\\
                  	z_1 & \hdots & z_k & z_{k+1}
		  \end{pmatrix}
		  \vspace{0.2cm}
	\]
	for $z = (z_1, \cdots, z_k) \in \C^k$. Since $Z_{(\fa,\fb)}(z)$ has the eigenvalues $\fa = \{a_1, \cdots, a_{k+1} \}$, the $(k+1,k+1)$-entry of $Z_{(\fa,\fb)}(z)$ is to be constant
	\[
		z_{k+1} = \sum_{i=1}^{k+1} a_i - \sum_{i=1}^k b_i \in \R
	\]
	by computing the trace of $Z_{(\fa,\fb)}(z)$.
	The characteristic polynomial of $Z_{(\fa,\fb)}(z)$ is expressed as
	\begin{equation}\label{equation_det}
		\det(xI - Z_{(\fa,\fb)}(z)) = (x - z_{k+1}) \cdot \prod_{i=1}^k (x-b_i) - \sum_{j=1}^k \left( \frac{|z_j|^2}{x-b_j} \cdot \prod_{i=1}^k (x-b_i) \right) = 0,
	\end{equation}
	whose zeros are $x = a_1, \cdots, a_{k+1}$ by our assumption. By inserting $x = a_1, \cdots, a_{k+1}$ into~\eqref{equation_det}, we obtain the system of $(k+1)$ equations of real coefficients, which are linear with respect to $(|z_1|^2, \cdots, |z_k|^2) \in (\R_{\geq 0})^k$.
	We sometimes regard an element $\widetilde{\mathcal{A}}_{k+1}(\fa,\fb)$ as an element $\C^k$ under the identification $Z_{(\fa, \fb)}(z) \mapsto z$.
	The following lemma implies that the solution space is \emph{never} empty as long as $(\fa, \fb)$ obeys~\eqref{patternaandb}.
	
	\begin{lemma}[Lemma 3.5 in \cite{NNU}]\label{lemma_NNU}
		Let $a_1, b_1, \cdots, a_k, b_k$, and $a_{k+1}$ be real numbers satisfying~\eqref{patternaandb}.
	Then there exists $z_1, \dots, z_k \in \mathbb{C}$ and $z_{k+1} \in \mathbb{R}$ such that
	\[
		  \begin{pmatrix} b_1 & & 0 & \bar{z}_1\\
                	  & \ddots & & \vdots\\
                  	0 & & b_k & \bar{z}_k\\
                  	z_1 & \hdots & z_k & z_{k+1}
		  \end{pmatrix}
	\]
	has eigenvalues $a_1, \dots , a_{k+1}$.
	\end{lemma}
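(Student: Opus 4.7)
The plan is to turn the eigenvalue condition into an explicit system of $(k+1)$ real equations in the unknowns $|z_1|^2, \ldots, |z_k|^2$ and $z_{k+1}$, read off from the characteristic polynomial formula~\eqref{equation_det}. First, matching the trace of $Z_{(\fa,\fb)}(z)$ with the sum of prescribed eigenvalues forces
\begin{equation*}
z_{k+1} \;=\; \sum_{i=1}^{k+1} a_i - \sum_{j=1}^k b_j \;\in\; \R,
\end{equation*}
so that only the $k$-tuple $(|z_1|^2, \ldots, |z_k|^2) \in (\R_{\geq 0})^k$ remains to be produced.

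Next, I would divide~\eqref{equation_det} through by $\prod_j (x - b_j)$ to rewrite the eigenvalue condition as the equality of rational functions
\begin{equation*}
x - z_{k+1} - \sum_{j=1}^k \frac{|z_j|^2}{x - b_j} \;=\; \frac{\prod_{i=1}^{k+1}(x - a_i)}{\prod_{j=1}^k (x - b_j)}.
\end{equation*}
In the generic regime $b_1 > b_2 > \cdots > b_k$, matching residues at $x = b_\ell$ via a partial fraction expansion of the right-hand side uniquely determines
\begin{equation*}
|z_\ell|^2 \;=\; -\frac{\prod_{i=1}^{k+1}(b_\ell - a_i)}{\prod_{j \neq \ell}(b_\ell - b_j)}, \qquad \ell = 1, \ldots, k,
\end{equation*}
while the constant term matches automatically with the value of $z_{k+1}$ already computed. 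I then verify nonnegativity by a direct sign count: under~\eqref{patternaandb}, the numerator carries $\ell$ nonpositive factors (from $i \leq \ell$, since $a_i \geq a_\ell \geq b_\ell$) and $k+1-\ell$ nonnegative factors (from $i > \ell$), so its sign is $(-1)^\ell$; the denominator carries $\ell - 1$ nonpositive factors and $k - \ell$ nonnegative factors, so its sign is $(-1)^{\ell-1}$. The explicit minus sign then makes the ratio nonnegative, so each $|z_\ell|^2 \geq 0$, and we can take $z_\ell$ to be any complex number with the prescribed modulus.

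The main obstacle will be the degenerate regime in which some consecutive $b_j$'s coincide. Here the partial fraction expansion above is invalid, and the formula for $|z_\ell|^2$ is an indeterminate $0/0$, since~\eqref{patternaandb} forces $a_{\ell+1} = b_\ell = b_{\ell+1}$ whenever $b_\ell = b_{\ell+1}$. I plan to resolve this by a continuity/compactness argument: approximate $(\fa, \fb)$ by strictly interlaced sequences $(\fa^{(n)}, \fb^{(n)}) \to (\fa, \fb)$, apply the generic construction to obtain matrices $Z^{(n)} \in \widetilde{\mcal{A}}_{k+1}(\fa^{(n)}, \fb^{(n)})$, and extract a convergent subsequence using the uniform operator-norm bound $\|Z^{(n)}\| \leq \max_i |a_i^{(n)}|$. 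Since the spectrum depends continuously on matrix entries, the limit matrix lies in $\widetilde{\mcal{A}}_{k+1}(\fa, \fb)$ and completes the proof. An equally viable alternative is induction on $k$: when $b_\ell = b_{\ell+1}$, set the corresponding $z_\ell = 0$ so that $Z_{(\fa, \fb)}(z)$ block-decomposes, and reduce to the $(k-1)$-case with the eigenvalue $a_{\ell+1}$ peeled off.
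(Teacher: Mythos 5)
The paper cites this as Lemma~3.5 of Nishinou--Nohara--Ueda and gives no proof of its own, so there is no in-paper argument to compare against; your proposal stands as an independent proof, and it is correct. The partial-fraction derivation of the residue formula
\[
|z_\ell|^2 = -\frac{\prod_{i=1}^{k+1}(b_\ell - a_i)}{\prod_{j \neq \ell}(b_\ell - b_j)}
\]
together with the sign count — $\ell$ nonpositive and $k+1-\ell$ nonnegative factors above, $\ell-1$ negative and $k-\ell$ positive factors below, giving net sign $-1$ cancelled by the explicit minus — is the standard (and essentially the only) route for the strictly interlaced case, and your bookkeeping is accurate. Your handling of the degenerate case is also sound: either the compactness argument (approximate by strictly interlaced data, use the uniform operator-norm bound $\|Z^{(n)}\| \leq \max_i |a_i^{(n)}|$ to extract a convergent subsequence, and invoke continuity of the spectrum) or the inductive reduction (if $b_\ell = b_{\ell+1}$, then \eqref{patternaandb} forces $a_{\ell+1} = b_\ell$, and setting $z_\ell = 0$ splits off a $1\times 1$ block and reduces to the $k-1$ case with the interlacing preserved) closes the gap. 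It is worth noting that your inductive alternative is in the same spirit as the reduction lemmas the paper does prove — Corollary~\ref{corollary_fiber_zero_a_b} and Lemmas~\ref{lemma_fiber_zero_b_a}, \ref{lemma_fiber_zero_a_a} — which similarly peel off coincident eigenvalues to reduce to the strictly interlaced situation; so while the existence statement itself is outsourced to [NNU], your argument dovetails with the mechanisms the paper actually develops in Section~\ref{secIteratedBundleStructuresOnGelfandCetlinFibers}.
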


	\begin{lemma}\label{lemma_fiber_torus}
		Suppose in addition that $a_1, b_1, \cdots, a_k, b_k$, and $a_{k+1}$ in Lemma~\ref{lemma_NNU} are all distinct.
		Then there exist positive numbers $\delta_1, \cdots, \delta_k$ such that
		\[
			\begin{array}{ccl}
			\widetilde{\mathcal{A}}_{k+1}(\fa,\fb) & = & \left \{
				  \begin{pmatrix} b_1 & & 0 & \bar{z}_1\\
                	  & \ddots & & \vdots\\
                  	0 & & b_k & \bar{z}_k\\
                  	z_1 & \hdots & z_k & z_{k+1}
		  \end{pmatrix}
		\in \mathcal{A}_{k+1}(\fa,\fb)
		  ~:~ (z_1,\cdots, z_k) \in \C^k, ~|z_i|^2 = \delta_i \textup{ for } i=1,\cdots, k \right \} \\[0.8em]
		  \end{array}
		\]
		where $z_{k+1} = \sum_{i=1}^{k+1} a_i - \sum_{i=1}^k b_i$.
		In particular, we have $\widetilde{\mathcal{A}}_{k+1}(\fa,\fb) \cong T^k$.
	\end{lemma}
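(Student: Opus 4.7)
The plan is to read off the values of $|z_i|^2$ forced by the spectral condition using the explicit form of the characteristic polynomial in \eqref{equation_det}, and then verify that each such value is strictly positive under the strict interlacing assumption. First I would rewrite \eqref{equation_det} as the genuine polynomial identity
\[
\det(xI - Z_{(\fa,\fb)}(z)) = (x - z_{k+1}) \prod_{i=1}^k (x-b_i) - \sum_{j=1}^k |z_j|^2 \prod_{i \neq j}(x-b_i),
\]
and set it equal to $\prod_{i=1}^{k+1}(x - a_i)$, which is what the determinant must equal whenever $Z_{(\fa,\fb)}(z) \in \mathcal{A}_{k+1}(\fa,\fb)$.

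Next, evaluating both sides at $x = b_\ell$ for each $\ell = 1, \ldots, k$ kills every term on the left except one, yielding
\[
-|z_\ell|^2 \prod_{i \neq \ell}(b_\ell - b_i) = \prod_{i=1}^{k+1}(b_\ell - a_i),
\]
so $|z_\ell|^2$ must equal the explicit quantity
\[
\delta_\ell := -\frac{\prod_{i=1}^{k+1}(b_\ell - a_i)}{\prod_{i \neq \ell}(b_\ell-b_i)}.
\]
The strict interlacing $a_1 > b_1 > a_2 > \cdots > a_k > b_k > a_{k+1}$ produces exactly $\ell$ negative factors (from $a_1, \ldots, a_\ell$) in the numerator and exactly $\ell - 1$ negative factors (from $b_1, \ldots, b_{\ell-1}$) in the denominator, so the fraction is negative and hence $\delta_\ell > 0$.

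For the converse direction, I would argue that any $z \in \mathbb{C}^k$ with $|z_i|^2 = \delta_i$ places $Z_{(\fa,\fb)}(z)$ inside $\mathcal{A}_{k+1}(\fa,\fb)$. Both $\det(xI - Z_{(\fa,\fb)}(z))$ and $\prod_{i=1}^{k+1}(x-a_i)$ are monic of degree $k+1$, share the same $x^k$ coefficient by the trace identity $z_{k+1} = \sum a_i - \sum b_i$, and agree at the $k$ distinct points $b_1, \ldots, b_k$ by the computation above. Their difference is then a polynomial of degree at most $k - 1$ vanishing at $k$ distinct points, hence identically zero. This identifies $\widetilde{\mathcal{A}}_{k+1}(\fa,\fb)$ with the product $\prod_{i=1}^k \{z_i \in \mathbb{C} : |z_i|^2 = \delta_i\}$, which is a $k$-torus. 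The only mildly subtle step will be the sign bookkeeping that certifies $\delta_\ell > 0$, but this reduces to the routine parity count above.
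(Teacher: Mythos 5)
Your proof is correct, and it takes a genuinely different and more elementary route than the paper's. The paper does not compute the $\delta_\ell$ explicitly at all. Instead, it invokes Lemma~\ref{lemma_NNU} to know that at least one nonnegative solution of the linear system~\eqref{equation_det} exists, rules out any $|z_j|^2 = 0$ by observing that this would force $b_j$ to be an eigenvalue, and then establishes uniqueness of the solution indirectly: it notes that uniqueness is equivalent to $\dim_\R \widetilde{\mathcal{A}}_{k+1}(\fa,\fb) = k$, and it pins down this dimension by iterating the $\rho_j$-bundle construction down to $\mathcal{O}_{\fc}$, $\mathcal{O}_{\fd}$, etc., and comparing the sum of fiber dimensions with $\tfrac12 \dim_\R \mathcal{O}_\fa = k(k+1)/2$. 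Your approach bypasses both the existence input and the dimension count: evaluating the characteristic-polynomial identity at $x = b_\ell$ isolates each $|z_\ell|^2$ in closed form, the interlacing inequalities give positivity directly by the $(-1)^\ell$ versus $(-1)^{\ell-1}$ parity count, and the converse (any $z$ with $|z_i|^2 = \delta_i$ actually lies in $\mathcal{A}_{k+1}(\fa,\fb)$) follows from the observation that two monic degree-$(k+1)$ polynomials with matching $x^k$ coefficient that agree at $k$ distinct points must coincide. This is cleaner and self-contained, and it even yields an explicit formula for the $\delta_\ell$, at the cost of not producing the auxiliary iterated-bundle picture that the paper's argument sets up for later use.
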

	\begin{proof}
		We first note that if $|z_j|^2 = 0$ for some $j$, then the equation (\ref{equation_det}) (with respect to $x$) has a solution $x=b_j$. It implies that
		$b_j \in \{a_1, \cdots, a_{k+1} \}$, which contradicts to our assumption that $a_i$'s and $b_j$'s are all distinct.
		Thus, it is enough to show existence and uniqueness of
		a solution $(|z_1|^2, \cdots, |z_k|^2)$ of the system of linear equations in (\ref{equation_det}).
		
		The existence immediately follows from Lemma \ref{lemma_NNU}.
		Let
		\[
			\{|z_1|^2 = \delta_1 > 0, \cdots, |z_k|^2 = \delta_k > 0\}
		\]
		be one of solutions of (\ref{equation_det}) so that
		$\widetilde{\mathcal{A}}_{k+1}(\fa,\fb)$ contains a real $k$-torus
		\[
			T^k = \{(z_1, \cdots, z_k) \in \C^k ~|~ |z_i|^2 = \delta_i,~ i=1,\cdots,k \},
		\]
		which yields that
		$\dim_\R \widetilde{\mathcal{A}}_{k+1}(\fa,\fb) \geq k$.
		Since (\ref{equation_det}) is a system of non-homogeneous \emph{linear} equations with respect to the variables $|z_1|^2, \cdots, |z_k|^2$,
		the set
		$$
			\left\{ (|z_1|^2, \cdots, |z_k|^2) ~|~ (z_1, \cdots, z_k) \in \widetilde{\mathcal{A}}_{k+1}(\fa,\fb) \right\}
		$$
		is an affine subspace of $\R^k$. Therefore,
		$\dim_\R \widetilde{\mathcal{A}}_{k+1}(\fa,\fb) = k$ if and only if the equations (\ref{equation_det})
		has a unique solution $(|z_1|^2, \cdots, |z_k|^2) = (\delta_1, \cdots, \delta_k)$. It suffices to show that
		$\dim_\R \widetilde{A}_{k+1}(\fa,\fb) = k$.
		
		Note that $\mathcal{A}_{k+1}(\fa,\fb)$ is an $ \widetilde{\mathcal{A}}_{k+1}(\fa,\fb)$-bundle over $\mathcal{O}_{\fb}$ whose projection map is
		\[
			\begin{array}{ccccc}
					\rho_{k+1}  \colon & \mathcal{A}_{k+1}(\fa,\fb) & \rightarrow & \mathcal{O}_\fb \\
					                    &                x             & \mapsto    &  x^{(k)}.\\
			\end{array}
		\]
		More precisely, for each element $y \in \mathcal{O}_\fb$, there exists a unitary matrix $g_y \in U(k)$ (depending on $y$) such that
		\[
			g_y y g_y^{-1} = I_\fb
		\]
		where $I_\fb$ is the diagonal matrix $\textup{diag}(b_1, \cdots, b_k)$.
		Then the preimage $\rho_{k+1}^{-1}(y)$ of $y$ can be identified with $\widetilde{\mathcal{A}}_{k+1}(\fa,\fb)$ via
		\[
		\begin{array}{ccl}
			\rho_{k+1}^{-1}(y) & \longrightarrow & \widetilde{\mathcal{A}}_{k+1}(\fa,\fb) \\[0.5em]
			          Y  =  \begin{pmatrix} y & * \\
					                  	*^t & z_{k+1} \end{pmatrix}
					                  	& \mapsto & \begin{pmatrix} g_y & 0 \\
					                  	0 & 1 \end{pmatrix}
					                  	\cdot  Y \cdot
					                  	\begin{pmatrix} g_y^{-1} & 0 \\
					                  	0 & 1 \end{pmatrix}\\[2em]
					                  	
					                  	& & = \begin{pmatrix} g_y \cdot y \cdot g_y^{-1} & g_y \cdot * \\
					                  	*^t \cdot g_y^{-1} & z_{k+1}
								 \end{pmatrix} \\[2em]
								 & & =
								 \begin{pmatrix} I_\fb & g_y \cdot * \\
					                  	*^t \cdot g_y^{-1} & z_{k+1}
								 \end{pmatrix}
		\end{array}
		\]
		so that $\mathcal{A}_{k+1}(\fa,\fb)$ is an $ \widetilde{\mathcal{A}}_{k+1}(\fa,\fb)$-bundle over $\mathcal{O}_{\fb}$ via $\rho_{k+1}$.
		
		Now, we consider a sequence of real numbers $\fc = \{c_1, \cdots, c_{k-1}\}$ such that
		\[
		b_1 > c_1 > \cdots > b_{k-1} > c_{k-1} > b_k.
		\]
		Restricting the fibration $\rho_{k+1}$ to $\mathcal{A}_k(\fb,\fc)$, we similary have an $\widetilde{\mathcal{A}}_{k+1}(\fa,\fb)$-bundle over
		$\mathcal{A}_k(\fb,\fc)$.
		Note that its total space is the collection of $(k+1)\times (k+1)$ hermitian matrices such that the spectra,
		the spectra of the $(k \times k)$ leading principal minor and the spectra of the $(k-1) \times (k-1)$ leading principal minor are resepectively $\fa, \fb$ and $\fc$.
		
		By a similar way described as above, we see that
		$\mathcal{A}_k(\fb,\fc)$ is an $\widetilde{\mathcal{A}}_k(\fb,\fc)$-bundle over $\mathcal{O}_\fc$
		with the projection map $\rho_k \colon \mathcal{A}_k(\fb,\fc) \rightarrow \mathcal{O}_\fc$ such that
		$\dim_\R \widetilde{\mathcal{A}}_k(\fb,\fc) \geq k-1$. Taking a sequence of real numbers $\fd = \{d_1, \cdots, d_{k-2}\}$
		so that
		\[
		c_1 > d_1 > \cdots > c_{k-2} > d_{k-2} > c_{k-1},
		\]
		the restriction of $\rho_k$ to $\mathcal{A}_{k-1}(\fc,\fd)$ induces an $\widetilde{\mathcal{A}}_k(\fb,\fc)$-bundle over $\mathcal{A}_{k-1}(\fc,\fd)$.
		
		Proceeding this procedure inductively, we end up obtaining a tower of bundles such that the total space $E$ is a generic fiber of the GC system of $\mcal{O}_\fa$. 
		Namely, $E$ is the preimage of a point in the interior of the GC polytope
		$\Delta_\fa$. By Proposition \ref{proposition_GS_smooth}, $E$ is a smooth manifold of dimension
		\[
			\dim_\R E = \frac{1}{2} \dim_\R \mathcal{O}_\fa = \frac{k(k+1)}{2}.
		\]
		On the other hand, by our construction, the dimension of $E$ is the sum of dimensions of all fibers of $\rho_i$'s for $i=2,\cdots,k+1$ so that
		\[
			\dim E = \dim \widetilde{\mathcal{A}}_{k+1} + \dim \widetilde{\mathcal{A}}_{k} + \cdots + \dim \widetilde{\mathcal{A}}_{2}.
		\]
		Since $\dim \widetilde{\mathcal{A}}_{i+1} \geq i$ for each $i$, we get $\dim \widetilde{\mathcal{A}}_{i+1} = i$ for every $i=1,\cdots,k$.
		Lemma~\ref{lemma_fiber_torus} is established.
	\end{proof}
	
	Note that Lemma \ref{lemma_fiber_torus} deals with the case where $a_j \not \in \{b_1, \cdots, b_k\}$ for $j=1,\cdots,k+1$.
	Now, let us consider the case where $a_{j+1} \in \{b_1, \cdots, b_k\}$ for some $j \in \{0,1,\cdots, k\}$. 			
	Denoting the multiplicity of $a_{j+1}$ in $\fa$ by $\ell$, without loss of generality, we assume that $a_j > a_{j+1} = \cdots = a_{j+\ell} > a_{j+\ell+1}$.
     Then either $a_{j+1} = b_j$ or $a_{j+1} = b_{j+1}$. For the first case, there are two possible cases:
     \begin{equation}\label{eq:inequality1}
        \begin{cases}
        (1) ~a_j > b_j = a_{j+1} = \cdots = a_{j+\ell} > b_{j+\ell},   ~\mathrm{or}\\
        (2) ~a_j > b_j = a_{j+1} = \cdots = a_{j+\ell} = b_{j+\ell} > a_{j+\ell+1}.
        \end{cases}
     \end{equation}
     For the second case, we have two possible cases too:
     \begin{equation}\label{eq:inequality2}
        \begin{cases}
        (3) ~b_j > a_{j+1} = b_{j+1} = \cdots = a_{j+\ell} = b_{j+\ell} > a_{j+\ell+1},  ~\mathrm{or}\\
		(4) ~b_j > a_{j+1} = b_{j+1} =  \cdots = a_{j+\ell} > b_{j+\ell},\\
        \end{cases}
     \end{equation}
     Note that only the case (2) above have more multiplicity of $b$'s than $a$'s, i.e.,
     multiplicity $\ell+1$ and $\ell$ respectively: The cases (1) and (3) have
     the same multiplicities of both $a$ and $b$ while in the case (4) $a$ has multiplicity $\ell$ and $b$ has
     multiplicity $\ell -1$.

	We start with the first inequality of (\ref{eq:inequality2}).
	\begin{lemma}[case (3) of~\eqref{eq:inequality2}]\label{lemma_fiber_zero_a_b}
		Suppose that $b_j > a_{j+1} = b_{j+1} = \cdots = a_{j+\ell} = b_{j+\ell} > a_{j+\ell+1}.$ Then, every solution $(z_1, \cdots, z_k) \in \C^k$ of the equation
		(\ref{equation_det}) satisfies
		\[
			z_{j+1} = \cdots = z_{j+\ell} = 0.
		\]
	\end{lemma}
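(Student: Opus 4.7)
The plan is to exploit the explicit characteristic polynomial formula~\eqref{equation_det} by means of a residue calculation at the repeated value
\[
t := a_{j+1} = \cdots = a_{j+\ell} = b_{j+1} = \cdots = b_{j+\ell}.
\]
First I would verify that $t$ appears with multiplicity \emph{exactly} $\ell$ in both $\fa$ and $\fb$. Indeed, the hypotheses $b_j > t$ at the left end of the equal block and $b_{j+\ell} = t > a_{j+\ell+1}$ at the right end, combined with the monotonicity of the sequences and the interlacing pattern~\eqref{patternaandb}, force $a_i, b_i \neq t$ for every index $i$ outside $\{j+1,\ldots,j+\ell\}$.

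Next, I would rewrite~\eqref{equation_det} (renaming the summation index to avoid clashing with the fixed $j$ of the lemma) in the form
\[
\frac{\prod_{i=1}^{k+1}(x - a_i)}{\prod_{i=1}^k (x - b_i)} \;=\; x - z_{k+1} - \sum_{m=1}^{k} \frac{|z_m|^2}{x - b_m},
\]
and observe that the left-hand side is analytic at $x = t$: numerator and denominator both vanish to order $\ell$ at $x = t$, so their orders cancel and the ratio is a regular rational function near $t$.

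Consequently the right-hand side, viewed as a meromorphic function of $x$, must also be regular at $x = t$. The only summands contributing a pole at $x = t$ are those with $b_m = t$, i.e., $m \in \{j+1,\ldots,j+\ell\}$, each giving a simple pole with residue $-|z_m|^2$. Hence the residue at $x = t$ of the right-hand side equals $-\sum_{m=j+1}^{j+\ell} |z_m|^2$, and its vanishing forces $\sum_{m=j+1}^{j+\ell}|z_m|^2 = 0$, whence $z_{j+1} = \cdots = z_{j+\ell} = 0$.

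The only delicate step I foresee is the multiplicity bookkeeping at the start: one must use both boundary strict inequalities of the equal block in~\eqref{eq:inequality2}(3) to rule out $b_i = t$ for $i \notin \{j+1,\ldots,j+\ell\}$; without this, $t$ could have strictly larger multiplicity in $\fb$ than in $\fa$, the left-hand side would acquire a pole at $t$, and the residue argument would no longer yield the desired vanishing. Everything else is a routine residue computation.
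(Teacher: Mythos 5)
Your residue computation is a clean reformulation of the paper's own argument: the paper divides the polynomial identity~\eqref{equation_det} by $(x-b_{j+1})^{\ell-1}$ and evaluates at $x = b_{j+1}$, while you divide by the full product $\prod_i(x-b_i)$ and take residues at $t$, but both rest on exactly the same multiplicity match (the order-$\ell$ zero of $\prod(x-a_i)$ at $t$ cancels the order-$\ell$ pole coming from the $b$'s equal to $t$) and yield the same linear constraint $\sum_{m=j+1}^{j+\ell}|z_m|^2 = 0$. Correct, and essentially the same approach; your explicit multiplicity bookkeeping at the start is a welcome clarification that the paper leaves implicit.
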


	\begin{proof}
	 	Observe that each term of the equation (\ref{equation_det})
	 	\[
	 		\det(xI - Z_{(\fa,\fb)}(z)) = (x - z_{k+1}) \cdot \prod_{i=1}^k (x-b_i) - \sum_{i=1}^k \left( \frac{|z_i|^2}{x-b_i} \cdot \prod_{m=1}^k (x-b_m) \right) = 0
		\]
		is divisible by $(x-b_{j+1})^{\ell-1}$ by our assumption. In particular, the first term of the equation
		\[
		(x - z_{k+1}) \cdot \prod_{i=1}^k (x-b_i)
		\]
		is divisible by $(x-b_{j+1})^{\ell}$.
		For each $i \not \in \{j+1, \cdots, j+\ell\}$, so is
		\[
		\frac{|z_i|^2}{x-b_i} \cdot \prod_{m=1}^k (x-b_m)
		\]
		since $b_{j+1} = \cdots = b_{j+\ell}$.
		Taking
		\[
			g(x) := \det(xI - Z_{(\fa,\fb)}(z)) / (x-b_{j+1})^{\ell-1},
		\]
		we have $g(b_{j+1}) = g(a_{j+1}) = 0$ because $x = a_{j+1} = b_{j+1}$ is a solution of~\eqref{equation_det} with multiplicity $\ell$. It yields
		\[
			\left(|z_{j+1}|^2 + \cdots + |z_{j+\ell}|^2\right) \cdot \prod_{\substack{m=1 \\ m \not \in \{j+1, \cdots, j+\ell\}}}^k (b_{j+1}-b_m) = 0.
		\]
		Since
		\[
			\prod_{\substack{m=1 \\ m \not \in \{j+1, \cdots, j+\ell\}}}^k (b_{j+1}-b_m) \neq 0,
		\]
		we deduce that $|z_{j+1}|^2 + \cdots + |z_{j+\ell}|^2 = 0$ and hence $z_{j+1} = \cdots = z_{j+\ell} = 0$.
	\end{proof}
	
	Therefore, under the assumption (case (3) in \eqref{eq:inequality2}) on $\fa$ and $\fb$,
	the equation (\ref{equation_det}) is written by
	\[
			\det(xI - Z_{(\fa,\fb)}(z))
			        = (x-b_{j+1})^{\ell} \cdot
			        \left \{(x - w_{k+1-\ell}) \cdot \displaystyle \prod_{i=1}^{k-\ell} (x-b'_i) - \sum_{i=1}^{k-\ell}
			        \left( \frac{|w_i|^2}{x-b'_i} \cdot \prod_{m=1}^{k-\ell} (x-b'_m) \right) \right \}\\
			        =  0
	\]
		where
		\begin{itemize}
			\item $(b_1'. \cdots, b_{k-\ell}') = (b_1, \cdots, b_j, \hat{b_{j+1}}, \cdots, \hat{b_{j+\ell}}, b_{j+\ell+1}, \cdots, b_k)$,
			\item $(w_1. \cdots, w_{k-\ell}) = (z_1, \cdots, z_j, \hat{z_{j+1}}, \cdots, \hat{z_{j+\ell}}, z_{j+\ell+1}, \cdots, z_k)$, and
			\item $w_{k-\ell+1} = z_{k+1}$.
		\end{itemize}
	Observe that the equation
	\[
		\det(xI - Z_{(\fa,\fb)}(z)) / (x-b_{j+1})^{\ell} = 0
	\]	
	is same as the equation $\det(xI - Z_{(\fa',\fb')}(w)) = 0$ where
	\begin{itemize}
		\item $\fa' = (a_1', \cdots, a_{k-\ell+1}') = (a_1, \cdots, a_j, \hat{a_{j+1}}, \cdots, \hat{a_{j+\ell}}, a_{j+\ell+1}, \cdots, a_{k+1})$ and
		\item $\fb' = (b_1', \cdots, b_{k-\ell}')$.
	\end{itemize}
	Thus, $\widetilde{\mathcal{A}}_{k+1}(\fa,\fb)$ can be identified with $\widetilde{\mathcal{A}}_{k+1-\ell}(\fa', \fb')$ via
	\begin{equation}\label{identificationof}
		\begin{array}{cccc}
				  & \widetilde{\mathcal{A}}_{k+1}(\fa,\fb) & \rightarrow & \widetilde{\mathcal{A}}_{k+1-\ell}(\fa', \fb') \\
				                    &               		(z_1, \cdots, z_j, 0, \cdots, 0, z_{j+\ell+1}, \cdots, z_k)             & \mapsto    &  		(z_1, \cdots, z_j,  \hat{z_{j+1}}, \cdots, \hat{z_{j+\ell}}, z_{j+\ell+1}, \cdots, z_k).\\
		\end{array}
	\end{equation}
	Therefore, we obtain the following corollary.
	
	\begin{corollary}\label{corollary_fiber_zero_a_b}
		For sequences $\fa = (a_1, \cdots, a_{k+1})$ and $\fb = (b_1, \cdots, b_k)$ of real numbers obeying~\eqref{patternaandb}, suppose that there exist $j, \ell \in \Z_{>0}$ such that
		\[
			b_j > a_{j+1} = b_{j+1} = \cdots = a_{j+\ell} = b_{j+\ell} > a_{j+\ell+1}.
		\]
		Setting $\fa'$ (respectively $\fb'$) to be the sequence of real numbers obtained by deleting $a_{j+1}, \cdots, a_{j+\ell}$ (respectively $b_{j+1}, \cdots, b_{j+\ell}$), $\widetilde{\mathcal{A}}_{k+1}(\fa, \fb)$ can be identified with $\widetilde{\mathcal{A}}_{k+1-\ell}(\fa', \fb')$ under~\eqref{identificationof}.
	\end{corollary}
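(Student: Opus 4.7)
The plan is to extract the corollary directly from Lemma \ref{lemma_fiber_zero_a_b} together with the algebraic manipulation of the characteristic polynomial already sketched in the paragraph preceding the corollary. First, I would fix an arbitrary element $Z_{(\fa,\fb)}(z) \in \widetilde{\mathcal{A}}_{k+1}(\fa,\fb)$ and invoke Lemma \ref{lemma_fiber_zero_a_b}, which under the hypothesis $b_j > a_{j+1}=b_{j+1}=\cdots=a_{j+\ell}=b_{j+\ell}>a_{j+\ell+1}$ forces the off-diagonal entries $z_{j+1}=\cdots=z_{j+\ell}=0$. Thus every element of $\widetilde{\mathcal{A}}_{k+1}(\fa,\fb)$ is automatically supported on the complement of the $\ell$ forbidden coordinates.

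Next, I would exploit this vanishing to rewrite $\det(xI - Z_{(\fa,\fb)}(z))$. Because the off-diagonal entries in rows/columns $j+1,\ldots,j+\ell$ are zero and the corresponding diagonal entries are all equal to $b_{j+1}$, the factor $(x-b_{j+1})^{\ell}$ cleanly divides the characteristic polynomial, and the quotient is exactly $\det(xI - Z_{(\fa',\fb')}(w))$ with $w = (z_1,\ldots,z_j, z_{j+\ell+1},\ldots,z_k)$ and $w_{k+1-\ell}=z_{k+1}$, as written just before the corollary. This is a routine polynomial bookkeeping step using the multi-linearity of the determinant, and gives the explicit correspondence~\eqref{identificationof}.

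It remains to check that the prescription $(z_1,\ldots,z_j,0,\ldots,0,z_{j+\ell+1},\ldots,z_k)\mapsto (z_1,\ldots,z_j,z_{j+\ell+1},\ldots,z_k)$ is a bijection between $\widetilde{\mathcal{A}}_{k+1}(\fa,\fb)$ and $\widetilde{\mathcal{A}}_{k+1-\ell}(\fa',\fb')$. Injectivity is tautological from the coordinate deletion; for surjectivity I would observe that the reduced tuples $\fa'$ and $\fb'$ still satisfy the interlacing pattern~\eqref{patternaandb}, so by Lemma \ref{lemma_NNU} any element of $\widetilde{\mathcal{A}}_{k+1-\ell}(\fa',\fb')$ exists, and the above polynomial factorization shows that inserting zeros in positions $j+1,\ldots,j+\ell$ produces an element of $\widetilde{\mathcal{A}}_{k+1}(\fa,\fb)$ with spectrum $\fa$ and $(k\times k)$-leading minor spectrum $\fb$.

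The main obstacle I anticipate is purely bookkeeping: verifying carefully that $\fa'$ and $\fb'$ do satisfy the required pattern (the deletion does not destroy the interlacing because the removed blocks consist of equal $a_{j+\bullet}$ and $b_{j+\bullet}$ sandwiched between the preserved entries $b_j$ and $a_{j+\ell+1}$), and that the characteristic polynomial identity really matches $\det(xI - Z_{(\fa',\fb')}(w))$ after factoring out $(x-b_{j+1})^{\ell}$. Once these are in place, the corollary follows immediately.
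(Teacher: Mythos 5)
Your proof is correct and follows essentially the same route as the paper: invoke Lemma \ref{lemma_fiber_zero_a_b} to kill the coordinates $z_{j+1},\ldots,z_{j+\ell}$, factor $(x-b_{j+1})^{\ell}$ out of the characteristic polynomial, and identify the quotient with $\det(xI-Z_{(\fa',\fb')}(w))$. The only superfluous step is the appeal to Lemma \ref{lemma_NNU} for surjectivity: you do not need nonemptiness of $\widetilde{\mathcal{A}}_{k+1-\ell}(\fa',\fb')$, since your own polynomial identity already shows that for \emph{any} $w\in\widetilde{\mathcal{A}}_{k+1-\ell}(\fa',\fb')$, padding with zeros in positions $j+1,\ldots,j+\ell$ lands in $\widetilde{\mathcal{A}}_{k+1}(\fa,\fb)$ (the $(k\times k)$-leading minor is $I_\fb$ by construction, and the spectrum is $\prod(x-a_i)$ because $a_{j+1}=\cdots=a_{j+\ell}=b_{j+1}$).
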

	
	The following two lemmas below are about the cases of (4) in \eqref{eq:inequality2} and (1) of \eqref{eq:inequality1}.
	Since they can be proven by using exactly same method of the proof of Lemma \ref{lemma_fiber_zero_a_b}, we omit the proofs.
	
	\begin{lemma}[case (1) of \eqref{eq:inequality1}]\label{lemma_fiber_zero_b_a}
		Suppose that $a_j > b_j = a_{j+1} = \cdots = a_{j+\ell} > b_{j+\ell}.$ Then
		$(z_1, \cdots, z_k) \in \widetilde{\mathcal{A}}_{k+1}(\fa,\fb)$ if and only if
		\begin{itemize}
			\item $z_{j} = \cdots =z_{j+\ell-1} = 0$, and
			\item $(z_1, \cdots, z_{j-1}, z_{j+\ell}, \cdots, z_{k}) \in \widetilde{\mathcal{A}}_{k-\ell+1}(\fa',\fb')$
		\end{itemize}
		where $\fa'$ is obtained by deleting $\{a_{j+1}, \cdots, a_{j+\ell} \}$ from $\fa$ and
		$\fb' = \{b_1', \cdots, b_{k-\ell}'\}$ is obtained by deleting $\{b_{j}, \cdots, b_{j+\ell-1} \}$ from $\fb$.
	\end{lemma}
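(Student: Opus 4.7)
The plan is to adapt the argument of Lemma~\ref{lemma_fiber_zero_a_b} almost verbatim, since the combinatorial content (namely the pattern of equalities forced by~\eqref{patternaandb} on top of the given inequality) is morally the same but shifted by one index. First I would observe that the hypothesis $a_j > b_j = a_{j+1} = \cdots = a_{j+\ell} > b_{j+\ell}$, together with the interlacing pattern~\eqref{patternaandb}, forces
$$b_j = b_{j+1} = \cdots = b_{j+\ell-1} = a_{j+1}.$$
Thus the factor $(x - a_{j+1})$ appears with multiplicity exactly $\ell$ in $\prod_{i=1}^{k}(x-b_i)$, coming from $b_j, \ldots, b_{j+\ell-1}$.

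For the forward direction ($\Rightarrow$), examine each term of the characteristic equation
$$\det(xI - Z_{(\fa,\fb)}(z)) = (x - z_{k+1}) \prod_{i=1}^{k}(x-b_i) - \sum_{i=1}^{k} |z_i|^2 \prod_{m\neq i}(x-b_m)$$
modulo $(x-a_{j+1})^{\ell}$. The polynomial $(x-z_{k+1})\prod_{i=1}^{k}(x-b_i)$ contains $(x-a_{j+1})^\ell$ and so vanishes modulo $(x-a_{j+1})^\ell$. In the sum, for $i \notin \{j, j+1, \ldots, j+\ell-1\}$ the product $\prod_{m \neq i}(x-b_m)$ still contains $(x-a_{j+1})^\ell$, whereas for $i \in \{j, j+1, \ldots, j+\ell-1\}$ it contains precisely $(x-a_{j+1})^{\ell-1}$. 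Since the whole expression is divisible by $(x-a_{j+1})^{\ell}$ (it equals $\prod_s (x-a_s)$), dividing the resulting congruence by $(x-a_{j+1})^{\ell-1}$ and evaluating at $x = a_{j+1}$ yields
$$\left( \sum_{i=j}^{j+\ell-1} |z_i|^2 \right) \cdot \prod_{m \notin \{j, \ldots, j+\ell-1\}} (a_{j+1} - b_m) = 0.$$
Using~\eqref{patternaandb} and the multiplicity convention (so $a_j > a_{j+1}$ and $a_{j+\ell} > a_{j+\ell+1}$), one sees that $b_m > a_{j+1}$ for $m < j$ and $b_m < a_{j+1}$ for $m \geq j+\ell$, so the product over $m$ is nonzero. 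Hence $z_j = z_{j+1} = \cdots = z_{j+\ell-1} = 0$.

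Once these entries vanish, the rows and columns indexed by $j, j+1, \ldots, j+\ell-1$ of $Z_{(\fa,\fb)}(z)$ consist only of the diagonal entries $b_j = \cdots = b_{j+\ell-1} = a_{j+1}$, so after the obvious row/column permutation $Z_{(\fa,\fb)}(z)$ is conjugate to the block diagonal matrix $a_{j+1} I_\ell \oplus Z'$, where
$$Z' = \begin{pmatrix} \mathrm{diag}(b_1', \ldots, b'_{k-\ell}) & \bar w^{\,t} \\ w & z_{k+1} \end{pmatrix},$$
with $(b_1', \ldots, b'_{k-\ell}) = (b_1, \ldots, b_{j-1}, b_{j+\ell}, \ldots, b_k)$ and $w = (z_1, \ldots, z_{j-1}, z_{j+\ell}, \ldots, z_k)$. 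The spectrum condition $\mathrm{sp}(Z_{(\fa,\fb)}(z)) = \fa$ is then equivalent to $\mathrm{sp}(Z') = \fa'$, and the leading principal minor condition on $Z'$ manifestly gives $\fb'$. Reading this chain of equivalences in both directions finishes the proof.

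The main obstacle is essentially bookkeeping: tracking the propagation of equalities through the interlacing pattern and confirming the non-vanishing of $\prod_{m \notin \{j, \ldots, j+\ell-1\}}(a_{j+1}-b_m)$. No new idea beyond that of Lemma~\ref{lemma_fiber_zero_a_b} is required, and indeed this reduction can equivalently be recast as a direct application of Corollary~\ref{corollary_fiber_zero_a_b} after swapping the roles of $a$-sequence and $b$-sequence through a dual version of the block collapse.
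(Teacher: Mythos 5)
Your proof is correct and follows exactly the approach the paper intends: the paper omits this proof, stating only that it is ``by exactly the same method'' as Lemma~\ref{lemma_fiber_zero_a_b}, and your argument is precisely that method transposed to case~(1) of \eqref{eq:inequality1}. You correctly identify that $b_j = \cdots = b_{j+\ell-1} = a_{j+1}$ (rather than $b_{j+1} = \cdots = b_{j+\ell}$ as in case~(3)), divide the characteristic polynomial by $(x-a_{j+1})^{\ell-1}$, evaluate at $a_{j+1}$, and check non-vanishing of the residual product; the block-diagonal identification for the reduction to $\widetilde{\mathcal{A}}_{k-\ell+1}(\fa',\fb')$ matches the one given in the paper around~\eqref{identificationof}.
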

	
	\begin{lemma}[case (4) of \eqref{eq:inequality2}]\label{lemma_fiber_zero_a_a}
		Suppose that $b_j > a_{j+1} = \cdots = a_{j+\ell} > b_{j+\ell}.$ Then
		$(z_1, \cdots, z_k) \in \widetilde{\mathcal{A}}_{k+1}(\fa,\fb)$ if and only if
		\begin{itemize}
			\item $z_{j+1} = \cdots =z_{j+\ell-1} = 0$, and
			\item $(z_1, \cdots, z_{j}, z_{j+\ell}, \cdots, z_{k}) \in \widetilde{\mathcal{A}}_{k-\ell+2}(\fa',\fb')$
		\end{itemize}
		where $\fa' = \{a_1', \cdots, a_{k-\ell+2}'   \}$ is obtained by deleting $\{a_{j+2}, \cdots, a_{j+\ell} \}$ from $\fa$ and
		$\fb' = \{b_1', \cdots, b_{k-\ell+1}' \}$ is obtained by deleting $\{b_{j+1}, \cdots, b_{j+\ell-1} \}$ from $\fb$.
	\end{lemma}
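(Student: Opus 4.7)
The plan is to mimic the argument for Lemma~\ref{lemma_fiber_zero_a_b}, adjusting the multiplicity bookkeeping to the present situation. First I would observe that the interlacing pattern \eqref{patternaandb} together with the hypothesis $b_j > a_{j+1} = \cdots = a_{j+\ell} > b_{j+\ell}$ forces
\[
b_{j+1} = b_{j+2} = \cdots = b_{j+\ell-1} = c, \qquad c := a_{j+1},
\]
because for each $i = 1, \ldots, \ell-1$ the entry $b_{j+i}$ is squeezed between $a_{j+i}$ and $a_{j+i+1}$, both of which equal $c$. Thus $c$ occurs in $\fa$ with multiplicity $\ell$ and in $(b_1,\ldots, b_k)$ with multiplicity $\ell-1$, the remaining $b_i$'s being distinct from $c$.

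Next I would factor the characteristic polynomial \eqref{equation_det}. Setting $J := \{j+1, \ldots, j+\ell-1\}$ and $P(x) := \prod_{i \notin J}(x-b_i)$, one has $\prod_{i=1}^{k}(x-b_i) = (x-c)^{\ell-1} P(x)$ with $P(c) \neq 0$. For each $i \notin J$ the term $|z_i|^2 \prod_{m \neq i}(x-b_m)$ is divisible by $(x-c)^{\ell-1}$, whereas for $i \in J$ it is divisible only by $(x-c)^{\ell-2}$. Collecting these contributions yields
\[
\det(xI - Z_{(\fa,\fb)}(z)) = (x-c)^{\ell-2}\, h(x),
\]
where the only summand of $h(x)$ not carrying an extra factor of $(x-c)$ is $-P(x)\sum_{i \in J}|z_i|^2$. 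Since $c$ is a root of the left-hand side of multiplicity $\ell$, we must have $h(c)=0$, which gives $-P(c)\sum_{i \in J}|z_i|^2 = 0$ and hence $z_{j+1} = \cdots = z_{j+\ell-1} = 0$, using $P(c) \neq 0$.

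For the converse direction, once these $\ell-1$ coordinates vanish, direct substitution shows that $\det(xI - Z_{(\fa,\fb)}(z))$ factors as $(x-c)^{\ell-1}\det(xI' - Z_{(\fa',\fb')}(w))$, where $w := (z_1,\ldots,z_j,z_{j+\ell},\ldots,z_k) \in \C^{k-\ell+1}$ and $\fa'$, $\fb'$ are the sequences obtained by removing $\ell-1$ copies of $c$ from $\fa$ and $\fb$ respectively, following the recipe of the lemma. Hence the spectrum condition $\mathrm{sp}(Z_{(\fa,\fb)}(z)) = \fa$ is equivalent to $\mathrm{sp}(Z_{(\fa',\fb')}(w)) = \fa'$, which gives the claimed identification with $\widetilde{\mathcal{A}}_{k-\ell+2}(\fa',\fb')$.

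The only real point requiring care is the bookkeeping: one copy of $c$ must be retained in each of $\fa'$ and $\fb'$ so that the sizes $|\fa'| = k-\ell+2$ and $|\fb'| = k-\ell+1$ agree with the ambient dimensions of $\widetilde{\mathcal{A}}_{k-\ell+2}(\fa',\fb')$. This matches exactly the prescription in the statement (delete $a_{j+2}, \ldots, a_{j+\ell}$ from $\fa$ and $b_{j+1}, \ldots, b_{j+\ell-1}$ from $\fb$), so I do not anticipate any obstacle beyond the routine polynomial manipulations, which are completely parallel to those in the proof of Lemma~\ref{lemma_fiber_zero_a_b}.
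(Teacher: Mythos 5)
Your proof is correct and follows the approach the paper had in mind: the paper explicitly omits the proof of this lemma, stating that it follows ``by using exactly [the] same method of the proof of Lemma~\ref{lemma_fiber_zero_a_b}'', and your argument (factoring out $(x-c)^{\ell-2}$ from the characteristic polynomial \eqref{equation_det}, evaluating the cofactor $h$ at $x = c$ to force $\sum_{i \in J}|z_i|^2 = 0$, then factoring off $(x-c)^{\ell-1}$ to identify the reduced matrix $Z_{(\fa',\fb')}(w)$) is precisely that method with the multiplicities adjusted to the case at hand.

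One small slip in the closing bookkeeping remark: you write that ``one copy of $c$ must be retained in each of $\fa'$ and $\fb'$''. In fact $c$ occurs with multiplicity $\ell$ in $\fa$ and $\ell-1$ in $\fb$, and the lemma's prescription deletes $\ell-1$ copies from each, so $\fa'$ retains exactly one copy of $c$ while $\fb'$ retains \emph{none}. This is consistent with your (correctly stated) sizes $|\fa'| = k-\ell+2$, $|\fb'| = k-\ell+1$; retaining a copy of $c$ in $\fb'$ would give $|\fb'| = k-\ell+2$, contradicting what you wrote. The core argument is unaffected by this slip.
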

	
	It remains to take care of the case (2) of \eqref{eq:inequality1}.
	
	\begin{lemma}[case (2) of \eqref{eq:inequality1}]\label{lemma_fiber_sphere_b_b}
		Suppose that
		\[
			a_j > b_j = a_{j+1} = \cdots = a_{j+\ell} = b_{j+\ell} > a_{j+\ell+1}
		\]
		Then there exists a unique positive real number $C_j > 0$ such that
		\[
			|z_j|^2 + \cdots + |z_{j+\ell}|^2 = C_{j}.
		\]
		for any $(z_1, \cdots, z_k) \in \widetilde{\mathcal{A}}_{k+1}(\fa,\fb)$.
	\end{lemma}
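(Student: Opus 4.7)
The plan is to extract the value of the sum $|z_j|^2+\cdots+|z_{j+\ell}|^2$ directly from the characteristic polynomial identity \eqref{equation_det} by comparing the orders of vanishing of both sides at the common value $c := b_j = a_{j+1} = \cdots = a_{j+\ell} = b_{j+\ell}$. Since $b_j = b_{j+\ell}$ and the $b_i$'s are weakly decreasing, $c$ has multiplicity exactly $\ell+1$ in $\fb$, while by hypothesis $c$ has multiplicity exactly $\ell$ in $\fa$. This one-step discrepancy is what forces the constraint.

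First I would rewrite \eqref{equation_det} in polynomial form, clearing the denominators, as
\[
\prod_{s=1}^{k+1}(x-a_s) = (x-z_{k+1})\prod_{m=1}^{k}(x-b_m) - \sum_{i=1}^{k} |z_i|^2 \prod_{m\neq i}(x-b_m).
\]
Next, I would track divisibility by $(x-c)$: the left-hand side is divisible by $(x-c)^{\ell}$ but not $(x-c)^{\ell+1}$; the first term on the right is divisible by $(x-c)^{\ell+1}$; and each summand $|z_i|^2\prod_{m\neq i}(x-b_m)$ is divisible by $(x-c)^{\ell+1}$ when $i\notin\{j,\ldots,j+\ell\}$ but is divisible only by $(x-c)^{\ell}$ when $i\in\{j,\ldots,j+\ell\}$. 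Reducing the identity modulo $(x-c)^{\ell+1}$, all terms except those corresponding to $i\in\{j,\ldots,j+\ell\}$ on the right, and the LHS, vanish.

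Then I would set
\[
R(x) := \prod_{s\notin\{j+1,\ldots,j+\ell\}}(x-a_s), \qquad Q(x) := \prod_{m\notin\{j,\ldots,j+\ell\}}(x-b_m),
\]
so that after dividing by $(x-c)^{\ell}$ and evaluating at $x=c$, the reduced identity becomes
\[
R(c) = -\,Q(c)\cdot\bigl(|z_j|^2+\cdots+|z_{j+\ell}|^2\bigr).
\]
This isolates the sum and gives a candidate $C_j := -R(c)/Q(c)$, which depends only on $\fa$ and $\fb$, not on $z$. Existence of at least one solution $(z_1,\ldots,z_k)$ is guaranteed by Lemma~\ref{lemma_NNU} (and the case already handled by Lemma~\ref{lemma_fiber_zero_a_b} etc.\ can be used to see that the remaining $z_i$'s are unconstrained by this single equation), so $C_j$ is forced on every element of $\widetilde{\mathcal{A}}_{k+1}(\fa,\fb)$.

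The one remaining step, and the main point requiring care, is to verify $C_j>0$. I would do this by a sign count in $R(c)$ and $Q(c)$: for $s\leq j$ one has $a_s\geq a_j>c$, while for $s\geq j+\ell+1$ one has $a_s\leq a_{j+\ell+1}<c$, giving $R(c)$ the sign $(-1)^{j}$; similarly, using $b_m\geq a_j>c$ for $m<j$ and $b_m\leq a_{j+\ell+1}<c$ for $m>j+\ell$, one obtains that $Q(c)$ has sign $(-1)^{j-1}$. Thus $R(c)$ and $Q(c)$ have opposite signs and $C_j=-R(c)/Q(c)>0$, as desired. The strict inequalities $a_j>c$ and $c>a_{j+\ell+1}$ in the hypothesis are used precisely here to ensure $R(c)\neq 0$ and $Q(c)\neq 0$.
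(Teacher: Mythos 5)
Your proof is correct, and it takes a genuinely more direct route than the paper. Both you and the paper begin by factoring $(x-c)^{\ell}$ out of the polynomial identity obtained by clearing denominators in \eqref{equation_det}, where $c=b_j=\cdots=b_{j+\ell}$. The paper then identifies the quotient $g(x)$ with the characteristic polynomial $\det(xI-Z_{(\fa',\fb')}(w))$ of a reduced hermitian problem in which $|w_j|^2=|z_j|^2+\cdots+|z_{j+\ell}|^2$, and invokes Lemma~\ref{lemma_fiber_torus} (the distinct-eigenvalue case, itself proved via an indirect dimension-count over the whole iterated bundle) to deduce existence and positivity of the constant $C_j$. It also first reduces away patterns of types (1), (3), (4) using Lemmas~\ref{lemma_fiber_zero_a_b}--\ref{lemma_fiber_zero_a_a}, and treats multiple type-(2) patterns by induction. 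You instead evaluate the quotient directly at $x=c$ and read off the explicit closed form $C_j=-R(c)/Q(c)$, then verify $C_j>0$ by counting sign changes in $R(c)$ and $Q(c)$ using the strict inequalities $a_j>c>a_{j+\ell+1}$ together with the interlacing of $\fa$ and $\fb$. Your argument is self-contained: it does not need the reductions to eliminate other pattern types, does not need Lemma~\ref{lemma_fiber_torus}, and produces an explicit formula for $C_j$ as a bonus, at the cost of being slightly more computational. I verified the bookkeeping: the multiplicity of $c$ is $\ell$ in $\fa$ and $\ell+1$ in $\fb$ by the interlacing inequalities, the first RHS term and the summands with $i\notin\{j,\ldots,j+\ell\}$ all retain a surviving $(x-c)$ factor after dividing by $(x-c)^{\ell}$, and the sign of $R(c)$ is $(-1)^{j}$ while that of $Q(c)$ is $(-1)^{j-1}$, giving $C_j>0$. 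The one cosmetic point: existence of a solution (via Lemma~\ref{lemma_NNU}) is not actually needed to establish the asserted constraint and positivity, though it is of course needed elsewhere to conclude that the fiber is a nonempty sphere.
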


	\begin{proof}
		By Lemma \ref{lemma_fiber_zero_a_b}, \ref{lemma_fiber_zero_b_a}, and \ref{lemma_fiber_zero_a_a},
		we may reduce $\fa = \{a_1, \cdots, a_{k+1} \}$ and $\fb = \{ b_1, \cdots, b_k \}$ to
		\[
			\fa' = \{a_1', \cdots, a_{r+1}' \}, \quad \text{and} \quad \fb' = \{b_1', \cdots, b_r' \}
		\]
		for some $r > 0$ so that
		there are no subsequences of type (3), (4) of (\ref{eq:inequality2}) or (1) of \eqref{eq:inequality1} in $(a_1' \geq b_1' \geq \cdots \geq a_r' \geq b_r' \geq a_{r+1}')$.
		Also, the above series of lemmas implies that $\widetilde{\mathcal{A}}_{k+1}(\fa,\fb)$ is identified with $\widetilde{\mathcal{A}}_{r+1}(\fa',\fb')$ under the
		 identification of $w = (w_1, \cdots, w_r)$ with
		suitable sub-coordinates $(z_{i_1}, \cdots, z_{i_r})$ of $(z_1, \cdots, z_{k+1})$.
		Therefore, it is enough to prove Lemma \ref{lemma_fiber_sphere_b_b}
		in the case where $(a_1, b_1, \cdots, a_k, b_k, a_{k+1})$ does not contain any pattern of type
	        (3), (4) of \eqref{eq:inequality2} or (1) of \eqref{eq:inequality1}.
		
		We temporarily assume that
		\[
			a_j > b_j = a_{j+1} = \cdots = a_{j+\ell} = b_{j+\ell} > a_{j+\ell+1}.
		\]
		is the unique pattern
		of type (2) of \eqref{eq:inequality1} in $(a_1, b_1, \cdots, a_k, b_k, a_{k+1})$.
		Then the equation (\ref{equation_det}) is written as
		\[
			\det(xI - Z) = (x-b_j)^{\ell} \cdot g(x)
		\]
		where
		\[
			\begin{array}{ccl}
				g(x) & = & (x-z_{k+1}) \displaystyle \prod_{\substack{i=1 \\ i \not\in \{j+1, \cdots, j+\ell\}}}^k (x-b_i)
				                   -  \sum_{i=1}^k \left( \frac{|z_i|^2}{x-b_i} \cdot \prod_{\substack{m=1 \\ m \not\in \{j+1, \cdots, j+\ell\}}}^k (x-b_m) \right) \\[0.2em]
			\end{array}\vspace{0.2cm}
		\]
		is a polynomial of degree $(k-\ell+1)$ with respect to $x$.
		For the sake of simplicity, we denote by
		\[
			\displaystyle B(x) := \prod_{\substack{m=1 \\ m \not\in \{j+1, \cdots, j+\ell\}}}^k (x-b_m).
		\]
		Since our assumption says that $\displaystyle \frac{1}{x-b_{j}} = \cdots = \frac{1}{x-b_{j+\ell}}$,
		the second part of $g(x)$ can be written by
		\begin{displaymath}
			\begin{array}{ccl}
			\displaystyle \sum_{i=1}^k \left( \frac{|z_i|^2}{x-b_i} \cdot B(x) \right) & = &
			\displaystyle
			\left( \displaystyle \frac{(|z_j|^2 + \cdots + |z_{j+\ell}|^2)}{x-b_j} + \sum_{\substack{i=1 \\ i \not\in \{j, \cdots, j+\ell\}}}^k  \frac{|z_i|^2}{x-b_i} \right) \cdot B(x)  \\
			\end{array}
		\end{displaymath}
		By substituting $\fa' = \{a_1' \cdots, a_{k+1-\ell}'\}$ and $\fb' = \{b_1', \cdots, b_{k-\ell}' \}$ where
		\begin{itemize}
			\item $a_i' = a_i$ \quad and  \quad $b_i' = b_i$ \quad for $1 \leq i \leq j$,
			\item $a_i' = a_{i+\ell}$ \quad for $j+1 \leq i \leq k-\ell+1$, and
			\item $b_i' = b_{i+\ell}$ \quad for $j+1 \leq i \leq k-\ell$,
		\end{itemize}
		$\fa' $ and $\fb'$ satisfies
		\[
			a_1' > b_1' > \cdots > a_{k-\ell}' > b_{k-\ell}' > a_{k+1-\ell}'.
		\]
		Then we have $g(x) = \det(xI - Z_{(\fa',\fb')}(w))$ where
		\begin{itemize}
			\item $|w_i|^2 =|z_i|^2$ \quad for $1 \leq i \leq j-1$,
			\item $|w_j|^2 = |z_j|^2 + \cdots + |z_{j+\ell}|^2$,
			\item $|w_i|^2 = |z_{i+\ell}|^2$ \quad for $j+1 \leq i \leq k-\ell$, and
			\item $w_{k-\ell+1} = \sum_{i=1}^{k-\ell+1} a_i' - \sum_{i=1}^{k-\ell}b_i' = \sum_{i=1}^{k+1} a_i - \sum_{i=1}^{k}b_i = z_{k+1}.$
		\end{itemize}
		Thus Lemma \ref{lemma_fiber_torus} implies that there exist positive constants $C_1, \cdots, C_{k-\ell}$ such that
		\[
			\widetilde{\mathcal{A}}_{k-\ell+1}(\fa',\fb') = \left\{(w_1, \cdots, w_{k-\ell}) \in \C^{k-\ell} ~|~ |w_j|^2 = C_j,~j=1,\cdots,k-\ell \right\}.
		\]
		In particular, we have $|w_j|^2 = |z_j|^2 + \cdots + |z_{j+\ell}|^2 = C_j$.
		
		It remains to prove the case where  $(a_1, b_1, \cdots, a_k, b_k, a_{k+1})$ contains more than one pattern of
                type (2) of \eqref{eq:inequality1}. However, since all patterns of
        	type (2) of \eqref{eq:inequality1} are disjoint from one another, we can apply the same argument to each
		pattern inductively. This completes the proof.
	\end{proof}
	
	Now, we are ready to prove Proposition \ref{proposition_A_k_S_k_bundle}.
	
	\begin{proof}[Proof of Proposition \ref{proposition_A_k_S_k_bundle}]
		For a given sequence $a_1\geq  b_1 \geq  \cdots \geq a_k \geq b_k \geq a_{k+1}$, let us consider the $W$-shaped block $W_k(a,b)$ with
		walls defined by strict inequalities $a_j > b_j$ or $b_j > a_{j+1}$ for each $j=1,\cdots, k$.  (See Figure \ref{figure_block_a_b}.)
		Note that each pattern of type (2) in \eqref{eq:inequality1} corresponds to an $M$-shaped block inside of $W_k(\fa,\fb)$.
		More specifically, if
		\[
			a_j > b_j = a_{j+1} = \cdots = a_{j+\ell} = b_{j+\ell} > a_{j+\ell+1}.
		\]
		is one of patterns of type (2) in \eqref{eq:inequality1} for some $j$, then it corresponds to a simple closed region which is an $M$-shaped block $M_{\ell+1}$.
		In particular, we have
		\[
			|M_{\ell+1}| = 2\ell + 1= \dim \left\{ (z_j, \cdots, z_{j+\ell}) \in \C^{\ell+1}~|~ |z_j|^2 + \cdots + |z_{j+\ell}|^2 = C_j \right\} = \dim S^{2\ell+1}.
		\]
		for a positive real number $C_j$.
		Combining the series of Lemma \ref{lemma_fiber_zero_a_b}, \ref{lemma_fiber_zero_b_a}, \ref{lemma_fiber_zero_a_a}, and \ref{lemma_fiber_sphere_b_b},
		we see
		$\widetilde{\mathcal{A}}_{k+1}(\fa,\fb) \cong S_{k}(\fa,\fb)$.
		Note that $\mathcal{A}_{k+1}(\fa,\fb)$ is an $\widetilde{\mathcal{A}}_{k+1}(\fa,\fb)$-bundle over $\mathcal{O}_\fb$ via
		\begin{equation}\label{equation_fibration_over_flag_manifold}
			\begin{array}{ccccc}
					\rho_{k+1}  \colon & \mathcal{A}_{k+1}(\fa,\fb) & \rightarrow & \mathcal{O}_\fb \\
					                    &                x             & \mapsto    &  x^{(k)}.\\
			\end{array}
		\end{equation}
		Thus $\mathcal{A}_{k+1}(\fa,\fb)$ is an $S_k(\fa,\fb)$-bundle over $\mathcal{O}_\fb$.
	\end{proof}

	\begin{corollary}\label{corollary_first_part}
		Let $f$ be a face of the Gelfand-Cetlin polytope $\Delta_\lambda$ and $\gamma$ be the face of the ladder diagram
		$\Gamma_\lambda$ corresponding to $f$. For any point $\textbf{\textup{u}}$ in the interior of $f$, the fiber $\Phi_\lambda^{-1}(\textbf{\textup{u}})$ has
		an iterated bundle structure given by
		\[
			\Phi_\lambda^{-1}(\textbf{\textup{u}}) = \bar{S_{n-1}}(\gamma) \xrightarrow {p_{n-2}} \bar{S_{n-2}}(\gamma) \rightarrow \cdots
			 \xrightarrow{p_1} \bar{S_1}(\gamma) = S_1(\gamma)
		\]
		where
		$p_{k-1} : \bar{S_k}(\gamma) \rightarrow \bar{S_{k-1}}(\gamma)$ is an $S_k(\gamma)$-bundle over $\bar{S_{k-1}}(\gamma)$ for $k=1,\cdots, n-1$.
		In particular, $\Phi_\lambda^{-1}(\textbf{\textup{u}})$ is of dimension
		\[
			\dim \Phi_\lambda^{-1}(\textbf{\textup{u}}) = \sum_{k=1}^{n-1} \dim S_k(\gamma).
		\]
	\end{corollary}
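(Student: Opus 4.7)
The plan is to iterate Proposition~\ref{proposition_A_k_S_k_bundle} along the tower of leading principal minors. Fix $\textbf{\textup{u}} \in \mathring{f}$ with coordinates $(u_{i,j})_{(i,j)\in\mcal{I}}$, and for each $k=1,\dots,n$ let $\fa^{(k)} := (u_{1,k}, u_{2,k-1}, \dots, u_{k,1})$ be the ``$k$-th row'' of GC values read off from $\textbf{\textup{u}}$ (so $\fa^{(n)} = \lambda$). By construction of $\Phi_\lambda$, a point $x \in \mathcal{O}_\lambda$ lies in $\Phi_\lambda^{-1}(\textbf{\textup{u}})$ if and only if for every $k$, the spectrum of the $k\times k$ leading principal minor $x^{(k)}$ equals $\fa^{(k)}$. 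The entries of $\fa^{(k+1)}$ and $\fa^{(k)}$ interlace because $\textbf{\textup{u}} \in \Delta_\lambda$ satisfies the pattern~\eqref{equation_GC-pattern}.

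I would then define, for each $1 \leq k \leq n-1$,
\[
    \bar{S_k}(\gamma) := \left\{ y \in \mathcal{H}_{k+1} \,\middle|\, \mathrm{sp}(y^{(\ell)}) = \fa^{(\ell)} \text{ for all } \ell = 1,\dots,k+1 \right\},
\]
with $\bar{S_0}(\gamma) := \{\text{point}\}$, and let $p_k \colon \bar{S_k}(\gamma) \to \bar{S_{k-1}}(\gamma)$ send $y \mapsto y^{(k)}$. By the very definition, $\Phi_\lambda^{-1}(\textbf{\textup{u}}) = \bar{S}_{n-1}(\gamma)$, and the restriction of $p_k$ to any preimage is exactly the map $\rho_{k+1}\colon \mathcal{A}_{k+1}(\fa^{(k+1)}, \fa^{(k)}) \to \mathcal{O}_{\fa^{(k)}}$ studied in Proposition~\ref{proposition_A_k_S_k_bundle}, but restricted to the sub-locus of $\mathcal{O}_{\fa^{(k)}}$ where all lower principal minors have the prescribed spectra $\fa^{(\ell)}$, $\ell \leq k$. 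Since the fibers of $\rho_{k+1}$ depend only on the pair $(\fa^{(k+1)}, \fa^{(k)})$, restricting the base to $\bar{S_{k-1}}(\gamma)$ simply pulls the $S_k(\fa^{(k+1)}, \fa^{(k)})$-bundle back, giving that $p_k$ is an $S_k(\fa^{(k+1)}, \fa^{(k)})$-bundle over $\bar{S_{k-1}}(\gamma)$.

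It remains to identify $S_k(\fa^{(k+1)}, \fa^{(k)})$ with $S_k(\gamma)$. This is where the combinatorics of the face $\gamma$ enters. Recall from Section~\ref{secLadderDiagramAndItsFaceStructure} that $\gamma = \Psi^{-1}(f)$ is determined by equating adjacent entries $u_{i,j}$ in the GC pattern along edges not crossed by a positive path of $\gamma$. Therefore the walls of $W_k(\fa^{(k+1)}, \fa^{(k)})$ in Figure~\ref{figure_block_a_b}, which are placed exactly where strict inequalities hold between adjacent entries of $\fa^{(k+1)}$ and $\fa^{(k)}$, coincide with the edges of $\gamma$ lying inside $W_k$ in the sense of Definition~\ref{defn:W-gamma}. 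Consequently the simple closed regions partitioning $W_k(\fa^{(k+1)}, \fa^{(k)})$ are the same as those partitioning $W_k(\gamma)$, and their $M$-block classification matches, yielding $S_k(\fa^{(k+1)}, \fa^{(k)}) = S_k(\gamma)$.

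Iterating $p_{n-1} \circ p_{n-2} \circ \cdots \circ p_1$ then produces the desired iterated bundle, and the dimension formula follows by adding up dimensions stage by stage. The main obstacle is the last identification $S_k(\fa^{(k+1)}, \fa^{(k)}) \cong S_k(\gamma)$: one must be careful that the face correspondence $\Psi$ of Theorem~\ref{theorem_equiv_CG_LD} translates strict inequalities between specific principal-minor eigenvalues into precisely the edges of $\gamma$ appearing inside the block $W_k$, and that this translation is consistent across all $k$ simultaneously so that the bundles glue into a genuine iterated fibration rather than a fiberwise disjoint union.
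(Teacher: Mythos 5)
Your proposal takes essentially the same route as the paper: both define the anti-diagonal sequences $\fa^{(k)}$ from the coordinates of $\textbf{\textup{u}}$, realize $\bar{S_k}(\gamma)$ as the iterated pullback of the bundles $\rho_{k+1}\colon \mathcal{A}_{k+1}(\fa^{(k+1)},\fa^{(k)}) \to \mathcal{O}_{\fa^{(k)}}$ from Proposition~\ref{proposition_A_k_S_k_bundle}, and conclude by identifying $S_k(\fa^{(k+1)},\fa^{(k)})$ with $S_k(\gamma)$ via the edges of $\gamma$ inside $W_k$ matching the walls coming from strict inequalities. One small remark: your closing worry about the bundles "gluing into a genuine iterated fibration rather than a fiberwise disjoint union" is not a real issue, since each $p_k$ is by construction a pullback of a locally trivial bundle over $\mathcal{O}_{\fa^{(k)}}$ along the inclusion $\bar{S_{k-1}}(\gamma) \hookrightarrow \mathcal{O}_{\fa^{(k)}}$, hence automatically a locally trivial bundle; this is exactly the content of the diagram~\eqref{figure_iterated_bundle} in the paper.
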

	\begin{proof}
		For each $(i,j) \in \Z^2_{\geq 1}$, we denote by $\Phi_\lambda^{i,j} \colon \mathcal{O}_\lambda \rightarrow \R$ be the component of $\Phi_\lambda$ which
		corresponds to the unit box $\square^{(i,j)}$ of $\Gamma_\lambda$
		whose top-right vertex is located at $(i,j)$ in $\Gamma_\lambda$.
		For each $k \in \Z_{>1}$ and $1 \leq i \leq k$,
		let us define
		\[
			a_i(k) := \Phi_\lambda^{i, k+1-i}(\textbf{\textup{u}}) \quad \mathrm{and} \quad b_i(k) := a_i(k-1).
		\]
		provided with $a_1(1) := \Phi_\lambda^{1,1}(\textbf{\textup{u}})$. Let $\fa{(k)} := (a_1(k), \cdots, a_k(k))$ and $\fb{(k)} := (b_1(k), \cdots, b_{k-1}(k))$.
		By applying Proposition \ref{proposition_A_k_S_k_bundle} repeatedly and observing that
		$S_k(\gamma) = S_k(\fa{(k+1)}, \fb{(k+1)})$, we describe the fiber $\Phi_\lambda^{-1}(\textbf{\textup{u}})$ as the total space of an iterated bundle as in~\eqref{figure_iterated_bundle}.
		The dimension formula immediately follows.
	\end{proof}

\begin{equation}\label{figure_iterated_bundle}
	\xymatrix{
		\quad \overline{S_{n-1}(\gamma)} \quad \ar[r] \ar[d]^{{p_{n-1}}}& \quad \cdots \quad \ar[r] \ar[d] & \iota_{(n-1)}^* \left( \mcal{A} (\frak{a}{(n)} , \frak{b}{(n)})\right)   \ar[r] \ar[d]^{\iota^*_{(n-1)} \rho_n}  & \,  \mcal{A} (\frak{a}{(n)}, \frak{b}{(n)}) \, \ar@{^{(}->}[r]^{\,\,\,\,\,\,\,\,\, \iota_{(n)}} \ar[d]^{\rho_n} & \mcal{O}_{\frak{a}{(n)}}\\
		\quad \overline{S_{n-2}(\gamma)} \quad \ar[r] \ar[d]^{{p_{n-2}}} & \quad \cdots \quad \ar[r] \ar[d] & \,\,\, \mcal{A}(\frak{a}{(n-1)}, \frak{b}{(n-1)}) \,\,\, \ar@{^{(}->}[r]^{\,\,\,\,\,\,\,\,\,\,\,\,\,\,\,\, \iota_{(n-1)}} \ar[d]^{\rho_{n-1}}   &  \quad \mcal{O}_{\frak{a}{(n-1)}} \quad  & \\
		\quad \quad \vdots \quad \quad \ar[r] \ar[d] & \quad {\cdots} \quad   \ar@{^{(}->}[r] \ar[d] & \quad \quad \mcal{O}_{\frak{a}{(n-2)}} \quad \quad &  &\\
		\overline{S_1(\gamma)} = \mcal{A}(\frak{a}{(2)}, \frak{b}{(2)}) \ar@{^{(}->}[r]^{\quad \quad \,\,\,\,\,\,\,\,\, \iota_{(2)}} \ar[d]^{{p_1} = \rho_{2}} & \quad \cdots \quad &  & & \\
		\overline{S_0(\gamma)} = \mcal{O}_{\frak{a}{(1)}} &   &  & &}
\end{equation}
\vspace{0.2cm}

		To complete the proof of Theorem \ref{theorem_main}, it remains to verify that $\Phi^{-1}_\lambda(\textbf{\textup{u}})$ is an isotropic submanifold
		of $(\mathcal{O}_\lambda, \omega_\lambda)$ for every $\textbf{\textup{u}} \in \Delta_\lambda$.
		Recall the definition of the KKS from Section~\ref{ssecSymplecticStructureOnMathcalOLambda}.
		For a fixed positive integer $k>1$, let $\fa = (a_1, \cdots, a_{k+1})$ and $\fb = (b_1, \cdots, b_k)$ be sequences of real numbers satisfying~\eqref{patternaandb}
		and let $\rho_{k+1} \colon \mathcal{A}_{k+1}(\fa,\fb) \rightarrow \mathcal{O}_\fb$ be the map defined by $\rho_{k+1}(x) = x^{(k)}$.
		Then $\rho_{k+1}$ makes $\mathcal{A}_{k+1}(\fa,\fb)$ into a $\widetilde{\mathcal{A}}_{k+1}(\fa,\fb)$-bundle over $\mathcal{O}_\fb$. 
		See Proposition \ref{proposition_A_k_S_k_bundle}.

		For any $x \in \mathcal{A}_{k+1}(\fa,\fb) \subset \mathcal{O}_\fa \subset \mathcal{H}_{k+1}$, let $V_x \subset T_x \mathcal{A}_{k+1}(\fa,\fb)$
		be the vertical tangent space
		at $x$ with respect to $\rho_{k+1}$ and let $H_x$ be the subspace of $T_x \mathcal{A}_{k+1}(\fa,\fb)$ generated by $U(k)$-action
		where
		$U(k)$ acts on $\mathcal{A}_{k+1}(\fa,\fb)$ as a subgroup of $U(k+1)$ via the embedding
		\[
		\begin{array}{ccccl}
				i_k \colon& U(k) & \hookrightarrow & U(k+1) \\[0.8em]
				        &  A & \mapsto &
		\begin{pmatrix}A & 0\\[0.3em]
                  	0 & 1\\
		  \end{pmatrix}. \\
		 \end{array}
		\]
                Then we can see that
                \[
                	(\rho_{k+1})_*|_{H_x} \colon H_x \rightarrow T_{\rho_{k+1}(x)} \mathcal{O}_\fb
		\]
		is surjective since $\rho_{k+1}$ is $U(k)$-invariant and the $U(k)$-action on $\mathcal{O}_\fb$ is transitive.
		Let  $(i_k)_* \colon \mathfrak{u}(k) \rightarrow \mathfrak{u}(k+1)$ be the induced Lie algebra monomorphism.
                Then the kernel of $\ker (\rho_{k+1})_*|_{H_x}$ is given by
                 \[
                 	\begin{array}{ccl}
	                 	\ker (\rho_{k+1})_*|_{H_x} & = & \left\{ [(i_k)_*(X), x] ~|~ X \in \mathfrak{u}(k) ,~ [X, x^{(k)}]= 0   \right\} \\
	                 	                                                              & = & \left\{ [(i_k)_*(X), x] ~|~ X \in T_e U(k)_{x^{(k)}} \right\} \\
                 	\end{array}
		\]
		where $x^{(k)}$ is the $(k \times k)$ leading principal minor of $x$ and $U(k)_{x^{(k)}}$ is the stabilizer of $x^{(k)} \in \mathcal{O}_\fb$ for the $U(k)$-action.

		From now on, we assume that
		$U(k)$ acts on ${\mathcal{A}}_{k+1}(\fa,\fb)$ via $i_k$, unless stated otherwise.
		\begin{lemma}\label{lemma_transitive}
			$U(k)$ acts transitively on $\mathcal{A}_{k+1}(\fa,\fb)$.
		\end{lemma}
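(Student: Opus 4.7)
The plan is to reduce the statement to the transitivity of a subgroup of $U(k)$ on an explicit product of spheres, exploiting the structural description of $\widetilde{\mcal{A}}_{k+1}(\fa,\fb)$ already obtained in Lemmas \ref{lemma_NNU}--\ref{lemma_fiber_sphere_b_b}. First I would use transitivity of $U(k)$ on $\mcal{O}_\fb$ (by standard co-adjoint orbit theory) to find, for any given $x,y \in \mcal{A}_{k+1}(\fa,\fb)$, elements $A_x, A_y \in U(k)$ such that $i_k(A_x)\cdot x \cdot i_k(A_x)^{-1}$ and $i_k(A_y)\cdot y \cdot i_k(A_y)^{-1}$ both lie in $\widetilde{\mcal{A}}_{k+1}(\fa,\fb)$. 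This reduces the problem to showing that the isotropy subgroup
\[
G_\fb := \{A \in U(k) \mid A I_\fb A^{-1} = I_\fb\}
\]
acts transitively on $\widetilde{\mcal{A}}_{k+1}(\fa,\fb)$ via $i_k$.

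Next I would compute this $G_\fb$-action explicitly. A direct matrix calculation shows that for $A \in G_\fb$ one has $i_k(A) \cdot Z_{(\fa,\fb)}(z) \cdot i_k(A)^{-1} = Z_{(\fa,\fb)}(z A^{-1})$, so under the identification $\widetilde{\mcal{A}}_{k+1}(\fa,\fb) \hookrightarrow \C^k$ via $z \mapsto Z_{(\fa,\fb)}(z)$, the $G_\fb$-action is the restriction of the standard unitary action of $G_\fb$ on $\C^k$. Since $G_\fb$ decomposes block-diagonally as $U(m_1) \times \cdots \times U(m_p)$, where $m_j$ is the multiplicity of the $j$-th distinct value of $\fb$, each factor $U(m_j)$ acts unitarily on precisely the $m_j$ coordinates $z_i$ indexed by the corresponding block of equal $b$-values.

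Finally I would verify transitivity block by block by matching the factors of $G_\fb$ against the sphere factors of $\widetilde{\mcal{A}}_{k+1}(\fa,\fb)$ produced by Lemmas \ref{lemma_fiber_torus}--\ref{lemma_fiber_sphere_b_b}. Each maximal block of equal $b$-values of size $m$ falls into one of the patterns (1)--(4) of \eqref{eq:inequality1}--\eqref{eq:inequality2}, together with the ``generic'' case of a distinct $b_i$. In pattern (2), Lemma \ref{lemma_fiber_sphere_b_b} says the corresponding $m$ coordinates satisfy a single sphere equation $\sum|z_i|^2 = C$, and the $U(m)$ factor acts transitively on this $S^{2m-1}$ by its standard action. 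In patterns (1), (3), (4), Lemmas \ref{lemma_fiber_zero_a_b}--\ref{lemma_fiber_zero_a_a} force every $z_i$ in the block to vanish, so $U(m)$ acts trivially on a single point. In the generic case Lemma \ref{lemma_fiber_torus} gives $|z_i|^2 = \delta_i > 0$, and the corresponding $U(1)$ factor rotates $z_i$ on its circle. Thus $G_\fb$ acts transitively on every factor of the product decomposition of $\widetilde{\mcal{A}}_{k+1}(\fa,\fb)$, and hence transitively on the whole space.

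The main obstacle I foresee is not conceptual but rather careful bookkeeping: one must make the correspondence between (i) the combinatorial blocks of equal $b$-values, (ii) the block factors of $G_\fb$, and (iii) the sphere/point factors produced by the reduction lemmas entirely unambiguous. In particular, boundary blocks (those containing $b_1$ or $b_k$) are not literally covered by the ``strict inequality on both sides'' format of \eqref{eq:inequality1}--\eqref{eq:inequality2}, and should be handled either by adopting formal boundary values $b_0 = +\infty$, $b_{k+1} = -\infty$, or by checking directly that the arguments of the relevant lemmas apply verbatim at the endpoints.
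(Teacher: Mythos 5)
Your proposal is correct and follows essentially the same route as the paper's proof: first reduce to transitivity of the isotropy subgroup $U(k)_{I_\fb}$ on $\widetilde{\mcal{A}}_{k+1}(\fa,\fb)$, then identify that subgroup as a product of blocks $U(k_1) \times \cdots \times U(k_r)$ acting by $z \mapsto zg^{-1}$, and verify transitivity block by block by matching against the sphere/point constraints from Lemmas \ref{lemma_fiber_torus}--\ref{lemma_fiber_sphere_b_b}. Your remark about the boundary blocks (the ones containing $b_1$ or $b_k$, which do not literally fit the two-sided-strict-inequality format of \eqref{eq:inequality1}--\eqref{eq:inequality2}) is a real bookkeeping point that the paper handles tacitly; your fix of introducing formal sentinel values $b_0 = +\infty$, $b_{k+1} = -\infty$ is a clean way to close it.
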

		\begin{proof}
			Since any element $x \in \mathcal{A}_{k+1}(\fa,\fb)$ is conjugate to an element of the following form
			\[
			Z_{(\fa,\fb)}(z) =   \begin{pmatrix} b_1 & & 0 & \bar{z}_1\\
                	  & \ddots & & \vdots\\
                  	0 & & b_k & \bar{z}_k\\
                  	z_1 & \hdots & z_k & z_{k+1}
		  \end{pmatrix} \in \widetilde{\mathcal{A}}_{k+1}(\fa,\fb) \subset \mathcal{A}_{k+1}(\fa,\fb)
		  \]
		with respect to the $U(k)$-action, it is enough to show that the isotropy subgroup $U(k)_{I_\fb}$ of $I_\fb$
		acts on $\widetilde{\mathcal{A}}_{k+1}(\fa,\fb)$ transitively where $I_\fb$ is the diagonal matrix whose $(i,i)$-th entry is $b_i$ for $i=1,\cdots,k$.
		
		Now, let us assume that
		\[
			b_{i_0} := b_1 = \cdots = b_{i_1} > b_{i_1 + 1} = \cdots = b_{i_2} > \cdots > b_{i_{r-1} + 1} = \cdots = b_{i_r} := b_k.
		\]
		for some $r \geq 1$ provided with $i_0 = 0$ and $i_r = k$.
		Then it is not hard to show that $U(k)_{I_b} = U(k_1) \times \cdots \times U(k_r)$ where $k_j = i_j - i_{j-1}$ for $j=1,\cdots, r$.
		For each $j$, we know that each $(z_{i_j+1}, \cdots, z_{i_{j+1}}) \in \C^{k_{j+1}}$ satisfies either
		\begin{itemize}
			\item $|z_{i_j+1}|^2 + \cdots + |z_{i_{j+1}}|^2 = 0$, or
			\item $|z_{i_j+1}|^2 + \cdots + |z_{i_{j+1}}|^2 = C_{j+1}$ for some positive constant $C_{j+1} \in \R_{>0}$
		\end{itemize}
		by Lemma \ref{lemma_fiber_zero_a_b}, \ref{lemma_fiber_zero_b_a}, \ref{lemma_fiber_zero_a_a}, and \ref{lemma_fiber_sphere_b_b}.
		In the latter case, $U(k)_{I_b}$-action is written as
		\[
		\begin{pmatrix} g & 0\\
                	  0 & 1\\
		\end{pmatrix}
		\cdot
		\begin{pmatrix} I_b & \bar{z}^t\\
                	  z & z_{k+1}\\
		\end{pmatrix}
		\cdot
		\begin{pmatrix} g^{-1} & 0\\
                	  0 & 1\\
		\end{pmatrix}
		=
		\begin{pmatrix} I_b & g\bar{z}^t\\
                	  zg^{-1} & z_{k+1}\\
		\end{pmatrix}
		\]
		for every $g \in U(k)_{I_b}$ and $z = (z_1, \cdots, z_k)$.
		Note that every $g \in U(k)_{I_b}$ is of the form
		\[
		g = \begin{pmatrix} g_1 & 0 & 0 & \cdots & 0\\
                	  0 & g_2 & 0 & \cdots & 0\\
                	  0 & 0 & \ddots & \cdots & \vdots \\
                	  \vdots & \vdots & \vdots & \ddots & \vdots\\
                	  0 & \cdots & \cdots & \cdots & g_r
		\end{pmatrix}
		\]		
		where $g_i \in U(k_i)$ for $i=1,\cdots,r$. Thus each $g \in U(k)_{I_b}$ acts on $(z)_{j+1} := (z_{i_j+1}, \cdots, z_{i_{j+1}}) \in \C^{k_{j+1}}$
		by $(z)_{j+1} \cdot g_{j+1}^{-1}$ which is equivalent to the standard linear $U(k_{j+1})$-action on the sphere
		$S^{2k_{j+1} - 1} \subset \C^{k_j}$ of radius $\sqrt{C_{j+1}}$.
		Therefore, the action is transitive.
		\end{proof}
		
		\begin{lemma}\label{lemma_isotropic}
			For each $x \in \mathcal{A}_{k+1}(\fa,\fb)$ and any $\xi, \eta \in T_x \mathcal{A}_{k+1}(\fa,\fb)$, we have
			\[
				(\omega_\fa)_x(\xi, \eta) = (\omega_\fb)_{\rho_{k+1}(x)}((\rho_{k+1})_* \xi, (\rho_{k+1})_* \eta).
			\]
			In particular, the vertical tangent space $V_x \subset T_x \mathcal{A}_{k+1}(\fa,\fb)$ of $\rho_{k+1}$ is contained in $\ker (\omega_\fa)_x$.
		\end{lemma}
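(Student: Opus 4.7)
The plan is to leverage the transitivity of the $U(k)$-action on $\mathcal{A}_{k+1}(\fa,\fb)$ provided by Lemma~\ref{lemma_transitive} in order to reduce the identity to a direct trace calculation. Since the conjugation action of $U(k) \hookrightarrow U(k+1)$ via $i_k$ is transitive on $\mathcal{A}_{k+1}(\fa,\fb)$, the differential of the orbit map through $x$, namely $X' \mapsto [(i_k)_*(X'), x]$, will be surjective onto $T_x \mathcal{A}_{k+1}(\fa,\fb)$. Hence any $\xi, \eta \in T_x \mathcal{A}_{k+1}(\fa,\fb)$ can be written as $\xi = [(i_k)_*(X'), x]$ and $\eta = [(i_k)_*(Y'), x]$ for some $X', Y' \in \mathfrak{u}(k)$; this uniform representation is the key observation that makes both sides of the claimed identity simultaneously computable.

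For the left-hand side, I will apply the KKS formula and use that $(i_k)_*$ is a Lie algebra morphism:
\[
(\omega_\fa)_x(\xi, \eta) = \text{tr}\bigl(ix\,[(i_k)_*(X'),(i_k)_*(Y')]\bigr) = \text{tr}\bigl(ix\,(i_k)_*[X', Y']\bigr).
\]
Writing $(i_k)_*[X', Y'] = \begin{pmatrix} [X', Y'] & 0 \\ 0 & 0 \end{pmatrix}$ together with $x$ in block form having top-left block $x^{(k)}$, the trace isolates the contribution from the upper-left block and collapses to $\text{tr}(ix^{(k)}[X', Y'])$.

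For the right-hand side, I will verify that $(\rho_{k+1})_* \xi = [X', x^{(k)}]$ by noting that for $g = \begin{pmatrix} g_0 & 0 \\ 0 & 1 \end{pmatrix} \in i_k(U(k))$, conjugation $gxg^{-1}$ has leading principal minor $g_0 x^{(k)} g_0^{-1}$, so differentiating at the identity yields this commutator, and likewise for $\eta$. The KKS formula on $\mathcal{O}_\fb$ then gives
\[
(\omega_\fb)_{x^{(k)}}\bigl((\rho_{k+1})_* \xi, (\rho_{k+1})_* \eta\bigr) = \text{tr}(ix^{(k)}[X', Y']),
\]
matching the expression derived for $(\omega_\fa)_x(\xi, \eta)$.

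The ``in particular'' assertion will then follow at once: if $\xi \in V_x = \ker (\rho_{k+1})_*$, the right-hand side vanishes for every $\eta$, forcing $(\omega_\fa)_x(\xi, \cdot)|_{V_x} \equiv 0$. I do not foresee a genuine obstacle here, since the heavy lifting is encapsulated in Lemma~\ref{lemma_transitive}; once that representation of tangent vectors is in hand, the verification reduces to a short block-matrix computation.
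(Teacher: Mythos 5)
Your proposal is correct and follows essentially the same route as the paper's own argument: use Lemma~\ref{lemma_transitive} to express $\xi, \eta$ as commutators $[(i_k)_*(\cdot), x]$, reduce both sides to $\mathrm{tr}(ix^{(k)}[X', Y'])$ by the block structure (the paper notes the $(k+1,k+1)$-entry of $x[(i_k)_*(X),(i_k)_*(Y)]$ vanishes; your observation that $[(i_k)_*(X'),(i_k)_*(Y')] = (i_k)_*[X',Y']$ is the same fact phrased via the Lie algebra morphism), and identify $(\rho_{k+1})_*\xi = [X', x^{(k)}]$ using equivariance of $\rho_{k+1}$. One small slip: in the final paragraph the conclusion should be that $(\omega_\fa)_x(\xi, \cdot)$ vanishes on all of $T_x \mathcal{A}_{k+1}(\fa,\fb)$, not just on $V_x$; your reasoning ("the right-hand side vanishes for every $\eta$") correctly gives the stronger statement, so this is only a typographical lapse.
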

		
		\begin{proof}
			Note that Lemma \ref{lemma_transitive} implies that any tangent vector in $T_x \mathcal{A}_{k+1}(\fa,\fb)$ can be written as $[(i_k)_*(X), x]$ for some
			$X \in \mathfrak{u}(k)$ where
			\[
				(i_k)_*(X) = 		\begin{pmatrix}X & 0\\[0.3em]
                  	            0 & 0\\
		  \end{pmatrix} \in \mathfrak{u}(k+1).
			\]
			Thus for any $\xi, \eta \in T_x \mathcal{A}_{k+1}(\fa,\fb)$, there exist $X, Y \in \mathfrak{u}(k)$ such that
			\[
				\xi = [(i_k)_*(X), x], \quad \eta = [(i_k)_*(Y), x].
			\]
			Therefore, we have
			\[
				(\omega_\fa)_x(\xi, \eta) = \mathrm{tr}(ix[(i_k)_*(X), (i_k)_*(Y)]) = \mathrm{tr}(ix^{(k)}[X,Y]) = (\omega_\fb)_{x^{(k)}} ([X, x^{(k)}] , [Y, x^{(k)}])	
			\]
			since the $(k+1, k+1)$-th entry of the matrix $x[(i_k)_*(X), (i_k)_*(Y)]$ is zero by direct computation.
			Since $\rho_{k+1}$ is $U(k)$-invariant, we obtain that $[X, x^{(k)}] = (\rho_{k+1})_*(\xi)$ and
			$[Y, x^{(k)}] = (\rho_{k+1})_*(\eta)$.			
			This completes the proof.
		\end{proof}

	\begin{proposition}\label{proposition_second_part}
		For any $\textbf{\textup{u}} \in \Delta_\lambda$, the fiber $\Phi_\lambda^{-1}(\textbf{\textup{u}})$ is an isotropic submanifold of
		$(\mathcal{O}_\lambda,\omega_\lambda)$, i.e.,
		$\omega|_{\Phi_\lambda^{-1}(\textbf{\textup{u}})} = 0$.
	\end{proposition}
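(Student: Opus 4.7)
The plan is to proceed by induction on $n$, the size of matrices in $\mathcal{O}_\lambda$, with Lemma~\ref{lemma_isotropic} serving as the central geometric input. The base case $n=1$ is trivial since $\mathcal{O}_\lambda$ is a point. For the inductive step, I would first set $\fb := (u_{1,n-1}, u_{2,n-2}, \ldots, u_{n-1,1})$, the sequence of eigenvalues that $\textbf{\textup{u}}$ prescribes to the $(n-1)\times(n-1)$ leading principal minor. By construction $\Phi_\lambda^{-1}(\textbf{\textup{u}})$ is contained in the subvariety $\mathcal{A}_n(\lambda,\fb)$, and the principal-minor map $\rho_n$ restricts to a surjection from $\Phi_\lambda^{-1}(\textbf{\textup{u}})$ onto the smaller GC fiber $\Phi_\fb^{-1}(\textbf{\textup{u}}')$ in the $U(n-1)$-orbit $\mathcal{O}_\fb$, where $\textbf{\textup{u}}'\in\Delta_\fb$ is the truncation of $\textbf{\textup{u}}$ to the entries $(i,j)$ with $i+j\le n$. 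This surjectivity (and the compatibility of smooth structures) is precisely what the top row of diagram~\eqref{figure_iterated_bundle} encodes.

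The inductive step is then a one-line computation: Lemma~\ref{lemma_isotropic} gives $\omega_\lambda|_{\mathcal{A}_n(\lambda,\fb)} = \rho_n^* \omega_\fb$, so restricting once more and invoking naturality of pullback yields
\[
\omega_\lambda\big|_{\Phi_\lambda^{-1}(\textbf{\textup{u}})} \;=\; \rho_n^*\bigl(\omega_\fb\big|_{\Phi_\fb^{-1}(\textbf{\textup{u}}')}\bigr).
\]
The inductive hypothesis applied to the $U(n-1)$-orbit $\mathcal{O}_\fb$ says that the inner restriction vanishes, so the left-hand side vanishes as well. In effect the iterated bundle of Corollary~\ref{corollary_first_part} is realized as a tower of isotropic-on-fibers submersions, and isotropy propagates from bottom to top by repeated application of Lemma~\ref{lemma_isotropic}.

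I do not expect a serious obstacle. Smoothness of $\Phi_\lambda^{-1}(\textbf{\textup{u}})$, which is needed even to make sense of restricting a differential form, has already been supplied by the iterated bundle construction in Corollary~\ref{corollary_first_part}. The only modest book-keeping point is to confirm that $\rho_n$ does indeed send $\Phi_\lambda^{-1}(\textbf{\textup{u}})$ onto $\Phi_\fb^{-1}(\textbf{\textup{u}}')$ (not merely into it), so that the inductive hypothesis can be applied at the correct point; this is transparent from~\eqref{figure_iterated_bundle} because the $(n{-}1)$-st stage of the tower for $\lambda$ at $\textbf{\textup{u}}$ is literally the $(n{-}2)$-nd stage of the tower for $\fb$ at $\textbf{\textup{u}}'$. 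Once this is observed, the proof is essentially a clean induction powered entirely by Lemma~\ref{lemma_isotropic}.
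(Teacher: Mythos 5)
Your proposal is correct and takes essentially the same approach as the paper: both arguments reduce isotropy to Lemma~\ref{lemma_isotropic} applied down the chain of leading-principal-minor projections $\rho_k$. The paper writes this out as an explicit telescoping chain of equalities $(\omega_{\fa(n)})_x(\xi,\eta) = (\omega_{\fa(n-1)})_{\rho_n(x)}(\cdots) = \cdots = (\omega_{\fa(2)})_{\rho_2\circ\cdots\circ\rho_n(x)}(\cdots) = 0$, while you package the same mechanism as a clean induction on $n$; the content is identical.
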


	\begin{proof}
		Suppose that $\gamma$ is a face of $\Gamma_\lambda$
		such that the corresponding face $f_\gamma$ contains $\textbf{\textup{u}}$ in its interior.
		Let $x \in \Phi^{-1}_\lambda(\textbf{\textup{u}})$ and let $\xi, \eta \in T_x \Phi^{-1}_\lambda(\textbf{\textup{u}})$.
		Then Corollary \ref{corollary_first_part} says that $\Phi^{-1}_\lambda(\textbf{\textup{u}})$ is the total space of an iterated bundle
		\[
				\Phi_\lambda^{-1}(\gamma) = \bar{S_{n-1}}(\gamma) \xrightarrow {p_{n-2}} \bar{S_{n-2}}(\gamma) \rightarrow \cdots
		 		\xrightarrow{p_1} \bar{S_1}(\gamma) = S_1(\gamma).
		\]
		described in~\eqref{figure_iterated_bundle}.

		For each integer $k$ with $ 1 < k \leq n$ and $1 \leq i \leq k$,
		let us define
		\[
			a_i(k) := \Phi_\lambda^{i, k+1-i}(\textbf{\textup{u}}).
		\]
		provided with $a_1(1) := \Phi_\lambda^{1,1}(\textbf{\textup{u}})$ and let $\fa(k) := (a_1(k), \cdots, a_k(k))$.
		In particular, we have $\fa(n) = \lambda = \{ \lambda_1, \cdots, \lambda_n \}$.
		Then
		Lemma \ref{lemma_isotropic} implies that
		\[
			(\omega_{\fa(n)})_x (\xi, \eta) = (\omega_{\fa(n-1)})_{\rho_n(x)}((\rho_n)_*(\xi), (\rho_n)_*(\eta)).
		\]
		Since
		$p_{n-2}$ is the restriction of $\rho_n$ to $\bar{S}_{n-1}(\gamma) \subset \mathcal{A}(\fa(n), \fa(n-1))$,
		both $(\rho_n)_*(\xi)$ and $(\rho_n)_*(\eta)$ are lying on $T_{\pi_{n-2}(x)} \bar{S}_{n-2}(\gamma) \subset
		T_{p_{n-2}(x)} \mathcal{A}(\fa(n-1), \fa(n-2))$.

		Thus we can apply Lemma \ref{lemma_isotropic}
		inductively so that we have
		\[	\begin{array}{ccl}
			(\omega_{\fa(n)})_x (\xi, \eta) & = & (\omega_{\fa(n-1)})_{\rho_n(x)}((\rho_n)_*(\xi), (\rho_n)_*(\eta)) \\
                                             & = & (\omega_{\fa(n-2)})_{\rho_{n-1} \circ \rho_n(x)}((\rho_{n-1} \circ \rho_n)_*(\xi), (\rho_{n-1} \circ \rho_n)_*(\eta)) \\
                                             & = & \cdots \\
			                     & = & (\omega_{\fa(2)})_{\rho_2 \circ \cdots \circ \rho_n(x)}((\rho_2 \circ \cdots \circ \rho_n)_*(\xi), (\rho_2 \circ \cdots \circ \rho_n)_*(\eta))\\
			                     & = & 0.
			\end{array}
		\]
		The last equality follows from $\rho_2 \colon \mathcal{A}(\fa(2), \fa(1)) \rightarrow \{ a_1(1) = \Phi_\lambda^{1,1}(\textbf{\textup{u}}) \} = \mathrm{point}.$
	\end{proof}
	
	\begin{proof}[Proof of Theorem~\ref{theorem_main}]
		It follows from Corollary~\ref{corollary_first_part} and  Proposition~\ref{proposition_second_part}.
	\end{proof}

\section{Degenerations of fibers to tori}
\label{secDegenerationsOfFibersToTori}

In this section, we study the topology of GC fibers via toric degenerations and describe how each fiber of a GC system 
degenerates to a torus fiber of a toric moment map. 

Let $\lambda$ be given in \eqref{equation_GC-pattern} and let $f$ be a face of the GC polytope $\Delta_\lambda$ of dimension $r$. 
It is shown in Section ~\ref{secIteratedBundleStructuresOnGelfandCetlinFibers} that a fiber $\Phi_\lambda^{-1}(\textbf{\textup{u}})$ is the total space of an iterated bundle 
	\begin{equation}\label{equation_iterated_S}
		\Phi_\lambda^{-1}(\textbf{\textup{u}}) \cong \bar{S_{n-1}}(\gamma_f) \xrightarrow {p_{n-1}} \bar{S_{n-2}}(\gamma_f)
		 \rightarrow \cdots
		 \xrightarrow{p_2} \bar{S_1}(\gamma_f) \xrightarrow{p_1} \bar{S_0}(\gamma_f) := \mathrm{point}
	\end{equation}
where $\gamma_f$ is the face of  $\Gamma_\lambda$ corresponding to $f$, see Corollary \ref{corollary_first_part}.
The main theorem of this section further claims that 
every $S^1$-factor appeared in any stage of the iterated bundle in \eqref{equation_iterated_S} comes out as a trivial factor. 
Namely, letting $\bar{S_{i}}(\gamma)$ be $(S^1)^{m_i} \times Y_i$ such that $\sum_{i=1}^{n-1} m_i = r$, the fiber is written as the product
$$
\Phi_\lambda^{-1}(\textbf{\textup{u}}) = (S^1)^r \times Y(\textbf{\textup{u}})
$$
where $Y(\textbf{\textup{u}})$ is the total space of an iterated bundle whose fibers at stages are $Y_{\bullet}$'s, see Theorem~\ref{theorem_contraction}. Indeed, $Y(\textbf{\textup{u}})$ shrinks to a point through a toric degeneration, which illustrates how fibers degenerate into toric fibers. As an application, it provides a more concrete description of the GC fiber. Furthermore, we compute the fundamental group and the second homotopy group of $\Phi_\lambda^{-1}(\textbf{\textup{u}})$.

Recall that for a given K\"{a}hler manifold $(X,\omega)$, 
a flat family $\pi \colon \mcal{X} \rightarrow \C$ of algebraic varieties is called a 
{\em toric degeneration} $(X, \omega)$ if there exists an algebraic embedding $i \colon \mcal{X} \hookrightarrow \p^N \times \C$ such that 
\begin{enumerate}
	\item the diagram
		\begin{equation}\label{equation_def_toric_degeneration}
		    \xymatrix{
			      \mcal{X}  ~\ar[drr]_{\pi} \ar@{^{(}->}[rr]^{i}
			      & & \p^N \times \C \ar[d]^{q} \\ 
			      & & \C 
			    }
		\end{equation} commutes where $q \colon \p^N \times \C \rightarrow \C$ is the projection to the second factor, 
	\item $\pi^{-1}(\C^*)$ is isomorphic to $ X \times \C^*$	as a complex variety, and 
	\item For the product K\"{a}hler form $\widetilde{\omega} := \omega_{\mathrm{FS}} \oplus \omega_{\mathrm{std}}$ 
	on $\p^N \times \C$ where 
	$\omega_{\mathrm{FS}}$ is a multiple of the Fubini-Study form on $\p^N$ and $\omega_{\mathrm{std}}$ is the standard K\"{a}hler form 
	on $\C$, 
		\begin{itemize}
			\item $(X_1, \widetilde{\omega}|_{X_1})$ is symplectomorphic to  $(X, \omega)$, and 
			\item $X_0$ is a projective toric variety (in $\p^N$) and $\widetilde{\omega}|_{X_0}$ is a torus invariant K\"{a}hler form 
			denoted by $\omega_0$
		\end{itemize}
	where $X_t := \pi^{-1}(t) \cong i(\pi^{-1}(t)) \subset \p^N \times \{t \}$ is a projective variety for every $t \in \C$. 
\end{enumerate}	

Let $\mcal{X}^{\mathrm{sm}} \subset \mcal{X}$ be the smooth locus of $\mcal{X}$.  
The Hamiltonian vector field, denoted by $\widetilde{V}_\pi$, for the imaginary part $\mathrm{Im}(\pi)$ of $\pi$ is defined on $\mcal{X}^{\mathrm{sm}}$.
By the holomorphicity, $\pi$ satisfies the Cauchy-Riemann equation which induces $\nabla\mathrm{Re}(\pi) = - \widetilde{V}_\pi$ where 
$\nabla$ denotes the gradient vector field with respect to a K\"{a}hler metric associated with $\widetilde{\omega}$. 
We call
\[
	V_\pi := \widetilde{V}_\pi / ||\widetilde{V}_\pi||^2
\]
the {\em gradient-Hamiltonian vector field of $\pi$}, see Ruan \cite{R}. 
Then the one-parameter subgroup generated by $V_\pi$ induces a symplectomorphism 
\begin{equation}\label{equation_symplecto_open_dense}
	\phi \colon (\mcal{U}, \omega) \rightarrow (\phi(\mcal{U}), \omega_0)
\end{equation}
on an open dense subset $\mcal{U}$ of $X$ ($\cong X_1$) such that $\phi(\mcal{U}) = X_0^{\mathrm{sm}}$ and it is extended to 
a surjective continuous map 
\[
	\phi \colon X \rightarrow X_0
\]
defined on the whole $X$, see Harada-Kaveh \cite[Theorem A]{HK} for more details. 

We may also consider a toric degeneration of a K\"{a}hler manifold {\em ``equipped with a completely integrable system''} as follows.
Consider a triple $(X,\omega,\Theta)$ where $\Theta = (\Theta_\alpha)_{\alpha \in \mcal{I}}$ is a (continuous) completely integrable system on $(X, \omega)$
and $\mcal{I}$ is the index set for $\Theta$ such that $|\mcal{I}| = \dim_\C X_0$. 
We call $\pi \colon \mcal{X} \rightarrow \C$ a {\em toric degeneration} of $(X, \omega, \Theta)$ if $\pi$ is a toric degeneration of 
$(X,\omega)$ and $\Theta = \Phi \circ \phi$ where $\Phi \colon X_0 \rightarrow \R^{|\mcal{I}|}$ is a toric moment map on $(X_0, \omega_0)$, see \cite[Definition 1.1]{NNU}.
\begin{equation}\label{equation_toric_degeneration_diagram_Section 7}
	\xymatrix{
		(X_1,\omega_1)\cong (X,\omega)  \ar[dr]_{\Theta} \ar[rr]^{ \phi}
                            & & (X_0, \omega_0) \subset \p^N \times \{0\}
      \ar[dl]^{\Phi} \\
   & \Delta_0 &}
\end{equation}
The Hamiltonian vector field of each component $\Phi_\alpha$ of $\Phi$ ($ = \{ \Phi_\alpha \}_{\alpha \in \mcal{I}}$) is globally defined on $X_0$, even though $X_0$ is singular, by the following reason. 
Note that $X_0 \subset \p^N \times \{0 \}$ is a projective toric variety, which is the Zariski closure of the single $(\C^*)^{|\mcal{I}|}$-orbit on $X_0$.
The $(\C^*)^{|\mcal{I}|}$-action on $X_0$ extends to the linear Hamiltonian action on $\p^N$ with respect to $\omega_{\mathrm{FS}}$. 
We denote by $(S^1)^{|\mcal{I}|}$ the maximal compact subgroup of $(\C^*)^{|\mcal{I}|}$, by $\widetilde{\Phi} = (\widetilde{\Phi}_\alpha)_{\alpha \in \mcal{I}}$
a moment map for the $(S^1)^{|\mcal{I}|}$-action on $\p^N$, and by $\xi_\alpha$ the fundamental vector field of $\widetilde{\Phi}_\alpha$ on $\p^N$ for each $\alpha \in \mcal{I}$.
Then each component $\Phi_\alpha$ coincides with the restriction of $\widetilde{\Phi}_\alpha$ to $X_0$. 
Since $X_0$ is $T^{|\mcal{I}|}$-invariant, the trajectory of the flow of $\xi_\alpha$ passing through any point of $X_0$ 
should be on $X_0$. In other words, the restriction $\xi_\alpha|_{X_0}$ should be tangent\footnote{Every toric variety is a stratified space \cite{LS}
so that each point in $X_0$ is contained in an open smooth stratum and each vector field $\xi_\alpha$ is tangent to the stratum.} 
to $X_0$, and therefore the Hamiltonian vector field of $\Phi_\alpha$ is defined on the whole $X_0$. 

Now, let $\mcal{V}_\alpha$ be the open dense subset of $X$
on which $\Theta_\alpha$ is smooth. 
Then the Hamiltonian vector field, denoted by $\zeta_\alpha$, of $\Theta_\alpha$ is defined on $\mcal{V}_\alpha$. 
For any subset $\mcal{I}' \subset \mcal{I}$, we let
\begin{equation}\label{domainofsmoothofcomponent}
	\displaystyle \mcal{V}_{\mcal{I}'} := \displaystyle \bigcap_{\alpha \in \mcal{I}'} \mcal{V}_\alpha
\end{equation}
so that the Hamiltonian vector field of $\Theta_\alpha$ is defined on $\mcal{V}_{\mcal{I}'}$ for every $\alpha \in \mcal{I}'$. 
If $\Theta_\alpha$ is a periodic Hamiltonian, i.e., if $\Theta_\alpha$ generates a circle action for every $\alpha \in \mcal{I}'$, 
then $\mcal{V}_{\mcal{I}'}$ admits the $T^{|\mcal{I}'|}$-action generated by $\{ \zeta_\alpha \}_{\alpha \in \mcal{I}'}$. 
Note that $\mcal{V}_{\mcal{I}'}$ is open dense in $X$ so that $\mcal{U} \cap \mcal{V}_{\mcal{I}'}$ 
is also open dense in $X$ where $\mcal{U}$ is in \eqref{equation_symplecto_open_dense}.

\begin{lemma}\label{lemma_equivariant}
	For any $\alpha \in \mcal{I}$ and $p \in \mcal{V}_\alpha$, we have 
	\[
		\phi(\exp(t \, \zeta_\alpha) \cdot p) =\exp(t \, \xi_\alpha) \cdot \phi(p)
	\]
	for every $t \in \R$.  
\end{lemma}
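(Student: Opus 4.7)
The plan is to prove the lemma in three steps: first, to derive the infinitesimal identity $\phi_{\ast} \zeta_\alpha = \xi_\alpha$ on the locus where $\phi$ is already a symplectomorphism; second, to integrate this identity along $\zeta_\alpha$-orbits that remain entirely inside this locus, obtaining flow equivariance on a dense subset of $\mcal{V}_\alpha$; and third, to propagate the identity to all of $\mcal{V}_\alpha$ by continuity of $\phi$ and of the two flows.

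For the first step I will use that $\phi|_{\mcal{U}}\colon (\mcal{U},\omega)\to (X_0^{\mathrm{sm}},\omega_0)$ is a symplectomorphism by~\eqref{equation_symplecto_open_dense}, together with the defining property $\Theta=\Phi\circ\phi$ of the toric degeneration of $(X,\omega,\Theta)$, which gives $\Theta_\alpha=\phi^{\ast}\Phi_\alpha$ on $\mcal{U}$; since symplectomorphisms intertwine Hamiltonians with their Hamiltonian vector fields, I obtain $\phi_{\ast}\zeta_\alpha=\xi_\alpha$ pointwise on $\mcal{U}\cap\mcal{V}_\alpha$, interpreting $\xi_\alpha$ as the Hamiltonian vector field of $\Phi_\alpha$ on $X_0^{\mathrm{sm}}$. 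For the second step I will set
\[
\mcal{U}_\alpha := \{q \in \mcal{V}_\alpha \mid \exp(s\,\zeta_\alpha)\cdot q \in \mcal{U} \text{ for every } s \in \R\},
\]
whose complement in $\mcal{V}_\alpha$ is the $S^1$-saturation of $(X\setminus\mcal{U})\cap\mcal{V}_\alpha$, because each $\Theta_\alpha$ generates a periodic Hamiltonian flow on its smooth locus by Proposition~\ref{proposition_GS_smooth}. I will then argue that this saturation is nowhere dense by showing that every stratum of $X\setminus\mcal{U}=\Phi_\lambda^{-1}(\partial\Delta_\lambda)$ has real codimension at least two in $X$. Indeed, a Lagrangian facet $\gamma$ of the ladder diagram would satisfy $n_1(\gamma)=\dim\gamma=r-1$, while the Lagrangian identity $\sum_{k\geq 1}(2k-1)\,n_k(\gamma)=r$ from Proposition~\ref{proposition_classification_lagrangian_face} would force $1=3\,n_2(\gamma)+5\,n_3(\gamma)+\cdots$, which admits no solution in non-negative integers; hence no facet of $\Delta_\lambda$ is Lagrangian and the boundary stratification has codimension at least two, so that $\mcal{U}_\alpha$ is open and dense in $\mcal{V}_\alpha$. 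For $q\in\mcal{U}_\alpha$, both $t\mapsto \phi(\exp(t\,\zeta_\alpha)\cdot q)$ and $t\mapsto \exp(t\,\xi_\alpha)\cdot \phi(q)$ are smooth integral curves of $\xi_\alpha$ in $X_0^{\mathrm{sm}}$ sharing the initial value $\phi(q)$, so they agree on $\R$ by uniqueness of ODE solutions.

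For the third step, given an arbitrary $p\in\mcal{V}_\alpha$, I will choose a sequence $p_n\to p$ in $\mcal{U}_\alpha$ and pass to the limit in the equality $\phi(\exp(t\,\zeta_\alpha)\cdot p_n)=\exp(t\,\xi_\alpha)\cdot \phi(p_n)$ from the second step, using smoothness of $\exp(t\,\zeta_\alpha)$ on $\mcal{V}_\alpha$, continuity of $\phi\colon X\to X_0$, and smoothness of $\exp(t\,\xi_\alpha)$ on $X_0$. The hardest part will be the density of $\mcal{U}_\alpha$ in the second step: a continuous union of nowhere-dense translates of $X\setminus\mcal{U}$ can a priori fill an open subset of $\mcal{V}_\alpha$, and it is exactly the codimension-two bound on the singular locus of $\phi$---itself a combinatorial consequence of the Lagrangian classification in Corollary~\ref{corollary_L_fillable}---that rules out this pathology.
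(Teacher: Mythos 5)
Your proof follows the same overall strategy as the paper's: establish the infinitesimal relation $\phi_*\zeta_\alpha = \xi_\alpha$ on $\mcal{U}\cap\mcal{V}_\alpha$ from $\phi^*\omega_0 = \omega$ and $\Theta = \Phi\circ\phi$, then extend the flow equivariance by a density argument combined with uniqueness of ODE solutions. Where you depart from the paper is in recognizing that the extension step is not automatic: the paper's one-line appeal to ``$\mcal{U}\cap\mcal{V}_\alpha$ is open dense'' does not by itself guarantee that a $\zeta_\alpha$-orbit starting in $\mcal{U}$ remains there, and a circle action could in principle sweep a nowhere-dense bad set across an open region. You supply the missing ingredient: $X\setminus\mcal{U}$ lies over $\partial\Delta_\lambda$, the preimage of every facet has real codimension at least two because no facet of $\Delta_\lambda$ is Lagrangian (a clean consequence of the $L$-block dimension formula, since a Lagrangian facet $\gamma$ would need $n_1(\gamma)=r-1$ and $\sum_k(2k-1)n_k(\gamma)=r$, forcing $1=3n_2+5n_3+\cdots$), and the $S^1$-saturation of a closed codimension-two set is closed of codimension at least one, hence nowhere dense. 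This makes $\mcal{U}_\alpha$ open and dense, and the continuity limit in your third step then closes the argument cleanly. So your version is a more careful expansion of the paper's terse density-plus-ODE appeal; the observation that facets are never Lagrangian is a nice self-contained combinatorial fact that the paper never states explicitly. One minor correction: $X\setminus\mcal{U}$ is not equal to $\Phi_\lambda^{-1}(\partial\Delta_\lambda)$ but only contained in it (it is the preimage of the image of the singular locus of $X_0$ under $\Phi_0$); the containment is all your codimension estimate needs, so the argument survives unchanged.
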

\begin{proof}
	Fix $\alpha \in \mcal{I}$.
	From the fact that 
	\begin{itemize}
		\item $\phi^*\omega_0 = \omega$ on $\mcal{U} \cap \mcal{V}_\alpha$, and 
		\item $\Theta = \Phi \circ \phi$, 
	\end{itemize}	
	it follows that
	\[
		\omega_0(\phi_*(\zeta_\alpha), \phi_*(\cdot)) = \omega(\zeta_\alpha, \cdot ) = d\Theta_\alpha(\cdot) = d\Phi_\alpha \circ d\phi(\cdot) = \omega_0(\xi_\alpha, \phi_*(\cdot))
	\]
	so that $\phi_*(\zeta_\alpha) = \xi_\alpha$ on $\mcal{U} \cap \mcal{V}_\alpha$.
	Since $\mcal{U} \cap \mcal{V}_\alpha$ is open dense in $\mcal{V}_\alpha$ and $\xi_\alpha$ is defined on whole $X_0$, the equality 
	\[
		\phi_*(\zeta_\alpha) = \xi_\alpha
	\]
	holds on $\mcal{V}_\alpha$. 
	This completes the proof by the uniqueness of a solution of first-order ODE's.  
\end{proof}

Let $\mcal{I}' \subset \mcal{I}$ and suppose that $\Theta_\alpha$ is a periodic Hamiltonian on $\mcal{V}_{\mcal{I}'}$ for every $\alpha \in \mcal{I}'$. 
Since $\Phi_\alpha$ is also a periodic Hamiltonian on $X_0$, we deduce the following immediately from Lemma \ref{lemma_equivariant}.

\begin{corollary}\label{corollary_torus_equivariant}
	Let $\mcal{I}' \subset \mcal{I}$ such that $\{ \Theta_\alpha \}_{\alpha \in \mcal{I}'}$ are periodic Hamiltonians on $\mcal{V}_{\mcal{I}'}$. 
	Then $\phi$ is $T^{|\mcal{I}'|}$-equivariant on $\mcal{V}_{\mcal{I}'}$. 
\end{corollary}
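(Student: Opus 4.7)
The plan is to bootstrap from the one-parameter equivariance statement in Lemma~\ref{lemma_equivariant} to the full torus equivariance, by assembling the individual Hamiltonian flows on both sides into commuting $T^{|\mcal{I}'|}$-actions and then iterating the lemma coordinate by coordinate.

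First, I would verify that the circle subgroups generated by $\{\zeta_\alpha\}_{\alpha \in \mcal{I}'}$ assemble into a genuine $T^{|\mcal{I}'|}$-action on $\mcal{V}_{\mcal{I}'}$. Since $\{\Theta_\alpha, \Theta_\beta\} = 0$ on $\mcal{V}_{\mcal{I}'}$ by Definition~\ref{definition_CIS_continuous}(2), the Lie brackets $[\zeta_\alpha, \zeta_\beta]$ vanish and the flows commute wherever they are simultaneously defined. To iterate Lemma~\ref{lemma_equivariant} I also need each $S^1_\alpha$-flow to preserve $\mcal{V}_{\mcal{I}'}$: Poisson-commutativity gives $\Theta_\beta \circ \exp(t\zeta_\alpha) = \Theta_\beta$, and the flow of $\Theta_\alpha$ is a smooth diffeomorphism on its domain, so it preserves the smoothness locus $\mcal{V}_\beta$ of each $\Theta_\beta$ with $\beta \in \mcal{I}'$, and hence their intersection $\mcal{V}_{\mcal{I}'}$. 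On the target side the commutativity is automatic: the flows of $\{\Phi_\alpha\}_{\alpha \in \mcal{I}'}$ on $X_0$ are restrictions of commuting linear circle actions of the ambient $T^{|\mcal{I}|} \subset U(N+1)$ on $\mathbb{P}^N$, so they already define a $T^{|\mcal{I}'|}$-action on $X_0$.

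With both torus actions in hand, for any $(t_\alpha)_{\alpha \in \mcal{I}'} \in \R^{|\mcal{I}'|}$ and any $p \in \mcal{V}_{\mcal{I}'}$, I would fix an ordering of $\mcal{I}'$ and apply Lemma~\ref{lemma_equivariant} successively to each factor in the composition $\prod_{\alpha \in \mcal{I}'} \exp(t_\alpha \zeta_\alpha) \cdot p$. Invariance of $\mcal{V}_{\mcal{I}'}$ under each individual flow ensures that the hypothesis of the lemma is satisfied at every stage, yielding
\[
\phi\Bigl(\prod_{\alpha \in \mcal{I}'} \exp(t_\alpha \zeta_\alpha) \cdot p\Bigr) = \prod_{\alpha \in \mcal{I}'} \exp(t_\alpha \xi_\alpha) \cdot \phi(p),
\]
which is precisely the claimed $T^{|\mcal{I}'|}$-equivariance (the right-hand side is independent of the chosen ordering since the $\xi_\alpha$'s commute). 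The main, though relatively mild, obstacle is the stability of $\mcal{V}_{\mcal{I}'}$ under every $S^1_\alpha$; this is the one step that uses both the periodicity hypothesis and Poisson-commutativity in an essential way, and once it is in place the rest of the argument is a direct iteration of Lemma~\ref{lemma_equivariant}.
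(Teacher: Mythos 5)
Your proof is correct and follows the same route as the paper: the paper records, just before the corollary, that the commuting periodic Hamiltonian flows $\{\zeta_\alpha\}_{\alpha\in\mcal{I}'}$ assemble into a $T^{|\mcal{I}'|}$-action on $\mcal{V}_{\mcal{I}'}$, and then asserts (with no further details) that the corollary ``follows immediately'' from Lemma~\ref{lemma_equivariant}. You simply fill in the small details the paper leaves implicit—that Poisson-commutativity makes the flows commute, that each $S^1_\alpha$-flow preserves $\mcal{V}_{\mcal{I}'}$ (so the iterated application of the lemma is legitimate), and that the target action on $X_0$ is already a genuine torus action coming from the ambient linear action on $\p^N$.
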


We will apply Corollary \ref{corollary_torus_equivariant} to GC systems. Recall that for any $\lambda$ given in \eqref{equation_GC-pattern}, Nishinou-Nohara-Ueda \cite{NNU}
built a toric degeneration of the GC system $\Phi_\lambda$ on a partial flag manifold $(\mcal{O}_\lambda, \omega_\lambda, \Phi_\lambda)$. 
We first describe their toric degeneration of $(\mcal{O}_\lambda, \omega_\lambda, \Phi_\lambda)$ and a continuous map $\phi \colon \mcal{O}_\lambda \rightarrow X_0$ given in 
\eqref{equation_symplecto_open_dense} obtained by the gradient-Hamiltonian flow,  
where $X_0$ is the central fiber of the toric degeneration. (We also refer the reader to \cite{KM} or \cite{NNU} for more details.) 

In \cite{KM}, Kogan and Miller constructed an $(n-1)$-parameter family $F \colon \mcal{X} \rightarrow \C^{n-1}$, called a {\em toric degeneration in stage}, 
of projective varieties that can be factored as 
\begin{equation}{\label{equation_KoM}}
	F = q \circ \iota, \quad \quad 
	\mcal{X} \stackrel{\iota}  \hookrightarrow  P \times \C^{n-1} \stackrel {q} \rightarrow \C^{n-1}, \quad 
	P = \prod_{k=1}^r \p_{n_k}\footnote{For $m \in \Z_+$, we denote by $\p_m := \p(\wedge^m \C^n) = \p^{{n \choose m}-1}$.}
\end{equation}
where $\iota$ is an algebraic embedding with a K\"{a}hler form $\widetilde{\omega}$ on $P \times \C^{n-1}$ such that 
\begin{itemize}
	\item $(F^{-1}(1,\cdots,1), \widetilde{\omega}|_{F^{-1}(1,\cdots,1)}) \cong (\mcal{O}_\lambda, \omega_\lambda)$ and 
	\item $F^{-1}(0,\cdots,0)$ is isomorphic to the GC toric variety $X_0$ whose moment map image is $\Delta_\lambda$ 
		with the torus-invaraint K\"{a}hler form $\widetilde{\omega}|_{F^{-1}(0,\cdots,0)}$ on $X_0$. 
\end{itemize}
See \cite[Section 5 and Remark 5.2]{NNU} for more details. Following \cite{NNU}, we denote the coordinates of $\C^{n-1}$ by $(t_2, \cdots, t_n)$ and 
$F^{-1}(1,\cdots,1, t = t_k , 0,\cdots, 0)$ by $X_{k,t}$ for $2 \leq k \leq n$ and $t \in \C$. 
Then the set 
\[
	\{ X_{k,t} \}_{2 \leq k \leq n, t \in \C}
\] can be regarded as a family of algebraic varieties in $P$ via the embedding $\iota$ where 
$X_{n,1} \subset P$ is the image of the Pl\"{u}cker embedding of $\mcal{O}_\lambda$
and $X_{2,0} \subset P$ is the toric variety $X_0$ associated with $\Delta_\lambda$.

Let $T^{\frac{n(n-1)}{2}}$ be the compact subtorus of $(\C^*)^{\frac{n(n-1)}{2}} \subset X_0$ and consider a decomposition 
\[
	T^{\frac{n(n-1)}{2}} \cong T^1 \times T^2 \times \cdots \times T^{n-1}.
\]
For each $k = 1,\cdots,n-1$, we denote the $i$-th coordinate of $T^k$ by $\tau_{i,j}$
\footnote{
For the consistency of~\eqref{equation_coordinate}, we use the index $(i,j)$. 
} where $i + j = k+1$. 
Then, each $S^1$-action on $X_0$ corresponding to $\tau_{i,j}$ can be extended to the linear Hamiltonian $S^1$-action on $P$ and we denote a corresponding moment map by 
\begin{equation}{\label{phiijaa}}
	\Phi^{i,j} : P \rightarrow \R.
\end{equation}
On the other hand, recall that $U(n)$ acts on $P$ in a Hamiltonian fashion with a moment map $\mu^{(n)} : P \rightarrow \frak{u}(n)^*$ and consider the sequence of subgroups of $U(n)$ 
\[
	U(1) \subset U(2) \subset \cdots \subset U(n-1) \subset U(n), \quad U(k) := \begin{pmatrix} U(k) & 0 \\ 0 & I_{n-k} \end{pmatrix}.
\]
Then each $U(k)$ also acts on $P$ in a Hamiltonian fashion and the moment map induced from $\mu^{(n)}$ is given by 
\[
	\mu^{(k)} \colon P \rightarrow \frak{u}(k)^* \cong \{\text{$(k \times k)$-Hermitian matrices} \}
\]
for $k=1,\cdots,n-1$. For each pair $(i,j) \in (\Z_+)^2$ with $i + j = k + 1$, define 
\begin{equation}\label{phiijbb}
	\Phi_\lambda^{i,j} \colon P \rightarrow \R
\end{equation}
which assigns the $i$-th largest eigenvalue of $\mu^{(k)}(p)$ for every $p \in P$. 

\begin{remark}
If one follows the notations used in \cite[Section 5]{NNU}, then we may express as
\[
	\tau_{i,j} = \tau_i^{(j)}, \quad \Phi^{i,j} = v^{(k)}_i, \quad \Phi^{i,j}_\lambda = \lambda^{(k)}_i, \quad i+j = k+1.
\]
\end{remark}

An important fact is that 
a fiber of $F$ in \eqref{equation_KoM} is not invariant neither under the $U(n)$-action nor under the $T^1 \times T^2 \times \cdots \times T^{n-1}$-action on $P$, 
but $X_{k,t}$ is invariant under the $U(k-1)$ action and the $T^k \times \cdots \times T^{n-1}$ action for every $k \geq 2$ and $t \in \C$.
The following theorem states that the maps $\Phi^{i,j}_\lambda$'s in \eqref{phiijbb}
and $\Phi^{i,j}$'s in \eqref{phiijaa} defined on $P$ induces a completely integrable system on $X_{k,t}$ and how the GC system $\Phi_\lambda$ on $X_{n,1} \cong \mcal{O}_\lambda$
degenerates into the toric moment map $\Phi$ on $X_{2,0} \cong X_0$ {\em in stages}. See also Section 5 and Section 7 of \cite{NNU}.

\begin{theorem}[Theorem 6.1 in \cite{NNU}]\label{theorem_NNU_6_1}
For every $k \geq 2$ and $t \in \C$, the map
\[
	\Phi_{k,t} := \left.(\underbrace{\Phi^{1,1}_\lambda}_{1}, \cdots, \underbrace{\Phi^{1, k-1}_\lambda, \cdots, \Phi^{k-1,1}_\lambda}_{k-1}, 
	\underbrace{\Phi^{1, k}_{\vphantom{\lambda}}, \cdots, \Phi^{k,1}_{\vphantom{\lambda}}}_{k}, \cdots, 
	\underbrace{\Phi^{1,n-1}_{\vphantom{\lambda}}, \cdots, \Phi^{n-1,1}_{\vphantom{\lambda}}}_{n-1})\right|_{X_{k,t}} 
	: X_{k,t} \rightarrow \R^{\dim \Delta_\lambda}
\]
is a completely integrable system on $X_{k,t}$ in the sense of Definition \ref{definition_CIS_continuous}. Moreover, 
we have $\Phi_\lambda^{i, k - i} = \Phi_{\vphantom{\lambda}}^{i, k - i}$
on $X_{k, 0} = X_{k-1, 1}$ for every $i=1,\cdots, k-1$.
\end{theorem}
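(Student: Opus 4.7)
The plan is to verify the three defining properties of a (continuous) completely integrable system from Definition~\ref{definition_CIS_continuous}: that the listed functions, when restricted to $X_{k,t}$, are smooth on a common open dense subset, pairwise Poisson commute with respect to $\widetilde{\omega}|_{X_{k,t}}$, and have generically independent differentials in number equal to $\dim_\C X_{k,t} = \dim_\C \mcal{O}_\lambda$. I would organize the components into a \emph{spectral block} $\{\Phi^{i,j}_\lambda\}_{i+j \leq k}$ and a \emph{toric block} $\{\Phi^{i,j}\}_{i+j \geq k+1}$, and treat Poisson commutativity within each block and across them separately.

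The central geometric input is that $X_{k,t}$ is invariant under both the $U(k-1)$-action inherited from the ambient $U(n)$-action on $P = \prod_k \p_{n_k}$ and under the torus $T^k \times \cdots \times T^{n-1}$ acting linearly on $P$ through the toric-degeneration structure of Kogan--Miller. These two group actions on $X_{k,t}$ commute, which can be read off the Kogan--Miller embedding by noting that the one-parameter subgroups generating the $T^j$ factors, $j \geq k$, lie in the centralizer of the block $U(k-1) \subset U(n_j)$ on $\wedge^{n_j}\C^n$. Commutation of the group actions implies that the toric-block components Poisson commute with all spectral-block components (which are $U(k-1)$-invariant functions built from $\mu^{(k-1)}|_{X_{k,t}}$). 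Within the toric block, commutativity is immediate since these are moment-map coordinates of a single torus action. Within the spectral block, it follows from Guillemin--Sternberg's original argument via Thimm's trick, applied to the nested chain $U(1) \subset U(2) \subset \cdots \subset U(k-1)$: the eigenvalue functions of iterated leading principal minors of $\mu^{(k-1)}|_{X_{k,t}}$ Poisson commute on the locus where they are smooth.

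For counting and independence, the spectral block contains the eigenvalue functions of minors up to size $k-1$, from which we discard the forced-constant ones due to the multiplicity pattern in $\lambda$, and the toric block provides the remaining torus moment-map components; adding up yields exactly $|\mcal{I}_\lambda|$, matching $\dim_\C X_{k,t}$. Generic independence follows by combining the analogue of Proposition~\ref{proposition_GS_smooth} for the spectral block (linearly independent differentials on the locus of simple spectra of minors of $\mu^{(k-1)}$) with a transversality argument: on the open dense subset where the $T^k \times \cdots \times T^{n-1}$-orbits have top dimension, the torus fundamental vector fields are tangent to joint level sets of the spectral block, so the two families of differentials together span a subspace of the maximal possible dimension.

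The interface identity $\Phi_\lambda^{i,k-i} = \Phi^{i,k-i}$ on $X_{k,0} = X_{k-1,1}$ is verified by inspection of the Kogan--Miller family at the hinge $t_k = 0$: at this limit, the $(k-1)$-th stage Hermitian moment map $\mu^{(k-1)}$ becomes simultaneously diagonalizable with the linearization of the new torus factor, so its eigenvalues $\Phi_\lambda^{i,k-i}$ coincide with the weights of the linearized torus action, which are precisely the toric moment-map components $\Phi^{i,k-i}$. The main obstacle I expect is the cross-commutativity step — checking that each $T^j$-generator for $j \geq k$ commutes with the full $U(k-1)$-action on $X_{k,t}$. This requires unpacking the Pl\"ucker-type embedding $\iota$ to exhibit the circle as acting by scalars on each irreducible $U(k-1)$-isotypic component of $\wedge^{n_j}\C^n$, a bookkeeping task that is straightforward in principle but technically delicate because the isotypic decomposition varies across the stages.
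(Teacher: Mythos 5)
This is a theorem that the paper cites from Nishinou--Nohara--Ueda (their Theorem 6.1) without reproducing a proof, so there is no ``paper's own proof'' to compare against. What the paper does provide, in the paragraph preceding the theorem statement, is precisely the geometric input you identify as central: that $X_{k,t}$ is invariant under the $U(k-1)$-action inherited from $U(n)$ and under the $T^k\times\cdots\times T^{n-1}$-action on $P$, and that the spectral block arises from the induced moment map $\mu^{(k-1)}$ while the toric block arises from the moment maps $\Phi^{i,j}$ in \eqref{phiijaa}. Your organization of the argument --- spectral block commuting internally by the Guillemin--Sternberg/Thimm mechanism, toric block commuting internally as moment coordinates of a single torus, cross-block commutativity reduced to commutation of the two group actions, and dimension counting via $|\mcal{I}_\lambda|$ --- is the same skeleton that NNU's proof uses. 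So your outline is consistent with the source.

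The one point worth flagging is the one you already flag yourself: the cross-block step. Showing that the $T^j$-actions for $j\geq k$ commute with the $U(k-1)$-action on $X_{k,t}$ is indeed the technical heart, and it is not quite enough to say the circle acts by scalars on isotypic components of $\wedge^{n_j}\C^n$ --- one needs to check this against the specific one-parameter subgroups that Kogan--Miller introduce at each stage of the degeneration, and verify it on $X_{k,t}$, not just on $P$. Relatedly, your treatment of the interface identity $\Phi_\lambda^{i,k-i}=\Phi^{i,k-i}$ on $X_{k,0}=X_{k-1,1}$ is stated at the level of a plausibility argument (``becomes simultaneously diagonalizable with the linearization of the new torus factor''); this is the correct intuition, but a complete argument would identify the eigenvalues of $\mu^{(k-1)}$ with the weights of the $T^{k-1}$-action on the degenerate fiber via the explicit Pl\"ucker coordinates. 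These are exactly the places where one must return to the computations in \cite{NNU} and \cite{KM} rather than argue abstractly, so the proposal is sound as an outline but would need those two steps filled in before it counts as a proof.
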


Note that $\Phi_{k,t}$'s are related to one another via the {\em gradient-Hamiltonian flows} introduced by Wei-Dong Ruan \cite{R}. 
For each $m=1,\cdots, n-1$, let $F_m$ be the $m$-th component of $F$ in \eqref{equation_KoM} and 
let $\widetilde{V}_m$ be the Hamiltonian vector field of $\mathrm{Im}(F_m)$
on the smooth locus $\mcal{X}^{\mathrm{sm}}$ of $\mcal{X}$. Then the gradient-Hamiltonian vector field is defined by 
\[
	V_m := \widetilde{V}_m / ||\widetilde{V}_m||^2. 
\]
The flow of $V_m$, which we denote by $\phi_{m,t}$ where $t$ is a time parameter, 
preserves the fiberwise symplectic form and so $\phi_{m,t}$ induces a symplectomorphism  
on an open subset of each fiber on which $\phi_{m,t}$ is smooth. 
As a corollary, we have the following.
\begin{corollary}[Corollary 7.3 in \cite{NNU}]\label{corollary_NNU_deform}
	 The gradient-Hamiltonian vector field $V_k$ gives a deformation of $X_{k,t}$ 
	 preserving the structure of completely integrable systems.
 	 In particular, we have the following commuting diagram for every $t \geq 0$ : 
	  \begin{equation}\label{equation_preserve_integrable_system}
		    \xymatrix{
			      X_{k,1}  \ar[dr]_{\Phi_{k,1}} \ar[rr]^{\phi_{k,1-t}}
			      & & X_{k,t} \ar[dl]^{\Phi_{k,t}} \\ 
			      & \Delta_{\lambda} &
			    }
	  \end{equation}
\end{corollary}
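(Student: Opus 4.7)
My plan is to verify the statement in three stages, following Ruan's gradient-Hamiltonian flow strategy. \emph{Stage 1: the flow moves fibers symplectically.} Since $F_k$ is holomorphic on $\mcal{X}^{\mathrm{sm}}$ with respect to the induced K\"ahler structure, the Cauchy--Riemann equations give $\widetilde{V}_k = -\nabla\,\mathrm{Re}(F_k)$ (as already noted in the excerpt), so $V_k(F_k) \equiv -1$. Consequently, the flow $\phi_{k,s}$ of $V_k$ satisfies $F_k \circ \phi_{k,s} = F_k - s$, and hence $\phi_{k,1-t}$ sends $X_{k,1}$ into $X_{k,t}$. By the standard gradient-Hamiltonian argument (cf.\ \cite{R} and Theorem A of \cite{HK}), $\widetilde{V}_k$ generates a flow that preserves the relative symplectic form $\widetilde{\omega}|_{\mathrm{fiber}}$, so $\phi_{k,1-t}$ restricts to a symplectomorphism between the smooth loci of $X_{k,1}$ and $X_{k,t}$.

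\emph{Stage 2: each coordinate of $\Phi_{k,t}$ is preserved.} Every component $G$ of $\Phi_{k,t}$ in Theorem~\ref{theorem_NNU_6_1} is the restriction to $X_{k,t}$ of a function on $P$ that is \emph{independent} of $t$; hence it suffices to prove $\{F_k, G\} = 0$ on the smooth locus of the one-parameter sub-family $\mcal{X}_k := \bigcup_t X_{k,t} \subset \mcal{X}$. For a toric component $\Phi^{i,j}$ with $i + j \ge k+1$, the compact torus $T^{i+j-1} \subset T^k \times \cdots \times T^{n-1}$ acts on $\mcal{X}_k$ preserving each fiber $X_{k,t}$ of $F_k$, so its moment map components Poisson-commute with $F_k$. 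For an eigenvalue component $\Phi_\lambda^{i,j}$ with $i+j \le k$, the group $U(i+j-1) \subset U(k-1)$ acts on $\mcal{X}_k$ also preserving fibers of $F_k$; by Proposition~\ref{proposition_GS_smooth}, on the smooth locus the Hamiltonian vector field of $\Phi_\lambda^{i,j}$ is the fundamental vector field of a smoothly varying element of $\frak u(i+j-1)$, which is tangent to fibers of $F_k$, yielding $\{F_k, \Phi_\lambda^{i,j}\} = 0$.

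\emph{Stage 3: assembly and continuous extension.} Combining the two stages, on the smooth locus $\phi_{k,1-t}$ maps $X_{k,1}$ into $X_{k,t}$ and intertwines $\Phi_{k,1}$ with $\Phi_{k,t}$, which is exactly the commutative diagram~\eqref{equation_preserve_integrable_system}. The principal obstacle is continuity of $\phi_{k,1-t}$ across the singular strata of $\mcal{X}$ all the way down to $t = 0$: the vector field $V_k$ is undefined precisely where $\widetilde V_k$ vanishes, and extending the flow across these loci requires the explicit Kogan--Miller description of $\mcal{X}$ as a toric degeneration in stages, together with Ruan's convergence analysis of gradient-Hamiltonian flows near singular fibers. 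Once this continuity is established, the identity on the open dense smooth stratum extends to all of $X_{k,1}$ by continuity of each component of $\Phi_{k,t}$ as an eigenvalue function on $P$.
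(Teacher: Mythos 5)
The paper does not prove this corollary—it is quoted directly from \cite{NNU} (Corollary 7.3 therein)—so there is no in-paper argument to compare against. Your reconstruction is correct and follows the same strategy as \cite{NNU}: Ruan's gradient-Hamiltonian flow normalized so that $V_k(F_k) \equiv -1$ carries $X_{k,1}$ symplectically to $X_{k,t}$, and the integrable-system structure is transported because every component of $\Phi_{k,t}$ has its Hamiltonian vector field on $\mathcal{X}^{\mathrm{sm}}$ tangent to an orbit of the $T^{k}\times\cdots\times T^{n-1}$- or $U(k-1)$-action, and those orbits lie inside fibers of $F_k$. One sharpening worth recording in your Stage~2: for the eigenvalue functions $\Phi^{i,j}_\lambda$ with $i+j\le k$, the precise reason that the Hamiltonian vector field with respect to $\widetilde\omega$ on $\mathcal{X}^{\mathrm{sm}}$ (as opposed to the fiberwise Hamiltonian vector field on a single $X_{k,t}$) is a pointwise fundamental vector field of $U(i+j-1)$ is the collective-Hamiltonian identity for a function of the moment map $\mu^{(i+j-1)}\colon \mathcal{X}^{\mathrm{sm}}\to \mathfrak u(i+j-1)^*$; this is the general Lie-theoretic principle behind Proposition~\ref{proposition_GS_smooth}, which as stated applies only on a single co-adjoint orbit, and it is what guarantees that $X_{\Phi^{i,j}_\lambda}$ is tangent to the $U(i+j-1)$-orbit, hence to the level set of $F_k$. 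Your Stage~3 correctly locates the nontrivial analytic input—the extension of $\phi_{k,1-t}$ across singular strata down to $t=0$—which both you and the present paper take as established by \cite{R} and \cite{HK}.
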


By Corollary \ref{corollary_NNU_deform}, we obtain a continuous map 
\begin{equation}\label{phicontimapx}
	\phi \colon \mcal{O}_\lambda = X_{n,1} \rightarrow X_0 = X_{2,0}
\end{equation}
where $\phi = \phi_{2,1} \circ \cdots \circ \phi_{n,1}$ and it satisfies $\Phi \circ \phi = \Phi_\lambda$. 
Note that 
\[
	\phi \colon \Phi_\lambda^{-1}(\mathring{\Delta}_\lambda) \rightarrow \Phi^{-1}(\mathring{\Delta}_\lambda)
\]
is a symplectomorphism. 

Now, for a given $\textbf{u} \in \Delta_\lambda$, we investigate which component $\Phi_\lambda^{i,j}$ is smooth on $\Phi_\lambda^{-1}(\textbf{u})$. 
Let $\gamma$ be an $r$-dimensional face of $\Gamma_\lambda$ and let $f_\gamma$ be the corresponding face of $\Delta_\lambda$. 
Let 
	\begin{equation}\label{equation_index_cycle}
		\mcal{I}_{\mathcal{C}(\gamma)} := \{ (i,j) ~|~ (i,j) = v_\sigma \quad ~\text{for some minimal cycle}~\sigma~\text{of}~\gamma \}
	\end{equation}
so that $|\mcal{I}_{\mathcal{C}(\gamma)}| = r$. (See Figure \ref{figure_M_1_block_simple_closed_region}.)

\begin{lemma}\label{lemma_smooth_component_free_torus_action}
	Each $\Phi_\lambda^{i,j}$ is smooth on $\Phi_\lambda^{-1}(\mathbf{u})$ for every $(i,j) \in \mcal{I}_{\mathcal{C}(\gamma)}$.
	In particular, $\Phi_\lambda^{i,j}$ is smooth on $\Phi_\lambda^{-1}(\mathring{f}_\gamma)$. 
	Furthermore, $\{ \Phi_\lambda^{i,j} \}_{(i,j) \in \mcal{I}_{\mathcal{C}(\gamma)}}$ generates a smooth fiberwise $T^r$-action on 
	$\Phi_\lambda^{-1}(\textbf{\textup{u}})$ for each $\textbf{\textup{u}} \in \mathring{f}_\gamma$.
\end{lemma}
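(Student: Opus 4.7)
The plan is to match the combinatorial condition $(i,j) \in \mcal{I}_{\mathcal{C}(\gamma)}$ with the pointwise smoothness criterion of Proposition~\ref{proposition_GS_smooth}, and then to bootstrap this to the fiberwise $T^r$-action using the periodicity remark immediately following that proposition together with Poisson commutativity of the components of $\Phi_\lambda$.

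The first step is to translate $(i,j) = v_\sigma \in \mcal{I}_{\mathcal{C}(\gamma)}$ into concrete statements on the filling picture. As already exploited in the proof of Proposition~\ref{proposition_M_1_block_simple_closed_region}, a minimal cycle $\sigma$ of $\gamma$ is the unit square cycle enclosing its top-right box $\square^{v_\sigma}$, so when $v_\sigma = (i,j)$ all four edges of $\square^{(i,j)}$ lie in $\gamma$. In the filling convention (Figure~\ref{figure_GC_to_ladder}) the right edge of $\square^{(i,j)}$ separates $u_{i,j}$ from $u_{i+1,j}$ and the top edge separates $u_{i,j}$ from $u_{i,j+1}$; by the description of $\Psi$ in Theorem~\ref{theorem_equiv_CG_LD}, the presence of these edges in $\gamma$ means that the equalities $u_{i,j} = u_{i+1,j}$ and $u_{i,j} = u_{i,j+1}$ are \emph{not} imposed on $f_\gamma$. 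Combined with the weak interlacing inequalities of the GC pattern, this gives the strict inequalities
\[
	\Phi_\lambda^{i+1,j}(\textbf{\textup{u}}) < \Phi_\lambda^{i,j}(\textbf{\textup{u}}) < \Phi_\lambda^{i,j+1}(\textbf{\textup{u}})
\]
for every $\textbf{\textup{u}} \in \mathring{f}_\gamma$. This is precisely the hypothesis~\eqref{equation_smooth_region}, so Proposition~\ref{proposition_GS_smooth} implies that $\Phi_\lambda^{i,j}$ is smooth on the whole preimage $\Phi_\lambda^{-1}(\mathring{f}_\gamma)$, and in particular on $\Phi_\lambda^{-1}(\textbf{\textup{u}})$ for each such $\textbf{\textup{u}}$, which is the first two assertions.

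For the torus action I would invoke the remark after Proposition~\ref{proposition_GS_smooth}: at every point where condition~\eqref{equation_smooth_region} is met, the Hamiltonian flow of $\Phi_\lambda^{i,j}$ is periodic with integer period. Thus each of the $r$ functions $\Phi_\lambda^{i,j}$, $(i,j) \in \mcal{I}_{\mathcal{C}(\gamma)}$, generates a Hamiltonian $S^1$-action on $\Phi_\lambda^{-1}(\mathring{f}_\gamma)$. Any two components of the completely integrable system $\Phi_\lambda$ Poisson commute, so these $r$ circle actions commute and assemble into a Hamiltonian $T^r$-action on $\Phi_\lambda^{-1}(\mathring{f}_\gamma)$. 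Poisson commutativity with the remaining components of $\Phi_\lambda$ also implies that each flow preserves the level sets of every other component, hence preserves each fiber $\Phi_\lambda^{-1}(\textbf{\textup{u}})$; restricting the $T^r$-action fiberwise yields the claimed smooth $T^r$-action on $\Phi_\lambda^{-1}(\textbf{\textup{u}})$.

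The only nontrivial step is the combinatorial identification in the first paragraph, namely recognizing that a top-right vertex $v_\sigma = (i,j)$ of a minimal cycle forces exactly the two strict inequalities appearing in~\eqref{equation_smooth_region}. Once this is in place, the remainder of the argument is a direct application of results already established in the paper, and I do not expect any genuine obstacle — the freeness or effectiveness of the resulting $T^r$-action, which would be the natural next question, is not asserted in the lemma and can be deferred.
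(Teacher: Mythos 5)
Your proof follows the same route as the paper's: reduce the smoothness claim to Proposition~\ref{proposition_GS_smooth} via the combinatorics of minimal cycles, then invoke the periodicity of the Hamiltonian flows and Poisson-commutativity to assemble the $T^r$-action. The paper's own proof is terse on the combinatorial step and you supply the missing detail, which is helpful.

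One inaccuracy in your first paragraph is worth flagging, though it does not sink the argument. A minimal cycle $\sigma$ is \emph{not} in general a unit square cycle, and $\square^{v_\sigma}$ need not have all four of its edges in $\gamma$; look at Figure~\ref{figure_M_1_block_simple_closed_region}, where minimal cycles of various sizes appear, or at the description in the proof of Proposition~\ref{proposition_M_1_block_simple_closed_region}, which only says that $\square^{v_\sigma}$ is \emph{contained in} the region bounded by $\sigma$. What is true, and what your argument actually uses, is weaker: since $v_\sigma = (i,j)$ is the top-right vertex of the cycle $\sigma$, the two edges of $\sigma$ incident to $(i,j)$ must go to $(i-1,j)$ and $(i,j-1)$, and these are exactly the top edge and the right edge of $\square^{(i,j)}$. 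So the top and right edges of $\square^{(i,j)}$ are edges of $\gamma$, which is all you need to conclude that the equalities $u_{i,j} = u_{i,j+1}$ and $u_{i,j} = u_{i+1,j}$ are not among the ones defining $f_\gamma$, and hence are strict on $\mathring{f}_\gamma$. If you restate that sentence to claim only the top and right edges, the proof is clean.
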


\begin{proof}
	The smoothness of each $\Phi_\lambda^{i,j}$ on $\Phi_\lambda^{-1}(\mathring{f}_\gamma)$ follows 
	from the condition $(i,j) = v_\sigma$ and Proposition \ref{proposition_GS_smooth}.
	Also, we have seen in Section \ref{ssecSmoothnessOfPhiLambda} that each $\Phi_\lambda^{i,j}$ 
	generates a smooth circle action on an open dense subset of $\mathcal{O}_\lambda$ on which $\Phi_\lambda^{i,j}$  is smooth.
	Since all components of $\Phi_\lambda$ Poisson-commute with each other, it finishes the proof.
\end{proof}

We recall the following well-known fact on toric varieties.
Let $\Delta \subset \R^N \cong \frak{t}^*$ be a convex polytope of dimension $N$ and let $X_\Delta$ be the corresponding projective toric variety
where $\frak{t}$ is the Lie algebra of the maximal compact torus $T^N$ in $(\C^*)^N \subset X_0$ and $\frak{t}^*$ is the dual of $\frak{t}$. 
Let $f$ be an $r$-dimensional face of $\Delta$ and suppose that $F_1, \cdots, F_m$ are facets of 
$\Delta$ such that $f = \cap_{i=1}^m F_i$. Also, we let $v_i \in \frak{t}$ be the inward primitive integral normal vector of $F_i$ for $i=1,\cdots,m$

\begin{lemma}[Exercise 12.4.7.(d) in \cite{CLS}]\label{lemma_freely}
	Let $\xi_1, \cdots, \xi_r$ be primitive integral vectors in $\frak{t}$ which generates an $r$-dimensional subtorus $T^r$ of $T^N$. 
	Then the $T^r$ acts on $\Phi^{-1}(\mathring{f})$ freely if
	$\Z^N \cong \langle v_1, \cdots, v_m, \xi_1, \cdots, \xi_r \rangle$.	
\end{lemma}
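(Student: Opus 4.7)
The plan is to reduce the freeness of the $T^r$-action to a purely lattice-theoretic statement via the classical description of $T^N$-stabilizers on a projective toric variety. Recall that for the Hamiltonian $T^N$-action on $X_\Delta$ with moment map $\Phi$, the orbit-stabilizer structure is governed by the face poset of $\Delta$: for any point $p \in \Phi^{-1}(\mathring{f})$ where $f = F_1 \cap \cdots \cap F_m$, the stabilizer subgroup $\mathrm{Stab}_{T^N}(p) \subset T^N$ is the closed connected subtorus whose Lie algebra $\frak{t}_f \subset \frak{t}$ is spanned (over $\R$) by the inward primitive integral normals $v_1, \ldots, v_m$, and whose cocharacter lattice is $\langle v_1, \ldots, v_m \rangle \subset \Z^N$. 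In particular, $\mathrm{Stab}_{T^N}(p)$ is the same for every $p \in \Phi^{-1}(\mathring{f})$.

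Given this, the first step is to observe that $T^r$ acts freely on $\Phi^{-1}(\mathring{f})$ if and only if $T^r \cap \mathrm{Stab}_{T^N}(p) = \{e\}$ for some (equivalently, any) $p \in \Phi^{-1}(\mathring{f})$. Equivalently, the composition
\[
T^r \,\hookrightarrow\, T^N \,\twoheadrightarrow\, T^N/\mathrm{Stab}_{T^N}(p)
\]
must be injective. The second step is to translate this to the cocharacter lattice $\Z^N$: injectivity of the composition of tori is equivalent to the composition of lattices
\[
\langle \xi_1, \ldots, \xi_r \rangle \,\hookrightarrow\, \Z^N \,\twoheadrightarrow\, \Z^N/\langle v_1, \ldots, v_m \rangle
\]
being injective \emph{and} having torsion-free cokernel (the latter is needed to rule out finite-order points in $T^r$ that map to the identity modulo $\mathrm{Stab}_{T^N}(p)$).

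The third step is to extract both properties from the hypothesis $\Z^N \cong \langle v_1, \ldots, v_m, \xi_1, \ldots, \xi_r \rangle$. By rank considerations this forces $m + r = N$, and hence the generating set $\{v_1, \ldots, v_m, \xi_1, \ldots, \xi_r\}$ is in fact a $\Z$-basis of $\Z^N$. From this it follows at once that $\langle \xi_1, \ldots, \xi_r\rangle \cap \langle v_1, \ldots, v_m \rangle = 0$, and that $\Z^N / \langle v_1, \ldots, v_m \rangle$ is a free $\Z$-module of rank $r$ with basis given by the images of $\xi_1, \ldots, \xi_r$. Both required properties of the lattice map therefore hold, completing the reduction.

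There is no serious obstacle: the only care needed is the torsion-freeness of the cokernel, which is what upgrades a set-theoretic intersection statement of sublattices to injectivity of the corresponding map of compact tori. This is precisely where one uses that $\{v_\bullet, \xi_\bullet\}$ form a $\Z$-basis rather than merely a generating set of a finite-index sublattice.
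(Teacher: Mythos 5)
Your reduction to a lattice-theoretic condition is the right plan, and the two facts you aim for — $\langle\xi_\bullet\rangle \cap \langle v_\bullet\rangle = 0$ and that the images of $\xi_1,\ldots,\xi_r$ give a $\Z$-basis of $\Z^N/\langle v_\bullet\rangle$ — are exactly the ones that make the $T^r$-action free, and they do follow from the hypothesis. But the step ``rank considerations force $m+r=N$, hence $\{v_\bullet,\xi_\bullet\}$ is a $\Z$-basis'' is false in the situation that matters. The Gelfand-Cetlin polytope is not simple: an $r$-dimensional face of a non-simple $N$-dimensional polytope can lie on strictly more than $N-r$ facets (the Lagrangian vertex of $\Delta_\lambda$ for $\mathcal{F}(3)$ lies on four facets while $N-r=3$), so $m+r>N$ and the $m+r$ vectors cannot even be linearly independent, let alone a $\Z$-basis. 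A related imprecision occurs already in your first paragraph: for the \emph{singular} toric variety $X_0$, the stabilizer of a point over $\mathring{f}$ has cocharacter lattice equal to the saturation $\Z^N\cap\operatorname{span}_\R(v_1,\ldots,v_m)$, which may strictly contain $\langle v_\bullet\rangle$; replacing the saturation by $\langle v_\bullet\rangle$ is legitimate only after one shows $\Z^N/\langle v_\bullet\rangle$ is torsion-free, which is part of what you need to prove, not a starting point.

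The repair is to count ranks rather than cardinalities. Containment of $f$ in each supporting hyperplane forces $\operatorname{rank}\langle v_\bullet\rangle\le N-r$, while $\operatorname{rank}\langle\xi_\bullet\rangle=r$ since $T^r$ is $r$-dimensional; the hypothesis $\langle v_\bullet\rangle+\langle\xi_\bullet\rangle=\Z^N$ then forces $\operatorname{rank}\langle v_\bullet\rangle=N-r$ and yields a surjection $\langle\xi_\bullet\rangle\twoheadrightarrow\Z^N/\langle v_\bullet\rangle$ between abelian groups of the same rank $r$ with free source, which is necessarily an isomorphism. This simultaneously shows that $\Z^N/\langle v_\bullet\rangle$ is free (so $\langle v_\bullet\rangle$ is saturated and genuinely is the stabilizer's cocharacter lattice, and similarly $\langle\xi_\bullet\rangle$ is saturated and is the cocharacter lattice of $T^r$), and that the induced map of compact tori $T^r\to T^N/\operatorname{Stab}$ is injective, i.e., that the $T^r$-action on $\Phi^{-1}(\mathring{f})$ is free — all without any assumption that $m=N-r$ or that the facets supply a basis.
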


We then obtain the following.

\begin{proposition}\label{proposition_toric_degenerate_equivariant_map}
	Fix $\textbf{\textup{u}} \in \mathring{f}_\gamma$ and consider the $T^r$-action on $\Phi_\lambda^{-1}(\textbf{\textup{u}})$ generated by 
	$\{ \Phi_\lambda^{i,j} \}_{(i,j) \in \mcal{I}_{\mathcal{C}(\gamma)}}$ as given in Lemma \ref{lemma_smooth_component_free_torus_action}.
	Then the $T^r$-action on $\Phi_\lambda^{-1}(\textbf{\textup{u}})$ is free. Furthermore, $\Phi_\lambda^{-1}(\textbf{\textup{u}})$ becomes a trivial principal bundle over 
	$\Phi_\lambda^{-1}(\textbf{\textup{u}}) / T^r$, that is,  
	\[
		\Phi_\lambda^{-1}(\textbf{\textup{u}}) \cong T^r \times \Phi_\lambda^{-1}(\textbf{\textup{u}}) / T^r.
	\]
\end{proposition}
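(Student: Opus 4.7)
The plan is to exploit the $T^r$-equivariance of the gradient-Hamiltonian map $\phi\colon \mathcal{O}_\lambda \to X_0$ from~\eqref{phicontimapx} to reduce the proposition to a toric calculation on $X_0$. By Lemma~\ref{lemma_smooth_component_free_torus_action}, the components $\Phi_\lambda^{i,j}$ for $(i,j)\in\mcal{I}_{\mathcal{C}(\gamma)}$ are smooth on $\Phi_\lambda^{-1}(\mathring{f}_\gamma)$ and generate a fiberwise $T^r$-action there. Corollary~\ref{corollary_torus_equivariant} applied with $\mcal{I}'=\mcal{I}_{\mathcal{C}(\gamma)}$ then shows that $\phi$ intertwines this $T^r$-action with the subtorus $T^r\subset T^{|\mcal{I}_\lambda|}$ of the ambient toric torus of $X_0$ generated by the coordinate circles indexed by $\mcal{I}_{\mathcal{C}(\gamma)}$. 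Since $\Phi\circ\phi=\Phi_\lambda$, restriction yields a $T^r$-equivariant continuous map $\phi_{\textbf{\textup{u}}}\colon\Phi_\lambda^{-1}(\textbf{\textup{u}})\to\Phi^{-1}(\textbf{\textup{u}})$.

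On the toric side, $\Phi^{-1}(\textbf{\textup{u}})$ lies over a point in the relative interior of the $r$-dimensional face $f_\gamma$ and is therefore a single $r$-dimensional orbit of the full toric torus $T^{|\mcal{I}_\lambda|}$. Freeness of the $T^r$-action on $\Phi^{-1}(\textbf{\textup{u}})$ will follow from Lemma~\ref{lemma_freely} once I verify that the standard basis vectors $\{e_{i,j}\}_{(i,j)\in\mcal{I}_{\mathcal{C}(\gamma)}}$, together with the inward primitive normals to the facets of $\Delta_\lambda$ containing $f_\gamma$, span $\Z^{|\mcal{I}_\lambda|}$. Those normals are the differences $e_{i,j}-e_{i',j'}$ associated with interior edges of $\Gamma_\lambda$ absent from $\gamma$ together with basis vectors $e_{i,j}$ for boxes identified to boundary values $\lambda_k$; modulo these relations, the quotient is the free abelian group of rank $r$ on the equivalence classes of boxes cut out by $\gamma$ that do not touch the boundary. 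Invoking the bijection in Proposition~\ref{proposition_M_1_block_simple_closed_region} between minimal cycles of $\gamma$ and $M_1$-blocks, together with the fact that the top and right edges of $\square^{v_\sigma}$ necessarily lie in $\gamma$, will show that $\{[e_{v_\sigma}]\}_\sigma$ descends to a basis of the rank-$r$ quotient. Freeness combined with a dimension count on the $r$-dimensional orbit then gives transitivity, identifying $\Phi^{-1}(\textbf{\textup{u}})\cong T^r$ as a free transitive $T^r$-space.

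To transfer the conclusion upstairs, if $t\in T^r$ fixes some $x\in\Phi_\lambda^{-1}(\textbf{\textup{u}})$, equivariance gives $t\cdot\phi_{\textbf{\textup{u}}}(x)=\phi_{\textbf{\textup{u}}}(x)$, and hence $t=e$ by freeness on $X_0$; this proves that $T^r$ acts freely on $\Phi_\lambda^{-1}(\textbf{\textup{u}})$. The image of $\phi_{\textbf{\textup{u}}}$ is a nonempty $T^r$-invariant subset of the transitive $T^r$-space $\Phi^{-1}(\textbf{\textup{u}})\cong T^r$, so $\phi_{\textbf{\textup{u}}}$ is surjective. After fixing an identification $\Phi^{-1}(\textbf{\textup{u}})\cong T^r$ with base point $e$, the $T^r$-equivariant map
\[
\Phi_\lambda^{-1}(\textbf{\textup{u}}) \longrightarrow T^r \times \phi_{\textbf{\textup{u}}}^{-1}(e), \qquad x \longmapsto \bigl(\phi_{\textbf{\textup{u}}}(x),\,\phi_{\textbf{\textup{u}}}(x)^{-1}\cdot x\bigr)
\]
is a diffeomorphism inverse to $(t,y)\mapsto t\cdot y$, and $\phi_{\textbf{\textup{u}}}^{-1}(e)$ is canonically identified with the orbit space $\Phi_\lambda^{-1}(\textbf{\textup{u}})/T^r$, giving the asserted trivial principal-bundle structure.

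The principal obstacle is the combinatorial lattice-generation step in the second paragraph: one must check not merely that $\{e_{v_\sigma}\}_\sigma$ generates a full-rank sublattice of the quotient, but that these classes form an actual $\Z$-basis, ruling out any proper-sublattice phenomena. This requires careful bookkeeping of which edges of $\Gamma_\lambda$ lie in $\gamma$, using the top-right placement of $v_\sigma$ within its minimal cycle together with the dimension matching $\dim f_\gamma = r$ supplied by Theorem~\ref{theorem_equiv_CG_LD} and Proposition~\ref{proposition_M_1_block_simple_closed_region}.
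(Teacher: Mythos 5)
Your proposal follows essentially the same route as the paper: freeness on the toric side via Lemma~\ref{lemma_freely} and the lattice condition, transfer of freeness through the $T^r$-equivariant map $\phi_{\textbf{\textup{u}}}$ from Corollary~\ref{corollary_torus_equivariant}, and triviality of the resulting principal bundle. Your explicit trivialization $x \mapsto (\phi_{\textbf{\textup{u}}}(x),\, \phi_{\textbf{\textup{u}}}(x)^{-1}\cdot x)$ replaces the paper's pullback-of-trivial-bundle argument and also makes surjectivity of $\phi_{\textbf{\textup{u}}}$ explicit, but the substance is the same; the lattice-generation step you flag is glossed over in the paper too (``it is not hard to see\ldots''), so the two arguments share the same level of detail there.
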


\begin{proof}
	We first show that the $T^r$-action is free on $\Phi^{-1}(\textbf{\textup{u}})$. 
	For each $(i,j) \in \mcal{I}$\footnote{See \eqref{equation_index_global} and \eqref{collectionofindicesset} for the definition of $\mcal{I}$ and $\mcal{I}_\lambda$, respectively.}, we denote by 
	\[
		\xi_{i,j} := \left( e_{k,l} \right) \in \R^{\frac{n(n-1)}{2}}, \quad \begin{cases} \text{$e_{k,l} = 1$ if $(k,l) = (i,j)$ and $e_{k,l} = 0$ otherwise} & \text{if $(i,j) \in \mcal{I}_\lambda$} \\
		\text{$e_{k,l} = 0$ for every $(k,l)$} & \text{if $(i,j) \not \in \mcal{I}_\lambda$}.
		\end{cases}
	\]
	By the min-max principle \eqref{equation_GC-pattern} and the dimension formula given in Definition \ref{definition_face}, 
	an inward primitive integral normal vector $v_F$ for any facet $F$ of $\Delta_\lambda$ is either
	\[
		v_{vert}^{i,j} := \xi_{i,j+1} - \xi_{i,j}, \quad \text{or} \quad v_{hor}^{i,j} := -\xi_{i+1,j} + \xi_{i,j}
	\]
	for some $(i,j) \in \mcal{I}_\lambda$. In particular, if $F$ contains $f_\gamma$, then $v_F$ is either
	\[
		\begin{cases}
			v_{vert}^{i, j}, \quad \text{$\square^{i,j}$ and $\square^{i,j+1}$ are in the same simple region of $\gamma$, or} \\
			v_{hor}^{i, j}, \quad \text{$\square^{i,j}$ and $\square^{i+1,j}$ are in the same simple region of $\gamma$}
		\end{cases}
	\] for some $(i,j) \in \mcal{I}_\lambda$. Then, it is not hard to see that 
	\[
		\{ v_F \}_{f_\gamma \subset F} \cup \{\xi_{i,j} \}_{(i,j) \in \mcal{I}_{\mcal{C}(\gamma)}} 
	\]
	generates the full lattice $\Z^N$ where $N = \dim \Delta_\lambda$.
	Therefore the $T^r$-action generated by $\{\Phi^{i,j} \}_{(i,j) \in \mcal{I}_{\mcal{C}(\gamma)}}$ on $\Phi^{-1}(\mathring{f}_\gamma)$ is free by Lemma \ref{lemma_freely}, and hence 
	the $T^r$-action on each fiber $\Phi^{-1}(\textbf{\textup{u}})$ is free for every $\textbf{\textup{u}} \in \mathring{f}_\gamma$.
	Since 
	\[
		\phi_{\textbf{\textup{u}}} := \phi|_{\Phi_\lambda^{-1}({\textbf{\textup{u}}})} \colon \Phi_\lambda^{-1}(\textbf{\textup{u}}) 
		\to \Phi_{\vphantom{\lambda}}^{-1}(\textbf{\textup{u}}) \cong T^r. 
	\]
	is $T^r$-equivariant by Corollary \ref{corollary_torus_equivariant}, we see that the $T^r$-action on $\Phi_\lambda^{-1}(\textbf{\textup{u}})$ is also free. 
	
	The freeness of the $T^r$-action on $\Phi_\lambda^{-1}({\textbf{\textup{u}}})$ implies that the map
	$\phi_{\textbf{\textup{u}}} := \phi|_{\Phi_\lambda^{-1}({\textbf{\textup{u}}})}$ becomes a principal bundle map such that 
	\[
	\xymatrix{
		\Phi_\lambda^{-1}({\textbf{\textup{u}}}) \ar[d]_{/T^r} \ar[r]^{\phi_{\textbf{\textup{u}}}} & \Phi^{-1}({\textbf{\textup{u}}}) \cong T^r \ar[d]^{/ T^r}\\
		\Phi_\lambda^{-1}({\textbf{\textup{u}}}) / T^r \ar[r] & \mathrm{point} 
		}
	\] commutes. In particular, $\Phi_\lambda^{-1}({\textbf{\textup{u}}})$ is a pull-back bundle of the trivial $T^r$-bundle over a point so that $\Phi_\lambda^{-1}({\textbf{\textup{u}}})$ 
	is a trivial $T^r$-bundle as desired.
\end{proof}

To sum up, we can describe how each fiber of a GC system deforms into a torus fiber of a moment map of $X_0$ via a toric degeneration as follows. 

\begin{theorem}\label{theorem_contraction}
	Let $\gamma$ be a face of the ladder diagram $\Gamma_\lambda$ of dimension $r$ and let $f_\gamma$ be the corresponding face of the Gelfand-Cetlin polytope $\Delta_\lambda$.
	For each point ${\textbf{\textup{u}}}$ in the relative interior $\mathring{f}_\gamma$, all $S^1$-factors that appeared in each stage of the iterated bundle structure $\bar{S_{\bullet}}(\gamma)$ 
	of $\Phi_\lambda^{-1}({\textbf{\textup{u}}})$ given in Theorem \ref{theorem_main} are factored out. That is, 
	\[
		\Phi_\lambda^{-1}({\textbf{\textup{u}}}) \cong T^r \times \bar{S_{\bullet}}(\gamma)'
	\] where 
	$\bar{S_{\bullet}}(\gamma)'$ is the total space of the iterated bundle which can be obtained by the construction of $\bar{S_{\bullet}}(\gamma)$
	ignoring all $S^1$-factors appeared in each stage. Furthermore, the continuous map $\phi$ in~\eqref{phicontimapx} on each fiber $\Phi_\lambda^{-1}({\textbf{\textup{u}}})$
	is the projection map 
	\[ 
		\Phi_\lambda^{-1}({\textbf{\textup{u}}}) \cong T^r \times \bar{S_{\bullet}}(\gamma)' \stackrel{\phi} \longrightarrow T^r \cong \Phi^{-1}({\textbf{\textup{u}}}). 
	\]
\end{theorem}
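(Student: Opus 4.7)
The plan is to bootstrap Theorem~\ref{theorem_contraction} from Proposition~\ref{proposition_toric_degenerate_equivariant_map}, which already does most of the work: it exhibits $\Phi_\lambda^{-1}(\textbf{\textup{u}})$ as a trivial principal $T^r$-bundle
\[
\Phi_\lambda^{-1}(\textbf{\textup{u}}) \cong T^r \times Q, \qquad Q := \Phi_\lambda^{-1}(\textbf{\textup{u}})/T^r,
\]
where the $T^r$-action is generated by the commuting periodic Hamiltonians $\{\Phi_\lambda^{i,j}\}_{(i,j) \in \mcal{I}_{\mcal{C}(\gamma)}}$ from Lemma~\ref{lemma_smooth_component_free_torus_action}. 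The remaining tasks are (i) to identify $Q$ with the iterated bundle $\bar{S}_\bullet(\gamma)'$, and (ii) to show that $\phi$ restricts on each fiber to the projection onto the $T^r$-factor.

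For (i), I would argue stage-by-stage along the tower \eqref{figure_iterated_bundle} produced in Section~\ref{secIteratedBundleStructuresOnGelfandCetlinFibers}. By Proposition~\ref{proposition_M_1_block_simple_closed_region} the minimal cycles of $\gamma$, equivalently the elements of $\mcal{I}_{\mcal{C}(\gamma)}$, are in bijection with the $M_1$-blocks appearing in the various $W_k(\gamma)$, and hence with the $S^1$-factors occurring in the fibers $S_k(\gamma)$ for $k=1,\dots,n-1$. Under this bijection, an index $(i,j) = v_\sigma \in \mcal{I}_{\mcal{C}(\gamma)}$ points to an $S^1$-factor sitting inside $S_{i+j-1}(\gamma) \subset \widetilde{\mcal{A}}_{i+j}(\fa(i+j),\fb(i+j))$ cut out by an equation of the form $|z_s|^2 = C_s$ in the explicit block-matrix coordinates of Proposition~\ref{proposition_bundle}. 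From that matrix model one reads off that the Hamiltonian flow of $\Phi_\lambda^{i,j}$ rotates the phase of exactly this coordinate $z_s$ and acts trivially on every other odd-sphere factor of every $S_k(\gamma)$, since those factors lie in strictly lower stages of the tower (and $\Phi_\lambda^{i,j}$ is constant along them) or are of type $S^{2\ell-1}$ with $\ell>1$ cut out by an equation $\sum |z_p|^2 = C$ invariant under the rotation of a single $|z_s|^2 = C_s$ sub-sphere. Therefore the $T^r$-action preserves each stage $p_k$ of the tower and acts diagonally by standard rotations on the $r$ distinguished $S^1$-factors. Taking fiberwise quotients stage-by-stage produces a new iterated bundle whose $k$-th fiber is obtained from $S_k(\gamma)$ by deleting exactly its $S^1$-factors — this is by definition $\bar{S}_\bullet(\gamma)'$, completing the identification $Q \cong \bar{S}_\bullet(\gamma)'$.

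For (ii), the restriction $\phi_{\textbf{\textup{u}}} := \phi|_{\Phi_\lambda^{-1}(\textbf{\textup{u}})} \colon \Phi_\lambda^{-1}(\textbf{\textup{u}}) \twoheadrightarrow \Phi^{-1}(\textbf{\textup{u}})$ is $T^r$-equivariant by Corollary~\ref{corollary_torus_equivariant} and the target $\Phi^{-1}(\textbf{\textup{u}}) \cong T^r$ is a single free $T^r$-orbit; so $\phi_{\textbf{\textup{u}}}$ factors through the quotient map $T^r \times Q \to T^r$, and under the trivialization of (i) it \emph{is} that projection (up to a global translation which may be absorbed into the choice of basepoint in the trivialization). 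The main obstacle I anticipate is the bookkeeping in (i): verifying that the Hamiltonian flow of each $\Phi_\lambda^{i,j}$, which is defined intrinsically as the $i$-th eigenvalue of the $(i+j-1)$-th leading principal minor, acts on the explicit block-matrix model of Section~\ref{secIteratedBundleStructuresOnGelfandCetlinFibers} as the rotation of a single $z_s$-coordinate and preserves the whole tower requires unpacking Guillemin–Sternberg's action-angle description (see the end of Section~\ref{ssecSmoothnessOfPhiLambda}) and tracking it compatibly through all the successive bundles $\rho_2, \rho_3, \dots, \rho_n$. Once this compatibility is established, the product decomposition and the description of $\phi$ as a coordinate projection are essentially immediate consequences.
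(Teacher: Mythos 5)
Your proposal is correct and matches the paper's proof almost exactly: both start from the trivialization $\Phi_\lambda^{-1}(\textbf{\textup{u}}) \cong T^r \times Q$ supplied by Proposition~\ref{proposition_toric_degenerate_equivariant_map} and Corollary~\ref{corollary_torus_equivariant}, then identify $Q$ with $\bar{S_{\bullet}}(\gamma)'$ by showing the $(S^1)^{r_k}$-subaction at level $k$ acts fiberwise with respect to $p_k$ and only on the $S^1$-factors of $S_k(\gamma)$, using the bijection of Proposition~\ref{proposition_M_1_block_simple_closed_region} for claim (1). The one spot where the paper is more explicit than your sketch is the verification that the $T^r$-action preserves each stage of the tower: where you appeal informally to ``$\Phi_\lambda^{i,j}$ is constant along lower stages,'' the paper runs the projectability computation $(\rho_n)_* \xi_{\hat f} = \xi_f$ via Lemma~\ref{lemma_isotropic} (their equation for $\omega_{\frak b}((p_k)_*\xi_{\hat f},\cdot)$) to conclude the Hamiltonian vector field of $\Phi_{\frak a}^{i,k-i}$ is vertical for $p_k$ — exactly the bookkeeping step you flagged as the anticipated obstacle.
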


\begin{proof} 
	Consider the iterated bundle structure of $\Phi_\lambda^{-1}({\textbf{\textup{u}}})$ given in Theorem \ref{theorem_main} : 
	\begin{equation}\label{equation_iterated_bundle}
		\begin{array}{ccccccccc}
		\Phi_\lambda^{-1}({\textbf{\textup{u}}}) \cong \bar{S_{n-1}}(\gamma) & \xrightarrow {p_{n-1}} & \bar{S_{n-2}}(\gamma) &
		 \rightarrow & \cdots & 
		 \xrightarrow{p_2} & \bar{S_1}(\gamma) & \xrightarrow{p_1} & \bar{S_0}(\gamma) := \mathrm{point} \\
		 \hookuparrow & & \hookuparrow & & \hookuparrow & & \hookuparrow & &  \\
		 S_{n-1}(\gamma) & & S_{n-2}(\gamma)  & & \cdots & & S_1(\gamma)  \\
		 \end{array}
	\end{equation}	
	where $S_k(\gamma)$ is the fiber of $p_k \colon \overline{S_k}(\gamma) \rightarrow \overline{S_{k-1}}(\gamma)$ at the $k$-th stage
	defined in \eqref{equation_block_sphere}. 
	Each $S_k(\gamma)$ can be factorized into $S_k(\gamma) = (S^1)^{r_k} \times Y_k$ where $Y_k$ is either a point or a product of odd-dimensional spheres 
	without any $S^1$-factors. (See the proof of Proposition \ref{proposition_bundle}.)
	Then we claim that 
	\begin{enumerate}
		\item there is a one-to-one correspondence between the $S^1$-factors that appeared in each stage and the elements in $\mcal{I}_{\mcal{C}(\gamma)}$, 
		\item $(S^1)^{r_k}$ acts on $\overline{S_k}(\gamma)$ fiberwise with respect to $p_k \colon \overline{S_k}(\gamma) \rightarrow \overline{S_{k-1}}(\gamma)$, and 
		\item the torus action on $\Phi_\lambda^{-1}({\textbf{\textup{u}}}) \cong \bar{S_{n-1}}(\gamma)$ generated by $\{ \Phi_\lambda^{i,j} \}_{(i,j) \in \mcal{I}_{\mcal{C}(\gamma)}, i+j-1=k}$
		is an extension of the $(S^1)^{r_k}$-action on $\overline{S_k}(\gamma)$ given in (2). 
	\end{enumerate}
	The first statement (1) is straightforward since each $(i,j) \in \mcal{I}_{\mcal{C}(\gamma)}$ corresponds to an $M_1$-block in 
	$W_{i+j-1}(\gamma)$ containing a bottom vertex of $W_{i+j-1}$
	so that each $(i,j) \in \mcal{I}_{\mcal{C}(\gamma)}$ assigns an $S^1$-factor in $S_{i+j-1}(\gamma)$. See Section \ref{ssecWShapedBlocks}. \
	The third statement (3) is also clear since each $\Phi_\lambda^{i,j}$ with $i+j-1 = k$ descends to a function $\Phi_{\lambda_k}^{i,j}$ on $\bar{S_k}(\gamma)$ where 
	\[
		\lambda_k = (\Phi_\lambda^{1,k}({\textbf{\textup{u}}}), \cdots, \Phi_\lambda^{k,1}({\textbf{\textup{u}}})). 
	\]

	For the second statement (2), fix $k \geq 1$ and consider the $k$-th stage
	\begin{equation}\label{bundleatstagek}
		\begin{array}{ccc}
			S_k(\gamma) = (S^1)^{r_k} \times Y_k & \hookrightarrow & \overline{S_k}(\gamma) \subset (\mathcal{O}_{\frak{a}}, \omega_\frak{a})\\
			                                     &    &  \downarrow p_k\\
			                                      &    &  \overline{S_{k-1}}(\gamma) \subset (\mcal{O}_{\frak{b}}, \omega_\frak{b}) \\
		\end{array}
	\end{equation}
	of $\overline{S_\bullet}(\gamma)$. 
	As we have seen in the diagram~\eqref{figure_iterated_bundle},  $\overline{S_k}(\gamma)$ is a subset of $\mcal{A}(\frak{a}, \frak{b})$ where $\frak{a} = (a_1, \cdots, a_{k+1})$ and $\frak{b} = (b_1, \cdots, b_k)$ with 
	\begin{itemize}
		\item $a_i = \Phi_\lambda^{i, k+1-i}({\textbf{\textup{u}}})$ for $1 \leq i \leq k+1$ and 
		\item $b_j = \Phi_\lambda^{j, k-j}({\textbf{\textup{u}}})$ for $1 \leq j \leq k$. 
	\end{itemize}
	Note that $\Phi_{\frak{a}}^{i,k-i}$ generates a circle action
	on $\mcal{O}_{\frak{a}}$ whenever $a_i < b_i < a_{i+1}$, see Section \ref{ssecSmoothnessOfPhiLambda}. For any smooth functions $f$ on $\mcal{O}_\frak{b}$ and 
	$\hat{f} = f \circ p_k$ on $\mcal{O}_\frak{a}$, we denote by $\xi_f$ and $\xi_{\hat{f}}$ the Hamiltonian vector fields for $f$ and $\hat{f}$, respectively. 
	Then it follows that $\xi_{\hat{f}}$ is projectable under $p_k$ and it satisfies $(p_k)_* \xi_{\hat{f}} = \xi_f$, i.e., $dp_k ( \xi_{\hat f})(x) = \xi_f(p_k(x))$ for every $x \in \mcal{O}_{\frak{b}}$ since
	\begin{equation}\label{equation_pull_back}
		\begin{array}{ccl}
			\omega_\frak{b}((p_k)_* \xi_{\hat{f}}, (p_k)_* (\cdot)) & = & (p_k)^*\omega_\frak{b}(\xi_{\hat{f}}, \cdot) \\
			                                                                         & = & \omega_\frak{a}(\xi_{\hat{f}}, \cdot) \\
			                                                                         & = & d\hat{f}(\cdot) = df((p_k)_* (\cdot)) \\
			                                                                         & = & \omega_\frak{b}(\xi_f, (p_k)_* (\cdot))
		\end{array}
	\end{equation}
	where the second equality comes from Lemma \ref{lemma_isotropic}.
	Also, note that $\Phi_{\frak{b}}^{i,k-i}$ is a constant function on $\mcal{O}_{\frak{b}}$ and 
	$\Phi_{\frak{a}}^{i,k-i} = \Phi_{\frak{b}}^{i,k-i} \circ p_k$. By applying \eqref{equation_pull_back}, we can see that the Hamiltonian flow generated by 
	$\Phi_{\frak{a}}^{i,k-i}$ preserves the $(k \times k)$ leading principal minor, and therefore its Hamiltonian vector field is 
	tangent to the vertical direction of $p_k$. 
	
	Once (1), (2), and (3) are satisfied, its iterated bundle in \eqref{equation_iterated_bundle} descends to 
	\begin{equation}\label{equation_iterated_bundle_quotient}
		\begin{array}{ccccccccc}
		\Phi_\lambda^{-1}(\textbf{\textup{u}}) / (S^1)^r \cong \bar{S_{n-1}}(\gamma)' & \xrightarrow {p_{n-1}'} & \bar{S_{n-2}}(\gamma)' &
		 \rightarrow & \cdots & 
		 \xrightarrow{p_2'} & \bar{S_1}(\gamma)' & \xrightarrow{p_1'} & \bar{S_0}(\gamma)' := \mathrm{point} \\
		 \hookuparrow & & \hookuparrow & & \hookuparrow & & \hookuparrow & &  \\
		 Y_{n-1} & & Y_{n-2}  & & \cdots & & Y_1  \\
		 \end{array}
	\end{equation}	
	where $\overline{S_k}(\gamma)' = \overline{S_k}(\gamma) / (S^1)^{r_k + \cdots + r_1}$ and the $(S^1)^{r_k + \cdots + r_1}$-action is generated by 
	$\{ \Phi_\lambda^{i,j} \}_{(i,j) \in \mcal{I}_{\mcal{C}(\gamma)}, i+j-1 \leq k}$. Since $\Phi_\lambda^{-1}(\textbf{\textup{u}}) \cong T^r \times Y(\textbf{\textup{u}})$ for some $Y(\textbf{\textup{u}})$ by Proposition \ref{proposition_toric_degenerate_equivariant_map},
	we have $Y(\textbf{\textup{u}}) \cong \Phi_\lambda^{-1}(\textbf{\textup{u}}) / T^r \cong \overline{S_\bullet}(\gamma)'$, which completes the proof.
\end{proof}

As an application of Theorem~\ref{theorem_contraction}, one can provide a more explicit description of GC fibers. As mentioned in Remark~\ref{remark_su(3)}, an iterated bundle in Theorem~\ref{theorem_main} is in general \emph{not} trivial. Generally speaking, a torus bundle over a torus might be non-trivial, e.g. Kodaira-Thurston example, a $2$-torus bundle over a $2$-torus whose first betti number is $3$, see \cite{Th}. Yet, Theorem~\ref{theorem_contraction} guarantees that all torus factors in the iterated bundle can be taken out from $\Phi_\lambda^{-1}(\textbf{\textup{u}})$. Using this observation, in some case, the iterated bundle can be characterized explicitly. 

\begin{example}
\begin{enumerate}
\item Let $\mcal{O}_\lambda \simeq \mcal{F}(6)$ be the co-adjoint orbit associated with $\lambda = (5,3,1,-1,-3,-5)$. 
Consider the face $\gamma_1$ defined in Figure~\ref{Figure_diffeotype}. 
The diffeomorphic type of the fiber over a point $\textbf{\textup{u}}$ in the relative interior of $\gamma_1$ is the product of $(S^1)^7$ and $Y(\textbf{\textup{u}})$ by Theorem~\ref{theorem_contraction}. 
Here, $Y(\textbf{\textup{u}})$ is diffeomorphic to $SU(3)$ because $Y(\textbf{\textup{u}})$ is the total space of the $S^3$-bundle over $S^5$ from Remark~\ref{remark_su(3)}. In sum,
$$
\Phi^{-1}_\lambda(\textbf{\textup{u}}) \simeq (S^1)^7 \times SU(3).
$$
\item Let $\lambda = (3,3,3, -3,-3,-3)$. 
Then, the co-adjoint orbit $\mcal{O}_\lambda$ is $\mathrm{Gr}(3,6)$. 
Consider the face $\gamma_2$ defined in Figure~\ref{Figure_diffeotype}. 
We claim that the diffeomorphic type of the fiber over a point in the relative interior of $\gamma_2$ is 
$$
\Phi^{-1}_\lambda(\textbf{\textup{u}}) \simeq (S^1)^3 \times (S^3)^2.
$$
Because of Theorem~\ref{theorem_contraction}, the fiber is of the form $(S^1)^3 \times Y(\textbf{\textup{u}})$
where $Y(\textbf{\textup{u}})$ is an $S^3$-bundle over $S^3$. Since every $S^3$-bundle over $S^3$ is trivial (see Steenrod \cite{St}), we have $Y(\textbf{\textup{u}}) \simeq (S^3)^2$.
\end{enumerate}
\end{example}

\begin{figure}[ht]
	\scalebox{1}{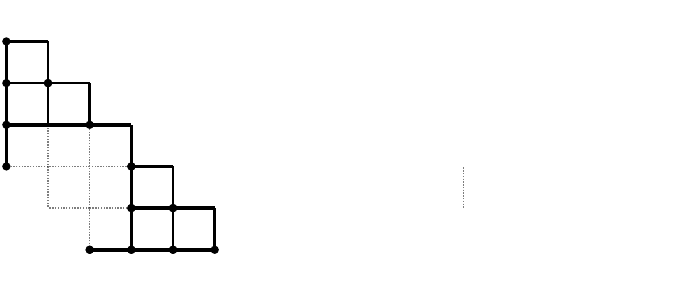}
	\caption{\label{Figure_diffeotype} Gelfand-Cetlin fibers.}	
\end{figure}

Another application of Theorem \ref{theorem_contraction} is to compute the first and the second homotopy groups of each $\Phi_\lambda^{-1}(\textbf{\textup{u}})$ as follows.
Let $\textbf{\textup{u}} \in \Delta_\lambda$ and let $f$ be the face of $\Delta_\lambda$ containing $\textbf{\textup{u}}$ in its relative interior. 
Also, let $\gamma$ be the face of $\Gamma_\lambda$ corresponding to $f$.
For each $k=1, \cdots, n-1$, the fibration~\eqref{bundleatstagek} 
induces the long exact sequence of homotopy groups given by
\begin{equation}\label{equation_homotopy_long_exact_sequence}
	\begin{array}{ccccccccccc}
		\cdots  &
		\rightarrow & \pi_2(S_k(\gamma)) & \rightarrow & \pi_2(\bar{S_k}(\gamma))&
		\rightarrow &  \pi_2(\bar{S_{k-1}}(\gamma))& & & & \\
		& \rightarrow & \pi_1(S_k(\gamma)) & \rightarrow & \pi_1(\bar{S_k}(\gamma)) & \rightarrow &
		\pi_1(\bar{S_{k-1}}(\gamma)) &
		\rightarrow & \pi_0(S_k(\gamma)) & \rightarrow & \cdots
	\end{array}
\end{equation}

\begin{proposition}\label{proposition_pi_2_zero_pi_1_k}
	Let $\textbf{\textup{u}} \in \Delta_\lambda$. Then the followings hold.
	\begin{itemize}
		\item $\pi_2(\Phi_\lambda^{-1}(\textbf{\textup{u}})) = 0$.
		\item If $\textbf{\textup{u}}$ is a point in the relative interior of an $r$-dimensional face of $\Delta_\lambda$, then
		\[
			\pi_1(\Phi_\lambda^{-1}(\textbf{\textup{u}})) \cong \Z^r.
		\]
	\end{itemize}
\end{proposition}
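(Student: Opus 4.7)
The plan is to prove both statements by induction along the stages of the iterated bundle~\eqref{theoremmaindia} produced by Theorem~\ref{theorem_main}, using the long exact sequence of homotopy groups~\eqref{equation_homotopy_long_exact_sequence}. The key inputs will be that each fiber $S_k(\gamma)$ is a product of odd-dimensional spheres, that every odd-dimensional sphere $S^{2m-1}$ satisfies $\pi_2(S^{2m-1}) = 0$, and that $\pi_1(S^{2m-1}) = 0$ for $m \geq 2$ while $\pi_1(S^1) = \Z$.

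First I would prove (1). Since $\pi_2$ of any odd sphere vanishes, a product argument gives $\pi_2(S_k(\gamma)) = 0$ for all $k$. Starting from $\bar S_0(\gamma) = \mathrm{point}$ and inducting on $k$, the segment
\[
\pi_2(S_k(\gamma)) \;\longrightarrow\; \pi_2(\bar S_k(\gamma)) \;\longrightarrow\; \pi_2(\bar S_{k-1}(\gamma))
\]
of~\eqref{equation_homotopy_long_exact_sequence} forces $\pi_2(\bar S_k(\gamma)) = 0$, which at $k = n-1$ gives (1). Next, for (2), I would invoke Theorem~\ref{theorem_contraction} to write $\Phi_\lambda^{-1}(\textbf{\textup{u}}) \cong T^r \times \bar S_\bullet(\gamma)'$, where $\bar S_\bullet(\gamma)'$ is the total space of the iterated bundle obtained by deleting all $S^1$-factors. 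Thus the fibers of the new iterated bundle are products of odd-dimensional spheres of dimension at least $3$ (or points), so they are simply connected and path-connected. A second induction using
\[
\pi_1(Y_k) \;\longrightarrow\; \pi_1(\bar S_k(\gamma)') \;\longrightarrow\; \pi_1(\bar S_{k-1}(\gamma)') \;\longrightarrow\; \pi_0(Y_k) = 0
\]
yields $\pi_1(\bar S_\bullet(\gamma)') = 0$, and hence $\pi_1(\Phi_\lambda^{-1}(\textbf{\textup{u}})) \cong \pi_1(T^r) \cong \Z^r$.

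The main obstacle is the $\pi_1$ computation in (2): from Theorem~\ref{theorem_main} alone, the relevant short exact sequences $0 \to \pi_1(S_k(\gamma)) \to \pi_1(\bar S_k(\gamma)) \to \pi_1(\bar S_{k-1}(\gamma)) \to 0$ only show that $\pi_1(\Phi_\lambda^{-1}(\textbf{\textup{u}}))$ has an iterated extension structure with $\Z$-factors coming from each $S^1$, but a priori this could be a non-trivial extension or have twisting that changes the isomorphism class. Theorem~\ref{theorem_contraction} is exactly what resolves this issue, since it exhibits a global splitting $\Phi_\lambda^{-1}(\textbf{\textup{u}}) \cong T^r \times (\text{simply connected space})$; the rank $r$ matches $\dim \gamma$ by Proposition~\ref{proposition_M_1_block_simple_closed_region}, which ensures agreement with $\dim f_\gamma = r$. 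Once this splitting is available, both claims reduce to routine long exact sequence chases, and the role of the proposition is essentially to package the topological consequences of Theorems~\ref{theoremA} and~\ref{theoremB} into a uniform statement about $\pi_1$ and $\pi_2$ of the fibers.
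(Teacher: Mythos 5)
Your proof is correct and follows essentially the same approach as the paper: an inductive long exact sequence chase for $\pi_2$, and an appeal to Theorem~\ref{theorem_contraction} to split off the $T^r$-factor for $\pi_1$. The only difference is that you spell out the simple connectedness of $\bar S_\bullet(\gamma)'$ via a second induction, which the paper asserts without detail.
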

\begin{proof}
	Since $S_k(\gamma)$ in \eqref{bundleatstagek} is a point or a product space of odd dimensional spheres,
	we have $\pi_2(S_k(\gamma)) = 0$ for every $k=1,\cdots, n-1$. 
	Note that $\pi_2(\bar{S_1}(\gamma)) = \pi_2(S_1(\gamma)) = 0$. 
	Therefore, it easily follows that $\pi_2(\bar{S_k}(\gamma)) = 0$ for every $k$ by the induction on $k$. 
	The second statement is deduced from Theorem \ref{theorem_contraction}, since $\bar{S_{\bullet}}(\gamma)'$ is simply connected. 
\end{proof}

\begin{corollary}\label{corollary_Lagrangian_tori}
	For a point $\textbf{\textup{u}} \in \Delta_\lambda$, the fiber $\Phi_\lambda^{-1}(\textbf{\textup{u}})$ is a Lagrangian torus if and only if $\textbf{\textup{u}}$ is an interior point of
	$\Delta_\lambda$
\end{corollary}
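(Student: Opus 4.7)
The plan is to deduce the corollary as a short bookkeeping argument from Proposition~\ref{proposition_pi_2_zero_pi_1_k} and Theorem~\ref{theorem_contraction}, both of which are now available. The forward direction is essentially classical: when $\textbf{\textup{u}} \in \mathring{\Delta}_\lambda$, Proposition~\ref{proposition_GS_smooth} ensures that every component $\Phi_\lambda^{i,j}$ is smooth on $\Phi_\lambda^{-1}(\mathring{\Delta}_\lambda)$, so the Arnold-Liouville theorem applies on this open dense subset and yields a Lagrangian torus. Alternatively, one may read this directly off Theorem~\ref{theorem_contraction}: when the face of $\Gamma_\lambda$ associated to $\textbf{\textup{u}}$ is the full ladder diagram, every $M$-shaped block appearing in the $W_k(\Gamma_\lambda)$ is an $M_1$-block, so $\bar{S_\bullet}(\gamma)'$ is a point and the fiber reduces to $T^{\dim \Delta_\lambda}$, which is Lagrangian by dimension count.

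For the converse, I would assume $\Phi_\lambda^{-1}(\textbf{\textup{u}}) \cong T^s$ is a Lagrangian torus, and let $f$ be the unique face of $\Delta_\lambda$ whose relative interior contains $\textbf{\textup{u}}$, with $r := \dim f$. Proposition~\ref{proposition_pi_2_zero_pi_1_k} gives
\[
  \Z^r \cong \pi_1(\Phi_\lambda^{-1}(\textbf{\textup{u}})) \cong \pi_1(T^s) \cong \Z^s,
\]
forcing $r = s$. On the other hand, the Lagrangian condition, combined with the equality $\dim \Delta_\lambda = \dim_{\C} \mcal{O}_\lambda = \tfrac{1}{2} \dim_{\R} \mcal{O}_\lambda$, forces $s = \dim \Delta_\lambda$. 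Hence $r = \dim f = \dim \Delta_\lambda$, which is possible only when $f = \Delta_\lambda$; equivalently, $\textbf{\textup{u}} \in \mathring{\Delta}_\lambda$.

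I do not anticipate any real obstacle: the heavy lifting has already been carried out in establishing the iterated bundle structure (Theorem~\ref{theorem_main}), its torus-factor splitting (Theorem~\ref{theorem_contraction}), and the homotopy computation (Proposition~\ref{proposition_pi_2_zero_pi_1_k}). The only mild subtlety is making sure to appeal to the face $f$ containing $\textbf{\textup{u}}$ in its \emph{relative} interior (so that Proposition~\ref{proposition_pi_2_zero_pi_1_k} applies with the correct $r$); once this is set up, both implications reduce to matching two integers obtained from $\pi_1$ and from the dimension of a Lagrangian.
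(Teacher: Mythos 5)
Your proof is correct and follows essentially the same route as the paper: the ``if'' direction is read off from Theorem~\ref{theorem_contraction} (or Arnold-Liouville on the smooth locus), and the ``only if'' direction compares the rank of $\pi_1$ supplied by Proposition~\ref{proposition_pi_2_zero_pi_1_k} with the rank forced by the Lagrangian-torus hypothesis, concluding that the relevant face must be all of $\Delta_\lambda$. The paper states this in one line, citing exactly these two results; your write-up just makes the integer-matching explicit.
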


\begin{proof}
	The ``if'' statement follows immediately from Theorem \ref{theorem_contraction}, and the ``only if'' part follows from Proposition \ref{proposition_pi_2_zero_pi_1_k}.
\end{proof}

\vspace{0.2cm}
\part{Non-displaceability of Lagrangian fibers}\label{Part_nondisplaceabilityofLag}

\vspace{0.2cm}
\section{Introduction of Part~\ref{Part_nondisplaceabilityofLag}}\label{Sec_Intropart2}

In the second part of this article, we focus on detecting displaceable and non-displaceable Lagrangian 
\footnote{See Definition~\ref{def_nondisanddis} for (non-)displacealbe Lagrangian submanifolds}
GC fibers. 

There has been research on displaceability and non-displaceability of GC fibers on partial flag manifolds. 
Nishinou, Nohara, and Ueda showed that the Lagrangian GC torus fiber at the center is non-displaceable. To show it, they calculated the potential function after constructing a toric degeneration from a GC system to a toric moment map and found a weak bounding cochain such that the deformed Floer cohomology is non-vanishing.

\begin{theorem}[Theorem 12.1 in \cite{NNU}]\label{theorem_NNU}
For any non-increasing sequence $\lambda$ of real numbers, the Gelfand-Cetlin system $\Phi_\lambda \colon (\mathcal{O}_\lambda, \omega_\lambda) \rightarrow \Delta_\lambda$ admits a non-displaceable Lagrangian torus fiber $\Phi_\lambda^{-1}(\textbf{\textup{u}}_0)$ over the center $\textbf{\textup{u}}_0$ of the Gelfand-Cetlin polytope $\Delta_\lambda$.
\end{theorem}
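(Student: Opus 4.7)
The plan is to transport the problem to the toric setting via the toric degeneration diagram \eqref{equation_toric_degeneration_diagram} and then invoke the bulk-deformed Lagrangian Floer machinery. Since $\textbf{\textup{u}}_0$ lies in the interior $\mathring{\Delta}_\lambda$, Theorem \ref{theoremB} tells us that $L_0 := \Phi_\lambda^{-1}(\textbf{\textup{u}}_0)$ is a Lagrangian torus and that $\phi$ restricts to a symplectomorphism between $L_0$ and the toric fiber $T_0 := \Phi^{-1}(\textbf{\textup{u}}_0) \subset X_\lambda^{\mathrm{sm}}$. By Ruan's construction of the gradient-Hamiltonian flow, a displacing Hamiltonian isotopy on $\mathcal{O}_\lambda$ would give rise, via $\phi$ and a limiting argument along the toric degeneration in stages $\{X_{k,t}\}$, to a displacing isotopy of $T_0$ inside a smoothing of $(X_\lambda,\omega_0)$. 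So it suffices to produce an obstruction to displaceability of $T_0$ that is stable under such degenerations.

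The core step is to compute the potential function $\mathfrak{PO}\colon H^1(T_0;\Lambda_0)\to\Lambda_+$ obtained by counting Maslov index two pseudo-holomorphic disks with boundary on $T_0$. On a smooth projective toric manifold, the Cho-Oh classification gives $\mathfrak{PO}(b;\textbf{\textup{u}}) = \sum_F T^{\ell_F(\textbf{\textup{u}})}\,y^{v_F}$, where the sum runs over facets $F$ of the moment polytope, $\ell_F$ is the affine distance from $\textbf{\textup{u}}$ to $F$, $v_F$ is the primitive inward normal, and $y = \exp(b) \in (\Lambda_0^*)^n$. For the possibly singular GC toric variety $X_\lambda$, which carries conifold strata in the sense of \cite{BCKV}, one must extend this classification by pulling $T_0$ off the singular locus and controlling disk bubbles through the Kogan-Miller family. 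The outcome I expect is exactly the same formula, with one summand per facet of $\Delta_\lambda$, corresponding bijectively under the map $\Psi$ of Theorem \ref{theorem_equiv_CG_LD} to the maximal non-wall edges of the ladder diagram $\Gamma_\lambda$.

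Once $\mathfrak{PO}$ is written down, the conclusion follows from the standard FOOO criterion: if $\mathfrak{PO}(\,\cdot\,;\textbf{\textup{u}}_0)$ admits a critical point $y^*\in(\Lambda_0^*)^{\dim T_0}$, then $(T_0,y^*)$ has non-vanishing bulk-deformed Floer cohomology, and therefore is non-displaceable. At the center $\textbf{\textup{u}}_0$, the values $\ell_F(\textbf{\textup{u}}_0)$ cluster into coincident families thanks to the symmetry of the min-max pattern \eqref{equation_GC-pattern} at the balanced point. Concretely, I would look for $y^*$ respecting the reflection symmetry $y_{i,j}\leftrightarrow y_{i,j}^{-1}$ coming from the up-down symmetry of $\Gamma_\lambda$ about its central diagonal, which reduces the critical point equations to a layered triangular system indexed by the rows of $\Gamma_\lambda$. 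An induction on $n$, matching the iterated structure of the principal minors used to build $\Phi_\lambda$, should produce $y^*$ layer by layer, with each layer solved by a Hensel-type lifting of an obvious unit solution of the leading $T^0$-equation.

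The main obstacle will be step two, the extension of the Cho-Oh disk count to $X_\lambda$. One must rule out extra disk classes arising from the conifold strata and ensure the one-to-one correspondence between facets of $\Delta_\lambda$ and Maslov-two classes persists across the family $\{X_{k,t}\}$, which amounts to an energy and transversality analysis for holomorphic disks in a degenerating family of symplectic manifolds. A secondary obstacle is to verify that the critical solution $y^*$ indeed has all coordinates in $\Lambda_0^*$ rather than only in $\Lambda_0$; this is where the balance of $\ell_F$'s at the center is crucial, since it makes the leading-order (valuation zero) piece of the critical point equation into a genuine algebraic system over $\mathbb{C}$ whose solutions can then be lifted.
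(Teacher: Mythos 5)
The paper does not reprove this statement; it is quoted directly from Nishinou--Nohara--Ueda as Theorem~12.1 of \cite{NNU}, and later summarized in Section~\ref{Sec_Intropart2}: ``they calculated the potential function after constructing a toric degeneration from a GC system to a toric moment map and found a weak bounding cochain such that the deformed Floer cohomology is non-vanishing.'' Your proposal captures that strategy at a high level --- toric degeneration, a Cho--Oh-type classification of Maslov-index-two discs in one-to-one correspondence with facets of $\Delta_\lambda$, and then the FOOO criterion via a critical point of the resulting Laurent polynomial --- so the skeleton is the right one.

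There are, however, two substantive inaccuracies. First, you frame the argument around ``bulk-deformed'' Floer cohomology, but NNU's Theorem~12.1 requires no bulk deformation at all: for the distinguished interior point $\textbf{\textup{u}}_0$ the ordinary potential function $\mathfrak{PO}$ already admits a critical point with coordinates in $\Lambda_U$. That is precisely what makes $\textbf{\textup{u}}_0$ the ``center'' in the sense of Fukaya--Oh--Ohta--Ono, Proposition~9.1 of \cite{FOOOToric1} (footnoted in Section~\ref{Sec_Intropart2}). Bulk deformations by Schubert cycles are the new ingredient of the present paper (Theorems~\ref{theoremC} and~\ref{theoremD}), needed only for the non-center fibers along $I_m(t)$, and invoking them here muddies where the genuine novelty lies. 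Second, your opening paragraph argues in the wrong direction: you do not need to transport a displacing Hamiltonian isotopy from $\mathcal{O}_\lambda$ to $X_\lambda$. One defines the $A_\infty$-structure and Floer cohomology of $L_0=\Phi_\lambda^{-1}(\textbf{\textup{u}}_0)$ directly inside $(\mathcal{O}_\lambda,\omega_\lambda)$ --- or equivalently in the nearby smooth fiber $X_\varepsilon$ of the Kogan--Miller family, which is symplectomorphic to $\mathcal{O}_\lambda$ --- and uses the gradient-Hamiltonian flow only to \emph{compute} the moduli spaces of holomorphic discs (cf.\ Proposition~\ref{barM1} and Lemmas~\ref{discsinX0}--\ref{discsinXepsilon}). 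Non-vanishing of $HF((L_0,b);\Lambda)$ then gives non-displaceability by the standard argument without any transport of isotopies.

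A minor point on the critical-point step: for non-monotone $\lambda$ the center $\textbf{\textup{u}}_0$ is not generally a center of symmetry of $\Delta_\lambda$, so one should not expect the involution $y_{i,j}\leftrightarrow y_{i,j}^{-1}$ to be available. The correct mechanism is the solvability of the FOOO leading term equation at $\textbf{\textup{u}}_0$ (by construction of the center), which yields nonzero complex solutions that can then be lifted to $\Lambda_U$-valued critical points; your ``Hensel-type lifting of an obvious unit solution'' is the right instinct, but the existence of that unit solution comes from the leading term equation, not from a polytope symmetry.
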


Over the Novikov ring over the field of complex numbers, Nohara and Ueda \cite{NU2} found bounding cochains making deformed Floer cohomology of $U(2)$-fiber non-zero in $\mathcal{O}_\lambda \cong \mathrm{Gr}(2,4)$ for 
\begin{equation}\label{eq_lambdagr}
\lambda = \{ \lambda_1 = \lambda_2 = 2 > \lambda_3 = \lambda_4 = -2 \}.
\end{equation}
The authors explicitly classified moduli spaces of holomorphic discs bounded by the fiber over the center of $\gamma$, 
using the homogeneity of the fiber in $\mathrm{Gr}(2,4)$. 
Here, the notion of homogeneous Lagrangians was introduced by Evans and Lekili, see \cite[Definition 1.1.1]{EL}.

\begin{theorem}[Theorem 1.2 in \cite{NU2}]\label{theorem_NU}
For the sequence $\lambda$ in~\eqref{eq_lambdagr}, let $\gamma$ be the one-dimensional Lagrangian face of $\Gamma_\lambda$ in Example \ref{example_gr24_W_block}. Then, the Gelfand-Cetlin $U(2)$-fiber over the center of the face $\gamma$ is non-displaceable. Moreover, any other Gelfand-Cetlin $U(2)$-fibers located at its interior are displaceable. 
\end{theorem}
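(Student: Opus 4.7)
The plan is to split Theorem~\ref{theorem_NU} into a non-displaceability assertion for the center $U(2)$-fiber and a displaceability assertion for every other $U(2)$-fiber over $\mathring{f}_\gamma$, and to treat each half separately. As setup I would first pin down the fiber topology: combining Theorem~\ref{theorem_main} with Theorem~\ref{theorem_contraction} applied to the face $\gamma$ of Example~\ref{example_gr24_W_block}, every fiber $\Phi_\lambda^{-1}(\textbf{\textup{u}})$ with $\textbf{\textup{u}} \in \mathring{f}_\gamma$ is diffeomorphic to $S^1 \times S^3 \cong U(2)$, and the center fiber can be realized as a single orbit of a compact subgroup $U(2) \hookrightarrow U(4)$ acting on $\mathrm{Gr}(2,4)$. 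This places the center fiber in the homogeneous Lagrangian framework of Evans-Lekili, which is what makes its Floer theory computable.

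For non-displaceability at the center I would apply the Lagrangian Floer machinery of Fukaya-Oh-Ohta-Ono. The homogeneity drastically reduces the moduli problem: every Maslov-two holomorphic disc bounded by the center $U(2)$-fiber is a $U(2)$-translate of one passing through a chosen reference point, so after quotienting by the $U(2)$-symmetry one is left with a finite, explicit disc count. Using this count I would write down the potential function, deformed by a one-cochain supported on the $S^1$-factor of $U(2)$, and then solve its critical point equation in the maximal ideal of the Novikov ring $\Lambda_0^{\C}$. A weakly bounding cochain $b$ produced this way makes the deformed Floer cohomology $HF((L,b),(L,b))$ non-vanishing, and the standard argument of \cite{FOOO} then forces the center fiber to be non-displaceable.

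For the displaceability of a non-center $\textbf{\textup{u}} \in \mathring{f}_\gamma$ I would exploit the $S^1$-splitting of Theorem~\ref{theorem_contraction}. Because $f_\gamma$ is one-dimensional, shifting $\textbf{\textup{u}}$ off the midpoint moves the $S^1$-factor off the symmetric center of its moment image. The Hamiltonian generating this circle action extends smoothly to a neighborhood of the non-center fiber by Lemma~\ref{lemma_smooth_component_free_torus_action}, and after multiplying by a suitable cut-off function one obtains an autonomous Hamiltonian whose time-one flow sweeps the $S^3$-factor along the $S^1$-direction; because $\textbf{\textup{u}}$ is off the center of $f_\gamma$, the convex geometry of $\Delta_\lambda$ leaves enough room in the transverse direction for no point of $\Phi_\lambda^{-1}(\textbf{\textup{u}})$ to return to itself, giving the desired displacement.

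The principal obstacle I anticipate is the Floer-theoretic side at the center. Even with the $U(2)$-symmetry reducing the geometric problem, one must track bookkeeping of moduli strata carefully to identify the contribution of the fundamental chain of the $S^3$-cycle to the bounding cochain, and then verify that the Maurer-Cartan equation admits a genuine solution over $\Lambda_0^{\C}$ rather than only over the universal Novikov ring; this is precisely where the field structure of $\C$ and the specific combinatorics of the $M_2$-block in $\gamma$ for $\mathrm{Gr}(2,4)$ enter essentially. The displaceability half, by contrast, is expected to reduce to a purely explicit Hamiltonian construction.
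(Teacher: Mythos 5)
This statement is Theorem 1.2 of Nohara--Ueda \cite{NU2}, and the paper \emph{cites} it rather than proving it, so there is no internal proof to compare against. The paper does, however, re-derive the displaceability half independently (Example~\ref{example_non_disp}(3), via Proposition~\ref{proposition_diagonal_all_equal}), and that is the relevant point of comparison.

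Your non-displaceability sketch for the center $U(2)$-fiber --- homogeneity in the sense of Evans--Lekili, classification of Maslov-two discs using the $U(2)$-symmetry, deformation by a weak bounding cochain in $H^1(U(2);\Lambda_0)$, and non-vanishing of the deformed Floer cohomology over $\C$-valued Novikov coefficients --- is essentially the route taken in \cite{NU2}, and as an outline it is fine. The subtleties you flag at the end (bookkeeping of strata, and the fact that the cancellation pattern genuinely requires $\C$ as ground field) are the correct ones.

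Your displaceability argument, however, does not work. The circle action you invoke is generated by a component $\Phi_\lambda^{i,j}$, i.e.\ by the very function whose level set contains $\Phi_\lambda^{-1}(\textbf{\textup{u}})$. The Hamiltonian flow of any such function preserves every level set, hence preserves the fiber; multiplying by a cut-off that is constant near the fiber leaves that conclusion unchanged. Your phrase ``sweeps the $S^3$-factor along the $S^1$-direction'' therefore describes the flow acting \emph{on} the fiber, not moving the fiber off itself, and the appeal to ``room in the transverse direction'' is not backed by any Hamiltonian that actually has a transverse component along the fiber. The paper's argument is both simpler and correct: by Lemma~\ref{lemma_diagonal_elt} the diagonal entries $d_k(\textbf{\textup{u}})$ are invariants of the fiber, conjugation by a transposition $(1,k)\in\mathfrak{S}_n\subset U(n)$ is a Hamiltonian diffeomorphism of $(\mcal{O}_\lambda,\omega_\lambda)$ that permutes diagonal entries, so $d_1(\textbf{\textup{u}})\neq d_k(\textbf{\textup{u}})$ forces displaceability (Proposition~\ref{proposition_diagonal_all_equal}). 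For $\lambda=\{t,t,0,0\}$ and $r_a$ on the Lagrangian edge this gives $d_1=d_2=a$, $d_3=d_4=t-a$, so the fiber is displaceable unless $2a=t$, i.e.\ except at the center.
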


For a sequence
\begin{equation}\label{eq_lambdagrrr}
\lambda = (\lambda_1 = \cdots = \lambda_n = n > \lambda_{n+1} = \cdots = \lambda_{2n} = - n),
\end{equation}
the co-adjoint orbit $\mcal{O}_\lambda$ gives rise to $\mathrm{Gr}(n,2n)$ and the $(n \times n)$-square $\gamma$ in $\Gamma_\lambda$ corresponds to the one-dimensional face whose fiber at the interior is diffeomorphic to $U(n)$. 
According to the result of the third named author in \cite{Oh}, any Lagangian submanifold that is a fixed point set of an anti-symplectic involution of a monotone Hermitian symmetric space has non-zero Floer cohomology over the field of characteristic two. 
By Iriyeh, Sakai, and Tasaki in \cite{IST}, the $U(n)$-fiber is the fixed point set of an anti-symplectic involution on $\mathrm{Gr}(n,2n)$, a Hermitian symmetric space. Using those results, Evans and Lekili \cite{EL2} observed that the $U(n)$-fiber at the center of the face $\gamma$ has non-zero Floer cohomology over the field in characteristic two. 

\begin{theorem}[Corollary 7.4.2 in \cite{EL2}]\label{theorem_EL}
For the sequence $\lambda$ in~\eqref{eq_lambdagrrr}, the Lagrangian $U(n)$-fiber at the center of one-dimensional edge $\gamma$ is non-displaceable. 
Moreover, the Lagrangian $U(N)$-fiber at the center of $\gamma$ split-generates the Fukaya category of $\mathrm{Gr}(N, 2N)$ in characteristic $2$ for $N = 2^s$ with a positive integer $s$ . 
\end{theorem}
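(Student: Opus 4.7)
The plan is to leverage the two inputs already cited in the excerpt: the Iriyeh--Sakai--Tasaki description of the $U(n)$-fiber as a real form, and the third named author's theorem that real Lagrangians in monotone Hermitian symmetric spaces have nonvanishing Floer cohomology over $\mathbb{F}_2$. With these, non-displaceability will be immediate; the split-generation assertion will come from Abouzaid's criterion combined with a characteristic-$2$ degeneration of the quantum cohomology of $\mathrm{Gr}(N,2N)$ that is special to $N=2^s$.

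First I would make the identification precise. Writing $L := \Phi_\lambda^{-1}(\textbf{\textup{u}}_c)$ for the fiber over the center $\textbf{\textup{u}}_c$ of the $n\times n$ Lagrangian face $\gamma \subset \Gamma_\lambda$, Theorem~\ref{theoremA} and Corollary~\ref{corollary_Lagrangian_tori} already guarantee $L$ is a smooth monotone Lagrangian diffeomorphic to $U(n)$. The Iriyeh--Sakai--Tasaki construction provides an anti-holomorphic isometric involution $\tau$ on $\mathrm{Gr}(n,2n)$ whose fixed-point set is diffeomorphic to $U(n)$; a direct check via the explicit Plücker coordinates shows $\mathrm{Fix}(\tau)$ lies in the GC fiber of the correct eigenvalue pattern, and by the homogeneity of the action of the isotropy group on $L$ (Lemma~\ref{lemma_transitive}, adapted to this Grassmannian setting) we conclude $\mathrm{Fix}(\tau) = L$. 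Being the fixed locus of an anti-holomorphic involution on a Kähler manifold, $L$ is automatically anti-symplectic, orientable, and relatively spin.

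Next, applying the third named author's theorem to $(X,\tau,L) = (\mathrm{Gr}(n,2n),\tau,U(n))$ yields an isomorphism $HF^*(L,L;\mathbb{F}_2)\cong H^*(L;\mathbb{F}_2)$. The mechanism is that $\tau$ induces a free involution on the moduli space of Maslov $2$ holomorphic discs with a generic boundary marked point (no disc can be $\tau$-invariant for index reasons), so the obstruction class $\mathfrak{m}_0$ vanishes mod~$2$ and the Floer differential is identified with the Morse differential. In particular $HF^*(L,L;\mathbb{F}_2)\neq 0$, which by the standard displacement energy argument shows $L$ is non-displaceable, establishing the first assertion.

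For the split-generation statement when $N=2^s$, I would invoke Abouzaid's criterion: a monotone Lagrangian $L$ split-generates the direct summand of $\mathrm{Fuk}(X;\mathbb{F}_2)$ on which the open-closed map $\mathcal{OC}\colon HH_*(CF^*(L,L))\to QH^*(X;\mathbb{F}_2)$ hits the unit. Two ingredients are then needed. (i) \emph{$\mathcal{OC}$ hits a unit}: because $L$ is the $\tau$-fixed locus and $HF^*(L,L;\mathbb{F}_2)$ has top class Poincaré-dual to the point class, the composition $QH^*\xrightarrow{\mathcal{CO}}HF^*(L,L)\xrightarrow{\langle \cdot,[\mathrm{pt}]\rangle}\mathbb{F}_2$ is Poincaré dual to $\mathcal{OC}([L])$ and is nonzero since $1\mapsto 1$ under $\mathcal{CO}$. (ii) \emph{The relevant summand is everything}: the Siebert--Tian presentation gives $QH^*(\mathrm{Gr}(N,2N);\mathbb{F}_2)=\mathbb{F}_2[x_1,\dots,x_N]/I$ where the deformed relations are $N$-th-power relations in $c_1$; for $N=2^s$, Frobenius in characteristic $2$ forces all eigenvalues of quantum multiplication by $c_1$ to coincide, so the ring is local and the decomposition of $\mathrm{Fuk}$ into summands is trivial. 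Together, (i) and (ii) give split-generation of the whole Fukaya category by $L$.

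The main obstacle I expect is step (i), identifying $\mathcal{OC}([L])$ concretely. Although Maslov $2$ discs on the $U(n)$-fiber are abstractly controlled by homogeneity, extracting the precise count (and its parity) that feeds into $\mathcal{OC}$ requires either a transversality-and-involution argument as in the non-displaceability step extended to discs with two boundary inputs, or an explicit toric-degeneration calculation in the spirit of Nishinou--Nohara--Ueda that tracks discs surviving the degeneration from $L$ to a Lagrangian in the central toric fiber. The characteristic-$2$ Frobenius collapse in step (ii), by contrast, should be a direct algebraic computation once the Siebert--Tian presentation is written out mod $2$.
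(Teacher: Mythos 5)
The statement you were asked to prove is not proved in this paper: it is explicitly labeled ``Corollary~7.4.2 in~\cite{EL2}'' and is imported verbatim from Evans--Lekili, with the surrounding paragraph in Section~\ref{Sec_Intropart2} only summarizing the two external ingredients (\cite{Oh} and \cite{IST}) that Evans--Lekili combine. There is therefore no ``paper's own proof'' against which your proposal can be compared.

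Taken as a reconstruction of the Evans--Lekili argument, your outline captures the broad strokes correctly: identify the $U(n)$-fiber with the fixed locus of the Iriyeh--Sakai--Tasaki anti-holomorphic involution; invoke \cite{Oh}'s theorem on real forms of monotone Hermitian symmetric spaces to get $HF^*(L,L;\mathbb{F}_2)\neq 0$ and hence non-displaceability; and for $N=2^s$ run Abouzaid's criterion together with the observation that $QH^*(\mathrm{Gr}(N,2N);\mathbb{F}_2)$ is local (Frobenius collapses the Siebert--Tian relations), so the Fukaya category has a single summand. This is consistent with the paper's one-paragraph description of the result. Two cautionary remarks on your details: (i) orientability and relative spinness of a real form are not automatic consequences of anti-holomorphy; here they hold because $U(n)$ is parallelizable, and that is what should be cited. (ii) The mechanism behind \cite{Oh}'s theorem is a doubling/reflection argument in which $\tau$-\emph{invariant} (real) strips are identified with Morse gradient trajectories on $L$ and the \emph{non}-invariant ones pair off and cancel mod $2$; it is not a statement that $\tau$ acts freely on the moduli of Maslov-$2$ discs, since real discs do occur and are precisely what survives. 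Also, Lemma~\ref{lemma_transitive} concerns the $U(k)$-action on $\mathcal{A}_{k+1}(\mathfrak a,\mathfrak b)$ and does not directly yield the identification $\mathrm{Fix}(\tau)=L$; that identification should be done by direct computation in Plücker coordinates, as in \cite{IST}, rather than by ``adapting'' the lemma.
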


In this paper, we discuss non-displaceable GC fibers on complete flag manifolds equipped with a monotone KKS form. In contrast to the case of the $U(n)$-fiber in $\mathrm{Gr}(n,2n)$, a non-toric fiber can not be realized as an orbit of a global holomorphic action in general because it has torus factors from torus actions not extending to the ambient manifold. Because of this feature, it seems that the arguments in Theorem~\ref{theorem_NU} and Theorem~\ref{theorem_EL} cannot be applied to this case. 

Let
$$
	\lambda = \{ \lambda_1 > \lambda_2 > \cdots > \lambda_n \}
$$
be a decreasing sequence of real numbers so that the co-adjoint orbit $\mathcal{O}_\lambda$ is diffeomorphic to a complete flag manifold. 
When the KKS form $\omega_\lambda$ is monotone, we will show non-displaceability of toric fibers and non-toric fibers as well. 
To show it, we first locate a half-open line segment $[\textbf{\textup{u}}_0,\textbf{u}_\gamma) \subset \Delta_\lambda$ where $\textbf{\textup{u}}_0$ is the center
\footnote{For any moment polytope $\Delta$, Fukaya, Oh, Ohta, and Ono \cite[Proposition 9.1]{FOOOToric1} described a unique interior point (which they denoted by $\textbf{u}_0$ or $P_K$) and we call it the \emph{center} of $\Delta$. 
The center $\textbf{u}_0$ of $\Delta$ is a point over which the corresponding toric fiber is non-displaceable, see Theorem 1.5 and Proposition 4.7 in \cite{FOOOToric1} for more details. 
Notice that the center is \emph{not} meant to be the barycenter of a polytope. The center and the barycenter are in general different. 
} 
of $\Delta_\lambda$ and $\textbf{u}_\gamma$ is a point in the relative interior of some Lagrangian face $\gamma$ of $\Delta_\lambda$. Then, in some cases, we are able to show that each torus fiber $\Phi_\lambda^{-1}(\textbf{\textup{u}})$ ($\textbf{\textup{u}} \in [\textbf{\textup{u}}_0,\textbf{u}_\gamma)$) is non-displaceable by showing that a Lagrangian Floer cohomology (with a certain bulk-deformation parameter and a weak bounding cochain) is non-zero. Since the non-toric Lagrangian submanifold $\Phi_\lambda^{-1}(\textbf{u}_\gamma)$ is realized as the ``limit" of non-displaceable Lagrangian tori, we deduce that the fiber $\Phi_\lambda^{-1}(\textbf{u}_\gamma)$ is also non-displaceable, see Proposition~\ref{closednessofnondisp} for the precise statement.

We describe line segments over which the fibers are non-displaceable in the GC polytope $\Delta_\lambda$ explicitly. If the symplectic form $\omega_\lambda$ is monotone, the center of the polytope $\Delta_\lambda$ is expressed as follows.
Recall that the form $\omega_\lambda$ is monotone if and only if $\lambda_n - \lambda_{n-1} = \lambda_{n-1} - \lambda_{n-2} = \cdots = \lambda_2 - \lambda_1$
by Proposition \ref{proposition_monotone_lambda}. By scaling $\omega_\lambda$ if necessary, we may assume that
\begin{equation}\label{givensequencemonotone}
	\lambda = \left\{ \lambda_{i} := n - 2i + 1 \,\colon\, i = 1, \cdots, n \right\},
\end{equation}
which is the case where $[\omega_\lambda] = c_1(T\mathcal{O}_\lambda)$. The polytope $\Delta_\lambda$ is of dimension $\frac{n(n-1)}{2}$. It turns out that $\Delta_\lambda$ is a reflexive
\footnote{A convex lattice polytope $\mathcal{P}$ is called {\em reflexive} if its dual polytope $\mathcal{P}^*$ is also a lattice polytope.}
polytope, see \cite[Corollary 2.2.4]{BCKV} or \cite[Lemma 3.12]{NNU}.
One well-known fact on a reflexive polytope is that there exists a unique lattice point in its interior, that is exactly the center of the polytope.

We start from the simplest case where the co-adjoint orbit $\mcal{O}_\lambda$ of a sequence $\lambda = \{ \lambda_ 1 > \lambda_2 > \lambda_3 \}$. In this case, Pabiniak investigated displaceable GC fibers. 

\begin{theorem}[\cite{Pa}]\label{displaceablefl3}
For $\lambda = \{ \lambda_ 1 > \lambda_2 > \lambda_3 \}$, let $\mcal{O}_\lambda$ be the co-adjoint orbit, which is a complete flag manifold $\mcal{F}(3)$ equipped with $\omega_\lambda$.
\begin{enumerate}
\item If $\omega_\lambda$ is not monotone, i.e. $(\lambda_1 - \lambda_2) \neq (\lambda_2 - \lambda_3)$, then all  Gelfand-Cetlin fibers but one over the center are displaceable.
\item If $\omega_\lambda$ is monotone i.e. $(\lambda_1 - \lambda_2) = (\lambda_2 - \lambda_3)$, then all  Gelfand-Cetlin fibers but the fibers over the line segment
\begin{equation}\label{equation_linesegmentforf3}
	I := \left\{ (u_{1,1}, u_{1,2}, u_{2,1}) = (0, a -t, -a +t) \in \R^3 ~\colon~ 0 \leq t \leq a \right\}
\end{equation}
are displaceable where $2a = \lambda_1 - \lambda_2$. Observe that the line segment $I$ is the red line in Figure~\ref{figure_GCmomnetf3}.
\end{enumerate}
\end{theorem}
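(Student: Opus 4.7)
My approach is via the probe technique due to McDuff, adapted to the Gelfand-Cetlin setting. On the open dense subset $\mcal{U}_\lambda := \Phi_\lambda^{-1}(\mathring{\Delta}_\lambda)$, Proposition~\ref{proposition_GS_smooth} guarantees that the three components $\Phi_\lambda^{1,1}, \Phi_\lambda^{1,2}, \Phi_\lambda^{2,1}$ are simultaneously smooth and their Hamiltonian vector fields span a locally free $T^3$-action. Consequently, $(\mcal{U}_\lambda, \omega_\lambda|_{\mcal{U}_\lambda}, \Phi_\lambda|_{\mcal{U}_\lambda})$ is symplectomorphic to the preimage of $\mathring{\Delta}_\lambda$ under a toric moment map on an open symplectic manifold, and every Lagrangian torus fiber over a point of $\mathring{\Delta}_\lambda$ is contained in $\mcal{U}_\lambda$.

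For $\textbf{\textup{u}} \in \mathring{\Delta}_\lambda$ and a primitive integer vector $v \in \Z^3$, I will call $v$ \emph{a probe direction at $\textbf{\textup{u}}$} if the maximal line segment $[\textbf{\textup{u}}', \textbf{\textup{u}}''] \subset \Delta_\lambda$ through $\textbf{\textup{u}}$ parallel to $v$ has $\textbf{\textup{u}}'$ on a facet $F$ with $v$ integrally transverse to $F$, and $|\textbf{\textup{u}} - \textbf{\textup{u}}'| < |\textbf{\textup{u}} - \textbf{\textup{u}}''|$. By McDuff's probe lemma applied to the toric model on $\mcal{U}_\lambda$, the existence of a probe direction at $\textbf{\textup{u}}$ produces a compactly supported Hamiltonian isotopy of $\mcal{U}_\lambda$ displacing $\Phi_\lambda^{-1}(\textbf{\textup{u}})$, which extends by zero to an isotopy of $\mcal{O}_\lambda$. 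The task is therefore to check when such a $v$ exists. The coordinate directions $\pm e_1, \pm e_2, \pm e_3$ are the natural first candidates, since they correspond to the Hamiltonian flows of the individual components; for instance, the probe along $+e_1$ from the facet $\{u_{1,1} = u_{2,1}\}$ succeeds if and only if $u_{1,1} < \tfrac{1}{2}(u_{1,2} + u_{2,1})$, and the five symmetric statements hold for the remaining coordinate directions. In the non-monotone case, a direct enumeration shows that the locus in $\mathring{\Delta}_\lambda$ where none of the six coordinate probes succeeds reduces to the single point $\textbf{\textup{u}}_0$; in the monotone case, the arithmetic relation $\lambda_1 - \lambda_2 = \lambda_2 - \lambda_3$ collapses several of these midpoint equations and the common failure locus becomes $I \cap \mathring{\Delta}_\lambda$.

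Boundary fibers not lying on $I$ form the remaining case. For $\mcal{F}(3)$, only the vertex $w_3$ carries a non-toric (indeed $S^3$) fiber by Example~\ref{example_lag_fiber_123_vertex}, and $w_3$ is an endpoint of $I$, so every other boundary fiber is a torus of dimension $k < 3$ by Corollary~\ref{corollary_L_fillable}. Such a fiber lies in the preimage of the corresponding face and still carries a local $T^k$-action coming from the subset of the components $\Phi_\lambda^{i,j}$ that remain smooth across that face. The strategy is to run the probe argument inside the face and then extend the resulting isotopy to $\mcal{O}_\lambda$. The chief technical obstacle is that the flows of $\Phi_\lambda^{1,2}$ and $\Phi_\lambda^{2,1}$ become singular across the walls $\{u_{1,1} = u_{1,2}\}$ and $\{u_{1,1} = u_{2,1}\}$ where eigenvalues of $x^{(2)}$ coincide with $x_{1,1}$, so a cut-off of the probe-generating Hamiltonian is required to prevent its support from reaching these singular walls. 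Verifying that a suitable cut-off still realizes the strict displacement predicted by the midpoint inequality, given only the local integrability of the system near the boundary strata, is the delicate step needed to conclude that every $\textbf{\textup{u}} \in \Delta_\lambda \setminus I$ has displaceable fiber.
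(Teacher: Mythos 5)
Your probe strategy is genuinely different from Pabiniak's argument, but there is a real gap that kills case~(1). Pabiniak's proof combines McDuff's probe technique with a separate discrete-symmetry argument using permutation matrices in $U(3)$; the latter is exactly what this paper reconstructs as Proposition~\ref{proposition_diagonal_all_equal} and applies to the vertex $w_3$ in Example~\ref{example_non_disp}(2). Your proposal relies exclusively on probes and therefore cannot touch the $S^3$-fiber over the vertex $w_3$. In the non-monotone case that fiber \emph{must} be displaced (only the centre survives in statement~(1)), yet a probe terminating at a vertex does not exist, and a non-torus fiber sitting over a $0$-dimensional face carries no local toric model to probe into. Your sentence ``$w_3$ is an endpoint of $I$, so every other boundary fiber is a torus'' silently imports the monotone hypothesis; in case~(1) the set $I$ is not even defined, and the $S^3$-fiber becomes an unaddressed object. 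Without an independent mechanism --- such as conjugation by the transposition $(1\,3)$, which moves diagonal entries and hence the fiber when $\lambda_1+\lambda_3\neq2\lambda_2$ --- the argument does not close.

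There are also two smaller issues worth flagging. First, McDuff's probe lemma requires the probe to start on a facet of the polytope, but the symplectomorphism to a toric model that you invoke is only available over $\mathring{\Delta}_\lambda$, which has no facets; one must justify that the displacing Hamiltonian can be constructed near a facet (where some component $\Phi^{i,j}_\lambda$ may fail Proposition~\ref{proposition_GS_smooth}) and still have compact support inside the smooth locus. Second, you mis-identify the singular walls: the eigenvalue functions $\Phi^{1,2}_\lambda$ and $\Phi^{2,1}_\lambda$ lose smoothness where the two eigenvalues of $x^{(2)}$ collide, i.e.\ on $\{u_{1,2}=\lambda_2=u_{2,1}\}$, not on $\{u_{1,1}=u_{1,2}\}$ or $\{u_{1,1}=u_{2,1}\}$ (the latter two concern $\Phi^{1,1}_\lambda$, which in fact is globally smooth since it is the $(1,1)$-entry). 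Finally, the ``direct enumeration'' identifying the failure locus of the six coordinate probes with $\{\mathbf{u}_0\}$ (resp.\ $I$) is asserted but not carried out; that computation is the quantitative heart of the argument and should be supplied.
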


Note that the line segment $I$ connects the center $(0, a, -a)$ of $\Delta_\lambda$ and the position $(0, 0, 0)$ of the Lagrangian $3$-sphere.
Combining with Theorem~\ref{theorem_NNU}, Theorem~\ref{displaceablefl3} finishes the classification of displaceable and non-displaceable fibers of the GC system $\Phi_\lambda$ for the non-monotone case.
The following theorem asserts every GC fiber in the family $\{\Phi_\lambda^{-1}(\textbf{\textup{u}}) ~|~ \textbf{\textup{u}} \in I\}$ is non-displaceable. Thus, together with Theorem~\ref{displaceablefl3}, our result provides the complete classification of displaceable and non-displaceable Lagrangian GC fibers when $\omega_\lambda$ is monotone.
We postpone its proof until Section~\ref{pfofmaincorforfullflag3}.

\begin{thmx}[Theorem \ref{theorem_maincompleteflag3}]\label{theoremC}
Let $\lambda = \{ \lambda_1 = 2 > \lambda_2 = 0 > \lambda_3 = -2 \}$.
Consider the co-adjoint orbit $\mathcal{O}_\lambda$, a complete flag manifold $\mcal{F}(3)$ equipped with the monotone Kirillov-Kostant-Souriau symplectic form $\omega_\lambda$, Then the Gelfand-Cetlin fiber over a point $\textbf{\textup{u}} \in \Delta_\lambda$ is non-displaceable if and only if $\textbf{\textup{u}} \in I$ where
\begin{equation}\label{linesegmentforf3}
I := \left\{ (u_{1,1}, u_{1,2}, u_{2,1}) = (0, 1 -t, -1 +t) \in \R^3 ~|~ 0 \leq t \leq 1 \right\}
\end{equation}
In particular, the Lagrangian 3-sphere $\Phi_\lambda^{-1}(0,0,0)$ is non-displaceable.
\end{thmx}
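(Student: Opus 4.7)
The strategy for Theorem C will be to prove non-displaceability of each torus fiber $\Phi_\lambda^{-1}(\mathbf{u})$ for $\mathbf{u}$ lying on the relative interior of the half-open segment $I$, and then to deduce non-displaceability of the non-toric fiber $\Phi_\lambda^{-1}(0,0,0) \cong S^3$ as a Hausdorff limit. Displaceability of fibers over $\Delta_\lambda \setminus I$ is already established by Pabiniak \cite{Pa}, so only the non-displaceability side is at stake. The overall plan is to deform Floer cohomology by ambient cycles in the sense of \cite{FOOO}, using Schubert cycles as bulk classes, and then pass to the limit along $I$.

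First, I will exploit the toric degeneration $\phi \colon (\mathcal{O}_\lambda,\omega_\lambda) \to (X_\lambda,\omega)$ from~\eqref{equation_toric_degeneration_diagram}. Under $\phi$, the torus fibers over $\mathring{\Delta}_\lambda$ correspond symplectically to toric moment fibers on $X_\lambda$, so a bulk-deformed Floer cohomology computation can be carried out on the toric side. The input bulk classes are the Schubert cycles of $\mathcal{F}(3)$, and the key ingredient is the theorem of Kogan-Miller \cite{K,KM} that these Schubert cycles degenerate flatly, along the degeneration of Nishinou-Nohara-Ueda, into unions of toric divisors associated to facets of $\Delta_\lambda$. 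Under the bijection $\Psi$ of Theorem~\ref{theorem_equiv_CG_LD}, each facet of $\Delta_\lambda$ corresponds to a codimension-one subgraph of $\Gamma_\lambda$ obtained by removing an edge, and hence to an explicit Schubert divisor on $\mathcal{F}(3)$. This gives a parameter family of bulk cycles $\mathfrak{b} \in H^{\ast}(\mathcal{O}_\lambda;\Lambda_0)$ whose images under degeneration are linear combinations of toric divisor classes of $X_\lambda$ with prescribed coefficients.

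Second, for each $\mathbf{u} = (0, 1-t, -1+t)$ with $t \in [0,1)$, I will write down the bulk-deformed potential function $\mathfrak{PO}^{\mathfrak{b}}(\mathbf{u}; y_1,y_2,y_3)$ of the torus fiber $\Phi_\lambda^{-1}(\mathbf{u})$ as a Laurent polynomial in coordinates $y_{1,1},y_{1,2},y_{2,1}$ dual to $u_{1,1},u_{1,2},u_{2,1}$. Following the formalism of \cite{FOOOToric1,FOOOToric2,NNU}, each monomial term is of the form $c_i(\mathfrak{b}) \, T^{\ell_i(\mathbf{u})} \, y^{v_i}$, where $\ell_i$ are the affine functions cutting out the facets of $\Delta_\lambda$, $v_i$ are the primitive integral normal vectors, and $c_i(\mathfrak{b})$ records the bulk contribution from the Schubert divisor corresponding to that facet. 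The task is then to choose $\mathfrak{b}=\mathfrak{b}(t)$ so that the critical point equation $y_j\,\partial \mathfrak{PO}^{\mathfrak{b}}/\partial y_j = 0$ admits a solution in the Novikov field with $T$-adic valuation zero. At $t=0$ and $\mathfrak{b}=0$ this is Theorem~\ref{theorem_NNU} of Nishinou-Nohara-Ueda; for $t>0$, one tunes the bulk coefficients $c_i(\mathfrak{b})$ along the facets containing the $S^3$-vertex $v_3$ so that the balancing which previously held only at the center persists at the shifted point $\mathbf{u}$. Since the potential is symmetric under the two facets meeting at $v_3$ (those supporting the equations $u_{1,2}=\lambda_2=0$ and $u_{2,1}=\lambda_2=0$), a one-parameter family of $\mathfrak{b}(t)$ should suffice; non-vanishing of the bulk-deformed Floer cohomology at such a critical point then implies non-displaceability of the fiber by \cite[Theorem 1.5]{FOOOToric1}.

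Finally, to promote non-displaceability from the open segment to the endpoint corresponding to the Lagrangian $S^3$, I will invoke the standard fact that the set of non-displaceable compact subsets of a closed symplectic manifold is closed in the Hausdorff metric (see \cite[Proposition~4.7]{FOOOToric1}). By Theorems~\ref{theorem_main} and~\ref{theorem_contraction}, as $t \nearrow 1$ the Lagrangian tori $\Phi_\lambda^{-1}(0,1-t,-1+t) \cong T^3$ Hausdorff-converge to $\Phi_\lambda^{-1}(0,0,0) \cong S^3$, and the limit is therefore non-displaceable. The main obstacle will be in the middle step: explicitly determining the bulk coefficients $c_i(\mathfrak{b}(t))$ as analytic functions of $t$ and showing that the critical point equations remain solvable in $\Lambda_0 \setminus \Lambda_+$ for \emph{every} $t \in [0,1)$ rather than generically. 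This requires a careful Kogan-Miller identification of each Schubert divisor's degeneration, a computation of the leading terms of Maslov index two disc contributions from both toric and ``smoothed'' contributions in the sense of the conifold transition of Batyrev-Ciocan-Fontanine-Kim-van Straten \cite{BCKV}, and an explicit verification that the discriminant of the resulting system does not vanish identically along $I$.
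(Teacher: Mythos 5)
Your proposal is correct and follows essentially the same strategy as the paper: use the Nishinou--Nohara--Ueda toric degeneration, deform Floer cohomology by Schubert cycles via the Kogan--Miller identification with unions of toric facet preimages, find critical points of the bulk-deformed potential function for every $t \in [0,1)$, and deduce non-displaceability of the $S^3$ endpoint by Hausdorff closedness (Proposition~\ref{closednessofnondisp}). The one place your execution diverges is the middle step: you propose to show the discriminant of the critical-point system does not vanish identically along $I$, whereas the paper sidesteps any discriminant analysis by an explicit ansatz---take a tentative bulk parameter with $\exp(\frak{b}^\textup{ver}_{2,1}) = 1 + T^{2t}$, set $y_{1,2}=y_{2,1}=1$, solve $(y_{1,1})^2 = 1+T^{2t}$ with $y_{1,1}\equiv -1 \bmod T^{>0}$, which kills $y_{1,1}\,\partial_{y_{1,1}}\frak{PO}^{\frak{b}}$ exactly, and then adjusts two further bulk coefficients $\frak{b}^\textup{ver}_{3,2}$, $\frak{b}^\textup{hor}_{2,3}$ (whose divisors do not meet the discs contributing to the $\partial_{y_{1,1}}$ equation) to kill the remaining two derivatives. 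This ``choose $\textbf{y}$ first, tune $\frak{b}$ afterward'' trick avoids any genericity or analytic-family discussion and gives an explicit critical point in $\Lambda_U$ for every $t$, which you should adopt in place of the discriminant argument.
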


\begin{figure}[H]
	\scalebox{1.7}{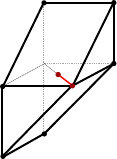}
	\caption{\label{figure_GCmomnetf3} The positions of non-displaceable GC Lagrangian fibers in $\mcal{F}(3)$.}	
\end{figure}

\begin{remark}
The fiber over the center $(0,1,-1)$ of $\Delta_\lambda$ is known to be non-displaceable by Theorem \ref{theorem_NNU}.
Also, Nohara and Ueda \cite{NU2} calculated a Floer cohomology of the Lagrangian 3-sphere $\Phi^{-1}_\lambda(0,0,0)$, which turns out to be zero over the Novikov field $\Lambda$.
Nevertheless, Theorem~\ref{theoremC} says that it is non-displaceable.
\end{remark}

Next, we deal with a general case for an arbitrary positive integer $n \geq 4$ where $\lambda$ is given as in~\eqref{givensequencemonotone}. In this case, the GC polytope $\Delta_\lambda$ is a reflexive polytope whose center is 
\[
\left( u _{i, j} := j - i ~\colon~ {i + j \leq n} \right) \in \Delta_\lambda \subset \R^{n(n-1)/2}.
\]
Consider the face $f_m$ of $\Delta_\lambda$ defined by
\[
\{ u_{i,j} = u_{i, j+1} ~\colon~  1 \leq i \leq m, 1 \leq j \leq m-1\} \cup \{ u_{i+1,j} = u_{i, j} ~\colon~  1 \leq i \leq m-1, 1 \leq j \leq m\}.
\]
for any integer $m$ satisfying $2 \leq m \leq \left\lfloor \frac{n}{2} \right\rfloor$.
Note that there are $\left( \left\lfloor \frac{n}{2} \right\rfloor - 1 \right)$ such faces in $\Delta_\lambda$.
For instance, we have two faces $f_2$ and $f_3$ for the case where $n=7$
(see Figure~\ref{figureF7}), and three faces $f_2$, $f_3$ and $f_4$
for $n=8$ (see Figure~\ref{figureF8}).
Furthermore, those faces can be filled by $L$-shaped blocks so that they are Lagrangian by Corollary~\ref{corollary_L_fillable}.
Regarding $f_m$ as a polytope, the center of $f_m$ admits a unique lattice point in its interior, whose components are
given by
\begin{equation}\label{endpointofIMT}
u_{i,j}  :=
\begin{cases}
0  \quad &\mbox{if \,} \max (i,j) \leq m \\
j - i \quad &\mbox{if \,} \max (i,j) > m.
\end{cases}
\end{equation}

Candidates for non-displaceable Lagrangian fibers are the fibers over the line segment $I_m \subset \Delta_\lambda$
connecting the center of $\Delta_\lambda$ and the center
of $f_m$ for each $m \geq 2$.
Explicitly,
the line segment $I_m$ is parameterized by $\{I_m(t) \in \Delta_\lambda ~\colon~ 0 \leq t \leq 1\}$ where
\begin{equation}\label{IMT}
I_m (t) :=
\begin{cases}
u_{i,j}(t) = (j - i) - (j - i) \, t \quad &\mbox{if \,} \max (i,j) \leq m \\
u_{i,j}(t) = (j - i) \quad &\mbox{if \,} \max (i,j) > m.
\end{cases}
\end{equation}
We denote by $L_m(t)$ the Lagrangian GC fiber over the point $I_m(t)$, that is $L_m(t) := \Phi_\lambda^{-1}(I_m(t))$.
Now, we state our second main theorem in this section. Again its proof will be
postponed to Section~\ref{section:decompositionofpote} and Section~\ref{solvabilityofthesplitleadingtermequ}.

\begin{thmx}[Theorem~\ref{theorem_completeflagmancotinuum}]\label{theoremD}
Let $\lambda = \{ \lambda_{i} := n - 2i + 1 \,|\, i = 1, \cdots, n \}$ be an $n$-tuple of real numbers for an arbitrary integer $n \geq 4$.
Consider the co-adjoint orbit $\mathcal{O}_\lambda$, a complete flag manifold $\mcal{F}(n)$ equipped with the monotone Kirillov-Kostant-Souriau symplectic form $\omega_\lambda$.
Then each Gelfand-Cetlin fiber $L_m(t)$ is non-displaceable Lagrangian for every $2 \leq m \leq \left\lfloor \frac{n}{2} \right\rfloor$. In particular, there exists a family of non-displaceable non-torus Lagrangian fibers
\[
	\left\{L_m(1) ~\colon~2 \leq m \leq \left\lfloor \frac{n}{2} \right\rfloor \right\}
\] 
of the Gelfand-Cetlin system $\Phi_\lambda$ where $L_m(1)$ is diffeomorphic to $U(m) \times T^{\frac{n(n-1)}{2} - m^2}$. 
\end{thmx}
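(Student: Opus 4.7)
\textbf{Proof proposal for Theorem~\ref{theorem_completeflagmancotinuum}.}

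The plan is to detect non-displaceability of each Lagrangian torus $L_m(t)$ with $t \in [0,1)$ by the non-vanishing of a bulk-deformed Lagrangian Floer cohomology, and then extract non-displaceability of the non-toric fiber $L_m(1)$ by taking a Hausdorff limit of Lagrangian tori along the segment $I_m$. First one checks (using the strict min-max inequalities of~\eqref{equation_GC-pattern}) that $I_m(t)$ lies in the interior $\mathring{\Delta}_\lambda$ for $t\in [0,1)$, so that $L_m(t)$ is a Lagrangian torus by Corollary~\ref{corollary_Lagrangian_tori}, while $L_m(1)$ sits in the relative interior of the Lagrangian face $f_m$ and is diffeomorphic to $U(m) \times T^{\frac{n(n-1)}{2}-m^2}$ by Theorem~\ref{theorem_contraction}. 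The bridge between $L_m(t)$ and a genuinely toric problem is the Nishinou--Nohara--Ueda continuous map $\phi \colon \mathcal{O}_\lambda \to X_\lambda$ from~\eqref{equation_toric_degeneration_diagram}, which restricts to a symplectomorphism $\Phi_\lambda^{-1}(\mathring{\Delta}_\lambda) \xrightarrow{\sim} \Phi^{-1}(\mathring{\Delta}_\lambda)$; thus $L_m(t)$ for $t \in [0,1)$ is symplectomorphic to the toric fiber $T_\lambda(I_m(t)) := \Phi^{-1}(I_m(t))$ inside $X_\lambda$.

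Next, I would run the machinery of Fukaya--Oh--Ohta--Ono's bulk-deformed Floer theory \cite{FOOO} on the singular toric variety $X_\lambda$, with bulk class taken in the span of the fundamental classes of Schubert cycles of $\mathcal{O}_\lambda$. The crucial algebro-geometric input is Kogan--Miller's theorem \cite{K,KM}: under the degeneration~\eqref{equation_KoM}, every Schubert cycle degenerates into a union (with multiplicities one) of toric subvarieties of $X_\lambda$ associated to unions of facets of $\Delta_\lambda$; passing through the gradient-Hamiltonian flow $\phi$ this allows us to realize each Schubert bulk class as an explicit $\R$-linear combination of toric divisors on $X_\lambda$. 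Consequently, for any Schubert bulk parameter $\mathfrak{b}$, the bulk-deformed toric superpotential of FOOO takes the shape
\begin{equation*}
\mathfrak{PO}^{\mathfrak{b}}(\mathbf{y}; \mathbf{u}) = \sum_{\rho} \exp(c_\rho(\mathfrak{b}))\, T^{\ell_\rho(\mathbf{u})}\, \mathbf{y}^{\mathbf{v}_\rho},
\end{equation*}
where $\rho$ runs over the facets of $\Delta_\lambda$, $\mathbf{v}_\rho$ is the primitive inward normal, $\ell_\rho$ is the affine support function of the facet, and $c_\rho(\mathfrak{b})$ is an explicit linear functional in the Schubert coefficients determined by the toric degenerations above. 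The non-displaceability criterion of FOOO reduces to solving, for each $\mathbf{u} = I_m(t)$, the bulk-deformed critical point system $\mathbf{y}^i \partial_{\mathbf{y}^i} \mathfrak{PO}^{\mathfrak{b}}(\mathbf{y};\mathbf{u}) = 0$ for some $\mathfrak{b}$ and $\mathbf{y} \in (\Lambda_0^\ast)^{\dim \Delta_\lambda}$ with prescribed valuations, after reducing to a leading-term equation in the residue field.

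The heart of the proof is the construction of the bulk parameter $\mathfrak{b} = \mathfrak{b}_m(t)$. Write each facet $\rho$ of $\Delta_\lambda$ as either a ``vertical'' facet $\{u_{i,j} = u_{i,j+1}\}$ or a ``horizontal'' facet $\{u_{i,j} = u_{i+1,j}\}$; on the face $f_m$ exactly the facets with $\max(i,j)\le m$ become equalities, and the leading-term equation splits along the ladder-diagram decomposition of Section~\ref{ssecWShapedBlocks} into an \emph{inner} $m\times m$ block of facets (which sees $I_m(t)$ nontrivially through the factor $(1-t)$) and an \emph{outer} block (which behaves as in the fiber over the centre of the GC polytope and is already controlled by Theorem~\ref{theorem_NNU}). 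One then chooses Schubert cycles whose Kogan--Miller toric degenerations are supported precisely on either the inner-block facets or on facet-pairs straddling the $m$-th boundary; balancing the corresponding $c_\rho(\mathfrak{b})$ against the $t$-dependent valuation shifts, the leading-term equation becomes the system that characterises the centre of a smaller reflexive polytope of the same GC type as $\mcal{F}(m)$, for which a solution exists by an induction based on the monotone case of Theorem~\ref{theorem_NNU}. Producing and verifying solvability of this split leading-term equation, carried out in Sections~\ref{section:decompositionofpote}--\ref{solvabilityofthesplitleadingtermequ}, is the main technical obstacle; once it is done, FOOO yields a weak bounding cochain $b$ such that $HF((L_m(t),b),(L_m(t),b);\mathfrak{b}_m(t)) \neq 0$, hence $L_m(t)$ is non-displaceable for each $t\in[0,1)$. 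Finally, since the family $\{L_m(t)\}_{t\in[0,1)}$ is a continuous family of Lagrangian tori whose Hausdorff limit as $t \to 1$ is exactly $L_m(1)$ (by Theorem~\ref{theoremA} and the iterated bundle description), the standard closedness principle for non-displaceability (Proposition~\ref{closednessofnondisp}) forces $L_m(1)$ to be non-displaceable as well.
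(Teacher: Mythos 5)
Your proposal follows the same route as the paper: identify $L_m(t)$, $t<1$, with a toric fiber via the Nishinou--Nohara--Ueda degeneration, deform Floer theory by Schubert bulk classes computed via Kogan--Miller, reduce non-vanishing of deformed Floer cohomology to solving a leading-term system (Theorem~\ref{splitleadingtermequationimpliesnonzero} and Theorem~\ref{criticalpointimpliesnondisplaceability}), and then obtain non-displaceability of $L_m(1)$ by the closedness principle (Proposition~\ref{closednessofnondisp}) together with the diffeomorphism type from Theorem~\ref{theorem_contraction}. You correctly flag that the paper bundles the hard solvability step into Sections~\ref{section:decompositionofpote}--\ref{solvabilityofthesplitleadingtermequ}.

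However, your description of how that solvability step actually works is inaccurate in a few respects, and as written would mislead a reader about the argument's structure. First, the bulk-deformation parameter $\frak{b}$ in the paper is \emph{not} supported on Schubert cycles hitting ``inner-block facets'' or ``facet-pairs straddling the $m$-th boundary''; as in \eqref{bulkparak2}, only $\frak{b}^{\textup{hor}}_{i,i+1}$ and $\frak{b}^{\textup{ver}}_{j+1,j}$ with $i,j\geq k=\lceil n/2\rceil$ are turned on, so inside $B(m)$ the potential is entirely undeformed. This is forced: the facets do not individually represent cycles in the GC setting, and the only degree-two cycles available are the column/row unions from Kogan--Miller, so the coefficients $c_\rho$ in your formula cannot be chosen facet-by-facet. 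Second, the ``split'' is not that the leading-term equation decomposes into an inner and an outer block; rather, the \emph{split leading term equation} of Definition~\ref{splittingleadingtermequa} is posed entirely on $\Gamma(n)\setminus B(m)\cup\{(m,m)\}$ plus the matching conditions $\pa_m(l)$, while the inner block is handled separately by the explicit symmetric closed-form solution of Lemma~\ref{symmetricsol} and Proposition~\ref{propsymmetricsol} (not by an induction on Theorem~\ref{theorem_NNU}). Third, the outer system is not ``already controlled by Theorem~\ref{theorem_NNU}'': its solvability is the new content, established by constructing and perturbing ``generic seeds'' (Proposition~\ref{Existenceofgenericseeds}), which has no analogue in the undeformed NNU argument for the center. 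With these corrections, the rest of your outline matches the paper's proof.
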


\begin{example} 
\begin{enumerate}
\item
A monotone complete flag manifold $\mcal{F}(7)$ admits (at least) two line segments $I_2$ and $I_3$ in the GC polytope over which the fibers are non-displaceable, see Figure~\ref{figureF7}. Particularly, it has non-displaceable Lagrangian submanifolds diffeomorphic to $U(2) \times T^{17}$ and $U(3) \times T^{12}$. 
\begin{figure}[ht]
	\scalebox{1}{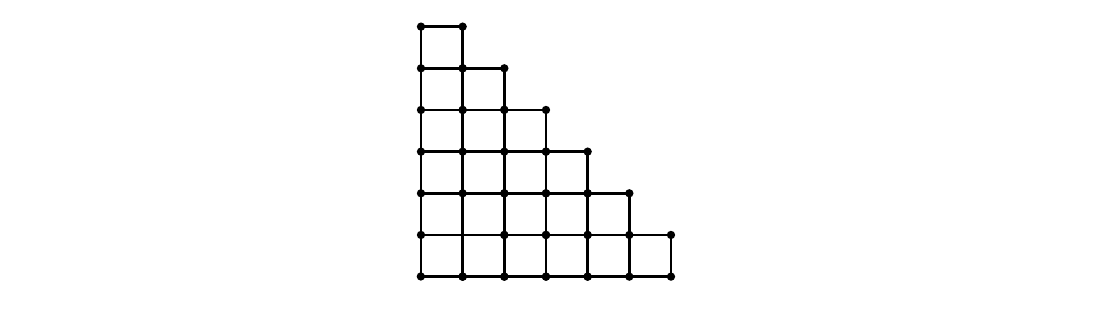}
	\caption{\label{figureF7} Positions of non-displaceable Lagrangian GC fibers in $\mathcal{F}(7)$.}	
\end{figure}
\item
A monotone complete flag manifold $\mcal{F}(8)$ admits (at least) three line segments $I_2$, $I_3$ and $I_4$ in the GC polytope over which the fibers are non-displaceable, see Figure~\ref{figureF8}. Particularly, it has non-displaceable Lagrangian submanifolds diffeomorphic to $U(2) \times T^{24}$, $U(3) \times T^{19}$ and $U(4) \times T^{12}$. 
\begin{figure}[ht]
	\scalebox{1}{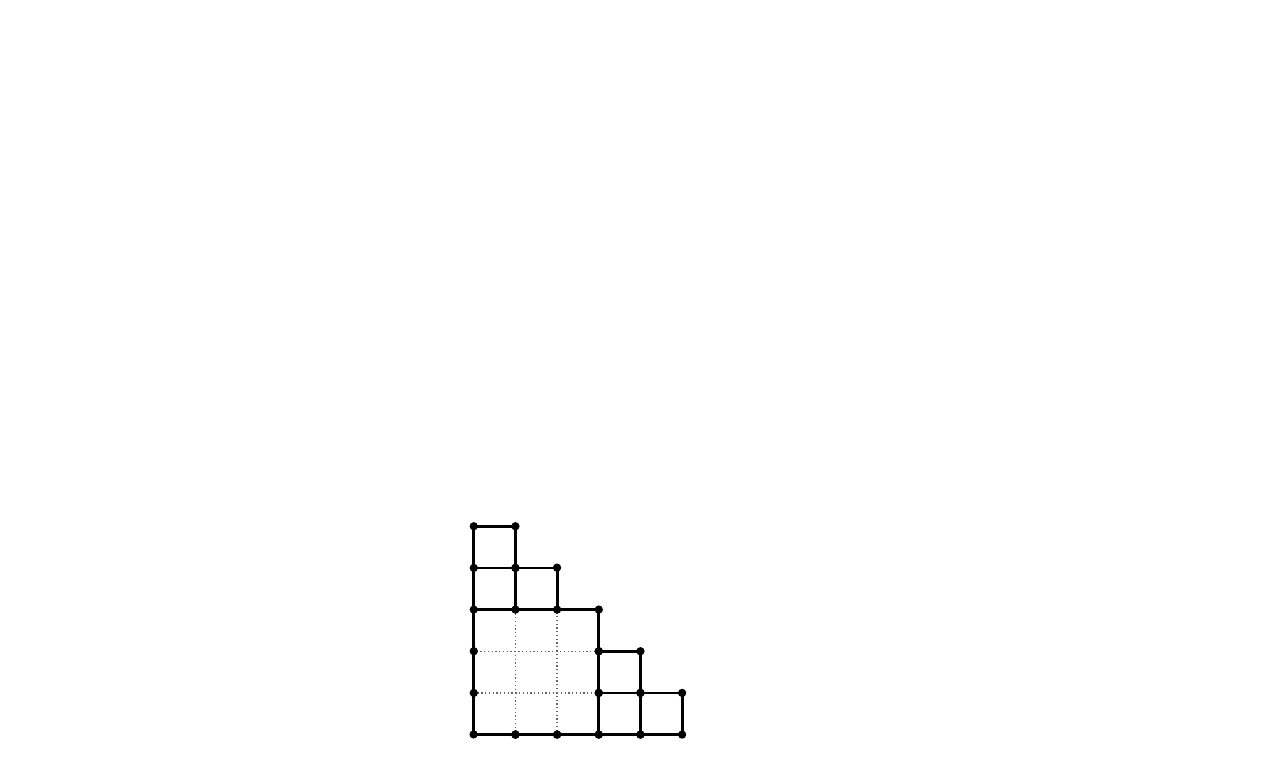}
	\caption{\label{figureF8} Positions of non-displaceable Lagrangian GC fibers in $\mathcal{F}(8)$.}	
\end{figure}
\end{enumerate}
\end{example}

\begin{remark}
	The third named author with Fukaya, Ohta, and Ono \cite{FOOOToric2} found a continuum of non-displaceable toric fibers on some compact toric manifolds including a non-monotone toric blowup of $\C P^2$ at two points, see also Woodward \cite{W}. Using the degeneration models, they also produced a continuum of non-displaceable Lagrangian tori on $\C P^1 \times \C P^1$ and the cubic surface respectively in \cite{FOOOS2S2} and \cite{FOOOsp}. Vianna \cite{V} also showed a continuum of non-displaceable tori in $(\C P^1)^{2n}$. In the case of toric orbifolds, dealing with more restrictive classes of Hamiltonian isotopies, non-displaceable toric fibers usually exist in abundance, see Woodward \cite{W}, Wilson-Woodward \cite{WW}, and Cho-Poddar \cite{CP}.
\end{remark}

Before dealing with non-displaceable GC fibers, we study displaceability of fibers in GC systems in Section \ref{secDisplaceabilityOfLagrangianFibers}. In the toric case, McDuff \cite{McD} and Abreu-Borman-McDuff \cite{ABM} developed the method of {\em probes} to detect displaceable toric fibers. The probe method can be also applied to Lagrangian \emph{torus} fibers in GC systems because they are related to the toric fibers via a (local) symplectomorphism in~\eqref{equation_toric_degeneration_diagram}, however, it is not applicable to non-torus fibers. In the case where a co-adjoint orbit is $\mcal{F}(3)$, Pabiniak \cite{Pa} investigated displaceable fibers. 

We develop several numerical and combinatorial criteria for detecting displaceable GC fibers which can be applied to \emph{both} torus fibers and non-torus fibers in GC systems.
Even though our criteria are not exhaustive to classify all displaceable fibers, it is enough to detect almost all displaceable fibers in that the non-displaceable fibers should be located over a measure zero set of the polytope $\Delta_\lambda$.
In particular, we are able to displace all non-torus fibers in some cases. 

\begin{thmx}[Corollary \ref{corollary_gr_2x}]\label{theoremE}
	Let $p$ be a prime number.
	Then every non-torus Lagrangian Gelfand-Cetlin fiber of the complex Grassmannian $\mathrm{Gr}(2,p)$ is displaceable.
\end{thmx}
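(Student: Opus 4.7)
The plan is to combine the classification of Lagrangian faces from Corollary~\ref{corollary_L_fillable} with the combinatorial displaceability criteria developed in Section~\ref{secDisplaceabilityOfLagrangianFibers}. Fix $\lambda = (t, t, 0, \ldots, 0) \in \R^p$ with $t > 0$, so that $(\mcal{O}_\lambda, \omega_\lambda) \cong \mathrm{Gr}(2, p)$ and the nontrivial part of the ladder diagram $\Gamma_\lambda$ is the $2 \times (p-2)$ rectangle, exactly as in Example~\ref{example_gr26} for the case $p = 6$.

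First I would enumerate the non-torus Lagrangian faces of $\Gamma_\lambda$. Because the rectangle has only two rows, any rigid $L_k$-block with $k \geq 2$ appearing in a Lagrangian face must be an $L_2$-block, and by Lemma~\ref{lemma_L_disjoint} distinct rigid $L$-blocks are disjoint. Hence each non-torus Lagrangian face $\gamma$ is uniquely encoded by a nonempty set $J = \{j_1 < \cdots < j_m\} \subset \{1, \ldots, p-3\}$ satisfying $j_{\ell+1} - j_\ell \geq 2$, recording the columns of the rigid $L_2$-blocks, with the remaining boxes tiled by rigid $L_1$-blocks. From the equality relations imposed on the GC pattern~\eqref{equation_GC-pattern} by the rigid $L_2$-blocks and the constant values forced by the rigid $L_1$-blocks, one reads off the coordinates of the center $\textbf{\textup{u}}_\gamma$ of $f_\gamma = \Psi(\gamma)$ as explicit rational combinations of $t$, $p$, and the $j_\ell$'s.

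Next I would apply the displaceability criteria of Section~\ref{secDisplaceabilityOfLagrangianFibers}, which extend McDuff's probe method to both torus and non-torus fibers of a GC system. Each criterion translates into a linear-arithmetic condition relating $p$ with the column indices $j_\ell$; the task is to verify, for every admissible $J$, that the criterion is satisfied at $\textbf{\textup{u}}_\gamma$. The hard part is precisely this last verification, since primality of $p$ must be used essentially: the case $p = 4$ is composite, and the corresponding $U(2)$-fiber of $\mathrm{Gr}(2,4)$ is non-displaceable by Theorem~\ref{theorem_NU}. I expect the obstruction to the criterion to reduce to a congruence modulo $p$ on an integer combination of the $j_\ell$'s and the rectangle dimension $p-2$; because $p$ is prime and all $j_\ell$ lie in $\{1, \ldots, p-3\}$, none of the relevant integer quantities can be a nonzero multiple of $p$, so the congruence cannot hold nontrivially and the displaceability criterion applies to every such $\gamma$. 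Combined with the classification above, this yields displaceability of every non-torus Lagrangian GC fiber of $\mathrm{Gr}(2, p)$.
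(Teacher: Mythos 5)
Your overall plan---classify the proper Lagrangian faces of $\Gamma_\lambda$ combinatorially, then kill them using the displaceability criteria of Section~\ref{secDisplaceabilityOfLagrangianFibers} together with primality---is indeed the paper's strategy, and your enumeration of the faces (as sets of $L_2$-block columns with spacing $\geq 2$, matching the $\gamma_{i_1,\dots,i_r}$ of the paper up to an index shift) is correct. However, the proposal has a genuine gap exactly where you flag ``the hard part'': you never actually derive the arithmetic condition that is satisfied when the criterion \emph{fails}, you only conjecture that it ``reduces to a congruence modulo $p$.'' That conjecture is not quite right, and without the precise statement your use of primality is a hope rather than a proof.

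The relevant tool is \emph{not} an extension of McDuff's probe method---the paper is explicit that probes do not apply to non-torus fibers---but the $\mathfrak{S}_n$-symmetry criterion of Proposition~\ref{proposition_gr_symmetry_case_more_two}. That proposition analyzes a face $\gamma_{i_1,\dots,i_r}$ (with $k\le i_1<\dots<i_r\le n-k$, $i_{s+1}-i_s\ge k$, here $k=2$, $n=p$) by cutting an element $x=XX^*$ into blocks of rows $i_s+1,\dots,i_{s+1}$ and swapping two such blocks by a permutation in $U(n)$. The conclusion is that the swap displaces the fiber \emph{unless} $i_s = s\cdot i_1$ for all $s=1,\dots,r+1$ with $i_{r+1}:=n$; in particular $i_1\mid n$. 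This is a \emph{divisibility} constraint, not a congruence modulo $p$ on the $j_\ell$'s. Once you have it, the finish is immediate: $2\le i_1\le p-2$ forces $i_1$ to be a proper divisor of $p$ larger than $1$, which cannot happen when $p$ is prime. (Note this also explains your sanity check: for $p=4$, $i_1=2$ divides $4$, matching the non-displaceable $U(2)$-fiber.) So to complete the proof you should replace the heuristic ``congruence'' step by an actual derivation of the condition $i_s=s\cdot i_1$ --- which requires Lemma~\ref{lemma_same_eigen_value} and Lemma~\ref{lemma_k_n_matrix} on spectra of $XX^*$ versus $X^*X$ --- and then read off the divisibility $i_1\mid p$.
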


The rest of the paper is organized as follows. In Section~\ref{secDisplaceabilityOfLagrangianFibers}, we introduce several numerical criteria testing displaceability of GC fibers and the proof of Theorem~\ref{theoremE} will be discussed. Section~\ref{secReviewOfLagrangianFloerTheory} is devoted to review Lagrangian Floer theory and to explain Theorem~\ref{theoremC}. To prove Theorem~\ref{theoremD}, the split leading term equation is introduced and its solvability is discussed in Section~\ref{secDecompositionsOfTheGradientOfPotentialFunction} and~\ref{secSolvabilityOfSplitLeadingTermEquation}. Section~\ref{sec_bulkdefbyschucy} focuses on calculation of the potential function deformed Schubert cycles.

\section{Criteria for displaceablility of Gelfand-Cetlin fibers}
\label{secDisplaceabilityOfLagrangianFibers}

For a given moment polytope,
McDuff \cite{McD} and Abreu-Borman-McDuff \cite{ABM} developed the techniques (using {\em probes})
to detect positions of displaceable toric moment fibers by using combinatorial data of the polytope.
In contrast to the toric cases, any partial flag manifold always possesses a non-torus Lagrangian GC fiber
unless it is diffeomorphic to a projective space, see Corollary \ref{corollary_unless_projective}.
The probe method also can be applied to some extent to the case of GC systems, see \cite{Pa} for example.
However, it works only for \emph{torus} fibers over interior points of a GC polytope.
In this section, we provide several numerical and combinatorial criteria for displaceability of both torus and non-torus Lagrangian
fibers of GC systems. (See Proposition~\ref{proposition_diagonal_all_equal},~\ref{proposition_gr_symmetry_case_one},~\ref{proposition_gr_symmetry_case_more_two}.)

\begin{definition}\label{def_nondisanddis}
	Let $(M,\omega)$ be a symplectic manifold and let $Y$ be a subset.
	We say that $Y$ is {\em displaceable} if there exists a Hamiltonian diffeomorphism $\phi \in \mathrm{Ham}(M,\omega)$ such that
	\[
		\phi(Y) \cap \bar{Y} = \emptyset.
	\]
	If there is no such a diffeomorphism, then we say that $Y$ is {\em non-displaceable}.
\end{definition}

\begin{definition}
	Let $\lambda$ be a sequence of non-increasing real numbers given in \eqref{lambdaidef}.
	We say that a face $\gamma$ is {\em displaceable} if $\Phi_\lambda^{-1}(\textbf{\textup{u}})$ is displaceable for every $\textbf{\textup{u}} \in \mathring{\gamma}$.
\end{definition}

\subsection{Testing by diagonal entries}
\label{ssecTestingByDiagonalEntries}~

\vspace{0.2cm}

For a sequence $\lambda = \{\lambda_1, \cdots, \lambda_n\}$ in~\eqref{lambdaidef},
let
$
	\Phi_\lambda \colon \mathcal{O}_\lambda \rightarrow \Delta_\lambda \subset \R^{\frac{n(n-1)}{2}}
$
be the GC system.
For each lattice point $(i,j) \in (\Z_{\geq 1})^2$ with $2 \leq i+ j \leq n$,
we denote by $\Phi_\lambda^{i,j} \colon \mathcal{O}_\lambda \rightarrow \R$ the $(i,j)$-th component of $\Phi_\lambda$.
We denote the coordinate system of $\R^{\frac{n(n-1)}{2}}$ by $(u_{i,j})_{2 \leq i+j \leq n}$ as described in Figure \ref{figure_GC_to_ladder}.

Note that the symmetric group $\mathfrak{S}_n$ can be regarded as a subgroup of $U(n) \subset \mathrm{Ham}(\mathcal{O}_\lambda, \omega_\lambda)$.
Namely, each element $w \in \mathfrak{S}_n$ is represented by the $(n \times n)$ elementary matrix, which
we still denote by $w$, whose $(i, w(i))$-th entry is
$1$ for each $i=1,\cdots, n$ and other entries are zero.
Then each $x = (x_{i,j})_{1 \leq i,j \leq n}\in \mathcal{O}_\lambda \subset  \mathcal{H}_n$ and $w \in \mathfrak{S}_n$ satisfies  
\[
	(w \cdot x)_{ij} = (w x w^{-1})_{ij}= x_{w(i), w(j)}
\]
for every $1 \leq i,j \leq n$.
The following lemma says the diagonal entries of any element $x \in \Phi^{-1}_\lambda(\textbf{\textup{u}})$ are completely determined by $\textbf{\textup{u}}$.

\begin{lemma}\label{lemma_diagonal_elt}
	Let $\textbf{\textup{u}} = (u_{i,j})_{i,j \geq 1} \in \Delta_\lambda$.
	Then for any $x \in \Phi_\lambda^{-1}(\textbf{\textup{u}})$, we have $x_{1,1} = u_{1,1}$ and
	\[
		x_{k, k} = \sum_{i+j = k+1} u_{i,j} - \sum_{i+j = k} u_{i,j}.
	\]
	for $ 1 < k \leq n$ where
	\[
		u_{i,j} := \lambda_i \quad \textup{for} \quad i+j = n+1.
	\]
\end{lemma}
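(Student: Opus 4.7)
The plan is to exploit the observation that the components $\Phi_\lambda^{i,j}$ record eigenvalues of leading principal minors, so their sum along a given anti-diagonal is nothing but a trace. Concretely, recalling from Definition~\ref{definition_GC_system} that $\Phi_\lambda^{i,j}(x)$ is the $i$-th largest eigenvalue of the Hermitian $(k \times k)$ matrix $x^{(k)}$ with $k = i+j-1$, I would first note that
\[
\sum_{i+j = k+1} u_{i,j} \;=\; \sum_{i=1}^{k} \Phi_\lambda^{i, k+1-i}(x) \;=\; \operatorname{tr}\bigl(x^{(k)}\bigr),
\]
for every $x \in \Phi_\lambda^{-1}(\textbf{u})$, because the trace of a Hermitian matrix equals the sum of its eigenvalues (counted with multiplicity). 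The extension of the indexing convention $u_{i,j} := \lambda_i$ for $i+j = n+1$ is exactly the content of the top row of the GC pattern~\eqref{equation_GC-pattern}, so the identity above is valid for all $1 \leq k \leq n$.

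Next I would rewrite the right-hand trace in terms of diagonal entries. Since $x^{(k)}$ is the $(k\times k)$ leading principal submatrix of $x$, its trace is simply $\sum_{\ell=1}^{k} x_{\ell,\ell}$. Thus
\[
\sum_{\ell=1}^{k} x_{\ell,\ell} \;=\; \sum_{i+j = k+1} u_{i,j}
\qquad \text{for all } 1 \leq k \leq n.
\]

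The case $k=1$ immediately yields $x_{1,1} = u_{1,1}$, since the only pair $(i,j) \in (\Z_{\geq 1})^2$ with $i+j = 2$ is $(1,1)$. For $1 < k \leq n$, subtracting the identity at level $k-1$ from that at level $k$ gives
\[
x_{k,k} \;=\; \sum_{\ell=1}^{k} x_{\ell,\ell} - \sum_{\ell=1}^{k-1} x_{\ell,\ell} \;=\; \sum_{i+j = k+1} u_{i,j} \;-\; \sum_{i+j = k} u_{i,j},
\]
which is precisely the asserted formula. There is no essential obstacle here; the only subtlety worth flagging is the boundary convention $u_{i,j} = \lambda_i$ for $i+j = n+1$, which ensures the case $k = n$ is treated uniformly with the others.
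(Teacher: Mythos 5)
Your proof is correct and takes essentially the same approach as the paper: both rely on the identity $\operatorname{tr}(x^{(k)}) = x_{1,1} + \cdots + x_{k,k} = \sum_{i+j=k+1} u_{i,j}$ (sum of eigenvalues equals trace) and obtain the formula by subtracting consecutive levels. You have simply spelled out the steps the paper states more briefly.
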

\begin{proof}
	It is straightforward from the fact that
	\[
		\mathrm{tr}(x^{(k)}) = x_{1,1} + \cdots + x_{k,k} = \sum_{i+j = k+1} u_{i,j}
	\]
	for every $k=1,\cdots,n$ where $x^{(k)}$ denotes the $(k \times k)$ leading principal minor matrix of $x$.
\end{proof}

For the sake of simplicity,
let $d_1(\textbf{\textup{u}}) := u_{1,1}$ and
\begin{equation}\label{dku}
d_k(\textbf{\textup{u}}) :=  \sum_{i+j = k+1} u_{i,j} - \sum_{i+j = k} u_{i,j}
\end{equation}
for $\textbf{\textup{u}} \in \Delta_\lambda$ and $1 < k \leq n$ so that $x_{k,k} = d_k(\textbf{\textup{u}})$ for every $x \in \Phi^{-1}_\lambda(\textbf{\textup{u}})$ by Lemma \ref{lemma_diagonal_elt}. 
We then state a numerical criterion for displaceable fibers.

\begin{proposition}\label{proposition_diagonal_all_equal}
	Let $\textbf{\textup{u}}$ be a point in the Gelfand-Cetlin polytope $\Delta_\lambda$.
	If the fiber $\Phi_\lambda^{-1}(\textbf{\textup{u}})$ is non-displaceable, then 
$$
d_1(\textbf{\textup{u}}) = \cdots = d_n(\textbf{\textup{u}}).
$$
\end{proposition}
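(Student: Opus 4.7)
The plan is to prove the contrapositive: if $d_j(\textbf{\textup{u}}) \neq d_k(\textbf{\textup{u}})$ for some indices $j \neq k$, then $\Phi_\lambda^{-1}(\textbf{\textup{u}})$ is displaceable. The displacing Hamiltonian diffeomorphism will be constructed from the permutation action of $\mathfrak{S}_n$ on $\mathcal{O}_\lambda$ already introduced in Section~\ref{ssecTestingByDiagonalEntries}, specifically the transposition $\tau = (j\,k) \in \mathfrak{S}_n \subset U(n)$.

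First, I would verify that every element of $U(n)$, and in particular every permutation matrix, acts on $\mathcal{O}_\lambda$ as a Hamiltonian diffeomorphism of $(\mathcal{O}_\lambda, \omega_\lambda)$. This is standard: the conjugation action of $U(n)$ on $\mathcal{O}_\lambda$ is Hamiltonian with moment map the inclusion $\mathcal{O}_\lambda \hookrightarrow \mathcal{H}_n \cong \mathfrak{u}(n)^*$, and since $U(n)$ is connected, every $g \in U(n)$ can be written as $\exp(\xi)$ for some $\xi \in \mathfrak{u}(n)$; the corresponding one-parameter subgroup $t \mapsto \exp(t\xi)$ acts as the Hamiltonian flow generated by the function $x \mapsto \mathrm{tr}(ix\,\xi)$, so its time-$1$ map $g$ belongs to $\mathrm{Ham}(\mathcal{O}_\lambda, \omega_\lambda)$.

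Next, I would compute how $\tau$ acts on diagonal entries. For any $x = (x_{pq}) \in \mathcal{O}_\lambda$, the formula $(wxw^{-1})_{pq} = x_{w(p), w(q)}$ recorded in Section~\ref{ssecTestingByDiagonalEntries} specializes on the diagonal to $(wxw^{-1})_{pp} = x_{w(p), w(p)}$. Combined with Lemma~\ref{lemma_diagonal_elt}, which asserts that \emph{every} $x \in \Phi_\lambda^{-1}(\textbf{\textup{u}})$ satisfies $x_{pp} = d_p(\textbf{\textup{u}})$ for all $p$, this shows that every $y \in \tau \cdot \Phi_\lambda^{-1}(\textbf{\textup{u}})$ satisfies $y_{jj} = d_k(\textbf{\textup{u}})$ and $y_{kk} = d_j(\textbf{\textup{u}})$, while every $x \in \Phi_\lambda^{-1}(\textbf{\textup{u}})$ satisfies $x_{jj} = d_j(\textbf{\textup{u}}) \neq d_k(\textbf{\textup{u}})$. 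Hence $\tau \cdot \Phi_\lambda^{-1}(\textbf{\textup{u}})$ and $\Phi_\lambda^{-1}(\textbf{\textup{u}})$ are separated by the continuous function $x \mapsto x_{jj}$ and so are disjoint. Taking closures preserves the disjointness since these level sets are closed, which gives the displaceability of $\Phi_\lambda^{-1}(\textbf{\textup{u}})$ by the Hamiltonian diffeomorphism $\tau$.

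The argument is essentially a bookkeeping exercise once one knows that $U(n)$ acts by Hamiltonian diffeomorphisms; the only conceptual point that needs care is the last step, namely that the fiber $\Phi_\lambda^{-1}(\textbf{\textup{u}})$, which in general is a singular level set of a continuous (not everywhere smooth) integrable system, is nevertheless contained in a genuine level set of the smooth moment map $x \mapsto \mathrm{diag}(x)$ of the maximal torus $T^n \subset U(n)$. I expect no real obstacle here, as this inclusion is exactly the content of Lemma~\ref{lemma_diagonal_elt}.
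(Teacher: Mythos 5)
Your proof is correct and follows essentially the same route as the paper's: use a transposition in $\mathfrak{S}_n \subset U(n)$ to swap two diagonal entries, and observe via Lemma~\ref{lemma_diagonal_elt} that the image of the fiber then sits in a different level set of a diagonal entry, hence is disjoint from the original. You have merely added explicit verifications (that permutation matrices are Hamiltonian diffeomorphisms, and that the level set is closed so closures stay disjoint) that the paper leaves implicit, while the paper always takes the transposition $(1,k)$ rather than a general $(j,k)$ — these are cosmetic differences, not conceptual ones.
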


\begin{proof}
Note that $\Phi_\lambda^{1,1}(x) = x_{1,1}$ and
$\Phi_\lambda^{1,1}(w \cdot x) = x_{w(1),w(1)}$
for $w \in \mathfrak{S}_n$.
If $d_1(\textbf{\textup{u}}) \neq d_k(\textbf{\textup{u}})$ for some $k$ with $1 \leq k \leq n$, then
\[
	\Phi_\lambda^{1,1}(x)  = x_{1,1} = d_1(\textbf{\textup{u}}) \neq d_k(\textbf{\textup{u}})  = x_{k,k} = x_{w(1),w(1)} = \Phi_\lambda^{1,1}(w \cdot x)
\]
for every
$x \in \Phi^{-1}_\lambda(\textbf{\textup{u}})$
where $w$ is the transposition $(1,k)$.
Thus, 
$$
w \cdot \Phi_\lambda^{-1}(\textbf{\textup{u}}) \cap \Phi_\lambda^{-1}(\textbf{\textup{u}}) = \emptyset
$$ 
and hence $\Phi_\lambda^{-1}(\textbf{\textup{u}})$ is displaceable.
This completes the proof.
\end{proof}

In a GC system, Proposition~\ref{proposition_diagonal_all_equal} implies that almost all fibers are displaceable. 

\begin{corollary}
For any Gelfand-Cetlin system $\Phi_\lambda$, there exists a dense open subset $\mcal{U}$ of the Gelfand-Cetlin polytope $\Delta_\lambda$ such that for each $\textbf{\textup{u}} \in \mcal{U}$ the fiber $\Phi_\lambda^{-1}(\textbf{\textup{u}})$ is displaceable. 
\end{corollary}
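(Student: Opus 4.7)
The plan is to take $\mcal{U}$ to be the set of points in $\Delta_\lambda$ at which the diagonal entries $d_1, \ldots, d_n$ (defined in \eqref{dku}) are not all equal, and to deduce openness and density from elementary linear algebra, combined with Proposition~\ref{proposition_diagonal_all_equal}.

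More precisely, I will set
\[
	\mcal{U} := \left\{ \textbf{\textup{u}} \in \Delta_\lambda \,\colon\, d_i(\textbf{\textup{u}}) \neq d_j(\textbf{\textup{u}}) \text{ for some } 1 \leq i < j \leq n \right\}.
\]
By Proposition~\ref{proposition_diagonal_all_equal}, every fiber $\Phi_\lambda^{-1}(\textbf{\textup{u}})$ with $\textbf{\textup{u}} \in \mcal{U}$ is displaceable, so only openness and density of $\mcal{U}$ remain to be verified. Openness is immediate, since each $d_k$ is a linear (hence continuous) function of the coordinates $u_{i,j}$, and therefore the complement
\[
	\Delta_\lambda \setminus \mcal{U} = \Delta_\lambda \cap \bigcap_{1 \leq i < j \leq n} \{d_i = d_j\}
\]
is closed as a finite intersection of closed affine subspaces with $\Delta_\lambda$.

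For density, it suffices to show that $\Delta_\lambda \setminus \mcal{U}$ has empty interior in $\Delta_\lambda$. Since $\Delta_\lambda \setminus \mcal{U}$ is contained in the single affine hyperplane $H := \{d_1 = d_2\}$, I will check that $H$ has codimension exactly $1$ in $\R^{n(n-1)/2}$ by computing
\[
	d_2(\textbf{\textup{u}}) - d_1(\textbf{\textup{u}}) = u_{1,2} + u_{2,1} - 2\,u_{1,1},
\]
which is a non-trivial linear form in the coordinates $(u_{i,j})$. Since $\Delta_\lambda$ is a full-dimensional polytope in $\R^{n(n-1)/2}$ (for $n \geq 2$; the case $n=1$ is vacuous), its intersection with the proper affine subspace $H$ has empty interior in $\Delta_\lambda$. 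Therefore $\mcal{U}$ is open and dense, which completes the proof. The argument is essentially immediate once Proposition~\ref{proposition_diagonal_all_equal} is in hand, so no real obstacles are anticipated; the only point worth double-checking is that the constraint $d_1 = d_2$ is genuinely non-trivial on the polytope, which is transparent from the explicit formula above.
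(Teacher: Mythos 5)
Your approach is the natural one and is almost certainly what the paper has in mind, since the corollary is stated without proof immediately after Proposition~\ref{proposition_diagonal_all_equal}. The logic (openness is clear; density follows from $\Delta_\lambda\setminus\mcal{U}\subset\{d_1=d_2\}$ and transversality of that hyperplane with the polytope) is sound. However, there is a minor but real imprecision that needs to be patched: $\Delta_\lambda$ is \emph{not} full-dimensional in $\R^{n(n-1)/2}$ in general. It is full-dimensional only in $\R^{|\mcal{I}_\lambda|}$, the space of coordinates $u_{i,j}$ for which $\Phi_\lambda^{i,j}$ is non-constant, and $|\mcal{I}_\lambda| = \frac12 (n^2 - \sum k_i^2) < \frac{n(n-1)}{2}$ for any partial flag manifold that is not a complete flag. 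For example, for $\CP^{n-1}$ the GC polytope is an $(n-1)$-simplex, much smaller than $\R^{n(n-1)/2}$. As stated, your argument would allow the possibility that the affine hull of $\Delta_\lambda$ sits inside the hyperplane $\{d_1=d_2\}$, destroying density.

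The gap is easily closed: one should check that $d_2 - d_1 = u_{1,2} + u_{2,1} - 2u_{1,1}$ is non-constant on the affine hull of $\Delta_\lambda$, i.e.\ after the constant coordinates are fixed. This holds because the coordinate $u_{1,1}$ is always a non-constant coordinate of $\Delta_\lambda$ (that is, $(1,1)\in\mcal{I}_\lambda$): by the Schur--Horn theorem, the $(1,1)$-entry of a Hermitian matrix with spectrum $\lambda$ ranges over the full interval $[\lambda_n,\lambda_1]$, which is non-degenerate since $\lambda$ satisfying \eqref{lambdaidef} has $\lambda_1 > \lambda_n$. Its coefficient $-2\neq 0$ then guarantees $d_2-d_1$ restricts to a non-constant affine function on the affine hull, so the hyperplane intersects $\Delta_\lambda$ in a set of empty interior. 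With this correction, your proof is complete and correct.
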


\begin{remark}
This contrasts to the case of toric orbifolds. There is a symplectic toric orbifold which contains an open subset $\mcal{U}$ of a moment polytope such that every fiber over a point in 
$\mcal{U}$ is non-displaceable, see Wilson-Woodward \cite{WW} and Cho-Poddar \cite{CP}.
\end{remark}

\begin{example}\label{example_non_disp}
We demonstrate how to apply Proposition \ref{proposition_diagonal_all_equal} to detect displaceable fibers.
\begin{enumerate}
\item (Complex projective spaces)
	For
	$\lambda = \{\lambda_1= n, ~\lambda_2 = \cdots = \lambda_n = 0\}$, the co-adjoint oribt $(\mathcal{O}_\lambda, \omega_\lambda)$ is a complex projective space
	$\C P^{n-1}$ equipped with a (multiple of) the Fubini-Study form $\omega_\lambda$.
	In this case, every component $\Phi_\lambda^{i,j}$ of the GC system $\Phi_\lambda$ is a \emph{constant} function unless $i = 1$ so that $\Delta_\lambda$ is an $(n-1)$-dimensional simplex.
	For any $\textbf{\textup{u}} \in \Delta_\lambda$ and $x \in \Phi_\lambda^{-1}(\textbf{\textup{u}})$, we have
	\[
	\begin{cases}
		x_{1,1} = d_1(\textbf{\textup{u}}) = u_{1,1}, \\
		x_{k,k} = d_k(\textbf{\textup{u}}) = u_{1,k} - u_{1, k-1} \mbox{for  $1 \leq k \leq n-1$}, \\
		x_{n,n} = d_n(\textbf{\textup{u}}) = u_{1,n} - u_{1,n-1} = n - u_{1, n-1}.
	\end{cases}
 	\]
	Then $d_1(\textbf{\textup{u}}) = \cdots =d_n(\textbf{\textup{u}})$ implies that $d_k(\textbf{\textup{u}}) = 1$ for every $1 \leq k \leq n$.
	Therefore, by Proposition \ref{proposition_diagonal_all_equal}, if $d_k(\textbf{\textup{u}}) \neq 1$ for some $k$, then $\Phi^{-1}_\lambda(\textbf{\textup{u}})$ is displaceable. 
	The only possible candidate for a non-displaceable fiber is $\Phi^{-1}(\textbf{\textup{u}}_0)$ where $\textbf{\textup{u}}_0$ is the center of $\Delta_\lambda$, i.e.,
	$\textbf{\textup{u}}_0 = (u_{i,j})$ with $u_{1,j} = j$ for $j=1,\cdots,n-1$.
	Note that 
	it has been shown by Cho \cite{Cho} that $\Phi^{-1}_\lambda(\textbf{\textup{u}}_0)$ is non-displaceable.
	Therefore, $\Phi_\lambda^{-1}(\textbf{\textup{u}}_0)$ is a \emph{stem}.\footnote{A fiber of a moment map is called a {\em stem} if all other fibers are displaceable. See \cite{EP2}.}

\item (Complete flag manifold $\mcal{F}(3)$)
	Let $\lambda = \{\lambda_1, 0 , -\lambda_2\}$ for $\lambda_1, \lambda_2 > 0$ so that 
	$(\mathcal{O}_\lambda, \omega_\lambda)$ is a complete flag manifold $\mcal{F}(3)$ and it
	admits a unique proper Lagrangian face, a vertex $v_3$, of $\Delta_\lambda$ in Example \ref{example_123},
	see Figure \ref{figure_F123_non_torus_fiber} below.
	For any $x \in \Phi_\lambda^{-1}(v_3)$, we can easily see that $x_{1,1} = x_{1,2} = x_{2,1} = x_{2,2} = 0$,
	and $x_{3,3} = \lambda_1 - \lambda_2$.
	By Proposition \ref{proposition_diagonal_all_equal}, we can conclude that $\Phi^{-1}(v_3)$ is
	displaceable whenever $\lambda_1 \neq \lambda_2$. 
	We have reproduced the result of Pabiniak \cite{Pa} on displaceability of a (unique) non-torus GC Lagrangian fiber in $\mcal{F}(3)$.
	In Section \ref{pfofmaincorforfullflag3}, $\Phi^{-1}(v_3)$ will be shown to be non-displaceable when
	$\lambda_1 = \lambda_2$.

	\begin{figure}[H]
		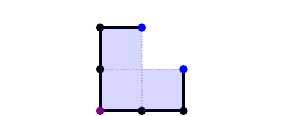
		\caption{\label{figure_F123_non_torus_fiber} The Lagrangian face of $\Gamma_\lambda$ in $\mcal{F}(3)$}
	\end{figure}

\item (Complex Grassmannian $\mathrm{Gr}(2,4)$)
	Let $\lambda = \{t,t,0,0\}$ for $t > 0$. By Corollary \ref{corollary_L_fillable},
	the edge $e$ in Figure \ref{figure_Lagrangian_Gr24} of $\Delta_\lambda$
	is the unique proper Lagrangian face of $\Gamma_\lambda$. 	For a positive real number $a$ with $0 \leq a \leq t$, we consider the point $r_a \in e$ given by $u_{1,1} = u_{1,2} = u_{2,1} = u_{2,2} = a$.
	Nohara-Ueda \cite{NU2} proved that the every fiber over the edge except the fiber $\Phi^{-1}_\lambda(r_{{t}/{2}})$ is displaceable and moreover $\Phi^{-1}_\lambda(r_{{t}/{2}})$ is non-displaceable.
	Our combinatorial test (Proposition \ref{proposition_diagonal_all_equal}) easily tells us that $\Phi^{-1}_\lambda(r_a)$ is displaceable unless $2a = t$.
	\begin{figure}[H]
		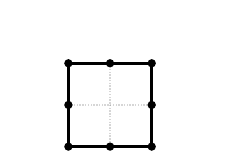
		\caption{\label{figure_Lagrangian_Gr24} Lagrangian face of $\Gamma_\lambda$ in $\mathrm{Gr}(2,4)$}
	\end{figure}

\item (Complex Grassmannian $\mathrm{Gr}(2,6)$)
	Let $\lambda = \{6,6,0,0,0,0\}$. By Example \ref{example_gr26}, there are exactly four proper Lagrangian faces
	$\gamma_2$, $\gamma_3$, $\gamma_4$, and $\gamma_{2,4}$ of $\Gamma_\lambda$ as follows.
	\begin{figure}[H]
		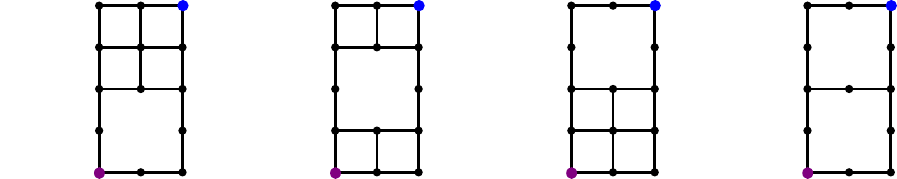
		\caption{\label{figure_4faces_gr26} Four proper Lagrangian faces of $\Gamma_\lambda$}
	\end{figure}
	
	First, consider two faces $\gamma_3$ and $\gamma_{2,4}$. By Proposition \ref{proposition_diagonal_all_equal}, we can easily check that
	every Lagrangian fiber over a point in $\gamma_3$ (resp. $\gamma_{2,4}$) is displaceable except for the fiber at $\textbf{\textup{u}}_3$
	(resp. $\textbf{\textup{u}}_{2,4}$) described in Figure \ref{figure_nondisp_point_gr26_24}.
	\begin{figure}[H]
		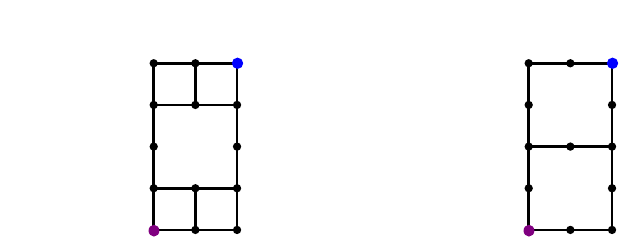
		\caption{\label{figure_nondisp_point_gr26_24} Lagrangian faces $\gamma_3$ and $\gamma_{2,4}$}
	\end{figure}
	
	For the other faces $\gamma_2$ and $\gamma_4$, again by Proposition \ref{proposition_diagonal_all_equal},
	every fiber is displaceable except for the one-parameter families of Lagrangian fibers $\textbf{\textup{u}}_2(t)$ and $\textbf{\textup{u}}_4(t)$ ($-1 < t < 1$)
	described in Figure \ref{figure_nondisp_point_gr26_13}
	in $\gamma_2$ and $\gamma_4$, respectively.
	\begin{figure}[H]
		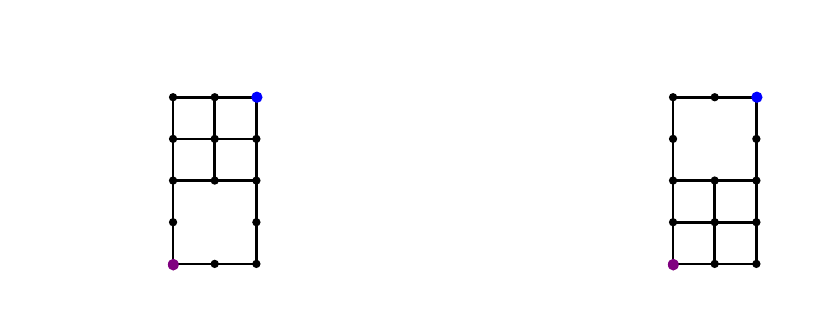
		\caption{\label{figure_nondisp_point_gr26_13} Lagrangian faces $\gamma_2$ and $\gamma_4$}
	\end{figure}
	In Section~\ref{ssecSymmetryOnGammaLambdaComplexGrassmannianCases},
	we will give another method for detecting displaceability of fibers and show that $\gamma_2$ and $\gamma_4$ are
	 indeed displaceable. Consequently, there are exactly two non-torus Lagrangian fiber
	$\Phi^{-1}_\lambda(\textbf{\textup{u}}_3)$ and $\Phi^{-1}_\lambda(\textbf{\textup{u}}_{2,4})$ that might be non-displaceable.
\end{enumerate}
\end{example}

\subsection{Symmetry on $\Gamma_\lambda$ (complex Grassmannian cases)}
\label{ssecSymmetryOnGammaLambdaComplexGrassmannianCases}~
\vspace{0.2cm}

In this section, we study displaceability of non-torus Lagrangian fibers of GC systems on complex Grassmannians.
Let $\mathrm{Gr}(k,n)$ be the Grassmannian of complex $k$-dimensional subspaces of $\C^n$.
Since $\mathrm{Gr}(k,n) \cong \mathrm{Gr}(n-k,n)$, without loss of generality, we assume that $n-k \geq k$.
Let $\lambda = \{ \underbrace{t,\cdots,t}_{k \textup{ times}}, ~\underbrace{0,\cdots,0}_{n-k \textup{ times}}\}$ for $t > 0$  so that
\[
	\mathrm{Gr}(k,n) \cong \mathcal{O}_\lambda.
\]
We start with the following series of algebraic lemmas which seems to be well-known.
\begin{lemma}\label{lemma_same_eigen_value}
	Let $A$ be any complex $(n \times k)$ matrix.
	Then $AA^*$ and $A^*A$ have the same non-zero eigenvalues with the same multiplicities.
\end{lemma}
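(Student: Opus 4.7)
The statement is a classical linear algebra fact, so the plan is short. I would give the direct eigenvector-pushforward argument, which simultaneously yields the equality of non-zero eigenvalues and of their multiplicities.

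First, I would show containment of the non-zero spectra. Let $\mu \neq 0$ be an eigenvalue of $A^*A$ and pick $v \in \C^k \setminus \{0\}$ with $A^*Av = \mu v$. Since $\mu v \neq 0$ we must have $Av \neq 0$, and then
\[
AA^*(Av) \,=\, A(A^*Av) \,=\, \mu\, (Av),
\]
so $\mu$ is an eigenvalue of $AA^*$ with eigenvector $Av \in \C^n$. The symmetric argument (replacing $A$ by $A^*$) gives the reverse inclusion, so the sets of non-zero eigenvalues coincide.

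Next, I would upgrade this to an equality of (geometric) multiplicities by observing that the map $v \mapsto Av$ restricts to a linear map
\[
\Psi_\mu \colon \ker(A^*A - \mu I_k) \;\longrightarrow\; \ker(AA^* - \mu I_n)
\]
which is injective for $\mu \neq 0$: if $Av = 0$ then $A^*Av = 0$, contradicting $A^*Av = \mu v$ with $\mu v \neq 0$. Swapping the roles of $A$ and $A^*$ gives an injection in the opposite direction, so the two eigenspaces have equal (finite) dimension. Since both $AA^*$ and $A^*A$ are Hermitian and hence diagonalizable, geometric and algebraic multiplicities coincide, giving the claim for algebraic multiplicities as well.

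There is no real obstacle here; the only thing to be mindful of is to verify that $Av \neq 0$ so that $\Psi_\mu$ is well-defined and nontrivial, which is exactly where the hypothesis $\mu \neq 0$ enters. If a more structural presentation is preferred, I would instead invoke the singular value decomposition $A = U\Sigma V^*$ and read off $AA^* = U(\Sigma \Sigma^*)U^*$ and $A^*A = V(\Sigma^*\Sigma)V^*$, whose non-zero diagonal entries are both the squares of the non-zero singular values of $A$ with identical multiplicities; but the eigenvector-pushforward proof is shorter and self-contained, so I would use it.
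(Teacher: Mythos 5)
Your proof is correct, but it takes a different route from the paper's. You give the eigenvector-pushforward argument: $v \mapsto Av$ carries $\mu$-eigenvectors of $A^*A$ injectively into $\mu$-eigenvectors of $AA^*$ when $\mu \neq 0$, and the symmetric map gives equality of geometric multiplicities, which agree with algebraic multiplicities since both operators are Hermitian. The paper instead invokes the singular value decomposition $A = U\Sigma V^*$ directly, reads off $AA^* = U(\Sigma\Sigma^*)U^*$ and $A^*A = V(\Sigma^*\Sigma)V^*$, and observes that $\Sigma\Sigma^*$ and $\Sigma^*\Sigma$ share the same non-zero diagonal entries with the same multiplicities. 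The trade-off is the one you already identify in your closing remark: your argument is shorter and self-contained, avoiding the existence of SVD, at the cost of having to separately invoke diagonalizability to pass from geometric to algebraic multiplicity; the paper's SVD argument is a one-liner once SVD is granted and makes the equality of multiplicities immediate. In the context of the paper, the SVD route is also convenient because the SVD is reused in the very next lemma (Lemma 6.3), so the authors get two results from one decomposition.
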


\begin{proof}
	Recall that a singular value decomposition yields 
	\[
		A = U\Sigma V^*
	\]
	where $U$ is an $(n \times n)$ unitary matrix, $\Sigma$ is an $(n \times k)$ matrix such that $\Sigma_{i,j} = 0$ unless $i = j$, and $V$ is a
	$(k \times k)$ unitary matrix. Then $AA^* = U\Sigma \Sigma^* U^*$ and $A^*A = V\Sigma^* \Sigma V^*$ are unitary diagonalizations of
	$AA^*$ and $A^*A$ respectively. Since
	$\Sigma \Sigma^*$ and $\Sigma^* \Sigma$ have the same nonzero eigenvalues with
	the same multiplicities, this completes the proof.
\end{proof}

\begin{lemma}\label{lemma_k_n_matrix}
	Any element $x \in \mathcal{O}_\lambda$ can be expressed by
	\[
		x = X X^*
	\]	
	for some $(n \times k)$ matrix $X = [v_1, \cdots, v_k]$ such that
	\begin{itemize}
		\item $|v_i|^2 = t$ for every $i=1,\cdots, k$, and
		\item $\langle v_i, v_j \rangle = 0$ for every $i \neq j$
	\end{itemize}
	where $\langle \cdot, \cdot \rangle$ means the standard hermitian inner product on $\C^n$. In particular, we have
	\[
		X^*X = tI_k
	\]
	where $I_k$ is the $(k \times k)$ identity matrix.
\end{lemma}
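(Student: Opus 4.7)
The plan is to use the spectral theorem together with a simple factorization of the eigenvalue matrix. Since $x \in \mcal{O}_\lambda \subset \mcal{H}_n$ is hermitian with eigenvalue pattern $\lambda = \{t,\dots,t,0,\dots,0\}$ (with $t$ of multiplicity $k$ and $0$ of multiplicity $n-k$), Remark~\ref{remark_property_hermitian_matrix} guarantees a unitary $U \in U(n)$ such that
\[
x = U D U^*, \qquad D = \mathrm{diag}(\underbrace{t,\dots,t}_{k},\underbrace{0,\dots,0}_{n-k}).
\]

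Next I would produce a rectangular square root of $D$. Let $D' \in M_{n \times k}(\C)$ be the matrix whose top $k \times k$ block is $\sqrt{t}\, I_k$ and whose bottom $(n-k) \times k$ block is zero. A direct computation gives $D' (D')^* = D$ and $(D')^* D' = t I_k$.

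Setting $X := U D'$, which is an $(n \times k)$ matrix, yields
\[
X X^* = U D' (D')^* U^* = U D U^* = x,
\]
and
\[
X^* X = (D')^* U^* U D' = (D')^* D' = t I_k.
\]
Writing $X = [v_1, \cdots, v_k]$, the identity $X^* X = t I_k$ is exactly the assertion that $\langle v_i, v_j\rangle = t\,\delta_{ij}$, which gives the desired properties of the columns. There is no genuine obstacle here; the only content is choosing the rectangular factorization $D = D'(D')^*$ compatibly with the spectral decomposition, and the orthonormality of the columns of $X$ (up to scale) then follows tautologically from $U^*U = I_n$.
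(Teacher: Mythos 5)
Your proof is correct and yields the same factor $X = \sqrt{t}\,[u_1,\dots,u_k]$ (first $k$ columns of a diagonalizing unitary, scaled), but it reaches it by a more direct route than the paper does. The paper first invokes a Cholesky factorization $x = LL^*$ (since $x$ is positive semi-definite) and then performs a singular value decomposition $L = U\Sigma V^*$ to obtain $x = U(\Sigma\Sigma^*)U^*$, before extracting $X$ from $UD$ with $D = \sqrt{\Sigma\Sigma^*}$. You skip the Cholesky step entirely and go straight to the spectral decomposition $x = UDU^*$, which the paper has already recorded as available in Remark~\ref{remark_property_hermitian_matrix}, and then factor the diagonal matrix $D$ through the rectangular $D'$. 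The two arguments produce identical conclusions; yours is leaner because it avoids introducing the Cholesky factor $L$ only to immediately discard it via its SVD, and it makes transparent that the orthonormality (up to the scale $\sqrt{t}$) of the columns of $X$ is simply inherited from $U^*U = I_n$.
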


\begin{proof}
	Let $x \in \mathcal{O}_\lambda$.
	Since $x$ is semi-positive definite by our choice of $\lambda$, there exists an $(n \times n)$ lower triangular matrix $L$ having non-negative diagonal entries such that
	\begin{equation}\label{equation_Cole}
		x = LL^*.
	\end{equation}
	The expression \eqref{equation_Cole} is called a {\em Cholesky factorization of $x$}, see \cite[Corollary 7.2.9]{HJ}.
	Let $L = U\Sigma V^*$ be a singular value decomposition of $L$. In this case, the matrices $U, \Sigma,$ and $V$ are all $(n \times n)$ matrices.
	Then we have
	\[
		x = LL^* = U\Sigma \Sigma^* U^* \quad \text{where} \quad 
		 \Sigma \Sigma^* = \begin{pmatrix} tI_k & 0 \\
                	  0 & O_k\\
                  \end{pmatrix}.
	\]	
        Let $D = D^* = \sqrt{\Sigma \Sigma^*}$ so that $x = UDD^*U^* $ and denote by $U = [u_1, \cdots, u_n]$. Since $UD = \sqrt{t}\cdot [u_1, \cdots, u_k, 0, \cdots, 0]$,
        we have $UDD^*U^* = UD(UD)^* = XX^*$ where $X$ is taken to be the $(n \times k)$ matrix
        \[
        	X = [v_1, \cdots, v_k] = \sqrt{t}\cdot [u_1, \cdots, u_k].
	\]
        This finishes the proof.
\end{proof}

Now, we consider Lagrangian faces of particular types in a ladder diagram $\Gamma_\lambda$ described as follows. Let $^{(0,i)} \square_k$ denote the $(k \times k)$
square whose upper-left corner is $(0,i)$ so that the vertices of $^{(0,i)} \square_k$ are $(0, i), (k, i), (k, i -k)$ and $(0, i-k)$. Let $\gamma_i$ be the graph drawn by all positive paths not passing through the interior of the square $^{(0,i)} \square_k$. Hence, $\gamma_i$ contains exactly one $(k \times k)$-sized simple closed region $^{(0,i)}\square_k$
and the other simple closed regions in $\gamma_i$ are unit squares, see Figure \ref{gr_nk_ladder}. Note that $\gamma_i$ is a Lagrangian face. 

\begin{figure}[H]
		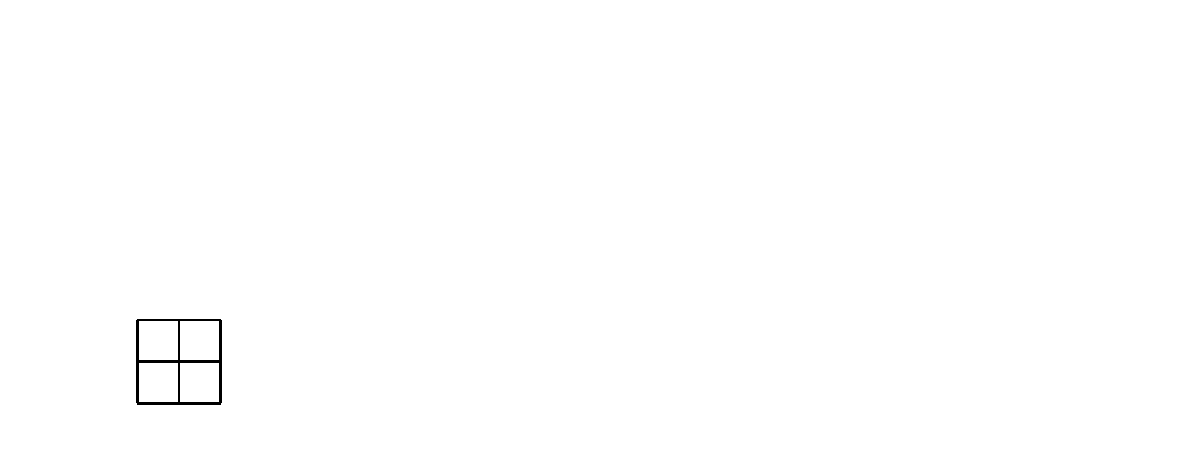
		\caption{\label{gr_nk_ladder} Ladder diagram $\Gamma_\lambda$ and Lagrangian faces $\{\gamma_i\}_{k \leq i \leq n-k}$}
\end{figure}

\begin{proposition}\label{proposition_gr_symmetry_case_one}
	For each $k \leq i \leq n-k$, there exists a permutation matrix $w \in U(n)$ such that
	\[
	w \cdot \Phi_\lambda^{-1}(\mathring{\gamma}_i) \subset \Phi_\lambda^{-1}(\gamma_{n-i}).
	\] In particular,  
	\[
		w \cdot \Phi^{-1}_\lambda(\mathring{\gamma}_i) \cap \Phi^{-1}_\lambda(\mathring{\gamma}_i) = \emptyset.
	\]
	 unless $n = 2i$. 
	Consequently, the face $\gamma_i$ is displaceable provided that $n \neq 2i$.
\end{proposition}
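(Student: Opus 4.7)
The plan is to exhibit a Hamiltonian diffeomorphism, induced by the reversal permutation $w\in\mathfrak{S}_n\subset U(n)$ defined by $w(j) = n+1-j$, and show that it carries $\Phi_\lambda^{-1}(\mathring{\gamma}_i)$ into $\Phi_\lambda^{-1}(\gamma_{n-i})$. Conjugation by $w$ preserves the co-adjoint orbit $\mcal{O}_\lambda$ (the spectrum is invariant under reordering) and is a symplectomorphism. Moreover, the $j$-th leading principal minor of $wxw^{-1}$ is unitarily equivalent to the bottom-right $j\times j$ submatrix of $x$, hence has the same spectrum.

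First, I would invoke Lemma~\ref{lemma_k_n_matrix} to factorize $x = XX^*$ with an $(n\times k)$ matrix $X$ satisfying $X^*X = tI_k$, and decompose $X = \begin{pmatrix} A \\ B \end{pmatrix}$ with $A$ the top $(n-j)\times k$ block. Then $x^{(n-j)} = AA^*$ and the bottom-right $j\times j$ block of $x$ equals $BB^*$. The identity $A^*A + B^*B = tI_k$ combined with Lemma~\ref{lemma_same_eigen_value} yields the key bijection between the nonzero spectra: a value $\mu\in\operatorname{spec}(A^*A)$ corresponds to $t-\mu\in\operatorname{spec}(B^*B)$, with the zero spectra padded out on each side to match the sizes $n-j$ and $j$.

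Second, I would translate the face condition $x\in\Phi_\lambda^{-1}(\mathring{\gamma}_i)$ into explicit spectral data. The $(k\times k)$ simple region of $\gamma_i$ at position $(0,i)$ forces the variables $u_{p,q}$ at lattice points inside that square to equal a single value $c\in(0,t)$. In particular $A_i^*A_i = cI_k$ where $A_i$ denotes the top $i$ rows of $X$, so $x^{(i)}$ has spectrum consisting of $k$ copies of $c$ and $i-k$ zeros. Applying the reversal, $(wxw^{-1})^{(n-i)}$ then has spectrum consisting of $k$ copies of $t-c$ and $n-i-k$ zeros, which exactly matches the level-$(n-i)$ constraint imposed by $\gamma_{n-i}$. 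A parallel bookkeeping at the remaining levels $l\in\{i-k+1,\dots,i+k-1\}$ should confirm that every face equality of $\gamma_i$ is exchanged by $w$ with the corresponding face equality of $\gamma_{n-i}$ at the mirror level $n-l$. Hence $wxw^{-1}\in\Phi_\lambda^{-1}(\gamma_{n-i})$, and since $n\neq 2i$ implies $\mathring{\gamma}_i\cap\gamma_{n-i} = \emptyset$, the fiber is displaced by the Hamiltonian diffeomorphism induced by $w$.

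The main obstacle lies in the bookkeeping of the previous step: while the compatibility at level $n-i$ follows immediately from the single matrix identity $A_i^*A_i = cI_k$, verifying the corresponding equalities at the neighboring levels $n-i\pm 1,\dots,n-i\pm(k-1)$ requires controlling the rank and eigenvalue structure of the smaller Gram matrices $A_l^*A_l$ for $l$ close to $i$. This information is not directly encoded in the single equation $A_i^*A_i = cI_k$, but must be extracted from the full face equality pattern of $\gamma_i$, together with the interlacing structure of the Gelfand-Cetlin pattern, to cover all levels appearing in the $(k\times k)$ square.
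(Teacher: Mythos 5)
Your approach is the same as the paper's: factor $x = XX^*$ via Lemma~\ref{lemma_k_n_matrix}, use Lemma~\ref{lemma_same_eigen_value} together with the face condition to deduce $(X^{(i)})^*X^{(i)} = cI_k$, pass to the complementary block $\check{X}^{(i)}$ with $(\check{X}^{(i)})^*\check{X}^{(i)} = (t-c)I_k$, and conjugate by a permutation carrying the last $n-i$ rows to the top $n-i$ positions. The one place where you hesitate --- verifying the face equalities of $\gamma_{n-i}$ at the levels $n-i\pm 1,\dots,n-i\pm(k-1)$ --- is easier than you fear and does not require controlling the intermediate Gram matrices $A_l^*A_l$ at all. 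Once the level-$(n-i)$ spectrum of $wxw^{-1}$ is pinned down as $\{(t-c)^k, 0^{n-i-k}\}$, the remaining in-square equalities follow purely from the Gelfand-Cetlin interlacing pattern \eqref{equation_GC-pattern}: each in-square entry at level $n-i\pm j$ is squeezed, via $u_{p+1,q}\le u_{p,q}\le u_{p,q+1}$, between two in-square entries at level $n-i\pm(j-1)$ that have already been shown to equal $t-c$, so a short induction on $j$ forces the whole $(k\times k)$ square of $\gamma_{n-i}$ to be filled with the constant $t-c$. (The paper also treats this step as immediate, writing simply ``Therefore $\Phi_\lambda(w\cdot x)\in\gamma_{n-i}$'' right after establishing the level-$(n-i)$ equalities.)
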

\begin{proof}
	Let $u$ be any point  in $\mathring{\gamma}_i$.
	Then we have
	\[
		u_{1, i} = u_{2, i-1} = \cdots = u_{k,i-k+1},
	\]
	which implies that for every $x \in \Phi^{-1}_\lambda(\textbf{\textup{u}})$, the $i$-th leading principal minor $x^{(i)}$ of $x$ has eigenvalues
	$u_{1,i}$ with multiplicity $k$ and $0$ with multiplicity $i - k$.
	
	Now, choose any $x \in \Phi^{-1}_\lambda(\textbf{\textup{u}})$. Then Lemma \ref{lemma_k_n_matrix} implies that
	there exists an $(n \times k)$ matrix
	\[
		X = \left( v_1, \cdots, v_k \right) = \left(\begin{array}{c} w_1 \\ w_2 \\ \vdots \\ w_n \\ \end{array}\right)
	\] such that $x = XX^*$ and it satisfies
	\begin{itemize}
		\item $|v_j|^2 = t$ for every $j=1, \cdots, k$, and
		\item $\langle v_j, v_{j'} \rangle = 0$ for every $j \neq j'$.
	\end{itemize}
	Now, we divide $X$ into two submatrices $X^{(i)}$ and $\check{X}^{(i)}$ where 
	\[
		X^{(i)} := \left(v_1^{(i)}, \cdots, v_k^{(i)} \right) = \left(\begin{array}{c} w_1 \\ w_2 \\ \vdots \\ w_i \\ \end{array}\right), \quad \quad
		\check{X}^{(i)} := \left(\check{v}_1^{(i)}, \cdots, \check{v}_k^{(i)}\right) = \left(\begin{array}{c} w_{i+1} \\ \vdots \\ w_n \\ \end{array}\right).
	\]
	In other words, $X^{(i)}$ (resp. $\check{X}^{(i)}$) is the $(i \times k)$ (resp. $((n-i) \times k)$)
	submatrix of $X$ obtained by deleting all $\ell$-th rows of $X$ for $\ell > i$ (resp. $\ell \leq i )$.
	Thus we have 
	\[
		x^{(i)} = X^{(i)}(X^{(i)})^*.
	\]
	Then Lemma \ref{lemma_same_eigen_value} implies that $x^{(i)} = X^{(i)}(X^{(i)})^*$ and $(X^{(i)})^*X^{(i)}$ have the same non-zero eigenvalue
	$u_{1, i}$ with the same multiplicity $k$. Since $(X^{(i)})^*X^{(i)}$ is a $(k \times k)$ matrix, we get
	\[
		(X^{(i)})^*X^{(i)} = u_{1, i} \cdot I_k.
	\]
	In particular, we have
	\begin{itemize}
		\item $|v_j^{(i)}|^2 = u_{1, i}$ for every $j=1,\cdots, k$, and
		\item $\langle v_j^{(i)}, v_{j'}^{(i)} \rangle = 0$ for every $j \neq j'$,
	\end{itemize}
	i.e., the columns of $X^{(i)}$ are orthogonal to each other and have the same square norm equal to $u_{1,i}$.
	This implies that $\check{X}^{(i)} = [\check{v}_1^{(i)}, \cdots, \check{v}_k^{(i)}]$ satisfies
	\begin{itemize}
		\item $|\check{v_j}^{(i)}|^2 = t - u_{1,i}$ for every $j=1, \cdots, k$, and
		\item $\langle \check{v}_j^{(i)}, \check{v}_{j'}^{(i)} \rangle  = 0$ for every $j \neq j'$
	\end{itemize}
	so that we have
	\[
		(\check{X}^{(i)})^*\check{X}^{(i)} = (t - u_{1, i}) \cdot I_{k}.
	\]
	Now, let $w \in \mathfrak{S}_n \subset U(n)$ be any permutation satisfying
	$
		w(i+j) = j
	$
	for $j=1,\cdots, n-i$. Then
	$
		(w  X)^{(n-i)} = \check{X}^{(i)}
	$
	and the following matrix
	\[
		w \cdot x = wxw^{-1} = w X X^*w^* = (w X)(w X)^*.
	\]
	has the $(n-i)$-th leading principal minor
	\[
		(w \cdot x)^{(n-i)} = (w  X)^{(n-i)} ((w  X)^{(n-i)})^* = (\check{X}^{(i)})(\check{X}^{(i)})^*.
	\]
	Again by Lemma \ref{lemma_same_eigen_value}, the nonzero eigenvalue of $(w \cdot x)^{(n-i)}$ is $t - u_{1,i}$ with multiplicity $k$ and zero with multiplicity
	$n-i-k$. Thus we have
	\[
		\Phi_\lambda^{1, n-i}(w \cdot x) = \Phi_\lambda^{2, n-i-1}(w \cdot x) = \cdots = \Phi_\lambda^{k, n-i-k+1}(w \cdot x) = t - u_{1,i}.
	\]
	Therefore, we have $\Phi_\lambda(w \cdot x) \in \gamma_{n-i}$.
	In particular if $n-i \neq i$, then
	$\Phi_\lambda(w \cdot x)$ is not in $\mathring{\gamma}_i$ since $\mathring{\gamma}_i \cap \gamma_{n-i} = \emptyset$.
\end{proof}

\begin{example}\label{example_gr26_13}
	Let us reconsider the Lagrangian faces $\gamma_2$ and $\gamma_4$ in Example \ref{example_non_disp} (4), which is the case where $n=6$ and $k=2$. Then
	\[
		n = 6 \neq 2 \cdot 2 = 2i ~\text{ for}~i=2,  \,\,\text{and} \,\, n = 6 \neq 2 \cdot 4 = 2i ~\text{ for} ~i=4.
	\]
	Thus the faces $\gamma_2$ and $\gamma_4$ are displaceable by Proposition \ref{proposition_gr_symmetry_case_one}.
	For the face $\gamma_3$ in Example \ref{example_non_disp} (4), however, we have
	\[
		n = 6 = 2 \cdot 3 = 2i \quad \text{and} \quad i = 3
	\]
	so that we cannot determine displaceability of $\gamma_3$ by using  Proposition \ref{proposition_gr_symmetry_case_one}.
\end{example}

Now, we extend Proposition \ref{proposition_gr_symmetry_case_one} to Lagrangian faces containing multiple squares of type ${}^{(0,\bullet)} \square_k$ in $\Gamma_\lambda$. 
For a sequence $(i_1, \cdots, i_r)$ satisfying
\begin{equation}\label{conditionforboxes}
\begin{cases}
&k \leq i_1 < \cdots < i_r \leq n-k, \\
&i_{s+1} - i_{s} \geq k \,\, \textup{for each } s,
\end{cases}
\end{equation}
let $\gamma_{i_1, \cdots, i_r}$ be the Lagrangian face of $\Gamma_\lambda$ which contains $r$ simple
closed regions $\{C_1 \cdots, C_r \}$ where $C_s$ is the square ${}^{(0,i_s)} \square_k$ and the other simple closed regions of $\gamma_{i_1, \cdots, i_r}$
are of size $1 \times 1$, see $\gamma_{2,4}$ in Figure \ref{figure_4faces_gr26} for example. 

\begin{proposition}\label{proposition_gr_symmetry_case_more_two}
	Suppose $\{ i_1 , \cdots , i_r \}$ is given satisfying~\eqref{conditionforboxes}.
	Then there is a permutation $w \in \mathfrak{S}_n$ such that
	\begin{equation}\label{permutingfaces}
		w \cdot \Phi_\lambda^{-1}(\mathring{\gamma}_{i_1, \cdots, i_r}) \cap \Phi_\lambda^{-1}(\mathring{\gamma}_{i_1, \cdots, i_r}) = \emptyset
	\end{equation}
	unless $i_s = s \cdot i_1$ for every $s=1,\cdots, r+1$ provided that $i_{r+1} := n$.
\end{proposition}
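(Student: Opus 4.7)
The plan is to use the direct generalization of the permutation employed in the proof of Proposition \ref{proposition_gr_symmetry_case_one}: take $w \in \mathfrak{S}_n \subset U(n)$ to be the cyclic shift by $i_1$, namely the permutation satisfying $w(i_1 + j) = j$ for $1 \leq j \leq n - i_1$ and $w(j) = n - i_1 + j$ for $1 \leq j \leq i_1$. When $r = 1$ this recovers exactly the permutation used in Proposition \ref{proposition_gr_symmetry_case_one}.

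Given $\textbf{\textup{u}} \in \mathring{\gamma}_{i_1,\ldots,i_r}$ and $x \in \Phi_\lambda^{-1}(\textbf{\textup{u}})$, I first iterate the argument already used in the proof of Proposition \ref{proposition_gr_symmetry_case_one}. By Lemma \ref{lemma_k_n_matrix}, write $x = XX^*$ with $X$ an $(n \times k)$ matrix whose columns are orthogonal of norm $\sqrt{t}$. Applying Lemma \ref{lemma_same_eigen_value} to each $i_s$-th leading principal minor yields $(X^{(i_s)})^* X^{(i_s)} = a_s I_k$, where $a_s := u_{1, i_s}$. Partitioning the rows of $X$ along the blocks $B_j := \{i_{j-1} + 1, \ldots, i_j\}$ (with $i_0 := 0$ and $i_{r+1} := n$) and taking successive differences, I obtain $X_{B_j}^* X_{B_j} = b_j I_k$ for $b_j := a_j - a_{j-1} > 0$.

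The key step is then a short computation: for each $m = 2, 3, \ldots, r+1$, the cyclic shift sends $\{1, \ldots, i_m - i_1\}$ to $\{i_1 + 1, \ldots, i_m\} = B_2 \cup \cdots \cup B_m$, a union of consecutive blocks. Consequently $(w \cdot x)^{(i_m - i_1)}$ is the principal submatrix of $x$ indexed by $B_2 \cup \cdots \cup B_m$, which equals $X_{B_2 \cup \cdots \cup B_m} X_{B_2 \cup \cdots \cup B_m}^*$; by Lemma \ref{lemma_same_eigen_value} its nonzero eigenvalues match those of $\sum_{j=2}^m X_{B_j}^* X_{B_j} = (a_m - a_1) I_k$, namely $a_m - a_1$ with multiplicity exactly $k$. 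Therefore the face of $\Delta_\lambda$ in whose relative interior $w \cdot x$ sits must display a $k \times k$ square at every row in
\[
	I' := \{\, i_2 - i_1,\ i_3 - i_1,\ \ldots,\ i_r - i_1,\ n - i_1 \,\},
\]
and conditions \eqref{conditionforboxes} guarantee that $I' \subset \{k, \ldots, n-k\}$ consists of $r$ strictly increasing integers.

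The remaining obstacle, which I expect to be only combinatorial, is to compare $I'$ with $\{i_1, \ldots, i_r\}$. If $w \cdot x$ were to lie in $\Phi_\lambda^{-1}(\mathring{\gamma}_{i_1,\ldots,i_r})$, then the set of $k \times k$ square positions for the face containing $w \cdot x$ would have to be exactly $\{i_1, \ldots, i_r\}$, and therefore would have to contain $I'$; having cardinality $r$ on both sides then forces $I' = \{i_1, \ldots, i_r\}$. Matching the two strictly increasing sequences, this amounts to $i_{s+1} - i_1 = i_s$ for $s = 1, \ldots, r-1$ together with $n - i_1 = i_r$, whose unique solution is $i_s = s \cdot i_1$ for every $s = 1, \ldots, r+1$. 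Hence whenever this symmetric condition fails, $w \cdot x \notin \Phi_\lambda^{-1}(\mathring{\gamma}_{i_1,\ldots,i_r})$ for all $x \in \Phi_\lambda^{-1}(\mathring{\gamma}_{i_1,\ldots,i_r})$, establishing the disjointness \eqref{permutingfaces}.
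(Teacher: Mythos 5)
Your proof is correct, but it takes a genuinely different route from the paper's. The paper argues by contraposition using a whole family of permutations: it assumes no $w\in\mathfrak{S}_n$ displaces $\Phi_\lambda^{-1}(\mathring{\gamma}_{i_1,\ldots,i_r})$, introduces the block-swap permutations $w(s,s')$ for all $s<s'$ (interchanging the row blocks $X^{(i_{s+1})}_{(i_s)}$ and $X^{(i_{s'+1})}_{(i_{s'})}$ of $X$), reads off from the resulting equality of faces that $i_{s+1}-i_s = i_{s'+1}-i_{s'}$ for every pair, and concludes $i_s = s\cdot i_1$. You instead produce a single explicit permutation, the cyclic shift by $i_1$, show that it carries $\Phi_\lambda^{-1}(\mathring{\gamma}_{i_1,\ldots,i_r})$ into $\Phi_\lambda^{-1}(\gamma_{I'})$ with $I'=\{i_2-i_1,\ldots,i_r-i_1,\,n-i_1\}$, and note that non-displacement forces (by the same dimension-and-cardinality comparison of faces that the paper also invokes) $I'=\{i_1,\ldots,i_r\}$, which unpacks termwise to $i_s=s\cdot i_1$. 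Your approach is more economical — one explicit displacing permutation instead of $\binom{r+1}{2}$ of them — and has the pleasant feature of literally specializing to the permutation used in Proposition~\ref{proposition_gr_symmetry_case_one} when $r=1$. One harmless shorthand shared with the paper's own proof of Proposition~\ref{proposition_gr_symmetry_case_one}: under the convention $(w\cdot x)_{ab}=x_{w(a),w(b)}$ fixed in Section~\ref{ssecTestingByDiagonalEntries}, the permutation with $w(i_1+j)=j$ is the \emph{inverse} of the one that actually places the rows indexed by $B_2\cup\cdots\cup B_m$ into the leading $(i_m-i_1)$-minor of $w\cdot x$ (one wants $w(a)=i_1+a$); since $w$ displaces a set if and only if $w^{-1}$ does, this has no effect on the conclusion.
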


\begin{remark}
	Proposition \ref{proposition_gr_symmetry_case_one} can be obtained by Proposition \ref{proposition_gr_symmetry_case_more_two}
	by taking $r = 1$.
\end{remark}

\begin{proof}
	Let $\textbf{\textup{u}} \in \mathring{\gamma}_{i_1, \cdots, i_r}$. Then $\textbf{\textup{u}}$ satisfies
	\[
		u_{1, i_s } = u_{2, i_s -1} = \cdots = u_{k, i_s -k +1}
	\]
	for every $s=1,\cdots, r$.
	
	For any $x \in \Phi^{-1}_\lambda(\textbf{\textup{u}})$, Lemma \ref{lemma_k_n_matrix} implies that
	there exists an $(n \times k)$ matrix $X = [v_1, \cdots, v_k]$ such that
	 \begin{itemize}
	 	\item $x = XX^*$,
	 	\item $|v_j|^2 = t$ for every $j=1,\cdots,k$, and
	 	\item $\langle v_j , v_{j'} \rangle = 0$ for $j \neq j'$.
	 \end{itemize}
	 Then we can divide $X$ into $r+1$ submatrices $X^{(i_1)}_{(i_0)}, X^{(i_2)}_{(i_1)}, \cdots, X^{(i_r)}_{(i_{r-1})}, X^{(i_{r+1})}_{(i_r)}$ of $X$
	 (provided that $i_0 := 0$ and $i_{r+1} := n$) where
	 \[
	 	X^{(i_{s+1})}_{(i_s)} = \left({v_1}^{(i_{s+1})}_{(i_s)}, \cdots, {v_k}^{(i_{s+1})}_{(i_s)}\right) := \left(\begin{array}{c} w_{i_s + 1} \\ \vdots \\ w_{i_{s+1}} \\ \end{array}\right)
	 \]
	 for each $s=0,\cdots, r$. In other words,
	$X^{(i_{s+1})}_{(i_s)}$ is the $((i_{s+1} - i_s) \times k)$ submatrix of $X$ obtained by deleting all $\ell$-th rows of $X$ for $\ell > i_{s+1}$ and $\ell \leq i_s$.
	By using Lemma \ref{lemma_same_eigen_value} and Lemma \ref{lemma_k_n_matrix},
	it is not hard to show that
	 \begin{itemize}
	 	\item $|{v_j}^{(i_{s + 1})}_{(i_s)}|^2 = u_{1, i_{s+1}} - u_{1, i_s}$ for every $j=1,\cdots,k$ and $s=0,\cdots,r$, and
	 	\item $\langle {v_j}^{(i_{s + 1})}_{(i_s)}, {v_{j'}}^{(i_{s + 1})}_{(i_s)} \rangle = 0$ for $j \neq j'$.
	 \end{itemize}

Now, suppose that
\begin{equation}\label{permufacesss}
w \cdot \Phi_\lambda^{-1}(\mathring{\gamma}_{i_1, \cdots, i_r}) \cap \Phi_\lambda^{-1}(\mathring{\gamma}_{i_1, \cdots, i_r}) \neq \emptyset
\end{equation}
for every $w \in \mathfrak{S}_n$. We claim that
	\[
		i_s = s \cdot i_1 \quad\quad ~\text{for every}~ s =1,\cdots, r+1 \quad \text{where} \quad i_{r+1} := n.
	\]
	To show this, consider $w:= w(s,s') \in \mathfrak{S}_n \subset U(n)$ for any $s, s' \in \{1,\cdots, r+1\}$ with $s < s'$ defined by
	 \[
	 	\begin{array}{c}
		\{1, \cdots, i_s, \underline{i_s + 1, \cdots, i_{s+1}}, i_{s+1} + 1, \cdots, i_{s'}, \underline{i_{s'} + 1, \cdots, i_{s'+1}}, i_{s'+1} + 1, \cdots, n\}   \\[0.5em]
		w \bigg \downarrow \\[0.5em]
		\{1, \cdots, i_s, \underline{i_{s'} + 1, \cdots, i_{s'+1}} , i_{s+1} + 1, \cdots, i_{s'}, \underline{i_s + 1, \cdots, i_{s+1}}, i_{s'+1} + 1, \cdots, n\}. \\
		\end{array}
	 \]
	Then we can easily see that the permutation $w$ swaps the position of $X^{(i_{s+1})}_{(i_s)}$ with $X^{(i_{s'+1})}_{(i_{s'})}$ in $X$ so that 
	\[
		\Phi_\lambda(w \cdot x) \in \gamma_{i_1', \cdots, i_r'}
	\]
	for some $k \leq i_1'< \cdots < i_r' \leq n-k$ where
	\[
		i_{s+1}' = i_s + (i_{s'+1} - i_{s'}).
	\]
	By our assumption \eqref{permufacesss}, we have
	$\gamma_{i_1', \cdots, i_r'} \cap \mathring{\gamma}_{i_1, \cdots, i_r} \neq \emptyset$.
	Since two faces $\gamma_{i_1', \cdots, i_r'}$ and $\gamma_{i_1, \cdots, i_r}$
	have the same dimensions, they must coincide and hence 
	\[
		i_{s+1}' = i_s + (i_{s'+1} - i_{s'}) = i_{s+1}.
	\]
	Therefore, we may deduce that 
	\[
		i_{r+1} - i_r = i_r - i_{r-1} = \cdots = i_2 - i_1 = i_1 - i_0 = i_1,
	\]
	which completes the proof.	
\end{proof}

\begin{corollary}[Theorem~\ref{theoremE}]\label{corollary_gr_2x}
	Let $\lambda = \{t,t,0,\cdots, 0\}$ so that $\mathcal{O}_\lambda \cong \mathrm{Gr}(2,n)$.
	If $n$ is prime, then every proper Lagrangian face of the Gelfand-Cetlin system on $(\mathcal{O}_\lambda, \omega_\lambda)$ is displaceable.
\end{corollary}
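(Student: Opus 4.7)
The plan is to combine the classification of Lagrangian faces of $\Gamma_\lambda$ for $\mathrm{Gr}(2,n)$ provided by Corollary~\ref{corollary_L_fillable} with the permutation-based displaceability criterion of Proposition~\ref{proposition_gr_symmetry_case_more_two}, and then exploit the primality of $n$ as a purely arithmetic obstruction.

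First, I would classify all proper Lagrangian faces of $\Gamma_\lambda$. For $\mathrm{Gr}(2,n)$, the two-dimensional part of the ladder diagram is a $2\times(n-2)$ rectangle, so the height constraint rules out any rigid $L_j$-block with $j\ge 3$: such a block would need at least three boxes in some row or column and cannot fit. Thus by Corollary~\ref{corollary_L_fillable}, every Lagrangian face is tiled by rigid $L_1$- and $L_2$-blocks. An $L_2$-block can only appear in the form $L_2(1,q)$ with $1\leq q\leq n-3$, because both $\square^{(p+1,q)}$ and $\square^{(p,q+1)}$ must lie in the rectangle. Under the rigidity condition (Definition~\ref{definition_rigid}), such a block fills three boxes of one $(2\times 2)$ region, with the remaining corner box covered by a rigid $L_1$-block; this is precisely the tiling of the square ${}^{(0,i)}\square_2$ appearing in $\gamma_i$, with $i=q+1$. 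By Lemma~\ref{lemma_L_disjoint}, two such $L_2$-blocks cannot overlap, which forces $i_{s+1}-i_s\geq 2$. Hence every proper Lagrangian face has the form $\gamma_{i_1,\ldots,i_r}$ with $r\geq 1$ and $2\leq i_1<\cdots<i_r\leq n-2$, $i_{s+1}-i_s\geq 2$; the case $r=0$ is excluded since it corresponds to the full ladder $\Gamma_\lambda$ itself.

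Next, I would apply Proposition~\ref{proposition_gr_symmetry_case_more_two}. Suppose some proper Lagrangian face $\gamma_{i_1,\ldots,i_r}$ admits no permutation $w \in \mathfrak{S}_n \subset U(n)$ with $w\cdot\Phi_\lambda^{-1}(\mathring\gamma_{i_1,\ldots,i_r})\cap\Phi_\lambda^{-1}(\mathring\gamma_{i_1,\ldots,i_r})=\emptyset$. Then $i_s=s\cdot i_1$ for every $s=1,\ldots,r+1$, under the convention $i_{r+1}:=n$. In particular $n=(r+1)\cdot i_1$, with $r+1\ge 2$ (since $r\ge 1$) and $i_1\ge 2$. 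This is a non-trivial factorization of $n$ into two integers each at least $2$, which contradicts the primality of $n$; the small cases $n=2,3$ are vacuous because $\mathrm{Gr}(2,2)$ is a point and $\mathrm{Gr}(2,3)\cong\mathbb{C}P^{2}$ admits no proper Lagrangian face in its ladder diagram. Therefore every proper Lagrangian face is displaceable.

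The main obstacle is the classification step, specifically verifying that the rigidity axioms and the narrowness of the $2\times(n-2)$ rectangle genuinely confine every rigid $L_2$-block to the form $L_2(1,q)$, and that such a block inside a Lagrangian face must pair with an $L_1$-block in its missing corner to form one of the squares ${}^{(0,i)}\square_2$ appearing in Figure~\ref{gr_nk_ladder}. This requires a case analysis of the ``rightmost edge in $\gamma$'' and ``top edge in $\gamma$'' conditions together with the positive-path description of faces; once carried out, the remainder of the argument is essentially the arithmetic observation that a prime number has no factorization into two factors each at least $2$.
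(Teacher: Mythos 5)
Your proposal is correct and follows essentially the same route as the paper's proof: both classify the proper Lagrangian faces as the $\gamma_{i_1,\ldots,i_r}$ and then invoke Proposition~\ref{proposition_gr_symmetry_case_more_two} together with the primality of $n$ to rule out non-displaceability. The only difference is that the paper treats the classification step as immediate (simply asserting ``every proper Lagrangian face of $\Gamma_\lambda$ is of the form $\gamma_{i_1,\cdots,i_r}$'') and reaches the contradiction via ``$i_1$ divides $n$ with $2\leq i_1\leq n-2$,'' whereas you spell out the $L$-block analysis (no rigid $L_j$ with $j\geq 3$ fits in the $2\times(n-2)$ strip, $L_2$-blocks are confined to $L_2(1,q)$, and disjointness forces $i_{s+1}-i_s\geq 2$) and phrase the final step as the non-trivial factorization $n=(r+1)\,i_1$; these are equivalent.
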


\begin{proof}
	Note that every proper Lagrangian face of $\Gamma_\lambda$ is of the form $\gamma_{i_1, \cdots, i_r}$ for some $i_1, \cdots, i_r$.
	By Proposition \ref{proposition_gr_symmetry_case_more_two}, $i_1$ divides $n$ where $2 \leq i_1 \leq n-2$ which is impossible since $n$ is prime.
	Thus there is no proper non-displaceable Lagrangian face of $\Gamma_\lambda$.
\end{proof}

\vspace{0.2cm}
\section{Lagrangian Floer theory on Gelfand-Cetlin systems}\label{secReviewOfLagrangianFloerTheory}

The aim of this section is to review Lagrangian Floer theory which will be used to prove the results in Section~\ref{Sec_Intropart2}. After briefly recalling Lagrangian Floer theory and its deformation developed by the third named author with Fukaya, Ohta, and Ono in a general context, we review work of Nishinou, Nohara, and Ueda about the calculation of the potential function of a GC system. Then, using the combinatorial description of Schubert cycles in complete flag manifolds by Kogan, we will express the potential function deformed by a combination of Schubert cycles of codimension two as a Laurent series. Finally, combining those ingredients, we provide the proof of Theorem~\ref{theoremC}.

\subsection{Potential functions of Gelfand-Cetlin systems}\label{reviewpotentialpotentialfunc}~
\vspace{0.2cm}

Let $\Lambda$ be the Novikov field over the field of complex numbers, defined by
\begin{equation}\label{Novikovfield}
\Lambda := \left\{ \sum_{j=1}^{\infty} a_{j} T^{\lambda_j}  \, \bigg{|} \, a_j \in \C, \lambda_j \in \R, \lim_{j \rightarrow \infty} \lambda_j = \infty \right\}.
\end{equation}
It is algebraically closed by Lemma A.1 in \cite{FOOOToric1}. It comes with the valuation
$$
\frak{v}_T : \Lambda \backslash \{ 0 \} \to \R, \quad \frak{v}_T \left(\sum_{j=1}^{\infty} a_{j} T^{\lambda_j}  \right) := \inf_{j} \, \{ \lambda_j : a_j \neq 0 \}.
$$
We also play with two subrings of $\Lambda$ given by
\begin{align*}
\Lambda_0 &:= \frak{v}_T^{-1}[0, \infty) \cup \{ 0 \} = \left\{ \sum_{i=1}^{\infty} a_{i} T^{\lambda_i} \in \Lambda  \, \bigg{|} \, \lambda_i \geq 0 \right\} \\
\Lambda_+ &:= \frak{v}_T^{-1}(0, \infty) \cup \{ 0 \} = \left\{ \sum_{i=1}^{\infty} a_{i} T^{\lambda_i} \in \Lambda  \, \bigg{|} \, \lambda_i > 0 \right\}. \,
\end{align*}
Let $\Lambda_U$ be the collection of unitary elements of $\Lambda$. That is,
\begin{align*}
\Lambda_U &:= \Lambda_0 \backslash \Lambda_+ = \left\{ \sum_{i=1}^{\infty} a_{i} T^{\lambda_i} \in \Lambda_0  \, \bigg{|} \, \frak{v}_T \left( \sum_{i=1}^{\infty} a_{i} T^{\lambda_i} \right) = 0 \right\}.
\end{align*}

For a compact relatively spin Lagrangian submanifold $L$ in a compact symplectic manifold $(X, \omega)$, thanks to the work of Fukaya \cite{Fuk}, one can associate a sequence of $A_\infty$-structure maps $\{ \frak{m}_k \}_ {k \geq 0}$ on the $\Lambda_0$-valued de Rham complex of $L$, which comes from moduli spaces of holomorphic discs bounded by $L$. Following the procedure in \cite{FOOOc} for instance, the $A_\infty$-algebra can be converted into the canonical model on $H^\bullet (L; \Lambda_0)$. By an abuse of notation, the structure maps of the canonical model are still denoted by $\frak{m}_k$'s because the canonical model is only dealt with from now on.

A solution $b \in H^1(L; \Lambda_+)$ of the (weak) Maurer-Cartan equation
$$
\sum_{k=0}^\infty \frak{m}_k (b^{\otimes k}) \equiv 0 \mod \textup{PD}[L]
$$
is said to be a \emph{(weak) bounding cochain}. The value of the \emph{potential function} $\frak{PO}$ at a bounding cochain $b$ is assigned to be the multiple of the Poincar\'{e} dual $\textup{PD}[L]$ of $L$. Namely,
$$
\sum_{k=0}^\infty \frak{m}_k (b^{\otimes k}) = \frak{PO}(b) \cdot \textup{PD}[L].
$$
Since $\textup{PD}[L]$ is the strict unit of the $A_\infty$-algebra, the deformed map
$$
\frak{m}^{b}_1 (h) = \sum_{l,k} \frak{m}_{l+k+1}(b^{\otimes l}, h, b^{\otimes k})
$$
becomes a differential and thus the Floer cohomology (deformed by $b$) over $\Lambda_0$ can be defined by
$$
{HF} ((L, b); \Lambda_0) := \textup{Ker} (\frak{m}^b_1) \, / \, \textup{Im} (\frak{m}^b_1).
$$
The Floer cohomology (deformed by $b$) over $\Lambda$ is defined by
$$
{HF} ((L, b); \Lambda) := {HF} ((L, b); \Lambda_0) \otimes_{\Lambda_0} \Lambda.
$$
The reader is referred to \cite{FOOO, FOOOToric1, FOOOToric2, FOOOToric3} for details.

Now, we specialize to the case of a Lagrangian GC torus fiber $\Phi_\lambda^{-1}(\textbf{u})$ in a co-adjoint orbit $\mcal{O}_\lambda$. As a deformation of the action of the Borel subgroup, Kogan and Miller \cite{KM} realized the toric degeneration of a flag manifold given by Gonciulea-Lakshmibai \cite{GL} and Batyrev et al. \cite{BCKV}.
Using the degeneration of a (partial) flag manifold to the GC toric variety (in stages), Nishinou-Nohara-Ueda \cite{NNU} constructed a (not-in-stages) degeneration of the GC system of $\mcal{O}_\lambda$ to the moment map of the toric variety. 

\begin{theorem}[Theorem 1.2 in \cite{NNU}]\label{NNUtoricdeg}
For any non-increasing sequence $\lambda = \{\lambda_1 \geq \cdots \geq \lambda_n \}$, there exists a toric degeneration of the Gelfand-Cetlin system $\Phi_\lambda$ on the co-adjoint orbit $(\mcal{O}_\lambda, \omega_\lambda)$ in the following sense.
\begin{enumerate}
\item There is a flat family $f \colon \mcal{X} \to I = [0,1]$ of algebraic varieties and a symplectic form $\widetilde{\omega}$ on $\mcal{X}$ such that
\begin{enumerate}
\item ${X}_0 := f^{-1}(0)$ is the toric variety associated with the Gelfand-Cetlin polytope $\Delta_\lambda$ and $\omega_0 := \widetilde{\omega}|_{X_0}$ is a torus-invariant K{\"a}hler form.
\item ${X}_1 := f^{-1}(1)$ is the co-adjoint orbit $\mcal{O}_\lambda$ and $\omega_1 = \widetilde{\omega}|_{X_1}$ is the Kirillov-Kostant-Souriau symplectic form {$\omega_\lambda$}.
\end{enumerate}
\item There is a family $\{\Phi_t \colon X_t \rightarrow \Delta_\lambda\}_{0 \leq t \leq 1}$ of completely integrable systems such that $\Phi_0$ is the moment map for the torus action on $X_0$ and $\Phi_1$ is the Gelfand-Cetlin system.
\item Let $\Delta^{\textup{sm}}_\lambda := \Delta_\lambda \backslash \Phi_0 \left( \textup{Sing} (X_0) \right)$ and
$X^{\textup{sm}}_t := \Phi_t^{-1}(\Delta^{\textup{sm}}_\lambda )$ where $\textup{Sing} (X_0)$ is the set of singular points of $X_0$. 
Then, there exists a flow $\phi_t$ on $\mcal{X}$ such that for each $0 \leq t \leq s$,
the restricted flow $\phi_t |_{X^{\textup{sm}}_{s}}  : X^{\textup{sm}}_{s} \to X^{\textup{sm}}_{s-t}$ respects the symplectic structures and the complete integrable systems:
\[
	\xymatrix{
		  (X^{\textup{sm}}_{s}, \omega_s)  \ar[dr]_{\Phi_s} \ar[rr]^{ \phi_t|_{X^{\textup{sm}}_{s}} }  & & (X^{\textup{sm}}_{s-t}, \omega_{s-t})
      \ar[dl]^{\Phi_{s-t}} \\
  & \Delta^{\textup{sm}}_\lambda &}
\]
\end{enumerate}
\end{theorem}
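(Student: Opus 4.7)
The plan is to follow the strategy of Nishinou--Nohara--Ueda, combining the toric degeneration in stages of Kogan--Miller with Ruan's gradient-Hamiltonian flow construction. First I would invoke the Kogan--Miller family $F\colon \mathcal{X} \to \mathbb{C}^{n-1}$ factoring through a projective embedding $\mathcal{X} \hookrightarrow P \times \mathbb{C}^{n-1}$ with $P = \prod_k \mathbb{P}_{n_k}$, whose fiber over $(1,\dots,1)$ is the image of $\mathcal{O}_\lambda$ under the Pl\"ucker embedding and whose fiber over $(0,\dots,0)$ is the GC toric variety $X_0$ associated with $\Delta_\lambda$. Pulling back a product K\"ahler form $\omega_{\mathrm{FS}} \oplus \omega_{\mathrm{std}}$ from $P \times \mathbb{C}^{n-1}$ to $\mathcal{X}$ gives the desired $\widetilde{\omega}$. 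To produce the one-parameter family, I would restrict $F$ to a path $\sigma\colon [0,1] \to \mathbb{C}^{n-1}$ going successively through $(1,\dots,1,0,\dots,0)$ (one coordinate at a time), obtaining $f\colon \mathcal{X} \to I$ whose generic fiber is $\mathcal{O}_\lambda$ and whose central fiber is $X_0$, thereby establishing (1).

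Next, I would build the integrable systems $\Phi_t$ fiberwise. On $X_0$ the moment map for the torus $T^{n(n-1)/2}$ acting linearly on $P$ restricts, by toric invariance, to the moment map $\Phi$ with image $\Delta_\lambda$. On $\mathcal{O}_\lambda$ the components $\Phi_\lambda^{i,j}$ are eigenvalues of leading principal minors, and these extend naturally to continuous functions on $P$ (using the embeddings $U(k) \hookrightarrow U(n)$ and the induced moment maps $\mu^{(k)}$). The compatibility between the torus-moment picture on $X_0$ and the eigenvalue picture on $\mathcal{O}_\lambda$, in stages along each intermediate fiber $X_{k,t}$, is exactly the content of Theorem~\ref{theorem_NNU_6_1}, so I would invoke this stepwise to obtain $\Phi_t$ on every $X_t$ and verify the endpoint identifications in (2).

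For (3), the key tool is Ruan's gradient-Hamiltonian vector field $V_m = \widetilde{V}_m / \|\widetilde{V}_m\|^2$ associated to each coordinate $F_m$ of the Kogan--Miller family. Since $F$ is holomorphic, the Cauchy--Riemann equation gives $\nabla \mathrm{Re}(F_m) = -\widetilde{V}_m$, so the flow of $V_m$ moves between level sets of $\mathrm{Re}(F_m)$ while being Hamiltonian in the fiber direction; this simultaneously forces the flow to preserve the fiberwise symplectic form. Composing these flows along the path $\sigma$ yields the desired $\phi_t$. The vector field $V_m$ is smooth only on the smooth locus of $\mathcal{X}$, which is precisely why I must work with $X_t^{\mathrm{sm}} = \Phi_t^{-1}(\Delta_\lambda^{\mathrm{sm}})$ and the symplectomorphism onto $X_{s-t}^{\mathrm{sm}}$. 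The compatibility with the integrable systems, $\Phi_{s-t} \circ \phi_t = \Phi_s$, follows from the stage-by-stage statement in Corollary~\ref{corollary_NNU_deform}, once one checks that the eigenvalue functions $\Phi_\lambda^{i,j}$ descend consistently from $P$ to each $X_{k,t}$.

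The main obstacle I anticipate is controlling the singularities. The gradient-Hamiltonian flow is a priori only defined on $\mathcal{X}^{\mathrm{sm}}$, so one must argue carefully that the flow lines do not run off to the singular locus in finite time and that the resulting map extends continuously across $\Phi_t^{-1}(\partial \Delta_\lambda) \cup \Phi_t^{-1}(\mathrm{Sing}(X_0))$. This is handled in Nishinou--Nohara--Ueda by identifying the singular locus of $X_0$ with the union of toric subvarieties corresponding to certain faces of $\Delta_\lambda$ whose preimages are of sufficiently high codimension in $\mathcal{O}_\lambda$, so that the extension exists as a surjective continuous map realizing (3) above the open stratum $\Delta_\lambda^{\mathrm{sm}}$. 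Beyond that, the remaining verifications---symplecticity, intertwining of the integrable systems, identification of endpoints---are formal consequences of the constructions of Kogan--Miller and Ruan together with Theorem~\ref{theorem_NNU_6_1}.
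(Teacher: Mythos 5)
Your proposal correctly reconstructs the Nishinou--Nohara--Ueda strategy, and it matches the construction that the paper itself outlines in Section~\ref{secDegenerationsOfFibersToTori} (where the Kogan--Miller family $F$, the path through $(1,\dots,1,0,\dots,0)$, the stagewise integrable systems $\Phi_{k,t}$, and Ruan's gradient-Hamiltonian flows $V_m$ are discussed in detail, with the key compatibility statements delegated to Theorem~\ref{theorem_NNU_6_1} and Corollary~\ref{corollary_NNU_deform}). Note that the paper does not give an independent proof of this theorem but cites it directly from \cite{NNU}; your sketch is faithful to that source and to the paper's own summary of it, and it correctly flags the two genuine subtleties: that $V_m$ lives only on the smooth locus, and that one must show the flow does not escape to the singular stratum in finite time so the map extends continuously.
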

Let $\phi^\prime_s \colon X_s \to X_0$ be a (continuous) extension of the flow $\phi_s \colon X^{\textup{sm}}_s \to X^{\textup{sm}}_0$ in Theorem~\ref{NNUtoricdeg} (\cite[Section 8]{NNU}). The extended map $\phi^\prime_s$ effectively transports data for Floer theory from the toric moment map to a nearby integrable system. 
As the deformation in Theorem~\ref{NNUtoricdeg} is through Fano varieties and the GC toric variety admits a small resolution at the singular loci, any holomorphic discs bounded by $L$ intersecting the loci collapsing to the singular loci of $X_0$ must have the Maslov index strictly greater than two so that such discs do \emph{not} contribute to the potential function. Furthermore, because the Fredholm regularity is an open condition, the holomorphic discs of Maslov index two intersecting the toric divisor in the toric variety $X_0$ give rise to regular holomorphic discs at $X_s$ for sufficiently small $s$. Set $X := X_s$ and let $L$ be any Lagrangian torus fiber of $\Phi_s \colon X_s \to \Delta_\lambda$. Combining those with the results of Cho-Oh \cite{CO}, Nishinou-Nohara-Ueda proved the followings.   

\begin{enumerate}
\item Each Lagrangian torus fiber $L$ does not bound any non-constant holomorphic discs whose classes are of Maslov index less than or equal to zero. \cite[Lemma 9.20]{NNU}
\item Every class $\beta \in \pi_2 (X, L)$ of Maslov index two is Fredholm regular. \cite[Proposition 9.17]{NNU}
\item There is a one-to-one correspondence between the holomorphic discs of Maslov index two bounded by a Lagrangian Gelfand-Cetlin torus fiber and the facets of Gelfand-Cetlin polytope. \cite[Lemma 9.22]{NNU}
\item For each class $\beta$ of Maslov index two, the open Gromov-Witten invariant $n_\beta$, which counts the holomorphic discs passing through a generic point in $L$ and representing $\beta$, is $1$. \cite[Proposition 9.16]{NNU}
\end{enumerate}

The above conditions imply that every $1$-cochain in $H^1(L; \Lambda_0)$ is a weak bounding cochain and hence the potential function can be defined on $H^1(L; \Lambda_0)$. To extend deformation space from $H^1(L; \Lambda_+)$ to $H^1(L; \Lambda_0)$, one should consider Floer theory twisted with flat non-unitary line bundles, see Cho \cite{Cho2}. Moreover, the potential function can be written as
\begin{equation}\label{certainpotential}
\frak{PO} \left( L ; b \right) = \sum_{\beta} n_\beta \cdot \exp(\pa \beta \cap b) \, T^{\omega(\beta) / 2 \pi}
\end{equation}
where the summation is taken over all homotopy classes in $\pi_2 (X, L)$ of Maslov index two. Setting
$$
\Gamma(n) = \{ (i, j) ~\colon~ 2 \leq i + j \leq n\},
$$
we fix the basis $\{ \gamma_{i,j} ~\colon~ (i,j) \in \Gamma(n) \}$ for $H^1(L; \Z)$ dual to the basis for $H_1(L, \Z)$ which consists of the circles generated by $\{ u_{i,j}: (i,j) \in \Gamma(n) \}$. Take the exponential variables
\footnote{In \cite{NNU}, Nishinou, Nohara, and Ueda set
$$
y_{i,j} := e^{x_{i,j}} T^{u_{i,j}}
$$
so that it is different from our $y_{i,j}$ in~\eqref{exponencoord}. To keep track of the valuations of holomorphic discs, we prefer to take $y_{i,j}$ as the expotential variable without weights. 
}
\begin{equation}\label{exponencoord}
y_{i,j} := e^{x_{i,j}}
\end{equation}
where $b$ is expressed as the linear combination  $\sum_{(i,j) \in \Gamma(n)} x_{i,j} \cdot \gamma_{i,j}$. Then, the potential function can be expressed as a Laurent polynomial $\frak{PO}(\textup{\textbf{y}})$ with respect to $\{ y_{i,j} ~\colon~ (i,j) \in \Gamma(n) \}$. Setting $u_{i,n+1-i}:= \lambda_{i}$, keep in mind that $\Delta_\lambda$ is defined by
$$
\left\{ (u_{i,j}) \in \R^{n(n-1)/2} ~\colon~ u_{i, j+1} - u_{i,j} \geq 0, \, u_{i,j} - u_{i+1, j} \geq 0 \, \right\}.
$$
(see Theorem~\ref{theorem_equiv_CG_LD} for instance.) 

\begin{theorem}[Theorem 10.1 in \cite{NNU}]
Setting
$$
y_{i,n+1-i} := 1 \quad u_{i,n+1-i}:= \lambda_{i},
$$
we consider the Lagrangian torus $L$ over a point $\{ u_{i,j} : (i,j) \in \Gamma(n) \}$. Then, the potential function on $L$ is written as
\begin{equation}\label{potentialfunctionoriginal}
\frak{PO}(L; \textup{\textbf{y}}) = \sum_{(i,j)} \left( \frac{y_{i, j+1}}{y_{i,j}} \, T^{ u_{i,j+1} - u_{i,j}} + \frac{y_{i,j}}{y_{i+1, j}} \, T^{ u_{i,j} - u_{i+1, j}}  \right)
\end{equation}
where the summation is taken over $\Gamma(n) = \{ (i, j) : 2 \leq i + j \leq n\}$.
\end{theorem}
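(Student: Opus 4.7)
The plan is to apply the general disc-counting formula \eqref{certainpotential} facet by facet, using the combinatorics of the Gelfand-Cetlin polytope $\Delta_\lambda$. The key preparatory inputs are already in place in the discussion preceding the theorem: every class $\beta \in \pi_2(X_s, L)$ of Maslov index two is Fredholm regular with $n_\beta = 1$, no non-constant holomorphic disc bounded by $L$ has Maslov index $\leq 0$, and there is a bijection $\beta \longleftrightarrow F_\beta$ between homotopy classes of Maslov index two and facets of $\Delta_\lambda$. Consequently, it suffices to enumerate the facets and, for each one, to read off the combinatorial data $\pa \beta \cap b$ and $\omega(\beta)/2\pi$ directly from the defining inequalities of $\Delta_\lambda$.

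First I would catalogue the facets of $\Delta_\lambda$. From the min-max pattern \eqref{equation_GC-pattern}, the supporting hyperplanes are of exactly two kinds: a \emph{vertical} type $\{u_{i,j+1} = u_{i,j}\}$ and a \emph{horizontal} type $\{u_{i,j} = u_{i+1,j}\}$, with $(i,j)$ ranging so that the inequality actually cuts out a codimension-one face, and with the boundary convention $u_{i,n+1-i} = \lambda_i$. Under the toric degeneration of Theorem~\ref{NNUtoricdeg}, each facet corresponds to a torus-invariant prime divisor of $X_0$ meeting the smooth locus, which in turn corresponds to a Maslov-two class in the nearby Fano flag variety $X_s \cong \mcal{O}_\lambda$.

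Next I would compute $(\pa\beta, \omega(\beta))$ for each facet class. Using the basis $\{\gamma_{i,j}\}$ dual to the $u_{i,j}$-circles, the inward primitive integral normal to a vertical facet $F = \{u_{i,j+1} = u_{i,j}\}$ is $e_{i,j+1} - e_{i,j}$, so $\pa\beta_F \cap b = x_{i,j+1} - x_{i,j}$, and the symplectic area equals the affine distance to $F$, namely $u_{i,j+1} - u_{i,j}$; exponentiating via \eqref{exponencoord} gives the monomial $\frac{y_{i,j+1}}{y_{i,j}}\, T^{u_{i,j+1} - u_{i,j}}$, where the boundary convention $y_{i,n+1-i} = 1$ reflects the fact that $x_{i,n+1-i}$ is not a free deformation parameter (the corresponding $u$ is the fixed eigenvalue $\lambda_i$). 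The same analysis applied to a horizontal facet $F = \{u_{i,j} = u_{i+1,j}\}$ yields the monomial $\frac{y_{i,j}}{y_{i+1,j}}\, T^{u_{i,j} - u_{i+1,j}}$. Substituting $n_\beta = 1$ into \eqref{certainpotential} and summing over all facets gives precisely \eqref{potentialfunctionoriginal}.

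The main obstacle is the identification of the disc classes with the facet data, which is where the toric degeneration carries the essential load: on the smooth locus of $X_0$ the Maslov-two disc count is the Cho--Oh computation \cite{CO}, and the cited regularity/non-existence statements ensure that these discs deform to honest holomorphic discs on $X_s$ without extra contributions coming from the conifold stratum $X_0 \setminus X_0^{\textup{sm}}$ (which is avoided because discs crossing it would have Maslov index $> 2$). Once this bijection is in place, the remaining work is bookkeeping: verify that each pair of neighboring boxes in the ladder diagram supplies exactly one monomial of each type in the two sums, and that boundary indices $(i,n-i)$ and $(n-j, j)$, where one of $y_{i,n+1-i}$ or $y_{n-j+1,j}$ becomes $1$ and the corresponding $u$-value specializes to an eigenvalue $\lambda_\bullet$, are counted once and only once.
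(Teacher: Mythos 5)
Your proposal is correct and follows the approach that the paper itself outlines in the paragraph preceding the theorem: the paper does not reprove Theorem 10.1 but cites it from [NNU], while listing exactly the inputs you use (absence of non-positive Maslov classes, Fredholm regularity, the facet--class bijection, and $n_\beta = 1$, all attributed to specific lemmas in [NNU]) as the ingredients feeding the general formula \eqref{certainpotential}. Your facet enumeration, normal-vector and area computation, and treatment of the boundary convention $y_{i,n+1-i}=1$ are all in line with how [NNU] (and the Cho--Oh toric model under the degeneration) carry out this calculation.
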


\begin{remark}
As we are mainly concerned with non-displaceability, it is enough to show that $L_s$ is non-displaceable because of the homotopy invariance of $A_\infty$-structures.
\end{remark}

For $t$ with $0 \leq t < 1$, the potential function of the Lagrangian torus $L_m(t)$ over $I_m(t)$ is arranged as follows:
\begin{align}\label{potential}
\begin{split}
\frak{PO}(L_m(t); \textbf{y}) &= \left( \sum_{i = 1}^{m} \sum_{j=1}^{m-1} \frac{y_{i, j+1}}{y_{i,j}}+ \sum_{i= 1}^{m-1} \sum_{j=1}^{m} \frac{y_{i, j}}{y_{i+1,j}}\right) T^{1-t} + \left(  \sum_{\substack{\max(i, j) \geq m+1 }} \left( \frac{y_{i, j+1}}{y_{i,j}}+ \frac{y_{i, j}}{y_{i+1,j}} \right) \right) T^{1}  \\
&+ \left( \sum_{i=0}^{m-1} \left( \frac{y_{m-i,m+1}}{y_{m-i,m}} + \frac{y_{m,m-i}}{y_{m+1,m-i}} \right) T^{1 + it}   \right)
\end{split}
\end{align}
For simplicity, we frequently omit $L_m(t)$ in $\frak{PO}(L_m(t); \textbf{y})$ if $L_m(t)$ is clear in the context. The logarithmic derivative with respect to $y_{i,j}$ is denoted by
\begin{equation}\label{shortnotofderi}
\pa (i,j)(\textbf{y}) := y_{i,j} \frac{\pa \frak{PO}}{\pa y_{i,j}}.
\end{equation}

\begin{example}
In the complete flag manifold $\mcal{F}(5) \simeq \mcal{O}_\lambda$ where $\lambda = \{ 4, 2, 0, -2, -4\}$, some logarithmic derivatives of $\frak{PO}(L_2(t); \textbf{y})$ are as follows.
\begin{align*}
\pa (1,1)(\textbf{y}) &= \left( - \frac{y_{1, 2}}{y_{1,1}} + \frac{y_{1,1}}{y_{2, 1}} \right) T^{1-t}, \\
\pa (2,2)(\textbf{y}) &= \left( - \frac{y_{1, 2}}{y_{2,2}} + \frac{y_{2,2}}{y_{2,1}}\right) T^{1-t} + \left( - \frac{y_{2, 3}}{y_{2,2}} + \frac{y_{2,2}}{y_{3, 2}} \right) T^{1}, \\
\pa (2,3)(\textbf{y}) &= \left( - \frac{1}{y_{2,3}} - \frac{y_{1,3}}{y_{2,3}} + {y_{2,3}} + \frac{y_{2,3}}{y_{2, 2}} \right) T^{1}.
\end{align*}
\end{example}

\vspace{0.2cm}
\subsection{Bulk-deformations by Schubert cycles}\label{section:bulkdeform}~
\vspace{0.2cm}

We will apply Lagrangian Floer theory deformed by cycles of an ambient symplectic manifold, developed by the third named author with Fukaya, Ohta, and Ono in \cite{FOOO, FOOOToric2, FOOOToric3}, in order to show non-displaceability. For a Lagrangian torus $L$ from Section~\ref{reviewpotentialpotentialfunc}, we deform the underlying $A_\infty$-algebra by empolying moduli spaces of holomorphic discs with interior marked points passing through a combination of designated ambient cycles at the image of interior marked points. We are particularly interested in a combination of cycles $\scr{D}_j$ of degree two not intersecting $L$
$$
\frak{b}:= \sum_{j=1}^B \frak{b}_j \cdot \scr{D}_j,
$$
which is called a \emph{bulk-deformation parameter}. As the $A_\infty$-algebra is deformed, the potential function is also deformed. The deformed potential function is denoted by $\frak{PO}^\frak{b}$.

\begin{remark} For the purpose of the present paper, we use only cycles of degree two to deform Lagrangian
Floer theory because the deformed potential function is computable. It also simplifies construction of
virtual fundamental cycles needed for the definition of open Gromov-Witten invariants
\cite[Definition 6.7]{FOOOToric2}, denoted by $n_\beta({\bf p})$.
\end{remark}

We first recall the formula for the potential function of a toric fiber $L$ deformed by a combination of toric divisors $$
\frak{b}:= \sum_{j=1}^B \frak{b}_j \cdot \scr{D}_j,
$$
in a compact toric manifold $X$ from \cite{FOOOToric2}.

\begin{theorem}[\cite{FOOOToric2}]\label{Foootoric2potenti} The bulk-deformed potential function, also called the potential function with bulk, is written as
\begin{equation}\label{bulkdeformedpotential}
\frak{PO}^\frak{b} \left( L ; b \right) = \sum_{\beta} n_\beta \cdot \exp \left( \sum_{j=1}^B \left( \beta \cap \scr{D}_{j} \right) \frak{b}_{j} \right) \exp(\pa \beta \cap b) \, T^{\omega(\beta) / 2 \pi}.
\end{equation}
where the summantion is taken over all homotopy classes in $\pi_2 (X, L)$ of Maslov index two.
\end{theorem}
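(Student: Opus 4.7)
The plan is to establish the formula by unpacking the definition of the bulk-deformed $A_\infty$-structure and exploiting the special form of Maslov-index-two discs in the toric setting. First I would recall that the bulk-deformed structure maps on $H^\bullet(L;\Lambda_0)$ are defined by
$$
\frak{m}^\frak{b}_k(b^{\otimes k}) = \sum_{\ell\ge 0}\frac{1}{\ell!}\,\frak{m}_{k,\ell}\bigl(b^{\otimes k};\frak{b}^{\otimes \ell}\bigr),
$$
where $\frak{m}_{k,\ell}$ is built from the moduli spaces $\mcal{M}_{k+1,\ell}(\beta)$ of stable bordered holomorphic maps with $k+1$ boundary and $\ell$ interior marked points, weighted by $T^{\omega(\beta)/2\pi}$ and with the interior evaluation maps constrained to lie on the chosen cycles $\scr{D}_j$. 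Grouping terms by homotopy class $\beta$, the potential with bulk is the coefficient of $\textup{PD}[L]$ in $\sum_k \frak{m}^\frak{b}_k(b^{\otimes k})$; the expected dimension of $\mcal{M}_{k+1,\ell}(\beta)$ together with the real codimension two of the cycles $\scr{D}_j$ forces the Maslov index of contributing classes to be $\mu(\beta)=2$, using the Fano/toric regularity input and the absence of discs of non-positive Maslov index bounded by toric fibers (cf. \cite{CO} and \cite{NNU}).

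The core of the argument is the computation for a fixed class $\beta$ with $\mu(\beta)=2$. The moduli space with one boundary marked point and no interior constraints contributes the basic open Gromov-Witten number $n_\beta$. Adding $\ell$ interior marked points mapped into $\frak{b}=\sum_j \frak{b}_j\scr{D}_j$ yields, by the interior divisor axiom applied to each forgetful map of interior marked points, a factor $\bigl(\sum_j \frak{b}_j(\beta\cap \scr{D}_j)\bigr)^\ell$ (with the prefactor $1/\ell!$ absorbed into the exponential after summing); summing over $\ell$ then gives $\exp\bigl(\sum_j (\beta\cap \scr{D}_j)\,\frak{b}_j\bigr)$. Similarly, summing over the number $k$ of boundary insertions of $b$ produces $\exp(\pa\beta \cap b)$ via the boundary divisor axiom, which applies because all relevant boundary evaluation maps have positive-dimensional fibers after the Maslov-two dimension count has been saturated. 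Multiplying by the Novikov weight $T^{\omega(\beta)/2\pi}$ and summing over all Maslov-two classes yields the formula \eqref{bulkdeformedpotential}.

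The main obstacle is the rigorous justification of the divisor axiom in the bulk-deformed framework, which requires coherent Kuranishi/virtual structures on the tower of moduli spaces $\{\mcal{M}_{k+1,\ell}(\beta)\}$ compatible with the forgetful maps that drop interior marked points, together with transversality of each $\scr{D}_j$ with respect to all interior evaluation maps. This is the technical content carried out in detail in \cite{FOOOToric2}; for our subsequent applications it suffices to quote the resulting formula, since the hypotheses needed on the toric side (regularity of Maslov-two discs, a bijection between such classes and facets, and $n_\beta=1$) are supplied by \cite[Proposition 9.16, Lemma 9.22]{NNU} and transported to Gelfand-Cetlin torus fibers via the toric degeneration of Theorem \ref{NNUtoricdeg}.
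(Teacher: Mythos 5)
Your sketch follows the same chain of ideas that the paper itself uses (in Section \ref{sec_bulkdefbyschucy}) when it re-derives the bulk-deformed potential formula for the GC setting, and it matches the outline of the derivation in \cite{FOOOToric2}: the dimension count restricting to $\mu(\beta)=2$, the interior divisor axiom producing $\prod_i(\beta\cap\scr{D}_{\textbf{p}(i)})$ so that the $\frac{1}{\ell!}$-sum over $\textbf{p}$ exponentiates to $\exp(\beta\cap\frak{b})$, and the boundary divisor axiom producing $\exp(\pa\beta\cap b)$. Note, however, that as stated Theorem \ref{Foootoric2potenti} is a citation — the paper itself offers no proof of the toric case and instead records in Section \ref{sec_bulkdefbyschucy} the modifications needed when the cycles $\scr{D}_j$ are Schubert (i.e.\ not smooth, not $T^n$-invariant), which is where the new work lies.

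Two small inaccuracies worth flagging. First, your justification for the boundary divisor axiom — that ``all relevant boundary evaluation maps have positive-dimensional fibers after the Maslov-two dimension count has been saturated'' — is not the actual mechanism. The identity $\frak{q}_{k,\ell;\beta}(\textbf{p};b^{\otimes k}) = \frac{1}{k!}(\pa\beta\cap b)^k\,\frak{q}_{0,\ell;\beta}(\textbf{p};1)$ follows from a system of perturbations (in the toric case, $T^n$-equivariant multisections) compatible with the forgetful map that drops boundary marked points, as in \cite[Section~5]{Fuk}; the fiber-dimension heuristic is neither the stated hypothesis nor the reason the axiom holds. Second, the closing sentence collapses two separate claims: the cited toric Theorem \ref{Foootoric2potenti} on the one hand, and the GC-fiber counterpart Theorem \ref{mainthmappdendix} proved via \cite[Proposition 9.16, Lemma 9.22]{NNU} and the toric degeneration on the other. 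Those NNU inputs are hypotheses for the GC transport, not ingredients of the toric proof you are sketching. Neither issue changes the overall correctness of the strategy.
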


In the derivation of this in \cite{FOOOToric2}, the properties that
the relevant ambient cycles  are smooth and $T^n$-invariant are used. Since our Schubert cycles
are neither smooth nor $T^n$-invariant in general, we will provide details of
the proof of this theorem for the current GC case modifying the arguments used in the proof of \cite[Proposition 4.7]{FOOOToric2} similarly as done in \cite[Section 9]{NNU}, see Section~\ref{sec_bulkdefbyschucy}. The upshot is that we still have the same formula for the potential function with bulk in the current GC case, see~\eqref{bulkdeformedpotential}. 
Again by taking the system of exponential coordinates in~\eqref{exponencoord}, $\frak{PO}^\frak{b}$ in~\eqref{bulkdeformedpotential} becomes a Laurent series with respect to $\{ y_{i,j} : (i,j) \in \Gamma(n) \}$.

\begin{theorem}[Section 8 in \cite{FOOOToric2}]\label{foootoric2nondispl}\label{criticalpointimpliesnondisplaceability}
If the bulk-deformed potential function $\frak{PO}^\frak{b} (L; \textbf{\textup{y}})$ admits a critical point $\textbf{\textup{y}}$ whose components are in $\Lambda_U$, then $L$ is non-displaceable.
\end{theorem}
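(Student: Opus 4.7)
The plan is to realize the critical-point condition as the vanishing of a bulk-deformed Floer differential on $H^\bullet(L;\Lambda)$, and then apply the standard principle that non-vanishing Floer cohomology obstructs displacement. Set $N = \dim L$ and fix the basis $\{\gamma_{i,j}\}$ of $H^1(L;\Z)$ from Section~\ref{reviewpotentialpotentialfunc}. Given a critical point $\textbf{y}$ with components in $\Lambda_U$, write $y_{i,j} = e^{x_{i,j}}$ with $x_{i,j} \in \Lambda_0$ and put $b := \sum_{(i,j)} x_{i,j}\,\gamma_{i,j} \in H^1(L;\Lambda_0)$. By the unobstructedness properties recorded before Theorem~\ref{Foootoric2potenti} (regularity of Maslov-two classes, absence of non-positive Maslov discs), every such $b$ is a weak bounding cochain for the bulk-deformed $A_\infty$-algebra, and formula~\eqref{bulkdeformedpotential} computes $\frak{m}_0^{b,\frak{b}}$ as $\frak{PO}^\frak{b}(L;\textbf{y})\cdot\textup{PD}[L]$; here one allows the non-unitary extension of Cho~\cite{Cho2} so that $b$ need not lie in $\Lambda_+$.

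Next, I would relate the logarithmic derivatives $\pa(i,j)(\textbf{y}) = y_{i,j}\,\pa\frak{PO}^\frak{b}/\pa y_{i,j}$ to the deformed Floer differential $\frak{m}_1^{b,\frak{b}}$ on the canonical model, following~\cite[Section~3.7 and Theorem~2.4.1]{FOOOToric3}. The point is that, after differentiating the weak Maurer--Cartan equation $\sum_k \frak{m}_k(b^{\otimes k}) = \frak{PO}^\frak{b}\cdot\textup{PD}[L]$ in the direction $\gamma_{i,j}$, one obtains an explicit identification of $\pa(i,j)(\textbf{y})$ with the coefficient of the PD-class of $\gamma_{i,j}$ in $\frak{m}_1^{b,\frak{b}}(\gamma_{i,j})$ (up to a unit). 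Since $L \cong T^N$, the cohomology $H^\bullet(L;\Lambda)$ is the exterior algebra on the $\gamma_{i,j}$, and the deformed $A_\infty$-structure on $T^N$ satisfies the Leibniz property established in~\cite[Lemma~13.1 and Section~6]{FOOOToric2}. Therefore vanishing of the first-order differential on the degree-one generators, which is exactly $\pa(i,j)(\textbf{y}) = 0$ for all $(i,j)$, propagates to all degrees and yields $\frak{m}_1^{b,\frak{b}} \equiv 0$. Consequently $HF((L,(b,\frak{b}));\Lambda) \cong H^\bullet(L;\Lambda) \neq 0$.

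Finally, I would invoke the Hamiltonian invariance of Lagrangian Floer cohomology with bulk~\cite[Theorem~G]{FOOO}: if $\psi \in \mathrm{Ham}(\mcal{O}_\lambda,\omega_\lambda)$ displaces $L$ from itself, then $HF((L,\psi(L);\frak{b})) = 0$ by an energy/transversality argument, while this group is isomorphic to $HF((L,(b,\frak{b}));\Lambda)$, forcing the latter to vanish, contradicting Step 2.

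The delicate point is Step 2: formula~\eqref{bulkdeformedpotential} was stated in~\cite{FOOOToric2} under the hypothesis that the bulk cycles $\scr{D}_j$ are smooth and $T^N$-invariant, but the Schubert cycles used here are neither. Justifying that the same formula and the same Leibniz-type property persist in the Gelfand--Cetlin setting requires transporting open Gromov--Witten data across the toric degeneration of Theorem~\ref{NNUtoricdeg}, as is done in~\cite[Section~9]{NNU} for the undeformed potential. This transfer is carried out in Section~\ref{sec_bulkdefbyschucy}; granting it, the argument above reduces to the standard machinery of~\cite[Section~8]{FOOOToric2}.
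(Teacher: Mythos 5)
Your proof is a correct reconstruction of the argument of \cite[Section~8]{FOOOToric2}, which the paper invokes by citation rather than reproving; the paper's own contribution here (Section~\ref{sec_bulkdefbyschucy}, culminating in Theorem~\ref{mainthmappdendix}) is to verify that the hypotheses of that argument --- the formula~\eqref{bulkdeformedpotential}, weak unobstructedness of $L$ with respect to $\frak{b}$, and the divisor axiom for the open Gromov--Witten invariants $n_\beta(\textbf{p})$ --- persist for Schubert bulk cycles after transport through the toric degeneration of Theorem~\ref{NNUtoricdeg}, which is exactly the obligation you flag in your final paragraph. One degree-counting imprecision: $\frak{m}_1^{b,\frak{b}}(\gamma_{i,j})$ lies in $H^2(L;\Lambda)$, so describing $\pa(i,j)(\textbf{y})$ as ``the coefficient of the PD-class of $\gamma_{i,j}$ in $\frak{m}_1^{b,\frak{b}}(\gamma_{i,j})$'' is not meaningful in general degree; what is actually used is that the critical-point equations $\pa(i,j)(\textbf{y})=0$ are equivalent to the vanishing of $\frak{m}_1^{b,\frak{b}}$ restricted to $H^1(L;\Lambda)$, after which the derivation structure of the canonical model on the torus (which is itself a nontrivial output of the $T^N$-equivariant construction being transported) forces $\frak{m}_1^{b,\frak{b}}\equiv 0$. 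This does not affect the conclusion.
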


In our case, we will employ Schubert cycles to deform the potential function $\frak{PO}$. In his thesis \cite{K}, Kogan found an expression of a Schubert cycle in terms of a certain union of the inverse images of faces in the GC system of a complete flag manifold, see also Kogan-Miller \cite{KM}. As we have seen in Section~\ref{secLagrangianFibersOfGelfandCetlinSystems}, due to presence of non-torus fibers, the inverse image of a face might have boundary so that it does \emph{not} represent a cycle. What he proved is that a certain combination of faces can represent a cycle because the boundaries are cancelled out.

We review the result in terms of ladder diagrams. A facet in a GC polytope is said to be \emph{horizontal} (resp. \emph{vertical}) if it is given by $u_{i,j} = u_{i+1,j}$ (resp. $u_{i, j+1} = u_{i,j}$). Let $P^\textup{hor}_{i,i+1}$ (resp. $P^\textup{ver}_{j+1,j}$) be the union of horizontal (resp. vertical) facets between the $i$-th column and the $(i+1)$-th column (resp. the $(j+1)$-th row and the $j$-th row) of the ladder diagram. That is,
\begin{align*}
P^\textup{hor}_{i,i+1} := \bigcup_{s=1}^{n-i} \{ u_{i, s} = u_{i+1, s} \}, \quad P^\textup{ver}_{j+1, j} := \bigcup_{r=1}^{n-j} \{ u_{r, j+1} = u_{r, j} \}
\end{align*}
for $1 \leq i, j  \leq n - 1$ where $\{ u_{\bullet, \bullet} = u_{\bullet, \bullet}\}$ denotes the facet given by the equation inside.
Let 
$$
\scr{D}^\textup{hor}_{i,i+1} := \Phi^{-1}_\lambda \left( P^\textup{hor}_{i,i+1} \right), \quad
\scr{D}^\textup{ver}_{j+1,j} := \Phi^{-1}_\lambda \left( P^\textup{ver}_{j+1,j} \right),
$$
which are respectively called a \emph{horizontal} and \emph{vertical Schubert cycle} (of degree two) because of Theorem~\ref{thm_KoganMiller}.

\begin{example}
Consider the co-adjoint orbit $\mcal{O}_\lambda \simeq \mcal{F}(6)$ where $\lambda = (5, 3, 1, -1, -3, -5)$.
\begin{enumerate}
\item $P^\textup{hor}_{4,5}$ is the union of two horizontal facets
$$
P^\textup{hor}_{4,5} = \{ u_{4,2} = -3 \} \cup \{ u_{4,1} = u_{5,1} \}
$$
as in Figure~\ref{HorizontalP45}.
\item $P^\textup{ver}_{4,3}$ is the union of three vertical facets
$$
P^\textup{ver}_{4,3} = \{ 1 = u_{3,3} \} \cup \{ u_{2,4} = u_{2,3} \} \cup \{u_{1,4} = u_{1,3} \}
$$
as in Figure~\ref{VerticalP43}.
\end{enumerate}
\begin{figure}[ht]
	\scalebox{1}{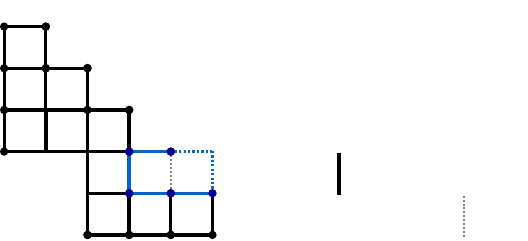}
	\caption{\label{HorizontalP45} $P^\textup{hor}_{4,5}$ in $\mathcal{F}(6)$.}	
\end{figure}
\begin{figure}[ht]
	\scalebox{1}{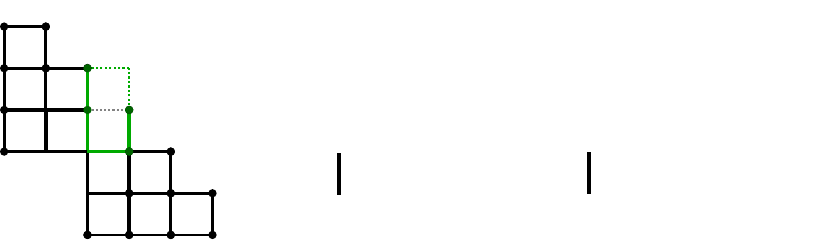}
	\caption{\label{VerticalP43} $P^\textup{ver}_{4,3}$ in $\mathcal{F}(6)$.}	
\end{figure}
\end{example}

By following the combinatorial process playing with \emph{reduced pipe dreams} in \cite{K,KM}, observe that the Schubert varieties associated with the adjacent transpositions
\footnote{
An adjacent transposition is a transposition of the form $(i, i +1)$. 
},
having a complex codimension one, are corresponding to either unions of horizontal facets or unions of vertical facets. The opposite Schubert varieties are corresponding to the other, see Remark 9 in \cite{KM}.

\begin{theorem}[Theorem 2.3.1 in \cite{K}, Theorem 8 in \cite{KM}]\label{thm_KoganMiller}
The inverse image $\scr{D}^\textup{hor}_{\bullet, \bullet+1}$ (or $\scr{D}^\textup{ver}_{\bullet+1, \bullet}$) represents a Schubert cycle of degree two. 
For any (opposite) Schubert divisor $X_w$, there exists either $\scr{D}^\textup{hor}_{\bullet, \bullet+1}$ or $\scr{D}^\textup{ver}_{\bullet+1, \bullet}$ such that it represents the cycle $[X_w]$ of the divisor.
\end{theorem}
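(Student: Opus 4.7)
The plan is to leverage the Kogan–Miller toric degeneration, which is precisely the setting in which these statements were originally proved, and then transport the conclusion from the toric central fiber $X_0$ back to $\mcal{O}_\lambda$ via the gradient-Hamiltonian flow $\phi$ of \eqref{equation_toric_degeneration_diagram_Section 7}. Recall that by \cite{KM}, each Schubert variety $X_w \subset \mcal{O}_\lambda$ degenerates flatly inside the family $\mcal{X} \to \C$ to a reduced union of toric subvarieties of $X_0$, one piece for each reduced pipe dream $D$ representing the permutation $w$. Flatness guarantees that the homology class $[X_w]$ is preserved along the degeneration, so its Poincar\'e dual equals the sum of the fundamental classes of the toric strata arising in the central fiber, each of which is the closure of a torus orbit and hence the inverse image under the toric moment map $\Phi$ of a face of $\Delta_\lambda$.

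First, I would take $w = s_i$ to be an adjacent transposition, so that $X_{s_i}$ is a Schubert divisor. The reduced pipe dreams for $s_i$ are easy to enumerate combinatorially: a single cross can be placed in any of the boxes of the $i$th row (or $i$th column, depending on the convention one fixes for horizontal versus vertical) of the pipe diagram, and each such placement produces exactly one toric divisor in $X_0$. Matching these boxes with the indexing of facets of $\Delta_\lambda$ via the ladder-diagram bijection of Theorem~\ref{theorem_equiv_CG_LD}, one verifies that the collection of toric divisors appearing coincides precisely with $\Phi^{-1}(P^\textup{hor}_{i,i+1})$, the union of the horizontal facets between rows $i$ and $i{+}1$. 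Thus in $X_0$ one has the equality of cycles $[X_{s_i}]|_{X_0} = [\Phi^{-1}(P^\textup{hor}_{i,i+1})]$. Applying $\phi$ to pull this equality back to $\mcal{O}_\lambda$, and using that $\phi$ is a homeomorphism that restricts to a symplectomorphism on the smooth locus, yields $[X_{s_i}] = [\scr{D}^\textup{hor}_{i,i+1}]$ in $H_\bullet(\mcal{O}_\lambda;\Z)$.

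For opposite Schubert divisors $X^{s_i}$, the same argument applies but with the pipe-dream placements transposed, so that the corresponding union of toric divisors on $X_0$ becomes $\Phi^{-1}(P^\textup{ver}_{j+1,j})$ for the appropriate $j$; equivalently, one can deduce the vertical case from the horizontal one by applying the obvious $(i,j) \leftrightarrow (j,i)$ symmetry of the ladder diagram together with the standard duality between $X_w$ and $X^w$. Combined with the fact that $\{[X_{s_i}]\} \cup \{[X^{s_j}]\}$ exhausts all Schubert divisor classes (or, equivalently, generates $H^2(\mcal{O}_\lambda;\Z)$ with the correct multiplicities), this establishes the second assertion: every codimension-one (opposite) Schubert cycle class is realized by some $\scr{D}^\textup{hor}_{\bullet,\bullet+1}$ or $\scr{D}^\textup{ver}_{\bullet+1,\bullet}$, and conversely each such inverse image represents a Schubert cycle of degree two.

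The step I expect to require the most care is the transfer of the cycle equality from $X_0$ to $\mcal{O}_\lambda$: the gradient-Hamiltonian flow $\phi$ is only a symplectomorphism on the preimage of the smooth locus of $\Delta_\lambda$, and the Schubert cycles in question may meet the singular locus of $X_0$. One resolves this by working at the level of (Borel–Moore) homology classes rather than varieties—flatness of $\mcal{X}\to\C$ gives the class identity on every fiber, and the non-displaceability-oriented use of these cycles later (they are merely degree-two ambient classes used as bulk deformation parameters in \eqref{bulkdeformedpotential}) only depends on this homological identification, not on any smoothness of the cycles themselves.
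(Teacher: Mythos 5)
The statement you are proving is cited in the paper as Theorem~2.3.1 of Kogan's thesis~\cite{K} and Theorem~8 of Kogan--Miller~\cite{KM}; the paper itself offers no proof beyond a one-sentence gloss that the claim is established ``by following the combinatorial process playing with reduced pipe dreams.'' So there is no in-paper argument to match yours against; your reconstruction has to be judged against what the cited sources do.

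Your overall strategy---degenerate $X_w$ inside the Kogan--Miller flat family, read off its central fiber as a union of toric strata indexed by reduced pipe dreams for $w$, match those strata against the facets $P^\textup{hor}_{i,i+1}$ or $P^\textup{ver}_{j+1,j}$ via the ladder-diagram bijection, and transport the conclusion back to $\mcal{O}_\lambda$---is essentially the right skeleton and does match the Kogan--Miller approach. Two points, however, need correction or strengthening. First, you assert that $\phi$ is a homeomorphism. It is not: the paper only claims $\phi$ is a continuous surjection, and Theorem~\ref{theorem_contraction} makes explicit that $\phi$ collapses entire iterated-sphere-bundle factors $B(\textbf{u})$ to points over faces carrying non-torus fibers, so $\phi$ is badly non-injective precisely in the region where the GC preimage of a facet acquires boundary. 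Your transfer step therefore cannot be phrased as ``pull the cycle equality back along a homeomorphism.'' Second, and more substantively, the actual content of Kogan's Theorem~2.3.1 is that the union $\Phi_\lambda^{-1}(P^\textup{hor}_{i,i+1})$ is a cycle at all---the individual facet-preimages have boundary, and only for the specific combinations coming from pipe dreams do the boundary strata cancel in pairs. Your argument produces the homological equality indirectly (flat degeneration preserves $[X_{s_i}]$, therefore whatever it degenerates to must be a cycle), but this conflates the class $[X_{s_i}]$ pushed through the degeneration with the class of $\scr{D}^\textup{hor}_{i,i+1}$ computed intrinsically in $\mcal{O}_\lambda$; these a priori live in different homology groups ($H_\bullet(X_0)$ versus $H_\bullet(\mcal{O}_\lambda)$, with $X_0$ singular), and identifying them requires the set-theoretic statement $\phi(X_{s_i}) = \Phi^{-1}(P^\textup{hor}_{i,i+1})$ together with an argument that the boundary cancellation actually occurs, which is precisely what Kogan proves directly. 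So the gap is real: you are using the conclusion (that $\scr{D}^\textup{hor}_{i,i+1}$ is a cycle homologous to $[X_{s_i}]$) as an implicit step in justifying the conclusion. A self-contained argument needs either Kogan's direct boundary-cancellation analysis or a careful Borel--Moore comparison between the fibers of the degeneration that does not pass through the assumption that $\phi$ is a homeomorphism.
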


Now, we apply~\eqref{bulkdeformedpotential}, which is a counterpart of~\eqref{Foootoric2potenti}, to calculate the bulk-deformed potential function. Because of the condition (3) in Section~\ref{reviewpotentialpotentialfunc}, let
$$
\beta^{i,j}_{i+1, j}  \,\,\, \left(\textup{resp. } \beta^{i,j+1}_{i, j} \right)
$$
be the homotopy class in $\pi_2(\mcal{O}_\lambda, L)$ represented by a holomorphic disc of Maslov index $2$ corresponding to $u_{i,j} = u_{i+1,j}$ (resp. $u_{i,j+1} = u_{i,j}$).

\begin{lemma}\label{intersectionnumbercal}
Let $\scr{D}$ be either a horizontal or a vertical Schubert cylce in $X_s$. Then, we have
$$
\beta^{i,j}_{i+1,j} \cap \scr{D} =
\begin{cases}
1 \quad \mbox{if } \scr{D} = \scr{D}^\textup{hor}_{i, i+1} \\
0 \quad \mbox{otherwise, }
\end{cases}
\quad
\beta^{i,j+1}_{i,j} \cap \scr{D} =
\begin{cases}
1 \quad \mbox{if } \scr{D} = \scr{D}^\textup{ver}_{j+1, j} \\
0 \quad \mbox{otherwise. }
\end{cases}
$$
\end{lemma}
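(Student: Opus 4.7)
The plan is to reduce the computation to the toric case via the gradient-Hamiltonian degeneration of Theorem~\ref{NNUtoricdeg}, where the analogous intersection numbers follow from the Cho--Oh classification of Maslov index two discs bounded by toric fibers. Recall from the discussion preceding equation~\eqref{bulkdeformedpotential} that for $s$ sufficiently small, every Maslov index two disc bounded by $L = L_s$ arises from a disc bounded by the corresponding toric fiber in $X_0$, and by \cite[Lemma 9.22]{NNU} these are in bijection with the facets of $\Delta_\lambda$. Under this bijection, the class $\beta^{i,j}_{i+1,j}$ (respectively $\beta^{i,j+1}_{i,j}$) corresponds to the facet $\{u_{i,j} = u_{i+1,j}\}$ (respectively $\{u_{i,j+1}=u_{i,j}\}$) of $\Delta_\lambda$.

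The first step is to express the Schubert cycle in terms of the facets it contains. By definition,
\[
\scr{D}^\textup{hor}_{i,i+1} = \Phi_\lambda^{-1}\!\left(\bigcup_{s=1}^{n-i}\{u_{i,s}=u_{i+1,s}\}\right) = \sum_{s=1}^{n-i} \Phi_\lambda^{-1}(\{u_{i,s}=u_{i+1,s}\})
\]
as a cycle, and similarly for $\scr{D}^\textup{ver}_{j+1,j}$. It is enough, therefore, to verify the formula when $\scr{D}$ is replaced by a single facet-divisor $\Phi_\lambda^{-1}(\{u_{i',j'}=u_{i'+1,j'}\})$ or $\Phi_\lambda^{-1}(\{u_{i',j'+1}=u_{i',j'}\})$, and then sum.

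For this reduced statement, I would transport everything through the symplectomorphism $\phi_s$ of Theorem~\ref{NNUtoricdeg}(3), which identifies the relevant disc class with the standard Maslov-2 disc in the toric fiber above a generic interior point, and identifies the facet-divisor with the toric divisor over that facet away from $\mathrm{Sing}(X_0)$. Since the Fano condition ensures that no disc intersects the singular locus (\cite[Lemma 9.20]{NNU}), and since the singular strata of $X_0$ have codimension at least $4$ (hence do not affect the degree-$2$ intersection pairing), the intersection numbers match those in the smooth toric model. There, by the Cho--Oh formula \cite{CO}, the Maslov-2 disc associated to a facet $F$ intersects the toric divisor of a facet $F'$ transversally once if $F=F'$ and is disjoint from it otherwise. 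Summing over $s$ in the horizontal union, the disc $\beta^{i,j}_{i+1,j}$ meets exactly one divisor in $\scr{D}^\textup{hor}_{i,i+1}$ --- the one indexed by $(i,j,i+1,j)$ --- yielding the value $1$, and meets no divisor in any $\scr{D}^\textup{hor}_{i',i'+1}$ with $i'\neq i$ or in any $\scr{D}^\textup{ver}_{j'+1,j'}$, yielding $0$. The vertical case is identical.

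The only technical point is verifying that the degenerating symplectic flow $\phi_s$ preserves the intersection pairing between the disc class and the Schubert cycle; this is where one must use that $\scr{D}^\textup{hor}_{i,i+1}$ represents a genuine homology class in $X_s$ (by Theorem~\ref{thm_KoganMiller}, which upgrades the set-theoretic preimage to a cycle via the Kogan--Miller description of Schubert varieties in the degeneration), and that the discs are supported away from $\mathrm{Sing}(X_0)$ so that the symplectomorphism $\phi_s|_{X_s^{\mathrm{sm}}}$ suffices to carry out the comparison. Once this is in place, the computation is purely combinatorial and matches the toric formula.
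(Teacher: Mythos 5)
Your proposal follows essentially the same strategy as the paper's proof — transport the disc and the divisor through the gradient-Hamiltonian flow to the central fiber $X_0$, observe that everything stays away from the singular locus, and then invoke the Cho--Oh classification to compute the intersection number in the toric model. The key technical step where you diverge is how you justify computing the intersection number on the singular variety $X_0$. You argue by codimension ("the singular strata of $X_0$ have codimension at least $4$ ... hence do not affect the degree-$2$ intersection pairing"), whereas the paper avoids the subtleties of intersection theory on a singular variety altogether by passing to a \emph{small resolution} $p\colon\widetilde{X}_0\to X_0$: since the disc $\varphi_0$ and the divisor live in the locus where $p$ is an isomorphism, they lift to $\widetilde{\varphi}_0$ and $\widetilde{\scr{D}}_0$ in the smooth toric manifold $\widetilde{X}_0$ without changing the local intersection count, and the Cho--Oh formula applies there without any caveats. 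The small-resolution step is the rigorous version of your codimension heuristic and is worth including; as written, your codimension-$4$ claim is not established and, in any case, is unnecessary once one uses Lemma~\ref{nodiscinW0} (which is Lemma 9.9, not 9.20, of \cite{NNU}) asserting that no Maslov-index-two disc meets even a neighborhood of $\mathrm{Sing}(X_0)$.

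A second minor inaccuracy: your decomposition
\[
\scr{D}^\textup{hor}_{i,i+1} = \sum_{s=1}^{n-i}\Phi_\lambda^{-1}(\{u_{i,s}=u_{i+1,s}\})
\]
is asserted "as a cycle," but the individual preimages $\Phi_\lambda^{-1}(\{u_{i,s}=u_{i+1,s}\})$ are generally \emph{not} cycles — that is exactly the content of Theorem~\ref{thm_KoganMiller}, which says only the \emph{union} is a cycle because the boundaries of the pieces cancel. This does not affect the intersection count, since all intersections occur in the interiors of the preimages away from the boundary strata, but the formal-sum-of-cycles phrasing should be adjusted. Once these two points are cleaned up, the argument agrees with the paper's.
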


\begin{proof}
Let $\phi^\prime_s \colon X_s \to X_0$ be a (continuous) extension of the flow $\phi_s \colon X^{\textup{sm}}_s \to X^{\textup{sm}}_0$
in Theorem~\ref{NNUtoricdeg}, see \cite[Section 8]{NNU}.
Let $\varphi \colon (\mathbb{D}, \pa \mathbb{D}) \to (X_s, L_s)$ be a holomorphic disc in the class $\beta^{i,j}_{i+1,j}$ of Maslov index two for example.
Then, we have a (topological) disc $\phi^\prime_s \circ \varphi \colon (\mathbb{D}, \pa \mathbb{D}) \to (X_s, L_s) \to (X_0, L_0)$, representing $(\phi^\prime_s)_* \beta^{i,j}_{i+1,j}$. Note that there exists a holomorphic disc $\varphi_0$ by \cite{CO} in the class $(\phi^\prime_s)_* \beta^{i,j}_{i+1,j} = [\phi^\prime_s \circ \varphi]$. Meanwhile, by our choice of $\scr{D}$, $\phi^\prime_s (\scr{D})$ is the union of the components over either $P^\textup{hor}_{i, i+1}$ or $P^\textup{ver}_{j+1, j}$.
Since the flow $\phi^\prime_s$ gives rise to a symplectomorphism from $X^{\textup{sm}}_s$ to $X^{\textup{sm}}_0$ and the image of the disc $\varphi$ is contained in $X^{\textup{sm}}_s$, the (local) intersection number should be preserved through the flow $\phi^\prime_s$. To calculate the intersection number, we consider a small resolution $p \colon \widetilde{X}_0 \to X_0$. Because the intersection happens only outside the singular loci at $X_0$, we can lift the divisor and the disc $\varphi_0$ to $\widetilde{\scr{D}}_0$ and $\widetilde{\varphi}_0$ in $\widetilde{X}_0$ without any change of the intersection number. Then, we have
$$
\beta^{i,j}_{i+1,j} \cap \scr{D} = [ \varphi] \cap \scr{D} = [ \widetilde{\varphi}_0 ]\cap \widetilde{\scr{D}}_0,
$$
which completes the proof.
\end{proof}

We take
\begin{equation}\label{bulkparak}
\frak{b} :=  \sum_{i} \frak{b}^\textup{hor}_{i, i+1} \cdot \varphi^\prime_{1-s} \left( \scr{D}^\textup{hor}_{i, i+1}  \right)
 + \sum_{j} \frak{b}^\textup{ver}_{j+1, j} \cdot \varphi^\prime_{1-s} \left( \scr{D}^\textup{ver}_{j+1, j} \right)
\end{equation}
where $\frak{b}^\textup{hor}_{i, i+1}, \frak{b}^\textup{ver}_{j+1, j} \in \Lambda_0$ and $\varphi^\prime_{1-s} \colon X_1 \to X_{s}$. By abuse of notation for simplicity, we denote $\varphi^\prime_{1-s} \left( \scr{D}^\textup{hor}_{i, i+1}  \right)$ (resp. $\varphi^\prime_{1-s} \left( \scr{D}^\textup{ver}_{j+1, j} \right)$) by $\scr{D}^\textup{hor}_{i, i+1}$ (resp. $\scr{D}^\textup{ver}_{j+1, j}$). By the homotopy invariance of the $A_\infty$-structures, we may calculate the (bulk-deformed) Floer cohomology of $L_{m,s}(t)$ in $X_s$ for $s$ sufficiently close to $0$. In particular, non-displaceability of $L_{m}(t)$ can be achieved as long as the Floer cohomology of $L_{m,s}(t)$ is non-zero. Whenever turning on a bulk-deformation, this process passing to $L_{m,s}(t)$ and $\varphi^\prime_{1-s} (\scr{D})$  in $X_s$ will be taken into consideration.  Also, depending on the position $t$ of a Lagrangian torus $L_m( \, \cdot \,)$, we need to consider different $\frak{b}^\textup{hor}_{i, i+1}$ and $\frak{b}^\textup{ver}_{j+1, j}$.

\begin{corollary}\label{formulaforbulkdeformedpotentialoursitu}
Taking a bulk-deformation parameter as in~\eqref{bulkparak}, the deformed potential function is expressed as
\begin{equation}
\frak{PO}^\frak{b}(L; \textup{\textbf{y}}) = \sum_{(i,j)} \left( \exp \left( \frak{b}^\textup{hor}_{i,i+1} \right) \frac{y_{i,j}}{y_{i+1, j}}  T^{ u_{i,j} - u_{i+1, j}} + \exp \left( \frak{b}^\textup{ver}_{j+1,j} \right) \frac{y_{i, j+1}}{y_{i,j}}  T^{ u_{i,j+1} - u_{i,j}} \right).
\end{equation}
\end{corollary}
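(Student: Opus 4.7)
My proof proposal is to derive the formula by specializing the general bulk-deformed potential formula of Theorem~\ref{Foootoric2potenti} (equation~\eqref{bulkdeformedpotential}) to the current setting, using the Maslov index $2$ disc classification for GC systems together with Lemma~\ref{intersectionnumbercal}. Since the bulk deformation parameter $\frak{b}$ in~\eqref{bulkparak} is a $\Lambda_0$-linear combination of Schubert cycles of degree two, and since these cycles are transported through the gradient--Hamiltonian flow $\varphi^\prime_{1-s}$ to the nearby integrable system $X_s$, we may compute on $X_s$ for $s$ sufficiently close to $0$ and invoke the homotopy invariance of $A_\infty$-structures.

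First, I would recall from the discussion in Section~\ref{reviewpotentialpotentialfunc} that in the GC setting (via the toric degeneration of Nishinou--Nohara--Ueda), holomorphic discs of Maslov index $\leq 0$ do not exist, discs of Maslov index two are Fredholm regular, and there is a bijection between homotopy classes $\beta \in \pi_2(X_s, L_s)$ of Maslov index two and facets of $\Delta_\lambda$. In terms of ladder diagrams, these classes are exactly $\beta^{i,j}_{i+1,j}$ (associated with the horizontal facet $u_{i,j}=u_{i+1,j}$) and $\beta^{i,j+1}_{i,j}$ (associated with the vertical facet $u_{i,j+1}=u_{i,j}$), each with open Gromov--Witten invariant $n_\beta=1$. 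From the undeformed formula~\eqref{potentialfunctionoriginal}, one reads off the symplectic areas $\omega(\beta^{i,j}_{i+1,j})/2\pi = u_{i,j}-u_{i+1,j}$ and $\omega(\beta^{i,j+1}_{i,j})/2\pi = u_{i,j+1}-u_{i,j}$, and the boundary pairings $\exp(\partial\beta \cap b)$ evaluated on the basis~\eqref{exponencoord} yield the monomials $y_{i,j}/y_{i+1,j}$ and $y_{i,j+1}/y_{i,j}$ respectively.

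Next, I would apply Theorem~\ref{Foootoric2potenti} to plug $\frak{b}$ in~\eqref{bulkparak} into~\eqref{bulkdeformedpotential}. For each Maslov index two class $\beta$, the bulk factor is
\[
\exp\!\left(\sum_{i}(\beta \cap \scr{D}^{\textup{hor}}_{i,i+1})\,\frak{b}^{\textup{hor}}_{i,i+1} \;+\; \sum_{j}(\beta \cap \scr{D}^{\textup{ver}}_{j+1,j})\,\frak{b}^{\textup{ver}}_{j+1,j}\right).
\]
By Lemma~\ref{intersectionnumbercal}, for $\beta = \beta^{i,j}_{i+1,j}$ all intersection numbers vanish except $\beta^{i,j}_{i+1,j} \cap \scr{D}^{\textup{hor}}_{i,i+1}=1$, so this bulk factor collapses to $\exp(\frak{b}^{\textup{hor}}_{i,i+1})$, and symmetrically to $\exp(\frak{b}^{\textup{ver}}_{j+1,j})$ for $\beta = \beta^{i,j+1}_{i,j}$. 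Combining with $n_\beta=1$, the area factor and the monomial factor recalled above, and summing over the two families of classes, yields precisely the asserted expression for $\frak{PO}^\frak{b}(L;\textbf{y})$.

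The only genuine obstacle is not in the calculation itself but in justifying the use of the FOOO formula~\eqref{bulkdeformedpotential} in the current non-toric setting: the Schubert cycles $\scr{D}^{\textup{hor}}_{i,i+1}$ and $\scr{D}^{\textup{ver}}_{j+1,j}$ are neither smooth nor torus-invariant in $\mcal{O}_\lambda$, so the derivation in \cite{FOOOToric2} does not directly apply. As the authors indicate, this step requires adapting the arguments of \cite[Proposition 4.7]{FOOOToric2} along the lines of \cite[Section~9]{NNU}, which will be carried out in Section~\ref{sec_bulkdefbyschucy}. Granting that adaptation, the corollary follows immediately from the combinatorial computation sketched above.
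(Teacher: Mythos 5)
Your proposal is correct and follows essentially the same route as the paper: the corollary is an immediate consequence of substituting $\frak{b}$ from~\eqref{bulkparak} into the bulk-deformed potential formula~\eqref{bulkdeformedpotential} (whose validity in the GC setting is established in Section~\ref{sec_bulkdefbyschucy}, via Theorem~\ref{mainthmappdendix}) and applying Lemma~\ref{intersectionnumbercal} to collapse the intersection sums to a single term for each Maslov index two class. You also correctly flag that the only substantive point is justifying~\eqref{bulkdeformedpotential} for Schubert cycles that are neither smooth nor torus-invariant, which is precisely what the paper defers to its Section 11.
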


Note that the gradient of the bulk-deformed potential is given as follows:
\begin{equation}\label{thegradientofbulkdeformedpotential}
\begin{split}
\pa^\frak{b} (i, j) (\textup{\textbf{y}}) := y_{i,j} \frac{\pa \frak{PO}^{\frak{b}}}{\pa y_{i,j}}(\textup{\textbf{y}}) &= - c^\textup{ver}_{j+1, j}  \frac{y_{i, j+1}}{y_{i, j}} T^{u_{i,j+1} - u_{i,j}} - c^\textup{hor}_{i-1, i} \frac{y_{i-1, j}}{y_{i,j}} T^{u_{i-1,j} - u_{i,j}}\\
&+ c^\textup{hor}_{i, i+1} \frac{y_{i,j}}{y_{i+1, j}} T^{u_{i,j} - u_{i+1,j}} + c^\textup{ver}_{j, j-1} \cdot \frac{y_{i, j}}{y_{i, j-1}} T^{u_{i,j} - u_{i,j-1}}
\end{split}
\end{equation}
where $c^\textup{hor}_{i,i+1} = \exp (\frak{b}^\textup{hor}_{i, i+1})$ and $c^\textup{ver}_{j+1,j} = \exp (\frak{b}^\textup{ver}_{j+1, j})$.

\vspace{0.2cm}
\subsection{The non-displaceable Gelfand-Cetlin fibers in $\mcal{F}(3)$}\label{pfofmaincorforfullflag3}~
\vspace{0.2cm}

In this section, the case of $\mcal{F}(3)$ will be discussed in details. The following theorem will be proven. 

\begin{theorem}[Theorem~\ref{theoremC}]\label{theorem_maincompleteflag3}
Let $\lambda = \{ \lambda_1 = 2 > \lambda_2 = 0 > \lambda_3 = -2 \}$.
Consider the co-adjoint orbit $\mathcal{O}_\lambda$, a complete flag manifold $\mcal{F}(3)$ equipped with the monotone Kirillov-Kostant-Souriau symplectic form $\omega_\lambda$, Then the Gelfand-Cetlin fiber over a point $\textbf{\textup{u}} \in \Delta_\lambda$ is non-displaceable if and only if $\textbf{\textup{u}} \in I$ where
\begin{equation}\label{linesegmentforf3}
I := \left\{ (u_{1,1}, u_{1,2}, u_{2,1}) = (0, 1 -t, -1 +t) \in \R^3 ~|~ 0 \leq t \leq 1 \right\}
\end{equation}
In particular, the Lagrangian 3-sphere $\Phi_\lambda^{-1}(0,0,0)$ is non-displaceable.
\end{theorem}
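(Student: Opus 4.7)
The ``only if'' direction follows from Pabiniak's Theorem~\ref{displaceablefl3}. For the converse I split $I$ into the half-open piece $\mathring{I} = \{(0, 1-t, -1+t) : 0 \leq t < 1\}$, over which the fibers $L(t) := \Phi_\lambda^{-1}(0, 1-t, -1+t)$ are Lagrangian tori, and the single endpoint $(0,0,0)$, whose fiber is the Lagrangian $3$-sphere. The plan is to produce, for each $t \in [0, 1)$, a bulk-deformation parameter $\frak{b}$ and a weak bounding cochain $b$ so that the Floer cohomology $HF((L(t), b); \frak{b}; \Lambda)$ is non-zero; non-displaceability of $\Phi_\lambda^{-1}(0,0,0)$ will then follow by the Hausdorff-closedness of non-displaceability.

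Writing $Y_1 := y_{1,1}$, $Y_2 := y_{1,2}$, $Y_3 := y_{2,1}$ and taking a bulk cycle built from the four types of degree-two Schubert divisors,
\[
\frak{b} = \frak{b}_1 \scr{D}^{\textup{ver}}_{2,1} + \frak{b}_2 \scr{D}^{\textup{ver}}_{3,2} + \frak{b}_3 \scr{D}^{\textup{hor}}_{1,2} + \frak{b}_4 \scr{D}^{\textup{hor}}_{2,3}, \qquad c_k := \exp(\frak{b}_k),
\]
Corollary~\ref{formulaforbulkdeformedpotentialoursitu} yields
\[
\frak{PO}^{\frak{b}}(L(t); \textbf{y}) = \left( c_3 \tfrac{Y_1}{Y_3} + c_1 \tfrac{Y_2}{Y_1} + c_3 Y_2 + c_1 \tfrac{1}{Y_3} \right) T^{1-t} + \left( c_2 \tfrac{1}{Y_2} + c_4 Y_3 \right) T^{1+t}.
\]
The three logarithmic-derivative equations are
\[
c_3 Y_1^2 = c_1 Y_2 Y_3, \qquad Y_2^2 (c_1 + c_3 Y_1) = c_2 Y_1 T^{2t}, \qquad c_4 Y_3^2 T^{2t} = c_1 + c_3 Y_1.
\]
For $Y_i \in \Lambda_U$ the last two equations force $\frak{v}_T(c_1 + c_3 Y_1) = 2t$, and hence $Y_{1,0} = -c_{1,0}/c_{3,0}$ at leading valuation. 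Multiplying these two equations and eliminating via the first yields a cube equation whose leading part imposes the scalar obstruction
\[
c_{1,0} c_{4,0} + c_{2,0} c_{3,0} = 0.
\]
Once this is satisfied, the $T^{2t}$-coefficient $Y_1' := (Y_1 + c_{1,0}/c_{3,0})/T^{2t}|_{T=0}$ parametrizes a one-parameter family of leading solutions in $\Lambda_U^3$; taking, say, $c_1 = c_2 = c_3 = 1$, $c_4 = -1$, and $Y_1' = -1$ gives the concrete leading datum $(Y_1, Y_2, Y_3) = (-1 - T^{2t}, 1, 1)$. A standard Newton iteration over $\Lambda_+$ lifts this datum to a genuine $\Lambda_U$-valued critical point of $\frak{PO}^{\frak{b}}(L(t); \cdot)$, uniformly for $t \in [0, 1)$, and Theorem~\ref{criticalpointimpliesnondisplaceability} then yields non-displaceability of each $L(t)$.

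For the endpoint $t = 1$ I invoke Proposition~\ref{closednessofnondisp}, which says that the set of non-displaceable compact subsets of $(\mcal{O}_\lambda, \omega_\lambda)$ is closed in the Hausdorff topology. Theorem~\ref{theoremB} identifies $\Phi_\lambda^{-1}(0,0,0) \cong S^3$ as the Hausdorff limit of the tori $L(t)$ as $t \to 1^-$: in the iterated-bundle description of $L(t)$, the two $S^1$-factors corresponding to the periodic Hamiltonians $\Phi^{1,1}_\lambda$ and $\Phi^{1,2}_\lambda$ both contract to a point while the $S^3$ block persists. Combining these two steps finishes the proof.

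The technically delicate piece is the middle paragraph: at $t > 0$ the valuations of the three gradient equations split, and a naive ansatz with $\C$-valued bulk parameters forces $Y_3 = 0 \notin \Lambda_U$. Finding the single scalar identity $c_{1,0} c_{4,0} + c_{2,0} c_{3,0} = 0$ that breaks this obstruction, and recognizing that the sum of the four Schubert divisors (with one sign twist) is enough to meet it, is the heart of the computation; the remaining Newton iteration in the Novikov setting is routine.
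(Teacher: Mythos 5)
Your overall plan---detect non-displaceable torus fibers $L(t)$ for $t \in [0,1)$ via a bulk-deformed critical point of $\frak{PO}^{\frak{b}}$, then pass to the $S^3$-fiber by Hausdorff closedness of non-displaceability---is the same as the paper's, but the middle step contains a genuine gap. You fix complex constants $c_1 = c_2 = c_3 = 1$, $c_4 = -1$ satisfying the scalar identity, write down a leading datum $(Y_1, Y_2, Y_3) = (-1-T^{2t}, 1, 1)$, and claim a Newton iteration over $\Lambda_+$ lifts it to a genuine $\Lambda_U$-valued critical point of $\frak{PO}^{\frak{b}}$. With fixed $\C$-valued $c_i$'s there is, however, no critical point to iterate towards once $t>0$. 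Multiplying the second and third gradient equations and eliminating $Y_2 Y_3$ via the first produces the exact identity $c_4 c_3^2 Y_1^3 = c_2 c_1^2$, so $Y_1^3$ is a fixed complex number. The cube root compatible with the required leading behaviour $Y_{1,0} = -c_{1,0}/c_{3,0}$ is $Y_1 = -c_1/c_3$ exactly, which makes $c_1 + c_3 Y_1 = 0$ identically; the third gradient equation then forces $Y_3 = 0 \notin \Lambda_U$. The other two cube roots make $c_1 + c_3 Y_1$ a nonzero constant of valuation $0$, incompatible with the valuation $2t>0$ of $c_4 Y_3^2 T^{2t}$. Your one-parameter family of ``leading solutions'' is an artefact of truncating at order $T^{2t}$: the identity $c_{1,0}c_{4,0} + c_{2,0}c_{3,0} = 0$ is a necessary condition on the leading bulk coefficients, but it is not sufficient with $\C$-valued bulk.

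The paper sidesteps this by letting the bulk coefficients carry nontrivial $\Lambda_+$-tails. It takes $\exp(\frak{b}^\textup{ver}_{2,1}) = 1 + T^{2t}$, which spreads two of the disc contributions across both energy levels $T^{1-t}$ and $T^{1+t}$, and lets $y_{1,1}^2 = 1 + T^{2t}$ kill the first gradient equation exactly with $y_{1,2}=y_{2,1}=1$; the two residual gradients are then absorbed by tuning $\exp(\frak{b}^\textup{ver}_{3,2})$ and $\exp(\frak{b}^\textup{hor}_{2,3})$, which end up with leading coefficients $-\tfrac12$ and $\tfrac12$ (so the paper's choice does satisfy your necessary scalar identity, with $c_{1,0}=c_{3,0}=1$). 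What your argument misses is that the bulk parameters must be genuine Novikov elements, not constants: the correct unknowns are the fiber coordinates and the bulk parameter $\frak{b}$ together, either solved simultaneously, or---as in the paper---sequentially, by fixing a tentative $\frak{b}'$, solving for the fiber coordinates, and then adjusting the remaining bulk coefficients in $\Lambda_0$ to absorb the residuals.
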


Before starting our proof, we explain an alternative description for the fibers over the red line in Figure~\ref{figure_GCmomnetf3} following Chan-Pomerleano-Ueda \cite{CPU}. They proved the homological mirror symmetry for the conifold by realizing Strominger-Yau-Zaslow(SYZ) mirror symmetry on the smoothing $Y_\varepsilon$ of the conifold where
\begin{align*}
Y_\varepsilon = \{ (u_1, v_1, u_2, v_2) \in \C^4 ~\colon~ u_1 v_1 - u_2 v_2 + \varepsilon = 0\} 
\end{align*}
It can be embedded into
$$
\mathring{Y}_\varepsilon =\{ (u_1, v_1, u_2, v_2, z) \in \C^5 ~\colon~ u_1 v_1 = z - a, u_2 v_2 =  z - b\}
$$
where $a$ and $b$ are positive real numbers such that $\varepsilon = b - a$.
For the purpose of doing Strominger-Yau-Zaslow(SYZ) mirror symmetry, they came up with a double conic fibration on the complement of the anticanonical divisor $\{z=0\}$ in $\mathring{Y}_\varepsilon$. Consider the projection to the $z$-variable $\mathring{Y}_\varepsilon \to \C$. Note that we have singular fibers over two points $z=a$ and $z=b$. Also, it admits the following fiberwise $T^2$-action
$$
\begin{cases}
\theta_1 * (u_1, v_1) = (e^{\sqrt{-1}\theta} u_1, e^{- \sqrt{-1}\theta} v_1) \\
\theta_2 * (u_2, v_2) = (e^{\sqrt{-1}\theta} u_2, e^{- \sqrt{-1}\theta} v_2).
\end{cases}
$$
Any $T^2$-orbit satisfying $|u_i| = |v_i|$ for $i= 1, 2$ is called an \emph{equator}. By collecting the equators over a circle centered at the origin in the $z$-plane, we obtain a (special) Lagrangian torus fibration (with respect to the holomorphic volume form $d \log z \wedge d \log u_1 \wedge d \log u_2$), see Figure~\ref{Fig_doubleconicfib}. The fibration carries the exactly two walls at $z = a$ and $z = b$. Also, collecting the equators over the line segment connecting $a$ and $b$, we obatin two solid tori that forms Lagrangian $S^3$. This picture will serve as a local model for a Lagrangian torus fibration on $\mcal{F}(3)$ equipped with a (non-standard) symplectic form and the tori consisting of equators over simply closed curve containing $a$, $b$ and $0$ in the $z$-plane will be in our interest.

\vspace{0.2cm}
\begin{figure}[ht]
	\scalebox{1}{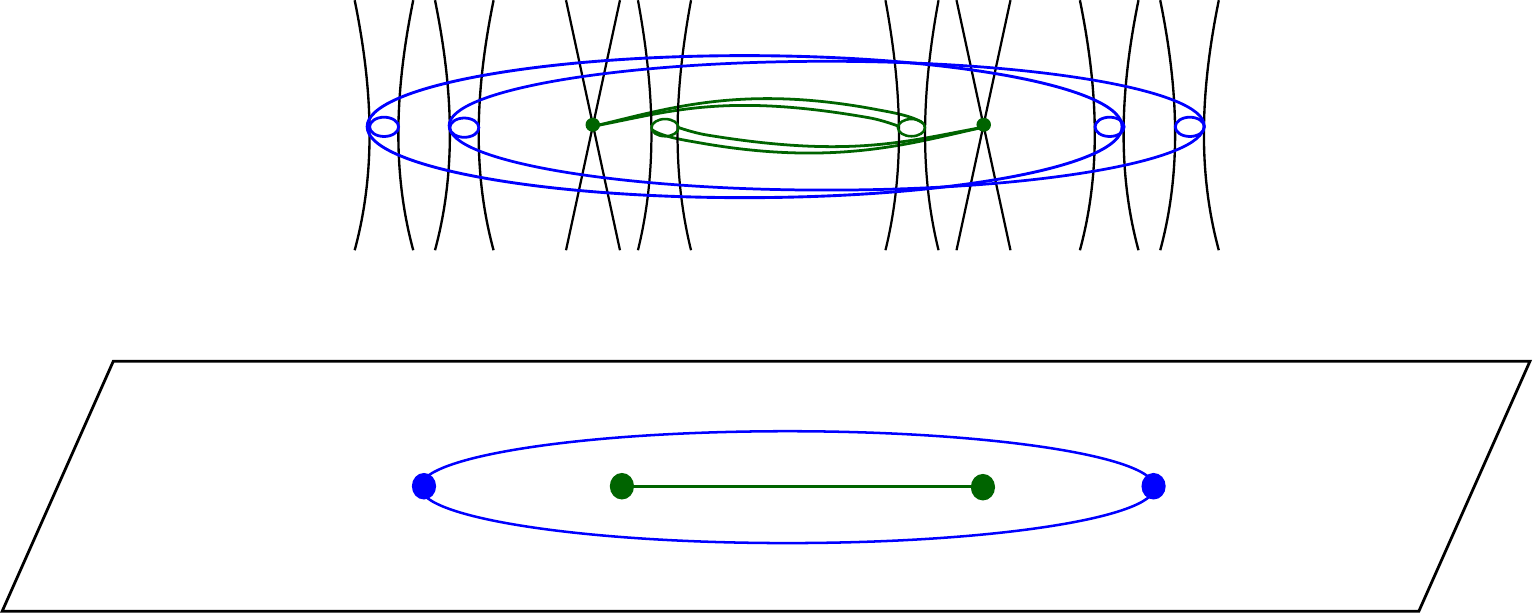}
	\caption{\label{Fig_doubleconicfib} Double conic fibration.}	
\end{figure}

Returning to the case of $\mcal{F}(3)$, we have a toric degeneration of algebraic varieties consisting of 
$$
X_\varepsilon = \{ ([Z_1: Z_2: Z_3], [Z_{12}: Z_{23}: Z_{13}]) \in \CP^2 \times \CP^2 ~\colon~ \varepsilon Z_1 Z_{23} - Z_2 Z_{13} + Z_3 Z_{12} = 0\}.
\footnote{
The conventions constructing toric degenerations on \cite{NNU} and \cite{KM} are different. In  \cite{NNU}, the diagonal term in the Pl{\"u}cker embedding survives, whereas in \cite{KM} the anti-diagonal term does. Here, we are following the convention of \cite{KM}. 
}
$$
For any $\varepsilon \neq 0$, $X_\varepsilon$ is diffeomorphic to a complete flag manifold $\mcal{F}(3)$. When $\varepsilon = 0$, we have a toric variety $X_0$ at the center. A generic one $X_\varepsilon$ ($\varepsilon \neq 0$) can be viewed as a compactification of $Y_\varepsilon$. By getting rid of divisors $Z_1 = 0$ and $Z_{23} = 0$ in $X_\varepsilon$ and setting 
$$
u_1 := \frac{Z_3}{Z_1}, v_1 := \frac{Z_{12}}{Z_{23}}, u_2 := \frac{Z_2}{Z_1}, v_2 := \frac{Z_{13}}{Z_{23}},
$$
we obtain a torus fibration on an open subvariety of $X_\varepsilon$ by following the above recipe with respect to the symplectic form induced from $X_\varepsilon$. 

As we are concerned with Lagrangians, we may set $a = 0$ and $b = \varepsilon$ and then fix a family $\{\gamma(t) ~\colon~ 0 < t \leq 1 \}$ of simply closed curves enclosing $a$ and $b$ at the interior and converging to the line segment $[a,b]$. The torus fibers consisting of the equators over the simple curves in the family degenerates into the Lagrangian $S^3$. We would like to show that those tori are non-displaceable. 

As $\varepsilon \to 0$, $X_\varepsilon$ degenerates into a toric variety $X_0$. A torus over the simple closed curve enclosing $a$ and $b$ is Lagrangian isotopic to a torus over a simple closed curve centered at the origin, which comes from the toric fibers. 
In this case, according to Knutson-Miller \cite[Section 1.3]{KnM}, the (opposite) Schubert divisors on $X_\varepsilon$ can be defined as matrix Schubert variety. In terms of the Pl{\"u}cker coordinates, their results can be written as 
$$
X_{132} = \{ Z_{12} = 0\} = \{v_1 = 0\},\, X_{213} = \{ Z_{1} = 0\} = \{u_1 = 0\}
$$
where $X_w$ is the Schubert variety associated with $w$ in the symmetric group $S_3$. By \cite{KM}, these correspond to
\begin{equation}\label{eq_ver}
\scr{D}^\textup{ver}_{2,1} = X_{132}, \, \scr{D}^\textup{ver}_{3,2} =  X_{213}
\end{equation}
By taking the involution where, 
$$
Z_1 \leftrightarrow \overline{Z}_{23}, Z_2 \leftrightarrow \overline{Z}_{13}, Z_3 \leftrightarrow \overline{Z}_{12} 
$$
we also have 
\begin{equation}\label{eq_hor}
\scr{D}^\textup{hor}_{1,2} = \{ Z_{3} = 0\},\, \scr{D}^\textup{hor}_{2,3} = \{ Z_{23} = 0 \}.
\end{equation}
To deform the Floer theory, we employ a combination of the vertical and horizontal divisors in~\eqref{eq_ver} and~\eqref{eq_hor}.
Note that $\scr{D}^\textup{ver}_{2,1}$ and $\scr{D}^\textup{hor}_{1,2}$ are over $z=0$ and $\scr{D}^\textup{ver}_{3,2}$ and $\scr{D}^\textup{hor}_{2,3}$ are at $\infty$. Therefore, any of the divisors do \emph{not} intersect the torus fibers. 
\begin{equation}\label{eq_combverhor}
\frak{b} = \frak{b}^\textup{ver}_{2,1} \cdot \scr{D}^\textup{ver}_{2,1} + \frak{b}^\textup{hor}_{1,2} \cdot \scr{D}^\textup{hor}_{1,2} + \frak{b}^\textup{ver}_{3,2} \cdot \scr{D}^\textup{ver}_{3,2} + \frak{b}^\textup{hor}_{2,3} \cdot \scr{D}^\textup{hor}_{2,3}
\end{equation}

For the proof of Theorem~\ref{theorem_maincompleteflag3}, we need the following proposition.

\begin{proposition}\label{closednessofnondisp}
Let $\Phi \colon X \to \Delta \subset \R^d$ be a completely integrable system (Definition~\ref{definition_CIS_continuous}) such that $\Phi$ is proper. If there exists a sequence $\{ \textbf{\textup{u}}_i  : i \in \N \}$ such that
\begin{enumerate}
\item Each $\Phi^{-1}(\textbf{\textup{u}}_i)$ is non-displaceable.
\item The sequence $\textbf{\textup{u}}_i$ converges to some point $\textbf{\textup{u}}_\infty$ in $\Delta$.
\end{enumerate}
then $\Phi^{-1}(\textbf{\textup{u}}_\infty)$ is also non-displaceable.
\end{proposition}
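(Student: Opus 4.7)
The plan is a straightforward contradiction argument using upper semi-continuity of the fiber map, which follows from properness of $\Phi$. Suppose $\Phi^{-1}(\textbf{u}_\infty)$ were displaceable by some $\phi \in \mathrm{Ham}(X,\omega)$. Writing $A := \Phi^{-1}(\textbf{u}_\infty)$, the two sets $A$ and $\phi(A)$ are disjoint and compact, hence by the Hausdorff property can be separated by disjoint open sets $U \supset A$ and $V \supset \phi(A)$. First I would pass from these to a single open $W := U \cap \phi^{-1}(V)$; this $W$ still contains $A$, and since $\phi(W) \subset V$ while $W \subset U$, one gets $\phi(W) \cap W = \emptyset$. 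In other words, $\phi$ already displaces an entire open neighborhood $W$ of $A$.

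Next, I would invoke the upper semi-continuity of $\textbf{u} \mapsto \Phi^{-1}(\textbf{u})$: because $\Phi$ is proper and continuous, for any open $W \supset \Phi^{-1}(\textbf{u}_\infty)$ there is an open neighborhood $\mcal{V}$ of $\textbf{u}_\infty$ in $\Delta$ with $\Phi^{-1}(\mcal{V}) \subset W$. This is the standard argument: if it failed, one could pick $\textbf{u}_n \to \textbf{u}_\infty$ and $x_n \in \Phi^{-1}(\textbf{u}_n) \setminus W$, confine the $x_n$ to a compact set via properness on a compact neighborhood of $\textbf{u}_\infty$, extract a convergent subsequence, and reach a limit point in $\Phi^{-1}(\textbf{u}_\infty) \setminus W$, a contradiction.

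Finally, by the hypothesis $\textbf{u}_i \to \textbf{u}_\infty$, for all sufficiently large $i$ we have $\textbf{u}_i \in \mcal{V}$, and thus $\Phi^{-1}(\textbf{u}_i) \subset W$ and $\phi(\Phi^{-1}(\textbf{u}_i)) \subset \phi(W)$. Because $W \cap \phi(W) = \emptyset$, the fiber $\Phi^{-1}(\textbf{u}_i)$ is displaced by $\phi$, contradicting hypothesis (1). Hence $\Phi^{-1}(\textbf{u}_\infty)$ must be non-displaceable.

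There is essentially no serious obstacle here: the only subtlety is making sure the Hausdorff separation is upgraded from ``$A$ and $\phi(A)$ are separated'' to ``a whole neighborhood of $A$ is displaced by $\phi$,'' which is handled by the intersection trick $W = U \cap \phi^{-1}(V)$. Properness of $\Phi$ (which in our setting holds because $X$ is compact, so fibers are automatically compact and $\Phi$ is automatically proper) is exactly what is needed to convert this neighborhood-level statement back to a fiber-level statement for the approximating sequence.
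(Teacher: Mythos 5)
Your proof is correct and follows essentially the same approach as the paper: argue by contradiction, use properness of $\Phi$ to control limits of points in nearby fibers, and conclude that fibers near $\textbf{u}_\infty$ would also be displaced. The only difference is that you explicitly supply the step of upgrading $\phi(A)\cap A=\emptyset$ to the displacement of an open neighborhood $W\supset A$ via the intersection trick $W=U\cap\phi^{-1}(V)$, a point the paper's proof asserts without detail.
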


\begin{proof}
For a contradiction, suppose that $\Phi^{-1}(\textbf{\textup{u}}_\infty)$ is displaceable. There is a (time-dependent) Hamiltonian diffeomorphism $\phi$ and an open set $U$ containing $\Phi^{-1}(\textbf{\textup{u}}_\infty)$ in $X$ such that
$$
\phi(U) \cap U = \emptyset.
$$
For each $i$, there exists a point $x_i \in \Phi^{-1}(\textbf{\textup{u}}_i)$ such that $x_i \notin U$ since $\Phi^{-1}(\textbf{\textup{u}}_i)$ is non-displaceable. It implies that any subsequence of $\{x_i\}$ cannot converge to a point in $U$. On the other hand, passing to a subsequence, we may assume that $x_i$ converges to $x_\infty$ for some $x_\infty \in X$ since $\Phi$ is proper. By the continuity of $\Phi$, we then have
$$
\textbf{\textup{u}}_\infty = \lim_{i \to \infty} \textbf{\textup{u}}_i = \lim_{i \to \infty} \Phi (x_i) = \Phi (x_\infty).
$$
It leads to a contradiction that $x_\infty \in \Phi^{-1} (\textbf{\textup{u}}_\infty) \subset U$.
\end{proof}

We now start the proof of Theorem~\ref{theorem_maincompleteflag3}. Since 

\begin{proof}[Proof of Theorem~\ref{theorem_maincompleteflag3}]

For any fixed $t$ with $0 \leq t < 1$, let $L(t)$ be the Lagrangian torus fiber over $(0, 1-t, -1+t) \in I$ in~\eqref{linesegmentforf3}. Let $L_s(t)$ be the fiber corresponding to $L(t)$ in $X_s$ via a toric degeneration of completely integrable systems in Theorem~\ref{NNUtoricdeg}. By taking $s > 0$ sufficiently close to $0$, the potential function of $L_s(t)$ can be arranged as
$$
\frak{PO}(\textbf{y}) = \left( \frac{y_{1,2}}{y_{1,1}} + \frac{y_{1,1}}{y_{2,1}} + y_{1,2} + \frac{1}{y_{2,1}} \right) T^{1-t} + \left( \frac{1}{y_{1,2}} + y_{2,1} \right) T^{1+t}.
$$

We use a combination of Schubert cycles in~\eqref{bulkparak} (or equivalently~\eqref{eq_combverhor}) to deform the potential function. A strategy we take is to postpone determining bulk-deformation parameters. Namely, we start with a tentative a parameter, determine solutions for $\textbf{y}$ first, and then adjust the parameter to make the chosen $\textbf{y}$ a critical point.

Take a \emph{tentative} bulk-parameter $\frak{b}^\prime := \frak{b}^\textup{ver}_{2,1} \cdot \scr{D}^\textup{ver}_{2,1}$ such that $\exp (\frak{b}^\textup{ver}_{2,1}) = 1 + T^{2t}$, i.e.,  
$$
\frak{b}^\textup{ver}_{2,1} = T^{2t} - \frac{1}{2} T^{4t} + \cdots \in \Lambda_+. 
$$
By Corollary~\ref{formulaforbulkdeformedpotentialoursitu}, the potential function is deformed into
$$
\frak{PO}^{\frak{b}^\prime}(\textbf{y}) = \left( \frac{y_{1,2}}{y_{1,1}} + \frac{y_{1,1}}{y_{2,1}} + y_{1,2} + \frac{1}{y_{2,1}} \right) T^{1-t} + \left( \frac{y_{1,2}}{y_{1,1}} + \frac{1}{y_{2,1}} + \frac{1}{y_{1,2}} + y_{2,1} \right) T^{1+t},
$$
whose logarithmic derivatives are
$$
\begin{cases}
\displaystyle y_{1,1} \frac{\pa \frak{PO}^{\frak{b}^\prime}}{\pa y_{1,1}} (\textbf{y}) =  \left( - \frac{y_{1,2}}{y_{1,1}} + \frac{y_{1,1}}{y_{2,1}} \right) T^{1-t} + \left( - \frac{y_{1,2}}{y_{1,1}} \right) T^{1+t} \\ \\
\displaystyle y_{1,2} \frac{\pa \frak{PO}^{\frak{b}^\prime}}{\pa y_{1,2}} (\textbf{y}) =  \left(  \frac{y_{1,2}}{y_{1,1}} + {y_{1,2}} \right) T^{1-t} + \left( \frac{y_{1,2}}{y_{1,1}} - \frac{1}{y_{1,2}} \right) T^{1+t} \\ \\
\displaystyle y_{2,1} \frac{\pa \frak{PO}^{\frak{b}^\prime}}{\pa y_{2,1}} (\textbf{y}) =  \left( - \frac{y_{1,1}}{y_{2,1}} - \frac{1}{y_{2,1}} \right) T^{1-t} + \left( - \frac{1}{y_{2,1}} + y_{2,1} \right) T^{1+t}. \\
\end{cases}
$$

We set $y_{1,2} = 1, y_{2,1} = 1$ and take $y_{1,1}$ as the solution of $(y_{1,1})^2 = 1 + T^{2t}$ satisfying $y_{1,1} \equiv -1 \mod T^{>0}$. It is easy to see that $y_{1,1} \frac{\pa \frak{PO}^{\frak{b}^\prime}}{\pa y_{1,1}} (\textbf{y}) = 0$. Note that $y_{1,1}$ is of the form
$$
y_{1,1} \equiv - 1 - \frac{1}{2} T^{2t} \mod T^{>2t}.
$$

We now adjust a bulk-deformation parameter from $\frak{b}^\prime$ to $\frak{b}$ in order for the chosen $(y_{1,1}, y_{1,2}, y_{2,1})$ to be a critical point of $\frak{PO}^\frak{b}$. Let
$$
\frak{b} := \frak{b}^\prime + \frak{b}^\textup{ver}_{3,2} \cdot \scr{D}^\textup{ver}_{3,2} + \frak{b}^\textup{hor}_{2,3} \cdot \scr{D}^\textup{hor}_{2,3}.
$$
Since $\scr{D}^\textup{ver}_{3,2}$ and $\scr{D}^\textup{hor}_{2,3}$ do \emph{not} intersect with the homotopy classes corresponding to $\scr{D}^\textup{ver}_{2,1}$ in $\pi_2(\mcal{O}_\lambda, \Phi^{-1}_\lambda(t))$, we still have 
$$
y_{1,1} \frac{\pa \frak{PO}^{\frak{b}}}{\pa y_{1,1}} (\textbf{y}) = y_{1,1} \frac{\pa \frak{PO}^{\frak{b}^\prime}}{\pa y_{1,1}} (\textbf{y})  = 0.
$$ 
Plugging the chosen $y_{i,j}$'s, we have
$$
\begin{cases}
\displaystyle y_{1,2} \frac{\pa \frak{PO}^{\frak{b}}}{\pa y_{1,2}} (\textbf{y}) = \left( -\frac{1}{2} - \exp (\frak{b}^\textup{ver}_{3,2}) \right) T^{1+t} + \frak{P}^{(1,2)} \cdot T^{1+t}  \\ \\
\displaystyle y_{2,1} \frac{\pa \frak{PO}^{\frak{b}}}{\pa y_{2,1}} (\textbf{y}) = \left( -\frac{1}{2} + \exp (\frak{b}^\textup{hor}_{2,3}) \right) T^{1+t} + \frak{P}^{(2,1)} \cdot T^{1+t}. \\
\end{cases}
$$
for some constant $ \frak{P}^{(1,2)}, \frak{P}^{(2,1)}\in \Lambda_+$. By choosing $\frak{b}^\textup{ver}_{3,2}, \frak{b}^\textup{hor}_{2,3} \in \Lambda_0$ so that 
$$
\begin{cases}
\displaystyle \exp (\frak{b}^\textup{ver}_{3,2}) = -\frac{1}{2} + \frak{B}^{(1,2)} \\ \\
\displaystyle \exp (\frak{b}^\textup{hor}_{2,3}) = \frac{1}{2} - \frak{B}^{(2,1)}. \\
\end{cases}
$$
we can make $\frak{PO}^\frak{b}(\textbf{y})$ admit a critical point. By Theorem~\ref{criticalpointimpliesnondisplaceability}, $L_\varepsilon(t)$ has a non-vanishing (bulk-)deformed Floer cohomology. By the hamiltonian invariance of $A_\infty$-structures, so does $L(t)$ and therefore it is non-displaceable. 

In sum, each torus fiber over the line segment in~\eqref{linesegmentforf3} is non-displaceable. Furthermore, Proposition~\ref{closednessofnondisp} yields non-displaceability of the Lagrangain $3$-sphere. 
\end{proof}

\section{Decompositions of the gradient of potential function}
\label{secDecompositionsOfTheGradientOfPotentialFunction}
\label{section:decompositionofpote}

In this section, in order to prove Theorem~\ref{theoremD}, we introduce the split leading term equation of the potential function in~\eqref{potential}, which is the analogue of the leading term equation in~\cite{FOOOToric1, FOOOToric2}. We discuss the relation between its solvability and non-triviality of Floer cohomology under a certain bulk-deformation.

\vspace{0.2cm}
\subsection{Outline of Section~\ref{section:decompositionofpote} and Section~\ref{solvabilityofthesplitleadingtermequ}}~
\vspace{0.2cm}

Due to Theorem~\ref{criticalpointimpliesnondisplaceability}, in order to show that the GC torus fiber $L_m(t)$ for each $t$ with $0 \leq t < 1$ is non-displaceable, it suffices to find a bulk-deformation parameter $\frak{b}$ such that $\frak{PO}^\frak{b}$ admits a critical point $\textbf{y}$. Section~\ref{section:decompositionofpote} and Section~\ref{solvabilityofthesplitleadingtermequ} will be occupied to discuss how to determine them.

Before giving the outline, we explain why this process takes so long by pointing out the differences from the case of toric fibers in a symplectic toric manifold. In the toric case, the (generalized) leading term equation was introduced to detect non-displaceable toric fibers effectively in \cite[Section 11]{FOOOToric2}. Roughly speaking, it consists of the initial terms of the gradient of a (bulk-deformed) potential function with respect to a suitable choice of exponential variables. It is proven therein that there always exists a bulk-deformation parameter $\frak{b}$ so that the complex solution becomes a critical point of the bulk-deformed potential function $\frak{PO}^\frak{b}$ as soon as the leading term equation admits a solution whose components are in $\C \backslash \{0\}$.  Indeed, the positions where the leading term equation is solvable are characterized by the intersection of certain tropicalizations in \cite{KL}. The key features for proving the above statements are in order. First, there is a one-to-one correspondence between the \emph{honest} holomorphic discs bounded by a toric fiber of Maslov index $2$ and the facets of the moment polytope. Second, the preimage of each facet represents a cycle of degree $2$. Therefore, all terms corresponding to the facets can be independently controlled, yielding a parameter $\frak{b}$ and providing a solution of the equation.

In a GC system, however, the inverse image of a facet may not represent a cycle of degree $2$ so that the terms of $\frak{PO}$ cannot be independently controlled.
\footnote{
Because of this feature, Bernstein-Kushnirenko theorem \cite{Be, Ku} cannot be applied in our situation. 
}
Thus, the above statements are not expected to hold anymore. But for a family of Lagrangian tori $L_m(t)$ in $\mcal{O}_\lambda \simeq \mcal{F}(n)$ with the monotone symplectic form $\omega_\lambda$, we will show the existence of a bulk-deformation parameter $\frak{b}$ and a critical point $\textbf{y}$. In Section~\ref{section:decompositionofpote}, we define the \emph{split leading term equation} (See Definition~\ref{splittingleadingtermequa}), which replaces the role of the leading term equation in the toric case. We then demonstrate how to determine a bulk-deformation parameter and extend a solution of the split leading term equation to a critical point of the bulk-deformed potential function. In Section~\ref{solvabilityofthesplitleadingtermequ}, we show that the split leading term equation always admits a solution. In general, finding a solution of a general system of multi-variable equations is not simple at all even with the aid of a computer. Yet, in this case, we are able to find a solution, guided by ladder diagrams regarding as the containers of exponential variables.  

Let $B(m)$ be the sub-diagram consisting of $(m \times m)$ lower-left unit boxes in the ladder diagram $\Gamma(n) := \Gamma(1, \cdots, n)$ of $\mcal{F}(n)$. The diagrams $\Gamma(n)$ and $B(m)$ are often regarded as collections of double indices as follows:
\begin{equation}\label{gammanbm}
\begin{split}
&\Gamma(n) = \{ (i, j) ~\colon~ 2 \leq i + j \leq n\}\\
&B(m) = \{ (i, j) ~\colon~ 1 \leq i, j \leq m\}.
\end{split}
\end{equation}
Recalling~\eqref{potential}, the potential function of $L_m(t)$ is arranged as several groups with repsect to the energy levels. Observe that the valuation of $\pa (i,j)(\textbf{y})$ for $(i, j) \in B(m)$ is $(1-t)$ and that of $\pa (i,j)(\textbf{y})$ for $(i, j) \in \Gamma(n) \backslash B(m)$ is $1$.

We will decompose the gradient of the potential function deformed by $\frak{b}$ in~\eqref{bulkparak} into two pieces along the boundary of $B(m)$. We are planning to determine a critical point in the following steps.
\begin{enumerate}
\item Find a solution $y^\C_{i,j} \in \C \backslash \{0\}$ of the system consisting of the equations $\pa^\frak{b}(i,j)(\textbf{y}) \equiv 0 \mod T^{>1}$ in $\Gamma(n) \backslash  B(m) \cup \{(m,m)\}$ and equations relating the variables adjacent to $B(m)$ in Section~\ref{solvabilityofthesplitleadingtermequ}.
\item Find a solution $y^\C_{i,j} \in \C \backslash \{0\}$ of $\pa^\frak{b}(i,j)(\textbf{y}) \equiv 0 \mod T^{>1-t}$ in $B(m)$ in Section~\ref{symmetriccomplexsolinbm}.
\item Determine a solution $y_{i,j} \in \Lambda_U$ of $\pa^\frak{b}(i,j)(\textbf{y}) = 0$ in $B(m)$  such that $y_{i,j} \equiv y_{i,j}^\C \mod T^{>0}$ in Section~\ref{insidebm}.
\item Determine a solution $y_{i,j} \in \Lambda_U$ of $\pa^\frak{b}(i,j)(\textbf{y}) = 0$ in $\Gamma(n) \backslash  B(m)$ such that $y_{i,j} \equiv y_{i,j}^\C \mod T^{>0}$ in Section~\ref{outsideofbm}.
\end{enumerate}
The \emph{split leading term equation} (See Definition~\ref{splittingleadingtermequa}) arises in the first step $(1)$. In this section, assuming that the split leading term equation is solvable, we explain how to complete the remaing steps $(2), (3)$ and $(4)$. The next section focuses on solving the split leading term equation.

\begin{example}
In the co-adjoint orbit $\mcal{O}_\lambda$ of a sequence $\lambda = \{ 5, 3, 1, -1, -3, -5 \}$ for instance, the potential function of $L_2(t)$ is arranged as follows: 
$$
\frak{PO}(L_2(t); \textbf{y}) = \left( \frac{y_{1, 2}}{y_{1,1}}+ \frac{y_{1, 1}}{y_{2,1}} + \frac{y_{1, 2}}{y_{2,2}} + \frac{y_{2, 2}}{y_{2,1}} \right) T^{1-t} + \left(  \frac{y_{1, 4}}{y_{1,3}} + \frac{y_{1, 3}}{y_{2,3}} + \cdots \right) T^{1} + \left( \frac{y_{1, 3}}{y_{1,2}} + \frac{y_{2, 1}}{y_{3,1}} \right) T^{1 + t}.
$$
In this example, the valuation of partial derivatives of $\frak{PO}$ jumps along the red line in Figure~\ref{Decomposition of the gradient of the potential function}.

Turning on bulk-deformation, according to~\eqref{thegradientofbulkdeformedpotential}, A complex number $y_{i,j} \in \C \backslash \{0\}$ has to satisfy
\begin{equation}\label{firstsysfrominb2}
\begin{cases}  - c^\textup{ver}_{2,1} \cdot \frac{y_{1, 2}}{y_{1,1}}+ c^\textup{hor}_{1,2} \cdot \frac{y_{1, 1}}{y_{2,1}} = 0, \,\, c^\textup{ver}_{2,1} \cdot \frac{y_{1, 2}}{y_{1,1}} + c^\textup{hor}_{1,2} \cdot \frac{y_{1, 2}}{y_{2,2}} = 0, \\ \\
 - c^\textup{hor}_{1,2} \cdot \frac{y_{1, 1}}{y_{2,1}} - c^\textup{ver}_{2,1} \cdot \frac{y_{2, 2}}{y_{2,1}} = 0, \,\, - c^\textup{hor}_{1,2} \cdot \frac{y_{1, 2}}{y_{2,2}} + c^\textup{ver}_{2,1} \cdot \frac{y_{2, 2}}{y_{2,1}} = 0, \\
\end{cases}
\end{equation}
which comes from the initial parts of the partial derivatives inside $B(2)$, and
\begin{equation}\label{secondsysfromoutb2}
\begin{cases}
& - c^\textup{ver}_{6,5} \cdot \frac{1}{y_{1,5}} + c^\textup{hor}_{1,2} \cdot {y_{1,5}} + c^\textup{ver}_{5,4} \cdot \frac{y_{1,5}}{y_{1,4}} = 0, \, - c^\textup{ver}_{5,4} \cdot \frac{1}{y_{2,4}} - c^\textup{hor}_{1,2} \cdot \frac{y_{1,4}}{y_{2,4}} + c^\textup{hor}_{2,3} \cdot {y_{2,4}} + c^\textup{ver}_{4,3} \cdot \frac{y_{2,4}}{y_{2,3}} = 0, \cdots \\ \\
& - c^\textup{ver}_{5,4} \cdot \frac{y_{1,5}}{y_{1,4}} + c^\textup{hor}_{1,2} \cdot \frac{y_{1,4}}{y_{2,4}} + c^\textup{ver}_{4,3} \cdot \frac{y_{1,4}}{y_{1,3}} = 0, \, - c^\textup{ver}_{4,3} \cdot \frac{y_{2,4}}{y_{2,3}} - c^\textup{hor}_{1,2} \cdot \frac{y_{1,3}}{y_{2,3}} + c^\textup{hor}_{2,3} \cdot \frac{y_{2,3}}{y_{3,3}} + c^\textup{ver}_{3,2} \cdot \frac{y_{2,3}}{y_{2,2}} = 0, \cdots \\ \\
& -  c^\textup{ver}_{4,3} \cdot \frac{y_{1,4}}{y_{1,3}} + c^\textup{hor}_{1,2} \cdot \frac{y_{1,3}}{y_{2,3}} = 0, \,\,  -  c^\textup{ver}_{3,2} \cdot \frac{y_{2,3}}{y_{2,2}} + c^\textup{hor}_{2,3} \cdot \frac{y_{2,2}}{y_{3,2}} = 0, \,\, -  c^\textup{ver}_{2,1} \cdot \frac{y_{3,2}}{y_{3,1}} + c^\textup{hor}_{3,4} \cdot \frac{y_{3,1}}{y_{4,1}} = 0,\\
\end{cases}
\end{equation}
which comes from the initial parts of the partial derivatives inside $\Gamma(6) \backslash B(2) \cup \{(2,2)\}$, see Figure~\ref{Decomposition of the gradient of the potential function}. Solving the first system~\eqref{firstsysfrominb2} is related to the step $(2)$ and solving the second system~\eqref{secondsysfromoutb2} is related to the step $(1)$.
\begin{figure}[ht]
	\scalebox{1}{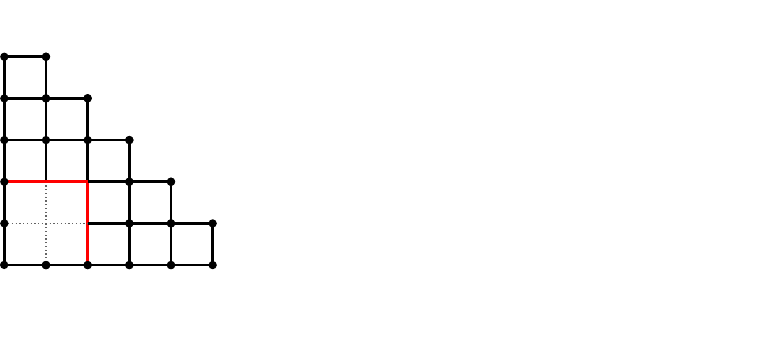}
	\caption{\label{Decomposition of the gradient of the potential function} Decomposition of the gradient of the potential function in $\mcal{F}(6)$.}	
\end{figure}
\end{example}

\begin{remark}
More generally, one might consider the fibers over the line segment connecting the center of $\Delta_\lambda$ and the center of another Lagrangian face $\gamma$. Depending on $\gamma$ in the ladder diagram $\Gamma(n)$ in some cases, one might decompose the potential function into several pieces along the simply connected regions that are not unit-sized blocks. The reader is referred to \cite{CKO} in order to consult a geometric implication of the potential function \emph{inside} the decomposed blocks. Also, it is discussed therein when the potential function has a critical point. 

The split leading term equation would be formed by the system from the outside of the decomposed blocks, providing the complex part of a critical point with a suitable choice of a complex solution within the decomposed blocks. This intuition motivates us to name it the ``{split}" leading term equation. In this article, the general form will not be discussed as the split leading term equation from $I_m(t)$ in~\eqref{IMT} is only dealt with.
\end{remark}

\vspace{0.2cm}
\subsection{Split leading term equation}~
\vspace{0.2cm}

We now define the split leading term equation arising from the potential function of $L_m(t)$. In this case, it suffices to take a bulk-deformation parameter
\begin{equation}\label{bulkparak2}
\frak{b} :=  \sum_{i \geq k} \frak{b}^\textup{hor}_{i, i+1} \cdot \scr{D}^\textup{hor}_{i, i+1}  + \sum_{j \geq k} \frak{b}^\textup{ver}_{j+1, j} \cdot \scr{D}^\textup{ver}_{j+1, j}
\end{equation}
instead of the form~\eqref{bulkparak}. Therefore, we should set
$$
\begin{cases}
c^\textup{hor}_{i, i+1} := \exp \left(\frak{b}^\text{hor}_{i,i+1} = 0 \right) = 1 \quad \mbox{for } i < k, \\
c^\textup{ver}_{j+1, j} := \exp \left(\frak{b}^\text{ver}_{j+1,j}= 0 \right) = 1  \quad \mbox{for } j < k.
\end{cases}
$$
In particular, we see that $\pa^\frak{b}(i,j)$ in~\eqref{thegradientofbulkdeformedpotential} coincides with $\pa(i,j)$ in~\eqref{shortnotofderi} for all $1 \leq i, j < k$. 

\begin{definition}\label{splittingleadingtermequa} Let $k = \left\lceil  n/2 \right\rceil$, that is $n = 2k -1$ or $2k$. We set
\begin{equation}\label{settingup}
\begin{cases}
&c^\textup{hor}_{i, i+1} := 1 \quad \mbox{for } i < k, \quad c^\textup{ver}_{j+1, j} := 1 \quad \mbox{for } j < k \\
&y_{i,m} := \infty \quad \mbox{for } i < m, \quad y_{m, j} := 0 \quad \mbox{for } j < m\\
&y_{\bullet,0} := \infty, \,\, y_{0, \bullet} := 0, \,\, y_{i, n+ 1 - i} := 1 \quad \mbox{for } 1 \leq i \leq n \\
\end{cases}
\end{equation}
The \emph{split leading term equation of} $\Gamma(n)$ \emph{associated with} $B(m)$ is the system of the following equations:
\begin{equation}\label{splitleadingtermequ}
\begin{cases}
\pa_m^\frak{b} (i, j) (\textup{\textbf{y}}) = 0 \\
\pa_m (l) (\textup{\textbf{y}}) = 0
\end{cases}
\end{equation}
for all $(i,j) \in \Gamma(n) \backslash B(m) \cup \{ (m,m) \}$ and all $l$ with $1 \leq l < m$. Here,
\begin{align}\label{pabmijy}
\pa_m^\frak{b} (i, j) (\textup{\textbf{y}}) &:= - c^\textup{ver}_{j+1, j} \cdot \frac{y_{i, j+1}}{y_{i, j}} - c^\textup{hor}_{i-1, i} \cdot \frac{y_{i-1, j}}{y_{i,j}} + c^\textup{hor}_{i, i+1} \cdot \frac{y_{i,j}}{y_{i+1, j}} + c^\textup{ver}_{j, j-1} \cdot \frac{y_{i, j}}{y_{i, j-1}} \\
\label{pamly} \pa_m (l) (\textup{\textbf{y}}) &:= (-1)^{m+1-l} \cdot \frac{y_{l,m+1}}{y_{m,m}} + \frac{y_{m,m}}{y_{m+1,l}},
\end{align}
and $c^\textup{hor}_{i, i+1}$'s and $c^\textup{ver}_{j+1, j}$'s for $i, j \geq k$ are non-zero complex numbers.
\end{definition}

We explain how the split leading term equation of $\Gamma(n)$ associated with $B(m)$ can be written. Cutting the boxes $B(m) \backslash \{ (m,m) \}$ off from the diagram $\Gamma(n)$, \eqref{pabmijy} comes from $\pa^\frak{b} (i, j)(\textbf{y})$ for $(i,j)$'s on the cut diagram as explained in~\eqref{secondsysfromoutb2} for the case $\mcal{F}(6)$.
In addition to them, we impose~\eqref{pamly} to relate the variables adjacent to $B(m)$. It will be explained in \eqref{y1m+1} and~\eqref{yjm+1} why~\eqref{pamly} appears.
 
\begin{remark}\label{whenm2kchoiceof}
Furthermore, we may take $\frak{b}^\textup{hor}_{k,k+1} = 0$ so that $c^\textup{hor}_{k,k+1} = 1$ if $n=2k$. See Remark~\ref{separatecasemk} to see why.
\end{remark}

\begin{example}\label{splitleadingtermgamm5b2}
The split leading term equation of $\Gamma(5)$ associated with $B(2)$ consists of
$$
\begin{cases}
&\displaystyle - c^\textup{ver}_{5,4} \cdot \frac{1}{y_{1,4}} + y_{1,4} + c^\textup{ver}_{4,3} \cdot \frac{y_{1,4}}{y_{1,3}} = 0, \,\, - c^\textup{ver}_{4,3} \cdot \frac{1}{y_{2,3}} - \frac{y_{1,3}}{y_{2,3}} + y_{2,3} + \frac{y_{2,3}}{y_{2,2}} = 0, \\ \\
&\displaystyle -\frac{1}{y_{3,2}} - \frac{y_{2,2}}{y_{3,2}} + c^\text{hor}_{3,4} \cdot y_{3,2} + \frac{y_{3,2}}{y_{3,1}} = 0, - \frac{1}{y_{4,1}} - c^\textup{hor}_{3,4} \cdot \frac{y_{3,1}}{y_{4,1}} + c^\textup{hor}_{4,5} \cdot y_{4,1} = 0, \\  \\
&\displaystyle - c^\textup{ver}_{4,3} \cdot \frac{y_{1,4}}{y_{1,3}} + \frac{y_{1,3}}{y_{2,3}} = 0, \,\, -\frac{y_{3,2}}{y_{3,1}} + c^\textup{hor}_{3,4} \cdot \frac{y_{3,1}}{y_{4,1}} = 0,\,\, -\frac{y_{2,3}}{y_{2,2}} + \frac{y_{2,2}}{y_{3,2}} = 0, \,\, \frac{y_{1,3}}{y_{2,2}} + \frac{y_{2,2}}{y_{3,1}} = 0.
\end{cases}
$$
\end{example}

\begin{remark}
We would like to address that the split leading term equation is \emph{not} same as the initial part of the gradient of the potential function. Note that $\pa^\frak{b} (2,2) = 0$ is of the form 
$$
\left( - \frac{y_{1,2}}{y_{2,2}} + \frac{y_{2,2}}{y_{2,1}} \right) T^{1-t} + \left( -\frac{y_{2,3}}{y_{2,2}} + \frac{y_{2,2}}{y_{3,2}} \right) T^1 = 0.
$$
Its initial part will be used to obtain a critical point of the potential function within $B(2)$ in Section~\ref{symmetriccomplexsolinbm}. As we have seen in Example~\ref{splitleadingtermgamm5b2}, the split leading term equation captures the terms with the second energy level
$$
-\frac{y_{2,3}}{y_{2,2}} + \frac{y_{2,2}}{y_{3,2}} = 0
$$
as well. 
\end{remark}

The main theorem of this section is as follows.
\begin{theorem}\label{splitleadingtermequationimpliesnonzero}
Let $\lambda = \{ \lambda_{i} := n - 2i + 1 ~\colon~ i = 1, \cdots, n \}$ be an $n$-tuple of real numbers for an arbitrary integer $n \geq 4$. Consider the co-adjoint orbit $\mathcal{O}_\lambda$, a complete flag manifold $\mcal{F}(n)$ equipped with the monotone form $\omega_\lambda$.
Fix one Lagrangian Gelfand-Cetlin torus $L_m(t)$ over $I_m(t)$ for $0 \leq t < 1$ in $\mathcal{O}_\lambda$.
If the split leading term equation~\eqref{splitleadingtermequ} admits a solution $\{ y^\C_{i,j} \in \C \backslash \{0\} ~\colon~ (i,j) \in \Gamma(n) \backslash B(m) \cup \{ (m,m) \} \}$ each component of which is a non-zero complex number for some nonzero complex numbers $c^{\textup{hor}, \C}_{i, i+1}$'s and $c^{\textup{ver}, \C}_{j+1, j}$'s ($i, j \geq k$), then there exists a bulk-deformation parameter $\frak{b}$ (depending on $m$ and $t$) of the form~\eqref{bulkparak} such that
\begin{enumerate}
\item The bulk-deformed potential function $\frak{PO}^\frak{b}(\textbf{\textup{y}})$ has a critical point $ \{ y_{i,j} \in \Lambda_U ~\colon~ (i,j) \in \Gamma(n) \}$ satisfying 
$$
y^\C_{i,j} \equiv y^{\phantom{\C}}_{i,j} \mod T^{>0} \quad \text{for } (i,j) \in \Gamma(n) \backslash B(m) \cup \{ (m,m) \}.
$$
\item Also, $c^{\textup{hor}, \C}_{i,i+1} \equiv \exp (\frak{b}^{\textup{hor} \phantom{, \C}}_{i,i+1}), \, c^{\textup{ver}, \C}_{j+1, j} \equiv \exp (\frak{b}^{\textup{ver} \phantom{, \C}}_{j+1,j}) \mod T^{>0}$.
\end{enumerate}
\end{theorem}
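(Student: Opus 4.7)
The plan is to lift the complex data supplied by the hypothesis to a $\Lambda_U$-valued critical point of $\frak{PO}^\frak{b}$, following the four-step programme set out before the theorem. Throughout, I shall keep the interior bulk parameters $\frak{b}^{\textup{hor}}_{i,i+1}$ ($i<k$) and $\frak{b}^{\textup{ver}}_{j+1,j}$ ($j<k$) fixed at zero, in line with~\eqref{settingup}, and tune only the exterior ones to absorb obstructions at each $T$-order.

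The first move is to complete the complex data on $B(m)$. The $T^{1-t}$-initial part of $\pa^\frak{b}(i,j)(\textbf{\textup{y}})$ for $(i,j)\in B(m)\setminus\{(m,m)\}$ forms a closed system in the variables on $B(m)$ once the boundary values $y^\C_{l,m+1}$, $y^\C_{m+1,l}$ and $y^\C_{m,m}$ are fixed. The auxiliary equations $\pa_m(l)(\textbf{\textup{y}})=0$ built into the split leading term equation are precisely the compatibility conditions on these boundary data, so the interior system is consistent; using a symmetric ansatz adapted to the $U(m)$-structure of $B(m)$, I expect to produce an explicit solution $y^\C_{i,j}\in\C\setminus\{0\}$ for $(i,j)\in B(m)\setminus\{(m,m)\}$. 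This step, to be carried out in Section~\ref{symmetriccomplexsolinbm}, reduces the theorem to lifting the full complex data to $\Lambda_U$.

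The lift proceeds by successive approximation in $T$. Writing $y_{i,j}=y^\C_{i,j}\bigl(1+\sum_{r\ge 1}z^{(r)}_{i,j}T^{\epsilon_r}\bigr)$ with strictly increasing exponents $0<\epsilon_1<\epsilon_2<\cdots$, and similarly expanding $\exp(\frak{b}^{\textup{hor}}_{i,i+1})$ and $\exp(\frak{b}^{\textup{ver}}_{j+1,j})$ around their prescribed leading values $c^{\textup{hor},\C}_{i,i+1}$ and $c^{\textup{ver},\C}_{j+1,j}$, and plugging into $\pa^\frak{b}(i,j)(\textbf{\textup{y}})=0$, one obtains at each order $\epsilon_r$ a linear system in the new unknowns, with coefficients evaluated at the leading complex solution. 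Inside $B(m)$ the coefficient operator is, up to non-zero rescaling, the Hessian of the leading-order complex potential at the critical point produced in the previous step; I expect this Hessian to be non-degenerate by direct computation on the symmetric ansatz, which then uniquely determines $z^{(r)}_{i,j}$ for $(i,j)\in B(m)\setminus\{(m,m)\}$ — this is Section~\ref{insidebm}. Outside $B(m)$, I will solve the corresponding linear system by choosing the exterior bulk increments so as to cancel the right-hand sides generated by previous orders — this is Section~\ref{outsideofbm}.

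The main obstacle, and the reason the argument is not a direct translation of the toric case of~\cite{FOOOToric1,FOOOToric2}, is that a single Schubert cycle $\scr{D}^{\textup{hor}}_{i,i+1}$ (resp.\ $\scr{D}^{\textup{ver}}_{j+1,j}$) meets several facets at once by Lemma~\ref{intersectionnumbercal}, so each coefficient $c^{\textup{hor}}_{i,i+1}$ enters the derivatives $\pa^\frak{b}(i,j)$ for every admissible $j$ simultaneously. Tuning $\frak{b}^{\textup{hor}}_{i,i+1}$ to kill one obstruction therefore perturbs every equation in the same horizontal strip. I plan to resolve this by ordering the strips from the outermost inward and solving the obstructions within each strip using the interface relations coming from $\pa_m(l)(\textbf{\textup{y}})=0$; this should render the inductive construction triangular in the pair (strip number, $T$-order), closing the iteration and producing the two conclusions of the theorem.
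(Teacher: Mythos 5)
Your four-step programme matches the paper's overall structure, and your observation that the auxiliary equations $\pa_m(l)(\textbf{y})=0$ are compatibility conditions on the boundary data $y_{l,m+1},y_{m+1,l},y_{m,m}$ is exactly right. The symmetric ansatz inside $B(m)$ also coincides with what the paper does (Section~\ref{symmetriccomplexsolinbm}). However, your lifting mechanism has a genuine gap.

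You propose to lift the complex solution to $\Lambda_U$ by Newton-type iteration: at each $T$-order solve a linear system whose coefficient operator you identify, inside $B(m)$, with the Hessian of the leading-order complex potential at the critical point, and you ``expect'' this Hessian to be non-degenerate. That expectation is false. Lemma~\ref{minussol} records the scale invariance $y_{i,j}\mapsto c\,y_{i,j}$ of the truncated system $\pa_m(i,j)(\textbf{y})=0$ on $B(m)$, which forces the Hessian to have a non-trivial kernel along the scaling direction. Your linearised system therefore cannot determine the corrections uniquely without an additional gauge-fixing (e.g.\ pinning $y_{m,m}$), and you never address this. More subtly, the corrections $y_{i,j}-y^\C_{i,j}$ inside $B(m)$ do not occur at a common exponent sequence $\epsilon_1<\epsilon_2<\cdots$: the paper shows (cf.~\eqref{valofy},~\eqref{valofy2}) that they are forced to have valuations like $mt$, $(m-j+1)t$, etc., which vary with the position $(i,j)$ and with $t$. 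A naive order-by-order Newton scheme with a fixed exponent grid does not see this structure. The paper sidesteps both difficulties entirely: instead of inverting a Hessian, it sweeps anti-diagonal by anti-diagonal and observes that each new equation $\pa(i,j)(\textbf{y})=0$ introduces exactly one new unknown, which can be isolated as a rational function of the previously determined data (Lemma~\ref{extensionlemma}), landing in $\Lambda_U$ precisely when the leading complex part is non-zero. Your ``outermost inward'' ordering of the strips is also the opposite of the paper's inside-out sweep ($B(m)\to B(k)\to\Gamma(n)$), and the triangular structure you invoke is cleaner when stated in the paper's direction, where exactly one new bulk parameter $\frak{b}^\textup{ver}_{j+1,j}$ or $\frak{b}^\textup{hor}_{i,i+1}$ becomes available at each new anti-diagonal beyond $B(k)$ to absorb the last remaining obstruction.
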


The existence of a solution for the split leading term equation~\eqref{splitleadingtermequ} implies that the assumption of the following lemma is satisfied. We will repeatedly employ it in order to extend a solution in $\C \backslash \{0\}$ to that in $\Lambda_U$ of the gradient of the (bulk-deformed) potential function.

\begin{lemma}\label{extensionlemma}
Let $c_{j+1, j}, c_{i-1, i}, c_{i, i+1}$ and $c_{j, j-1}$ be elements in $\Lambda_U$. Suppose that we are given $y_{i-1, j} \in \Lambda_U \cup \{ 0 \}$, $y_{i,j-1} \in \Lambda_U \cup \{ \infty\}$ and $y_{i+1, j}, y_{i,j}$ (resp. $y_{i,j}, y_{i, j+1})$ $\in \Lambda_U$. If there is a \emph{non-zero} complex solution $y^\C_{i, j+1}$ (resp. $y^\C_{i+1, j}$) for
\begin{equation}\label{complexlevel}
- c^\C_{j+1, j} \cdot \frac{y^\C_{i,j+1}}{y^\C_{i,j}} - c^\C_{i-1, i} \cdot \frac{y^\C_{i-1,j}}{y^\C_{i,j}} + c^\C_{i, i+1} \cdot \frac{y^\C_{i,j}}{y^\C_{i+1,j}} + c^\C_{j, j-1} \cdot \frac{y^\C_{i,j}}{y^\C_{i,j-1}} = 0,
\end{equation}
then there exists a unique element $y_{i,j+1}$ (resp. $y_{i+1, j}$) $\in \Lambda_U$ that solves
\begin{equation}\label{equationwithoutcomplexlevel}
- c_{j+1, j} \cdot \frac{y_{i,j+1}}{y_{i,j}} - c_{i-1, i} \cdot \frac{y_{i-1,j}}{y_{i,j}} + c_{i, i+1} \cdot \frac{y_{i,j}}{y_{i+1,j}} + c_{j, j-1} \cdot \frac{y_{i,j}}{y_{i,j-1}} = 0.
\end{equation}
Here, for $y \in \Lambda_U$, $y^\C$ denotes a unique complex number such that $y^\C \equiv y \mod T^{>0}$.

Furthermore, assume in addition that $c_{\bullet, \bullet}$'s are non-zero complex numbers and
$$
\frak{v}_T \left(y^{\phantom{\C}}_{i,j} - y^\C_{i,j} \right) > \lambda, \frak{v}_T \left(y^{\phantom{\C}}_{i-1,j} - y^\C_{i-1,j} \right) > \lambda \text{ and } \frak{v}_T \left(y^{\phantom{\C}}_{i,j-1} - y^\C_{i,j-1} \right) > \lambda.
$$
Then, $\frak{v}_T (y_{i+1,j}) = \lambda$ if and only if $\frak{v}_T (y_{i,j+1}) = \lambda$.
\end{lemma}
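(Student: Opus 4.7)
The plan is to treat equation \eqref{equationwithoutcomplexlevel} as a linear equation in the single unknown (either $y_{i,j+1}$ or $y_{i+1,j}$) and solve it in closed form, then verify the result lies in $\Lambda_U$ by reducing modulo $T^{>0}$ and invoking the hypothesized complex solution. In Case A one isolates
\[
y_{i,j+1} \;=\; \frac{y_{i,j}}{c_{j+1,j}} \left( - c_{i-1,i} \cdot \frac{y_{i-1,j}}{y_{i,j}} + c_{i,i+1} \cdot \frac{y_{i,j}}{y_{i+1,j}} + c_{j,j-1} \cdot \frac{y_{i,j}}{y_{i,j-1}} \right);
\]
the closed form in Case B is analogous, with $y_{i+1,j}$ obtained by inverting the expression $c_{j+1,j}\,y_{i,j+1}/y_{i,j} + c_{i-1,i}\,y_{i-1,j}/y_{i,j} - c_{j,j-1}\,y_{i,j}/y_{i,j-1}$ and multiplying by $c_{i,i+1}\,y_{i,j}$. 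Uniqueness is immediate from the closed form; the conventions $\bullet/0=\infty$ and $\bullet/\infty=0$ simply force the corresponding term to vanish when $y_{i-1,j}=0$ or $y_{i,j-1}=\infty$, mirroring the convention used in~\eqref{complexlevel}.

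The second step is to verify that the right-hand side lies in $\Lambda_U$. Each ratio inside the parentheses lies in $\Lambda_0$, and reducing the whole expression modulo $T^{>0}$ substitutes every $y_{\bullet,\bullet}$ by $y^\C_{\bullet,\bullet}$ (with $0$ and $\infty$ preserved) and every $c_{\bullet,\bullet}$ by $c^\C_{\bullet,\bullet}$. Applying the complex equation~\eqref{complexlevel}, the parenthesized combination in Case A reduces to $c^\C_{j+1,j} \cdot y^\C_{i,j+1}/y^\C_{i,j}$, so the whole expression reduces to $y^\C_{i,j+1}$. Since $y^\C_{i,j+1} \in \C \setminus \{0\}$ by hypothesis, the constructed element has valuation $0$ and belongs to $\Lambda_U$ with complex reduction $y^\C_{i,j+1}$. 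The analogous argument settles Case B: the denominator that gets inverted reduces to the non-zero complex number $c^\C_{i,i+1} \cdot y^\C_{i,j}/y^\C_{i+1,j}$, hence is invertible in $\Lambda_U$.

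For the ``furthermore'' assertion, I read the conclusion as being about $\frak{v}_T(y_{i+1,j} - y^\C_{i+1,j})$ and $\frak{v}_T(y_{i,j+1} - y^\C_{i,j+1})$---as literally stated the claim would be vacuous outside $\lambda=0$, so I interpret ``$y$'' to mean ``$y - y^\C$''. Subtracting~\eqref{complexlevel} from~\eqref{equationwithoutcomplexlevel} and using $c_{\bullet,\bullet} = c^\C_{\bullet,\bullet}$ gives
\[
c_{j+1,j} \left( \frac{y_{i,j+1}}{y_{i,j}} - \frac{y^\C_{i,j+1}}{y^\C_{i,j}} \right) - c_{i,i+1} \left( \frac{y_{i,j}}{y_{i+1,j}} - \frac{y^\C_{i,j}}{y^\C_{i+1,j}} \right) \;=\; R,
\]
where $R$ collects the two differences involving $y_{i-1,j}$ and $y_{i,j-1}$. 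Using the identity $\tfrac{y}{y'} - \tfrac{y^\C}{(y')^\C} = \bigl((y - y^\C)(y')^\C - y^\C(y' - (y')^\C)\bigr)/\bigl(y'(y')^\C\bigr)$, with all denominators in $\Lambda_U$ of valuation zero, the hypothesis $\frak{v}_T(y_{\bullet,\bullet} - y^\C_{\bullet,\bullet}) > \lambda$ for the indices $(i-1,j), (i,j), (i,j-1)$ forces $\frak{v}_T(R) > \lambda$, while the same identity gives $\frak{v}_T\bigl(\tfrac{y_{i,j+1}}{y_{i,j}} - \tfrac{y^\C_{i,j+1}}{y^\C_{i,j}}\bigr) = \frak{v}_T(y_{i,j+1} - y^\C_{i,j+1})$ and similarly for the $(i+1,j)$ difference. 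Since $R$ has strictly larger valuation than $\lambda$, the two left-hand terms must share a common valuation; hence one is equal to $\lambda$ iff the other is. The main obstacle I anticipate is purely notational: keeping track of the boundary conventions $y_{\bullet,0}=\infty$, $y_{0,\bullet}=0$ cleanly so that vanishing or infinite terms do not disrupt the valuation bookkeeping, particularly in the ``furthermore'' step where the identity has to be applied on a case-by-case basis depending on which neighbors are zero or infinite.
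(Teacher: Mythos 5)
Your proof is correct and takes essentially the same approach as the paper's, which consists of a single sentence observing that the unknown variable can be solved for as a rational function of the others; you simply spell out the reduction modulo $T^{>0}$ and the valuation bookkeeping that the paper leaves implicit. Your reading of the ``furthermore'' clause as comparing $\frak{v}_T(y_{i+1,j} - y^\C_{i+1,j})$ with $\frak{v}_T(y_{i,j+1} - y^\C_{i,j+1})$ is the intended one, as confirmed by how the lemma is applied to establish~\eqref{valofy} and~\eqref{valofy2}.
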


\begin{proof}
The proof immediately follows from the observation that $y_{i+1,j}$ (or $y_{i,j+1}$) in~\eqref{equationwithoutcomplexlevel} can be expressed as a rational function in terms of the other variables. 
\end{proof}

\begin{figure}[ht]
	\scalebox{0.7}{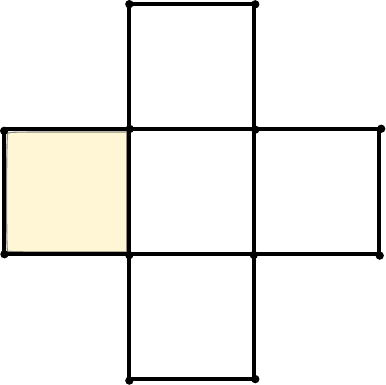}
	\caption{\label{graphicdeslemma} Graphical description of Lemma~\ref{extensionlemma}.}	
\end{figure}

\subsection{Symmetric complex solutions within $B(m)$}\label{symmetriccomplexsolinbm}~
\vspace{0.2cm}

In this section, we focus on the system of equations
\begin{equation}\label{paijmBm}
\pa{(i,j)} (\textbf{y}) := y_{i,j} \frac{\pa \frak{PO}}{\pa y_{i, j}}(\textbf{y}) = 0 \quad \mbox{ for all } (i, j) \in B(m)
\end{equation}
within $B(m)$. For an index $(i, j) \in B(m)$, ignoring the variables outside of $B(m)$, the initial part of $\pa(i, j) (\textbf{y}) $ is denoted by $\pa_m{(i,j)} (\textbf{y})$. By~\eqref{potential}, we obtain
\begin{equation}\label{paijinm}
\pa_m{(i,j)} (\textbf{y}) = - \frac{y_{i, j+1}}{y_{i,j}} - \frac{y_{i-1, j}}{y_{i,j}} + \frac{y_{i,j}}{y_{i+1, j}} + \frac{y_{i, j}}{y_{i, j-1}}
\end{equation}
where $y_{0, \bullet}, y_{\bullet, m+1,}, y_{\bullet, 0}$ and $y_{m+1, \bullet}$ are respectively set to be $0, 0, \infty$ and $\infty$. The goal of the section is to find a ``{symmetric}" complex solution for~\eqref{paijinm}, see Proposition~\ref{propsymmetricsol}.

\begin{lemma}\label{minussol}
Let $c$ be a non-zero complex number. If there is a solution $\{ y_{i,j}^\C \in \C \backslash \{0\} ~\colon~ (i,j) \in B(m) \}$ of the system of equations
\begin{equation}\label{paijmbfy}
\pa_m (i, j) (\textbf{\textup{y}}) = 0 \quad \mbox{for }\, (i,j) \in B(m),
\end{equation}
then
$$
\widetilde{y^\C_{i,j}} := c \cdot y^\C_{i,j} \quad \mbox{for }\, (i,j) \in B(m)
$$
also forms a solution of~\eqref{paijmbfy}.
\end{lemma}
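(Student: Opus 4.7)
The plan is to exploit the manifest scale-invariance of each equation $\pa_m(i,j)(\textbf{y}) = 0$. Looking at~\eqref{paijinm}, every summand appearing in $\pa_m(i,j)(\textbf{y})$ is a ratio of two entries of $\textbf{y}$ (with the convention that the boundary entries $y_{0,\bullet}, y_{\bullet,m+1}$ are $0$ and $y_{\bullet,0}, y_{m+1,\bullet}$ are $\infty$). Since a ratio $y_{a,b}/y_{c,d}$ of two interior variables is invariant under the common rescaling $y_{i,j} \mapsto c\,y_{i,j}$ for all $(i,j) \in B(m)$, the equations involving only interior indices are clearly preserved.

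It therefore remains to check that the terms involving boundary values are also invariant. First, I would treat the four boundary strips of $B(m)$ case by case. For $(i,j)$ with $i=1$, the term $y_{i-1,j}/y_{i,j} = 0/y_{1,j} = 0$ stays equal to $0/(c\,y_{1,j}) = 0$ after scaling; similarly for $(i,j)$ with $j=m$, the term $y_{i,m+1}/y_{i,m} = 0$ is unaffected. For $(i,j)$ with $i=m$, the term $y_{i,j}/y_{i+1,j} = y_{m,j}/\infty = 0$ remains $0$, and the analogous statement holds for $(i,j)$ with $j=1$. Hence every summand of $\pa_m(i,j)$ is invariant under the simultaneous rescaling, and so is the whole expression.

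Consequently, $\pa_m(i,j)(c \cdot \mathbf{y}^{\C}) = \pa_m(i,j)(\mathbf{y}^{\C}) = 0$ for every $(i,j) \in B(m)$, which proves that $\widetilde{y_{i,j}^\C} = c\,y_{i,j}^\C$ is again a solution. Since there are no nontrivial computations to perform, no genuine obstacle arises in this step; the lemma is purely a statement about the homogeneity structure of the truncated gradient inside the block $B(m)$, and this homogeneity will later let us normalize the solution inside $B(m)$ compatibly with the solution on $\Gamma(n)\setminus B(m)$ obtained from the split leading term equation.
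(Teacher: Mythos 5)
Your proof is correct and relies on exactly the same observation as the paper's one-line argument, namely the scale invariance $\pa_m(i,j)(c\cdot\textbf{y}) = \pa_m(i,j)(\textbf{y})$; you merely spell out the boundary conventions, which the paper leaves implicit.
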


\begin{proof}
It follows from $\pa_m(i,j) (\textbf{\textup{y}}) = \pa_m (i,j) (c \cdot \textbf{\textup{y}})$.
\end{proof}

\begin{lemma}\label{symmetricsol}
There exists a solution $\{ {y}^\C_{i,j} \in \C \backslash \{ 0\}  ~\colon~ \, i + j \leq m + 1 \}$ of the system of equations
\begin{equation}\label{paijmbfyijm}
\pa_m{(i,j)} (\textup{\textbf{y}}) = 0 \quad \mbox{for }\, i + j \leq m
\end{equation}
such that
\begin{equation}\label{symmetricproperty}
y^\C_{i,j} = (y^\C_{j,i})^{-1}.
\end{equation}
\end{lemma}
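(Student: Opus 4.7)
The plan is to construct the solution explicitly by combining a symmetry reduction with a one-sequence ansatz. The key observation is that the involution $\tau(\textbf{y})_{i,j} := y_{j,i}^{-1}$ satisfies $\pa_m(i,j)(\tau(\textbf{y})) = -\pa_m(j,i)(\textbf{y})$, as one checks by a one-line manipulation of \eqref{paijinm}. Hence restricting to the $\tau$-fixed locus---which is exactly the symmetric locus \eqref{symmetricproperty} demanded by the lemma---identifies the equation at $(i,j)$ with the equation at $(j,i)$ and makes each diagonal equation $\pa_m(i,i)(\textbf{y})=0$ automatic. So one may as well search within \eqref{symmetricproperty} from the start.

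I would then take the further one-sequence ansatz $y_{i,j}^\C := a_i/a_j$, augmented by the convention $a_0 := 0$ in order to absorb the boundary conventions $y_{0,\bullet}=0$ and $y_{\bullet,0}=\infty$ of \eqref{settingup}. A direct substitution into \eqref{paijinm} collapses the four summands pairwise to
\[
  \pa_m(i,j)(\textbf{y}) \;=\; g_i - g_j,
  \qquad g_i \,:=\, \frac{a_i}{a_{i+1}} - \frac{a_{i-1}}{a_i},
\]
and the identity continues to hold verbatim along the boundary strips $i=1$ and $j=1$ and at the corner $(1,1)$, because $a_0=0$ kills exactly the summand that the boundary conventions of \eqref{settingup} kill in $\pa_m(i,j)$. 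Consequently the entire system \eqref{paijmbfyijm} for $i+j\leq m$ reduces to the single scalar requirement that the sequence $(g_i)$ be constant in $i$.

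Setting $g_i \equiv C$ gives the linear recursion $a_i/a_{i+1} - a_{i-1}/a_i = C$, which with $a_0/a_1 = 0$ is solved in closed form by $a_i = a_1/((i-1)!\,C^{i-1})$. Normalizing $a_1 = C = 1$ then produces the candidate
\[
  y_{i,j}^\C \;=\; \frac{(j-1)!}{(i-1)!}
  \qquad \text{for } i+j \leq m+1,
\]
which is manifestly a non-zero complex number with $y_{i,i}^\C = 1 \in \{\pm 1\}$ and $y_{i,j}^\C \cdot y_{j,i}^\C = 1$, so the symmetry \eqref{symmetricproperty} is built in. I do not anticipate a genuine obstacle along this route: the real content is the identification of the right ansatz (and the pleasant coincidence that the single convention $a_0=0$ simultaneously accounts for all three boundary degenerations of $\pa_m$), after which the verification that $\pa_m(i,j)(\textbf{y}^\C) = 0$ for all $i+j\leq m$ is a one-line algebraic check.
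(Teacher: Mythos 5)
Your proof is correct, and it reaches the same destination by a more conceptual route.  The paper simply exhibits the closed form
$$
y^\C_{i,j} = \begin{cases} 1 & i=j,\\ \prod_{r=0}^{j-i-1}(2i+2r) & i<j,\\ \prod_{r=0}^{i-j-1}(2j+2r)^{-1} & i>j, \end{cases}
$$
and verifies by direct substitution that each $\pa_m(i,j)$ vanishes, reducing the $i>j$ case to the $i<j$ case via $\pa_m(i,j)(\mathbf y)=-\pa_m(j,i)(\mathbf y)$.  You instead \emph{derive} a solution: the involution $\tau(\mathbf y)_{i,j}:=y_{j,i}^{-1}$ intertwines $\pa_m(i,j)$ and $-\pa_m(j,i)$ (and it is compatible with the boundary conventions $y_{0,\bullet}=0$, $y_{\bullet,0}=\infty$), so one may look on the $\tau$-fixed locus from the start; the rank-one ansatz $y_{i,j}=a_i/a_j$ with $a_0:=0$ collapses every $\pa_m(i,j)$ to the telescoping difference $g_i-g_j$, where $g_i=a_i/a_{i+1}-a_{i-1}/a_i$, and hence the whole system to the single linear recursion $g_i\equiv C$.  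This is a genuine structural observation that the paper does not make explicit.  Your normalisation $a_1=C=1$ gives $y^\C_{i,j}=(j-1)!/(i-1)!$, which differs from the paper's solution only by the choice $C=2$: the paper's formula is exactly $a_i/a_j$ with $a_i=1/\bigl(2^{i-1}(i-1)!\bigr)$, so both sit in the one-parameter family your derivation produces.  Either normalisation satisfies $y_{i,j}^\C\,y_{j,i}^\C=1$ and $y_{i,i}^\C=1$, which is all that Proposition~\ref{propsymmetricsol} and Corollary~\ref{corsymmetricsol} downstream need.
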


\begin{proof}
We claim that
\begin{equation}\label{solutionofpamij}
\displaystyle y^\C_{i,j} :=
\begin{cases}
1 \quad &\mbox{for } i = j \\
\displaystyle \prod_{r=0}^{j - i - 1} (2i + 2r) \quad &\mbox{for } i < j \\
\displaystyle \prod_{r=0}^{i - j - 1} (2j + 2r)^{-1} \quad &\mbox{for } i > j
\end{cases}
\end{equation}
forms a solution for~\eqref{paijmbfyijm} satisfying~\eqref{symmetricproperty}.

For the case where $ i < j$, we see
$$
\displaystyle y_{i,j}^\C =  \prod_{r=0}^{j - i - 1} (2i + 2r) = \left( \prod_{r=0}^{j - i - 1} (2i + 2r)^{-1} \right)^{-1} = \left( {y_{j,i}^\C} \right)^{-1}
$$
and
\begin{align*}
\pa_m(i,j) ({\textbf{\textup{y}}}) &= - \frac{y^\C_{i, j+1}}{y^\C_{i,j}} - \frac{y^\C_{i-1,j}}{y^\C_{i,j}} + \frac{y^\C_{i,j}}{y^\C_{i+1,j}} +\frac{y^\C_{i,j}}{y^\C_{i,j-1}} \\
\displaystyle &= - \frac{\prod_{r=0}^{j - i} (2i + 2r)}{\prod_{r=0}^{j - i - 1} (2i + 2r)} - \frac{\prod_{r=0}^{j - i} (2(i - 1) + 2r)}{\prod_{r=0}^{j - i - 1} (2i + 2r)} + \frac{\prod_{r=0}^{j - i - 1} (2i + 2r)}{\prod_{r=0}^{j -i - 2} (2(i+1) + 2r)} + \frac{\prod_{r=0}^{j - i - 1} (2i + 2r)}{\prod_{r=0}^{j - i - 2} (2i + 2r)} \\
\displaystyle &= - 2j - (2i - 2) + 2i + (2j - 2) = 0.
\end{align*}
The case for $i > j$ follows from $\pa_m (i,j) ({\textbf{\textup{y}}}) = - \pa_m (j,i) ({\textbf{\textup{y}}})$. When $i = j$,
$$
\pa_m(i,i) ({\textbf{\textup{y}}}) = - \frac{y^\C_{i, i+1}}{y^\C_{i,i}} - \frac{y^\C_{i-1,i}}{y^\C_{i,i}} + \frac{y^\C_{i,i}}{y^\C_{i+1,i}} +\frac{y^\C_{i,i}}{y^\C_{i,i-1}} = - {y^\C_{i, i+1}} - {y^\C_{i-1,i}} + \frac{1}{y^\C_{i+1,i}} +\frac{1}{y^\C_{i,i-1}} = 0.
$$
\end{proof}

We are ready to prove the existence of a symmetric solution in the sense of~\eqref{symmetricproperty}.
\begin{proposition}\label{propsymmetricsol}
There exists a solution $\{ {y}^\C_{i,j} \in \C \backslash \{ 0\}  ~\colon~ \, (i,j) \in B(m) \}$ of the system~\eqref{paijmbfy} of equations such that~\eqref{symmetricproperty} holds for $(i,j) \in B(m)$.
\end{proposition}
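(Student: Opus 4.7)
The plan is to give an explicit closed-form extension of the partial solution from Lemma~\ref{symmetricsol}, rather than construct the solution inductively. Define
\[
y^\C_{i, j} := \begin{cases} \text{(the Lemma~\ref{symmetricsol} value at $(i, j)$)} & \text{if } i + j \leq m + 1, \\ (-1)^{i + j - m - 1} \cdot \text{(the Lemma~\ref{symmetricsol} value at $(m + 1 - j, m + 1 - i)$)} & \text{if } i + j \geq m + 1. \end{cases}
\]
The two definitions agree on the overlap $i + j = m + 1$, since $(m + 1 - j, m + 1 - i) = (i, j)$ there; and whenever $i + j \geq m + 1$, the reflected index $(m + 1 - j, m + 1 - i)$ satisfies $2m + 2 - (i + j) \leq m + 1$, so it always lies in the region where Lemma~\ref{symmetricsol} supplies a nonzero complex value. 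The required symmetry~\eqref{symmetricproperty} is immediate: for $i + j \leq m + 1$ it is Lemma~\ref{symmetricsol} itself, and for $i + j \geq m + 1$ we get
\[
y^\C_{i, j} \cdot y^\C_{j, i} \;=\; (-1)^{2(i + j - m - 1)} \, y^\C_{m+1-j,\, m+1-i} \cdot y^\C_{m+1-i,\, m+1-j} \;=\; 1
\]
by applying Lemma~\ref{symmetricsol} to the transposed pair in its region of validity.

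The verification of $\partial_m(i, j)(\textbf{y}^\C) = 0$ throughout $B(m)$ splits into three cases according to the value of $i + j$. For $i + j \leq m$, every index appearing in~\eqref{paijinm} has sum $\leq m + 1$, so the equation is an identity supplied by Lemma~\ref{symmetricsol}. For $i + j \geq m + 2$, each of the four ratios in~\eqref{paijinm} acquires an overall factor of $-1$ upon substitution of the closed-form formula (because the exponent $k := i + j - m - 1$ shifts by $\pm 1$ between the center and any neighbor), and after the reflection $(i', j') := (m + 1 - j, m + 1 - i)$ with $i' + j' \leq m$, one obtains
\[
\partial_m(i, j)(\textbf{y}^\C) \;=\; -\, \partial_m(i', j')(\textbf{y}^\C) \;=\; 0
\]
by Lemma~\ref{symmetricsol}. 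On the transitional anti-diagonal $i + j = m + 1$, the two downward neighbors $(i - 1, j)$ and $(i, j - 1)$ take their values directly from Lemma~\ref{symmetricsol}, while the two upward neighbors are governed by the formula with $k = 1$, yielding $y^\C_{i, m + 2 - i} = -\, y^\C_{i - 1, m + 1 - i}$ and $y^\C_{i + 1, m + 1 - i} = -\, y^\C_{i, m - i}$. Substituting these into~\eqref{paijinm} produces four terms that cancel in two pairs, each of the form $\tfrac{A}{B} - \tfrac{A}{B} = 0$.

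The only delicate point, and the step deserving the most care in a complete writeup, is the sign bookkeeping in the reduction $\partial_m(i, j)(\textbf{y}^\C) = -\, \partial_m(i', j')(\textbf{y}^\C)$ for $i + j \geq m + 2$: each of the four ratios in~\eqref{paijinm} must simultaneously have its numerator and denominator replaced by reflected Lemma values and pick up the sign from the shift in~$k$, and one must confirm that the four $-1$ factors combine with the $\pm$ signs in~\eqref{paijinm} to yield precisely $-\, \partial_m(i', j')$. The transitional case $i + j = m + 1$ requires an analogous but asymmetric substitution (downward neighbors unreflected, upward reflected with $k = 1$), whose cancellation relies on the fact that $(-1)^{k=1} = -1$ contributes exactly once in each upward ratio. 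No induction or auxiliary construction is needed, so Proposition~\ref{propsymmetricsol} reduces, modulo Lemma~\ref{symmetricsol}, to a few lines of careful algebra.
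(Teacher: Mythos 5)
Your proposal is correct and follows essentially the same route as the paper: extend the Lemma~\ref{symmetricsol} values across the anti-diagonal $i+j=m+1$ via $y^\C_{i,j}=(-1)^{i+j-m-1}y^\C_{m+1-j,m+1-i}$, check the symmetry $y^\C_{i,j}y^\C_{j,i}=1$, and verify $\pa_m(i,j)(\textbf{y}^\C)=0$ in three ranges of $i+j$ using the sign-flip identity $\pa_m(i,j)=-\pa_m(m+1-j,m+1-i)$ and the pairwise cancellation on the transitional anti-diagonal. The only difference from the paper's writeup is that you flag the sign bookkeeping explicitly, but the underlying argument is the same.
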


\begin{proof}
We start with a solution $\{ y^\C_{i,j} \in \C \backslash \{0\} ~\colon~ i+j \leq m+1 \}$ from Lemma~\ref{symmetricsol}. For an index $(i,j)$ with $i + j > m + 1$, we take
\begin{equation}\label{fijfullm}
y_{i,j}^\C := (-1)^{i+j-m-1} \, y^\C_{m+1-j, m+1-i}.
\end{equation}
We show that~\eqref{fijfullm} forms a solution for~\eqref{paijmbfy}. For $(i,j) \in B(m)$ with $i + j \geq m + 2$, it is straightforward to see
\begin{align*}
\pa_m{(i,j)} (\textbf{\textup{y}}) &= - \pa_m{(m+1-j, m+1-i)} (\textbf{\textup{y}}) = 0
\end{align*}
by Lemma~\ref{symmetricsol}. For an index $(i,j)$ with $i + j = m + 1$,
\begin{align*}
\pa_m{(i,j)} (\textbf{\textup{y}}) &= - \frac{y^\C_{i, j+1}}{y^\C_{i,j}} - \frac{y^\C_{i-1,j}}{y^\C_{i,j}} + \frac{y^\C_{i,j}}{y^\C_{i+1,j}} +\frac{y^\C_{i,j}}{y^\C_{i,j-1}} = \frac{y^\C_{i-1, j}}{y^\C_{i,j}} - \frac{y^\C_{i-1,j}}{y^\C_{i,j}} + \frac{y^\C_{i,j}}{y^\C_{i+1,j}} -\frac{y^\C_{i,j}}{y^\C_{i+1,j}} = 0.
\end{align*}
From~\eqref{symmetricproperty} for $(i,j)$ with $i + j \leq m + 1$, it follows~\eqref{symmetricproperty} for $(i,j)$ with $i + j > m + 1$ because
\begin{align*}
y_{i,j}^\C = (-1)^{i+j-m-1} \, y^\C_{m+1-j, m+1-i} = (-1)^{i+j-m-1} \, (y^\C_{m+1-i, m+1-j})^{-1} = (y_{j,i}^\C)^{-1}.
\end{align*}
Thus, we have just found a symmetric solution $\{ y^\C_{i,j} ~\colon~ (i,j) \in B(m) \}$ such that $y_{i,i} = \pm 1$.
\end{proof}

\begin{corollary}\label{corsymmetricsol}
For any non-zero complex number $c$, there exists a solution $\{ {y}^\C_{i,j} \in \C \backslash \{ 0\}  ~\colon~ \, (i,j) \in B(m) \}$ of the system~\eqref{paijmbfy} of equations such that
\begin{enumerate}
\item $y^\C_{i,j} \cdot y^\C_{j,i} = c^2$ .
\item $y^\C_{m,m} = c^{\phantom{\C}}$
\item $y^\C_{i,i} = \pm \, c^{\phantom{\C}}$ for any $1 \leq i < m$.
\end{enumerate}
\end{corollary}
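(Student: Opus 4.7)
The plan is to combine Proposition~\ref{propsymmetricsol} with Lemma~\ref{minussol}. Specifically, Proposition~\ref{propsymmetricsol} already hands us a symmetric solution $\{y^\C_{i,j}\}_{(i,j)\in B(m)}$ satisfying $y^\C_{i,j} \cdot y^\C_{j,i} = 1$ for every pair $(i,j)$; in particular, squaring gives $(y^\C_{i,i})^2 = 1$, so $y^\C_{i,i} = \pm 1$ for every $1 \leq i \leq m$. Denote $\epsilon := y^\C_{m,m} \in \{\pm 1\}$.

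Next, I would invoke the scaling symmetry of the system recorded in Lemma~\ref{minussol}: for any non-zero constant $\alpha \in \C \setminus \{0\}$, the rescaled tuple $\widetilde{y^\C_{i,j}} := \alpha \cdot y^\C_{i,j}$ is again a solution of~\eqref{paijmbfy}. Choosing $\alpha := c/\epsilon$, which is a non-zero complex number since both $c$ and $\epsilon$ are, produces a new solution $\{\widetilde{y^\C_{i,j}}\}$.

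It remains to check conditions (1)--(3). First, $\widetilde{y^\C_{m,m}} = (c/\epsilon) \cdot \epsilon = c$, so (2) holds. Second, for any $(i,j) \in B(m)$,
\[
\widetilde{y^\C_{i,j}} \cdot \widetilde{y^\C_{j,i}} \;=\; (c/\epsilon)^2 \cdot y^\C_{i,j} \cdot y^\C_{j,i} \;=\; (c/\epsilon)^2 \;=\; c^2,
\]
using $\epsilon^2 = 1$; this gives (1). Third, for $1 \leq i < m$,
\[
\widetilde{y^\C_{i,i}} \;=\; (c/\epsilon) \cdot y^\C_{i,i} \;=\; (c/\epsilon) \cdot (\pm 1) \;=\; \pm\, c,
\]
yielding (3). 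There is no real obstacle here since all the work has already been done in Proposition~\ref{propsymmetricsol} and Lemma~\ref{minussol}; the only minor point to keep track of is that $\epsilon = y^\C_{m,m}$ is a root of $x^2 = 1$, which is what makes the scaling factor $c/\epsilon$ simultaneously achieve $\widetilde{y^\C_{m,m}} = c$ and preserve the product identity $\widetilde{y^\C_{i,j}} \cdot \widetilde{y^\C_{j,i}} = c^2$.
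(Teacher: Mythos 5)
Your proof is correct and follows essentially the same route as the paper's: take the symmetric solution from Proposition~\ref{propsymmetricsol} (which satisfies $y^\C_{i,j} y^\C_{j,i}=1$ and $y^\C_{i,i}=\pm 1$), then rescale by $c/\epsilon$ with $\epsilon = y^\C_{m,m}\in\{\pm 1\}$ via Lemma~\ref{minussol}. The paper phrases this as ``multiplying by $\pm c$'', which is exactly your scaling factor.
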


\begin{proof}
The component $y^\C_{m,m}$ of a solution from Proposition~\ref{propsymmetricsol} is either $1$ or $-1$. By multiplying by $\pm c$, because of Lemma~\ref{minussol}, we have another solution satisfying $(1), (2)$ and $(3)$.
\end{proof}

\vspace{0.2cm}
\subsection{Inside of $B(m)$}\label{insidebm}~
\vspace{0.2cm}

Assume that the split leading term equation of $\Gamma(n)$ associated with $B(m)$ has a solution for some non-zero complex numbers $c^{\textup{hor},\C}_{i, i+1}$'s and $c^{\textup{ver},\C}_{j+1, j}$'s for $i, j \geq k$. Let
\begin{equation}\label{seedabovebox}
y^\C_{m,m}, \, y^\C_{1, m+1}, \cdots, y^\C_{m,m+1}
\end{equation}
where $y^\C_{i,j}$ is the $(i,j)$-th component of a solution of the split leading term equation, which is a non-zero complex number.  
By Corollary~\ref{corsymmetricsol}, we obtain a symmetric complex solution such that $c$ becomes the $(m,m)$-component $y_{m,m}^\C$ of the solution. 
In order to emphasize that~\eqref{seedabovebox} is pre-determined, let 
$$
d^{\phantom{\C}}_{i,j} := y_{i,j}^\C.
$$
Setting it as the initial part for a solution and using Lemma~\ref{extensionlemma}, we extend it to a solution of~\eqref{paijmBm} in $\Lambda_U$. For a pictorial outline of Section~\ref{insidebm}, see Figure~\ref{Stepsinside}. 

\vspace{0.2cm}
\begin{figure}[ht]
	\scalebox{1}{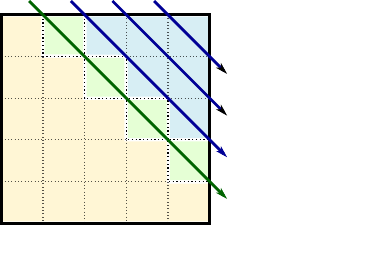}
	\caption{\label{Stepsinside} Pictorial outline of Section~\ref{insidebm}.}	
\end{figure}

\bigskip

\noindent{\bf Step 1. $(i, j) \in B(m)$ with $i + j \leq m + 1$}
\smallskip

We begin by taking $y^{\phantom{\C}}_{i,j} := {y^\C_{i,j}} \in \C \backslash \{0\} \subset \Lambda_U $ where $\{y^\C_{i,j} ~\colon~ (i,j) \in B(m) \}$ is a solution satisfying $y^\C_{m,m} = d^{\phantom{\C}}_{m,m}$ from Corollary~\ref{corsymmetricsol} for all indices $(i,j)$'s with $i + j \leq m+1$. Then, the equations $\pa (i,j) (\textbf{y}) = 0$ for $i+j \leq m$ hold because $\pa (i,j) (\textbf{y})= \pa_m (i,j) (\textbf{y}) \, T^{1-t}$ by~\eqref{shortnotofderi}.

\bigskip

\noindent{\bf Step 2. $(i, j) \in B(m)$ with $i + j = m + 2$}
\smallskip

Next, we determine all entries $y_{i,j}$'s of the anti-diagonal given by $i + j = m + 2$ within $B(m)$. For this purpose, we decompose the equation $\pa (1,m) (\textbf{y}) = 0$ in the system~\eqref{shortnotofderi} into two pieces as follows:
\begin{align*}
\pa{(1,m)} (\textbf{y})
&= \left(\frac{y_{1,m}}{y_{2,m}} + \frac{y_{1,m}}{y_{1,m-1}} \right) T^{1-t} +  \left( - \frac{y_{1,m+1}}{y_{1,m}} \right) T^{1+(m-1)t} \\
&= \left(\frac{y_{1,m}}{y_{2,m}} + \frac{y_{1,m}}{y_{1,m-1}} \right) T^{1-t} +  \left( - \frac{y_{1,m+1}}{y_{1,m}} {\color{blue} + a_{1} \frac{y_{1,m}}{y_{1,m-1}}} {\color{blue} - a_1 \frac{y_{1,m}}{y_{1,m-1}}} \right) T^{1+(m-1)t} \\
&= \left( \frac{y_{1,m}}{y_{2,m}} + \frac{y_{1,m}}{y_{1,m-1}}  {\color{blue} - a_1 \frac{y_{1,m}}{y_{1,m-1}}} T^{mt} \right)  T^{1-t}  + \left( - \frac{y_{1,m+1}}{y_{1,m}} {\color{blue} + a_1 \frac{y_{1,m}}{y_{1,m-1}}} \right) T^{1+(m-1)t}
\end{align*}
where $a_1$ will be determined shortly. 

In order to solve $\pa (1,m) (\textbf{y}) = 0$, it suffices to find a solution of the following system
\begin{align}\label{(m,1)equ}
\begin{cases}
\displaystyle \pa{(1,m)}^{(1)} (\textbf{y}) := \frac{y_{1,m}}{y_{2,m}} + \frac{y_{1,m}}{y_{1,m-1}}  { - a_1 \frac{y_{1,m}}{y_{1,m-1}}} T^{mt} = 0 \\ \\
\displaystyle \pa{(1,m)}^{(2)} (\textbf{y}) := - \frac{y_{1,m+1}}{y_{1,m}} { + a_1 \frac{y_{1,m}}{y_{1,m-1}}}  = 0 \\
\end{cases}
\end{align}
By our choice of $y_{i,j}$'s so far, we have
$$
\frac{y^{\phantom{\C}}_{1,m}}{y^{\phantom{\C}}_{1,m-1}} = \frac{y^\C_{1,m}}{y^\C_{1,m-1}} \neq 0.
$$
Then, $y_{1,m+1} = d_{1,m+1}$ uniquely determines the value $a_1\in \C \backslash \{0\}$ from $\pa{(1,m)}^{(2)} (\textbf{y}) =0$. Then, there exists a unique $y_{2,m} \in \Lambda_U$ such that $\pa (1,m)^{(1)} (\textbf{y}) = 0$.

By applying Lemma~\ref{extensionlemma} succesively, we can determine the remaining entries of the anti-diagonal containing $y_{2,m}$ within $B(m)$. Namely, from a pre-determined $y_{i+1,m-i+1} \in \Lambda_U$ with $\frak{v}_T (y^{\phantom{\C}}_{i+1,m-i+1} - {y}^\C_{i+1,m-i+1}) = mt$, we determine a solution $y_{i+2,m-i} \in \Lambda_U$ of $\pa(i+1,m-i) (\textbf{y}) = 0$ so that
\begin{equation}\label{valofy}
\frak{v}_T \left(y^{\phantom{\C}}_{i+2,m-i} - {y}^\C_{i+2,m-i}\right) = mt.
\end{equation}
Then, all anti-diagonal entries $y_{i+2,m-i}$'s inside $B(m)$ are chosen to obey
$$
\begin{cases}
\pa(1, m)^{(1)} (\textbf{y}) = 0 \\
\pa(i+1,m-i)(\textbf{y}) = 0 \quad \mbox{for  } 1 \leq  i \leq m-2.
\end{cases}
$$
Plugging the previously determined $y_{i,j}$'s into $\pa (m,1) (\textbf{y}) = 0$, we convert $\pa (m,1) (\textbf{y}) = 0$ into $\pa (m,1)^{(2)} (\textbf{y}) = 0$ of the form~\eqref{pa1m2}. By solving $\pa (m,1)^{(2)} (\textbf{y}) = 0$, we obtain $y_{m+1,1}$.

We would like to find a sufficient condition that $y_{m+1,1}$ exists in $\Lambda_U$ such that $y^\C_{m+1,1} \equiv y^{\phantom{\C}}_{m+1,1} \mod T^{>0}$. Because of~\eqref{valofy}, we put
\begin{equation}\label{yismi}
y^{\phantom{\C}}_{i+2,m-i} \equiv {y}^\C_{i+2,m-i} + A_i \cdot T^{mt} \mod T^{>mt}
\end{equation}
where $A_i \in \C \backslash \{0\}$. A straightforward calculation via consideration of $\pa(i+1,m-i)(\textbf{y}) = 0$ gives us the following lemma.

\begin{lemma}\label{aibi}
A recurrence relation for $A_i$'s is
$$
\begin{cases}
A_0 &= - a_1 \cdot y^\C_{1,m-1} \\
A_{i} &= - \frac{\,\, \left({y}^\C_{i+2,m-i} \right)^2 }{\,\, \left({y}^\C_{i+1,m-i} \right)^2} \, A_{i-1}.
\end{cases}
$$
\end{lemma}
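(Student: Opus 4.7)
My plan is to establish both parts of Lemma~\ref{aibi} by matching coefficients of $T^{mt}$ in the two defining equations, using the expansions $y^{\phantom{\C}}_{i+2,m-i} \equiv y^\C_{i+2,m-i} + A_i T^{mt} \mod T^{>mt}$ from~\eqref{yismi} and the fact that the complex parts already satisfy the symmetric solution equations from Section~\ref{symmetriccomplexsolinbm}.

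For the base case $A_0$, I will unravel the single equation $\pa(1,m)^{(1)}(\textbf{y}) = 0$ defining $y_{2,m}$. Of the three variables $y_{1,m}$, $y_{1,m-1}$, $y_{2,m}$ appearing in it, the first two are the complex numbers $y^\C_{1,m}$, $y^\C_{1,m-1}$ fixed in Step~1, while $y_{2,m}$ is the one being determined. The $T^0$ part reproduces $\pa_m(1,m)(\textbf{y}^\C)=0$, which after using the boundary conventions $y_{1,m+1}=0$ and $y_{0,m}=0$ simplifies to $y^\C_{2,m} = -y^\C_{1,m-1}$. Extracting the $T^{mt}$ coefficient, using the first-order expansion $1/(a+bT^{mt}) \equiv a^{-1} - a^{-2} b T^{mt} \mod T^{>mt}$, gives a single linear equation in $A_0$ and $a_1$; substituting $y^\C_{2,m} = -y^\C_{1,m-1}$ collapses the ratio $(y^\C_{2,m})^2/y^\C_{1,m-1}$ into $y^\C_{1,m-1}$ and yields $A_0 = -a_1 \, y^\C_{1,m-1}$.

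For the recurrence at step $i$ with $1 \leq i \leq m-2$, I will read off coefficients from $\pa(i+1, m-i)(\textbf{y}) = 0$. Among the four ratios in $\pa_m(i+1,m-i)$, only two involve entries carrying a $T^{mt}$ perturbation: the numerator $y_{i+1, m-i+1}$, which by induction has expansion $y^\C_{i+1,m-i+1} + A_{i-1} T^{mt} + O(T^{>mt})$, and the denominator $y_{i+2,m-i}$, which carries the unknown $A_i$. The remaining variables $y_{i,m-i}$, $y_{i+1,m-i}$, $y_{i+1,m-i-1}$ are complex from Step~1. The $T^0$ coefficient vanishes since the symmetric complex solution satisfies $\pa_m(i+1, m-i)(\textbf{y}^\C) = 0$. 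The $T^{mt}$ coefficient then produces a single linear relation which rearranges directly to the claimed recurrence $A_i = -(y^\C_{i+2,m-i})^2/(y^\C_{i+1,m-i})^2 \cdot A_{i-1}$.

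There is no serious algebraic obstacle: the only routine inputs are the first-order Taylor expansion of the reciprocal and cancellation of the $T^0$ terms via the symmetric solution. The one thing I would watch carefully is the sign bookkeeping, since the two contributing ratios in $\pa_m$ come with opposite signs but each involves a reciprocal, so after differentiation both $T^{mt}$ contributions flip sign and combine into the negative ratio of squares. As a sanity check I plan to verify the $m=2$ case by direct computation before writing up the general induction.
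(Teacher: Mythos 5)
Your proof is correct and is precisely the ``straightforward calculation'' the paper invokes without writing out: match $T^0$ and $T^{mt}$ coefficients in $\pa(1,m)^{(1)}(\textbf{y}) = 0$ for the base case and in $\pa(i+1,m-i)(\textbf{y}) = 0$ (for $1\le i\le m-2$, where all four neighboring indices lie in $B(m)$ and three of the five entries are complex from Step~1) for the recurrence, using that the complex parts already satisfy $\pa_m(i+1,m-i)(\textbf{y}^\C)=0$. The only slip is in your informal sign remark at the end: the two $T^{mt}$ contributions acquire their common minus for different reasons (the first from the explicit $-$ in front of $y_{i+1,m-i+1}/y_{i+1,m-i}$, whose perturbed variable sits in the numerator; the second from differentiating the reciprocal in $+y_{i+1,m-i}/y_{i+2,m-i}$), not because ``each involves a reciprocal'' --- but the resulting linear relation and the recurrence it yields are exactly right.
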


From~\eqref{yismi} and Lemma~\ref{aibi}, it follows
\begin{align*}
\pa{(m,1)} (\textbf{y}) T^{t-1} &= - \frac{y_{m-1,1}}{y_{m,1}} - \frac{y_{m,2}}{y_{m,1}} + \frac{y_{m,1}}{y_{m+1,1}} T^{mt} \\
&\equiv \left( - \frac{y^\C_{m-1,1}}{y^\C_{m,1}} - \frac{y^\C_{m,2}}{y^\C_{m,1}} \right) +  \left( - \frac{A_{m-2}}{ y^\C_{m,1}} + \frac{y^\C_{m,1}}{y^{\phantom{\C}}_{m+1,1}} \right) T^{mt} &\mod{ T^{>mt}} \\
&\equiv \left( (-1)^{m} a_1 \, \frac{y^\C_{1,m-1}}{ y^\C_{m,1}} \left( \prod_{i=1}^{m-2} \frac{\,\, ({y}^\C_{i+2,m-i})^2 }{\,\, ({y}^\C_{i+1,m-i})^2} \right) \, + \frac{y^\C_{m,1}}{y^{\phantom{\C}}_{m+1,1}} \right) T^{mt} &\mod{ T^{>mt}}.
\end{align*}
By Corollary~\ref{corsymmetricsol}, we have
\begin{itemize}
\item $y^\C_{i,j} \cdot y^\C_{j,i} = (d^{\phantom{\C}}_{m,m})^2$, $(y^\C_{i,i})^2 = (d^{\phantom{\C}}_{m,m})^2$
\item ${y_{1,m-1}^\C} + {y_{2,m}^\C} = 0$
\item $y^\C_{1,m-1} \cdot y^\C_{m-1,1} = (d^{\phantom{\C}}_{m,m})^2$,
\end{itemize}
Using them, we simplify the above expression as follows.
\begin{align*}
\pa{(m,1)} (\textbf{y}) T^{t-1} 
&\equiv \left( (-1)^{m} \,  a_1\frac{ y^\C_{1,m-1} }{y^\C_{m,1}} \, \frac{(d^{\phantom{\C}}_{m,m})^2}{(y^\C_{2,m})^2} + \frac{y^\C_{m,1}}{y^{\phantom{\C}}_{m+1,1}} \right) T^{mt} &\mod{ T^{>mt}}\\
&\equiv \left( (-1)^{m} \,  a_1\frac{ 1 }{y^\C_{m,1}} \, \frac{(d^{\phantom{\C}}_{m,m})^2}{ y^\C_{1,m-1} } + \frac{y^\C_{m,1}}{y^{\phantom{\C}}_{m+1,1}} \right) T^{mt} &\mod{ T^{>mt}}\\
&\equiv \left( (-1)^{m} a_1 \, \frac{ y^\C_{m-1,1} }{y^\C_{m,1}} + \frac{y^\C_{m,1}}{y^{\phantom{\C}}_{m+1,1}} \right) T^{mt} &\mod{ T^{>mt}}.
\end{align*}
Thus, $\pa(m,1)(\textbf{y}) = 0$ yields
\begin{equation}\label{pa1m2}
\pa(m,1)^{(2)}(\textbf{y}) := (-1)^{m} a_1 \, \frac{ y_{m-1,1} }{y_{m,1}} + \frac{y_{m,1}}{y_{m+1,1}} + \frak{P}(m,1)^{(2)}(a_1) = 0
\end{equation}
for some constant $\frak{P}(m,1)^{(2)}(a_1) \in \Lambda_+$ (depending on $a_1$). Then, $y_{m+1,1} \in \Lambda_U$ can be determined so that $\pa(m,1)^{(2)}(\textbf{y}) = 0$ holds. Because of Corollary~\ref{corsymmetricsol} and~\eqref{(m,1)equ}, we observe
\begin{equation}\label{y1m+1}
y_{m+1,1} \equiv (-1)^{m+1} \, \frac{1}{a_1} \, \frac{(y_{m,1})^2}{y_{m-1,1}} \equiv (-1)^{m+1} \, \frac{(d_{m,m})^2}{a_1} \, \frac{ y_{1,m-1}}{ (y_{1,m})^2} \equiv (-1)^{m+1} \frac{(d_{m,m})^2}{y_{1,m+1}} \mod{ T^{>0}},
\end{equation}
which explains why the equation $\pa_m(1) (\textbf{y}) = 0$ in the system \eqref{splitleadingtermequ} appears. In other words,~\eqref{pamly} provides a sufficient condition to solve $y_{m+1,1}$ over $\Lambda_U$ in~\eqref{pa1m2} such that $y^\C_{m+1,1} \equiv y^{\phantom{\C}}_{m+1,1} \mod T^{>0}$.

\bigskip

\noindent{\bf Step 3. $(i, j) \in B(m)$ with $m + 2 < i + j \leq 2m $}
\smallskip

Now, we determine all elements $y_{i,j}$'s for $(i, j) \in B(m)$ satisfying $m+2 < i + j < 2m$. For an index $j$ with $2 < j < m$, we decompose $\pa{(j,m)} (\textbf{y})$ as follows:
\begin{align*}
\displaystyle \pa{(j,m)} (\textbf{y})&= \left(- \frac{y_{j-1,m}}{y_{j,m}} + \frac{y_{j,m}}{y_{j+1,m}} + \frac{y_{j,m}}{y_{j,m-1}} \right) T^{1-t} +  \left( - \frac{y_{j,m+1}}{y_{j,m}} \right) T^{1+(m-j)t}  \\
&= \left( \left(- \frac{y_{j-1,m}}{y_{j,m}} + \frac{y_{j,m}}{y_{j+1,m}} + \frac{y_{j,m}}{y_{j,m-1}} \right) + \left( {\color{blue} - a_j \frac{y_{j,m}}{y_{j+1,m}} - a_j \frac{y_{j,m}}{y_{j,m-1}}} \right) T^{(m-j+1)t} \right) T^{1-t} \\
&+ \left( - \frac{y_{j,m+1}}{y_{j,m}} {\color{blue} + a_j \frac{y_{j,m}}{y_{j+1,m}} + a_j \frac{y_{j,m}}{y_{j,m-1}}} \right) T^{1+(m-j)t}
\end{align*}
In order to have a solution of $\pa (j,m) (\textbf{y}) = 0$, we decompose it into the following equations.
\begin{align*}
\begin{cases}
\displaystyle \pa{(j,m)}^{(1)} (\textbf{y})  := \left(- \frac{y_{j-1,m}}{y_{j,m}} + \frac{y_{j,m}}{y_{j+1,m}} + \frac{y_{j,m}}{y_{j,m-1}} \right) - a_j \left( {  \frac{y_{j,m}}{y_{j+1,m}} + \frac{y_{j,m}}{y_{j,m-1}}} \right) T^{(m-j+1)t} = 0   \\ \\
\displaystyle \pa{(j,m)}^{(2)} (\textbf{y}) := - \frac{y_{j,m+1}}{y_{j,m}} + a_j \left( { \frac{y_{j,m}}{y_{j+1,m}} + \frac{y_{j,m}}{y_{j,m-1}}} \right)= 0. \\
\end{cases}
\end{align*}
Due to the following lemma, it is enough to find a solution of the following system to solve $\pa(j,m)(\textbf{y}) = 0$.

\begin{lemma} A solution of the system
\begin{align}\label{solutdecompos}
\begin{cases}
\displaystyle \pa{(j,m)}^{(1)} (\textbf{\textup{y}}) = 0 \\
\displaystyle \pa{(j,m)}^{(2)} (\textbf{\textup{y}}) - a_j \cdot \pa{(j,m)}(\textbf{\textup{y}}) \, T^{t-1} = - \frac{y_{j,m+1}}{y_{j,m}}  + a_j \left( \frac{y_{j-1,m}}{y_{j,m}}  + \frac{y_{j,m+1}}{y_{j,m}} \, T^{(m-j+1)t}\right) = 0
\end{cases}
\end{align}
is also a solution of $\pa(j,m)(\textbf{\textup{y}}) = 0$.
\end{lemma}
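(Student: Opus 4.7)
The plan is to treat this as a small piece of linear algebra over the Novikov ring, using only the defining identity
\[
\pa(j,m)(\textbf{y}) \;=\; \pa(j,m)^{(1)}(\textbf{y})\, T^{1-t} \;+\; \pa(j,m)^{(2)}(\textbf{y})\, T^{1+(m-j)t}
\]
together with the fact that any element of the form $1 - c\, T^s$ with $s>0$ and $c \in \C$ is a unit in $\Lambda_0$. First I would verify the decomposition above by direct expansion of the formulas introduced just before the lemma; this is immediate because $\pa(j,m)^{(1)}$ and $\pa(j,m)^{(2)}$ are built by adding and subtracting exactly the same $a_j$-correction, so that the two ``halves'' reassemble to $\pa(j,m)$.

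Assuming $\textbf{y}$ satisfies the system~\eqref{solutdecompos}, I would then feed the first equation $\pa(j,m)^{(1)}(\textbf{y})=0$ into this decomposition to get $\pa(j,m)(\textbf{y}) = \pa(j,m)^{(2)}(\textbf{y})\, T^{1+(m-j)t}$, and plug this identity into the second equation. After the $a_j \pa(j,m)\,T^{t-1}$ term is expanded, everything collapses to
\[
\pa(j,m)^{(2)}(\textbf{y})\bigl(1 - a_j\, T^{(m-j+1)t}\bigr) \;=\; 0.
\]
Since $t>0$ and $m-j+1\geq 2$, the exponent $(m-j+1)t$ is strictly positive, hence $a_j\, T^{(m-j+1)t}\in\Lambda_+$ and the factor $1 - a_j\, T^{(m-j+1)t}$ is a unit in $\Lambda_0$. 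This forces $\pa(j,m)^{(2)}(\textbf{y})=0$, and combined with $\pa(j,m)^{(1)}(\textbf{y})=0$ yields $\pa(j,m)(\textbf{y})=0$ as claimed. (The boundary case $t=0$ places $L_m(0)$ at the center of $\Delta_\lambda$, whose non-displaceability is already covered by Theorem~\ref{theorem_NNU}, so it needs no separate treatment here.)

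I do not expect any genuine obstacle: the whole content of the lemma is the unit property of $1 - a_j\, T^{(m-j+1)t}$ in $\Lambda_0$, and the argument amounts to a linear change of basis in the two-dimensional $\Lambda$-subspace spanned by $T^{1-t}$ and $T^{1+(m-j)t}$.
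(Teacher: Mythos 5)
Your proof is correct and follows essentially the same route as the paper: both use the identity $\pa(j,m)(\textbf{y}) = \pa(j,m)^{(1)}(\textbf{y})\,T^{1-t} + \pa(j,m)^{(2)}(\textbf{y})\,T^{1+(m-j)t}$ and then factor out a unit in $\Lambda_0$; the paper obtains $\pa(j,m)(\textbf{y})\bigl(1 - a_j T^{(m-j+1)t}\bigr) = 0$ directly while you obtain $\pa(j,m)^{(2)}(\textbf{y})\bigl(1 - a_j T^{(m-j+1)t}\bigr) = 0$, which is an equivalent factoring. One minor imprecision: $a_j$ lies in $\Lambda_U$ rather than $\C$ as you write, but since $\frak{v}_T\!\bigl(a_j T^{(m-j+1)t}\bigr)=(m-j+1)t>0$ the factor is still a unit and your conclusion stands.
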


\begin{proof}
Note that
$$
\pa(j,m)(\textbf{y}) = \pa(j,m)^{(1)}(\textbf{y}) \, T^{1-t} + \pa(j,m)^{(2)}(\textbf{y}) \, T^{1 + (m -j)t}.
$$
A solution of~\eqref{solutdecompos} satisfies
$$
\pa(j,m)(\textbf{y}) =  \pa(j,m)^{(2)}(\textbf{y}) \, T^{1 + (m -j)t} = a_j \cdot \pa(j,m)(\textbf{y}) \, T^{(m-j+1)t},
$$
which gives rise to $\pa(j,m)(\textbf{y}) = 0$.
\end{proof}

Suppose that we are given a solution
$$
\{ y_{r,s} \in \Lambda_U ~\colon~ (r,s) \in B(m) , \, r+s \leq m+j\}
$$
of $\pa (r,s)(\textbf{y}) = 0$
for all $(r,s) \in B(m)$ and $r+s < m +j$ such that
$$
\frak{v}_T (y^{\phantom{\C}}_{r,s} - y_{r,s}^\C) \geq (m - j + 2)t
$$
as the induction hypothesis. 
Since each $y_{r,s}^\C$ is non-zero by our choice, we then obtain ${y^\C_{j-1,m}}/{y^\C_{j,m}} \neq 0$
so that $\pa{(j,m)}^{(2)} (\textbf{\textup{y}}) - a_j \cdot \pa{(j,m)}(\textbf{\textup{y}}) T^{t-1} = 0$ determines a unique value $a_j \in \Lambda_U$ from $y_{j,m+1} = d_{j,m+1}$.
Then, the equation $\pa (j,m)^{(1)} (\textbf{y}) = 0$ yields
$$
\frac{y^\C_{j,m}}{y_{j+1,m}} \left(1 - a_j \, T^{(m-j+1)t} \right) \equiv \left( \frac{y^\C_{j-1,m}}{y^\C_{j,m}} - \frac{y^\C_{j,m}}{y^\C_{j,m-1}} \right) + a_j \frac{y^\C_{j-1,m}}{y^\C_{j,m}} \, T^{(m-j+1)t}  \mod T^{>(m-j+1)t}.
$$

Keeping in mind that $y_{r,s}^\C$'s from Corollary~\ref{corsymmetricsol} satisfy
$$
\pa_m (j,m)(\textbf{y}) = - \frac{y^\C_{j-1,m}}{y^\C_{j,m}}  + { \frac{y^\C_{j,m}}{y^\C_{j+1,m}} + \frac{y^\C_{j,m}}{y^\C_{j,m-1}}} = 0,
$$
we obtain
$$
 \frac{y^\C_{j,m}}{y^\C_{j+1,m}} \neq 0 \, \textup{ and } \, \frac{y^\C_{j-1,m}}{y^\C_{j,m}} - \frac{y^\C_{j,m}}{y^\C_{j,m-1}} \neq 0
$$
since $y_{j,m}^\C$ is non-zero. Hence $y_{j+1,m} \in \Lambda_U$ with $\frak{v}_T (y^{\phantom{\C}}_{j+1,m} - y_{j+1,m}^\C) = (m-j+1)t$.

Suppose that
$$
\{ y_{r,s} \in \Lambda_U : (r,s) \in B(m),\, r+s \leq m+j \} \cup \{ y_{r,s} \in \Lambda_U : (r,s) \in B(m), \, r+s = m+j + 1, \, r > m-i\}
$$
are given and $\frak{v}_T (y^{\phantom{\C}}_{i+j,m-i+1} - y^\C_{i+j,m-i+1})  = (m - j + 1) t$.
By Lemma~\ref{extensionlemma}, the equation $\pa (i+j,m-i) = 0$ determines $y_{i+j+1,m-i} \in \Lambda_U$ such that
\begin{equation}\label{valofy2}
\frak{v}_T (y^{\phantom{\C}}_{i+j+1,m-i} - y^\C_{i+j+1,m-i}) = (m - j + 1)t.
\end{equation}

In order to find $y_{m+1,j}$, we convert $\pa (m,j) (\textbf{y}) = 0$ into $\pa (m,j)^{(2)} (\textbf{y}) = 0$ by inserting the previously determined $y_{i,j}$'s. For $0 \leq i \leq m - j -1$, due to~\eqref{valofy2}, we may set
$$
y^{\phantom{\C}}_{i+j+1,m-i} \equiv {y}^\C_{i+j+1,m-i} + A_i \cdot T^{(m - j + 1) t} \mod T^{> (m - j + 1) t}
$$
where $A_i \in \C \backslash \{0\}$. As in Lemma~\ref{aibi}, we derive the following lemma.
\begin{lemma}\label{aibi2}
A recurrence relation for $A_i$'s is
$$
\begin{cases}
\displaystyle A_0 &= - a_j \cdot \frac{(y^\C_{j+1,m})^2}{(y^\C_{j,m})^2} \cdot y^\C_{j-1,m} \\
\displaystyle A_{i} &= - \frac{\,\, \left({y}^\C_{i+j+1,m-i} \right)^2 }{\,\, \left({y}^\C_{i+j,m-i} \right)^2} \, A_{i-1}.
\end{cases}
$$
\end{lemma}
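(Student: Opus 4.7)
My plan is to mirror the proof of Lemma~\ref{aibi}: expand the relevant equations at the first non-trivial order in $T$, isolate the leading correction, and invoke the fact that the symmetric complex solution from Proposition~\ref{propsymmetricsol} kills the $T^0$ terms. The crucial bookkeeping observation is that, by the inductive structure of Step~3, every variable on an anti-diagonal $r+s \leq m+j$ was fixed at an earlier stage (indexed by some $j' < j$), and its correction term sits at order $T^{(m-j'+1)t}$ with $m - j' + 1 > m - j + 1$. Hence at the working order $T^{(m-j+1)t}$, only the variables on the current anti-diagonal $r+s = m+j+1$ carry non-trivial corrections.

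For $A_0$, I would substitute the expansion $y_{j+1, m} \equiv y^\C_{j+1, m} + A_0 \, T^{(m-j+1)t}$ into $\pa(j,m)^{(1)}(\textbf{y}) = 0$. Since $y_{j-1, m}$, $y_{j, m}$, and $y_{j, m-1}$ have no correction at this order, the $T^{(m-j+1)t}$-coefficient reduces to
\[
-\frac{y^\C_{j,m} \, A_0}{(y^\C_{j+1,m})^2} \;-\; a_j \left( \frac{y^\C_{j,m}}{y^\C_{j+1,m}} + \frac{y^\C_{j,m}}{y^\C_{j,m-1}} \right) = 0.
\]
Applying $\pa_m(j,m)(y^\C) = 0$ to replace the parenthesized expression by $y^\C_{j-1,m}/y^\C_{j,m}$ and rearranging yields the stated formula for $A_0$.

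For $i \geq 1$, I would expand $\pa(i+j, m-i)(\textbf{y}) = 0$. In the range $1 \leq i \leq m-j-1$ we have $i+j < m$ and $m-i < m$, so no term from the $T^{1+\ell t}$-summation of~\eqref{potential} enters, and
\[
\pa(i+j, m-i)(\textbf{y}) \, T^{t-1} = -\frac{y_{i+j, m-i+1}}{y_{i+j, m-i}} - \frac{y_{i+j-1, m-i}}{y_{i+j, m-i}} + \frac{y_{i+j, m-i}}{y_{i+j+1, m-i}} + \frac{y_{i+j, m-i}}{y_{i+j, m-i-1}}.
\]
Among the five variables involved, only $y_{i+j, m-i+1}$ (contributing $A_{i-1}$, by reindexing $i' = i-1$) and $y_{i+j+1, m-i}$ (contributing $A_i$) lie on the current anti-diagonal $r+s = m+j+1$; the rest have smaller anti-diagonal sum and hence no correction at order $T^{(m-j+1)t}$. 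Since the complex data satisfy $\pa_m(i+j, m-i)(y^\C) = 0$, the $T^0$ piece vanishes, and matching the $T^{(m-j+1)t}$ coefficients gives
\[
-\frac{A_{i-1}}{y^\C_{i+j, m-i}} \;-\; \frac{y^\C_{i+j, m-i} \, A_i}{(y^\C_{i+j+1, m-i})^2} = 0,
\]
which rearranges to the asserted recurrence.

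The main obstacle is therefore not analytical but combinatorial: one must carefully verify that every neighbor of $(i+j, m-i)$ on a lower anti-diagonal was determined by an earlier step (with correction order strictly larger than $(m-j+1)t$), and that the boundary indices in the definition of $\pa(i+j, m-i)$ do not accidentally pull in a term from the $T^{1+\ell t}$-summation of~\eqref{potential}. Both checks are direct from the index ranges in Step~3, so once this bookkeeping is settled the identities for $A_0$ and $A_i$ follow from the Taylor expansions outlined above.
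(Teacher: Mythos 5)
Your proposal is correct and follows the same computational route that the paper itself indicates: expand each $\pa(\cdot,\cdot)(\textbf{y}) = 0$ to order $T^{(m-j+1)t}$, use the vanishing of the $T^0$ part from $\pa_m(\cdot,\cdot)(y^\C) = 0$, observe that by the induction hypothesis only the two neighbors on the anti-diagonal $r+s = m+j+1$ carry corrections at this order, and match coefficients. The paper's proof is just the declaration "As in Lemma~\ref{aibi}, we derive the following lemma" (and for Lemma~\ref{aibi}, "a straightforward calculation"), so you have simply supplied the calculation the authors omitted, including the correct bookkeeping of index ranges and correction orders.
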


By Lemma~\ref{aibi2} and Corollary~\ref{corsymmetricsol},
\begin{align*}
\pa{(m,j)} (\textbf{y}) T^{t-1} &=  \left(- \frac{y_{m,j+1}}{y_{m,j}} - \frac{y_{m-1,j}}{y_{m,j}} + \frac{y_{m,j}}{y_{m,j-1}} \right) +  \frac{y_{m,j}}{y_{m+1,j}} T^{(m-j+1)t} \\
&\equiv  \left(- \frac{y^\C_{m,j+1}}{y^\C_{m,j}} - \frac{y^\C_{m-1,j}}{y^\C_{m,j}} + \frac{y^\C_{m,j}}{y^\C_{m,j-1}} \right) + \left( - \frac{A_{m-j-1}}{y^\C_{m,j}} + \frac{y^\C_{m,j}}{y^{\phantom{\C}}_{m+1,j}} \right) T^{(m-j+1)t} &\mod{ T^{>(m-j+1)t}} \\
&\equiv \left( (-1)^{m-j+1} a_j \, \frac{y^\C_{j-1,m}}{ y^\C_{m,j}} \left( \prod_{i=0}^{m-j-1} \frac{\,\, ({y}^\C_{i+j+1,m-i})^2 }{\,\, ({y}^\C_{i+j,m-i})^2}  \right)  + \frac{y^\C_{m,j}}{y^{\phantom{\C}}_{m+1,j}} \right) T^{(m-j+1)t} &\mod{ T^{>(m-j+1)t}} \\
&\equiv \left( (-1)^{m-j+1} a_j \, \frac{1}{ y^\C_{m,j}} \frac{(d^{\phantom{\C}}_{m,m})^2}{y^\C_{m,j-1}} \frac{(y^\C_{m,j})^2}{(d^{\phantom{\C}}_{m,m})^2}+  \frac{y^\C_{m,j}}{y^{\phantom{\C}}_{m+1,j}} \right) T^{(m-j+1)t} &\mod{ T^{>(m-j+1)t}} \\
&\equiv \left( (-1)^{m-j+1} a_j \, \frac{y^\C_{m,j}}{y^\C_{m,j-1}} +  \frac{y^\C_{m,j}}{y^{\phantom{\C}}_{m+1,j}} \right) T^{(m-j+1)t} &\mod{ T^{>(m-j+1)t}}
\end{align*}
which yields
\begin{equation}\label{pajm2}
\pa(m,j)^{(2)}(\textbf{y}) = \left( (-1)^{m-j+1} a_j \, \frac{y^\C_{m,j}}{y^\C_{m,j-1}} + \frac{y^\C_{m,j}}{y^{\phantom{\C}}_{m+1,j}} \right) + \frak{P}(m,j)^{(2)} (a_1, \cdots, a_j) = 0.
\end{equation}
for some $\frak{P}(m,j)^{(2)} (a_1, \cdots, a_j) \in \Lambda_+$. We then have
\begin{equation}\label{yjm+1}
y_{m+1,j} \equiv (-1)^{m-j} \, \frac{y_{m,j-1}}{a_j} \equiv (-1)^{m-j} \, \frac{(d_{m,m})^2}{a_j \cdot y_{j-1,m}} \equiv (-1)^{m-j} \frac{(d_{m,m})^2}{y_{j,m+1}} \mod{ T^{>0}}.
\end{equation}
which explains why the equation $\pa_m(j) (\textbf{y}) = 0$ in the system \eqref{splitleadingtermequ} appears. In other words,~\eqref{pamly} provides a sufficient condition to solve $y_{m+1,j}$ over $\Lambda_U$ in~\eqref{pajm2} such that $y^{\phantom{\C}}_{m+1,j} = y^\C_{m+1,j}$.

Finally, we convert $\pa (m,m)(\textbf{y}) = 0$ into $\pa (m,m)^{(2)}(\textbf{y}) = 0$ as follows. Inserting $y_{m-1,m}, y_{m,m}, y_{m,m-1}$ and $y_{m,m+1} =  d_{m,m+1}$ into $\pa (m,m)(\textbf{y}) = 0$, we derive
\begin{equation}\label{pamm2}
\pa (m,m)^{(2)}(\textbf{y}) = \left( - \frac{y^{\phantom{\C}}_{m,m+1}}{y^\C_{m,m}} + \frac{y^\C_{m,m}}{y^{\phantom{\C}}_{m+1, m}} \right) + \frak{P}(m,m)^{(2)}(\textbf{a}) = 0
\end{equation}
for some $\frak{P}(m,m)^{(2)}(\textbf{a}) \in \Lambda_+$. We obtain
$$
y_{m+1, m} \equiv \frac{(y_{m,m})^2}{y_{m,m+1}} \mod T^{>0}
$$
and determine $y_{m+1,m}$ in $\Lambda_U$.

In summary, the above discussion is summarized as follows.
\begin{proposition}
For any tuple $(d_{m,m}, d_{1, m+1}, \cdots, d_{m,m+1})$ of non-zero complex numbers, there exist
\begin{itemize}
\item $y_{i,j} \in \Lambda_U$ \, for $(i,j) \in B(m) $,
\item $y_{i, m+1} \in \Lambda_U$ \, for $1 \leq i \leq m $,
\item $y_{m+1,j} \in \Lambda_U$ \, for $1 \leq j \leq m $
\end{itemize}
satisfying
\begin{enumerate}
\item $y_{m,m} \equiv d_{m,m} \mod T^{>0}$,
\item $y_{i, m+1} = d_{i, m+1}$ for each $i = 1, \cdots, m$,
\item $\pa (i,j) (\textbf{\textup{y}}) = 0$ \, for $(i,j) \in B(m)$,
\item $\displaystyle (-1)^{m+1-l} \, \frac{y_{l,m+1}}{y_{m,m}} +  \frac{y_{m,m}}{y_{m+1,l}} \equiv 0 \mod T^{>0}$.
\end{enumerate}
\end{proposition}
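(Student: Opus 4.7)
The proof will essentially be an organization of Steps 1--3 already developed in Section~\ref{insidebm} into a clean inductive statement. The plan is to begin by applying Corollary~\ref{corsymmetricsol} with $c = d_{m,m}$ to obtain a symmetric complex base solution $\{y^{\C}_{i,j}: (i,j) \in B(m)\}$ of $\pa_m(i,j)(\textbf{y}) = 0$, so that $y^{\C}_{m,m} = d_{m,m}$, $y^{\C}_{i,j}\cdot y^{\C}_{j,i}= (d_{m,m})^2$, and the diagonal entries satisfy $y^{\C}_{i,i} = \pm d_{m,m}$. For $(i,j) \in B(m)$ with $i+j \le m+1$, I would simply set $y_{i,j} := y^{\C}_{i,j} \in \C\setminus\{0\} \subset \Lambda_U$, which solves $\pa(i,j)(\textbf{y}) = 0$ automatically for $i+j \le m$ because on this range the equation is just a $T^{1-t}$-multiple of $\pa_m(i,j)(\textbf{y})$.

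Next, I would inductively determine the entries on the anti-diagonal $\{i+j = m + 1 + s\}$ inside $B(m)$ for $s = 1, 2, \ldots, m-1$. For the transition at each $s$, the boundary equation $\pa(s, m)(\textbf{y}) = 0$ (or $\pa(1,m)(\textbf{y}) = 0$ when $s=1$) gets split as in Step~2 and Step~3 into a pair of equations $\pa(s,m)^{(1)} = 0$ and $\pa(s,m)^{(2)} - a_s\cdot \pa(s,m)\, T^{t-1} = 0$ via an auxiliary constant $a_s \in \Lambda_U$. The prescribed value $y_{s,m+1} = d_{s,m+1}$ uniquely determines $a_s$, and then $\pa(s,m)^{(1)} = 0$ determines $y_{s+1,m} \in \Lambda_U$ with controlled valuation. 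Lemma~\ref{extensionlemma} then propagates this down the anti-diagonal, giving $y_{i+s, m-i+1}$ for $i=1,\ldots, m-s$ with $\frak{v}_T(y_{i+s,m-i+1} - y^{\C}_{i+s,m-i+1}) = (m-s+1)t$ (when $s = 1$, this is $mt$).

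The main technical point, which I would treat as the crucial step, is verifying condition (4). For each $l$ with $1 \leq l < m$, this amounts to showing that solving the leftover equation $\pa(m,l)^{(2)}(\textbf{y}) = 0$ for $y_{m+1,l}$ produces an element of $\Lambda_U$ whose reduction mod $T^{>0}$ is $(-1)^{m-l}(d_{m,m})^2 / d_{l, m+1}$. This requires the cascade identity for the error terms $A_i$ derived in Lemmas~\ref{aibi} and~\ref{aibi2}: because the ratios $(y^{\C}_{i+s+1,m-i})^2/(y^{\C}_{i+s,m-i})^2$ telescope through the anti-diagonal, and because of the symmetric identities $y^{\C}_{i,j}\cdot y^{\C}_{j,i} = (d_{m,m})^2$ from Corollary~\ref{corsymmetricsol}, the leading coefficient of $\pa(m,l)^{(2)}$ simplifies as in~\eqref{y1m+1} and~\eqref{yjm+1} to precisely give the required congruence. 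Finally, the case $l = m$ is handled separately via~\eqref{pamm2}, where the insertion of $y_{m, m+1} = d_{m,m+1}$ yields $y_{m+1,m} \equiv (y_{m,m})^2 / y_{m,m+1} \mod T^{>0}$. The hardest part is bookkeeping the valuations through the successive decompositions so that at every stage one can invoke Lemma~\ref{extensionlemma}; the symmetric choice of base solution is what makes the telescoping in the $A_i$-recursion collapse cleanly.
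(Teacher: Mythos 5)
Your proposal follows essentially the same route as the paper: it sets up the base complex solution via Corollary~\ref{corsymmetricsol} with $c = d_{m,m}$, propagates along anti-diagonals using the $\pa(j,m)^{(1)}$/$\pa(j,m)^{(2)}$ decomposition with auxiliary constants $a_j$, extends inside $B(m)$ by repeatedly invoking Lemma~\ref{extensionlemma}, and verifies condition~(4) through the telescoping identities of Lemmas~\ref{aibi} and~\ref{aibi2} combined with the symmetric relations $y^\C_{i,j}\cdot y^\C_{j,i} = (d_{m,m})^2$. The organization and the key lemmas are the same as in Steps~1--3 of Section~\ref{insidebm}, so the argument is correct and not a genuinely different approach.
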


\vspace{0.2cm}
\subsection{Outside of $B(m)$} \label{outsideofbm}~
\vspace{0.2cm}

Suppose that we are given a complex solution 
$$
\{ y^\C_{i,j} \in \C \backslash \{0\} :  (i,j) \in \Gamma(n) \backslash B(m) \cup \{(m,m)\} \}
$$ 
for~\eqref{splitleadingtermequ} together with non-zero complex numbers $c^{\textup{ver}, \C}_{i+1,i}$'s and $c^{\textup{hor}, \C}_{j,j+1}$'s, which is the hypothesis of Theorem~\ref{splitleadingtermequationimpliesnonzero}. In this section, we discuss how to determine a bulk-deformation parameter $\frak{b}$ in~\eqref{bulkparak2} from $c^{\textup{ver}, \C}_{i+1,i}$'s and $c^{\textup{hor}, \C}_{j,j+1}$'s and how to extend it to a solution in $\Lambda_U$ from $y^\C_{i,j}$'s for $\pa^\frak{b}(i,j) (\textbf{y}) = 0$. Assume that $m < k = \left\lceil n/2 \right\rceil$. For the case $m = k$, see Remark~\ref{separatecasemk}. Here is a pictorial outline of the section. 

\vspace{0.2cm}
\begin{figure}[ht]
	\scalebox{1}{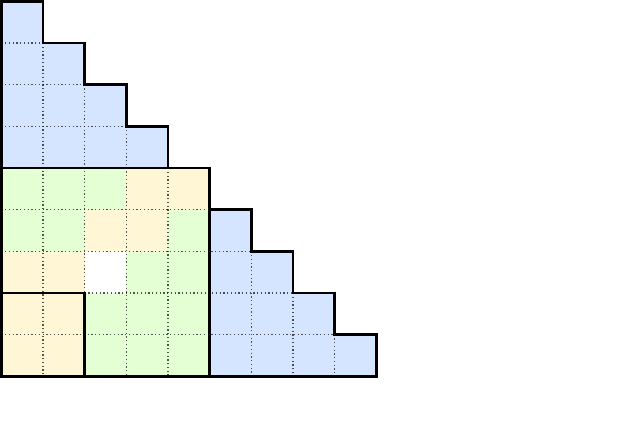}
	\caption{\label{Outside} Pictorial outline of Section~\ref{outsideofbm}.}	
\end{figure}

\bigskip

\noindent{\bf Step 1. $(i,j) \in B(m) \cup \mcal{I}_{\textup{seed}}$}
\smallskip

Let 
\begin{equation}\label{IndexsetforSeed}
\mcal{I}_{\textup{seed}} := \left\{ (m,m), ({1,m+1}), \cdots, ({m,m+1}), ({m+1,m+1}), ({m+1,m+2}), \cdots, \left({\left\lfloor \frac{n}{2} \right\rfloor, \left\lceil \frac{n}{2} \right\rceil}\right) \right\} 
\end{equation}
and
\begin{equation}\label{IndexsetforSeedbackslashmm}
\mcal{I}_{\textup{initial}} :=  \mcal{I}_{\textup{seed}} \backslash \{(m,m)\}.
\end{equation}

\begin{remark}
We will define a \emph{seed} in Definition~\ref{definitionofseeds} to generate a candidate for a solution for the split leading term equation. The set~\eqref{IndexsetforSeed} is the collection of indices where the corresponding variables will be generically chosen as the initial step. 
\end{remark}

We start to take $y^{\phantom{\C}}_{i,j} := y^\C_{i,j}$ for $(i,j) \in \mcal{I}_\textup{initial}$. Then, we fix a complex solution in $B(m)$ from Corollary~\ref{corsymmetricsol} such that $c = y_{m,m}^\C$. 

\bigskip

\noindent{\bf Step 2. $(i,j) \in  B(k) \backslash B(m)$}
\smallskip

By following Section~\ref{insidebm}, the chosen element $y_{1,m+1} \in \C \backslash \{0\}$ determines $y_{i+1,m-i+1}$'s in $\Lambda_U$ for $1 \leq i \leq m-1$. Moreover, we have $y_{m+1,1} \in \Lambda_U$. Again by Section~\ref{insidebm}, we also find $y_{i,j}$'s in $\Lambda_U$ for $(i,j)$ with $i \leq m+1$, $j \leq m+1$ and $i+j = m+3$ satisfying~\eqref{yjm+1}. If $ m + 1 = k$, then proceed to the next anti-diagonal. If $m + 1 < k$, then it remains to determine $y_{1, m+2}$ and $y_{m+2, 1}$ in this anti-diagonal within $B(k) \backslash B(m)$. Since the hypothesis of Lemma~\ref{extensionlemma} is fulfilled at the equations $\pa (1,m+1) (\textbf{y}) = 0$ and $\pa (m+1,1) (\textbf{y}) = 0$ by our standing assumption, they are determined in $\Lambda_U$. Proceeding inductively, we fill up all $y_{i,j}$'s for $(i, j) \in B(k) \backslash B(m)$ obeying
\begin{enumerate}
\item $y_{i,j} \in \Lambda_U$ such that $y^{\phantom{\C}}_{i,j} \equiv y^\C_{i,j} \mod T^{>0}$,
\item $\pa (i, j) (\textbf{y}) = 0$ for $(i,j) \in B(k-1)$.
\end{enumerate}

\bigskip

\noindent{\bf Step 3. $(i,j) \in  \Gamma(n) \backslash B(k)$ and $\frak{b}$}
\smallskip

We determine $y_{i,j}$'s for $(i,j) \in \Gamma(n) \backslash B(k)$ and $\frak{b}^\textup{hor}_{i, i+1}$'s and $\frak{b}^\textup{ver}_{j+1, j}$'s in~\eqref{bulkparak2} over $\Lambda_U$. Notice that
$$
\pa (i, j) (\textbf{y}) = \pa^\frak{b} (i, j)(\textbf{y}) \,\, \textup{for } (i, j) \in B(k-1)
$$
because of our choice of $\frak{b}$, and thus we may keep $\{ y_{i,j} \in \Lambda_U: (i,j) \in B(k) \}$ as a solution of $\pa^\frak{b} (i, j)(\textbf{y}) = 0$.

From now on, we focus only on the case where $n = 2k - 1$ because the case $n = 2k$ can be similarly dealt with. In this case, there are $(k^2 - 1)$ variables in $B(k)$. As all variables $y_{k-1,k}, y_{k-2,k}$ and $y_{k-1,k-1}$ in 
$$
\pa (k-1,k)(\textbf{y}) = - \frac{1}{y_{k-1,k}} - \frac{y_{k-2,k}}{y_{k-1,k}} + y_{k-1, k} + \frac{y_{k-1,k}}{y_{k-1, k-1}}
$$ 
have been already determined by previous inductive steps, we do \emph{not} have any extra variables to make $\pa(k-1,k)(\textbf{y}) = 0$ hold. It is time to adjust the equation $\pa (k-1,k)(\textbf{y}) = 0$ by selecting $\frak{b}^\textup{ver}_{k+1, k}$ suitably. By~\eqref{thegradientofbulkdeformedpotential}, we have
$$
\pa^\frak{b}(k-1,k) (\textbf{y}) := - \exp( \frak{b}^\textup{ver}_{k+1, k} ) \cdot \frac{1}{y_{k-1,k}} - \frac{y_{k-2,k}}{y_{k-1,k}} + {y_{k-1,k}} + \frac{y_{k-1,k}}{y_{k-1, k-1}}.
$$
From the following equation 
$$
 - c^{\textup{ver}, \C}_{k+1, k}  \cdot \frac{1}{y^\C_{k-1,k}} - \frac{y^\C_{k-2,k}}{y^\C_{k-1,k}} + {y^\C_{k-1,k}} + \frac{y^\C_{k-1,k}}{y^\C_{k-1, k-1}} = 0,
$$
one equation in the split leading term equation, it follows that 
$$
- \frac{y^\C_{k-2,k}}{y^\C_{k-1,k}} + {y^\C_{k-1,k}} + \frac{y^\C_{k-1,k}}{y^\C_{k-1, k-1}} \neq 0,
$$
otherwise $ - c^{\textup{ver}, \C}_{k+1, k}  = 0$. 
Since
$$
- \frac{y^{\phantom{\C}}_{k-2,k}}{y^{\phantom{\C}}_{k-1,k}} + {y^{\phantom{\C}}_{k-1,k}} + \frac{y^{\phantom{\C}}_{k-1,k}}{y^{\phantom{\C}}_{k-1, k-1}} \equiv - \frac{y^\C_{k-2,k}}{y^\C_{k-1,k}} + {y^\C_{k-1,k}} + \frac{y^\C_{k-1,k}}{y^\C_{k-1, k-1}} \neq 0 \mod T^{>0},
$$
there exists a unique bulk-deformation parameter $\frak{b}^\textup{ver}_{k+1, k} \in \Lambda_0$ such that
\begin{enumerate}
\item $\pa^\frak{b} (k-1,k)(\textbf{y}) = 0$
\item $\exp (\frak{b}^\textup{ver}_{k+1, k}) \equiv c^{\textup{ver}, \C}_{k+1, k} \mod T^{>0}$
\end{enumerate}
Notice that $\frak{b}^\textup{ver}_{k+1, k}$ does not only deforms $\frac{1}{y_{k-1,k}}$, but also deforms $\frac{y_{j,k+1}}{y_{j,k}}$ into $\exp( \frak{b}^\textup{ver}_{k+1, k} )  \cdot \frac{y_{j,k+1}}{y_{j,k}}$ for all $j$ with $1 \leq j < k-1$ as in Corollary~\ref{formulaforbulkdeformedpotentialoursitu}. Therefore, we need to solve the deformed equation
$$
\pa^\frak{b}(j,k) (\textbf{y}) := -\exp(\frak{b}^\textup{ver}_{k+1, k}) \cdot \frac{y_{j,k+1}}{y_{j,k}} - \frac{y_{j-1,k}}{y_{j,k}} + \frac{y_{j,k}}{y_{j+1,k}} + \frac{y_{j,k}}{y_{j,k-1}} = 0
$$
in order to decide $y_{\bullet,k+1} \in \Lambda_U$.

For the induction hypothesis, assume that $y_{r,s}$'s for $s \leq j$ and $\frak{b}^\textup{ver}_{s, s-1}$'s for $s \leq j$ are determined. We pick a bulk-deformation parameter $\frak{b}^\textup{ver}_{j+1, j} \in \Lambda_0$ so that
\begin{enumerate}
\item $\pa^\frak{b}(n - j, j)(\textbf{y}) = 0$
\item $\exp (\frak{b}^{\textup{ver} \phantom{,\C}}_{j+1, j}) \equiv c^{\textup{ver}, \C}_{j+1, j} \mod T^{>0}$.
\end{enumerate}
After fixing $\frak{b}^\textup{ver}_{j+1, j} $, we determine $y_{\bullet,j+1} \in \Lambda_U$. Hence, all entries above $B(k)$ together with $\frak{b}^\textup{ver}_{j+1,j}$'s are determined in this way. Symmetrically, we can choose $\frak{b}^\textup{hor}_{i,i+1}$'s and fill up the other part of $\Gamma(n) \backslash B(k)$. Hence, Theorem~\ref{splitleadingtermequationimpliesnonzero} is now verified.

\begin{remark}\label{separatecasemk}
We outline the proof of Theorem~\ref{splitleadingtermequationimpliesnonzero} when $n = 2k$ and $m = k$. In this case, taking $c = 1$ for $y^\C_{m,m}$, Corollary~\ref{corsymmetricsol} will give us the initial parts $y^\C_{i,j}$'s of $y^{\phantom{\C}}_{i,j}$'s for $(i,j) \in B(m)$. We then follow Section~\ref{insidebm} to extend to $y_{i,j}$'s in $\Lambda_U$. If one uses both $\frak{b}^\textup{ver}_{m+1,m}$ and $\frak{b}^\textup{hor}_{m,m+1} $ to deform $\pa(m,m) = 0$, then we have two extra variables $c^\textup{ver}_{m+1,m}$ and $c^\textup{hor}_{m,m+1}$ in $\pa^\frak{b}(m,m) = 0$. For our convenience, recall that we have chosen $\frak{b}^\textup{hor}_{m,m+1} = 0$ in Remark~\ref{whenm2kchoiceof}. Now, we need to take $\frak{b}^\textup{ver}_{m+1,m}$ so that $\pa^\frak{b}(m,m) = 0$. Since $y^\C_{m,m} = 1$, we get $\frak{b}^\textup{ver}_{m+1,m} \in \Lambda_+$, which yields that $1 = c^{\textup{ver},\C}_{m+1,m} = \exp (\frak{b}^{\textup{ver}\phantom{,\C}}_{m+1,m}) \mod T^{>0}$. After fixing $\frak{b}^{\textup{ver}\phantom{,\C}}_{m+1,m}$, we solve $y_{\bullet, m+1}$ by solving $\pa^\frak{b}(\bullet, m)^{(2)} = 0$ where
\begin{align*}
\pa^\frak{b}{(1,m)}^{(2)} (\textbf{y}) &:= - \exp (\frak{b}^{\textup{ver}\phantom{,\C}}_{m+1,m}) \, \frac{y_{1,m+1}}{y_{1,m}} { + a_1 \frac{y_{1,m}}{y_{1,m-1}}}  = 0 \\
\pa^\frak{b}{(j,m)}^{(2)} (\textbf{y}) &:= - \exp (\frak{b}^{\textup{ver}\phantom{,\C}}_{m+1,m}) \, \frac{y_{j,m+1}}{y_{j,m}} + a_j \left( { \frac{y_{j,m}}{y_{j+1,m}} + \frac{y_{j,m}}{y_{j,m-1}}} \right)= 0 \quad \mbox{for } j > 2.
\end{align*}
The remaining steps are similar to the case for $m < k$ in Section~\ref{outsideofbm}.
\end{remark}

\section{Solvability of split leading term equation}\label{secSolvabilityOfSplitLeadingTermEquation}\label{solvabilityofthesplitleadingtermequ}

This section aims to verify the assumption for Theorem~\ref{splitleadingtermequationimpliesnonzero} when the split leading term equation~\eqref{splitleadingtermequ} comes from the line segment $I_m \subset \Delta_\lambda$ in~\eqref{IMT}. To find its solution, we introduce a \emph{seed} generating a candidate for a solution and prove that there exists a ``good" choice of seeds such that the candidate is indeed a solution.

\vspace{0.2cm}
\subsection{Seeds}~
\vspace{0.2cm}

We begin by the definition of a seed. Recall the notations $\Gamma(n)$ and $B(m)$ in~\eqref{gammanbm}.

\begin{definition}\label{definitionofseeds}
A \emph{seed} of $\Gamma(n)$ \emph{associated with} $B(m)$ consists of the two data $(\textbf{d}, \mcal{I})$.
\begin{itemize}
\item An $(n - m)$-tuple $\textbf{\textup{d}}$ of elements in $\Lambda_U$ 
$$ 
\textbf{\textup{d}} = (d_1, \cdots, d_{n-m})
$$
\item An $(n- m)$-tuple $\mcal{I}$ of double indices
$$
\mcal{I} = \{(m,m), (i_1,j_1), \cdots, (i_{n-m-1},j_{n-m-1})\} \subset
\{(m,m)\}\cup (\Gamma(n) \backslash B(m))
$$
satisfying
\begin{enumerate}
\item the first index is $(m,m)$ 
\item the remaining indices are contained in $\Gamma(n) \backslash B(m)$ such that any two indices must not come from the same anti-diagonal of $\Gamma(n) \backslash B(m)$.
\end{enumerate}
\end{itemize}
\end{definition}

We are particularly interested in seeds $(\textbf{d}, \mcal{I})$ of the form
\begin{itemize}
\item $\textbf{d}$ is a tuple of \emph{non-zero real} numbers. 
\item $\mcal{I} := \mcal{I}_{\textup{seed}}$ in~\eqref{IndexsetforSeed}.
\end{itemize}
Let $\textbf{y}_\mcal{I}$ denote the components of $\textbf{y}$ associated with the set $\mcal{I}$ of indices. Namely,
$$
\textbf{y}_\mcal{I} := \left( y_{m,m}, y_{i_1,j_1}, \cdots, y_{i_{n-m-1},j_{n-m-1}} \right).
$$
Then, as the initial step, we take 
$$
\textbf{y}_\mcal{I} := \textbf{d}.
$$ 
So, the double indices designate the places in which the components of $\textbf{d}$ are plugged. Since $\mcal{I}$ is always taken to be $\mcal{I}_{\textup{seed}}$, $\mcal{I}$ will be often omitted from now on.
We instead set $d_{i,j}$ to denote the component of $\textbf{d}$ corresponding to $(i,j)$. For instance, we have $d_1 = d_{m,m}$. 

Following the procedure in Section~\ref{outsideofbm}, see Figure~\ref{Outside}, we generate the other $y_{i,j}$'s such that $\textbf{y}$ satisfies the split leading term equation with a suitable choice of complex numbers
$$
\textbf{c} := \left( c^\textup{hor}_{k, k+1}, \cdots, c^\textup{hor}_{n-1, n} , c^\textup{ver}_{k+1, k}, \cdots, c^\textup{ver}_{n, n-1} \right).
$$ 
Namely, by isolating one undetermined variable and plugging the determined variables in one equation of the split leading term equation, we can solve the remaining $y_{i,j}$'s and $\textbf{c}$ inductively. However, the undetermined variable might be zero or undefined when generating a candidate from a seed. A \emph{good} choice of seed, we call a \emph{generic} seed, must avoid the issue. 

We would like to find a condition for generic seeds. In the setup of~\eqref{settingup} and Remark~\ref{whenm2kchoiceof}, we put
\begin{equation}\label{tildepabijy}
\widetilde{\pa^\frak{b}_m} (i,j)  (\textup{\textbf{y}}) :=
\begin{cases}
\displaystyle - c^\textup{ver}_{j, j-1} \cdot \frac{1}{y_{i, j-1}} + \frac{1}{(y_{i, j})^2} \left( c^\textup{ver}_{j+1, j} \cdot {y_{i, j+1}}+ c^\textup{hor}_{i-1, i} \cdot {y_{i-1, j}} \right) \quad &\mbox{if \,} i \geq j \\ \\
\displaystyle - c^\textup{hor}_{i-1, i} \cdot {y_{i-1, j}} + (y_{i,j})^2 \left( c^\textup{hor}_{i, i+1} \cdot \frac{1}{y_{i+1, j}} + c^\textup{ver}_{j, j-1} \cdot \frac{1}{y_{i, j-1}} \right) \quad &\mbox{if \,} i < j.\\
\end{cases}
\end{equation}
Note that $\widetilde{\pa_m^\frak{b}} (i,j)  (\textup{\textbf{y}})$ is achieved by isolating $c^\textup{hor}_{i,i+1} \cdot (y_{i+1, j})^{-1}$ and $c^\textup{ver}_{j+1,j} \cdot y_{i, j+1}$ in ${\pa_m^\frak{b}} (i,j)  (\textup{\textbf{y}}) =0$, see ~\eqref{pabmijy}. Moreover,~\eqref{tildepabijy} appears when isolating the undetermined variable so that the expression is required to be non-zero.  

\begin{definition}
A seed $\textbf{\textup{d}}$ is called \emph{generic} if the candidate generated by $\textbf{y}_{\mcal{I}} =  {\bf d}$ satisfies 
\begin{equation}\label{widetildepabijneq}
\widetilde{\pa_m^\frak{b}} (i,j) (\textup{\textbf{y}}) \neq  0 \mod T^{>0}
\end{equation}
for all $(i,j)$'s. 
\end{definition}

\begin{example}
A straightforward calculation asserts that the tuples 
\begin{enumerate}
\item $\textbf{d} = (-1, 1, 1, -1, 1)$
\item $\mcal{I} = \mcal{I}_\textup{seed} = ( (2,2), (1,3), (2,3), (3,3), (3,4) )$
\end{enumerate}
form a generic seed of $\Gamma(7)$ to $B(2)$. The tuples 
\begin{enumerate}
\item $\textbf{d} = (-1, 1, 1, 1, 1)$
\item $\mcal{I} = \mcal{I}_\textup{seed} = ( (2,2), (1,3), (2,3), (3,3), (3,4) )$
\end{enumerate}
form a seed of $\Gamma(7)$ to $B(2)$, but not a generic seed because $\widetilde{\pa^\frak{b}_2} (1,5) (\textbf{y}) = 0$. 
\end{example}

The main proposition of this section is the existence of a generic seed, which will be proven throughout this section.
\begin{proposition}\label{Existenceofgenericseeds}
For each integer $m$ where $2 \leq m \leq k =  \left\lceil n/2 \right\rceil$, a generic seed of $\Gamma(n)$ to $B(m)$ exists.
\end{proposition}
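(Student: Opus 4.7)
First I note that, with $\mcal{I} = \mcal{I}_{\mathrm{seed}}$ fixed, the inductive procedure of Section~\ref{outsideofbm}, viewed modulo $T^{>0}$, gives a rational map from the seed space $(\R^{\times})^{n-m}$ into the space of all $(y_{i,j})$'s and $\textbf{c}$'s. At every inductive step one isolates a single unknown---either some $y_{i',j'}$, or a bulk parameter $c^{\textup{hor}}_{i,i+1}$, or a bulk parameter $c^{\textup{ver}}_{j+1,j}$---from one equation of the split leading term system~\eqref{splitleadingtermequ}. As made explicit in~\eqref{tildepabijy}, this isolation is legitimate exactly when $\widetilde{\pa^{\frak b}_m}(i,j)(\textbf{y}) \neq 0$ modulo $T^{>0}$ at that stage, and the isolated value is then a rational function in the entries of $\textbf{d}$ with coefficients in $\Q$ and in the standard symmetric values in $B(m)$ prescribed by Corollary~\ref{corsymmetricsol}.

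\textbf{Reduction to non-vanishing.} Under this parameterization, each of the finitely many conditions~\eqref{widetildepabijneq} becomes an inequality $R_{i,j}(\textbf{d}) \neq 0$ for a certain rational function $R_{i,j}$ in the seed, defined over $\Q$. The plan is to verify, for each $(i,j)$, that $R_{i,j}$ is \emph{not} identically zero. Granting this, each zero locus $\{R_{i,j} = 0\}$ is a proper Zariski-closed subvariety of $(\R^{\times})^{n-m}$; since the latter is Zariski-irreducible, the complement of the finite union of the $\{R_{i,j} = 0\}$'s is a non-empty Zariski-open (in fact dense) subset, and any real point in this complement furnishes a generic seed.

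\textbf{Strategy for non-identical vanishing.} To show that $R_{i,j}$ is non-identically zero, I would exhibit---for each $(i,j)$---an explicit real seed $\textbf{d}_0$ at which $R_{i,j}(\textbf{d}_0)\neq 0$. The most efficient route is a joint induction on $n$ and $m$: the base cases are the low-rank examples (such as the explicit generic seed $\textbf{d}=(-1,1,1,-1,1)$ for $\Gamma(7)$ associated with $B(2)$ displayed in the text), together with direct computations in the extremal case $m=k$. For the inductive step, the conditions $\widetilde{\pa^{\frak b}_m}(i,j)\neq 0$ coming from positions outside $B(k)$ involve the parameters $\textbf{c}$, which constitute independent degrees of freedom. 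Thus a generic seed for a smaller sub-diagram can be extended by choosing appropriate extra seed values on the boundary of $B(m)$ (perturbing off the symmetric solution of Corollary~\ref{corsymmetricsol}) while preserving non-vanishing of all earlier $\widetilde{\pa^{\frak b}_m}(i,j)$'s.

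\textbf{Main obstacle.} The principal technical difficulty is to organize this induction so that the cascade of rational expressions $R_{i,j}(\textbf{d})$ does not conspire to cancel along a codimension-one locus that covers all admissible perturbations. Concretely, one must track how the generic choice of seed interacts with the symmetric solution from Corollary~\ref{corsymmetricsol} across the anti-diagonals of $B(m)$---where two different isolations meet, as in the matching conditions \eqref{y1m+1} and \eqref{yjm+1}---since a cancellation there could propagate outward and force $R_{i,j}\equiv 0$ for some $(i,j)$ farther from the core. Ruling out such propagation, by separating the $\textbf{c}$-dependence from the $\textbf{d}$-dependence in each $\widetilde{\pa^{\frak b}_m}(i,j)$, is where the remaining work of Section~\ref{solvabilityofthesplitleadingtermequ} will be needed.
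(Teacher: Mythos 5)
Your proposal lays out a sensible high-level strategy---viewing the generic-seed condition as avoiding a finite union of proper Zariski-closed subsets in $(\R^\times)^{n-m}$---and this is indeed the right frame. But the proposal stops at exactly the point where the substantive work begins: you need to show that each rational function $R_{i,j}(\textbf{d})$ is not identically zero, and you explicitly defer this to ``remaining work.'' That deferred step is essentially the entire content of Section~\ref{solvabilityofthesplitleadingtermequ}, and it is not straightforward, for two reasons the proposal does not anticipate.

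First, for interior positions ($i+j<n$) the paper's Lemma~\ref{rationalfunctionzij} has genuine content: it shows, via a continued-fraction recurrence, that each $z_{r-i,s+i}$ is a \emph{non-constant} rational function of the anti-diagonal seed $z_{r,s}$. Non-constancy is exactly what makes $R_{i,j} \not\equiv 0$, and the proof is not a formality---one has to track the M\"obius coefficients $A(i),B(i)$ and rule out the possibility that the continued fraction degenerates. Second, and more seriously, on the last anti-diagonal ($i+j=n$) the natural symmetric choice from Corollary~\ref{corsymmetricsol}---which sets all diagonal entries to $\pm 1$---actually lands \emph{exactly} on the bad locus: with $d_{k-1,k-1} = -1$ one gets $\widetilde{\pa^{\frak b}_m}(1,n-1)(\textbf{z}) = 0$, as the paper points out in the proof of Lemma~\ref{reductionkk-1k-1k-1}. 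So the candidate point one would most naturally use to witness non-vanishing of $R_{i,j}$ is precisely where it \emph{does} vanish. The paper resolves this with a delicate perturbation argument: one shows the pre-generic property is open, that $d_{k-1,k-1}=-1$ is pre-generic up to the penultimate anti-diagonal, and that any small perturbation off $-1$ simultaneously preserves the earlier conditions and removes the cascading collapse on the last anti-diagonal (via a step-by-step propagation of $\prod c^{\textup{ver}}_{r+1,r}\neq -z_{n-j-1,j}$). Your proposal's suggestion that the bulk parameters $\textbf{c}$ ``constitute independent degrees of freedom'' that could be used to decouple the last anti-diagonal is not correct here: in the split leading term equation the $\textbf{c}$-values are \emph{outputs} determined by the seed, not adjustable inputs, and it is precisely their forced cascading structure that creates the degeneracy at $d_{k-1,k-1}=-1$. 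The induction you sketch on $(n,m)$ also does not obviously carry these perturbation constraints through; the paper instead builds directly inward toward the center of the diagram (Lemmas~\ref{reductioniii+1i+1}, \ref{evenreduction1}, \ref{dm+11canbegeneric}, \ref{tempssedmk}, split into three parity cases), using Lemma~\ref{symmericseedz} to decouple the choice of $d_{m,m}$ from the earlier pre-generic choices.
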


As a corollary, we assert solvability of the split leading term equation.
\begin{corollary}\label{splitleadingtermequationissolv}
The split leading term equation of $\Gamma(n)$ associated with $B(m)$ has a solution each component of which is a non-zero complex number.
\end{corollary}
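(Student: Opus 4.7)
The plan is to deduce Corollary~\ref{splitleadingtermequationissolv} from Proposition~\ref{Existenceofgenericseeds} directly, since the construction sketched after Definition~\ref{definitionofseeds} turns any generic seed into a full complex solution. Given a seed $(\textbf{d}, \mcal{I}_\textup{seed})$, I would set $\textbf{y}_\mcal{I} := \textbf{d}$ as initial data and then determine the remaining $y_{i,j}$ together with the constants $c^\textup{hor}_{i,i+1}, c^\textup{ver}_{j+1,j}$ by traversing $\Gamma(n) \setminus B(m)$ anti-diagonal by anti-diagonal, moving outward from $B(m)$ toward the boundary of $\Gamma(n)$, following the scheme of Section~\ref{outsideofbm} (see Figure~\ref{Outside}). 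At each step, exactly one of the equations in the split leading term equation has a unique undetermined variable which can be isolated; this isolation produces the expression $\widetilde{\pa_m^\frak{b}}(i,j)(\textbf{y})$ in~\eqref{tildepabijy}. The genericity hypothesis~\eqref{widetildepabijneq} is precisely what is needed to guarantee that the isolated variable is a well-defined nonzero complex number rather than $0$ or $\infty$, so that the induction continues to the next lattice point.

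Consequently, the whole burden is to establish Proposition~\ref{Existenceofgenericseeds}. The natural route is to view each candidate $y_{i,j}$ (and each constant $c^{\bullet}_{\bullet,\bullet}$) produced by the induction as a rational function of the seed components $\textbf{d} = (d_{m,m}, d_{1,m+1}, \ldots, d_{\lfloor n/2\rfloor,\lceil n/2\rceil})$. The non-genericity conditions $\widetilde{\pa_m^\frak{b}}(i,j)(\textbf{y}) \equiv 0 \bmod T^{>0}$ then cut out a finite union of algebraic hypersurfaces in the affine space of seeds, so it suffices to exhibit one seed outside this bad locus. My plan is to construct such a seed explicitly using the symmetry $y_{i,j}\cdot y_{j,i} = (d_{m,m})^2$ from Corollary~\ref{corsymmetricsol}, extended from $B(m)$ to all of $\Gamma(n)$ by the relations
\[
y_{l,m+1}\cdot y_{m+1,l} \equiv (-1)^{m+1-l}(d_{m,m})^{2} \bmod T^{>0}
\]
coming from~\eqref{y1m+1} and~\eqref{yjm+1}. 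This symmetry cuts the number of independent conditions in half and should let me choose the signs of the $d_{i,j}$'s so that no cancellations occur in $\widetilde{\pa_m^\frak{b}}(i,j)(\textbf{y})$.

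The main obstacle will be keeping track of signs and denominators simultaneously during the anti-diagonal induction: a sign conspiracy at one lattice point could force a zero in $\widetilde{\pa_m^\frak{b}}$ at a subsequent lattice point further from $B(m)$. To handle this, I would argue by induction on the anti-diagonal index, using the explicit closed-form solution from~\eqref{solutionofpamij} inside $B(m)$ as the base case, and then showing inductively that one can always choose the next seed component $d_{i,j}$ from a cofinite subset of $\C^*$ so that the relevant $\widetilde{\pa_m^\frak{b}}$ at every lattice point on the current anti-diagonal is nonzero. Since at each inductive step one only needs to avoid finitely many algebraic equations on a nonempty irreducible variety (a copy of $\C^{*}$ in the relevant coordinate), a valid choice of $d_{i,j}$ always exists. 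Stringing these choices together yields the desired generic seed, which by the first paragraph produces a solution of the split leading term equation in $(\C\setminus\{0\})^{|\mcal{I}_\lambda|}$, completing the proof of Corollary~\ref{splitleadingtermequationissolv}.
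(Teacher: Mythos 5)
Your first paragraph is correct and is exactly the paper's argument for the corollary: Proposition~\ref{Existenceofgenericseeds} supplies a generic seed, and the inductive generating process from Section~\ref{outsideofbm} propagates it to a full solution in $(\C\setminus\{0\})^{*}$ precisely because the genericity condition~\eqref{widetildepabijneq} keeps every isolated variable and every $c^\textup{hor}_{\bullet,\bullet},\,c^\textup{ver}_{\bullet,\bullet}$ well-defined and nonzero.

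Where you go beyond the paper is in trying to re-prove Proposition~\ref{Existenceofgenericseeds} (which the paper states first and proves after the corollary, so it may simply be cited). Your sketch of that proof has a gap: you assert that the non-genericity conditions cut out a finite union of algebraic hypersurfaces in seed space and that "a valid choice of $d_{i,j}$ always exists," but this presupposes that those hypersurfaces are \emph{proper}, i.e.\ that none of the expressions $\widetilde{\pa_m^\frak{b}}(i,j)$ vanishes identically as a function of the free seed parameter. That is exactly the content the paper has to work for: Lemma~\ref{rationalfunctionzij} establishes that the interior relations are non-constant rational functions of the anti-diagonal parameter, and the boundary equations at $i+j=n$ require the perturbation lemmas~\ref{reductionkk-1k-1k-1}, \ref{reductioniii+1i+1}, \ref{evenreduction1}. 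In particular, \eqref{problematicpamb} shows that the otherwise natural choice $d_{k-1,k-1}=-1$ forces $\widetilde{\pa_m^\frak{b}}(1,n-1)(\textbf{z})=0$ \emph{identically}, so one must argue that a nearby perturbation of $d_{k-1,k-1}$ simultaneously repairs the boundary expressions and preserves pre-genericity of the earlier choices. A bare "avoid finitely many algebraic equations" argument does not detect, let alone handle, this collapse; so if you intend to prove the proposition as well, you would need to add the non-constancy and perturbation arguments.
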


\begin{proof}
Once a seed has the property~\eqref{widetildepabijneq}, the remaining $y_{i,j}$'s and a sequence $\textbf{c}$ are (uniquely) determined to be in $\C \backslash \{0\}$ by the exactly same process in Section~\ref{outsideofbm}.
\end{proof}

\vspace{0.2cm}
\subsection{Pre-generic elements}~
\vspace{0.2cm}

We now introduce a coordinate system $\{z_{i,j} ~\colon~ (i,j) \in \Gamma(n) \backslash B(m) \cup \{(m,m)\} \}$ with respect to which
the system of equations
$$
\pa_m^\frak{b} (i,j) (\textup{\textbf{y}}) = 0, \quad \text{ for }\,  i+j < n
$$
does \emph{not} depend on the choice of a bulk-deformation parameter $\frak{b}$. We define
\begin{equation}\label{coordinatechange}
\begin{cases}
\displaystyle z_{i+1, \bullet} := \left( \prod^{i}_{r=k} {c^\textup{hor}_{r, r+1} } \right)^{-1} y_{i+1,\bullet} \quad &\mbox{if \,} i \geq k \\
\displaystyle z_{\bullet, j+1} := \left( \prod^{j}_{r=k} c^\textup{ver}_{r+1, r} \right) y_{\bullet, j+1} \quad &\mbox{if \,} j \geq k \\
\displaystyle z_{i,j} := y_{i,j} \quad &\mbox{otherwise.}
\end{cases}
\end{equation}
Under this coordinate system, we convert $\pa_m^\frak{b} (i, j) (\textup{\textbf{y}})$ in~\eqref{pabmijy} into
\begin{equation}\label{pambijconvertedtoz}
\pa_m^\frak{b} (i, j) (\textup{\textbf{z}}) :=
\begin{cases}
\displaystyle - \frac{z_{i, j+1}}{z_{i, j}} - \frac{z_{i-1, j}}{z_{i,j}} + \frac{z_{i,j}}{z_{i+1, j}} + \frac{z_{i, j}}{z_{i, j-1}}\, &\mbox{if \,} i + j < n \\ \\
\displaystyle - \left( \prod^{i-1}_{r=k} {c^\textup{hor}_{r, r+1} } \right)^{-1} \frac{1}{z_{i, j}} - \frac{z_{i-1, j}}{z_{i,j}} + \left( \prod^{i}_{r=k} {c^\textup{hor}_{r, r+1} } \right){z_{i,j}} + \frac{z_{i, j}}{z_{i, j-1}}\, &\mbox{if \,} i \geq j,\, i + j = n\\ \\
\displaystyle -  \left( \prod^{j}_{r=k} c^\textup{ver}_{r+1, r} \right) \frac{1}{z_{i, j}} - \frac{z_{i-1, j}}{z_{i,j}} +  \left( \prod^{j-1}_{r=k} c^\textup{ver}_{r+1, r} \right)^{-1} {z_{i,j}}+ \frac{z_{i, j}}{z_{i, j-1}}\, &\mbox{if \,} i < j,\, i + j = n.\\
\end{cases}
\end{equation}
Here, one should interpret that the product over the empty set is $1$. For example,
$$
\prod_{r = k}^{k-1} c^\textup{ver}_{r+1, r} = 1.
$$

We set
\begin{equation}\label{tildepabijz}
\widetilde{\pa_m^\frak{b}} (i,j)(\textup{\textbf{z}}) :=
\begin{cases}
\displaystyle - \frac{1}{z_{i, j-1}} + \frac{1}{(z_{i, j})^2} \left( {z_{i, j+1}} + {z_{i-1, j}} \right) \left( = \frac{1}{z_{i+1,j}}\right) \quad &\mbox{if \,} i \geq j, \, i + j < n \\ \\
\displaystyle - {z_{i-1, j}} + ({z_{i,j}})^2 \left( \frac{1}{z_{i+1, j}} + \frac{1}{z_{i, j-1}} \right) \left(=  z_{i, j+1}\right) \quad &\mbox{if \,} i < j, \, i + j < n \\ \\
\displaystyle - \frac{1}{z_{i, j-1}} + \frac{1}{(z_{i, j})^2} \left( \left( \prod^{i-1}_{r=k} {c^\textup{hor}_{r, r+1} } \right)^{-1} + {z_{i-1, j}} \right) \left( = \left( \prod^{i}_{r=k} {c^\textup{hor}_{r, r+1} } \right)\right) \quad &\mbox{if \,} i \geq j, \, i + j = n \\ \\
\displaystyle - {z_{i-1, j}} + ({z_{i,j}})^2 \left( \left( \prod^{j-1}_{r=k} c^\textup{ver}_{r+1, r} \right)^{-1} + \frac{1}{z_{i, j-1}} \right) 
\left(= \left( \prod^{j}_{r=k} c^\textup{ver}_{r+1, r} \right)\right) \quad &\mbox{if \,} i < j, \,  i + j = n,
\end{cases}
\end{equation}
where $\widetilde{\pa_m^\frak{b}} (i,j)(\textup{\textbf{z}})$ is obtained from isolating the expression in the parentheses in $\pa_m^\frak{b} (i, j) (\textup{\textbf{z}})$.

We then have the following lemma, which says it suffices to check $\widetilde{\pa_m^\frak{b}} (i, j) (\textup{\textbf{z}}) \neq 0$ to show $\widetilde{\pa_m^\frak{b}} (i, j) (\textup{\textbf{y}}) \neq 0$.
\begin{lemma}
$\widetilde{\pa_m^\frak{b}} (i, j) (\textup{\textbf{z}}) \neq 0$ for all indices $(i,j) \in \Gamma(n) \backslash B(m)$ if and only if $\widetilde{\pa_m^\frak{b}} (i, j) (\textup{\textbf{y}}) \neq 0$ for all indices $(i,j) \in \Gamma(n) \backslash B(m)$.
\end{lemma}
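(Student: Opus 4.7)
\medskip

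\noindent\textbf{Plan.} The coordinate change in~\eqref{coordinatechange} is a diagonal rescaling $y_{i,j} = f_{i,j}\, z_{i,j}$, where
$$
f_{i,j} = \prod_{r=k}^{i-1} c^{\textup{hor}}_{r,r+1} \quad \text{if } i \geq k+1, \qquad f_{i,j} = \Bigl(\prod_{r=k}^{j-1} c^{\textup{ver}}_{r+1,r}\Bigr)^{-1} \quad \text{if } j \geq k+1,
$$
and $f_{i,j} = 1$ otherwise. (One verifies that these two rules never conflict: in $\Gamma(n)$ the constraint $i+j \leq n$ together with the pigeonhole bound $n \leq 2k$ forbids both $i\geq k+1$ and $j\geq k+1$.) In particular each $f_{i,j}$ is a unit in $\Lambda_U$ since every $c^{\textup{hor}}_{r,r+1}$ and $c^{\textup{ver}}_{r+1,r}$ lies in $\Lambda_U$. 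Hence the conclusion of the lemma will follow once we show that for every $(i,j) \in \Gamma(n)\setminus B(m)$ there is a unit $C_{i,j} \in \Lambda_U$ with $\widetilde{\pa_m^\frak{b}}(i,j)(\textbf{y}) = C_{i,j} \cdot \widetilde{\pa_m^\frak{b}}(i,j)(\textbf{z})$ after the substitution $y=fz$.

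\medskip

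I would proceed by a case analysis on the location of $(i,j)$: interior versus boundary anti-diagonal ($i+j<n$ or $i+j=n$), and $i\geq j$ or $i<j$. The key ingredients are the three telescoping identities
$$
c^{\textup{ver}}_{j+1,j}\, \frac{f_{i,j+1}}{f_{i,j}} = 1, \quad c^{\textup{hor}}_{i-1,i}\, \frac{f_{i-1,j}}{f_{i,j}} = 1, \quad c^{\textup{ver}}_{j,j-1}\, \frac{f_{i,j}}{f_{i,j-1}} = 1
$$
(and the analogous ones with the roles of horizontal and vertical indices swapped), each of which follows directly from the piecewise definition of $f_{i,j}$ and the convention $c^{\textup{hor}}_{r,r+1} = c^{\textup{ver}}_{r+1,r} = 1$ for $r<k$. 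In the interior case $i\geq j$, $i+j<n$, these identities show that after substitution the three summands of $\widetilde{\pa_m^\frak{b}}(i,j)(\textbf{y})$ in~\eqref{tildepabijy} each acquire the common prefactor $1/f_{i,j}$, so that $C_{i,j} = 1/f_{i,j}$. The symmetric case $i<j$, $i+j<n$ is identical after swapping roles.

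\medskip

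The only genuinely new bookkeeping occurs on the outermost anti-diagonal $i+j=n$, where one of the neighbours $y_{i,j+1}$ or $y_{i+1,j}$ has been normalised to $1$ by the convention $y_{i,n+1-i}=1$; on the $z$-side this boundary value has been absorbed into the explicit products $\bigl(\prod_{r=k}^{i-1} c^{\textup{hor}}_{r,r+1}\bigr)^{-1}$ or $\bigl(\prod_{r=k}^{j-1} c^{\textup{ver}}_{r+1,r}\bigr)^{-1}$ appearing in~\eqref{tildepabijz}. The same telescoping identities, evaluated at the boundary, show that these explicit products are exactly what is needed to realign the $y$- and $z$-summands by the common scalar $1/f_{i,j}$ (respectively $f_{i,j}$ in the $i<j$ branch). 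This is the one step that needs to be done with care, as it requires matching the two visually different forms of~\eqref{tildepabijz} with the single form of~\eqref{tildepabijy}. Once the relation $\widetilde{\pa_m^\frak{b}}(i,j)(\textbf{y}) = C_{i,j}\, \widetilde{\pa_m^\frak{b}}(i,j)(\textbf{z})$ is verified in all cases with $C_{i,j}\in\Lambda_U$ a unit, the equivalence of the two non-vanishing conditions is immediate.
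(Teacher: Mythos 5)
Your approach is essentially the same as the paper's, which simply asserts that the coordinate change~\eqref{coordinatechange} converts~\eqref{tildepabijy} into~\eqref{tildepabijz}; you flesh this out by writing the change as a diagonal rescaling $y_{i,j} = f_{i,j}\,z_{i,j}$ by units and verifying, via the three telescoping identities, that $\widetilde{\pa_m^\frak{b}}(i,j)(\textbf{y})$ equals $f_{i,j}^{\mp1}\,\widetilde{\pa_m^\frak{b}}(i,j)(\textbf{z})$, which is correct for every interior index and for every boundary index with $i\neq j$.

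The one place that your phrase ``done with care'' does not actually pin down is the index $(i,j)=(k,k)$ when $n=2k$ and $m<k$, which does lie in $\Gamma(n)\setminus B(m)$. There $z_{k,k+1} = c^{\textup{ver}}_{k+1,k}\cdot y_{k,k+1} = c^{\textup{ver}}_{k+1,k}$, so your telescoping step would require $f_{k,k+1} = \prod_{r=k}^{k-1}c^{\textup{hor}}_{r,r+1} = 1$; but~\eqref{coordinatechange} gives $f_{k,k+1} = (c^{\textup{ver}}_{k+1,k})^{-1}$, which is not $1$ in general, and correspondingly the expression $\bigl(\prod_{r=k}^{k-1}c^{\textup{hor}}_{r,r+1}\bigr)^{-1}=1$ in the third branch of~\eqref{tildepabijz} does not match the $\mathbf{y}$-side after your rescaling. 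What saves the argument is that at $(k,k)$ the inductive procedure actually isolates $c^{\textup{ver}}_{k+1,k}$ (compare the role of $(k-1,k)$ in the $n=2k-1$ discussion of Section~\ref{outsideofbm}), i.e.\ the intended reading at $(k,k)$ is the ``$i<j$'' branch combined with the normalisation $c^{\textup{hor}}_{k,k+1}=1$ from Remark~\ref{whenm2kchoiceof}; with that reading your unit-multiplier computation does close. This ambiguity at $i=j=k$ is already latent in the paper's own case split in~\eqref{pambijconvertedtoz}--\eqref{tildepabijz} and in its one-sentence proof, but since you are the one supplying the details, the corner case should be singled out and resolved rather than folded into a general remark.
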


\begin{proof}
Under the coordinate change~\eqref{coordinatechange},~\eqref{tildepabijy} is converted into~\eqref{tildepabijz}.
\end{proof}

To show that $\widetilde{\pa_m^\frak{b}} (i, j) (\textup{\textbf{z}}) \neq 0$, we now start to solve~\eqref{pambijconvertedtoz} from $\textbf{y}_\mcal{I} := \textbf{d}$ by isolating the undetermined variable in~\eqref{pambijconvertedtoz}. When $m < k = \lceil n / 2 \rceil$, since $\mcal{I} \subset B(k)$ and $\textbf{y}_\mcal{I} = \textbf{z}_\mcal{I}$ because of~\eqref{coordinatechange}, we may insert $\textbf{d}$ into $\textbf{z}_\mcal{I}$ as the starting point. For the case $m = k$, we take $c^\textup{hor}_{m,m+1} = 1$ and $c^\textup{ver}_{m+1,m} = 1$ (see Remark~\ref{separatecasemk}) and hence $\textbf{z}_\mcal{I} = \textbf{y}_\mcal{I} = \textbf{d}$ as well. For simplicity, we set
\begin{equation}
\textbf{z}_{(l \backslash m)}: =  \{ z_{i,j} \in \C \backslash \{0\} : (i,j) \in \Gamma(l) \backslash B(m) \cup \{ (m,m)\} \}.
\end{equation}
Choosing the component in an anti-diagonal generically, we can easily make the first two equations of~\eqref{tildepabijz} non-zero because of the following lemma. 

\begin{lemma}\label{rationalfunctionzij}
Suppose that the set $\textbf{\textup{z}}_{(r+s-1 \backslash m)}$ is determined. Each variable $z_{r-i,s+i}$ can be expressed as a \emph{non-constant} rational function with respect to $z_{r,s}$.
\end{lemma}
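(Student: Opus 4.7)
The plan is to proceed by induction on $|i|$, with the trivial base case $i = 0$ since $z_{r,s}$ is itself a non-constant rational function of $z_{r,s}$. The key preliminary observation is that, after the coordinate change~\eqref{coordinatechange}, the equation $\pa_m^\frak{b}(i,j)(\textbf{\textup{z}}) = 0$ for any $(i,j)$ with $i + j < n$ reduces to the bulk-free form given in the first case of~\eqref{pambijconvertedtoz}. Consequently, every rational relation we extract from these equations depends only on $\textbf{\textup{z}}_{(r+s-1 \backslash m)}$ and on the unknowns along the anti-diagonal $i + j = r + s$, with no residual dependence on the bulk data $\textbf{\textup{c}}$.

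For the inductive step in the direction $i > 0$, I would apply $\pa_m^\frak{b}(r-i-1,\, s+i)(\textbf{\textup{z}}) = 0$. Among the five cells appearing in that equation, the three cells $(r-i-2, s+i)$, $(r-i-1, s+i)$ and $(r-i-1, s+i-1)$ sit on the strictly earlier anti-diagonals $i+j \in \{r+s-2,\,r+s-1\}$ and hence belong to $\textbf{\textup{z}}_{(r+s-1 \backslash m)}$. Solving for the remaining unknown $z_{r-i-1,\, s+i+1}$ in terms of $z_{r-i,\, s+i}$ yields
$$z_{r-i-1,\, s+i+1} \;=\; -\,z_{r-i-2,\,s+i}\;+\;\frac{(z_{r-i-1,\,s+i})^{2}}{z_{r-i-1,\,s+i-1}}\;+\;\frac{(z_{r-i-1,\,s+i})^{2}}{z_{r-i,\,s+i}},$$
an expression of M\"obius type $\alpha + \beta / z_{r-i,s+i}$ with $\beta = (z_{r-i-1,s+i})^{2} \neq 0$. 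Since $z_{r-i,s+i}$ is, by the inductive hypothesis, a non-constant rational function of $z_{r,s}$, and since the composition of a non-constant rational function with a non-constant M\"obius transformation is again non-constant, $z_{r-i-1,s+i+1}$ is a non-constant rational function of $z_{r,s}$. The opposite direction $i < 0$ is treated symmetrically by applying $\pa_m^\frak{b}(r+|i|,\, s-|i|-1)(\textbf{\textup{z}}) = 0$ and isolating $z_{r+|i|+1,\,s-|i|-1}$, which again produces a genuine M\"obius expression in the preceding anti-diagonal variable.

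The main obstacle I anticipate is a careful boundary analysis. When the anti-diagonal $i+j = r+s$ abuts either the outer boundary of $\Gamma(n)$ or the excluded region $B(m) \setminus \{(m,m)\}$, some of the neighbouring entries in the isolation formula take the degenerate values $0$ or $\infty$ prescribed by~\eqref{settingup}. In such cases one or both of the summands $(z_{r-i-1, s+i})^{2}/z_{r-i-1, s+i-1}$ can vanish or blow up, and one must re-examine the relevant branch of~\eqref{pambijconvertedtoz} to confirm that the coefficient $\beta$ of $1/z_{r-i, s+i}$ remains non-zero so that the M\"obius structure is preserved. A secondary subtlety concerns the boundary anti-diagonal $i+j = n-1$, where the outer neighbour lies on $i+j = n$ and is governed by the bulk-dependent expressions in the second and third cases of~\eqref{pambijconvertedtoz}; there the isolation still remains a M\"obius transformation, but with coefficients in $\Lambda_U$ rather than in $\C$, and one must verify that the leading coefficient does not accidentally cancel modulo $T^{>0}$. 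Once these boundary cases are handled, the induction propagates non-constancy along the full anti-diagonal and completes the proof.
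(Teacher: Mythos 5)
Your proof is correct and follows the same underlying idea as the paper: unwind the recurrence along the anti-diagonal as an iterated M\"obius transformation in the parameter $z_{r,s}$, with each step of the form $X(i+1)=\alpha_i+\beta_i/X(i)$, and observe that non-constancy is preserved because $\beta_i\neq 0$. The paper makes this concrete by writing the continued fraction as $X(i)=\bigl(A(i)X(0)+B(i)\bigr)/\bigl(A(i-1)X(0)+B(i-1)\bigr)$, deriving recurrences for $A(i),B(i)$, and running a proof by contradiction (if $X(i)=C$ constant, then either $C=[i]$ forcing $A(i-2)=B(i-2)=0$, or $C\neq[i]$ forcing $X(i-1)$ constant); this is precisely the matrix-determinant proof of non-degeneracy of the M\"obius composition that you invoke more abstractly. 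Both arguments hinge on the identical observation that the coefficient $[i,i-1]=(z_{r-i-1,s+i})^{2}$ is nonzero.

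One small remark: the boundary worry in your closing paragraph is largely moot. The decisive coefficient $\beta=(z_{r-i-1,s+i})^2$ always comes from an index on the anti-diagonal $r+s-1$ inside $\Gamma(n)\setminus B(m)\cup\{(m,m)\}$, which by the standing hypothesis belongs to $\textbf{\textup{z}}_{(r+s-1\backslash m)}$ and hence is a nonzero complex number rather than one of the conventional $0$ or $\infty$ placeholders of~\eqref{settingup}. The degenerate assignments of~\eqref{settingup} can only appear among the \emph{additive} terms in $\alpha_i$ (where they simply kill the corresponding summand, as $y_{0,\bullet}=0$ or $y_{\bullet,0}=\infty$ makes a quotient vanish), never in the coefficient of $1/X(i)$. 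Similarly the bulk-dependent cases on $i+j=n$ in~\eqref{pambijconvertedtoz} multiply that same coefficient by a unit in $\Lambda_U$, so its leading term is unchanged modulo $T^{>0}$. Thus the M\"obius step is non-degenerate uniformly along the anti-diagonal, and no separate boundary analysis is required.
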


\begin{proof}
We only show the case for $i > 0$ since the case where $i < 0$ can be similarly proven. Let
$$
X(i) := z_{r-i,s+i}.
$$
By~\eqref{pambijconvertedtoz}, a recurrence relation for $X(i)$'s is
\begin{equation}\label{recrelxi}
X(i) = [i] + \frac{[i,i-1]}{X(i-1)}
\end{equation}
where
$$
[i] := - z_{r-i-1, s+i} + \frac{(z_{r-i, s+i-1})^2}{z_{r-i,s+i-2}},\,\, [i, i-1] :=(z_{r-i, s+i-1})^2.
$$
Composing~\eqref{recrelxi} several times, $X(i)$ is expressed as a continued fraction in terms of $X(0)$. Letting $A(0) = 1$ and $B(0) = 0$, it becomes
\begin{equation}\label{X(i)}
X(i) = \frac{A(i) \cdot X(0) + B(i)}{A(i-1) \cdot X(0) + B(i-1)}
\end{equation}
for some constants $A(i)$'s and $B(i)$'s determined by the given set $\textbf{\textup{z}}_{(r+s-1 \backslash m)}$. Thus, $X(i)$ is a rational function with respect to $X(0)$.

To show that every $X(i)$ is \emph{non-constant} with respect to $X(0)$, we investigate properties of $A(i)$'s and $B(i)$'s. By induction, we can show that the terms of $A(i)$ correspond to the partitions of $\{ i, i-1, \cdots, 1 \}$ into one single number or two consecutive numbers. Also, the terms of $B(i)$ correspond to the partitions of $\{ i, i-1, \cdots, 1, 0 \}$ into one single or two consecutive numbers containing the subset $[1,0]$. For instance, $A(3)$ and $B(3)$ are expressed as
\begin{align*}
A(3) &= [3][2][1] + [3][2,1] + [3,2][1], \\
B(3) & = [3][2][1,0] + [3,2][1,0].
\end{align*}
It then follows that
\begin{align*}
&A(i) = [i] \cdot A(i-1) + [i, i-1] \cdot A(i-2) \\
&B(i) = [i] \cdot B(i-1) + [i, i-1] \cdot B(i-2).
\end{align*}

Note that $X(0)$ and $X(1)$ are non-constant functions with respect to $X(0)$. Suppose to the contrary that $X(i)$ is a constant function with the value $C$ and all $X(j)$'s for all $j < i$ are non-constant rational functions with respect to $X(0)$. Let
$$
X(i) := \frac{A(i) \cdot X(0) + B(i)}{A(i-1) \cdot X(0) + B(i-1)} = C.
$$
We then obtain
\begin{align*}
&C \cdot A(i-1) = A(i) = [i] \cdot A(i-1) + [i, i-1] \cdot A(i-2)\\
&C \cdot B(i-1) = B(i) = [i] \cdot B(i-1) + [i, i-1] \cdot B(i-2).
\end{align*}

We claim that $C - [i] \neq 0$. Otherwise, $A(i-2) = B(i-2) = 0$ because $[i, i-1] = (z_{r-i, s+i-1})^2 \neq 0$. It yields that $X(i-2) \equiv 0$, contradicting to the assumption that $X(i-2)$ is not constant.

We then have
\begin{align*}
A(i-1) = C^\prime \cdot A(i-2) \\
B(i-1) = C^\prime \cdot B(i-2)
\end{align*}
where $C^\prime = {[i,i-1]}/{(C - [i])}$.  So, we deduce a contradiction that
$$
X(i-1) = \frac{A(i-1) \cdot X(0) + B(i-1)} {A(i-2) \cdot X(0) + B(i-2)} = C^\prime
$$
is constant. Hence, every $X(i)$ has to be a non-constant rational function.
\end{proof}

\begin{corollary}\label{choiceofzij}
Suppose that the set $\textbf{\textup{z}}_{(r+s-1 \backslash m)}$ is determined. There exists a non-zero real number $d_{r,s}$ such that if we set $z_{r,s} = d_{r,s}$
\begin{equation}\label{paijznonzero}
\widetilde{\pa_m^\frak{b}} (i,j) (\textup{\textbf{z}}) \neq 0
\end{equation}
for all $(i,j)$'s obeying $i + j = r + s - 1$.
\end{corollary}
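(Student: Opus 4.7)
\emph{Proof plan for Corollary~\ref{choiceofzij}.} The plan is to view each $\widetilde{\pa_m^\frak{b}}(i,j)(\textbf{z})$ with $i+j = r+s-1$ as a non-constant rational function of the single variable $z_{r,s}$, and then to pick $d_{r,s}$ away from the finitely many zeros and poles.

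First I would invoke the explicit formulas in~\eqref{tildepabijz}: for $(i,j)$ with $i+j = r+s-1 < n$, one has $\widetilde{\pa_m^\frak{b}}(i,j)(\textbf{z}) = 1/z_{i+1,j}$ when $i \geq j$ and $\widetilde{\pa_m^\frak{b}}(i,j)(\textbf{z}) = z_{i,j+1}$ when $i < j$. In both cases the value is, up to a reciprocal, a coordinate on the next anti-diagonal $i+j = r+s$. (If $r+s-1 = n$ one is instead reading off a product of $c$'s, but this case does not arise here since the coordinate $z_{r,s}$ we are about to specify already lies on an anti-diagonal $r+s \leq n$.)

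Next I would apply Lemma~\ref{rationalfunctionzij} with pivot $X(0) = z_{r,s}$: once $\textbf{z}_{(r+s-1 \backslash m)}$ has been fixed, every coordinate $X(i) = z_{r-i,s+i}$ on anti-diagonal $r+s$ is a non-constant rational function of $z_{r,s}$, exhibited explicitly as a M\"obius transformation in~\eqref{X(i)}. Composing with the reciprocal map where needed, each $\widetilde{\pa_m^\frak{b}}(i,j)(\textbf{z})$ with $i+j = r+s-1$ is itself a non-constant rational function of $z_{r,s}$, and therefore has only finitely many zeros and finitely many poles.

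Taking the union over the (finitely many) indices $(i,j)$ with $i+j = r+s-1$ of these zero- and pole-sets, together with $\{0\}$, produces a finite set $S \subset \R$. Since $\R \setminus S$ is infinite, I can choose any $d_{r,s} \in \R \setminus S$ and set $z_{r,s} := d_{r,s}$; by construction, every $\widetilde{\pa_m^\frak{b}}(i,j)(\textbf{z})$ on the anti-diagonal $r+s-1$ is both defined and non-zero, which is the assertion. The only step that requires a small verification is that Lemma~\ref{rationalfunctionzij} remains applicable when the anti-diagonal $r+s$ meets the boundary of $B(m)$; this is routine given the conventions in~\eqref{settingup} and the inductive hypothesis that all previously fixed entries of $\textbf{z}_{(r+s-1 \backslash m)}$ are non-zero, which guarantees that the coefficients $[i,i-1] = (z_{r-i,s+i-1})^2$ in the recurrence~\eqref{recrelxi} are non-zero, so the M\"obius formula~\eqref{X(i)} is indeed non-degenerate.
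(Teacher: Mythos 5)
Your proof is correct and takes essentially the same approach as the paper's own (very terse) proof: invoke Lemma~\ref{rationalfunctionzij} to see that every coordinate on the anti-diagonal $r+s$ is a non-constant rational function of $z_{r,s}$, observe via~\eqref{tildepabijz} that the quantities $\widetilde{\pa_m^\frak{b}}(i,j)(\textbf{z})$ on anti-diagonal $r+s-1$ are exactly those coordinates or their reciprocals, and then pick $d_{r,s}$ avoiding the finitely many resulting zeros and poles together with $0$. Your extra remarks — that the $i+j=n$ branch of~\eqref{tildepabijz} cannot occur here because $(r,s)\in\Gamma(n)$ forces $r+s-1\le n-1$, and that the coefficients $[i,i-1]=(z_{r-i,s+i-1})^2$ are non-zero so the M\"obius formula~\eqref{X(i)} is non-degenerate — are reasonable clarifications of points the paper leaves implicit.
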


\begin{proof}
Since each $z_{r-i, s+i}$ is a non-constant rational function with respect to $z_{r,s}$, there are only finitely many $z_{r,s}$'s so that $z_{r-i, s+i}$ is zero or is not defined. Avoid these values when choosing $d_{r,s}$.
\end{proof}

\begin{definition} 
Suppose the set $\textbf{\textup{z}}_{(r+s-1 \backslash m)}$ is given.
For an index $(r,s) \in \Gamma(n) \backslash B(m)$, an element $d_{r,s}$ is said to be \emph{pre-generic with respect to}
$\textbf{\textup{z}}_{(r+s-1 \backslash m)}$ if~\eqref{paijznonzero} holds for any $(i,j) \in \Gamma(r+s) \backslash (B(m) \cup \Gamma(r+s-1))$.
\end{definition}

For the later purpose, we prove the following property of the the pre-generic elements.

\begin{lemma}\label{symmericseedz}
Assume that $ m < k$. Suppose that we have $d_{s, m+1}$'s for $s$ with $1 \leq s \leq m$ such that for each $s$, $d_{s,m+1}$ is pre-generic with respect to the previously determined $\textbf{\textup{z}}_{(s+m \backslash m)}$ by one choice of $d_{m,m}$ and $d_{1,m+1}, \cdots, d_{s-1,m+1}$. Then, regardless of a choice of $d_{m,m} \in \C \backslash \{ 0 \}$, $d_{s,m+1}$ is pre-generic as long as we do not change $d_{1,m+1}, \cdots, d_{s-1,m+1}$.
\end{lemma}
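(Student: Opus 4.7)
The plan is to leverage the scaling structure that changing $d_{m,m}$ induces on the symmetric solution inside $B(m)$, and to propagate this structure carefully through the iterative formulas in $\Gamma(n) \setminus B(m)$ so that the pre-genericness condition at the index $(s,m+1)$ becomes visibly independent of the particular choice of $d_{m,m}$.

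First, I would set $\alpha := d_{m,m}^{\text{new}}/d_{m,m}^{\text{old}}$ and invoke Lemma~\ref{minussol} together with Corollary~\ref{corsymmetricsol} to observe that every $z_{i,j}$ with $(i,j) \in B(m)$ simply rescales by $\alpha$, while each of the fixed seed values $d_{1,m+1}, \ldots, d_{s-1,m+1}$ is untouched. In particular, the boundary values $z_{m+1,l} = (-1)^{m-l}(d_{m,m})^2/d_{l,m+1}$ produced by the relations $\pa_m(l)=0$ (for $1 \le l < m$) scale by $\alpha^2$. Thus the data on and around the boundary of $B(m)$ rescale by integer powers of $\alpha$ that are completely determined by the position, independent of the fixed $d$'s.

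Next, I would propagate anti-diagonal by anti-diagonal using the recurrence from Lemma~\ref{rationalfunctionzij} to express every subsequently determined $z_{i,j}$, for $(i,j) \in \Gamma(s+m+1)\setminus B(m)$, as an explicit rational function of the fixed seed data and of $d_{m,m}$. By induction on the anti-diagonal, I would track the \emph{transpose duality} $z_{i,j}\, z_{j,i}$ which, inside $B(m)$, equals $(d_{m,m})^{2}$ up to sign and which I expect to persist under propagation as $z_{i,j}\, z_{j,i} = C_{i,j}\,(d_{m,m})^{a(i,j)}$, where $C_{i,j}$ depends only on the fixed seeds and $a(i,j)$ is a position-dependent integer. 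This duality is the manifestation, outside $B(m)$, of the symmetry~\eqref{symmetricproperty} of the inside solution.

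The main obstacle is to verify that the relevant values entering $\widetilde{\pa_m^\frak{b}}(i,j)(\textbf{z})$ on the anti-diagonal covering index $(s,m+1)$ — in particular $z_{i,j}$, $z_{i\pm1,j}$, $z_{i,j\pm1}$ — factor cleanly into an $\alpha$-monomial part and a part depending only on the fixed $d$'s. The $\alpha$-monomial part is nonzero for every $\alpha \neq 0$, while the remaining factor is nonzero by the hypothesis that pre-genericness held at the initial choice of $d_{m,m}$. Combining these two observations should yield $\widetilde{\pa_m^\frak{b}}(i,j)(\textbf{z}) \neq 0$ for all $(i,j)$ required by Definition~\ref{definition_rigid}, completing the proof; the delicate point will be to justify the clean factorization at the boundary equations $\pa_m^\frak{b}(m,m)=0$ and at the row and column adjacent to $B(m)$, where two distinct scaling weights ($\alpha$ and $\alpha^2$) interact.
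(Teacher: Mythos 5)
Your sketch correctly identifies the transpose duality $z_{i,j}\,z_{j,i}=\pm(d_{m,m})^{2}$ as the crux, but it leaves it as an ``expectation'' with an undetermined exponent $a(i,j)$ and a seed-dependent constant $C_{i,j}$, whereas the whole content of the paper's proof is an explicit two-step induction showing that the exponent is \emph{always} $2$ and $C_{i,j}$ is just a sign. Concretely, the paper first extracts the base case $z_{m+1,l}\,z_{l,m+1}=(-1)^{m-l}(d_{m,m})^{2}$ from $\pa_m(l)=0$ (which you also observe), and then substitutes the inductive hypothesis $z_{a,b}\,z_{b,a}=(-1)^{a+b-1}(d_{m,m})^{2}$ into $\pa_m^{\frak b}(i,j)(\textbf{z})=0$ and compares the result against $\pa_m^{\frak b}(j,i)(\textbf{z})=0$ to push the relation one anti-diagonal further. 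Without carrying out that substitution, the statement ``I expect [the duality] to persist'' is exactly the step that needs proving, and the proposal stops short of it.

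A second issue is the opening appeal to the interior rescaling: the split leading term equation \eqref{splitleadingtermequ} is posed only on $\bigl(\Gamma(n)\setminus B(m)\bigr)\cup\{(m,m)\}$, with $y_{i,m}=\infty$ ($i<m$) and $y_{m,j}=0$ ($j<m$), so the rescaled symmetric solution \emph{inside} $B(m)$ never enters those equations and the observation is a red herring. Relatedly, the ``clean $\alpha$-monomial factorization'' you posit is stronger than what actually holds once $d_{m,m}$ and $d_{m+1,m+1}$ both appear in an equation (e.g.\ at $\pa_m^{\frak b}(m,m+1)$): the scaling weights $0$ and $1$ mix and the sum is not a monomial in $\alpha$. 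What saves the lemma is not a monomial factorization of each $z_{i,j}$ but the weaker and sharper statement proved in the paper: the above-diagonal propagation (which only touches fixed seeds and the $0/\infty$ boundary conventions) is entirely $d_{m,m}$-independent on the relevant anti-diagonals, and the below-diagonal values satisfy the exact reciprocity $z_{j,i}=\pm(d_{m,m})^{2}/z_{i,j}$, so $\widetilde{\pa_m^{\frak b}}(i,j)\neq0 \Longleftrightarrow \widetilde{\pa_m^{\frak b}}(j,i)\neq0$ and both reduce to the $d_{m,m}$-free above-diagonal check. You should replace the factorization heuristic with this precise reciprocity and write out the induction; at that point the references should also be corrected ($\ref{definition_rigid}$ concerns $L$-blocks in Part I and is not the relevant genericity condition here).
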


\begin{proof} 
If $m < k$, we see $y_{i,j} = z_{i,j}$ for $(i,j) \in B(m)$ by~\eqref{coordinatechange}.
We claim that
\begin{equation*}
z_{j,i+1} = (-1)^{i+j} \cdot \frac{(d_{m,m})^2}{z_{i+1,j}}.
\end{equation*}
Recall from~\eqref{pamly} that
\begin{equation*}
z_{m+1,i} = (-1)^{i + (m+1) - 1} \cdot \frac{(d_{m,m})^2}{z_{i,m+1}},
\end{equation*}
which provides the initial step for the induction. Next, by the induction hypothesis, we observe
\begin{align*}
\begin{split}
0 &= - \frac{z_{i,j+1}}{z_{i,j}} - \frac{z_{i-1,j}}{z_{i,j}} + \frac{z_{i,j}}{z_{i+1,j}} + \frac{z_{i,j}}{z_{i,j-1}}\\
&= \frac{z_{j,i}}{z_{j+1,i}} + \frac{z_{j,i}}{z_{j,i-1}} + (-1)^{i + j -1}\frac{(d_{m,m})^2}{z_{i+1,j} \cdot z_{j,i}} - \frac{z_{j-1,i}}{z_{j,i}} \\
&= \frac{z_{j,i+1}}{z_{j,i}} + (-1)^{i + j- 1} \frac{(d_{m,m})^2}{z_{i+1,j}\, z_{j,i}}.
\end{split}
\end{align*}
Thus, we obtain
\begin{equation*}
z_{j,i+1} = (-1)^{i+j} \cdot \frac{(d_{m,m})^2}{z_{i+1,j}}.
\end{equation*}
Therefore, $\widetilde{\pa_m^\frak{b}} (i, j) (\textup{\textbf{z}}) \neq 0$ as long as $\widetilde{\pa_m^\frak{b}} (j,i) (\textup{\textbf{z}}) \neq 0$.
\end{proof}

\vspace{0.2cm}
\subsection{Generic seeds}~
\vspace{0.2cm}

Applying Corollary~\ref{choiceofzij}, we make the first two expressions in~\eqref{tildepabijz} non-zero by taking one entry of an anti-diagonal generically. To make the last two equations non-zero, we need to select the previous ones more carefully. We deal with the three cases separately. 

\bigskip

\noindent{\bf Case 1. $ n = 2k - 1$.}
\smallskip

We need several lemmas. 

\begin{lemma}\label{reductionkk-1k-1k-1}
Assume that $\textbf{\textup{z}}_{(n-2 \backslash m)}$ is given. Suppose that either $d_{k-1,k-1} = -1$ is pre-generic or $k - 1 =m$. Then, there is a real number $d_{k-1,k-1}$ (sufficiently close to $-1$) and a non-zero real number $d_{k-1,k}$ such that if $z_{k-1,k-1} = d_{k-1,k-1}$ and $z_{k-1,k} = d_{k-1,k}$, 
\begin{equation}\label{pabijzneq0}
\widetilde{\pa_m^\frak{b}}(i,j)(\textup{\textbf{z}}) \neq 0 \mod T^{>0}
\end{equation}
for all $(i,j)$ with $i + j = n-1$ and $i + j = n$. 
\end{lemma}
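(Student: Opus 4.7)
The plan is to realize the non-vanishing conditions on the top two anti-diagonals $i+j=n-1$ and $i+j=n$ as finitely many algebraic non-vanishing conditions on the pair $(d_{k-1,k-1},d_{k-1,k})$, and then to select this pair generically in a neighborhood of $(-1,\bullet)$.

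First I would treat the two hypotheses uniformly by reducing to the situation in which anti-diagonal $i+j=n-1$ has been filled with non-zero entries from $\textbf{z}_{(n-2 \backslash m)}$ together with $z_{k-1,k-1}$. In the sub-case $k-1=m$, this is automatic since $z_{k-1,k-1}=z_{m,m}=d_{m,m}$ has already been fixed in $\mcal{I}_{\textup{initial}}$ via Corollary~\ref{corsymmetricsol}. In the sub-case $k-1>m$, the pre-genericity hypothesis says that tentatively setting $z_{k-1,k-1}=-1$ and propagating via the equations on anti-diagonal $n-2$ (as in Lemma~\ref{rationalfunctionzij}) fills anti-diagonal $n-1$ with non-zero entries. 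Since these entries are rational functions of $z_{k-1,k-1}$, non-vanishing persists for all $d_{k-1,k-1}$ in a small neighborhood of $-1$ — this is the source of the ``sufficiently close to $-1$'' clause in the statement.

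Next, with $n-1$ filled, I would regard $z_{k-1,k}$ on anti-diagonal $n$ as the free parameter and reapply Lemma~\ref{rationalfunctionzij}: every entry $z_{i,j}$ with $i+j=n$ is a non-constant rational function of $d_{k-1,k}$ whose coefficients depend rationally on $d_{k-1,k-1}$. The condition $\widetilde{\pa^\frak{b}_m}(i,j)\neq 0$ for $i+j=n-1$ becomes, via the first two cases of~\eqref{tildepabijz}, the non-vanishing of these propagated entries on $n$, and so excludes only finitely many values of $d_{k-1,k}$. The condition $\widetilde{\pa^\frak{b}_m}(i,j)\neq 0$ for $i+j=n$ becomes, via the last two cases of~\eqref{tildepabijz}, the non-vanishing of certain explicit rational expressions in the entries on $n-1$ and $n$; again a finite exclusion on $d_{k-1,k}$. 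A generic non-zero real $d_{k-1,k}$ then satisfies both conditions, and the resulting complex numbers $\prod^{i}_{r=k}c^\textup{hor}_{r,r+1}$ and $\prod^{j}_{r=k}c^\textup{ver}_{r+1,r}$ read off from the second condition are non-zero, determining the individual parameters $c^{\textup{hor}}_{r,r+1}$ and $c^{\textup{ver}}_{r+1,r}$ inductively.

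The main obstacle will be ruling out \emph{identical} vanishing of these rational functions in $d_{k-1,k}$: were some such rational function identically zero, no non-zero real choice of $d_{k-1,k}$ could make it non-zero. Identical vanishing is itself a closed algebraic condition on $d_{k-1,k-1}$ — namely the simultaneous vanishing of all coefficients in the variable $d_{k-1,k}$, viewed as rational functions of $d_{k-1,k-1}$ — and so is violated on a Zariski-dense open set unless the function collapses uniformly in both variables. The perturbation of $d_{k-1,k-1}$ near $-1$ is used precisely to exit the finitely many closed bad loci, while remaining inside the open pre-generic neighborhood. Verifying that none of these bad loci covers an entire neighborhood of $-1$ is the heart of the argument, and will be carried out using the continued-fraction representation~\eqref{X(i)} in the proof of Lemma~\ref{rationalfunctionzij}, by tracking leading and trailing coefficients in order to preclude total cancellation.
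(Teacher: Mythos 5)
Your proposal correctly diagnoses the central obstacle, namely that $\widetilde{\pa_m^\frak{b}}(1,n-1)(\textup{\textbf{z}})$ might vanish \emph{identically} as a function of $d_{k-1,k}$, but it defers the step that resolves it, asserting that it ``will be carried out'' via leading- and trailing-coefficient tracking in the continued-fraction representation~\eqref{X(i)}. Until that verification is actually performed, you have only shown that for each fixed $d_{k-1,k-1}$ the bad set of $d_{k-1,k}$ is either finite or all of $\C$; the whole point of the lemma is to exclude the latter on a neighborhood of $-1$. Moreover, your proposed tool aims at the wrong object: Lemma~\ref{rationalfunctionzij} and the continued fraction~\eqref{X(i)} describe the propagated entries on an anti-diagonal, whereas the quantity that vanishes identically at $z_{k-1,k-1}=-1$ is the bulk product $\prod c^\textup{ver}$ from the last two cases of~\eqref{tildepabijz}, which is not itself one of the $X(i)$'s and whose numerator and denominator as rational functions of $d_{k-1,k}$ are not the $A(i),B(i)$ of that proof.

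The paper closes the gap decisively by telescoping the system~\eqref{lemma1012recall}. From
$$\prod_{r=k}^{k}c^\textup{ver}_{r+1,r}=-z_{k-2,k}+(z_{k-1,k})^2\bigl(1+z_{k-1,k-1}^{-1}\bigr)$$
one reads off that $\prod_{r=k}^{k}c^\textup{ver}_{r+1,r}\neq -z_{k-2,k}$ precisely when $z_{k-1,k-1}\neq -1$; this inequality feeds into the next row to give $\prod_{r=k}^{k+1}c^\textup{ver}_{r+1,r}\neq -z_{k-3,k+1}$, and by induction $\prod_{r=k}^{n-2}c^\textup{ver}_{r+1,r}\neq -z_{1,n-2}$, which is exactly $\widetilde{\pa_m^\frak{b}}(1,n-1)(\textup{\textbf{z}})=\prod_{r=k}^{n-1}c^\textup{ver}_{r+1,r}\neq 0$. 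Thus the identical-vanishing locus for that final expression is exactly $\{z_{k-1,k-1}=-1\}$ --- a far sharper statement than mere properness of the bad locus --- and no additional genericity of $d_{k-1,k}$ is needed for this particular expression beyond pre-genericity of the propagated entries. The intermediate $\widetilde{\pa_m^\frak{b}}(n-j,j)(\textup{\textbf{z}})$ for $k\leq j<n-1$ are handled by the separate observation that they tend to $-z_{n-j-1,j}\neq 0$ as $z_{k-1,k-1}\to -1$, which (together with persistence of pre-genericity of the fixed $d_{k-1,k}$) is where the ``sufficiently close to $-1$'' clause is actually used.
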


Note that $\widetilde{\pa_m^\frak{b}}(i,j)(\textup{\textbf{z}})$'s for $(i,j)$ with $i + j = n$ provide the last two expressions of~\eqref{tildepabijz}. 

\begin{proof}
Assuming that $d_{k-1, k-1} = -1$ is pre-generic, by definition, every $z_{i, n - 1- i}$ is defined and becomes non-zero if we set $z_{k-1,k-1} = d_{k-1,k-1} = -1$. By Corollary~\ref{choiceofzij}, we can choose and fix a pre-generic element $d_{k-1,k}$ for $z_{k-1,k}$ so that the entries $z_{i,n-i}$'s are also determined.

We would like to emphasize that $d_{k-1,k-1} = -1$ is never being a component of a generic seed because of the following reason. Recall that the equations $\pa_m^\frak{b} (i, j) (\textup{\textbf{z}}) = 0$'s in~\eqref{pambijconvertedtoz} for $(i,j)$'s with $i + j \geq n$ and $i < j$ read
\begin{equation}\label{lemma1012recall}
\begin{cases}
\prod^{k}_{r=k} c^\textup{ver}_{r+1,r}&= - {z_{k-2,k}} + {(z_{k-1,k})^2} \left( 1 + \frac{1}{z_{k-1,k-1}} \right), \\
\prod^{k+1}_{r=k} c^\textup{ver}_{r+1,r} &= - {z_{k-3,k+1}} + {(z_{k-2,k+1})^2} \left( \left( \prod^{k}_{r=k} c^\textup{ver}_{r+1,r}\right)^{-1} + \frac{1}{z_{k-2,k}} \right), \\
&\cdots \\
\prod^{n-2}_{r=k} c^\textup{ver}_{r+1,r} &= - {z_{1,n-2}} + {(z_{2,n-2})^2} \left( \left( \prod^{n-3}_{r=k} c^\textup{ver}_{r+1,r}\right)^{-1} + \frac{1}{z_{2,n-3}} \right), \\
\prod^{n-1}_{r=k} c^\textup{ver}_{r+1,r} &= - {(z_{1,n-1})^2} \left( \left( \prod^{n-2}_{r=k} c^\textup{ver}_{r+1,r}\right)^{-1} + \frac{1}{z_{1,n-2}} \right).
\end{cases}
\end{equation}
If one chooses $z_{k-1,k-1} = d_{k-1, k-1} = -1$, then from~\eqref{lemma1012recall} we obtain
\begin{align*}
\prod^{j}_{r=k} c^\textup{ver}_{r+1,r} = - {z_{n-j-1,j}}
\end{align*}
for $j = k, k+1, \cdots, n-2$ and
\begin{equation}\label{problematicpamb}
\widetilde{\pa_m^\frak{b}}(1,n-1)(\textup{\textbf{z}})  = \prod^{n-1}_{r=k} c^\textup{ver}_{r+1,r} = 0.
\end{equation}
Thus, a seed $\textbf{d}$ is \emph{not} generic
\footnote{
We also have $\widetilde{\pa_m^\frak{b}}(n-1, 1)(\textup{\textbf{z}}) = 0$ if taking $z_{k-1,k-1} = d_{k-1, k-1} = -1$. 
}.
Nevertheless, we claim that there exists a choice of $d_{k-1, k-1}$ not equal to $-1$ but close to $-1$ so that~\eqref{pabijzneq0} is satisfied.

Note that the fixed $d_{k,k-1}$ remains to be pre-generic even if we perturb the value $z_{k-1, k-1}$ from $-1$ with sufficiently small amount. This is because the expression $\widetilde{\pa_m^\frak{b}}(i,j)(\textup{\textbf{z}})$ for each index $(i, j)$ with $i + j = n - 1$ is a continuous function with respect to $z_{k-1, k-1}$ at $-1$ after inserting $d_{k, k-1}$ into $z_{k, k-1}$. Also, by Lemma~\ref{rationalfunctionzij}, there exists a dense set of pre-generic elements for $d_{k-1, k-1}$. Therefore,~\eqref{pabijzneq0} is satisfied for $i + j = n -1$.

Also, we observe that as $z_{k-1, k-1} \to -1$, because of~\eqref{tildepabijz} and~\eqref{lemma1012recall}, $\widetilde{\pa_m^\frak{b}}(n-j,j)(\textup{\textbf{z}}) \to - z_{n-j-1,j}$ when $j \geq k$. Because $-z_{n -j-1, j} \neq 0$ for $j$ with $k \leq j < n - 1$, we still have $\widetilde{\pa_m^\frak{b}}(n-j,j)(\textup{\textbf{z}}) \neq 0$ for $j$ with $k \leq j < n - 1$ if $d_{k-1,k-1}$ is sufficiently close to $-1$. Finally, we claim that $\widetilde{\pa_m^\frak{b}}(1,n-1)(\textup{\textbf{z}}) \neq 0$ as soon as $z_{k-1, k-1} \neq -1$ so that the problem in~\eqref{problematicpamb} is solved. From ${\pa_m^\frak{b}}(k-1,k)(\textup{\textbf{z}}) = 0$ and $z_{k-1, k-1} \neq -1$, it follows that
$$
c^\textup{ver}_{k+1, k} \neq -z_{k-2, k}.
$$
Combining it with ${\pa_m^\frak{b}}(k-2,k+1)(\textup{\textbf{z}}) = 0$, we obtain
$$
\prod^{k+1}_{r=k} c^\textup{ver}_{r+1,r} \neq -z_{k-3, k+1}.
$$
Proceeding inductively, we deduce $\widetilde{\pa_m^\frak{b}}(1,n-1)(\textup{\textbf{z}}) \neq 0$. The discussion on the part where $j < k$ is omitted because the argument is symmetrical. 

Consequently, we may choose a generic $d_{k-1, k-1}$ sufficiently close to $-1$ so that~\eqref{pabijzneq0} holds for all $(i,j)$ with $i + j = n-1$ and $i + j = n$.

It remains to take care of the case where $k-1 = m$. The index $(k-1,k-1) = (m,m)$ is contained in the box $B(m)$ so that $d_{k-1,k-1}$ can be freely chosen by Corollary~\ref{corsymmetricsol}. Thus, we can apply the exactly same argument as above. 
\end{proof}

By applying a similar argument, we can prove the following lemma. 

\begin{lemma}\label{reductioniii+1i+1}
Suppose that either $d_{i, i} = \pm 1$ is pre-generic for $i > m$ or $i = m$. There is a real number $d_{i, i}$ (sufficiently close to $\pm 1$) and a non-zero real number $d_{i,i+1}$ so that $d_{i+1, i+1} = \mp 1$ becomes pre-generic.
\end{lemma}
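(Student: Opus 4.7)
The plan is to adapt the argument of Lemma~\ref{reductionkk-1k-1k-1} to an arbitrary anti-diagonal. Without loss of generality, take the ``$+1$'' sign, i.e.\ assume either $d_{i,i} = 1$ is pre-generic or $i = m$. First, I would tentatively set $z_{i,i} := 1$ and invoke Corollary~\ref{choiceofzij} to select a pre-generic real number $d_{i,i+1}$ so that, after setting $z_{i,i+1} := d_{i,i+1}$, every entry $z_{r,s}$ on the anti-diagonal $r+s = 2i+1$ is determined, non-zero, and satisfies $\widetilde{\pa_m^\frak{b}}(r,s)(\textbf{\textup{z}}) \neq 0$ for all $(r,s)$ with $r+s = 2i$.

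The key step is to verify that with $z_{i,i} = 1$ this forces $z_{i+1,i+1} = -1$ at the next anti-diagonal, via the antisymmetry property $z_{s,r} = (-1)^{r+s} (d_{m,m})^2 / z_{r,s}$ established in Lemma~\ref{symmericseedz} (and its analogue propagated by the recursion in~\eqref{pambijconvertedtoz} above the box). Concretely, I would plug $z_{i,i} = 1$, $z_{i,i+1} = d_{i,i+1}$, and the forced value $z_{i+1,i} = -d_{i,i+1}^{-1} (d_{m,m})^2$ into $\pa_m^\frak{b}(i,i+1)(\textbf{\textup{z}}) = 0$, and compute $z_{i+1,i+1}$ directly. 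The inductive structure of the recursion (mirroring the derivation of~\eqref{lemma1012recall}) then shows $z_{i+1,i+1}$ is exactly $-1$.

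However, just as in Lemma~\ref{reductionkk-1k-1k-1}, the exact choice $z_{i,i} = 1$ causes a telescoping cancellation analogous to~\eqref{problematicpamb} that makes some $\widetilde{\pa_m^\frak{b}}(r,s)(\textbf{\textup{z}})$ vanish on a later anti-diagonal. To remedy this, I would keep the pre-generic $d_{i,i+1}$ fixed but perturb $d_{i,i}$ to a value close to but not equal to $1$. By continuity, the quantities $\widetilde{\pa_m^\frak{b}}(r,s)(\textbf{\textup{z}})$ for $(r,s)$ with $r+s = 2i$ remain non-zero for small perturbations, and the same continuity argument from Lemma~\ref{reductionkk-1k-1k-1} (reading off limits from the analogue of~\eqref{lemma1012recall}) shows that the degenerate cancellations are eliminated as soon as $d_{i,i} \neq 1$. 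Since the set of pre-generic values is dense by Lemma~\ref{rationalfunctionzij}, I can choose $d_{i,i}$ generically near $1$. Then, the resulting value of $z_{i+1,i+1}$ is close to $-1$; because pre-genericity is an open condition in the seed (each relevant $\widetilde{\pa_m^\frak{b}}(r,s)(\textbf{\textup{z}})$ on the anti-diagonal $r+s = 2i+1$ is a continuous function of the data), the value $d_{i+1,i+1} = -1$ remains pre-generic with respect to the data determined by the chosen $d_{i,i}$ and $d_{i,i+1}$.

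The main obstacle is the dual continuity argument: I must confirm simultaneously that (i) the perturbation of $d_{i,i}$ away from $1$ does not destroy pre-genericity of $d_{i,i+1}$ on the anti-diagonal $r+s=2i$, and (ii) it \emph{does} remove the vanishing of $\widetilde{\pa_m^\frak{b}}$ entries that would have occurred at $d_{i,i} = 1$, so that $d_{i+1,i+1} = -1$ becomes pre-generic. Both follow from the same mechanism used in Lemma~\ref{reductionkk-1k-1k-1}: the telescoping product of the $c^{\textup{ver}}_{\bullet,\bullet}$-factors (or $c^{\textup{hor}}_{\bullet,\bullet}$-factors) picks up a strictly non-zero contribution from $z_{i,i} - 1$, which carries through the recursion on the subsequent anti-diagonals. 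The case $i = m$ is handled identically, since in that range $d_{m,m}$ may be chosen freely by Corollary~\ref{corsymmetricsol}, playing the role of the perturbation parameter.
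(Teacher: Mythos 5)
The overall strategy — adapt the perturbation-and-continuity argument of Lemma~\ref{reductionkk-1k-1k-1} to a generic anti-diagonal — is indeed what the paper intends by ``a similar argument.'' However, your execution contains a conceptual error about which entries are free and which are determined, and this breaks the central step of the proof.

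You write that $z_{i,i} = 1$ ``forces $z_{i+1,i+1} = -1$,'' and that you would ``plug $z_{i,i}=1$, $z_{i,i+1}=d_{i,i+1}$, and the forced value $z_{i+1,i}$ into $\pa_m^{\frak b}(i,i+1)(\textbf{z}) = 0$ and compute $z_{i+1,i+1}$ directly.'' Both claims are incorrect. First, $(i+1,i+1)$ is the seed index on the anti-diagonal $r+s = 2i+2$ (cf.\ $\mcal I_{\textup{seed}}$ in \eqref{IndexsetforSeed}), so $z_{i+1,i+1}$ is a free choice, not a derived value. The lemma's conclusion that ``$d_{i+1,i+1} = \mp 1$ becomes pre-generic'' is a statement that this particular \emph{choice} is good — i.e.\ that $\widetilde{\pa_m^{\frak b}}(r,s)(\textbf{z}) \neq 0$ for $(r,s)$ on anti-diagonal $2i+2$ once $z_{i+1,i+1} = \mp 1$ is inserted — not that the value $\mp 1$ is computed from the data. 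Second, the equation $\pa_m^{\frak b}(i,i+1)(\textbf{z}) = 0$ involves \emph{two} entries on anti-diagonal $2i+2$, namely $z_{i,i+2}$ and $z_{i+1,i+1}$: under the convention of \eqref{tildepabijz}, with $i<j=i+1$ it is used to solve for $z_{i,i+2}$ \emph{after} $z_{i+1,i+1}$ has been chosen. Trying to solve for $z_{i+1,i+1}$ from it is circular.

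The antisymmetry from Lemma~\ref{symmericseedz} also does not pin down the sign. It yields $z_{i+1,i+1}^2 = (d_{m,m})^2$, hence $z_{i+1,i+1} = \pm d_{m,m}$, but the antisymmetry alone cannot distinguish $+1$ from $-1$. The content of the lemma — that it is the sign \emph{opposite} to $d_{i,i}$ that is pre-generic — has to come from tracking the cancellation structure analogous to the telescoping in \eqref{lemma1012recall}, showing that one sign produces a vanishing $\widetilde{\pa_m^{\frak b}}$ on anti-diagonal $2i+2$ while the other does not. That analysis, which is the heart of the lemma, is absent from your proposal. The surrounding continuity-and-density framework (perturb $d_{i,i}$ off $\pm 1$, keep $d_{i,i+1}$ pre-generic, use Lemma~\ref{rationalfunctionzij}) is fine; what is missing is the justification of which sign for the seed $d_{i+1,i+1}$ survives the degeneration test.
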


We now ready to start the proof of Proposition~\ref{Existenceofgenericseeds} for the case where $n = 2k - 1$ and $m < k := \left\lceil n/2 \right\rceil$.

\begin{proof}[Proof of Proposition~\ref{Existenceofgenericseeds}] 
We start with a tentative choice of $d_{m,m} = \pm 1$. Choosing pre-generic elements from $d_{1,m+1} := z_{1,m+1}$ to $d_{m-1,m+1} := z_{m-1, m+1}$, we find $\textbf{\textup{z}}_{(2m \backslash m)}$ so that that \eqref{pabijzneq0} is satisfied for each index $(i,j)$ with $i + j \leq 2m - 1$. Due to Lemma~\ref{reductioniii+1i+1}, we may select $d_{m,m}$ sufficiently close to $\pm 1$ and $d_{m,m+1}$ so that $d_{m+1, m+1} = \mp 1$ becomes pre-generic. Because of Lemma~\ref{symmericseedz}, note that $d_{1,m+1}, \cdots, d_{m, m+1}$ remain to be pre-generic even if we choose another $d_{m,m}$. Moreover, applying Lemma~\ref{reductioniii+1i+1} repeatedly, we assert that $d_{k-1,k-1} = -1$ is also pre-generic by suitably choosing $d_{\bullet, \bullet}$. Hence, we have \eqref{pabijzneq0} for all indices $(i,j)$'s with $i + j \leq n -2$. Finally, Lemma~\ref{reductionkk-1k-1k-1} says that there is $d_{k-1, k-1}$ and $d_{k,k-1}$ such that~\eqref{pabijzneq0} holds for $i + j = n-1, n$. Thus, we have just found a generic seed.
\end{proof}

\bigskip

\noindent{\bf Case 2. $ n = 2k$ and $m < k$.}
\smallskip

Modifying the proofs of Lemma~\ref{reductionkk-1k-1k-1} and Lemma~\ref{reductioniii+1i+1}, we can prove the following lemma.

\begin{lemma}\label{evenreduction1}
Assume that $\textbf{\textup{z}}_{(n-2 \backslash m)}$  is given. Suppose that $d_{k-1,k} = -1$ is pre-generic. Then, there is a real number $d_{k-1,k}$ (sufficiently close to $-1$) and a non-zero real number $d_{k,k}$ such that if $z_{k-1,k} = d_{k-1,k}$ and $z_{k,k} = d_{k,k}$,
$$
\widetilde{\pa^\frak{b}_m}(i,j)(\textup{\textbf{z}}) \neq 0 \mod T^{>0}
$$
for all $(i,j)$ with $i + j = n-1$ and $i + j = n$.

Suppose that $d_{i-1, i} = \pm 1$ is pre-generic for $i \geq m+1$. There is a real number $d_{i-1, i}$ (sufficiently close to $\pm 1$) and a non-zero real number $d_{i,i}$ so that $d_{i, i+1} = \mp 1$ becomes pre-generic.
\end{lemma}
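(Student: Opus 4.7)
The plan is to mirror the strategy used in Lemmas~\ref{reductionkk-1k-1k-1} and \ref{reductioniii+1i+1}, carefully adapted to the fact that when $n = 2k$ the anti-diagonal $i+j = n$ is centered at $(k,k)$ rather than at a pair of symmetric off-diagonal cells, so the role previously played by $d_{k-1,k-1} = -1$ now passes to $d_{k-1,k} = -1$.

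First I would set $z_{k-1,k} = d_{k-1,k} = -1$ with the previously determined $\textbf{\textup{z}}_{(n-2\backslash m)}$, and trace through the system~\eqref{lemma1012recall} rewritten for $n = 2k$. The recurrence for the products $\prod_{r=k}^{j} c^{\textup{ver}}_{r+1,r}$ obtained from $\pa_m^{\frak{b}}(i,j)(\textbf{\textup{z}}) = 0$ for $i<j$, $i+j\geq n$ should again telescope to $\prod_{r=k}^{j} c^{\textup{ver}}_{r+1,r} = -z_{n-j-1,j}$ for $k \leq j \leq n-2$, and consequently force $\widetilde{\pa_m^{\frak{b}}}(1,n-1)(\textbf{\textup{z}}) = 0$ (and symmetrically $\widetilde{\pa_m^{\frak{b}}}(n-1,1)(\textbf{\textup{z}}) = 0$). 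This identifies $d_{k-1,k} = -1$ as a non-generic value, exactly analogous to the odd case.

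Next I would run the continuity argument: each $\widetilde{\pa_m^{\frak{b}}}(i,j)(\textbf{\textup{z}})$ for $(i,j)$ with $i+j = n-1$ is, after fixing a pre-generic $d_{k,k}$, a continuous function of $z_{k-1,k}$ at $-1$; by Lemma~\ref{rationalfunctionzij} the set of pre-generic values is dense, so we may perturb $d_{k-1,k}$ away from $-1$ keeping all such expressions non-zero. For indices on the last anti-diagonal $i+j = n$, the limit $\widetilde{\pa_m^{\frak{b}}}(n-j,j)(\textbf{\textup{z}}) \to -z_{n-j-1,j} \neq 0$ as $z_{k-1,k} \to -1$ takes care of $k \leq j \leq n-2$. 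For the remaining endpoint $\widetilde{\pa_m^{\frak{b}}}(1,n-1)(\textbf{\textup{z}})$, I would argue inductively from $\pa_m^{\frak{b}}(k-1,k+1)(\textbf{\textup{z}}) = 0$: once $z_{k-1,k} \neq -1$, the equation forces $c^{\textup{ver}}_{k+1,k} \neq -z_{k-2,k+1}$, and iterating this through each equation $\pa_m^{\frak{b}}(k-j,k+j)(\textbf{\textup{z}})=0$ propagates non-vanishing up to the top of the diagram, showing $\widetilde{\pa_m^{\frak{b}}}(1,n-1)(\textbf{\textup{z}}) \neq 0$. The horizontal side is handled symmetrically.

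For the second statement I would proceed as in Lemma~\ref{reductioniii+1i+1}: with $d_{i-1,i} = \pm 1$ pre-generic by hypothesis, I would use Corollary~\ref{choiceofzij} to pick $d_{i,i}$ so that~\eqref{paijznonzero} holds for all $(r,s)$ with $r+s = i+(i-1) + 1$, and then analyze the equations $\pa_m^{\frak{b}}(r,s)(\textbf{\textup{z}}) = 0$ on the next anti-diagonal to show that the candidate value $d_{i,i+1} = \mp 1$ lies in the open dense locus of pre-generic values (with a small perturbation $d_{i-1,i}$ of $\pm 1$ if necessary). The main obstacle I anticipate is the algebraic verification that the telescoping/propagation in the even case works out with the correct sign conventions: the parity of $n$ changes the location and the sign pattern of the singular cascade, and checking that the ``exceptional'' value is indeed $d_{k-1,k} = -1$ (and not $+1$ or something else) requires a careful unwinding of~\eqref{pambijconvertedtoz} on the two anti-diagonals $i+j = n-1, n$, together with verifying that the limiting non-vanishing expressions $-z_{n-j-1,j}$ inherit the correct signs from the previously fixed $\textbf{\textup{z}}_{(n-2\backslash m)}$.
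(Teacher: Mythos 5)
Your proposal correctly identifies that the paper leaves this proof as an implicit exercise (the text just says "Modifying the proofs of Lemma~\ref{reductionkk-1k-1k-1} and Lemma~\ref{reductioniii+1i+1}") and sketches a reasonable way to carry out that modification: trace the cascade that would vanish $\widetilde{\pa_m^{\frak{b}}}(1,n-1)$, perturb the anchor variable $d_{k-1,k}$ slightly off $-1$, and combine continuity with the propagation argument. The overall structure is sound and parallels the odd case faithfully.

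There are two points where the even case departs from the odd one in a way your outline glosses over. First, on the last anti-diagonal $i+j=n=2k$ the index $(k,k)$ is diagonal, and because $c^{\textup{hor}}_{k,k+1}$ is pre-fixed to $1$ by Remark~\ref{whenm2kchoiceof}, the isolated quantity at $(k,k)$ in $\widetilde{\pa_m^{\frak{b}}}(k,k)(\textbf{z})$ must be $c^{\textup{ver}}_{k+1,k}$, not the horizontal product that the $i \geq j$ branch of~\eqref{tildepabijz} would formally produce; so one has to use (a version of) the $i<j$ branch there. Your outline implicitly jumps to the vertical cascade but does not note this asymmetry. Second, and more substantively, the cascade triggers at $(k,k)$ only when the factor $1 + 1/z_{k,k-1}$ vanishes, i.e. $z_{k,k-1} = -1$. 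By Lemma~\ref{symmericseedz} this amounts to $z_{k-1,k} = -(d_{m,m})^2$, which equals $-1$ only when $(d_{m,m})^2 = 1$. The paper's framework arranges $d_{m,m}$ to be a perturbation of $\pm 1$, so the exceptional value is near $-1$, but this extra hypothesis on $d_{m,m}$ is what makes $d_{k-1,k} = -1$ the correct anchor; without tracking it one cannot justify that the problematic locus is a perturbation-sized neighborhood of $-1$ rather than of some other value. You flag "the algebraic verification" as a concern, and this is precisely where it bites. With those two clarifications supplied, the argument you outline does go through and matches the intended modification.
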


Also, we need the lemma, which serves as the starting point to obtain the desired $d_{\bullet,\bullet}$'s.

\begin{lemma}\label{dm+11canbegeneric}
$d_{m, m+1} = \pm 1$ can be pre-generic.
\end{lemma}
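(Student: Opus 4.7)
The plan is to exploit the transposition symmetry coming from Corollary~\ref{corsymmetricsol} and Lemma~\ref{symmericseedz}, together with a Zariski-density/perturbation argument in the spirit of Lemmas~\ref{reductionkk-1k-1k-1} and~\ref{evenreduction1}. First, I would fix a symmetric complex solution $\{z^{\C}_{i,j}\}_{(i,j)\in B(m)}$ from Corollary~\ref{corsymmetricsol} for a tentatively chosen $d_{m,m}$, and then use Corollary~\ref{choiceofzij} inductively to pick pre-generic values $d_{1,m+1}, d_{2,m+1}, \ldots, d_{m-1,m+1}$, so that the anti-diagonal propagation on $i+j = m+2, m+3, \ldots, 2m$ produces entries in $\C\setminus\{0\}$. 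By Lemma~\ref{symmericseedz}, these choices remain pre-generic under any subsequent rescaling of $d_{m,m}$, and the transposition symmetry $z_{j,i+1} = (-1)^{i+j}(d_{m,m})^2 / z_{i+1,j}$ is preserved by the propagation throughout.

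Next, I would set $d_{m,m+1} = \pm 1$ and study the continued-fraction propagation of Lemma~\ref{rationalfunctionzij} along anti-diagonal $i+j = 2m+1$. Each entry $z_{m-i,\, m+1+i}$ is a rational function $R_i$ of $d_{m,m+1}$ whose coefficients depend algebraically on $d_{m,m}$ and $d_{1,m+1}, \ldots, d_{m-1,m+1}$. The induced symmetry yields the pairing $z_{m+1+i,\, m-i} \cdot z_{m-i,\, m+1+i} = \pm (d_{m,m})^2$ on the anti-diagonal $2m+1$, so pre-genericity of $d_{m,m+1}=\pm 1$ reduces to verifying that none of the $R_i$ have a zero or a pole at $\pm 1$ on the upper half $0 \le i \le m-1$; the lower half is then automatic.

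The main obstacle is that for the particular earlier parameters fixed in the first step, some $R_i$ might accidentally have a zero or a pole exactly at $\pm 1$. To bypass this, I would argue that the bad locus
\[
\mathcal{B} := \Big\{(d_{m,m}, d_{1,m+1}, \ldots, d_{m-1,m+1}) \in (\C\setminus\{0\})^{m} : R_i(\pm 1) \in \{0,\infty\} \text{ for some } i \Big\}
\]
is a proper Zariski-closed subset of $(\C\setminus\{0\})^{m}$. The non-triviality relies on a scaling argument via Lemma~\ref{minussol}: rescaling $d_{m,m} \mapsto c\cdot d_{m,m}$ while keeping the remaining $d_{s,m+1}$ fixed induces a non-trivial weighted action on the coefficients of the $R_i$, so if $R_i(\pm 1)$ vanished identically, the induced homogeneity would be incompatible with the continued-fraction recursion~\eqref{X(i)}. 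Hence $\mathcal{B}$ is proper, and a generic perturbation of the earlier seed data, preserving pre-genericity by the openness of the pre-generic locus together with Lemma~\ref{symmericseedz}, realizes $d_{m,m+1} = \pm 1$ as pre-generic.
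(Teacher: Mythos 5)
Your approach is genuinely different from the paper's and, as written, it has a gap at the crucial step.

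The paper's proof is a short explicit construction: observe that the recursion for $\{z_{i,m+j}\}$ on the strip above $B(m)$, with the boundary condition $z_{i,m}=\infty$ for $i<m$, has exactly the same shape as the recursion inside $B(m)$ with boundary $y_{\bullet,0}=\infty$. So the explicit closed-form solution from Lemma~\ref{symmetricsol} furnishes a solution on the strip as well, and Lemma~\ref{minussol} allows rescaling by an arbitrary $a\in\C\setminus\{0\}$. Choosing $a=\prod_{r=0}^{m-2}(2+2r)$ forces $z_{m,m+1}=1$, every entry of this scaled solution is a manifestly non-zero real number, and Lemma~\ref{symmericseedz} decouples the choice of $d_{m,m}$. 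Pre-genericity of $d_{m,m+1}=1$ (and of $-1$, by flipping the sign of $a$) is therefore immediate: you never have to check whether a particular evaluation of a rational function avoids a zero or pole, because the entire candidate is spelled out and non-degenerate by inspection.

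Your proposal instead treats $d_{m,m+1}=\pm 1$ as fixed and tries to show that the ``bad locus'' $\mathcal{B}$ of earlier seed data for which some $R_i(\pm 1)$ hits $0$ or $\infty$ is a proper Zariski-closed subset of $(\C\setminus\{0\})^m$. The reduction to the upper half $0\le i\le m-1$ via the pairing $z_{m-i,m+1+i}\cdot z_{m+1+i,m-i}=(d_{m,m})^2$ is fine, and the openness of the pre-generic locus for the earlier coordinates is also fine. But the properness of $\mathcal{B}$ is exactly where the content of the lemma lives, and your scaling/homogeneity argument does not establish it. Rescaling $d_{m,m}\mapsto c\,d_{m,m}$ acts with a definite weight on the $z_{i,j}$'s, but if $R_i(\pm 1)$ were identically zero as a function of the parameters, it would remain identically zero after the rescaling; a non-trivial weighted action is perfectly compatible with identical vanishing. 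To conclude that $R_i(\pm 1)\not\equiv 0,\infty$, you would have to exhibit at least one value of the parameters where it does not vanish or blow up — which is precisely the explicit witness the paper produces. In other words, the genericity route cannot sidestep producing a witness; the explicit formula from Lemma~\ref{symmetricsol} (scaled by Lemma~\ref{minussol}) is what supplies it, and once you have it the Zariski-closedness packaging is unnecessary.
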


\begin{proof}
By Lemma~\ref{symmetricsol},
\begin{equation}
\displaystyle \widetilde{z}_{i,m+j} :=
\begin{cases}
1 \, &\mbox{for } i = j \\
\displaystyle \prod_{r=0}^{j - i - 1} (2i + 2r) \, &\mbox{for } i < j \\
\displaystyle \prod_{r=0}^{i - j - 1} (2j + 2r)^{-1} \, &\mbox{for } i > j
\end{cases}
\end{equation}
is a solution of $\pa_m^\frak{b} (i, m+j) (\textup{\textbf{z}}) = 0$ in ~\eqref{pambijconvertedtoz} for $m + i + j < n$. Furthermore, by Lemma~\ref{minussol}, so does
\begin{equation}\label{solutionofpamij2}
z_{i,m+j} := a \cdot \widetilde{z}_{i,m+j}
\end{equation}
for any non-zero complex number $a$. Selecting
$$
a := \prod^{m-2}_{r=0} (2 + 2r),
$$
$d_{m, m+1} = {z}_{m,m+1}$ becomes $1$. Because of Lemma~\ref{symmericseedz}, no matter what we choose any non-zero complex number $d_{m,m}$, $d_{m,m+1}$ is pre-generic (with respect to the previous determined $\textbf{z}_{(2m \backslash m)}$).
\end{proof}

\begin{proof}[Proof of Proposition~\ref{Existenceofgenericseeds} (continued)]
Combining Lemma~\ref{evenreduction1} and Lemma~\ref{dm+11canbegeneric}, we conclude Proposition~\ref{Existenceofgenericseeds} for the case where $n = 2k$ and $m < k$.
\end{proof}

\bigskip

\noindent{\bf Case 3. $ n = 2k$ and $m = k$.}
\smallskip

In this case, we take $d_{m,m} = 1$, see Remark~\ref{separatecasemk}. Because of Lemma~\ref{symmetricsol}, note that
\begin{equation}
\displaystyle \widetilde{z}_{i,m+j} :=
\begin{cases}
1 \, &\mbox{for } i = j \\
\displaystyle \prod_{r=0}^{j -i - 1} (2i + 2r) \, &\mbox{for } i < j \\
\displaystyle \prod_{r=0}^{i - j - 1} (2j + 2r)^{-1} \, &\mbox{for } i > j
\end{cases}, \,\,
\displaystyle \widetilde{z}_{m+i,j} :=
\begin{cases}
\displaystyle {(-1)^{m + i + j -1}} \, &\mbox{for } i = j \\
\displaystyle (-1)^{m + i + j -1} \prod_{r=0}^{i - j - 1} (2j + 2r)^{-1} \, &\mbox{for } i > j \\
\displaystyle (-1)^{m + i + j -1} \prod_{r=0}^{j - i - 1} (2i + 2r) \, &\mbox{for } i < j
\end{cases}
\end{equation}
respectively form a solution of $\pa_m^\frak{b} (i, m+j) (\textup{\textbf{z}}) = 0$ and $\pa_m^\frak{b} (m+i, j) (\textup{\textbf{z}}) = 0$ in ~\eqref{pambijconvertedtoz} for $m + i + j < n$. Also, our choice makes $\pa_m (l) (\textup{\textbf{y}}) = \pa_m (l) (\textup{\textbf{z}}) = 0$ in~\eqref{pamly} because $c^\textup{ver}_{m+1,m} = 1$ and $c^\textup{hor}_{m,m+1} = 1$, see Remark~\ref{whenm2kchoiceof} and Remark~\ref{separatecasemk}.

Furthermore,
\begin{equation}\label{solutionofpamij2}
z_{i,m+j} := a \cdot \widetilde{z}_{i,m+j}, \quad z_{m+i,j} := a^{-1} \cdot \widetilde{z}_{m+i,j}
\end{equation}
are also solutions of~\eqref{pambijconvertedtoz} and~\eqref{pamly} for any non-zero complex number $a$. Thus, we have a one-parameter family of solutions. Then, the expressions $\pa_m^\frak{b} (i, m+j) (\textup{\textbf{z}})$ and $\pa_m^\frak{b} (m+i, j) (\textup{\textbf{z}})$ for $(i,j)$ with $i + j = n$ can be considered as a function with respect to $a$.

\begin{lemma}\label{tempssedmk}
There exists a choice of the variable $a$ such that
\begin{equation}\label{genericamim-i}
\widetilde{\pa_m^\frak{b}} (m-i, m+i) (\textup{\textbf{z}}) \neq 0 \, \textup{ and } \, \widetilde{\pa_m^\frak{b}} (m+i, m-i) (\textup{\textbf{z}}) \neq 0 \mod T^{>0}
\end{equation}
in~\eqref{tildepabijz}.
\end{lemma}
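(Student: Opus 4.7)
My plan is to view the expressions in~\eqref{genericamim-i} as rational functions of the free parameter $a \in \C^{\times}$ introduced in~\eqref{solutionofpamij2}, and then pick $a$ outside the finite union of their zero-loci. Set $Q_j(a) := \widetilde{\pa_m^\frak{b}}(n-j, j)(\textbf{\textup{z}})$ for $m < j < n$. After substituting $z_{i, m+s} = a\,\widetilde{z}_{i, m+s}$ into~\eqref{tildepabijz} and using the normalization $c^\textup{ver}_{m+1, m} = 1$ of Remark~\ref{separatecasemk} (so $Q_m(a) \equiv 1$), one obtains the recursion
$$
Q_j(a) \;=\; -a\,\widetilde{z}_{n-j-1,\, j} \;+\; a^{2}\,\widetilde{z}_{n-j,\, j}^{\,2}\left(\frac{1}{Q_{j-1}(a)} + \frac{1}{a\,\widetilde{z}_{n-j,\, j-1}}\right),
$$
in which $\widetilde{z}_{n-j,\, j-1}$ at the boundary $j = m+1$ is interpreted via the symmetric complex solution of Corollary~\ref{corsymmetricsol}. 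Hence every $Q_j(a)$ is an explicit rational function of $a$.

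The heart of the argument is to show inductively that each $Q_j(a)$ is \emph{not} identically zero. For the base case, a direct computation gives the honest quadratic
$$
Q_{m+1}(a) \;=\; \widetilde{z}_{m-1, m+1}^{\,2}\, a^{2} \;+\; \left( \frac{\widetilde{z}_{m-1, m+1}^{\,2}}{\widetilde{z}_{m-1, m}} - \widetilde{z}_{m-2, m+1} \right) a,
$$
whose leading coefficient is nonzero because $\widetilde{z}_{m-1, m+1}\neq 0$ by~\eqref{solutionofpamij2}. For the inductive step, if $Q_{j-1}(a) \sim C_{j-1}\, a^{\epsilon_{j-1}}$ as $a \to \infty$ with $C_{j-1}\neq 0$, then $Q_j(a)$ is a sum of three terms of orders $a$, $a^{2-\epsilon_{j-1}}$ and $a$ respectively, and one checks that the term of highest order has nonzero coefficient because the $\widetilde{z}$'s are products of distinct positive integers and admit no accidental cancellation. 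The symmetric assertion $\widetilde{\pa_m^\frak{b}}(m+i, m-i)(\textbf{\textup{z}}) \neq 0$ is handled by the same argument after the involution $a \mapsto a^{-1}$, since $z_{m+i, j} = a^{-1}\widetilde{z}_{m+i, j}$ by the other half of~\eqref{solutionofpamij2}; equivalently one analyses the behavior as $a \to 0$.

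The union of the (finite) zero-sets of the finitely many $Q_j(a)$ and their mirrors is a finite subset of $\C^{\times}$; any $a$ outside it satisfies all the inequalities in~\eqref{genericamim-i}, which proves the lemma. The main obstacle is ruling out a miraculous cancellation in the leading coefficient of $Q_j(a)$ during the induction; this is the only genuinely computational step, and it is resolved by direct inspection of the closed-form values for $\widetilde{z}_{i, m+j}$ in~\eqref{solutionofpamij2}, which are (ratios of) products of distinct even integers and therefore cannot conspire to cancel.
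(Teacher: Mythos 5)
Your overall strategy (view the expressions as rational functions of the parameter $a$ from~\eqref{solutionofpamij2}, show each is not identically zero, then pick $a$ off the finite bad locus) is the same one the paper uses. However, your execution of the inductive step has a genuine gap, and the paper's proof sidesteps it with a normalization that you did not use.

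The problematic step is your assertion that ``the term of highest order has nonzero coefficient because the $\widetilde{z}$'s are products of distinct positive integers and admit no accidental cancellation.'' This is false. In your recurrence, the two terms of order $a^1$ have coefficients $-\widetilde{z}_{n-j-1,j}$ and $\widetilde{z}_{n-j,j}^{\,2}/\widetilde{z}_{n-j,j-1}$, and these can cancel exactly. Concretely take $m=4$, $n=8$, $j=6$ (so the index is $(2,6)$): from the closed form in Lemma~\ref{symmetricsol} one gets $\widetilde{z}_{1,6}=2$, $\widetilde{z}_{2,6}=1$, $\widetilde{z}_{2,5}=1/2$, hence $-\widetilde{z}_{1,6}+\widetilde{z}_{2,6}^{\,2}/\widetilde{z}_{2,5}=-2+2=0$. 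This cancellation is not accidental: whenever $\widetilde{z}_{n-j,j}$ sits on the diagonal of its box, the symmetry $\widetilde{z}_{i,m+j}=(\widetilde{z}_{j,m+i})^{-1}$ of Corollary~\ref{corsymmetricsol} forces $\widetilde{z}_{n-j-1,j}\,\widetilde{z}_{n-j,j-1}=\widetilde{z}_{n-j,j}^{\,2}$. Once the top-order coefficient vanishes, your induction ``$Q_{j-1}\sim C_{j-1}a^{\epsilon_{j-1}}\Rightarrow Q_j\sim C_j a^{\epsilon_j}$'' gives no control over $\epsilon_j$, and the argument that $Q_j\not\equiv 0$ no longer goes through as written.

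The paper avoids this issue by dividing by $z_{m-i,m+i}$: it works with the normalized quantity $\widetilde{\pa_m^{\frak b}}(m-i,m+i)(\mathbf z)/z_{m-i,m+i}$ and shows inductively that it is a \emph{non-constant} rational function of $a$. The key point is that, after normalization, the recursion takes the tidy form $A_i+B_i\cdot(\text{previous})^{-1}$, where $A_i$ and $B_i$ are constants (in $a$) with $B_i=\widetilde{z}_{m-i-1,m+i+1}/\widetilde{z}_{m-i,m+i}\neq 0$; non-constancy therefore propagates automatically, with no need to track degrees or rule out cancellations. Your recursion, by contrast, retains the raw $a^2\widetilde{z}^2_{n-j,j}/Q_{j-1}(a)$ piece, whose asymptotic order depends on $\epsilon_{j-1}$ and is precisely what makes the cancellation bookkeeping unmanageable.

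Two smaller remarks. In your base-case formula for $Q_{m+1}(a)$ you wrote a contribution $\widetilde{z}_{m-1,m+1}^{\,2}/\widetilde{z}_{m-1,m}$, but $z_{m-1,m}=y_{m-1,m}=\infty$ by the convention~\eqref{settingup}, so $1/z_{m-1,m}=0$ and that term should be absent; the correct base case is $Q_{m+1}(a)=\widetilde{z}_{m-1,m+1}^{\,2}\,a^2-\widetilde{z}_{m-2,m+1}\,a$ (the conclusion that the leading coefficient is nonzero is still correct). Finally, beware that showing the limit of $Q_j(a)$ as $a\to\infty$ is nonzero only shows $Q_j\not\equiv 0$; that is what you need, but your inductive hypothesis should then be phrased in terms of ``not identically zero'' (or, better, ``non-constant'' as in the paper) rather than the degree $\epsilon_{j-1}$ together with $C_{j-1}\neq 0$, since the latter requires knowing $\epsilon_{j-1}$, which the cancellation disrupts.
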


\begin{proof} 
We claim that $\widetilde{\pa^\frak{b}_m}(m-i, m+i) (\textup{\textbf{z}}) / z_{m-i, m+i}$ is a non-constant rational function with respect to $a$. For $i \geq 1$, we observe that
$$
\frac{\widetilde{\pa^\frak{b}_m}(m-1, m+1)(\textup{\textbf{z}})}{z_{m-1, m+1}} = - \frac{z_{m-2,m+1}}{z_{m-1,m+1}} + z_{m-1,m+1} =  - (2m - 4) + a \cdot \left( \prod^{m-3}_{r = 0} \left( 2 + 2r \right)^{-1} \right)
$$
is a non-constant linear function with respect to $a$. By the induction hypothesis, assume that
$$
\frac{\widetilde{\pa^\frak{b}_m}(m-i, m+i)(\textup{\textbf{z}})}{z_{m-i, m+i}} := \frac{P_i (a)}{Q_i (a)}
$$
is a non-constant rational function with respect to $a$. Then, we see
\begin{align*}
\frac{\widetilde{\pa^\frak{b}_m}(m-i-1, m+i+1)(\textup{\textbf{z}})}{z_{m-i-1, m+i+1}} &= \left( - \frac{z_{m-i-2,m+i+1}}{z_{m-i-1,m+i+1}} + \frac{z_{m-i-1,m+i+1}}{z_{m-i-1,m+i}} \right) + \frac{z_{m-i-1,m+i+1}}{{\widetilde{\pa^\frak{b}_m}(m-i,m+i)(\textup{\textbf{z}})}} \\
&= \left( - \frac{\widetilde{z}_{m-i-2,m+i+1}}{\widetilde{z}_{m-i-1,m+i+1}} + \frac{\widetilde{z}_{m-i-1,m+i+1}}{\widetilde{z}_{m-i-1,m+i}} \right) + \frac{\widetilde{z}_{m-i-1,m+i+1}}{\widetilde{z}_{m-i,m+i}} \cdot \frac{Q_i(a)}{P_i(a)},
\end{align*}
which is also a non-constant rational function. Similarly, one can see that $\widetilde{\pa_m^\frak{b}} (m+i, m-i) (\textup{\textbf{z}})$ is also a non-constant rational function for $i \geq 1$. Thus,~\eqref{genericamim-i} is established if choosing $a$ generically.
\end{proof}

We are ready to prove Proposition~\ref{Existenceofgenericseeds} for the case where $n = 2k$ and $m = k :=  \left\lceil n/2 \right\rceil$.

\begin{proof}[Proof of Proposition~\ref{Existenceofgenericseeds} (continued)]
By Lemma~\ref{tempssedmk}, we choose $d_{i, m+1} := a \cdot \widetilde{z}_{i,m+1}$ from~\eqref{solutionofpamij2} as a generic seed. It complete the proof.
\end{proof}

\vspace{0.2cm}
\subsection{Proof of Theorem~\ref{theoremD}}~
\vspace{0.2cm}

Finally, we are ready to prove the following theorem.

\begin{theorem}[Theorem \ref{theoremD}]\label{theorem_completeflagmancotinuum}
Let $\lambda = \{ \lambda_{i} := n - 2i + 1 \,|\, i = 1, \cdots, n \}$ be an $n$-tuple of real numbers for an arbitrary integer $n \geq 4$.
Consider the co-adjoint orbit $\mathcal{O}_\lambda$, a complete flag manifold $\mcal{F}(n)$ equipped with the monotone Kirillov-Kostant-Souriau symplectic form $\omega_\lambda$.
Then each Gelfand-Cetlin fiber $L_m(t)$ is non-displaceable Lagrangian for every $2 \leq m \leq \left\lfloor \frac{n}{2} \right\rfloor$. In particular, there exists a family of non-displaceable non-torus Lagrangian fibers
\[
	\left\{L_m(1) ~\colon~2 \leq m \leq \left\lfloor \frac{n}{2} \right\rfloor \right\}
\] 
of the Gelfand-Cetlin system $\Phi_\lambda$ where $L_m(1)$ is diffeomorphic to $U(m) \times T^{\frac{n(n-1)}{2} - m^2}$. 
\end{theorem}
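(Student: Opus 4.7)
The plan is to synthesize the machinery developed throughout Sections~\ref{secReviewOfLagrangianFloerTheory}--\ref{solvabilityofthesplitleadingtermequ} into a single argument, deferring all hard combinatorial/algebraic work to the technical results already established. Fix $m$ with $2 \leq m \leq \lfloor n/2 \rfloor$ and $t \in [0,1)$. The aim is to produce a bulk-deformation parameter $\mathfrak{b}$ of the form~\eqref{bulkparak2} and a critical point $\mathbf{y} \in (\Lambda_U)^{\frac{n(n-1)}{2}}$ of the bulk-deformed potential function $\mathfrak{PO}^{\mathfrak{b}}(L_m(t);\cdot)$; once this is done, Theorem~\ref{criticalpointimpliesnondisplaceability} forces the Lagrangian torus fiber $L_m(t)$ to be non-displaceable.

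The construction of $\mathfrak{b}$ and $\mathbf{y}$ I would carry out in two steps. First, I would invoke Corollary~\ref{splitleadingtermequationissolv} (which rests on Proposition~\ref{Existenceofgenericseeds}) to obtain a non-zero complex solution $\{y^{\mathbb{C}}_{i,j}\}_{(i,j) \in \Gamma(n) \setminus B(m) \cup \{(m,m)\}}$ of the split leading term equation~\eqref{splitleadingtermequ}, paired with non-zero complex constants $c^{\text{hor},\mathbb{C}}_{i,i+1}$ and $c^{\text{ver},\mathbb{C}}_{j+1,j}$ for $i,j \geq k$. Second, I would feed this complex solution into Theorem~\ref{splitleadingtermequationimpliesnonzero}, which produces the bulk-deformation parameter $\mathfrak{b}$ and extends the complex data to a critical point $\mathbf{y}$ over $\Lambda_U$ whose initial terms agree with $y^{\mathbb{C}}_{i,j}$ modulo $T^{>0}$.

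This establishes non-displaceability of every $L_m(t)$ for $t \in [0,1)$. To handle the endpoint $t = 1$, I would appeal to Proposition~\ref{closednessofnondisp}: since $L_m(1) = \Phi_\lambda^{-1}(I_m(1))$ is the Hausdorff limit of the non-displaceable tori $L_m(t)$ as $t \to 1^{-}$ and $\Phi_\lambda$ is proper, $L_m(1)$ is itself non-displaceable. For the identification of the diffeomorphism type, note that $I_m(1)$ lies in the relative interior of the Lagrangian face $f_m$, which has dimension $\tfrac{n(n-1)}{2} - m^2$ (the $m \times m$ square in the ladder diagram contributes one simple region in place of $m^2$ unit boxes). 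Theorem~\ref{theoremB} then gives a splitting $L_m(1) \cong T^{\frac{n(n-1)}{2}-m^2} \times B(\mathbf{u})$, and the iterated bundle presentation of Theorem~\ref{theoremA} applied to the $m \times m$ block identifies $B(\mathbf{u})$ with the GC fiber over the origin of $\mathcal{F}(m)$-type data, which is $U(m)$ (generalizing the $SU(3)$ computation of Remark~\ref{remark_su(3)}).

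The genuine obstacle in this scheme is the solvability of the split leading term equation, i.e.\ Proposition~\ref{Existenceofgenericseeds}. In contrast to the toric setting of~\cite{FOOOToric1, FOOOToric2}, where the preimage of each facet represents a degree-two cycle and the leading term equation decouples facet-by-facet, here the facets inside $B(m)$ do not give cycles, so the terms of $\mathfrak{PO}$ cannot be controlled independently; the split leading term equation is precisely the remedy. The analytical difficulty is that the candidate values for a generic seed must simultaneously avoid the vanishing loci of all expressions $\widetilde{\partial_m^{\mathfrak{b}}}(i,j)(\mathbf{z})$ in~\eqref{tildepabijz}, yet the most natural symmetric choices (such as $d_{k-1,k-1} = -1$ in the odd-$n$ case, or $d_{k-1,k} = -1$ in the even-$n$ case) themselves force one of these expressions to vanish. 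The proof therefore proceeds by a delicate perturbation argument (Lemmas~\ref{reductionkk-1k-1k-1}--\ref{dm+11canbegeneric} together with Lemma~\ref{symmericseedz}) that nudges these problematic values while preserving the symmetry relations~\eqref{pamly} imposed by the $B(m)$-block; this is, in my view, the principal technical contribution of Section~\ref{solvabilityofthesplitleadingtermequ}.
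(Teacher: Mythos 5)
Your proposal follows the paper's proof essentially verbatim: invoke Corollary~\ref{splitleadingtermequationissolv} (solvability of the split leading term equation), feed the complex solution into Theorem~\ref{splitleadingtermequationimpliesnonzero} to get a bulk parameter $\frak{b}$ and a $\Lambda_U$-critical point, apply Theorem~\ref{criticalpointimpliesnondisplaceability} to conclude non-displaceability of $L_m(t)$ for $t<1$, then pass to the limit $t\to 1$ via Proposition~\ref{closednessofnondisp} and identify the diffeomorphism type from Theorem~\ref{theorem_contraction}. The only cosmetic difference is that you unpack the diffeomorphism-type step slightly more (via Theorems~\ref{theoremA} and~\ref{theoremB} and a dimension count for $f_m$), whereas the paper cites Theorem~\ref{theorem_contraction} together with Corollary~\ref{corollary_L_fillable} directly; both routes are correct.
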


\begin{proof}
By Corollary~\ref{splitleadingtermequationissolv}, the split leading term equation~\eqref{splitleadingtermequ} has a desired solution for some nonzero complex numbers $c^{\textup{ver},\C}_{i+1, i}$'s and $c^{\textup{hor}, \C}_{j, j+1}$'s ($i, j \geq k$). Theorem~\ref{splitleadingtermequationimpliesnonzero} convinces us that for each Lagrangian torus $L_m(t)$ ($0 \leq t < 1$), there exists a suitable bulk-deformation parameter $\frak{b}$ of the form~\eqref{bulkparak} so that the bulk-deformed potential function admits a critical point.  By Theorem~\ref{criticalpointimpliesnondisplaceability}, each GC torus fiber $L_m(t)$ for $0 \leq t < 1$ is non-displaceable. Furthermore, Corollary~\ref{corollary_L_fillable} and Lemma~\ref{closednessofnondisp} imply that $L_m(1)$ is Lagrangian and non-displaceable. Finally, $L_m(1)$ is diffeomorphic to $U(m) \times T^{\frac{n(n-1)}{2} - m^2}$ because of Theorem~\ref{theorem_contraction}. This finishes the proof of Theorem~\ref{theorem_completeflagmancotinuum}.
\end{proof}

\section{Calculation of potential function deformed by Schubert cycles}\label{sec_bulkdefbyschucy}

The potential function \emph{with bulk} we use in the present paper was first constructed in
\cite[Section 3.8.5, 3.8.6]{FOOO} and was explicitly computed in \cite[Section 3]{FOOOToric2}
for the toric case. 
Since the main steps of the derivation of \eqref{bulkdeformedpotential} are the
same as that of the proof of \cite[Proposition 4.7]{FOOOToric2} given in Section 7 therein, we will only
explain modifications we need to make to apply them to the current GC case.
Also for the purpose of proving the counter part of Theorem \ref{Foootoric2potenti} in the present paper, the facts that
a Fano manifold $X_\epsilon$ has a toric degeneration and that we have only to consider codimension two cycles also help us to simplify the study of holomorphic discs contributing to the potential functions.
We closely follow \cite[Section 9]{NNU}. 

Let $L$ be a Lagrangian submanifold in a symplectic manifold $X$. Let $\mcal{M}_{k+1; \ell}(X, L; \beta)$ denote the moduli space of stable maps in the class $\beta \in \pi_2(X, L)$ from a bordered Riemann surface $\Sigma$ of genus zero with $(k+1)$ marked points $\{z_s\}_{s=0}^{k}$ on the boundary $\pa \Sigma$ respecting the counter-clockwise orientation and $\ell$ marked points $\{z^+_r \}_{r = 1}^{\ell}$ at the interior of $\Sigma$.
It naturally comes with two types of evaluation maps, at the $i$-th boundary marked point
\begin{equation}\label{evaluationatboundarymarked}
\ev_i \colon \mathcal M_{k+1;\ell}(X, L; \beta) \to L; \quad \ev_i \left( [ \varphi \colon \Sigma \to X, \{z_s\}_{s=0}^{k+1}, \{z^+_r \}_{r = 1}^{\ell}] \right) = \varphi(z_i)
\end{equation}
and at the $j$-th interior marked point
\begin{equation}\label{evaluationatinteriormarked}
\ev_j^{\text{\rm int}} \colon \mathcal M_{k+1;\ell}(X, L; \beta) \to X; \quad \ev_j^{\text{\rm int}} \left( [ \varphi \colon \Sigma \to X, \{z_s\}_{s=0}^{k+1}, \{z^+_r \}_{r = 1}^{\ell}] \right) = \varphi(z^+_j).
\end{equation}
Set $\mcal{M}_{k+1}(X, L; \beta) := \mcal{M}_{k+1; \ell = 0}(X, L; \beta)$, the moduli space without interior marked points and let 
$$
\bev_+ := (\ev_1, \cdots, \ev_k).
$$

Recall that an $A_\infty$-structure with the operators
$$
 \frak m_k = \sum_\beta \frak m_{k,\beta} \cdot T^{\omega(\beta)/  2\pi}, \quad  \frak m_{k,\beta}(b_1, \cdots, b_k): = (\ev_0)!(\bev_+)^*(\pi_1^*b_1 \otimes \cdots \otimes \pi_k^*b_k)
$$
on the de Rham complex $\Omega(L)$ is defined via a smooth correspondence.
\begin{equation}\label{smoothcorrespondence}
	\xymatrix{
                              & {\mcal{M}_{k+1}(X, L; \beta)} \ar[dl]_{\bev_+} \ar[dr]^{\ev_0} &
      \\
 L^k & & L}
\end{equation}
where $\pi_i \colon L^k \to L$ denotes the projection to the $i$-th copy of $L$. 
For a general symplectic manifold, one should choose a system of compatible Kuranishi structures and CF-perturbations on $\mcal{M}_{k+1,l}(X, L; \beta)$'s in order to apply the above smooth correspondence, see \cite{Fuk, FOOO:Kura1} for the details of construction. For a Lagrangian toric fiber $L$ in a $2n$-dimensional toric manifold, by constructing a system of compatible $T^n$-equivariant Kuranishi structures and multi-sections, the smooth correspondence can be applied without adding an auxiliary space for perturbing multi-sections to make it submersive, see \cite[Section 12]{FOOOToric2}. It is because the map $\ev_0$ automatically becomes submersive by the $T^n$-equivariance. 

We now recall Nishinou-Nohara-Ueda's computation of the potential function
of a torus fiber $L_\varepsilon \subset X_\varepsilon$ in \cite{NNU}.
They were able to exploit
the presence of toric degeneration of $X_\epsilon$ to $X_0$ in their computation
the explanation of which is now in order.
For the study of holomorphic discs in $X_0$ which is not smooth,
they used the following notion in Nishinou-Siebert \cite{NS}.

\begin{definition}[Definition 4.1 in \cite{NS}]\label{toricallytransverse}
A holomorphic curve in a toric variety $X$ is called \emph{torically transverse} if it is disjoint
from all toric strata of codimension greater than one. A stable map
$\varphi\colon \Sigma \to X$ is \emph{torically transverse} if $\varphi(\Sigma) \subset X$ is
torically transverse and $\varphi^{-1}(\operatorname{Int}X) \subset \Sigma$ is dense.
Here, $\operatorname{Int}X$ is the complement of the toric divisors in $X$.
\end{definition}

We denote by
$S_0: = \operatorname{Sing}(X_0)$ the singular locus of $X_0$.
Using the classification result \cite{CO} of holomorphic discs attached to
a Lagranigian toric fiber in a smooth toric manifold and the property of the small resolution,
Nishinou-Nohara-Ueda \cite{NNU} proved the following.

\begin{lemma}[Proposition 9.5 and Lemma 9.15 in \cite{NNU}] \label{discsinX0}
Any holomorphic disc $\varphi \colon (\mathbb{D}^2, \partial \mathbb{D}^2) \to (X_0,L_0)$ can be deformed into a holomorphic
disc with the same boundary condition that is torically transverse. Furthermore
the moduli space ${{\mathcal M}}_1(X_0,L_0;\beta)$ is empty if the Maslov
index of $\beta$ is less than two.
\end{lemma}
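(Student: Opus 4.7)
The plan is to exploit the small resolution $\pi : \widetilde{X}_0 \to X_0$ of the GC toric variety, which resolves the conifold strata $S_0 = \mathrm{Sing}(X_0)$ by replacing each node with an exceptional $\mathbb{P}^1$. The resulting $\widetilde{X}_0$ is a smooth projective toric Fano variety whose fan refines that of $X_0$; crucially $\pi$ is crepant so Maslov indices of disc classes are preserved. Since $L_0 = \Phi_0^{-1}(\textbf{u})$ lies in the open toric orbit it misses $S_0$, and $\pi$ restricts to a diffeomorphism over a neighborhood of $L_0$, so we may identify $L_0$ with its preimage $\pi^{-1}(L_0) \subset \widetilde{X}_0$.

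The first step is to lift an arbitrary holomorphic disc $\varphi : (\mathbb{D}^2, \partial \mathbb{D}^2) \to (X_0, L_0)$ to a holomorphic disc $\widetilde\varphi : (\mathbb{D}^2, \partial \mathbb{D}^2) \to (\widetilde{X}_0, L_0)$. The preimage $\varphi^{-1}(S_0)$ is a complex analytic subset of the one-dimensional domain $\mathbb{D}^2$, hence either a discrete set of interior points or all of $\mathbb{D}^2$; the latter is ruled out because $\varphi(\partial\mathbb{D}^2) \subset L_0$ avoids $S_0$. Away from $\varphi^{-1}(S_0)$, the lift $\widetilde\varphi = \pi^{-1} \circ \varphi$ is unambiguous, and near each point $p \in \varphi^{-1}(S_0)$ one uses normality of $\widetilde{X}_0$ together with the local model of the small resolution of an ordinary double point to extend $\widetilde\varphi$ continuously, and then holomorphically by Riemann's removable singularity theorem.

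Having produced $\widetilde\varphi$, I invoke the Cho--Oh classification on the smooth toric Fano $\widetilde{X}_0$: every holomorphic disc with boundary on a Lagrangian torus fiber is obtained from basic discs (one for each facet of the moment polytope of $\widetilde{X}_0$) via Blaschke-type products, and each such disc has Maslov index $\geq 2$, with equality exactly for basic discs. A generic perturbation of the Blaschke data then produces a torically transverse representative $\widetilde\varphi'$ in $\widetilde{X}_0$; pushing it down by $\pi$ yields a torically transverse disc in $X_0$, because the exceptional $\mathbb{P}^1$'s are contracted to points of $S_0$ which sit in toric strata of codimension $\geq 3$ of $X_0$, so transversality in $\widetilde{X}_0$ implies transversality in the sense of Definition~\ref{toricallytransverse} for $X_0$. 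Since the Maslov index of $\varphi$ agrees with that of $\widetilde\varphi$ by crepancy, the Cho--Oh bound in $\widetilde{X}_0$ forces $\mathcal{M}_1(X_0, L_0; \beta) = \emptyset$ whenever the Maslov index of $\beta$ is less than $2$.

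The main obstacle is the lifting step: one must verify that a holomorphic map from a disc into a variety with conifold singularities genuinely lifts holomorphically to a chosen small resolution, rather than merely continuously. Locally this amounts to showing that for a holomorphic $f : \mathbb{D} \to V = \{xy = zw\} \subset \mathbb{C}^4$ with $f(0)$ the node, the strict transform of $f(\mathbb{D})$ in the small resolution is parametrized holomorphically by $\mathbb{D}$; this follows from the universal property of normalization together with the observation that $\mathbb{D}$ is already smooth. All other steps are routine consequences of the Cho--Oh classification and toric geometry in $\widetilde{X}_0$.
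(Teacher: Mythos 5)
Your proposal follows the same route the paper attributes to Nishinou--Nohara--Ueda: lift the disc to a small toric crepant resolution $\widetilde{X}_0 \to X_0$, apply the Cho--Oh classification to perturb it to a torically transverse disc of Maslov index at least two, and push the result back down to $X_0$ (which preserves torical transversality because a small resolution introduces no new rays, so the codimension-one orbits of $\widetilde{X}_0$ and $X_0$ correspond and the exceptional locus sits in codimension two, hence is avoided). Two cosmetic slips do not affect the argument: the ``universal property of normalization'' is not the right tool here, since the conifold is already normal --- the extension over the finitely many interior preimages of $S_0$ is just Riemann's removable singularity theorem applied to the explicit local model, exactly as you say next --- and $\widetilde{X}_0$ need not be Fano, but this is also unnecessary because Cho--Oh's classification and the Maslov index bound $\geq 2$ hold for any smooth compact toric manifold.
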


\begin{lemma}[Lemma 9.9 in \cite{NNU}]\label{nodiscinW0} There is a small neighborhood $W_0$ of
the singular locus $S_0 \subset X_0$ such that no holomorphic discs of Maslov index two intersect $W_0$. 
\end{lemma}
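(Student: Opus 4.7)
\medskip

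\noindent\textbf{Proof plan.} The plan is to exploit the existence of a small resolution $\pi \colon \widetilde X_0 \to X_0$ of the GC toric variety at its conifold strata, together with the Cho--Oh classification of Maslov index two discs in a smooth toric manifold, to rule out any such disc meeting $S_0$, and then to upgrade pointwise avoidance to a uniform neighborhood by a compactness argument.

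\smallskip

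First, since $L_0 = \Phi_0^{-1}(\mathbf{u}_0) \subset X_0^{\mathrm{sm}}$ sits in the smooth locus (it is a torus fiber over an interior point), $\pi$ restricts to a diffeomorphism near $L_0$, so $L_0$ lifts canonically to a Lagrangian torus fiber $\widetilde L_0 \subset \widetilde X_0$ of the smooth toric manifold $\widetilde X_0$. Given any holomorphic disc $\varphi \colon (\mathbb{D},\partial \mathbb{D}) \to (X_0,L_0)$ of Maslov index two, Lemma~\ref{discsinX0} allows us to assume that $\varphi$ is torically transverse, so $\varphi^{-1}(S_0)$ is a finite set of interior points of $\mathbb{D}$; by the universal property of the small resolution, $\varphi$ lifts uniquely to a holomorphic disc $\widetilde\varphi \colon (\mathbb{D},\partial \mathbb{D}) \to (\widetilde X_0,\widetilde L_0)$.

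\smallskip

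Next I would use that small resolutions are crepant, i.e., $K_{\widetilde X_0} = \pi^* K_{X_0}$, to conclude that the Maslov index is preserved: $\mu(\widetilde\varphi) = \mu(\varphi) = 2$. Then by the Cho--Oh classification (see \cite{CO}), $\widetilde\varphi$ is a ``basic'' holomorphic disc attached to a single toric prime divisor of $\widetilde X_0$, meeting that divisor transversally at one interior point and disjoint from all toric strata of complex codimension $\geq 2$. The exceptional locus of $\pi$ has complex codimension $\geq 2$ in $\widetilde X_0$ (this is the defining property of a small resolution), and it is a union of closed toric strata of codimension $\geq 2$. Therefore the image of $\widetilde\varphi$ is disjoint from the exceptional set, and pushing down by $\pi$, the image of $\varphi$ is disjoint from $S_0$.

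\smallskip

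Finally, I need to convert this pointwise statement into the existence of an open neighborhood $W_0$. Let $\mathcal{M}$ denote the disjoint union of the moduli spaces $\overline{\mathcal M}_1(X_0,L_0;\beta)$ over all classes $\beta \in \pi_2(X_0,L_0)$ of Maslov index two. By Lemma~\ref{discsinX0} there are no nonconstant discs of Maslov index $\leq 0$, so Gromov compactness together with the positivity of the symplectic area on effective classes of Maslov index two implies that $\mathcal M$ has only finitely many nonempty components, each of them compact, with no sphere or disc bubbling lowering the Maslov index. Consider the universal disc $\mathcal U \to \mathcal M$ together with its evaluation map $\mathrm{ev} \colon \mathcal U \to X_0$; its image $K \subset X_0$ is compact, and by the previous paragraph $K \cap S_0 = \emptyset$. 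Since $S_0$ is closed and $K$ is compact, one can take $W_0$ to be any open neighborhood of $S_0$ with $W_0 \cap K = \emptyset$, for instance a sufficiently small $\epsilon$-neighborhood in an auxiliary metric.

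\smallskip

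The most delicate point of the argument is the lifting step in the first paragraph: one must verify that torically transverse discs in $X_0$ (which is singular) admit unique holomorphic lifts to $\widetilde X_0$ that remain of Maslov index two. This is really a combination of the universal property of blowing up applied along the preimage of the finite set $\varphi^{-1}(S_0)$ and the crepancy of the small resolution; the rest of the argument is then a routine combination of Cho--Oh and Gromov compactness.
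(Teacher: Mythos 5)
Your overall route --- lift Maslov index two discs to the small resolution $\widetilde{X}_0$, apply the Cho--Oh classification to see the lift is torically transverse, push down to conclude the original disc misses $S_0$, and finish with a Gromov compactness argument to pass from pointwise avoidance to a uniform neighborhood --- matches the approach of Nishinou--Nohara--Ueda. However, your first paragraph contains a genuine logical gap. You cannot ``assume $\varphi$ is torically transverse'' by invoking Lemma~\ref{discsinX0}: that lemma only provides a \emph{nearby} torically transverse disc with the same boundary, and gives no control over the image of the original $\varphi$; deformed transverse discs may come arbitrarily close to $S_0$ as they limit to a non-transverse one, so the needed uniform neighborhood does not follow. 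Moreover, your paragraph is internally inconsistent: if $\varphi$ were torically transverse then by Definition~\ref{toricallytransverse} it would already be disjoint from all toric strata of codimension $>1$, and since $S_0$ is a union of such strata you would have $\varphi^{-1}(S_0) = \emptyset$ rather than merely finite --- at which point the entire lifting argument would be superfluous. You have the causal structure backwards.

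The correct order is: start from an \emph{arbitrary} Maslov two disc $\varphi$. The finiteness of $\varphi^{-1}(S_0)$ requires no lemma --- $\varphi$ is holomorphic, $S_0$ is an analytic subvariety, and $\partial\mathbb{D}$ maps into $L_0\subset X_0^{\textup{sm}}$, so $\varphi^{-1}(S_0)$ is a proper analytic subset of $\mathbb{D}$ and hence a finite set of interior points. One then lifts $\varphi$ over $\mathbb{D}\setminus\varphi^{-1}(S_0)$ via $\widetilde{X}_0\setminus E\cong X_0\setminus S_0$ and extends across the punctures by a removable singularity theorem --- not by ``the universal property of blowing up,'' which does not apply as stated since $\varphi^{-1}(S_0)$ need not be the scheme-theoretic preimage of a Cartier divisor. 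This extension step needs an a priori energy bound for the partial lift that your write-up does not address. Only \emph{after} applying Cho--Oh to the lift does one learn that $\widetilde\varphi$ is torically transverse, hence disjoint from the codimension $\geq 2$ exceptional locus, whence $\varphi^{-1}(S_0)=\emptyset$. Your concluding compactness argument is fine once this first step is repaired.
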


Now let $\phi^\prime_\epsilon \colon X_\epsilon \to X_0$ be a (continuous) extension of the flow
$\phi_\epsilon \colon {X}^\textup{sm}_\epsilon \to {X}^\textup{sm}_0$ given in Theorem~\ref{NNUtoricdeg} (\cite[Section 8]{NNU}).
The following is the key proposition which relates the above mentioned holomorphic discs
in $(X_0,L_0)$ to those of $(X_\epsilon,L_\epsilon)$.

\begin{proposition}[Proposition 9.16 in \cite{NNU}]\label{barM1}
For any $\beta \in \pi_2(X_0,L_0)$ of Maslov index two, there is a positive real numbers $0 < \varepsilon \leq 1$
and a diffeomorphism
$$
\psi \colon {{\mathcal M}}_1(X_0,L_0;\beta) \to {{\mathcal M}}_1 (X_\varepsilon,L_\varepsilon;\beta)
$$
such that the diagram
$$
\xymatrix{
H_*({{\mathcal M}}_1(X_0,L_0;\beta)) \ar[d]_{\psi_*} \ar[r]^{\quad \quad (\ev_0)_*} & H_*(L_0) \ar[d]^{(\phi_\varepsilon)^{-1}_*}\\
H_*({{\mathcal M}}_1(X_\varepsilon,L_\varepsilon;\beta)) \ar[r]^{\quad \quad (\ev_0)_*} & H_*(L_\varepsilon)
}
$$
is commutative.
\end{proposition}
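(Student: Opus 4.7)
The plan is to build the diffeomorphism $\psi$ by transporting holomorphic discs from the central fiber $X_0$ to the smooth fiber $X_\varepsilon$ via the gradient-Hamiltonian flow $\phi_\varepsilon$ and then perturbing to genuine $J_\varepsilon$-holomorphic discs, using the Fredholm regularity that is already established for the classes of Maslov index two.

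First I would use Lemma~\ref{nodiscinW0} to confine the picture to the smooth locus: every element of $\mcal{M}_1(X_0,L_0;\beta)$ has image disjoint from the neighborhood $W_0$ of $\operatorname{Sing}(X_0)$. Since $\phi_\varepsilon\colon X^\textup{sm}_\varepsilon \to X^\textup{sm}_0$ is a symplectomorphism by Theorem~\ref{NNUtoricdeg}, pulling back via $\phi_\varepsilon^{-1}$ sends each such disc $\varphi_0\colon(\mathbb{D},\partial\mathbb{D})\to(X_0^{\mathrm{sm}},L_0)$ to a smooth map $\widetilde\varphi_\varepsilon:=\phi_\varepsilon^{-1}\circ\varphi_0\colon(\mathbb{D},\partial\mathbb{D})\to(X_\varepsilon,L_\varepsilon)$ whose boundary still lies in $L_\varepsilon$ and which is symplectically tamed (it solves a Cauchy-Riemann equation with respect to a possibly non-integrable almost complex structure obtained by pushing forward $J_0$). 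The key point is to then perturb $\widetilde\varphi_\varepsilon$ to an honest $J_\varepsilon$-holomorphic disc $\varphi_\varepsilon$ in the same relative homotopy class $\beta$.

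Second, I would invoke the Fredholm package for Maslov index two classes. By the regularity result stated as Proposition~9.17 in the excerpt, every class $\beta\in\pi_2(X_\varepsilon,L_\varepsilon)$ of Maslov index two is Fredholm regular, so the linearized Cauchy-Riemann operator at $\widetilde\varphi_\varepsilon$ (viewed inside a one-parameter family of almost complex structures $J_t$ interpolating between the pushed-forward one and $J_\varepsilon$) is surjective. Combined with the implicit function theorem applied to the parametrized $\bar\partial$-operator on the space of $W^{1,p}$-maps with Lagrangian boundary condition, this yields, for $\varepsilon$ sufficiently small, a unique (modulo automorphisms of $\mathbb{D}$) $J_\varepsilon$-holomorphic disc $\varphi_\varepsilon$ close to $\widetilde\varphi_\varepsilon$ representing $\beta$. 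This produces a well-defined map $\psi\colon\mcal{M}_1(X_0,L_0;\beta)\to\mcal{M}_1(X_\varepsilon,L_\varepsilon;\beta)$.

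Third, for the bijectivity I would argue by Gromov compactness combined with the Fano property. Any sequence of Maslov index two $J_\varepsilon$-holomorphic discs converges (after passing to a subsequence) to a stable disc in $(X_0,L_0)$; because $X_\varepsilon$ deforms through Fano varieties with $c_1>0$ and each irreducible bubble/component in the limit must carry positive Maslov/Chern number, no sphere bubbles can form and no disc can split off because the total Maslov index is exactly two. Hence the limit is a single smooth disc of Maslov index two, which lies in $X_0^{\mathrm{sm}}$ by Lemma~\ref{nodiscinW0}, and so arises as $\psi$ of a unique class. The inverse map is constructed by running the same implicit-function argument backward, and the commutativity of the stated diagram with $(\ev_0)_*$ is immediate from the construction $\varphi_\varepsilon\approx\phi_\varepsilon^{-1}\circ\varphi_0$ on the boundary.

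The main obstacle I expect is the uniform control needed to make both the implicit function argument in step two and the compactness argument in step three work for the \emph{same} $\varepsilon$: one must ensure that $\widetilde\varphi_\varepsilon$ is sufficiently close to a genuine $J_\varepsilon$-holomorphic disc, uniformly in the class $\beta$ (there are finitely many such classes in the Maslov index two stratum by \cite{CO}, which helps), and that no unexpected discs of Maslov index two can bubble off during the deformation. This is precisely where the Fano property and the absence of holomorphic discs of Maslov index $\le 0$ (Lemma~\ref{discsinX0}) enter decisively, ruling out any degeneration of the moduli spaces $\mcal{M}_1(X_{s(\varepsilon)},L_{s(\varepsilon)};\beta)$ as $s$ varies over $[0,\varepsilon]$.
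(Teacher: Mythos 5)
Your overall strategy — transport $J_0$-holomorphic discs in $X_0^{\mathrm{sm}}$ through the gradient-Hamiltonian flow, perturb via the implicit function theorem, and argue bijectivity by Gromov compactness plus positivity of Maslov indices — is the right shape and is essentially the approach in \cite{NNU}. However, the invocation of Proposition~9.17 of \cite{NNU} (Fredholm regularity of Maslov index two classes in $X_\varepsilon$) is logically backwards. In \cite{NNU}, that regularity statement is \emph{derived from} Proposition~9.16, not a tool available for proving it: $X_\varepsilon\cong\mcal{O}_\lambda$ is not a toric manifold, so there is no independent a priori reason for Fredholm regularity there, and the only route to it is precisely the diffeomorphism you are trying to construct together with known regularity on the toric side. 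Using it here is circular.

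The regularity you actually need is at the \emph{source}: the linearized Cauchy--Riemann operator must be surjective at the discs $\varphi_0$ in $(X_0,L_0)$. This comes from the Cho--Oh classification of Maslov index two holomorphic discs in smooth toric manifolds combined with Lemma~\ref{discsinX0} and Lemma~\ref{nodiscinW0} (the discs are torically transverse and miss a neighborhood of $\operatorname{Sing}(X_0)$, so they live in a smooth toric open piece, or equivalently lift to a small resolution $\widetilde X_0$, where the Cho--Oh formula gives surjectivity explicitly). Since $\widetilde\varphi_\varepsilon = \phi_\varepsilon^{-1}\circ\varphi_0$ is holomorphic for $(\phi_\varepsilon^{-1})_*J_0$ and the linearization is conjugate to the one at $\varphi_0$, surjectivity transfers. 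Then, because $(\phi_\varepsilon^{-1})_*J_0\to J_0$ and $J_\varepsilon\to J_0$ in $C^\infty_{\mathrm{loc}}$ on the smooth locus as $\varepsilon\to 0$, the interpolating path $J_t$ is uniformly $C^1$-small for $\varepsilon$ small, and the IFT applies. A similar caution applies to your surjectivity step: you should not appeal to the classification of discs in $X_\varepsilon$ (Lemma~9.22, which again comes later); instead, Gromov compactness as $s\to 0^+$ plus the Fano condition and Lemma~\ref{nodiscinW0} ensures any sequence of discs in $X_{s}$ limits to a single smooth disc in $X_0^{\mathrm{sm}}$ with no bubbling, so the map $\psi$ is onto for $\varepsilon$ small. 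With these corrections the argument closes up and agrees with the \cite{NNU} proof.
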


\begin{lemma}[Lemma 9.22 in \cite{NNU}]\label{discsinXepsilon} Let $W_\varepsilon: = (\phi_\varepsilon')^{-1}(W_0)$.
There exists $\varepsilon_0 > 0$ such that
for all $0< \varepsilon \leq \varepsilon_0$, any holomorphic curve bounded by $L_\varepsilon$ in a class
of Maslov index two does not intersect $W_\varepsilon$. 
\end{lemma}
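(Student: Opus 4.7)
The plan is to argue by contradiction via Gromov compactness in the family $f\colon \mcal{X}\to[0,1]$, promoting a hypothetical Maslov-index-two disc in $(X_{\varepsilon}, L_{\varepsilon})$ meeting $W_{\varepsilon}$ into a limiting Maslov-index-two disc in $(X_{0}, L_{0})$ meeting a neighborhood of $S_{0}$, contradicting Lemma~\ref{nodiscinW0}. Since $S_{0}$ is compact, we may shrink $W_{0}$ slightly while retaining the conclusion of Lemma~\ref{nodiscinW0}, so without loss of generality we fix an open set $W_{0}^{\sharp}\supset S_{0}$ with $\overline{W_{0}^{\sharp}}\subset W_{0}$ such that no Maslov-index-two disc in $(X_{0},L_{0})$ meets $\overline{W_{0}^{\sharp}}$; it suffices to prove that for all sufficiently small $\varepsilon$, no Maslov-index-two disc meets $W_{\varepsilon}^{\sharp}:=(\phi'_{\varepsilon})^{-1}(W_{0}^{\sharp})$, since replacing $W_{0}$ by $W_{0}^{\sharp}$ from the outset changes nothing in the statement.

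Supposing the contrary, choose $\varepsilon_{k}\searrow 0$, holomorphic discs $u_{k}\colon(\mathbb{D},\partial\mathbb{D})\to(X_{\varepsilon_{k}},L_{\varepsilon_{k}})$ of Maslov index two, and interior points $z_{k}\in\mathbb{D}$ with $\phi'_{\varepsilon_{k}}(u_{k}(z_{k}))\in W_{0}^{\sharp}$. Since each $(X_{\varepsilon},\omega_{\varepsilon})$ is monotone (Fano with the restriction of the common product K\"ahler form), Maslov index two pins down $\omega_{\varepsilon_{k}}([u_{k}])$ to a single constant, yielding a uniform energy bound. Viewing the $u_{k}$ as stable maps into the total space via the embedding $\mcal{X}\hookrightarrow \mathbb{P}^{N}\times[0,1]$ and invoking Gromov compactness for $J$-holomorphic curves with Lagrangian boundary condition $L_{\varepsilon_{k}}\to L_{0}$, a subsequence converges to a stable disc map $u_{\infty}$ in $(X_{0},L_{0})$, and the marked points $z_{k}$ converge to an interior marked point $z_{\infty}$ with $u_{\infty}(z_{\infty})\in\overline{W_{0}^{\sharp}}$.

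The total Maslov index of $u_{\infty}$ equals two. Because every non-constant holomorphic sphere in the Fano variety $X_{0}$ has positive Chern number and every non-constant holomorphic disc in $(X_{0},L_{0})$ has Maslov index at least two by Lemma~\ref{discsinX0}, the only admissible stable configuration is a single genuine Maslov-index-two disc with (at most) ghost bubbles attached; in particular, after relocating $z_{\infty}$ across any nodes onto the non-trivial component (which does not change its image), we obtain a Maslov-index-two holomorphic disc $u_{\infty}$ in $(X_{0},L_{0})$ whose image meets $\overline{W_{0}^{\sharp}}\subset W_{0}$, contradicting Lemma~\ref{nodiscinW0}. The main technical point will be setting up Gromov compactness cleanly across varying fibers and Lagrangian boundary conditions through the singular limit; this is handled by performing compactness in the ambient $\mathbb{P}^{N}\times[0,1]$ and using the torically-transverse structure of Maslov-index-two limits from Lemma~\ref{discsinX0} to guarantee that the limit disc is genuinely a disc in $(X_{0},L_{0})$ rather than a degenerate configuration trapped at the singular stratum $S_{0}$.
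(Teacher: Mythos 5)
Your proof follows essentially the same Gromov-compactness-by-contradiction strategy that Nishinou--Nohara--Ueda use for Lemma 9.22: argue by contradiction along a sequence $\varepsilon_k\to 0$, use the uniform energy bound coming from monotonicity, pass to a limiting stable map in $(X_0,L_0)$, invoke positivity of Maslov and Chern indices (via Lemma~\ref{discsinX0} and Fano-type positivity) to rule out non-ghost bubbling, and then apply Lemma~\ref{nodiscinW0} to derive the contradiction. The extra touches you add --- shrinking $W_0$ to $W_0^\sharp$ with $\overline{W_0^\sharp}\subset W_0$, performing compactness in the ambient $\mathbb{P}^N\times[0,1]$ to sidestep the singularity of $X_0$, and relocating the marked point onto the non-ghost component --- are technically careful bookkeeping but do not constitute a different approach.
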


We now combine the diffeomorphism $\psi \colon {{\mathcal M}}_1(X_0,L_0;\beta) \to {{\mathcal M}}_1 (X_\varepsilon,L_\varepsilon;\beta)$
and $\phi_\varepsilon' \colon X_0 \to X_\varepsilon$ to define
an isomorphism between the correspondence 
\begin{equation}\label{smoothcorrespondence0}
	\xymatrix{
                              & {\mcal{M}_{k+1}(X_0, L_0; \beta)} \ar[dl]_{\bev_+} \ar[dr]^{\ev_0} &
      \\
 L_0^k & & L_0}
\end{equation}
and the following correspondence 
\begin{equation}\label{correspondence-epsilon}
	\xymatrix{
                              & {\mcal{M}_{k+1}(X_\varepsilon, L_\varepsilon; \beta)} \ar[dl]_{\bev_+} \ar[dr]^{\ev_0} &
      \\
 L_\varepsilon^k & & L_\varepsilon}.
\end{equation}
Although they did not explicitly mention a choice of compatible systems of Kuranishi structures or
perturbations, Nishinou-Nohara-Ueda \cite{NNU} essentially constructed an $A_\infty$-structure 
on $L_\varepsilon \subset X_\epsilon$ and computed its potential function 
in the same way as on a Fano toric manifolds \cite{CO,FOOOToric1} using Proposition \ref{barM1} and Lemma~\ref{discsinXepsilon}.
Thus, they were able to take advantage of properties of $T^n$-equivariant perturbation in a toric manifold, a open submanifold of a toric variety $X_0$. 
We denote the corresponding compatible system of multi-sections by $\frak s = \frak s_{k+1, \beta}$, 
see \cite{FOOOToric1, FOOOToric2} for the meaning of this notation. 

Next we need to involve bulk deformations for our purpose of a construction of continuum of 
non-displaceable Lagrangian tori in $X$, whose construction is now in order.

Denote $\scr{A}_{GS}^2(\Z)$ be the free abelian group generated
by the horizontal and vertical Schubert cycles of real codimension two 
\begin{equation}\label{eq:basis}
\{\scr{D}_{i,i+1}^\textup{hor} ~\colon~ 1 \leq i \leq n-1 \} \cup \{\scr{D}_{j+1,j}^\textup{ver} ~\colon~ 1 \leq j \leq n-1 \}.
\end{equation}
We recall
\begin{equation}\label{NOintersections}
L \cap \scr{D}_{i,i+1}^\textup{hor} = \emptyset = L \cap \scr{D}_{j+1,j}^\textup{ver}
\end{equation}
for any $i, j$ and so the cap product of $\beta \in \pi_2(X,L)$ with any element thereof is well-defined.
Putting $\scr{A}_{GS}^2(\Lambda_0) := \scr{A}_{GS}^2(\Z) \otimes \Lambda_0,$
any element $\frak b \in \scr{A}_{GS}^2(\Lambda_0)$ can be expressed as
\begin{equation}\label{eq:frakb}
\frak b = \sum_{i=1}^{n-1} \frak b_{i,i+1}^\textup{hor}\scr{D}_{i,i+1}^\textup{hor}
+ \sum_{j=1}^{n-1} \frak b_{j+1,j}^\textup{ver}\scr{D}_{j+1,j}^\textup{ver}
\end{equation}
where $\frak b_{i,i+1}^\textup{hor}, \frak{b}_{j+1,j}^\textup{ver} \in \Lambda_0$.
We formally denote
\begin{equation}\label{formallydenote}
\beta \cap \frak b = \sum_{i=1}^{n-1} \frak b_{i,i+1}^\textup{hor} \left( \beta \cap \scr{D}_{i,i+1}^\textup{hor} \right)
+ \sum_{j=1}^{n-1} \frak b_{j+1,j}^\textup{ver} \left( \beta \cap \scr{D}_{j+1,j}^\textup{ver} \right).
\end{equation}
For simplicity, let us fix an enumeration $\{\scr{D}_j \mid j = 1,\cdots, B\}$ of the elements in \eqref{eq:basis} where $B = 2(n-2)$ and set $\frak{b}_j$ to be the coefficient corresponding to $\scr{D}_j$ in~\eqref{eq:frakb}.

The following is the statement of the counterpart of Theorem \ref{Foootoric2potenti}.

\begin{theorem}\label{mainthmappdendix} Let $\frak b \in \scr{A}_{GS}^2(\Lambda_0)$ and let $L_\varepsilon$ be a torus Lagrangian fiber in $X_\varepsilon$. Then, the bulk-deformed potential function is written as
\begin{equation}\label{bulkdeformedpotential}
\frak{PO}^\frak{b} \left( L_\varepsilon ; b \right) = \sum_{\beta} n_\beta \cdot \exp \left( \beta \cap \frak{b} \right) \cdot \exp(\pa \beta \cap b) \, T^{\omega(\beta) / 2 \pi}.
\end{equation}
where the summation is taken over all homotopy classes in $\pi_2 (X_\varepsilon, L_\varepsilon)$ of Maslov index two.
\end{theorem}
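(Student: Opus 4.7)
The plan is to reduce the computation of $\frak{PO}^\frak{b}(L_\varepsilon; b)$ to the toric model $(X_0,L_0)$ via the gradient-Hamiltonian flow $\phi'_\varepsilon \colon X_\varepsilon \to X_0$ and then invoke the toric formula proven in \cite[Section 7]{FOOOToric2}. First I would recall the definition of the bulk-deformed $A_\infty$-structure: for each class $\beta \in \pi_2(X_\varepsilon,L_\varepsilon)$ and each $\ell \geq 0$, one uses the extended correspondence
\[
\xymatrix{
X_\varepsilon^\ell & \ar[l]_{(\bev^{\text{\rm int}})} \mcal{M}_{k+1;\ell}(X_\varepsilon, L_\varepsilon; \beta) \ar[r]^{\quad\quad \ev_0} \ar[d]_{\bev_+}& L_\varepsilon\\
& L_\varepsilon^k &
}
\]
to define the deformed operators $\frak m_k^\frak b(b,\dots,b) = \sum_{\ell,\beta} \tfrac{1}{\ell!}\, \frak m_{k,\beta,\ell}(b,\dots,b;\frak b,\dots,\frak b)\, T^{\omega(\beta)/2\pi}$ via fiber product over $(\bev^{\text{\rm int}})^{-1}(\frak b^{\otimes \ell})$. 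Then $\frak{PO}^\frak{b}(L_\varepsilon;b)\cdot \mathrm{PD}[L_\varepsilon]$ is the $k=0$ term, and my goal is to show each $\beta$ contributes the factor $n_\beta \cdot \exp(\beta \cap \frak b)\cdot \exp(\partial\beta \cap b)\cdot T^{\omega(\beta)/2\pi}$.

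Next I would carry out the reduction to $X_0$. By Lemma~\ref{discsinXepsilon} every Maslov-index-two holomorphic disc in $(X_\varepsilon,L_\varepsilon)$ avoids the neighborhood $W_\varepsilon$ of $(\phi'_\varepsilon)^{-1}(\operatorname{Sing} X_0)$, and by Proposition~\ref{barM1} the diffeomorphism $\psi$ together with $\phi'_\varepsilon$ identifies the relevant correspondence diagrams over $X_\varepsilon^\textup{sm}$ and $X_0^\textup{sm}$. Under $\phi'_\varepsilon$, each Schubert cycle $\scr D_{i,i+1}^{\textup{hor}}$ or $\scr D_{j+1,j}^{\textup{ver}}$ in $X_\varepsilon$ pushes to the preimage under $\Phi$ of a union of facets of $\Delta_\lambda$, i.e.\ to a union of toric prime divisors in $X_0$ (Theorem~\ref{thm_KoganMiller}). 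Since all interior marked points can be constrained to lie in the smooth locus (Lemma~\ref{intersectionnumbercal} and its proof via the small resolution show the local intersection numbers are preserved), the bulk-insertion of $\frak b$ in $(X_\varepsilon,L_\varepsilon)$ agrees with the bulk-insertion of the pushed-forward toric divisor class in $(X_0,L_0)$ on the level of moduli-space correspondences.

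Once this identification is in place, I would apply the main computation of \cite[Section 7]{FOOOToric2} verbatim to the toric side: the moduli space $\mcal{M}_{1;\ell}(X_0,L_0;\beta)$ of Maslov-two discs with $\ell$ interior insertions is a torus bundle over $(\ev^{\text{\rm int}})^{-1}(\frak b^{\otimes\ell})$, and summing over $\ell$ turns the $T^n$-equivariant count into the exponential factor $\exp(\beta \cap \frak b)$; simultaneously the boundary insertions of $b$ give $\exp(\partial\beta\cap b)$, and the base count of discs in class $\beta$ gives $n_\beta = 1$ by \cite{NNU,CO}. Exactly as in \cite[Proposition 4.7]{FOOOToric2}, the sum over $\beta$ yields the claimed formula~\eqref{bulkdeformedpotential}.

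The main obstacle will be the first step, namely justifying that the singular, non-toric-invariant Schubert cycles $\scr D_{i,i+1}^{\textup{hor}}$ and $\scr D_{j+1,j}^{\textup{ver}}$ can legitimately be used as bulk classes: one must choose a system of compatible Kuranishi structures/CF-perturbations on $\{\mcal M_{k+1;\ell}(X_\varepsilon,L_\varepsilon;\beta)\}$ for which the evaluation $\bev^{\text{\rm int}}$ is strongly transverse to $\frak b$. The way around this is to transport the $T^n$-equivariant perturbations of \cite[Section 12]{FOOOToric2} from $X_0$ to $X_\varepsilon$ via $(\phi'_\varepsilon)^{-1}$ on the smooth loci, which is legitimate for Maslov-two classes because of Lemma~\ref{discsinX0} (torically-transverse representatives) and Lemma~\ref{discsinXepsilon} (avoidance of $W_\varepsilon$). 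Once transversality is secured away from $W_\varepsilon$, the degree and intersection computations agree with those in the smooth toric manifold obtained by a small resolution, and the remainder of the derivation is identical to \cite[Section 7]{FOOOToric2}, so I will only indicate the necessary modifications rather than reproduce the argument.
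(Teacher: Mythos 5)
Your proposal is correct and follows essentially the same route as the paper's own proof: reduce to the central toric fiber $(X_0,L_0)$ via the gradient-Hamiltonian flow $\phi'_\varepsilon$, use Lemma~\ref{discsinXepsilon} and Proposition~\ref{barM1} to identify Maslov-two correspondences away from the singular locus, transport $T^n$-equivariant perturbations from $X_0$, and then run the divisor-axiom computation of \cite[Section 7]{FOOOToric2}. The only presentational difference is that the paper carries out the final $\ell$-summation explicitly via the $\frak{q}_{k,\ell;\beta}$-operations, the emptiness conditions~\eqref{threeconditionsforempty}, and equations~\eqref{divisoraxiboundarymarked}--\eqref{divisoraxiomopengromov}, whereas you invoke the FOOO torus-bundle picture more compactly; both yield the same formula~\eqref{bulkdeformedpotential}.
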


The remaing part of this section is reserved for the proof of Theorem~\ref{mainthmappdendix}.

For a Lagrangian submanifold $L$ of $X$, denote by
$$
ev_i^{\text{\rm int}} \colon \mathcal M^{\text{\rm main}}_{k+1;\ell}(L,\beta) \to X
$$
the evaluation map at the $i$-th interior marked point for $i=1,\ldots,\ell$.
We put $\underline B = \{1,\ldots,B\}$ and denote the set of all maps $\text{\bf p}:
\{1,\ldots,\ell\} \to \underline B$ by
$Map(\ell,\underline B)$. We write $\vert\text{\bf
p}\vert = \ell$ if $\text{\bf p} \in Map(\ell,\underline B)$.
We define a fiber product
\begin{equation}
\mathcal M^{\text{\rm main}}_{k+1;\ell}(L,\beta;\text{\bf p})
= \mathcal M^{\text{\rm main}}_{k+1;\ell}(L,\beta)
{}_{(ev_1^{\text{int}},\ldots,ev_{\ell}^{\text{int}})}
\times_{X^{\ell}} \prod_{i=1}^{\ell} \scr{D}_{\text{\bf p}(i)}
\end{equation}
and consider the evaluation maps
$$
ev_i \colon \mathcal M^{\text{\rm main}}_{k+1;\ell}(L,\beta) \to L
$$
by
$$
ev_i((\Sigma,\varphi,\{z^+_i\}, \{z_i\})) = \varphi(z_i).
$$
It induces
$$
ev_i \colon \mathcal M^{\text{\rm main}}_{k+1;\ell}(L,\beta;\text{\bf p}) \to L.
$$

Note the image of a Schubert horizontal or vertical cycle is (a union of) components of the toric divisor via the map $\phi^\prime_\varepsilon$ so that $\scr{D}_j$ can be regarded as the (union of) components of toric divisors corresponding to Schubert horizontal or vertical cycles . Also, by Proposition \ref{barM1} and Lemma~\ref{discsinXepsilon}, the holomorphic discs of Maslov index two intersect at the smooth locus of divisors. For a toric fiber $L_0$ in $X_0$, there exists a system $\frak s = \{\frak s_{k+1,\beta;\textbf{\textup{p}}}\}$ of $T^n$-equivariant multi-sections on the moduli spaces $\mcal{M}_{k+1,l}(X_0, L_0; \beta; \textbf{\textup{p}})$ for all classes $\beta$ with $\mu(\beta) = 2$, see Lemma 6.5 in \cite{FOOOToric2}. 

As in the correspondence between~\eqref{smoothcorrespondence0} and~\eqref{correspondence-epsilon}, applying a smooth correspondence in  to
\begin{equation}\label{smoothcorrespondence}
	\xymatrix{
                              & {\mcal{M}_{k+1,l}(X_\varepsilon, L_\varepsilon; \beta; \textbf{\textup{p}})} \ar[dl]_{\bev_+} \ar[dr]^{\ev_0} &
      \\
 L^k & & L}
\end{equation}
we define
$$
\frak{q}_{k, \ell; \beta} \left( \textbf{p}; b^{\otimes k} \right) := \left(\ev_0\right)! \left(\bev_+^* (\pi_1^*b \otimes \cdots \otimes \pi_k^*b) \right).
$$
Since $X_\varepsilon$ is Fano, Lemma~\ref{discsinX0} and~\eqref{NOintersections} allows us to remain $\mathcal M_{k+1;\ell}(X_\varepsilon, L_\varepsilon,\beta;\text{\bf p})$ empty if one of the followings is satisfied.
\begin{equation}\label{threeconditionsforempty}
\begin{cases}
(1)\,\,  \mu(\beta) < 0, \\
(2)\,\, \mu(\beta) = 0 \,\, \mbox{and} \,\, \beta \neq 0, \\
(3)\,\, \beta = 0 \,\, \mbox{and} \,\, l > 0.
\end{cases}
\end{equation}
Because of the compatibility of the forgetful map forgetting the boundary marked points, see \cite[Section 5]{Fuk},  
\begin{equation}\label{divisoraxiboundarymarked}
\frak{q}_{k, \ell; \beta} \left( \textbf{p}; b^{\otimes k} \right) = \frac{1}{k!} (\pa \beta \cap b)^k \cdot \frak{q}_{0, \ell; \beta} \left( \textbf{p}; 1 \right).
\end{equation}
Since any moduli spaces satisfying one of the conditions in~\eqref{threeconditionsforempty} are empty, $\frak{q}_{0, \ell; \beta}(\textbf{p}; 1)$ represents a cycle, which yields that the Lagrangian $L_\varepsilon$ is weakly unobstructed with respect to $\frak{b}$ in~\eqref{eq:frakb}. Passing to the canonical model \cite{FOOO, FOOOc}, we obtain that
\begin{equation}\label{opengromovwittendef}
\frak{q}_{0, \ell; \beta}(\textbf{p}; 1) = n_{\beta} (\textbf{p}) \cdot \textup{PD}[L]
\end{equation}
for some $n_{\beta} (\textbf{p}) \in \Q$.  As a consequence, we obtain that every $1$-cochain is a weak bounding cochain with respect to $\frak{b}$. In particular, the potential function with bulk is defined on $H^1(L_\varepsilon; \Lambda_+)$.

Under our situation, $n_\beta (\textbf{p})$ is well-defined. Especially when $\dim \scr{D}_{\bullet} = 2n -2$, $\mu(\beta) = 2$, we recall that this is
precisely the situation where the divisor axiom of the Gromov-Witten theory applies, see \cite[p.193]{CK} and \cite[Lemma 9.2]{FOOOToric2}. In particualr, we can calculate $n_\beta (\textbf{p})$ in the homology level and therefore
\begin{equation}\label{divisoraxiomopengromov}
n_\beta (\textbf{p}) = n_\beta \cdot \prod^{| \textbf{p} |}_{i=1} \left(\beta \cap \scr{D}_{\textbf{p}(i)} \right).
\end{equation}
Following \cite[Section 7]{FOOOToric2} and using~\eqref{divisoraxiboundarymarked},~\eqref{opengromovwittendef},~\eqref{divisoraxiomopengromov} and~\eqref{formallydenote}, we obtain that
\begin{align*}
\sum_{k=0}^\infty \frak{m}^\frak{b}_k \left( b^{\otimes k} \right) &:= \sum_{k=0}^\infty \sum_{\ell=0}^\infty \sum_{\beta; \mu(\beta) = 2} \frac{1}{\ell!} \,\, \frak{q}_{k, \ell; \beta} \left(\frak{b}^{\otimes \ell}; b^{\otimes k} \right) T^{\omega(\beta) / 2 \pi} \\
&= \sum_{\ell=0}^\infty \sum_{\textbf{p}; |\textup{p} | = \ell} \sum_{\beta; \mu(\beta) = 2}  \exp \left( \pa \beta \cap b \right) \cdot \frac{1}{\ell!} \,\, \frak{b}^\textbf{p} \, \frak{q}_{0, \ell; \beta} \left(\textbf{p}; 1 \right) T^{\omega(\beta) / 2 \pi} \\
&= \sum_{\beta; \mu(\beta) = 2}  \left( \sum_{\ell=0}^\infty \sum_{\textbf{p}; |\textup{p} | = \ell}  \exp \left( \pa \beta \cap b \right) \cdot \frac{1}{\ell!} \,\, \frak{b}^\textbf{p} \, n_\beta(\textbf{p}) \right) T^{\omega(\beta) / 2 \pi} \cdot PD[L] \\
&= \sum_{\beta; \mu(\beta) = 2} n_\beta \cdot \left( \sum_{\ell=0}^\infty \frac{1}{\ell!}  \sum_{\textbf{p}; |\textup{p} | = \ell}  \,\, \prod^{| \textbf{p} |}_{i=1} \frak{b}_{\textbf{p}(i)} \left(  \beta \cap \scr{D}_{\textbf{p}(i)} \right)  \right) \cdot \exp(\pa \beta \cap b) \,\cdot T^{\omega(\beta) / 2 \pi}\cdot PD[L]\\
&= \sum_{\beta; \mu(\beta) = 2} n_\beta \cdot \exp \left( \beta \cap \frak{b} \right) \cdot \exp(\pa \beta \cap b) \,\cdot T^{\omega(\beta) / 2 \pi}\cdot PD[L]
\end{align*}
where $\frak{b}^\textbf{p} = \prod_{i=1}^\ell \frak{b}_{\textbf{p}(i)}$. Finally, incorporating with deformation of non-unitary flat line bundle by Cho \cite{Cho2}, we can extend the domain of the bulk-deformed potential function to $H^1(L_\varepsilon; \Lambda_0)$. This completes the proof of Theorem~\ref{mainthmappdendix}.

\bibliographystyle{annotation}

\end{document}